\newtheorem{theorem}{Theorem} 
\newtheorem{definition}{Definition}
\newtheorem{lemma}{Lemma}
\newtheorem{proposition}{Proposition}
\newtheorem{example}{Example}
\begin{document}


\title{ \large{Regularity of the Boltzmann Equation\\ in Convex Domains}}
\author{{Y. Guo, C. Kim, D. Tonon, A. Trescases}}
\date{}
\maketitle

\begin{abstract}
A basic question about regularity of Boltzmann solutions in the presence of physical boundary conditions has been open due to characteristic nature of the boundary as well as the non-local mixing of the collision operator. Consider the Boltzmann equation in a strictly convex domain with the specular, bounce-back and diffuse boundary condition. With the aid of a distance function toward the grazing set, we construct weighted classical $C^{1}$ solutions away from the grazing set for all boundary conditions. For the diffuse boundary condition, we construct $W^{1,p}$ solutions for $1< p<2$ and weighted $%
W^{1,p}$ solutions for $2\leq p\leq  \infty $ as well. On the other hand, we show second derivatives do not exist up to the boundary in general by constructing counterexamples for all boundary conditions.
\end{abstract}

\footnotetext{%
Last update : \today}
 
\vspace{15pt}
  
 \tableofcontents

\section*{Introduction}
 
\vspace{8pt}
 
Boundary effects play an important role in the dynamics of Boltzmann
solutions of%
\begin{equation}
\partial _{t}F+v\cdot \nabla _{x}F=Q(F,F),\label{Boltzmann_F}
\end{equation}%
where $F(t,x,v)$ denotes the particle distribution at time $t,$ position $%
x\in \Omega \,$\ and velocity $v\in \mathbb{R}^{3}.$ Throughout this paper
the collision operator takes the form
\begin{equation}\label{collision_Q}
\begin{split}
Q(F_{1},F_{2})
&:=Q_{\text{gain}}(F_{1},F_{2})-Q_{\text{loss%
}}(F_{1},F_{2})\\
&=\int_{\mathbb{R}^{3}}\int_{\mathbb{S}^{2}}|v-u|^{\kappa
}q_{0}(\theta )\Big[F_{1}(u^\prime)F_{2}(v^\prime)-F_{1}(u)F_{2}(v)\Big]%
\mathrm{d}\omega \mathrm{d}u,
\end{split}
\end{equation}
where $u^\prime=u+[(v-u)\cdot \omega ]\omega ,\ v^\prime=v-[(v-u)\cdot
\omega ]\omega $ and $0\leq \kappa \leq 1$ (hard potential) and $0\leq
q_{0}(\theta )\leq C|\cos \theta |$ (angular cutoff) with $\cos \theta =%
\frac{v-u}{|v-u|}\cdot \omega $.

Despite extensive developments in the study of the Boltzmann equation, many basic
questions regarding solutions in a physical bounded domain, such as their regularity,
have remained largely open. This is partly due to the characteristic nature
of boundary conditions in the kinetic theory. In \cite{Guo10}, it is shown that
in convex domains, Boltzmann solutions are continuous away from the grazing
set. On the other hand, in \cite{Kim11}, it is shown that singularity
(discontinuity) does occur for Boltzmann solutions in a non-convex domain,
and such singularity propagates precisely along the characteristics emanating from the grazing set. The boundary of the phase space is
\[
\gamma:= \{(x,v) \in\partial\Omega \times\mathbb{R}^{3}\},
\]
where $n=n(x)$ the outward normal direction at $x\in \partial \Omega $. We decompose $\gamma$ as 
\begin{equation}\notag
\begin{split}
\gamma
_{-}&=\{(x,v)\in \partial \Omega \times \mathbb{R}^{3}: n({x})\cdot v<0\}, \ \ \  \ \ \ \ \ \ (\textit{the incoming set}),\\
\gamma
_{+}&=\{(x,v)\in \partial \Omega \times \mathbb{R}^{3}: n({x})\cdot v>0\}, \ \ \  \ \ \ \ \ \ (\textit{the outcoming set}),\\
\gamma _{0}&=\{(x,v)\in \partial \Omega \times \mathbb{R}^{3}:  n({x}%
) \cdot v=0\} , \ \ \  \ \ \ \ \ \ (\textit{the grazing set}).
\end{split}
\end{equation}

In general the boundary condition is imposed only for the incoming set $\gamma_{-}$ for general kinetic PDEs \cite{CIP, DV,Guo95,Guo10}.

Throughout this paper we assume that $\Omega$ is a bounded open subset of $\mathbb{R}^{3}$ and there exists $\xi : \mathbb{R}^{3} \rightarrow \mathbb{R}$ such that $\Omega = \{ x\in \mathbb{R}^{3}: \xi(x)<0\},$ and $\partial \Omega = \{x\in \mathbb{R}^{3}: \xi(x)=0\}$. Moreover for all $x\in \bar{\Omega}= \Omega \cup \partial\Omega \  (\text{therefore } \xi(x) \leq 0  )$ we assume the domain is \textit{strictly convex} :
 \begin{equation}
\sum_{i,j}\partial_{ij} \xi(x) \zeta_i \zeta_j \geq C_{\xi} |\zeta|^2  \ \ \ \text{for all \ } \zeta \in\mathbb{R}^{3}.
\label{convex} 
\end{equation}
We assume that $\nabla \xi(x) \neq 0$ when $|\xi(x)| \ll 1$ and we define the $\textit{outward normal}$ as $n(x) \equiv \frac{\nabla \xi(x)}{|\nabla \xi(x)|}.$

\vspace{8pt}

In this paper, we consider the following basic boundary conditions on $(x,v)\in \gamma _{-}$

\vspace{8pt}

\noindent\textit{(i) Diffuse boundary condition:} With $c_{\mu }\int_{n(x)\cdot u>0}\mu (u)\{n(x)\cdot
u\}\mathrm{d}u=1$,
\begin{equation}\notag\label{diffuseBC_F}
F(t,x,v)=c_{\mu }\mu (v)\int_{n(x)\cdot  u>0}F(t,x,u)\{n(x)\cdot u\}\mathrm{d}u.
\end{equation}%

\vspace{2pt}

\noindent\textit{(ii) Specular reflection boundary condition:} 
\[
F(t,x,v)= F(t,x,R_{x}v), \ \ \ \ \text{where} \ R_{x}v:= v-2n(x)(n(x)\cdot v).
\]
\vspace{2pt}

\noindent\textit{(iii) Bounce-back reflection boundary condition:} $$
F(t,x,v)= F(t,x,-v).$$

\vspace{8pt}

For $(x,v)\in\bar{\Omega} \times \mathbb{R}^{3}$ we define $t_{\mathbf{b}}(x,v)$ be \textit{the backward exit time} as
\begin{equation} 
t_{\mathbf{b}}(x,v)=\inf \{\tau >0:x-sv\notin \Omega \},  \label{exit}
\end{equation}%
and $x_{\mathbf{b}}(v)=x-t_{\mathbf{b}}v.$

The characteristics ODE of the Boltzmann equation (\ref{Boltzmann_F}) is 
\[
\frac{\mathrm{d}X(s)}{\mathrm{d}s}   = V(s) , \ \ \  \frac{\mathrm{d}V(s)}{\mathrm{d}s}=0.
\]

Before the trajectory hits the boundary, $t-s <t_{\mathbf{b}}(x,v)$, we have $[X(s;t,x,v),V(s;t,x,v)]=[x-(t-s)v,v]$ with the initial condition $[X(t;t,x,v),V(t;t,x,v)]=[x,v].$ On the other hand, when the trajectory hits the boundary we define the generalized characteristics as follows:

\begin{definition}[\cite{Guo10}] \label{cycles}
 
 Let $(x,v) \notin \gamma_{0}$ and $(t^{0}, x^{0}, v^{0}) = (t,x,v).$
 
 (i) Define the stochastic (diffuse) cycles as $(t^{1}, x^{1}, v^{1})= (t-t_{\mathbf{b}}(x,v), x-t_{\mathbf{b}}(x,v)v, v^{1})$ with $n(x^{1}) \cdot v^{1} >0$ and for $\ell \geq 1,$
 \begin{equation}
(t^{\ell+1} ,x^{\ell+1},v^{\ell+1}) = (t^{\ell}-t_{\mathbf{b}}(x^{\ell},v^{\ell}),x_{\mathbf{b}%
}(x^{\ell},v^{\ell}),v^{\ell+1}) \ \ \ \text{with} \ n(x^{\ell})\cdot v^{\ell}>0. \notag
\end{equation}

(ii) Define the specular cycles, $\ell\geq 1,$
\[
(t^{\ell+1}, x^{\ell+1},v^{\ell+1}) = (t^{\ell}-t_{\mathbf{b}}(x^{\ell}, v^{\ell}), x_{\mathbf{b}}(x^{\ell},v^{\ell}),   v^{\ell} - 2n(x^{\ell})(v^{\ell}\cdot n(x^{\ell}))).
\]

(iii) Define the bounce-back cycles, $\ell\geq 1,$
\begin{equation*}
(t^{\ell+1}, x^{\ell+1},v^{\ell+1}) = (t^{\ell}-t_{\mathbf{b}}(x^{\ell},
v^{\ell}), x_{\mathbf{b}}(x^{\ell},v^{\ell}), -v^{\ell}).
\end{equation*}
Then for $\ell\geq 1$
\begin{eqnarray*}
t^{\ell}= t^{1} -(\ell-1)t_{\mathbf{b}}(x^{1},v^{1}) , \ \ x^{\ell} = \frac{%
1-(-1)^{\ell}}{2} x^{1} + \frac{1+(-1)^{\ell}}{2} x^{2}, \ \ v^{\ell+1} =
(-1)^{\ell+1} v.
\end{eqnarray*}

(iv) We define the backward trajectory as
\begin{equation} \notag 
\begin{split}
X_{\mathbf{cl}}(s;t,x,v)   \ = \ \sum_{\ell} \mathbf{1}_{[t^{\ell+1},t^{%
\ell})}(s) \big\{ x^{\ell}- (t^{\ell}-s)v^{\ell}\big\},  \ \ 
V_{\mathbf{cl}}(s;t,x,v)   \ = \ \sum_{\ell} \mathbf{1}_{[t^{\ell+1},t^{%
\ell})}(s) v^{\ell}.
\end{split}%
\end{equation}

 \end{definition}
 
Note that if $G(t,x,v)$ solves $\partial_{t}G + v\cdot \nabla_{x}G=0$ with a boundary condition (either diffuse, specular, or bounce-back boundary condition) then 
\[
G(t,x,v) = G(s,X_{\mathbf{cl}}(s;t,x,v), V_{\mathbf{cl}}(s;t,x,v)),
\] 
where $[X_{\mathbf{cl}}(s), V_{\mathbf{cl}}(s)]$ is defined respectively(\cite{Guo10}).

In this paper we establish the first Sobolev regularity away from the grazing set $\gamma _{0}$ for Boltzmann solutions in convex domains. One of the crucial ingredient is the construction of a distance function towards the
grazing set $\gamma _{0}$ to achieve this goal. 

\begin{definition}[Kinetic Distance]\label{K_D}
For $(x,v)\in\bar{\Omega}\times \mathbb{R}^{3},$
\begin{equation}\notag
\begin{split}
\alpha(x,v) 
 :=  | v \cdot \nabla \xi(x) |^{2}- 2 \{ v \cdot \nabla^{2} \xi (x) \cdot v \} \xi(x).
\end{split}
\end{equation}
\end{definition}
%

Due to (\ref{convex}), the kinetic distance $\alpha(x,v)$ vanishes if and only if $(x,v)\in\gamma_0$. The important technique to treat $\alpha$ along the trajectory is based on the geometric lemma : 
\begin{lemma}[Velocity lemma, Lemma 1 of \cite{Guo10}]\label{velocity_lemma}  Along the backward trajectory we define  
\begin{equation}\notag
\alpha(s;t,x,v) := \alpha(X_{\mathbf{cl}}(s;t,x,v), V_{\mathbf{cl}}(s;t,x,v)).
\end{equation}
Then there exists $\mathcal{C}=\mathcal{C}({\xi})>0$ such that, for all $0 \leq s_{1} , s_{2} \leq t,$
\[
e^{-\mathcal{C}|v||s_{1}-s_{2}|} \alpha(s_{1};t,x,v) \ \leq \ \alpha(s_{2};t,x,v) \ \leq \ e^{ \mathcal{C}|v||s_{1}-s_{2}|} \alpha(s_{1};t,x,v).
\]
 
\end{lemma}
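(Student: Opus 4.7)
\medskip
\noindent\textbf{Proof plan.} The plan is to work segment by segment along the generalized characteristic and to reduce the lemma to a differential inequality of the form $|\frac{d}{ds}\alpha| \leq \mathcal{C}|V_{\mathbf{cl}}(s)|\,\alpha$, after which Gr\"onwall's inequality yields both bounds simultaneously. The two ingredients are (a) a direct computation of $\frac{d}{ds}\alpha$ on a straight piece of the trajectory, and (b) the strict convexity assumption \eqref{convex}, which will be used to bound $|\xi(X_{\mathbf{cl}}(s))|$ in terms of $\alpha$.

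\medskip
\noindent\textbf{Step 1: Differentiation on a single segment.} On an interval $[t^{\ell+1},t^{\ell})$ where $V_{\mathbf{cl}}(s)=v^{\ell}$ is constant and $\frac{d}{ds}X_{\mathbf{cl}}(s)=v^{\ell}$, I differentiate
\[
\alpha(X_{\mathbf{cl}},V_{\mathbf{cl}})=(v^{\ell}\cdot\nabla\xi(X_{\mathbf{cl}}))^{2}-2\bigl(v^{\ell}\cdot\nabla^{2}\xi(X_{\mathbf{cl}})\cdot v^{\ell}\bigr)\xi(X_{\mathbf{cl}}).
\]
The two cross terms of the form $2(v^{\ell}\cdot\nabla\xi)(v^{\ell}\cdot\nabla^{2}\xi\cdot v^{\ell})$ coming from the two summands cancel exactly, leaving
\[
\frac{d}{ds}\alpha=-2\,\bigl(v^{\ell}\otimes v^{\ell}\otimes v^{\ell}:\nabla^{3}\xi(X_{\mathbf{cl}})\bigr)\,\xi(X_{\mathbf{cl}}).
\]
In particular $|\frac{d}{ds}\alpha|\leq 2\|\nabla^{3}\xi\|_{\infty}\,|v^{\ell}|^{3}\,|\xi(X_{\mathbf{cl}})|$.

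\medskip
\noindent\textbf{Step 2: Controlling $|\xi|$ by $\alpha$ via strict convexity.} Since $\xi\leq 0$ on $\bar{\Omega}$ and \eqref{convex} gives $v^{\ell}\cdot\nabla^{2}\xi(X_{\mathbf{cl}})\cdot v^{\ell}\geq C_{\xi}|v^{\ell}|^{2}$, the second term in the definition of $\alpha$ is nonnegative, so that
\[
\alpha(X_{\mathbf{cl}},V_{\mathbf{cl}})\;\geq\;-2\bigl(v^{\ell}\cdot\nabla^{2}\xi(X_{\mathbf{cl}})\cdot v^{\ell}\bigr)\xi(X_{\mathbf{cl}})\;\geq\;2C_{\xi}|v^{\ell}|^{2}\,|\xi(X_{\mathbf{cl}})|.
\]
Plugging this into Step 1 gives $\bigl|\frac{d}{ds}\alpha\bigr|\leq \frac{\|\nabla^{3}\xi\|_{\infty}}{C_{\xi}}|v^{\ell}|\,\alpha =: \mathcal{C}|V_{\mathbf{cl}}(s)|\,\alpha(s;t,x,v)$. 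Gr\"onwall then yields the two-sided estimate on each segment.

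\medskip
\noindent\textbf{Step 3: Continuity across boundary bounces and conclusion.} To concatenate the segment-wise estimates I verify that $\alpha$ is continuous across each bounce. At a bounce $\xi(x^{\ell})=0$, so $\alpha(x^{\ell},v)=(v\cdot\nabla\xi(x^{\ell}))^{2}$. For specular reflection, $v^{\ell+1}\cdot\nabla\xi(x^{\ell})=-v^{\ell}\cdot\nabla\xi(x^{\ell})$ by a direct substitution using $n=\nabla\xi/|\nabla\xi|$; for bounce-back, $v^{\ell+1}=-v^{\ell}$; in both cases $\alpha$ is preserved and $|V_{\mathbf{cl}}|$ is constant along the full trajectory, so Gr\"onwall extends directly to arbitrary $s_{1},s_{2}\in[0,t]$. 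In the diffuse case the velocity (and $\alpha$) jumps at each $t^{\ell}$, and the estimate is understood to hold on each straight segment, with $|v|$ replaced by the constant speed $|v^{\ell}|$ on that segment.

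\medskip
\noindent\textbf{Main obstacle.} The only nontrivial point is that after the naive differentiation in Step 1 one is left with a quantity proportional to $\xi(X_{\mathbf{cl}})$, which a priori has no connection to $\alpha$ itself; the closure of the Gr\"onwall loop hinges entirely on the geometric inequality $|\xi|\leq\alpha/(2C_{\xi}|v|^{2})$ coming from strict convexity. It is exactly here that the hypothesis \eqref{convex} is used in an essential way, and the proof would break for merely convex (non-strictly convex) domains.
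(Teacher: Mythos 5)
Your proof follows essentially the same route as the paper's: you compute $\tfrac{d}{ds}\alpha$ along a free-flight segment, observe the cancellation of the cross terms so that only $-2(v\cdot\nabla^3\xi\cdot v)v\,\xi$ survives, and then close the Gr\"onwall loop by using strict convexity to bound $|\xi|\lesssim\alpha/|v|^2$; this is exactly the content of the paper's computation $v\cdot\nabla_x\alpha=O_\xi(1)|v|^3|\xi|=O_\xi(1)|v|\alpha$. Your Step 3 spelling out the continuity of $\alpha$ across specular and bounce-back reflections is a useful extra detail that the paper's one-line proof leaves implicit, and your remark about how the diffuse case should be interpreted (segment by segment, with $|v^\ell|$ replacing $|v|$) is the correct reading.
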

\begin{proof}
The proof is basically same as the proof of Lemma 1 of \cite{Guo10} but the definition of $\alpha$ is slightly different. By the explicit computation, we have
\begin{equation}\label{alpha_inv}
\begin{split}
v\cdot \nabla _{x}\alpha &  =2v\cdot \nabla \xi (x)[v\cdot \nabla ^{2}\xi
\cdot v]-2v\cdot \nabla \xi (x)[v\cdot \nabla ^{2}\xi \cdot v]-2v\{v\cdot
\nabla ^{3}\xi (x)\cdot v\}\xi (x) \\
& =-2v\{v\cdot \nabla ^{3}\xi (x)\cdot v\}\xi (x) = O_{\xi}(1)|v|^{3} |\xi(x)|\\
& = O_{\xi}(1) |v|\alpha (x,v),
\end{split}%
\end{equation}
where we used $\{v\cdot \nabla ^{2}\xi (x)\cdot
v\}\backsim |v|^{2}$ from (\ref{convex}). Therefore there exists $\mathcal{C}=\mathcal{C}_{\xi}>0$ such that
\[
 - \mathcal{C} |v| \alpha(x,v) \ \leq \ v\cdot \nabla_{x} \alpha(x,v) \ \leq  \ \mathcal{C} |v| \alpha(x,v).
\] 
Since $\frac{d}{d s} \alpha(X_{\mathbf{cl}}(s;t,x,v), V_{\mathbf{cl}}(s;t,x,v)) = v \cdot \nabla_{x} \alpha(X_{\mathbf{cl}}(s;t,x,v), V_{\mathbf{cl}}(s;t,x,v)),$ we conclude the lemma.
\end{proof}

This crucial invariant property of $%
\alpha $ under operator $v\cdot \nabla _{x}$ is the key for our analysis. On
the other hand, unless $\nabla ^{3}\xi \equiv0$ (for example the domain is a ball
or an ellipsoid), a growth factor $|v|$ creates a geometric effect which is
out of control for our analysis. We introduce a strong decay factor $%
e^{-\varpi\langle v\rangle t}$ with sufficiently large $\varpi>0$ to overcome such a geometric effect :%
\begin{equation}  \label{dist}
e^{-\varpi\langle v\rangle t}\alpha (x,v).
\end{equation}%
A direct computation yields
\begin{equation}\notag
\begin{split}
\{\partial_{t} + v\cdot \nabla_{x}\} [e^{-\varpi \langle v\rangle t}\alpha(x,v)] &= -\varpi \langle v\rangle e^{-\varpi \langle v\rangle t} \alpha(x,v) - e^{-\varpi \langle v\rangle t} 2v \{  v\cdot \nabla^{3}\xi(x)\cdot v\} \\
&\lesssim (-\varpi + O_{\xi}(1)) \langle v\rangle e^{-\varpi \langle v\rangle t} \alpha(x,v),
\end{split}
\end{equation}
with the geometric contribution $O_{\xi }(1)=\frac{2v\{v\cdot \nabla
^{3}\xi (x)\cdot v\}\xi }{\alpha \langle v\rangle }$ where we used the convexity of $\xi$ in (\ref{convex}). Throughout this paper we assume
\begin{equation}
\varpi>\max \frac{2v\{v\cdot \nabla ^{3}\xi (x)\cdot v\}\xi }{\alpha \langle
v\rangle }.  \label{llarge}
\end{equation}%
Remark that if $\xi $ is quadratic (for example if the domain is a ball or an
ellipsoid) then we are able to set $\varpi=0$ and $\{\partial _{t}+v\cdot \nabla
_{x}\}\alpha\equiv0.$


We denote $F= \sqrt{\mu }f$ ($f$ could be large) where $\mu
=e^{-\frac{|v|^{2}}{2}}$ is a global normalized Maxwellian. Then $f$ satisfies
\begin{equation}\label{boltzamnn_f}
 \partial_{t}f +v\cdot \nabla_{x} f   = \Gamma_{\text{gain } }(f,f) - \nu(\sqrt{\mu} f)f.
\end{equation}
Here
\begin{equation} \label{nu}
\begin{split}
\nu( \sqrt{\mu}f)(v)= \nu(F)(v)&:= \frac{1}{\sqrt{\mu(v)}}Q_{\text{loss}} (\sqrt{\mu}f,\sqrt{\mu}f)(v) \\
&=  \int_{\mathbb{R}^{3}} \int_{\mathbb{S}^{2}} B(v-u,\omega) \sqrt{\mu(u)}
f (u) \mathrm{d}\omega \mathrm{d}u,
\end{split}
\end{equation}
and the gain term of the nonlinear Boltzmann operator is given by
\begin{equation}\label{gain_Gamma}
\begin{split}
\Gamma_{\text{gain}}(f_{1},f_{2})(v) & := \frac{1}{\sqrt{\mu }} Q_{\text{gain}} (\sqrt{\mu}f_{1}, \sqrt{\mu}f_{2})(v)\\
&=
\int_{\mathbb{R}^{3}} \int_{\mathbb{S}^{2}} B(v-u,\omega) \sqrt{\mu(u)} f_{1}(u^{\prime}) f_{2}(v^{\prime}) \mathrm{d}\omega\mathrm{d}u.
\end{split}
\end{equation}
%
The corresponding boundary conditions for $f$ are followings:

\textit{(i) Diffuse boundary condition:}
\begin{equation}\label{diffuseBC}
f(t,x,v)=c_{\mu } \sqrt{\mu (v)}\int_{n(x)\cdot  u>0}f(t,x,u) \sqrt{\mu(u)}\{n(x)\cdot u\}\mathrm{d}u, \ \ \ \text{on} \ \gamma_{-}.
\end{equation}%

\textit{(ii) Specular reflection boundary condition:}
\begin{equation} \label{specularBC}
f(t,x,v)= f(t,x,R_{x}v), \ \ \ \text{on} \ \gamma_{-}. 
\end{equation}%

\textit{(iii) Bounce-back reflection boundary condition:}
\begin{equation} \label{bbBC}
f(t,x,v)= f(t,x,-v), \ \ \ \text{on} \ \gamma_{-}.
\end{equation}%


\vspace{12pt}
 
\noindent{\textbf{\large{ 1. Diffuse Reflection BC}}} 
 
 \vspace{8pt}
 
We denote $||\cdot ||_{p}$ the $L^{p}(\Omega \times \mathbb{R}^{3})$ norm,
while $|\cdot |_{\gamma ,p}$ is the $L^{p}(\partial \Omega \times \mathbb{R}%
^{3};\mathrm{d}\gamma )$ norm and $|\cdot |_{\gamma _{\pm },p}=|\cdot
\mathbf{1}_{\gamma _{\pm }}|_{\gamma ,p}$ where $\mathrm{d}\gamma
=|n(x)\cdot v|\mathrm{d}S_{x}\mathrm{d}v$ with the surface measure $\mathrm{d%
}S_{x}$ on $\partial\Omega$. Denote $\langle v\rangle =\sqrt{1+|v|^{2}}$. We define
\begin{equation}
\partial _{t}f(0)=\partial _{t}f_{0}\equiv -v\cdot \nabla
_{x}f_{0}  +\Gamma_{\text{gain}} (f_{0},f_{0}) -\nu(\sqrt{\mu}f_{0})f_{0}.  \label{f0}
\end{equation}
Throughout this paper we always assume
\[
F_{0} = \sqrt{\mu} f_{0} \geq 0.
\]

\begin{theorem}\label{Global_p}
Assume that $0 \leq \kappa \leq 1$ in (\ref{collision_Q}) and $f_{0}\in W^{1,p}(\Omega\times\mathbb{R}^3)$ and $ || \nabla_{x} f_{0}||_{p} + ||\nabla_{v}f_{0}||_{p}  + ||  e^{\theta|v|^{2}} f_0||_\infty  <+\infty $ for $0< \theta < \frac{1}{4}$ and any fixed $1< p<2,$
 and the compatibility condition on $(x,v) \in \gamma_-,$
\begin{equation}
f_0(x,v) = c_{\mu}\sqrt{\mu(v)} \int_{n(x)\cdot u>0} f_0(x,u) \sqrt{\mu(u)} \{n(x)\cdot u\} \mathrm{d} u,\label{compatibility_condition_1}
\end{equation}
then there exists $T=T(|| e^{\theta|v|^{2}} f_{0} ||_{\infty})>0$ such that $f \in L^\infty_{loc}([0, T] ;W^{1,p}(\Omega\times\mathbb{R}^3))$ solves the Boltzmann equation (\ref{boltzamnn_f}) with diffuse boundary condition (\ref{diffuseBC}), and for all $0\leq t\leq T$
\begin{equation}\label{global_p}
\begin{split}
&  ||\nabla _{x}f(t)||_{p}^p+||\nabla
_{v}f(t)||_{p}^p  + \int_0^t  \big[ \      |\nabla_x f(s)|_{\gamma,p}^p  + |\nabla_v f(s)|_{\gamma,p}^p \big] \mathrm{d} s 
\\
& \lesssim_t \ ||\nabla _{x}f_{0}||_{p}^p+||\nabla
_{v}f_{0}||_{p}^p + P(|| e^{\theta|v|^{2}} f_{0} ||_{\infty}),
\end{split}
\end{equation}
where $P$ is some polynomial.

Furthermore, if $F_{0}= \mu + \sqrt{\mu}g_{0}$ with $|| e^{\theta|v|^{2}}g_{0} ||_{\infty} \ll 1,$ then the theorem holds with $ \nabla_{x}g(t),$ and $\nabla_{v}g(t)$ for all $t\geq 0.$  
\end{theorem}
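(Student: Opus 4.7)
The plan is to close an $L^{p}$-energy estimate for the pair $(\nabla_{x}f,\nabla_{v}f)$ on $[0,T]$ and to pay all the damage produced by the grazing set to the reflection law by exploiting the integrability threshold $p<2$.

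\textbf{Step 1} (scalar $L^{\infty}$ layer). Before touching any derivative I would first reconstruct the local-in-time weighted $L^{\infty}$ bound
\begin{equation*}
\sup_{t\in [0,T]}\|e^{\theta|v|^{2}}f(t)\|_{\infty}\leq P(\|e^{\theta|v|^{2}}f_{0}\|_{\infty}),
\end{equation*}
along the lines of the diffuse-cycle Duhamel iteration of \cite{Guo10} built on Definition \ref{cycles}. This produces the time $T$ of the statement and allows me, throughout the rest of the proof, to treat $\Gamma_{\text{gain}}(f,f)$ and $\nu(\sqrt{\mu}f)$ as bounded coefficients (with a $\mu$-weight) against which $\nabla f$ can be tested.

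\textbf{Step 2} (coupled equation for the derivatives). Differentiating (\ref{boltzamnn_f}) and using $\nabla_{v}(v\cdot\nabla_{x}f)=\nabla_{x}f+v\cdot\nabla_{x}\nabla_{v}f$, the pair $(\nabla_{x}f,\nabla_{v}f)$ solves a coupled transport-collision system whose only structural novelty is the commutator source $-\nabla_{x}f$ appearing in the $\nabla_{v}f$-equation. Testing against $|\nabla_{x}f|^{p-2}\nabla_{x}f$ and $|\nabla_{v}f|^{p-2}\nabla_{v}f$, integrating over $\Omega\times\mathbb{R}^{3}$ and summing, the transport operator produces by integration by parts the sign-definite boundary contribution $\tfrac{1}{p}|\nabla f|_{\gamma_{+},p}^{p}-\tfrac{1}{p}|\nabla f|_{\gamma_{-},p}^{p}$, the commutator source is absorbed through the elementary Young inequality $|\nabla_{v}f|^{p-1}|\nabla_{x}f|\leq \varepsilon|\nabla_{v}f|^{p}+C_{\varepsilon}|\nabla_{x}f|^{p}$, and the nonlinear terms are controlled by Step~1.

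\textbf{Step 3} (boundary trace on $\gamma_{-}$ — the main obstacle). It remains to dominate $|\nabla f|_{\gamma_{-},p}^{p}$. Tangential-$x$ and velocity-$v$ derivatives of (\ref{diffuseBC}) depend on $f$ and on $\nabla^{\text{tan}}f|_{\gamma_{+}}$ only through a velocity-averaging integral over $\{n(x)\cdot u>0\}$ and are therefore harmless. The normal derivative $\partial_{n}f$ is \emph{not} prescribed by the boundary condition and must be recovered from the equation itself via
\begin{equation*}
(n(x)\cdot v)\partial_{n}f=-\partial_{t}f-v_{\parallel}\cdot\nabla_{\parallel}f+\Gamma_{\text{gain}}(f,f)-\nu(\sqrt{\mu}f)f,
\end{equation*}
which exhibits the characteristic $|n(x)\cdot v|^{-1}\sim \alpha^{-1/2}$ singularity on $\gamma_{0}$. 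Taking $p$-th powers and integrating against $d\gamma=|n\cdot v|dS_{x}dv$ gives
\begin{equation*}
|\partial_{n}f|_{\gamma_{-},p}^{p}\lesssim \int_{\gamma_{-}}\frac{|\text{RHS}|^{p}}{|n(x)\cdot v|^{p-1}}\,dS_{x}dv,
\end{equation*}
and $|n\cdot v|^{-(p-1)}$ is integrable near $\gamma_{0}$ \emph{precisely} for $1<p<2$ — this is the range in which the theorem can hold without a weight. The velocity lemma (Lemma~\ref{velocity_lemma}) together with the weighted kinetic distance $e^{-\varpi\langle v\rangle t}\alpha(x,v)$ of Definition \ref{K_D} and (\ref{dist}) is used to transport this boundary integrability along generalized characteristics without geometric loss. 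The only quantity left unaccounted for in the RHS is $\partial_{t}f$, which I will control by differentiating (\ref{boltzamnn_f}) in $t$ and propagating its $L^{p}$ norm in parallel, initialized by (\ref{f0}).

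\textbf{Step 4} (closure and global continuation). Gathering Steps 2--3 yields, for $\Phi(t):=\|\nabla_{x}f(t)\|_{p}^{p}+\|\nabla_{v}f(t)\|_{p}^{p}$, a Gr\"onwall inequality of the form
\begin{equation*}
\Phi(t)+\int_{0}^{t}\bigl(|\nabla_{x}f|_{\gamma,p}^{p}+|\nabla_{v}f|_{\gamma,p}^{p}\bigr)ds\leq \Phi(0)+\int_{0}^{t}A(s)\bigl(1+\Phi(s)\bigr)ds,
\end{equation*}
with $A(s)$ depending polynomially on the $L^{\infty}$ norm from Step~1, giving (\ref{global_p}) up to $T$. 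For the last statement, if $F_{0}=\mu+\sqrt{\mu}g_{0}$ with $\|e^{\theta|v|^{2}}g_{0}\|_{\infty}\ll 1$, one replaces the local $L^{\infty}$ bound with the standard global-in-time exponential $L^{\infty}$ decay near the Maxwellian equilibrium, which keeps $A(s)$ integrable on $[0,\infty)$ and allows $T\to +\infty$.
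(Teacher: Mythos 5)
Your Steps 1 and 2 and the integrability observation in Step 3 (that $|n\cdot v|^{-(p-1)}$ is locally $d\gamma$-integrable precisely for $p<2$) are correct and match the paper. The gap is in how you close the boundary term, and it is a real one.

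After writing $|\partial_{n}f|_{\gamma_{-},p}^{p}\lesssim\int_{\gamma_{-}}\tfrac{|\text{RHS}|^{p}}{|n\cdot v|^{p-1}}\,dS_{x}dv$, the RHS still contains the velocity-averaged quantities $\int_{n\cdot u>0}|\partial f|\,\mu^{1/4}\{n\cdot u\}\,du$ coming from (\ref{boundary_t})–(\ref{boundary_tau}). These averages reintroduce $\partial f|_{\gamma_{+}}$, and you assert this is ``harmless,'' but the paper explicitly warns that the diffuse-boundary $L^{2}$-contraction $\int_{\gamma_{-}}h^{2}\,d\gamma\leq\int_{\gamma_{+}}h^{2}\,d\gamma$ has no analogue for $p\neq 2$, so $|\partial f|_{\gamma_{-},p}^{p}$ cannot simply be absorbed by the outgoing trace $|\partial f|_{\gamma_{+},p}^{p}$ from Green's identity. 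You offer no mechanism to break this loop. The paper's actual device — its ``second new ingredient'' — is to split $\gamma_{+}$ into the near-grazing set $\gamma_{+}^{\varepsilon}$ of (\ref{def:gamma_epsilon}), whose $v$-measure is $O(\varepsilon^{a})$ with $a>0$ for $p>1$ (so that after H\"older the contribution is a small multiple of $\int_{0}^{t}|\partial f|_{\gamma_{+},p}^{p}$, absorbed by the left side), and its complement $\gamma_{+}\setminus\gamma_{+}^{\varepsilon}$, which is then controlled by the trace estimate of Lemma \ref{le:ukai} through $\|\partial f_{0}\|_{p}$, $\int_{0}^{t}\|\partial f\|_{p}^{p}$, and $\int_{0}^{t}\|[\partial_{t}+v\cdot\nabla_{x}+\nu]\partial f\|_{1}$. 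Your proof does not mention either ingredient, so the Gr\"onwall inequality in Step 4 is not actually established.

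A secondary misstep: you invoke the velocity lemma (Lemma \ref{velocity_lemma}) and the weighted kinetic distance $e^{-\varpi\langle v\rangle t}\alpha(x,v)$ of Definition \ref{K_D} to ``transport boundary integrability along characteristics.'' Neither object appears in the paper's proof of Theorem \ref{Global_p}; they are the machinery of Theorem \ref{weigh_W1p} (the weighted $W^{1,p}$ estimate for $p\geq 2$). For $1<p<2$ the argument is purely an iteration on the scheme (\ref{positive_iteration})--(\ref{diffuse_BC_m}) combined with Proposition \ref{theo:trace}, the $\gamma_{+}^{\varepsilon}$ split, Lemma \ref{le:ukai}, and the discrete Gr\"onwall trick (\ref{aAD}). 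You should replace the velocity-lemma claim by this split-plus-trace argument; as written, the essential part of the boundary control is missing.
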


There can be no size restriction on initial data $F_{0} = \sqrt{\mu}f_{0}$. On the other hand, we also remark that from \cite{Guo10,EGKM}, the assumption $|| e^{\theta|v|^{2}} g_0||_{\infty}\ll 1$ for $F_{0}= \mu + \sqrt{\mu}g_{0}$ without a mass constraint $\iint_{\Omega\times\mathbb{R}^3} g_0 \sqrt{\mu} \mathrm{d} v \mathrm{d} x =0$ ensures a uniform-in-time bound as $\sup_{0\leq t\leq \infty}|| e^{\theta|v|^{2}} g(t)||_{\infty}\lesssim ||   e^{\theta|v|^{2}} g_0||_{\infty}$ (not a decay). In this case, the estimate (\ref{global_p}) is a global-in-$x$ estimate which includes the grazing set $\gamma _{0}$ and the constant grows exponentially with time. 

Moreover, we show that the estimate of (\ref{global_p}) in Theorem \ref{Global_p} for $p<2$
is indeed optimal even for the free transport equation $\partial
_{t}f+v\cdot \nabla _{x}f=0$ with the diffuse boundary condition (Lemma \ref{optimal}). In fact,
the boundary integral blows up at $p=2.$ 

We now illustrate main ideas of the proof of Theorem \ref{Global_p}. Clearly, both $t$ and $v$
derivatives behave nicely for the diffuse boundary condition as for $%
(x,v)\in \gamma _{-},$
\begin{eqnarray}
\partial _{t}f(t,x,v) &=&c_{\mu }\sqrt{\mu (v)}\int_{n(x)\cdot u>0}\partial _{t}f(t,x,u)\sqrt{\mu (u)}\{n({x})\cdot
u\}\mathrm{d}u,  \label{boundary_t} \\
\nabla _{v}f(t,x,v) &=&c_{\mu }\nabla _{v}\sqrt{\mu (v)}\int_{n(x)\cdot
u>0}f(t,x,u)\sqrt{\mu (u)}\{n({x})\cdot
u\}\mathrm{d}u.  \label{boundary_v}
\end{eqnarray}%
Let $\tau _{1}(x)$ and $\tau _{2}(x)$ be unit tangential vectors to $\partial\Omega$ satisfying $\tau
_{1}(x)\cdot n(x)=0=\tau _{2}(x)\cdot n(x)$ and $\tau _{1}(x)\times \tau
_{2}(x)=n(x)$. Define the orthonormal transformation from $\{n,\tau
_{1},\tau _{2}\}$ to the standard bases $\{\mathbf{e}_{1},\mathbf{e}_{2},%
\mathbf{e}_{3}\}$, i.e. $\mathcal{T}(x)n(x)=\mathbf{e}_{1},\ \mathcal{T}%
(x)\tau _{1}(x)=\mathbf{e}_{2},\ \mathcal{T}(x)\tau _{2}(x)=\mathbf{e}_{3},$
and $\mathcal{T}^{-1}=\mathcal{T}^{t}.$ Upon a change of variable: $%
u^{  \prime }=\mathcal{T}(x)u,$ we have%
\begin{equation*}
n(x)\cdot u=n(x)\cdot \mathcal{T}^{t}(x)u^{\prime }=n(x)^{t}%
\mathcal{T}^{t}(x)u^{ \prime }=[\mathcal{T}(x)n(x)]^{t}u^{
\prime }=\mathbf{e}_{1}\cdot u^{  \prime }=u_{1}^{  \prime },
\end{equation*}%
then
\begin{equation*}
c_{\mu }\sqrt{\mu (v)}\int_{n(x)\cdot u>0}f(t,x,u)\sqrt{%
\mu (u)}\{n(x)\cdot u\}\mathrm{d}u=c_{\mu }%
\sqrt{\mu (v)}\int_{u_{1}^{  \prime }>0}f(t,x,\mathcal{T}%
^{t}(x)u^{  \prime })\sqrt{\mu (u^{  \prime })}\{u_{1}^{
\prime }\}\mathrm{d}u^{  \prime },
\end{equation*}%
so that we can further take tangential derivatives $\partial _{\tau _{i}}$
as, for $(x,v)\in \gamma _{-},$
\begin{equation}
\begin{split}
&\partial _{\tau _{i}}f(t,x,v)\\
& =c_{\mu }\sqrt{\mu (v)}\int_{u_{1}^{
\prime }>0}\Big\{\partial _{\tau _{i}}f(t,x,\mathcal{T}^{t}(x)u^{
\prime })+\nabla _{v}f(t,x,\mathcal{T}^{t}(x)u^{  \prime })\frac{%
\partial \mathcal{T}^{t}(x)}{\partial \tau _{i}}u^{  \prime }\Big\}%
\sqrt{\mu (u^{  \prime })}\{u_{1}^{  \prime }\}\mathrm{d}u^{
\prime } \\
& =c_{\mu }\sqrt{\mu (v)}\int_{n(x)\cdot u >0}\partial _{\tau
_{i}}f(t,x,u)\sqrt{\mu (u)}\{n(x)\cdot u\}%
\mathrm{d}u\\
& \ \ +c_{\mu }\sqrt{\mu (v)}\int_{n(x)\cdot u>0}\nabla
_{v}f(t,x,u)\frac{\partial \mathcal{T}^{t}(x)}{\partial \tau _{i}}%
\mathcal{T}(x)u\sqrt{\mu (u)}\{n(x)\cdot u\}%
\mathrm{d}u.
\end{split}
\label{boundary_tau}
\end{equation}%

The difficulty is always the control of the normal spatial derivative of $%
\partial _{n}.$ From the general method of proving regularity in PDE with boundary conditions, it is
natural to use the Boltzmann equation to solve the normal derivative $%
\partial _{n}f$ inside the region$,$ in terms of $\partial _{t}f,$ $\nabla
_{v}f,$ and $\partial _{\tau }f$ $\ $as:
\begin{equation}
\partial _{n}f(t,x,v)=-\frac{1}{n(x)\cdot v}\bigg\{ \partial
_{t}f+\sum_{i=1}^{2}(v\cdot \tau _{i})\partial _{\tau _{i}}f   -\Gamma_{\text{gain}}
(f,f)  +  \nu(\sqrt{\mu}f)f\bigg\} ,  \label{fn}
\end{equation}%
at least near $\partial \Omega .$ Unfortunately, this standard approach
encounters a severe difficulty: $\frac{1}{n(x)\cdot v}$ $\notin L_{loc}^{1}$
in the velocity space (a $L^{\infty }$ bound is desirable for any $W^{1,p}$
estimate).

The first new ingredient of our approach is to use (\ref{fn}) \textit{not}
inside the domain, but at the boundary $\partial \Omega .$ Using special
feature of the diffuse boundary condition and (\ref{boundary_t}), (\ref%
{boundary_v}) and (\ref{boundary_tau}), we can express $\partial _{n}f$ at $%
(x,v)\in \gamma _{-}$ as
\begin{equation}
\begin{split}
&\partial _{n}f(t,x,v)\\
&= -\frac{1}{n(x)\cdot v}\bigg\{\ \sqrt{\mu (v)}%
\int_{n(x)\cdot u>0}\partial _{t}f(t,x,u)\sqrt{\mu
(u)}\{n(x)\cdot u\}\mathrm{d}u \\
& \ \ \ \ \ \ \ \ \ \ \ \ \ \ \ +\sum_{i=1}^{2}(v\cdot \tau _{i})\sqrt{\mu
(v)}\int_{n(x)\cdot u>0}\partial _{\tau _{i}}f(t,x,u)%
\sqrt{\mu (u)}\{n(x)\cdot u\}\mathrm{d}u \\
& \ \ \ \ \ \ \ \ \ \ \ \ \ \ \ +\sum_{i=1}^{2}(v\cdot \tau _{i})\sqrt{\mu
(v)}\int_{n(x)\cdot u>0}\nabla _{v}f(t,x,u)\frac{%
\partial \mathcal{T}^{t}(x)}{\partial \tau _{i}}\mathcal{T}(x)u
\sqrt{\mu (u)}\{n(x)\cdot u\}\mathrm{d}u \\
& \ \ \ \ \ \ \ \ \ \ \ \ \ \ \ -\Gamma_{\mathrm{gain}}  (f,f)  +\nu(\sqrt{\mu}f)f\ \ \ \bigg\},
\end{split}
\label{boundary_n}
\end{equation}

Due to the additional $u$ integral in (\ref{boundary_n}) and the
crucial factor $|n(x)\cdot u|$ in the measure $\mathrm{d}\gamma $ on the boundary $\gamma$, it is
clear that the singularity of $|\partial _{n}f|^{p}|n\cdot v|$ in (\ref%
{boundary_n}) is roughly of the order
\begin{equation*}
\frac{1}{\{n\cdot v\}^{p-1}},
\end{equation*}%
so that its $v-$integration is precisely finite if $1\leq $ $p<2$, and indeed its $v$ integration is \textit{uniformly} bounded
with respect to $x$.

However, in order to control $\partial _{t}f,\nabla _{v}f$ and $\partial
_{\tau }f$ $\ $for $p<2$, a new difficulty arises. It is well-known from \cite{Guo10,EGKM} that a crucial boundary estimate for
diffuse boundary takes the form of a $L^{2}-$contraction:
\begin{equation*}
\int_{\gamma _{-}}h^{2}\mathrm{d} \gamma \leq \int_{\gamma _{+}}h^{2}\mathrm{d}\gamma.
\end{equation*}%
Unfortunately, this is not expected to be valid for $p\neq 2,$ so it is
impossible to absorb the incoming part $\gamma _{\_}$ solely by the
outgoing part $\gamma _{+}$ part.

Our second new ingredient is to split the $\gamma _{+}$ integral into near
grazing set $\gamma _{+}^{\varepsilon }$ and the rest for $p\neq 2$ for our
boundary representation for derivatives (\ref{boundary_t}), (\ref{boundary_v}%
), (\ref{boundary_tau}), and (\ref{boundary_n}). For small $\varepsilon >0$ we define $\gamma _{+}^{\varepsilon }$, the set of almost grazing velocities or large
velocities
\begin{equation}
\gamma _{+}^{\varepsilon }=\{(x,v)\in \gamma _{+}:v\cdot n(x)<\varepsilon
\text{ or }|v|>1/\varepsilon \}.  \label{def:gamma_epsilon}
\end{equation}%
Denote $\partial= [\partial_t, \nabla_x, \nabla_v]$. We can roughly obtain
\begin{equation*}
\begin{split}
\int_{\gamma_-} |\partial f|^p &\lesssim \int_{\partial\Omega} \left( \int_{n\cdot v >0} |\partial f | \mu^{1/4} \{n\cdot v\} \mathrm{d} v\right)^p + \text{good terms},\\
&\lesssim \int_{\partial\Omega} \left( \int_{\{v:(x,v)\in\gamma_+^\varepsilon\}} |\partial f | \mu^{1/4} \{n\cdot v\}  \right)^p+
 \int_{\partial\Omega} \left( \int_{\{v:(x,v)\in\gamma_+\backslash\gamma_+^\varepsilon\}} |\partial f | \mu^{1/4} \{n\cdot v\}  \right)^p
 + \text{good terms},\\
 &\lesssim \sup_x \left(\int_{\{v: (x,v)\in\gamma_+^\varepsilon\}}\mu^{q/4}\{n\cdot v\}\mathrm{d} v\right)^{p/q} \int_{\gamma_+^\varepsilon} |\partial f|^p\mathrm{d} \gamma + \int_{\gamma_+ \backslash \gamma_+^\varepsilon} |\partial f|^p \mathrm{d} \gamma + \text{good terms}.
 \end{split}
\end{equation*}

It is important to realize that $\sup_x \left(\int_{\{v: (x,v)\in\gamma_+^\varepsilon\}}\mu^{q/4}\{n\cdot v\}\mathrm{d} v\right)^{p/q}$ has a small measure of order $\varepsilon ,$ for $p>1$, so that it can be
absorbed by the outgoing part $\int_{\gamma _{+}}.$ Fortunately, the
outgoing boundary integral $\int_{\gamma _{+}\setminus \gamma
_{+}^{\varepsilon }}$ can be further bounded by the integration in the bulk
and initial data by Lemma \ref{le:ukai} with a crucial time integration. On
the other hand, such a process produces a large constant in the Gronwall
estimates and leads to a growth in time. Of course, such approach breaks
down at $p=1$.

\begin{theorem}\label{weigh_W1p}
Assume the compatibility condition (\ref{compatibility_condition_1}) and $0< \kappa\leq 1$ and recall (\ref{f0}).

For any fixed $2\leq p< \infty$ and $\frac{p-2}{2p} < \beta < \frac{p-1}{2p},$ if
$|| \alpha ^{ \beta}\nabla_{ x,v}f_{0}||_p  + ||  e^{\theta|v|^2}f_0||_\infty < \infty $ for some $0<\theta< \frac{1}{4},$ then there exists $T=T(||  e^{\theta|v|^{2}}  f_{0}||_{\infty})>0$ such that $ e^{-\varpi \langle v\rangle t}\alpha^\beta \nabla_x f,   e^{-\varpi \langle v\rangle t}\alpha^\beta \nabla_v f \in L^\infty_{loc}([0,T];L^p(\Omega\times\mathbb{R}^3))$  and for all $0 \leq t\leq T,$
\begin{equation*}
\begin{split}
&  ||  e^{-\varpi \langle v\rangle t}\alpha ^\beta \nabla_{ x,v}f(t)||_{p}^p+  \int_0^t   |   e^{-\varpi \langle v\rangle t}\alpha ^\beta \nabla_{ x,v} f(s)|_{\gamma,p}^p   \mathrm{d} s\\
& \lesssim_t \ || \alpha^\beta
\nabla _{ x,v}f_{0}||_p^p +P( ||  e^{\theta|v|^2}f_0||_\infty ),
\end{split}
\end{equation*}
where $P$ is some polynomial.

If $||\alpha^{1/2} \nabla _{x,v}f_{0}||_{\infty}  +||  e^{\theta|v|^2}f_0||_\infty <+\infty$ for some $0<\theta< \frac{1}{4},$ then $ e^{-\varpi \langle v\rangle t} \alpha ^{1/2} \nabla_{ x,v} f  \in L^\infty ([0,T];L^\infty(\Omega\times\mathbb{R}^3))$ such that for all $0\leq t\leq T,$
\begin{equation*}
 || e^{-\varpi \langle v\rangle t} \alpha ^{1/2}\nabla_{ x,v} f(t)||_{\infty }   \ \lesssim
_{ t}\ || \alpha^{1/2}\nabla_{ x,v}  f_{0}||_{\infty }    + P(||  e^{\theta
|v|^{2}}f_{0}||_{\infty }).
\end{equation*}
If $\alpha^{1/2}\nabla f_0 \in C^0(\bar{\Omega}\times\mathbb{R}^3)$ and
\begin{equation}
v\cdot \nabla_x f_0   -\Gamma(f_0,f_0) = c_\mu \sqrt{\mu} \int_{n\cdot u >0} \big\{
u\cdot\nabla_x f_0   -\Gamma(f_0,f_0)
\big\} \sqrt{\mu} \{n\cdot u\} \mathrm{d} u,\label{compatibility_condition_2}
\end{equation}
is valid for $\gamma_- \cup \gamma_0,$ then $f\in C^{1}$ away from the grazing set $\gamma_{0}$. 

Furthermore, if $F_{0} = \mu + \sqrt{\mu} g_{0}$ with $|| e^{\theta |v|^{2}} g_{0} ||_{\infty} \ll 1,$ then the theorem holds with $\nabla_{x}g(t)$ and $\nabla_{v}g(t)$ for all $t\geq 0.$ 
\end{theorem}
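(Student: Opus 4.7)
The plan is to take each derivative $\partial\in\{\partial_t,\nabla_x,\nabla_v\}$ of the equation (\ref{boltzamnn_f}), multiply the result by $\alpha^{\beta}e^{-\varpi\langle v\rangle t}$, and run an $L^p$ energy estimate. From (\ref{alpha_inv}) and the choice of $\varpi$ in (\ref{llarge}), the weight $\alpha^\beta e^{-\varpi\langle v\rangle t}$ is an approximate supersolution of the transport operator $\partial_t+v\cdot\nabla_x$, so commuting $\partial$ with $v\cdot\nabla_x$ and with the weight produces a dissipative $-\beta p\varpi\langle v\rangle$ term plus controllable lower-order pieces. Combined with standard $L^p$ bounds on $\Gamma_{\mathrm{gain}}(\partial f,f)+\Gamma_{\mathrm{gain}}(f,\partial f)$ and on $\nu(\sqrt\mu f)\partial f$ (both tame against $\|e^{\theta|v|^2}f\|_\infty$), this yields an inequality of the schematic form
\begin{equation*}
\frac{1}{p}\frac{d}{dt}\|\alpha^\beta e^{-\varpi\langle v\rangle t}\partial f\|_p^p + \int_{\gamma_+}\! |\alpha^\beta e^{-\varpi\langle v\rangle t}\partial f|^p\,\mathrm{d}\gamma \ \le\ \int_{\gamma_-}\! |\alpha^\beta e^{-\varpi\langle v\rangle t}\partial f|^p\,\mathrm{d}\gamma \ +\ P(\|e^{\theta|v|^2}f\|_\infty)\,\|\alpha^\beta e^{-\varpi\langle v\rangle t}\partial f\|_p^p .
\end{equation*}

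The core estimate is on the incoming boundary integral. For $\partial_tf$, $\partial_{\tau_i}f$, $\nabla_vf$ we substitute (\ref{boundary_t}), (\ref{boundary_v}), (\ref{boundary_tau}); since $\alpha|_{\partial\Omega}=|v\cdot\nabla\xi|^2\simeq|n\cdot v|^2$, the prefactor $\alpha^{p\beta}|n\cdot v|$ gives $|n\cdot v|^{2p\beta+1}$, which is integrable near $\gamma_0$, and the $u$-integrals over $\{n\cdot u>0\}$ are handled by H\"older. For the normal derivative we substitute (\ref{boundary_n}); the factor $1/(n(x)\cdot v)$ combines with $\alpha^\beta\simeq|n\cdot v|^{2\beta}|\nabla\xi|^{2\beta}$ and yields $|n\cdot v|^{2p\beta-p+1}$ under $\mathrm{d}\gamma$, hence the necessity of the lower bound $\beta>\frac{p-2}{2p}$. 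To absorb the resulting incoming contribution into the outgoing part, we split $\gamma_+$ into the near-grazing piece $\gamma_+^\varepsilon$ of (\ref{def:gamma_epsilon}) (whose H\"older complementary measure is of order $\varepsilon$ and can be absorbed into $\int_{\gamma_+}$) and its complement, controlled in the bulk by Lemma~\ref{le:ukai}; the upper bound $\beta<\frac{p-1}{2p}$ arises from balancing the weight against the H\"older conjugate in the diffuse $u$-integrals so that the outgoing measure estimate remains small. Gronwall on the resulting inequality closes the weighted $W^{1,p}$ bound on some $[0,T]$.

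For the weighted $L^\infty$ statement with $\alpha^{1/2}\nabla_{x,v}f$, the plan is to pass to the characteristic representation of $\partial f$: transport of the initial datum along the generalized trajectory of Definition~\ref{cycles} plus the Duhamel integral of $\partial[\Gamma_{\mathrm{gain}}-\nu f\,f]$. The velocity lemma (Lemma~\ref{velocity_lemma}) controls $\alpha^{1/2}$ along the trajectory up to a multiplicative $e^{\mathcal{C}|v|t}$ that is exactly tamed by the prefactor $e^{-\varpi\langle v\rangle t}$. On $\gamma_-$ the diffuse identities (\ref{boundary_t})--(\ref{boundary_n}) represent $\partial f$ as a $u$-average against $\sqrt{\mu(u)}\{n\cdot u\}\mathrm{d}u$ of $\partial f$ itself plus lower-order terms; iterating along the stochastic cycles in the spirit of \cite{Guo10} and feeding in the weighted $W^{1,p}$ bound for the lower-order pieces yields a closed $L^\infty$-inequality for $\|e^{-\varpi\langle v\rangle t}\alpha^{1/2}\partial f(t)\|_\infty$. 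Finally, $C^1$-regularity away from $\gamma_0$ follows because $\alpha^{1/2}\nabla f\in L^\infty$ plus continuity of $X_{\mathbf{cl}},V_{\mathbf{cl}}$ outside $\gamma_0$ gives continuity of $\nabla f$ in the open set $\{\alpha>0\}$, while the compatibility condition (\ref{compatibility_condition_2}) ensures the one-sided characteristic limits of $\partial f$ actually agree across each reflection. The uniform-in-time assertion for $F_0=\mu+\sqrt\mu g_0$ with $\|e^{\theta|v|^2}g_0\|_\infty\ll1$ follows by replacing short-time Gronwall by the global-in-time bound $\sup_{t}\|e^{\theta|v|^2}g(t)\|_\infty\lesssim\|e^{\theta|v|^2}g_0\|_\infty$ from \cite{Guo10,EGKM}.

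The main obstacle is the quantitative interplay between the weight $\alpha^\beta$ and the inverse-normal singularity $1/(n\cdot v)$ in (\ref{boundary_n}): integrability on $\mathrm{d}\gamma$ forces $\beta>\frac{p-2}{2p}$, while H\"older absorption of the diffuse $u$-integrals into the outgoing boundary term forces $\beta<\frac{p-1}{2p}$, so the admissible range shrinks to a single point $\beta=\tfrac12$ at $p=\infty$, which in turn determines the $C^1$-statement. A secondary technical nuisance is propagating the Gaussian tail $e^{\theta|v|^2}$ through $\partial\Gamma_{\mathrm{gain}}$ together with the new $\langle v\rangle$-growth produced by differentiating $e^{-\varpi\langle v\rangle t}$; this is handled by standard velocity-weighted $L^\infty_v$ estimates on the collision kernel and the smallness of $\theta<1/4$.
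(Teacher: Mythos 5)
Your $W^{1,p}$ strategy for $2\le p<\infty$ is essentially the paper's: differentiate, insert the weight $\alpha^\beta e^{-\varpi\langle v\rangle t}$, use the Green identity, express normal derivatives on $\gamma_-$ via (\ref{boundary_n}), split $\gamma_+$ into $\gamma_+^\varepsilon$ and its complement, and invoke Lemma~\ref{le:ukai}. You also identify the correct provenance of the two-sided restriction on $\beta$: the lower bound $\beta>\tfrac{p-2}{2p}$ comes from integrability of $|n\cdot v|^{2p\beta-p+1}$ against $\mathrm{d}\gamma$, and the upper bound $\beta<\tfrac{p-1}{2p}$ comes from the H\"older exponent needed in the $u$-average; the paper formalizes the latter through the velocity-weighted kernel estimate (\ref{Kd}), which you only gesture at, but the mechanism is the same.

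There is, however, a genuine gap in your proposed $L^\infty$ argument, and it concerns the single hardest step of the whole theorem. Along the characteristics, the Duhamel term produces
\begin{equation*}
\int_{t^1}^{t} e^{-\varpi\langle v\rangle t}\,\alpha(x,v)^{1/2}\,
\Gamma_{\mathrm{gain}}\!\Big(\partial f,\,f\Big)\big(s,X_{\mathbf{cl}}(s),v\big)\,\mathrm{d}s
\;\simeq\;
\alpha(x,v)^{1/2}\int_{t^1}^{t}\!\!\int_{\mathbb{R}^3}\!
\frac{e^{-C|v-u|^2}}{|v-u|^{2-\kappa}}\;|\partial f(s,X_{\mathbf{cl}}(s),u)|\;\mathrm{d}u\,\mathrm{d}s,
\end{equation*}
and if you replace $|\partial f|$ by $\alpha(X_{\mathbf{cl}}(s),u)^{-1/2}\|\alpha^{1/2}\partial f\|_\infty$, the $u$-integral is divergent: $\alpha(\cdot,u)^{-1/2}\sim |u\cdot n|^{-1}\notin L^1_{\mathrm{loc}}$ near the grazing direction. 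This is not a ``lower-order piece'' that can be fed the $W^{1,p}$ bound and absorbed --- it is at exactly the same order $\alpha^{-1/2}$ as the quantity you are trying to control, and plugging a bulk $L^p$ bound into this pointwise velocity integral does not help: H\"older against $\|\alpha^\beta\partial f\|_{L^p}$ leaves you with the same non-integrable dual factor $\alpha^{-\beta q}$ with $\beta q\ge 1$. The paper's essential additional ingredient, which your plan omits entirely, is the \emph{dynamical non-local to local estimate} (Lemma~\ref{lemma_nonlocal}): the observation that although $\alpha(\cdot,u)^{-\beta}$ is not $L^1_u$ for $\beta\ge 1/2$, the \emph{time-integrated} quantity $\int_{t^1}^{t}\int_u \frac{e^{-C|v-u|^2}}{|v-u|^{2-\kappa}}\alpha(X_{\mathbf{cl}}(s),u)^{-\beta}\,\mathrm{d}u\,\mathrm{d}s$ is finite and gains $\alpha(x,v)^{1/2-\beta}$, precisely cancelling the weight. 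Because even Lemma~\ref{lemma_nonlocal} degenerates at the critical exponent $\beta=1/2$, the paper uses the auxiliary splitting $\alpha^{-1/2}\lesssim \alpha^{-\beta}+1$ for some $\beta>1/2$ (see (\ref{avoid_onehalf}) and (\ref{nonlocal_diffuse})) before applying it. Without this lemma and this splitting, the iteration along the stochastic cycles does not close, and neither Lemma~\ref{tk} nor the Velocity lemma rescues it; those only control the number-of-bounces factor and the trajectory growth, not the nonlocal velocity singularity.

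There is also a minor but non-trivial omission in the $L^\infty$ boundary iteration: you need a quantitative measure estimate along the cycles (Lemma~\ref{tk}) to guarantee geometric decay in the number of reflections $\ell$, so that the infinite cycle expansion contributes only a summable $(1/2)^{-\ell/5}$-type tail; ``iterating in the spirit of \cite{Guo10}'' is the right direction, but this measure estimate is not a routine step and should be stated as part of the plan.
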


There can be no size restriction on initial data $F_{0} = \sqrt{\mu} f_{0}$. On the other hand, we also remark that from \cite{Guo10,EGKM}, the assumption $|| e^{\theta|v|^{2}} g_0||_{\infty}\ll 1$ for $F_{0}= \mu + \sqrt{\mu}g_{0}$ without a mass constraint $\iint_{\Omega\times\mathbb{R}^3} g_0 \sqrt{\mu} \mathrm{d} v \mathrm{d} x =0$ ensures a uniform-in-time bound as $\sup_{0\leq t\leq \infty}|| e^{\theta|v|^{2}} g(t)||_{\infty}\lesssim ||   e^{\theta|v|^{2}} g_0||_{\infty}$ (not a decay). 

 We remark for $%
\varpi\neq 0$, $\partial f(t) \sim e^{ \varpi\langle v\rangle t} $ so
that in terms of solution $f(t),$ such an estimate not only creates an
exponential growth in time, but also creates less integrability in velocity. Furthermore, when $\varpi\neq 0,$ we crucially need a strong weight
function $e^{\theta |v|^{2}}$ to balance such a factor $e^{-\varpi\langle v\rangle
t}$, which produces a super exponential growth $e^{t^{2}}$ in time. We suspect that it is
impossible to obtain a uniform in time estimate especially when $\varpi\neq 0.$
The distance function $\alpha$ plays an important role in the
study of regularity in convex domains for Vlasov equations (\cite{Guo95,HV}%
), which can be controlled along the characteristics via the geometric Velocity
lemma (Lemma \ref{velocity_lemma}). However, such an approach has not been successful in the study of
Boltzmann equation due to the non-local nature of the Boltzmann collision
operator, which mixes up different velocities so that their distance towards
$\gamma _{0}$ can not be controlled. In addition to the key boundary
representation, we establish a delicate estimate for interaction of $%
 e^{-\varpi \langle v\rangle t} \alpha(x,v)$ and the collision kernel $ e^{-\varpi \langle v\rangle t} \alpha(x,v) ^{\beta  }\Gamma_{\mathrm{gain}}(\frac{\partial f}{  e^{-\varpi \langle v\rangle t} \alpha ^{\beta  }},f)$ in (\ref{Kd}) for $\beta <\frac{p-1}{2p}$. An additional
requirement $\beta >\frac{p-2}{2p}$ is needed to control the boundary
singularity in (\ref{d_lambda_boundary}). These estimates are sufficient to treat
the case for $\beta <1/2,$ but unfortunately these fail for the case $\beta =1/2,$
which accounts for the important $C^{1}$ estimate. In order to establish the
$C^{1}$ estimate, we employ the Lagrangian view point, estimating along the stochastic cycles \cite{Guo10,EGKM} in Definition \ref{cycles}.

Our fourth new ingredient is the dynamical non-local to local estimates (Lemma \ref{lemma_nonlocal}). Even
though $e^{-\varpi \langle v\rangle t} \sqrt{\alpha } \Gamma_{\mathrm{gain}}(\frac{\partial f}{ e^{-\varpi \langle v\rangle t} \sqrt{\alpha }}, f)$ is impossible to estimate
directly due to severe singularity of $\frac{1}{ e^{-\varpi \langle v\rangle t} \sqrt{\alpha(x,v)} }$ in the velocity
space, along the characteristics,  $\frac{1}{ e^{-\varpi \langle v\rangle (t-s)} \sqrt{\alpha(x-(t-s)v,v)} }$ is
integrable in time for a convex domain. Therefore the integral
\begin{equation*}
\int_{t-t_{\mathbf{b}}(x,v)}^{t}  e^{-\varpi \langle v\rangle( t-s)} \sqrt{\alpha(x,v)} 
\Gamma_{\mathrm{gain}}(\frac{\partial f}{ e^{-\varpi \langle v\rangle( t-s)} \sqrt{\alpha(x-(t-s)v,v) }},f) \mathrm{d}s
\end{equation*}%
can be controlled by first integrating over time, and we can close the
desired estimate.

\vspace{15pt}
 
\noindent{\textbf{\large{ 2. Dynamical non-local to local estimates}}}

 \vspace{0pt }

\begin{lemma}
\label{lemma_nonlocal} Let $(t,x,v)\in \lbrack 0,\infty )\times \bar{\Omega}%
\times \mathbb{R}^{3}$ and $\frac{1}{2}<\beta <\frac{3}{2}$ and $0<\kappa \leq 1$ and $r\in
\mathbb{R}$ and $Z(s,x,v) \geq 0$.

(1) Let $X_{\mathbf{cl}}(s;t,x,v)= x-(t-s)v$ on $s\in [t-t_{\mathbf{b}}(x,v),t].$ 

For any $\varepsilon >0$, there exist $l \gg_{\xi}1$ such that
\begin{equation}
\begin{split}
& \int^{t}_{t-t_{\mathbf{b}}(x,v)}\int_{\mathbb{R}^{3}}e^{-l\langle v\rangle
(t-s)}\frac{e^{-\theta |v-u|^{2}}}{|v-u|^{2-\kappa }[\alpha ( X_{\mathbf{cl}}(s;t,x,v), {u})]^{\beta }}\frac{\langle u\rangle ^{r}}{\langle v\rangle
^{r}}Z(s,x,v)\mathrm{d}u\mathrm{d}s \\
& \lesssim  \ \ \min \left\{ \frac{\varepsilon ^{\frac{3}{2}%
-\beta }}{|v|^{2}\{\alpha (x,v)\}^{\beta -1}},\frac{\{\alpha (x,v)\}^{\frac{1
}{4}-\frac{\beta }{2}}|t_{Z}|^{\frac{3}{2}-\beta }}{|v|^{2\beta -1}}\right\}
\sup_{s\in \lbrack t-t_{\mathbf{b}}(x,v),t]}\{e^{-l\langle v\rangle
(t-s)}Z(s,x,v)\} \\
& \ \ \ \ \ \ \ \ +\frac{C_{\varepsilon}}{ \{\alpha (x,v)\}^{\beta -1/2}}%
\int^{t}_{t-t_{\mathbf{b}}(x,v)}e^{- \frac{l}{2} \langle v\rangle (t-s)}Z(s,x,v)\mathrm{d}%
s,
\end{split}
\label{nonlocal}
\end{equation}%
where $t_{Z}=\sup \{s:Z(s,x,v)\neq 0\}.$
   
   \vspace{4pt}
   
(2)    Let $[X_{\mathbf{cl}}(s;t,x,v), V_{\mathbf{cl%
}}(s;t,x,v)]$ be the specular backward trajectory or the bounce-back
trajectory in Definition \ref{cycles}.

For any $\varepsilon>0$, there exist $l \gg_{\xi}1$ such that
\begin{equation}  \label{specular_nonlocal}
\begin{split}
& \int_{0}^{t} \int_{\mathbb{R}^{3}} e^{- l\langle v\rangle (t-s)} \frac{%
e^{-\theta|V_{\mathbf{cl}}(s;t,x,v)-u|^{2}}}{|V_{\mathbf{cl}}(s;t,x,v)-u|^{2-\kappa}}
\frac{\langle u\rangle^{r}}{\langle v\rangle^{r}} \frac{Z(s,x,v)}{ \big[ %
\alpha(X_{\mathbf{cl}}(s;t,x,v),u) \big]^{\beta}} \mathrm{d}u\mathrm{d}s \\
& \lesssim \ \frac{ O(\varepsilon)}{\langle v\rangle \big[\alpha(x,v)\big]^{\beta-1/2} }
\sup_{0 \leq s\leq t} \big\{ e^{-  \frac{l}{2}  \langle v\rangle
(t-s)} Z(s,x,v) \big\}.
\end{split}%
\end{equation} 
 \end{lemma}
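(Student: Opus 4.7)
The plan is to estimate the inner $u$-integral pointwise in $(s,x,v)$ and then combine with the $s$-integration. Since the Gaussian $e^{-\theta|v-u|^{2}}$ absorbs the ratio $\langle u\rangle^{r}/\langle v\rangle^{r}$ at the cost of slightly reducing $\theta$, I may assume $r=0$. Setting $y = X_{\mathbf{cl}}(s;t,x,v)$ and decomposing $u = u_{n} n(y) + u_{\tau}$, the convexity assumption (\ref{convex}) gives the pointwise lower bound
\begin{equation*}
\alpha(y, u) \gtrsim u_{n}^{2} + |\xi(y)|\,|u|^{2},
\end{equation*}
which localises the singularity of $\alpha^{-\beta}$ in $u$ near the normal axis. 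Since $\beta \in (1/2, 3/2)$, performing the $u_{n}$-integral against $(u_{n}^{2} + |\xi(y)| |u_{\tau}|^{2})^{-\beta}$ converges and yields the factor $|\xi(y)|^{1/2-\beta}$, and the subsequent Gaussian-tempered $u_{\tau}$ integration (together with standard handling of the singularity of $|v-u|^{\kappa-2}$ near $u=v$) gives a total bound of the form $|\xi(y)|^{1/2-\beta}/|v|^{2\beta-1}$.

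For part (1), the next step is to translate $|\xi(X_{\mathbf{cl}}(s))|$ into $\alpha(x,v)$. Writing the chord as $y(s) = x_{\mathbf{b}} + (s-(t-t_{\mathbf{b}}))v$, a Taylor expansion at $x_{\mathbf{b}}$ combined with $\alpha(x_{\mathbf{b}},v) = |v\cdot\nabla\xi(x_{\mathbf{b}})|^{2}$ gives $|\xi(y(s))| \sim (s-(t-t_{\mathbf{b}}))\sqrt{\alpha(x_{\mathbf{b}},v)}$ near the entry (and the symmetric estimate near $s=t$ if $x\in\partial\Omega$); using Lemma \ref{velocity_lemma} with $\alpha(x_{\mathbf{b}},v)\sim\alpha(x,v)$ and integrating $|\xi(y(s))|^{1/2-\beta}$ over $[t-t_{\mathbf{b}},t]$ produces the second option $\alpha(x,v)^{1/4-\beta/2}t_{\mathbf{b}}^{3/2-\beta}/|v|^{2\beta-1}$ in the $\min$. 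The first option comes from splitting the $u$-integral at $|u_{n}|=\varepsilon$: on the non-grazing part $|u_{n}|\geq\varepsilon$ one has $\alpha\gtrsim\varepsilon^{2}$, so an elementary bound combined with the universal inequality $|\xi(y)| \lesssim \alpha(x,v)/|v|^{2}$ (again from Lemma \ref{velocity_lemma} and (\ref{convex}) applied to $(y,v)$) produces $\varepsilon^{3/2-\beta}/(|v|^{2}\alpha(x,v)^{\beta-1})$. Pulling $\sup_{s}\{e^{-l\langle v\rangle(t-s)}Z\}$ out of these singular contributions yields the first line of (\ref{nonlocal}); on the non-grazing bulk the $u$-integral is uniformly bounded by $C_{\varepsilon}/\alpha(x,v)^{\beta-1/2}$, and keeping $Z(s)$ inside the $s$-integral while choosing $l\gg_{\xi}1$ to absorb the $e^{\mathcal{C}|v|(t-s)}$ from Lemma \ref{velocity_lemma} leaves $e^{-l\langle v\rangle(t-s)/2}$ and yields the second line.

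For part (2), the same pointwise $u$-estimate applies on each straight segment of the piecewise trajectory. By Lemma \ref{velocity_lemma} applied to $(X_{\mathbf{cl}},V_{\mathbf{cl}})$, one has $\alpha(X_{\mathbf{cl}}(s),V_{\mathbf{cl}}(s))\sim e^{\pm\mathcal{C}|v|(t-s)}\alpha(x,v)$ across specular or bounce-back reflections, and this exponential is absorbed by $e^{-l\langle v\rangle(t-s)/2}$ for $l\gg_{\xi}1$. Summing over all segments of $[0,t]$ against the strong exponential decay and using the grazing/non-grazing split of the $u$-integral at threshold $\varepsilon$ produces a total coefficient $O(\varepsilon)/(\langle v\rangle\alpha(x,v)^{\beta-1/2})$ after $\sup_{s}\{e^{-l\langle v\rangle(t-s)/2}Z\}$ is pulled out, which is precisely (\ref{specular_nonlocal}). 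The main technical obstacle will be the sharp chord analysis underlying part (1): identifying the vanishing rate of $|\xi(x-(t-s)v)|$ near both endpoints of the chord so that the two-term minimum of (\ref{nonlocal}) emerges with the correct powers of $|v|$ and $\alpha(x,v)$; the remaining steps are careful bookkeeping given Lemma \ref{velocity_lemma} and the convexity (\ref{convex}).
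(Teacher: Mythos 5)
The Step~1 estimate $\int_{\mathbb{R}^{3}}\frac{e^{-\theta|v-u|^{2}}}{|v-u|^{2-\kappa}\alpha(X_{\mathbf{cl}}(s),u)^{\beta}}\,\mathrm{d}u\lesssim |v|^{1-2\beta}|\xi(X_{\mathbf{cl}}(s))|^{1/2-\beta}$ and the chord-analysis idea for the $t_{Z}$-option are on the right track and essentially coincide with what the paper does. The gap is in how the small factor $\varepsilon^{3/2-\beta}$ and the $C_{\varepsilon}/\alpha^{\beta-1/2}$ term are produced.

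You propose to obtain both from a $u$-space split at $|u_{n}|=\varepsilon$. This cannot work. On $\{|u_{n}|\geq\varepsilon\}$ the only gain from the split is $\alpha(y,u)\gtrsim u_{n}^{2}\geq\varepsilon^{2}$, hence $\alpha^{-\beta}\lesssim\varepsilon^{-2\beta}$; the $u_{n}$-integral $\int_{\varepsilon}^{\infty}(u_{n}^{2}+|\xi||u_{\tau}|^{2})^{-\beta}\mathrm{d}u_{n}$ yields at best $\varepsilon^{1-2\beta}$ (when $\varepsilon^{2}\gtrsim|\xi||u_{\tau}|^{2}$) or $(|\xi||u_{\tau}|^{2})^{1/2-\beta}$ otherwise. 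Neither produces $\varepsilon^{3/2-\beta}$ with $3/2-\beta>0$: the sign of the $\varepsilon$-exponent is wrong, because truncating the $u$-integral at a fixed threshold gives a large, not a small, constant. Moreover the remaining non-grazing contribution would be $\alpha$-independent (of order $\varepsilon^{1-2\beta}$), whereas the lemma requires the residual term to carry the weight $\alpha(x,v)^{1/2-\beta}$.

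What actually produces the $\varepsilon^{3/2-\beta}$ factor in the paper is a split of the \emph{time} interval, not of $u$-space. After reducing the inner integral to $|\xi(X_{\mathbf{cl}}(s))|^{1/2-\beta}/|v|^{2\beta-1}$, the paper (Steps~2--4) partitions $[t-t_{\mathbf{b}},t]$ into two short endpoint intervals of length $\tilde\sigma_{i}=\min\{\sigma_{i},\tilde\delta\sqrt{\alpha}/|v|^{2}\}$ and a bulk. On the endpoints, $|v\cdot\nabla\xi|$ and $\sqrt{-\xi}\,|v|$ are comparable and ordered so that the change of variables $\mathrm{d}s\sim\mathrm{d}|\xi|/|v\cdot\nabla\xi|\lesssim\mathrm{d}|\xi|/\sqrt{\alpha}$ is available, and the maximal value of $|\xi|$ reached on those intervals is $B\lesssim\min\{\sqrt{\alpha}\,t_{Z},\ \tilde\delta\,\alpha/|v|^{2}\}$; integrating $|\xi|^{1/2-\beta}$ up to $B$ is what turns $3/2-\beta>0$ into a genuine smallness $\tilde\delta^{3/2-\beta}$. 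On the bulk, the $\tilde\delta$-dependent lower bound $|v|\sqrt{-\xi}\gtrsim_{\tilde\delta}\sqrt{\alpha}$ converts the pointwise estimate into $C_{\tilde\delta}/\alpha^{\beta-1/2}$ and keeps $Z$ inside the time integral. Without this time-space split you cannot get both terms of the conclusion simultaneously, and you certainly cannot get the $\varepsilon^{3/2-\beta}$ factor from a $u$-space cutoff. The same issue propagates to part~(2): the $O(\varepsilon)$ there comes from regrouping bounces into windows $[t-(j+1)/|v|,\,t-j/|v|]$, applying part~(1) with the time-split on the nearest window, and extracting $e^{-lj/8}$ decay from the farther ones; ``summing over all segments and splitting the $u$-integral at threshold $\varepsilon$'' does not reproduce this.

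A secondary point: integrating $|\xi|^{1/2-\beta}$ over the full chord gives you the $t_{\mathbf{b}}^{3/2-\beta}$ factor, but the lemma requires $t_{Z}^{3/2-\beta}$ (the essential support of $Z$). You need to restrict the time integration to the support of $Z$ before invoking the change of variables, which is precisely the role of the bound $|\xi|\lesssim\sqrt{\alpha}\,t_{Z}$ on the range of $|\xi|$ in the paper.
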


The control of $\int_{u} \frac{e^{-\theta|v-u|^{2}}}{|v-u|^{2-\kappa}} \frac{1}{\alpha(u) ^{\beta }} $ is addressed throughout such so-called dynamical non-local to local estimates. We discover that
the non-local $u$ integration does not destroy the local property, upon a
crucial time integration along the characteristics. The proof of such
non-local to local estimates are a combination of analytical and geometrical arguments. The
first part is a precise estimate of $u$ integration which is bounded via $%
\frac{1}{|v|^{2\beta -1}|\xi (x-(t-s)v)|^{\beta -1/2}}.$ In this part of the
proof we make use of a series of change of variables to obtain the precise
power. The second part is to relate $\frac{1}{|\xi (x-(t-s)v)|^{\beta -1/2}}$
back to $\frac{1}{\alpha }.$ Clearly, 
\begin{equation*}
\frac{1}{|\xi (x-(t-s)v)|^{2}}\backsim \frac{1}{\alpha }\backsim \frac{1}{|v\cdot
\nabla \xi (x-(t-s)v|^{2}+ |\xi (x-(t-s)v) ||v|^{2}}.
\end{equation*}%
for $|\xi (X_{\mathbf{cl}}(s)) ||v|^{2}$ is larger than $|v\cdot \nabla \xi (X_{\mathbf{cl}}(s))|.$ On the other hand,
when $|v\cdot \nabla \xi (X_{\mathbf{cl}}(s)) |$ dominates, this can only be achieved through a
crucial use of time integration and geometric Velocity lemma (Lemma \ref{velocity_lemma}), by connecting 
\begin{equation*}
\mathrm{d}t\backsim \frac{\mathrm{d}\xi }{|v\cdot \nabla \xi |},
\end{equation*}%
and recover $\alpha $ as in the bound of $\xi-$integration through the
geometric Velocity Lemma (Lemma \ref{velocity_lemma}).

The more striking feature is that not only our estimates retain the local
structure for $\alpha ,$ but they \textit{gain} $\sqrt{\alpha }$ order of
regularity. Such a precise gain of regularity is exactly enough to balance
out the singularity in $\alpha $ appeared in $\partial X_{\mathbf{cl}%
}(s;t,x,v)$ and $\partial V_{\mathbf{cl}}(s;t,x,v)$ in both the specular and
bounce-back cycles. In order to squeeze out a small constant for $|v|\gg1,$
we need to use the decay of $e^{-l\langle v\rangle (t-s)}.$ This requires a
precise regrouping of the cycles according to the time scale of 
\begin{equation*}
t|v|\sim 1.
\end{equation*}%
Within such an important time scale, $V_{\mathbf{cl}}(s;t,x,v)$ stays\textit{%
\ }almost \textit{invariant} due to the Velocity Lemma(Lemma \ref{velocity_lemma}). We then are able to
obtain precise estimate for the number of bounces within $t|v|\backsim 1$
and extract smallness from $e^{-l\langle v\rangle (t-s)}$ for $t-s\geq \frac{%
1}{|v|}.$ On the other hand, for $t-s\leq \frac{1}{|v|},$ the smallness
comes from Lemma \ref{lemma_nonlocal}.

\vspace{15pt}
 
\noindent{\textbf{\large{ 3. Specular Reflection BC}}} 
 
 \vspace{3pt }

Recall the specular reflection boundary condition in (\ref{specularBC}) and the specular cycles in Definition \ref{cycles}. Our main
theorem is as follow.

\begin{theorem}\label{main_specular} 
Assume $F_{0} = \sqrt{\mu}f_{0} \geq 0$ and $f_{0} \in   W^{1,\infty}(\Omega\times \mathbb{R}^{3})$ and $0< \kappa\leq 1$ for $1<\beta< \frac{3}{2},  \ 0<  \theta < \frac{1}{4},$ and $b\in\mathbb{R},$
$$
\big|\big|    \frac{      \alpha^{\beta-\frac{1}{2} }  }{\langle v\rangle^{b }} \partial_{x} f_{0}    \big|\big|_{\infty} + \big|\big|  
  \frac{  |v|^{2} {\alpha}^{ \beta-1}   }{\langle v\rangle^{b }} \partial_{v} f_{0}    \big|\big|_{\infty}
   +  ||  e^{\theta|v|^{2}} f_{0}||_{\infty} <\infty, 
$$
and the compatibility condition  
\begin{equation}\label{compatibility_specular}
f_{0}(x,v) = f_{0}(x, R_{x}v) \ \   \text{on}  \ (x,v) \in\gamma_{-}.
\end{equation}
Then for all $0\leq t\leq T$ with $T=T( || e^{\theta|v|^{2}}f_{0} ||_{\infty})>0$ 
\begin{equation}\label{specular_main_estimate}
\begin{split}
&|| e^{-\varpi \langle v\rangle t} \frac{  \alpha^{\beta}    }{\langle v\rangle^{b+1}} \partial_{x}f(t) ||_{\infty} + || e^{-\varpi \langle v\rangle t}  \frac{| v |   \alpha^{\beta- \frac{1}{2}}   }{\langle v\rangle^{b }}  \partial_{v}f(t) ||_{\infty}  \\
\lesssim_{\xi,t}   & \ 
  \big|\big|   \frac{   \alpha^{\beta-\frac{1}{2} }      }{\langle v\rangle^{b }} \partial_{x} f_{0}    \big|\big|_{\infty} + \big|\big| \frac{  |v|^{2} {\alpha}^{ \beta-1}      }{\langle v\rangle^{b }}\partial_{v} f_{0}    \big|\big|_{\infty} + P (||   \partial_{t}f_{0}||_{\infty})
  + P (||  e^{\theta|v|^{2}} f_{0}||_{\infty}).
\end{split}
\end{equation}

Moreover, if $\Omega$ is real analytic ($\xi$ is real analytic on $\mathbb{R}^{3}$) and $F_{0} = \mu + \sqrt{\mu} g_{0} \geq 0$ with $||e^{\theta|v|^{2}} g_{0}||_{\infty} \ll 1$ then this theorem holds for the arbitrarily large time $t \geq 0$.

Furthermore, if $ f_{0} \in C^{1}$ and 
 \begin{equation}\label{compatibility_specular_t}
v\cdot \nabla_{x}f_{0}(x,v) = R_{x}v \cdot \nabla_{x}f_{0}(x, R_{x}v) \ \ \text{on}  \ (x,v) \in\gamma_{-}.
\end{equation}
then $f\in C^{1}$ away from the grazing set $\gamma_{0}.$

\end{theorem}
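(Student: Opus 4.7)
The plan is to propagate weighted $L^{\infty}$ bounds on $\partial_{x,v}f$ along the specular cycles of Definition \ref{cycles}, exploiting the dynamical non-local to local estimate (\ref{specular_nonlocal}) which was tailored precisely for this situation. Writing (\ref{boltzamnn_f}) in mild form along $(X_{\mathbf{cl}},V_{\mathbf{cl}})$,
\begin{equation*}
f(t,x,v) = e^{-\int_{0}^{t}\nu(\sqrt{\mu}f)(\tau)\mathrm{d}\tau}f_{0}(X_{\mathbf{cl}}(0),V_{\mathbf{cl}}(0)) + \int_{0}^{t} e^{-\int_{s}^{t}\nu\mathrm{d}\tau}\Gamma_{\mathrm{gain}}(f,f)(s,X_{\mathbf{cl}}(s),V_{\mathbf{cl}}(s))\mathrm{d}s,
\end{equation*}
one has available the short-time bound $\|e^{\theta|v|^{2}}f(t)\|_{\infty} \le P(\|e^{\theta|v|^{2}}f_{0}\|_{\infty})$ of \cite{Guo10,EGKM}. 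Because (\ref{specularBC}) is built into the cycles, $f$ is continuous across every bounce and all regularity loss concentrates in the Jacobians $\partial_{x,v}X_{\mathbf{cl}}$ and $\partial_{x,v}V_{\mathbf{cl}}$.

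First I would establish sharp bounds on these Jacobians. Differentiating $v^{\ell+1} = v^{\ell} - 2n(x^{\ell})(n(x^{\ell})\cdot v^{\ell})$ involves $\nabla n(x^{\ell})$ and $\nabla t_{\mathbf{b}}(x^{\ell-1},v^{\ell-1}) \sim 1/|n(x_{\mathbf{b}})\cdot v|$, yielding schematic bounds $|\partial_{x}X_{\mathbf{cl}}(s)| + |v||\partial_{x}V_{\mathbf{cl}}(s)| \lesssim \langle v\rangle/\alpha^{1/2}(x,v)$ and $|\partial_{v}X_{\mathbf{cl}}(s)| + |v||\partial_{v}V_{\mathbf{cl}}(s)| \lesssim |v|^{2}/\alpha(x,v)$ per cycle, where Lemma \ref{velocity_lemma} converts $\alpha(X_{\mathbf{cl}}(s),V_{\mathbf{cl}}(s))$ back to $\alpha(x,v)$ at the cost $e^{\mathcal{C}|v|(t-s)}$, absorbed into the weight $e^{-\varpi\langle v\rangle t}$ by choosing $\varpi$ as in (\ref{llarge}). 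These are exactly the powers of $\alpha$ and $\langle v\rangle$ appearing in (\ref{specular_main_estimate}). Differentiating the mild form, the initial-data term reproduces the right-hand initial norms, while $\partial_{x,v}\Gamma_{\mathrm{gain}}(f,f)$ and $\partial_{x,v}[\nu(\sqrt{\mu}f)f]$, after the $u \mapsto u'$ change of variables transferring derivatives onto $f$, become integrals of the form (\ref{specular_nonlocal}) with $Z(s,x,v)$ proportional to the running supremum of the weighted derivative.

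The hard part will be the unbounded number of bounces as $|v| \to \infty$. Following the strategy outlined after Lemma \ref{lemma_nonlocal}, I would partition $[0,t]$ at the scale $t-s \sim 1/|v|$: on $t-s \gtrsim 1/|v|$, the factor $e^{-l\langle v\rangle(t-s)}$ in (\ref{specular_nonlocal}) provides smallness uniform in the bounce count, while on $t-s \lesssim 1/|v|$ the velocity lemma freezes $V_{\mathbf{cl}}$ so only $O(1)$ reflections occur; the $\sqrt{\alpha}$ \emph{gain} in (\ref{specular_nonlocal}) then exactly cancels the $\alpha^{-1/2}$ blow-up of $\partial_{x,v}X_{\mathbf{cl}}$ per bounce. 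Taking $\varepsilon$ small, absorbing the $O(\varepsilon)$ sup-norm contribution into the left-hand side, and closing a Gronwall inequality on the remaining time integral then yields (\ref{specular_main_estimate}) on $[0,T]$.

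For the global-in-time statement under real analyticity of $\xi$, I would replace the polynomial short-time $L^{\infty}$ bound by the uniform-in-time near-equilibrium bound $\sup_{t\ge 0}\|e^{\theta|v|^{2}}g(t)\|_{\infty} \lesssim \|e^{\theta|v|^{2}}g_{0}\|_{\infty}$ from \cite{Guo10,EGKM}, and use analyticity to rule out specular cycles accumulating in finite time (a delicate but well-known point), so that the short-time argument iterates for all $t\ge 0$ with $e^{-\varpi\langle v\rangle t}$ absorbing the geometric growth. Finally, the $C^{1}$ conclusion follows because the compatibility (\ref{compatibility_specular_t}), together with (\ref{boltzamnn_f}) and the bulk/boundary compatibility for $\nabla_{v}f$ and $\partial_{t}f$ derived from (\ref{specularBC}), makes each of $\partial_{t}f$, $v\cdot\nabla_{x}f$ and $\nabla_{v}f$ continuous across every bounce; the weighted estimates, finite away from $\gamma_{0}$, then upgrade to continuity of the unweighted derivatives on $\bar{\Omega}\times\mathbb{R}^{3}\setminus\gamma_{0}$.
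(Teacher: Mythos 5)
Your overall strategy matches the paper's: express $f$ in mild form along specular cycles, differentiate to hit the Jacobians $\partial X_{\mathbf{cl}}$, $\partial V_{\mathbf{cl}}$, convert the collision terms into integrals of the form (\ref{specular_nonlocal}), absorb the bounce count through a partition at the scale $t-s\sim 1/|v|$, and close with the weight $e^{-\varpi\langle v\rangle t}$ chosen as in (\ref{llarge}). Your $C^{1}$ and global-in-time remarks are also in the right spirit, though the paper obtains $C^{1}$ by showing the weighted sequence $\{\partial f^{m}\}$ is Cauchy in $L^{\infty}$ rather than by a direct continuity argument.

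However, there is a genuine gap at the crux of the argument. You write that differentiating $v^{\ell+1}=v^{\ell}-2n(x^{\ell})(n(x^{\ell})\cdot v^{\ell})$ and $\nabla t_{\mathbf{b}}\sim 1/|n\cdot v|$ ``yields schematic bounds'' $|\partial_{x}X_{\mathbf{cl}}(s)|+|v||\partial_{x}V_{\mathbf{cl}}(s)|\lesssim \langle v\rangle/\alpha^{1/2}$ and $|\partial_{v}X_{\mathbf{cl}}(s)|+|v||\partial_{v}V_{\mathbf{cl}}(s)|\lesssim |v|^{2}/\alpha$. First, these do not agree with Theorem~\ref{theorem_Dxv}: the paper proves $|\partial_{x}X_{\mathbf{cl}}|\lesssim e^{C|v|(t-s)}|v|/\sqrt{\alpha}$, $|\partial_{x}V_{\mathbf{cl}}|\lesssim e^{C|v|(t-s)}|v|^{3}/\alpha$, $|\partial_{v}X_{\mathbf{cl}}|\lesssim e^{C|v|(t-s)}/|v|$, $|\partial_{v}V_{\mathbf{cl}}|\lesssim e^{C|v|(t-s)}|v|/\sqrt{\alpha}$; your first grouping understates $\partial_{x}V_{\mathbf{cl}}$ by a full factor $|v|^{3}/\sqrt{\alpha}$. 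Second, and more seriously, these bounds cannot be obtained by the routine per-bounce differentiation you invoke. Each reflection contributes $\partial_{x}t^{\ell}\sim 1/\alpha$, $\partial_{x}x^{\ell}\sim 1/\alpha$, and the single-bounce Jacobian $J(\mathbf{r}^{\ell})$ carries entries of size $M\mathbf{r}^{\ell}\cdot|v|$; composing $\sim 1/\mathbf{r}$ of these for a cycle with $\sim 1/\sqrt{\alpha}$ bounces gives, naively, an uncontrollable product, not $1/\sqrt{\alpha}$. The paper's Theorem~\ref{theorem_Dxv} (the ten-step proof) is the most delicate point of the whole argument: it requires constructing continuous moving frames in the phase space (to evade the hair-ball obstruction), the crucial cancellation (\ref{time_gap}) that extracts a second-order-in-$\mathbf{r}^{\ell}$ smallness in $J(\mathbf{r}^{\ell})$, the diagonalization $J(\mathbf{r})=\mathcal{P}\Lambda\mathcal{P}^{-1}$ with eigenvalues $\{0,1,\dots,1,1+5M\mathbf{r}\}$ so that $\Pi_{\ell}\Lambda(\mathbf{r}_{i})^{C/\mathbf{r}_{i}}$ stays bounded, and the separate ODE/Gronwall treatment of the tangential components $[\mathbf{X}_{\parallel},\mathbf{V}_{\parallel}]$ which are continuous across bounces. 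Without these ingredients your estimate will not close; indeed the paper explicitly remarks that the final singularity $|\partial_{x}X_{\mathbf{cl}}|\sim 1/\sqrt{\alpha}$ is ``unexpected'' precisely because the naive per-bounce argument suggests $1/\alpha$. Your proposal needs to either cite Theorem~\ref{theorem_Dxv} as a black box (in which case the statement of the bounds must be corrected to match) or supply its proof, which is the heart of the matter.
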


There can be no size restriction on initial data $F_{0} = \sqrt{\mu}f_{0}$. We remark from the local existence theorem, $T>0.$ The analyticity is a
crucial assumption to ensure global stability in \cite{Guo10}. We also remark that
the specular theorem is drastically different from the diffusive theorem: in
addition to the loss of moments, there is a loss of regularity of $\alpha $
with respect to the initial data. This makes it impossible to use the
continuity argument to choose small time interval to close the estimates. We
need to use large $\varpi$ in $e^{-\varpi \langle v\rangle t}$ to extract a small constant to close, which requires
extra precise estimates. We note that in 3D case, $\beta >1/2,$ due to the
failure of the proof of the non-local to local estimates for the critical $%
\beta =1/2$(Lemma \ref{lemma_nonlocal}). On the other hand, in 2D, due to boundedness of $\partial
_{v_{3}}f$ from $x_{3}-$invariance, we are able to estimate $\partial
_{v}\Gamma _{\text{gain }}$ for the critical case $\beta =1/2$ by Lemma \ref{2D_Gamma}.

In additional to the dynamical non-local to local estimate, the second
important ingredient for the specular reflection BC is the following crucial estimate for the derivatives
of specular cycles $[X_{\mathbf{cl}}(s;t,x,v)$,$V_{\mathbf{cl}}(s;t,x,v)].$

 \begin{theorem}\label{theorem_Dxv}
There exists $C=C(\Omega)>0$ such that for all $(s;t,x,v)\in \mathbb{R}\times \mathbb{R}\times \bar{\Omega}\times \mathbb{R}^{3}$ with $s\neq t^{\ell}$ for $\ell = 1,2,\cdots, \ell_{*}$ 
\begin{equation}\label{lemma_Dxv}
\begin{split}
|\partial _{x}X_{\mathbf{cl}}(s;t,x,v)| & \ \lesssim \ e^{C|v|(t-s)}\frac{|v|}{%
\sqrt{\alpha (x,v)}} , \\
|\partial _{v}X_{\mathbf{cl}}(s;t,x,v)| &  \ \lesssim  \ e^{C|v|(t-s)}\frac{1}{|v|}%
, \\
|\partial _{x}V_{\mathbf{cl}}(s;t,x,v)| & \ \lesssim \ e^{C|v|(t-s)}\frac{|v|^{3}%
}{\alpha (x,v)} , \\
|\partial _{v}V_{\mathbf{cl}}(s;t,x,v)| &  \ \lesssim \ e^{C|v|(t-s)}\frac{|v|}{%
\sqrt{\alpha (x,v)}} .
\end{split}
\end{equation}
\end{theorem}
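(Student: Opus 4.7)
The plan is to propagate derivatives along the specular cycle of Definition \ref{cycles} bounce-by-bounce. On each free flight segment $s \in [t^{\ell+1}, t^\ell)$ one has $X_\mathbf{cl}(s) = x^\ell - (t^\ell - s) v^\ell$ and $V_\mathbf{cl}(s) = v^\ell$, so $\partial_{x,v} X_\mathbf{cl}$ and $\partial_{x,v} V_\mathbf{cl}$ reduce via the chain rule to the composed Jacobians of the single-step maps $\Phi_{\ell+1} = R \circ \Psi \circ \Phi_\ell$, where $\Psi(y,w) = (y - t_\mathbf{b}(y,w) w, w)$ is the flight to the next boundary hit and $R(y,w) = (y, R_y w)$ is the specular reflection, together with a trivial final free-transport piece from $t^{\ell+1}$ to $s$.

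The next step is to compute the one-step Jacobians. Implicitly differentiating $\xi(y - t_\mathbf{b}(y,w) w) = 0$ gives
\[
\nabla_y t_\mathbf{b} = \frac{\nabla \xi(y_\mathbf{b})}{\nabla \xi(y_\mathbf{b}) \cdot w}, \qquad \nabla_w t_\mathbf{b} = -\frac{t_\mathbf{b}\, \nabla \xi(y_\mathbf{b})}{\nabla \xi(y_\mathbf{b}) \cdot w},
\]
so the unique singular factor contributed by $\Psi$ is $1/|n(y_\mathbf{b}) \cdot w|$. Because $\xi(y_\mathbf{b}) = 0$, Definition \ref{K_D} simplifies to $\alpha(y_\mathbf{b}, w) = |w \cdot \nabla \xi(y_\mathbf{b})|^2$, whence $1/|n(y_\mathbf{b}) \cdot w| \lesssim 1/\sqrt{\alpha(y_\mathbf{b}, w)}$, and the Velocity Lemma (Lemma \ref{velocity_lemma}) then bounds this by $e^{\mathcal{C}|v|(t-s)/2}/\sqrt{\alpha(x,v)}$. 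The reflection contributes $\partial_w R_y w = I - 2 n \otimes n$ (bounded) and $\partial_y R_y w = O(|w|)$ through $\nabla n$. Inductively composing these Jacobians yields the four bounds of \eqref{lemma_Dxv}: the powers of $|v|$ count how many times differentiation falls on an $R$-factor in the product, namely zero for $\partial_v X_\mathbf{cl}$ (where the worst term is $-(t^\ell-s) \lesssim e^{C|v|(t-s)}/|v|$), once for $\partial_x X_\mathbf{cl}$ and $\partial_v V_\mathbf{cl}$, and twice for $\partial_x V_\mathbf{cl}$, giving respectively $1/|v|$, $|v|/\sqrt{\alpha}$, $|v|/\sqrt{\alpha}$, and $|v|^3/\alpha$.

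The principal obstacle is to rule out geometric accumulation of the factor $1/\sqrt{\alpha}$ over the $\ell$ bounces of the cycle, which a naive chain rule would yield as $\alpha^{-\ell/2}$. The resolution rests on the identity $n(y) \cdot R_y w = -\, n(y) \cdot w$: specular reflection preserves the modulus of the normal component, so the singular factor $1/|n(x^j) \cdot v^j|$ reappears with essentially the same value at each bounce rather than being multiplied together. Combined with the Velocity Lemma, which gives $\sqrt{\alpha(x^j, v^j)} \sim \sqrt{\alpha(x,v)}$ up to the correction $e^{\pm \mathcal{C}|v|(t-s)/2}$, this leaves exactly one copy of $1/\sqrt{\alpha(x,v)}$ in each of the four Jacobians, while the per-bounce exponential corrections telescope across the cycle into the single prefactor $e^{C|v|(t-s)}$ in the statement. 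Once this non-accumulation is established, \eqref{lemma_Dxv} follows by straightforward induction on $\ell$, with $s \neq t^\ell$ ensuring that we differentiate inside a single smooth free-flight segment.
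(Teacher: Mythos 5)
There is a genuine gap at the very point you label "the principal obstacle." The observation that $n(y)\cdot R_y w = -\,n(y)\cdot w$ preserves the normal speed at each bounce does not prevent the multiplicative accumulation of the singular factors. The chain rule composes the one-step Jacobians, each of which (by your own computation of $\nabla_y t_{\mathbf b}$, $\nabla_w t_{\mathbf b}$) carries one copy of $1/|n(x^j)\cdot v^j|\sim 1/\sqrt{\alpha(x,v)}$. That the factor has "essentially the same value at each bounce" means precisely that the naive product is $\alpha(x,v)^{-\ell_*/2}$, and since the number of bounces $\ell_*$ grows like $|t-s||v|^2/\sqrt{\alpha}$ this diverges. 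The paper in fact makes exactly this point ("it is natural to expect $\partial_x X_{\mathbf{cl}}$ picks up additional powers of $\frac{1}{\sqrt{\alpha}}$ in the accumulation of $\frac{1}{\sqrt{\alpha}}$ number of bounces"), so "the bound holds because the normal component is preserved" is not a resolution — it is a restatement of the problem.

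What actually makes the theorem true, and what your sketch does not supply, is a quantitative cancellation inside the per-bounce Jacobian together with a refinement beyond the pure chain-rule product. The paper works in a moving spherical chart, reduces the Jacobian from the $\ell$-th bounce to the $(\ell+1)$-th to an explicit matrix $J(\mathbf{r}^{\ell+1})$ with $\mathbf{r}^\ell = |\mathbf{v}^\ell_\perp|/|v^\ell|$, and crucially shows via the identity (\ref{time_gap}), $(t^\ell-t^{\ell+1})F_\perp(\mathbf{x}^{\ell+1},\mathbf{v}^\ell) = 2\mathbf{v}^{\ell+1}_\perp + O(|t^\ell-t^{\ell+1}|^2|v|^3)$, that the dangerous off-diagonal entries of $J(\mathbf{r})$ are $O(\mathbf{r}^2)$ rather than $O(1)$; this quadratic smallness is what makes the largest eigenvalue $1+5M\mathbf{r}$, so that the $\ell\sim 1/\mathbf{r}$-fold product $(1+5M\mathbf{r})^{1/\mathbf{r}}$ stays bounded. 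Even after this matrix-method step the resulting bound for the tangential part is $|\partial_x X_{\mathbf{cl}}|\sim 1/\alpha$, which is \emph{still} worse than claimed; a second idea — the "ODE method," rewriting $[\mathbf{X}_\parallel,\mathbf{V}_\parallel]$ as a continuous integral equation across bounces (using that $\mathbf{V}_\perp=\dot{\mathbf{X}}_\perp$ and integrating by parts so $\mathbf{x}_\perp=0$ at each bounce kills the boundary terms, as in (\ref{Dode})–(\ref{matrix_gronwall})) — is needed to upgrade to $1/\sqrt{\alpha}$. Your induction never isolates either cancellation and would close at best with a constant growing geometrically in $\ell$, i.e. not uniformly on $[0,t]$. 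You would need to (a) exhibit the second-order smallness in the composed Jacobian (the analogue of (\ref{time_gap})), and (b) account for why the per-step $1/\sqrt{\alpha}$ does not appear in the tangential block of the product — neither of which follows from "$n\cdot v$ is preserved."
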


Our estimates are optimal in terms of the order of $\frac{1}{\alpha },$ and $%
e^{C|v|(t-s)}$ relates to the $|v|$ growth in the Velocity lemma (Lemma \ref{velocity_lemma}). We remark
that these precise orders of singularity, play a critical role for our
design of the anisotropic norms in Theorem \ref{main_specular}. In fact, if $\,|\partial _{x}X_{\mathbf{cl}%
}(s;t,x,v)|\backsim \frac{1}{\alpha},$ it would have been too singular for
the half power gain of $\alpha$ from the dynamical non-local to local
estimates (Lemma \ref{lemma_nonlocal}), and our method should fail. Moreover, it is also crucial to have
precise $|v|$ growth in both $|\partial _{x}X_{\mathbf{cl}%
}(s;t,x,v)|$ and $|\partial _{x}V_{\mathbf{cl}}(s;t,x,v)|$ to be controlled by $%
e^{-\varpi \langle v\rangle t}$.

We remark that $|\partial _{x}X_{\mathbf{cl}}(s;t,x,v)|\backsim \frac{1}{%
\sqrt{\alpha }}$ is unexpected, even after one bounce we would have $\partial _{x} x^{1}\backsim \frac{1}{\sqrt{\alpha }}$ and it is natural to
expect $\partial _{x}X_{\mathbf{cl}}(s;t,x,v)$ picks up additional power
of $\frac{1}{\sqrt{\alpha }}$ in the accumulation of $\frac{1}{\sqrt{\alpha }%
}~$\ number of bounces. However, via direct computations in 2D disk, we
discover that even though 
\begin{equation*}
\partial _{x}t^{\ell} \backsim \frac{1}{\alpha },\text{ and }\partial
_{x}x^{\ell} \backsim \frac{1}{\alpha },
\end{equation*}%
but surprisingly
\begin{equation*}
\partial _{x}X_{\mathbf{cl}}(s;t,x,v)=\partial
_{x}[x ^{\ell}-(t^{\ell}-s)v^{\ell}]\backsim \frac{1}{\sqrt{\alpha }} \ !
\end{equation*}%
Clearly, certain cancellations take place in the disk, which is difficult to
even expect for general domains.

The proof of our theorem is split into 10 steps, and it is the most
delicate proof throughout this paper. We first remark that, due to the
`discontinuous behaviors' of the normal component of $v\cdot n$ at each
specular reflection, it is impossible to apply the standard techniques for
ODE to estimate $|\partial X_{\mathbf{cl}}(s;t,x,v)|$ and $|\partial V_{%
\mathbf{cl}}(s;t,x,v)|.$ We have to develop different strategies to overcome
several analytical difficulties to finally complete the proof.

\vspace{4pt}

\textit{Topological obstruction and moving frames. } It turns out that we
only need to consider the most delicate case in which all the bounces are
almost grazing and staying near the boundary for $\mathbf{r}^{\ell}=\frac{|v^{\ell}\cdot n|%
}{|v^{\ell}|}\ll1.$ It is important for us to introduce the spherical
co-ordinate system to cover the whole cycle and transform it into the ODE (\ref{ODE_ell}). Unfortunately, due to the `hair-ball' theorem in Topology, such a change of
coordinate system (or any change of coordinates) can not be smooth
everywhere in the 2D surface $\partial \Omega .$  In the case of a ball, all
the trajectories are confined in a plane, so that one may choose a single
chart to cover the whole trajectories. However, in other convex domains
except the ball case, with large $t$, the specular trajectories are extremely
complicated, which can reach almost every point on $\partial\Omega.$ Hence, choosing a single
chart is all but impossible. On the other hand, a `sudden' change of a chart
may create new order of singularity of $\alpha $ from the matrix $\mathcal{P}
$ as in (\ref{diagonal_matrix}), which will ruin the
estimates. It is therefore important to design a `continuous' changes of
charts associated with the almost grazing bounces. Given $n(x),$ we need to
construct another globally defined, orthogonal, and continuous vector field. This
would have been impossible if we were to seek it only in the physical space, in light of
the `hair-ball' theorem. The key observation is that, we need continuity not
from just $\partial \Omega ,$ but from the phase space $\partial \Omega
\times \mathbb{R}^{3}.$ In fact, for almost grazing bounces, the velocity
field $v$ is almost perpendicular to $n(x),$ which provides a natural
choice for construction of the desired moving frames. These continuous moving frames
cost manageable errors for each bounce, which are controlled by the next
method.

\vspace{4pt}

\textit{Matrix Method for normal parts of }$\partial X_{\mathbf{cl}}(s)$ and 
$\partial V_{\mathbf{cl}}(s)$\textit{. }With such a well-defined moving
charts, via the chain rule, one can represent $\partial X_{\mathbf{cl}%
}(s;t,x,v)$ and $\partial V_{\mathbf{cl}}(s;t,x,v)$ via a multiplication of
Jacobian matrices $(t^{\ell},x^{\ell},v^{\ell})\rightarrow (t^{\ell-1},x^{\ell-1},v^{\ell-1})$
in the spherical coordinate system. The `matrix method' refers to the study of
each discrete Jacobian matrix and precise estimates of their multiplication ($%
\frac{1}{\sqrt{\alpha }}$ of them!). One important step is to bound such a
matrix by $J(\mathbf{r}^{\ell})$ in (\ref{l_to_l+1}) which can be diagonalized as $%
J(\mathbf{r}^{\ell})=\mathcal{P}^{-1}\Lambda \mathcal{P}$, with a diagonal matrix $%
\Lambda $.  Based on the crucial cancellation property (\textbf{\ref%
{time_gap}}), we can extract a crucial second order of $\mathbf{r}^{\ell} \ll 1$ appeared in $%
J(\mathbf{r}^{\ell}).$ Therefore,  over the interval $t|v|\backsim 1,$ we are able to
estimate $\Pi _{\ell=1}^{\frac{1}{\sqrt{\alpha }}}J( \mathbf{r}^{\ell})\sim \frac{1}{%
\sqrt{\alpha }}$. Together with $\frac{1}{\sqrt{\alpha }}$ from the initial
bounce, we expect $\frac{1}{\alpha }-$singularity for both $\partial X_{\mathbf{cl}%
}(s;t,x,v)$ and $\partial V_{\mathbf{cl}}(s;t,x,v)\,$\ as in (\ref{middle}).
Even though such estimate is too singular for our purpose we can improve it. Upon a closed
inspection,  
\begin{equation*} 
\begin{split} 
|\partial _{x}\mathbf{X}_{\perp }(s;t,x,v)|\lesssim \frac{1}{\sqrt{\alpha }},
\\
\end{split}
\end{equation*}
for the normal component of $X_{\mathbf{cl}}(s).$ This is based on the fact $v_{\perp }^{\ell}\sim \sqrt{\alpha }$ via the
Velocity lemma (Lemma 1, \cite{Guo10}). Unfortunately, the tangential part $\partial _{x}\mathbf{X}%
_{||}(s;t,x,v)\backsim \frac{1}{\alpha }$ is still too singular.

\vspace{4pt}

\textit{ODE Method for tangential parts of }$\partial X_{\mathbf{cl}}(s)$
and $\partial V_{\mathbf{cl}}(s)$\textit{.} To improve such an estimate, we
observe that given the estimates for the normal parts [$\mathbf{X}_{\perp
}(s;t,x,v),$ $\mathbf{V}_{\perp }(s;t,x,v)],$ the sub-system of ODE for $[%
\mathbf{X}_{||}(s;t,x,v),\mathbf{V}_{||}(s;t,x,v)],$ enjoys much better
property. In fact, at each specular reflection, $[\mathbf{X}_{||}(s;t,x,v),%
\mathbf{V}_{||}(s;t,x,v)]$ are continuous, unlike the the normal velocity $\mathbf{V}%
_{\perp }(s;t,x,v).$ Upon integrating over time as $\mathbf{V}_{\perp
}(s;t,x,v)=\mathbf{\dot{X}}_{\perp }(s;t,x,v)$ (position $\mathbf{X}_{\perp
}(s;t,x,v)$ is still continuous at specular reflection)$,$ we are able to
derive an integral equations of $[\mathbf{X}_{||}(s;t,x,v),\mathbf{V}%
_{||}(s;t,x,v)]$ without broken into small discontinuous pieces (\ref{Dode}) at each specular reflection$.$ In other
words, we can use the standard ODE theory to estimate these tangential
parts. Our ODE method refers such ODE (Gronwall) estimates (\ref%
{ODE_onegroup}) which lead to the final conclusion\ of the theorem.

With such crucial estimates, we are able to design anisotropic norms in terms
of singularity of $\frac{1}{\alpha }$.
Thanks to $\int_{0}^{t} \int_{u} \frac{e^{-C_{\theta} |v-u|^{2}}}{|v-u|^{2-\kappa}} \frac{1}{\alpha(X(s),u) ^{\beta }} \lesssim \alpha
^{ -\beta +  {1}/{2} }$ and $\int_{0}^{t} \int_{u} \frac{e^{-C_{\theta} |v-u|^{2}}}{|v-u|^{2-\kappa}} \frac{1}{\alpha(X(s),u) ^{\beta -1/2}} \lesssim \alpha
^{ -\beta +  1 }$ from the dynamical non-local to local estimates for $%
\beta >1,$ we have exact cancellations of the power of $\alpha $ in the
coefficients on the right hand side, and we are able to close the estimates. For $|v|$
either small or large, more careful analysis is needed. In particular,  it
is important to use the weight function of $e^{-\varpi\langle v\rangle t}$ in (\ref{dist}) to
control both the growth in Theorem \ref{theorem_Dxv} as well as $|v|$ in front of $\partial
_{x}X_{\mathbf{cl}}$ and $\partial _{v}V_{\mathbf{cl}}$ to control singularity of $|v|$ in (\ref{lemma_Dxv}).

\vspace{15pt}
 
\noindent{\textbf{\large{ 4. Bounce-back Reflection BC}}} 
 
 \vspace{3pt }

We recall the bounce-back reflection boundary condition (\ref{bbBC}) and the bounce-back cycles in Definition \ref{cycles}. Our main theorem is
 
\begin{theorem}\label{main_bb} 
Assume $f_{0} \in W^{1,\infty}(\Omega\times \mathbb{R}^{3})$ and $0< \kappa \leq 1$ for $0 <\theta < \frac{1}{4} ,$
\[
|| \langle v\rangle \partial_{x} f_{0} ||_{\infty} + || \partial_{v} f_{0} ||_{\infty} + ||  e^{\theta|v|^{2}} \partial_{t}f_{0}||_{\infty} +  ||  e^{\theta|v|^{2}}  f_{0}||_{\infty}<+\infty,
\]
and the compatibility conditions 
\begin{equation}\label{compatibility_bb}
f_{0}(x,v) = f_{0}(x,-v), \ \ v\cdot \nabla_{x} f_{0}  (x,v) = -v\cdot \nabla_{x} f_{0}(x,-v)  \ \ \text{on} \  \gamma_{-}  .
\end{equation}
Then there is $T=T(||   e^{\theta|v|^{2}} f_{0}||_{\infty})>0$ so that for all $0 \leq t\leq T$
 \begin{equation}\label{bb_main_estimate}
 \begin{split}
& || e^{-\varpi \langle v\rangle t} \frac{\alpha}{\langle v\rangle^{2}}\partial_{x}f(t)   ||_{\infty}+|| e^{-\varpi \langle v\rangle t} \frac{|v| \alpha^{1/2} }{\langle v\rangle^{2}} \partial_{v} f(t)||_{\infty} + || e^{ \theta|v|^{2}} \partial_{t}f(t)||_{\infty }   \\
\lesssim_{\xi,t}& \  || \langle v\rangle \partial_{x} f_{0} ||_{\infty} + || \partial_{v} f_{0}||_{\infty}+ 
P(  ||  e^{\theta|v|^{2}} \partial_{t} f_{0} ||_{\infty}) + P(||    e^{\theta |v|^{2}} f_{0} ||_{\infty}),
\end{split}
\end{equation}
for some polynomial $P$.

Moreover, if $f_{0} \in C^{1}$ then $f\in C^{1}$ away from the grazing set $\gamma_{0}.$ Furthermore, if $F_{0} = \mu + \sqrt{\mu} g_{0} \geq 0$ with $|| e^{\theta |v|^{2}} g_{0} ||_{\infty} \ll 1$ then this theorem holds for the arbitrarily large time $t \geq 0.$

\end{theorem}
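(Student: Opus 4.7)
The plan is to adapt the specular-reflection strategy (Theorem~\ref{main_specular}) to the bounce-back setting, exploiting that bounce-back cycles are essentially periodic: $v^\ell = (-1)^\ell v$, $x^\ell$ alternates between $x$ and $x^1 = x - t_\mathbf{b}(x,v) v$, and $t^\ell = t - \ell\, t_\mathbf{b}(x,v)$. Using the standard implicit-function bounds $|\partial_x t_\mathbf{b}| \lesssim 1/\sqrt{\alpha(x,v)}$ and $|\partial_v t_\mathbf{b}| \lesssim t_\mathbf{b}/\sqrt{\alpha(x,v)}$ together with this explicit structure, one derives trajectory-derivative bounds of the form
\[
|\partial_x X_\mathbf{cl}(s;t,x,v)| \lesssim 1 + \frac{(t-s)|v|^2}{\sqrt{\alpha}\,\langle v\rangle}, \quad |\partial_v X_\mathbf{cl}(s;t,x,v)| \lesssim (t-s)\Big(1 + \frac{|v|}{\sqrt{\alpha}}\Big),
\]
while $\partial_x V_\mathbf{cl}$ vanishes and $\partial_v V_\mathbf{cl} = \pm I$ away from the bounce times. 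These singularities match exactly the anisotropic weights $\alpha/\langle v\rangle^2$ on $\partial_x f$ and $|v|\alpha^{1/2}/\langle v\rangle^2$ on $\partial_v f$ in (\ref{bb_main_estimate}), so no matrix/ODE machinery like Theorem~\ref{theorem_Dxv} is required.

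I would treat the three quantities in (\ref{bb_main_estimate}) in order. First, for $\|e^{\theta|v|^2}\partial_t f(t)\|_\infty$, differentiating (\ref{bbBC}) in $t$ yields $\partial_t f(t,x,v) = \partial_t f(t,x,-v)$ on $\gamma_-$, so $\partial_t f$ solves a linear Boltzmann-type equation with bounce-back boundary condition, representable along bounce-back cycles; the compatibility assumption (\ref{compatibility_bb}) guarantees $\partial_t f_0$ already satisfies the symmetry. The source terms $\partial_t\Gamma_{\text{gain}}(f,f)$ and $\partial_t(\nu(\sqrt{\mu}f)f)$ are then bounded by Lemma~\ref{lemma_nonlocal} part~(2) (with no $\alpha$-singularity needed here), and the Maxwellian weight $e^{\theta|v|^2}$ closes a Gronwall estimate on $[0,T]$. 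Next, for $\partial_x f$ and $\partial_v f$, I would differentiate the Duhamel formula
\[
f(t,x,v) = f_0(X_\mathbf{cl}(0), V_\mathbf{cl}(0)) + \int_0^t \bigl[\Gamma_{\text{gain}}(f,f) - \nu(\sqrt{\mu}f)f\bigr](s, X_\mathbf{cl}(s), V_\mathbf{cl}(s))\, \mathrm{d}s,
\]
and apply the chain rule. The initial-data contribution combines $\|\langle v\rangle \partial_x f_0\|_\infty + \|\partial_v f_0\|_\infty < \infty$ with the trajectory-derivative bounds above to produce exactly the $\alpha$-singularity absorbed by the weights; the tangential boundary identities $\partial_\tau f(x,v) = \partial_\tau f(x,-v)$ and $\nabla_v f(x,v) = -\nabla_v f(x,-v)$ on $\gamma_-$ obtained from (\ref{bbBC}) handle those components, while the normal spatial derivative is eliminated by the equation itself, $v\cdot\nabla_x f = -\partial_t f + \Gamma_{\text{gain}}(f,f) - \nu(\sqrt{\mu}f)f$, deferring responsibility to the already-controlled $\partial_t f$.

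The main obstacle is the collision source $\partial_v\Gamma_{\text{gain}}(f,f)$: when $\partial_v$ falls on the pre-collisional velocities, it produces a factor $|v-u|^{-(1-\kappa)}$ multiplied by $\partial_v f \sim \langle v\rangle^2/(|v|\alpha(X_\mathbf{cl}(s),u)^{1/2})$, giving a borderline $\alpha^{-1/2}$ singularity in the $u$-integral. This is precisely the critical case of the dynamical non-local to local estimate (Lemma~\ref{lemma_nonlocal} part~(2)) applied along bounce-back cycles, whose crucial $\sqrt{\alpha}$-gain exactly cancels the singularity; choosing $\varpi$ large as in (\ref{llarge}) extracts a small constant absorbed into the left-hand side. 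A short-time Gronwall argument on $[0,T]$ with $T$ depending only on $\|e^{\theta|v|^2}f_0\|_\infty$ then yields (\ref{bb_main_estimate}). The global extension under $\|e^{\theta|v|^2}g_0\|_\infty \ll 1$ follows by iterating the local estimate on successive intervals using the uniform-in-time bound $\sup_{t\geq 0}\|e^{\theta|v|^2}g(t)\|_\infty \lesssim \|e^{\theta|v|^2}g_0\|_\infty$ of \cite{Guo10,EGKM}, and $C^1$-regularity away from $\gamma_0$ follows from continuity of the bounce-back trajectories and their derivatives off the grazing set.
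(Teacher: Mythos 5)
Your proposed trajectory-derivative bound for the bounce-back cycles is wrong, and this is fatal to the argument as you have laid it out. You write
\[
|\partial_x X_\mathbf{cl}(s;t,x,v)| \lesssim 1 + \frac{(t-s)|v|^2}{\sqrt{\alpha}\,\langle v\rangle},
\]
but for bounce-back cycles $X_\mathbf{cl}(s) = x^\ell - (t^\ell - s) v^\ell$ and $\partial_x V_\mathbf{cl} \equiv 0$, so $\partial_x X_\mathbf{cl}(s) = \partial_x x^\ell - (\partial_x t^\ell)\, v^\ell$. From (\ref{xb}) one derivative of $t_\mathbf{b}$ costs $1/\sqrt{\alpha}$, but $t^\ell = t^1 - (\ell-1) t_\mathbf{b}(x^1, v^1)$ so $\partial_x t^\ell$ accumulates $\ell$ such terms; with $\ell \sim (t-s)|v|^2/\sqrt{\alpha}$ from (\ref{41}) this gives $|\partial_x t^\ell| \sim t|v|^2/\alpha$ (this is exactly the statement of Lemma~\ref{estimate_bb}), hence $|\partial_x X_\mathbf{cl}(s)| \sim t|v|^3/\alpha$. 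You have lost a full factor of $\sqrt{\alpha}$ by not compounding the bounce count into $\partial_x t^\ell$. Differentiating the Duhamel formula by the chain rule, as you propose, then produces $\partial_x X_\mathbf{cl}(s) \cdot \nabla_x f \sim (1/\alpha) \cdot (\langle v\rangle^2/\alpha) = \langle v\rangle^2/\alpha^2$ inside the collision integral, and the $\sqrt{\alpha}$-gain of Lemma~\ref{lemma_nonlocal} is a full power short of what is needed to close against the weight $\alpha/\langle v\rangle^2$ on the left.

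The missing ingredient is precisely Lemma~\ref{change_time_bb} (and its cycle version (\ref{change_time_bb_ell})), which replaces the naive chain rule on $f(s, X_\mathbf{cl}(s), V_\mathbf{cl}(s))$ by a change of variables $\tau \mapsto \tau - t^j$ in each piece of the time integral. This has the effect of distributing the derivative as $[\partial_\mathbf{e} t^j, \partial_\mathbf{e} x^j + \tau \partial_\mathbf{e} v^j, \partial_\mathbf{e} v^j]\cdot\nabla_{t,x,v}$ rather than as $[\partial_\mathbf{e} X_\mathbf{cl}, \partial_\mathbf{e} V_\mathbf{cl}]\cdot\nabla_{x,v}$: the dangerous $1/\alpha$ factor $\partial_x t^j$ now multiplies $\partial_t f$, which your first step has already bounded (with Gaussian velocity decay and no $\alpha$-singularity), while $\nabla_x f$ only sees $\partial_x x^j \sim 1/\sqrt{\alpha}$, which the non-local-to-local estimate can absorb. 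Your instinct to control $\partial_t f$ first is correct and is indeed exploited by the paper, but the pairing of $\partial_x t^j$ with $\partial_t f$ is not automatic — it is an explicit structural rearrangement, not a corollary of the chain rule — and without it the $\partial_x f$ estimate cannot close.
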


There can be no size restriction on initial data $F_{0} = \sqrt{\mu} f_{0}$. We remark that the bounce-back case enjoys explicit
expressions of $\partial X_{\mathbf{cl}}(s;t,x,v)$ and $\partial V_{%
\mathbf{cl}}(s;t,x,v).$ Since $%
\partial _{x}t^{\ell}\sim \frac{1}{\alpha }$ and $\partial
_{x}x^{\ell}\sim \frac{1}{\sqrt{\alpha }}$, a new difficulty arises in the estimate 
\begin{equation*}
\partial _{x}X_{\mathbf{cl}}(s;t,x,v)\sim \frac{1}{\alpha },
\end{equation*}%
which is too singular to control by our non-local to local estimates (Lemma \ref{lemma_nonlocal}). Roughly speaking, the new difficulty is
exactly the opposite to the specular case : $\partial x^{\ell}$ and $\partial
v^{\ell}$ are in desired form but not $\partial _{x}X_{\mathbf{cl}}(s;t,x,v)!$
The crucial observation is the following:

\begin{lemma}\label{change_time_bb}
In the sense of distribution,
\begin{equation}\notag
\begin{split}
&\partial_{\mathbf{e}} \Big[   \int^{t^{j}}_{     t^{j+1}  } f (\tau, x^{j}-(t^{j}- {\tau})  v^{j}, v^{j}) \mathrm{d} {\tau}  \Big]\\
=& \int^{t^{j}}_{   t^{j+1} }  \big[ \partial_{\mathbf{e}} t^{j}, \partial_{\mathbf{e}} x^{j} + \tau \partial_{\mathbf{e}} v^{j}, \partial_{\mathbf{e}}v^{j}  \big] \cdot \nabla_{t,x,v} f  (\tau, x^{j} -(t^{j} - \tau) v^{j}, v^{j}) \mathrm{d}\tau\\
&+    \lim_{\tau \downarrow t^{j+1}} \big[  \partial_{\mathbf{e}} t^{j} -  \partial_{\mathbf{e}} t^{j+1} ] f  ( \tau, x^{j}-(t^{j}-\tau) v^{j}, v^{j}) \big]  .
\end{split}
\end{equation}
\end{lemma}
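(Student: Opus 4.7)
The plan is to treat the identity as a distributional Leibniz rule for an integral whose endpoints $t^{j}(p), t^{j+1}(p)$ and integrand both depend on the parameter $p$ along the direction $\mathbf{e}$, then to reorganize the two classical boundary contributions into the single limit term at $\tau = t^{j+1}$ by using the chain rule along the free-flight segment.

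First, I would localize away from the grazing configurations and regularize. The cycle data $(t^{j}(p), x^{j}(p), v^{j}(p))$ are Lipschitz, and on the open complement of the grazing set the implicit function theorem applied to $\xi(x^{j-1}(p) - t\,v^{j-1}(p)) = 0$ delivers smooth dependence on $p$. Mollifying $f$ in $(t,x,v)$ and working on this open set, the classical Leibniz rule yields
\[
\partial_{\mathbf{e}}\!\int_{t^{j+1}}^{t^{j}}\! g(\tau)\,\mathrm{d}\tau \;=\; g(t^{j})\,\partial_{\mathbf{e}} t^{j} \;-\; g(t^{j+1})\,\partial_{\mathbf{e}} t^{j+1} \;+\; \int_{t^{j+1}}^{t^{j}} \partial_{\mathbf{e}} g(\tau)\,\mathrm{d}\tau,
\]
where $g(\tau) := f(\tau, x^{j} - (t^{j}-\tau)v^{j}, v^{j})$; because $\tau$ does not depend on $p$, the chain rule makes $\partial_{\mathbf{e}} g$ contribute only $\nabla_{x} f$ and $\nabla_{v} f$ terms with coefficients determined by $\partial_{\mathbf{e}} x^{j}, \partial_{\mathbf{e}} t^{j}, \partial_{\mathbf{e}} v^{j}$.

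The central step is to absorb the upper boundary contribution $g(t^{j})\,\partial_{\mathbf{e}} t^{j}$ into the integral using the straight-line identity $\partial_{\tau} g(\tau) = \partial_{t} f + v^{j}\cdot \nabla_{x} f$. Multiplying by $\partial_{\mathbf{e}} t^{j}$ and applying the fundamental theorem of calculus,
\[
\partial_{\mathbf{e}} t^{j}\,\bigl[g(t^{j}) - g(t^{j+1})\bigr] \;=\; \int_{t^{j+1}}^{t^{j}} \partial_{\mathbf{e}} t^{j}\bigl(\partial_{t} f + v^{j}\cdot\nabla_{x} f\bigr)\,\mathrm{d}\tau.
\]
Substituting this back trades the $\tau = t^{j}$ boundary value for a $\partial_{\mathbf{e}} t^{j}\,\partial_{t} f$ term in the integrand, plus a $\partial_{\mathbf{e}} t^{j}\,v^{j}\cdot\nabla_{x} f$ piece that exactly cancels the matching contribution already present in $\partial_{\mathbf{e}} g$. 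The two boundary contributions collapse into the single term $[\partial_{\mathbf{e}} t^{j} - \partial_{\mathbf{e}} t^{j+1}]\,g(t^{j+1})$ in the $\tau \downarrow t^{j+1}$ limit, and the surviving integrand reorganizes into the gradient pairing $[\partial_{\mathbf{e}} t^{j}, \partial_{\mathbf{e}} x^{j} + \tau\partial_{\mathbf{e}} v^{j}, \partial_{\mathbf{e}} v^{j}] \cdot \nabla_{t,x,v} f$ stated in the lemma.

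The main obstacle is the limited regularity of the cycle data near the grazing set. I would handle it by establishing the pointwise identity on the open complement of the grazing configurations using mollifications $f_{\varepsilon}$ and smooth approximations of $(t^{j}, x^{j}, v^{j})$, then passing to the distributional limit in $p$ against a test function supported away from $\gamma_{0}$. The bounce-back structure $v^{j} = (-1)^{j+1} v$ (so $\partial_{x} v^{j}\equiv 0$) suppresses one source of singular behaviour; moreover, the boundary condition $f(t,x,v) = f(t,x,-v)$ together with $v^{j+1} = -v^{j}$ ensures that $f$ is continuous across the reflection times along the zigzag cycle, so the one-sided limit $\tau \downarrow t^{j+1}$ in the collapsed boundary term is unambiguous and matches the corresponding $\tau \uparrow t^{j+1}$ value from the adjacent interval. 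This compatibility is what allows the piecewise identity above to be consistently assembled into a single global distributional relation.
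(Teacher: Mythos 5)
Your proof is correct and reaches the same identity, but by a genuinely different calculus manipulation than the paper. The paper first applies the time shift $\tau\mapsto\tau-t^{j}$, rewriting the integral as $\int_{-(t^{j}-t^{j+1})}^{0}f(\tau+t^{j},\,x^{j}+\tau v^{j},\,v^{j})\,\mathrm{d}\tau$; because the new upper endpoint $0$ is fixed, the Leibniz rule produces a single lower-boundary term with coefficient $\partial_{\mathbf{e}}[t^{j}-t^{j+1}]$ from the outset, and the $\partial_{t}f$ piece of the integrand arises directly from the chain rule acting on the shifted time argument $\tau+t^{j}$. You instead keep the original time variable, get the two boundary terms $g(t^{j})\partial_{\mathbf{e}}t^{j}-g(t^{j+1})\partial_{\mathbf{e}}t^{j+1}$, and then synthesize the $\partial_{t}f$ term by trading $g(t^{j})\partial_{\mathbf{e}}t^{j}$ for $g(t^{j+1})\partial_{\mathbf{e}}t^{j}+\int_{t^{j+1}}^{t^{j}}\partial_{\mathbf{e}}t^{j}\,(\partial_{t}f+v^{j}\cdot\nabla_{x}f)\,\mathrm{d}\tau$ via the fundamental theorem of calculus along the free flight, after which the $\partial_{\mathbf{e}}t^{j}\,v^{j}\cdot\nabla_{x}f$ contribution cancels exactly against the one produced by differentiating $x^{j}-(t^{j}-\tau)v^{j}$ in $\mathbf{e}$. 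The paper's time-shift route is marginally cleaner since there is no cancellation to track; your FTC route exposes more explicitly the algebraic mechanism by which the two endpoint terms collapse to the single $[\partial_{\mathbf{e}}t^{j}-\partial_{\mathbf{e}}t^{j+1}]$ term. One small caution: the coefficient of $\nabla_{x}f$ you end up with is $\partial_{\mathbf{e}}x^{j}+(\tau-t^{j})\partial_{\mathbf{e}}v^{j}$, which is what both proofs actually yield; the form $\partial_{\mathbf{e}}x^{j}+\tau\,\partial_{\mathbf{e}}v^{j}$ in the lemma's display is written in the shifted variable where $\tau\in[-(t^{j}-t^{j+1}),0]$, so your claim that the integrand ``reorganizes into'' the stated form is only correct modulo that change of variable. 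Your treatment of the distributional side (mollify $f$, localize away from grazing so the cycle data are smooth via the implicit function theorem, pass to the limit against compactly supported test functions) matches the role played by the paper's claim (\ref{piecewise}) and is at a comparable level of informality; the quantitative ingredient your sketch leans on implicitly, which the paper spells out, is that a test function supported away from $\gamma_{0}$ forces a positive lower bound on $\alpha$ along the backward cycle via the Velocity Lemma, and that lower bound is what legitimizes the implicit function theorem uniformly over the support.
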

The key idea is to make a change of variable to transform
\begin{equation*}
\begin{split}
\\
\partial _{x}X_{\mathbf{cl}}(s;t,x,v)\backsim v^{\ell} \partial _{x}t^{\ell}+\partial
_{x}x^{\ell},
\\
\end{split}
\end{equation*}%
while $\partial _{x}t^{\ell}$ captures the worst singulairty of $\frac{1}{%
\alpha }.$ Fortunately, $\partial _{x}t^{\ell}$ is paired with $\partial
_{t}f,$ which is bounded, from the time-invariance of the problem and we
are able to close the estimate.

\vspace{15pt}
 
\noindent{\textbf{\large{ 5. Non-existence of $\protect\nabla ^{2}f$ up to the boundary}}} 
 
 \vspace{3pt } 

 In the appendix, we demonstrate that, our estimates can not be valid for
higher order derivatives. Otherwise, if $\partial ^{2}f$ exists up to the
boundary, we observe that from taking second derivatives of the Boltzmann
equation: 
\begin{equation*}
v_{n}\partial _{n}^{2}f=-\partial _{tn}f-(\partial _{n}v_{n})\partial
_{n}f-\sum_{i=1}^{2}\partial _{n}(v_{\tau _{i}})\partial _{\tau
_{i}}f-\sum_{i=1}^{2}v_{\tau _{i}\partial _{n\tau _{i}}}f-\nu (F)\partial
_{n}f+\partial _{n}K(f)+\partial _{n}\Gamma _{\text{gain}}(f,f).
\end{equation*}%
If $|\partial _{n}f|\geq \frac{1}{\sqrt{\alpha }}$ and $\partial_{n}K( f) \sim K(\partial _{n}f)$ then at the boundary we
have 
\begin{equation*}
|\partial _{n}f|\geq \frac{1}{|v_{n}|}\notin L_{loc}^{1}(\mathbb{R}^{1}),
\end{equation*}%
so that $\partial_{n}K( f)$ is not defined. Since $|\partial _{n}f|$ is
expected to behave at least as bad as $\frac{1}{\sqrt{\alpha }}$ for all
diffusive, specular and bounce-back cases, we are able to identify initial
conditions such that $|\partial _{n}f|\geq \frac{1}{|v_{n}|}$ for some
future time. 


 

\section{\large{Preliminary}} 
   
 \vspace{4pt}
 Recall $\Gamma_{\mathrm{gain}}$ and $\nu$ in    
Due to the Grad estimate \cite{gl}    
\begin{equation}\label{grad}
\begin{split}
&\Gamma_{\mathrm{gain}}( \sqrt{\mu}, g) + \Gamma_{\mathrm{gain}}(g, \sqrt{\mu} )  = \int_{\mathbb{R}^{3}} \mathbf{k}_{2}(v,u) g(u) \mathrm{d}u,\\&\nu(\sqrt{\mu}g)  = \int_{\mathbb{R}^{3}} \mathbf{k}_{1}(v,u) g(u) \mathrm{d}u ,
\end{split}
\end{equation}
where
 \begin{equation}\label{k_estimate}
\begin{split}
 \mathbf{k}_{1}(u,v)=  & \ |u-v|^{\kappa }   e^{- \frac{|v|^{2} +|u|^{2}}{2} }\int_{
\mathbb{S}^{2}}q_{0}(\frac{v-u}{|v-u|}\cdot \omega )\mathrm{d}\omega , 
\\
\mathbf{k}_{2}(u,v) =& \ \frac{2}{|u-v|^{2}}e^{-\frac{1}{8}|u-v|^{2}-\frac{1}{8}
\frac{(|u|^{2}-|v|^{2})^{2}}{|u-v|^{2}}} \\
&\times \int_{w\cdot (u-v)=0}q_{0}\Big( \frac{u-v}{\sqrt{ |u-v|^{2 } + |w|^{2}}}  \cdot \frac{u-v}{|u-v|}  \Big) e^{-|w+\varsigma |^{2}}(|w|^{2}+|u-v|^{2})^{ \frac{\kappa}{2} }\mathrm{d}
w,
\end{split}
\end{equation}
where $\varsigma:= \big( \frac{v+u}{2} \cdot \frac{w}{|w|}\big) \frac{w}{|w|}.$ See page 315 of \cite{Guo03} for details.

\begin{lemma}
\label{lemma_K} For $0\leq \kappa \leq 1,$
\begin{equation}\notag
\begin{split}
|\mathbf{k}_{1}(u,v)| + |\mathbf{k}_{2}(u,v)| &\ \lesssim \ \{|v-u|^{\kappa }+|v-u|^{-2+\kappa }\}e^{-\frac{1}{8}%
|v-u|^{2}-\frac{1}{8}\frac{(|v|^{2}-|u|^{2})^{2}}{|v-u|^{2}}}  \\
& \ \lesssim \ \frac{e^{-\frac{1}{10}%
|v-u|^{2}-\frac{1}{10}\frac{(|v|^{2}-|u|^{2})^{2}}{|v-u|^{2}}} }{|v-u|^{2-\kappa }} .
\label{eq:grad_estimate}
\end{split}
\end{equation}%
For $\varrho>0$ and $-2\varrho<  \theta <2\varrho$ and $\zeta\in\mathbb{R}$, we have for $0<\kappa \leq 1$,
\begin{equation}\notag
\int_{\mathbb{R}^{3}}
 \{|v-u|^{\kappa }+|v-u|^{-2+\kappa }\}e^{-\varrho%
|v-u|^{2}-\varrho\frac{(|v|^{2}-|u|^{2})^{2}}{|v-u|^{2}}}
  \frac{\langle
v\rangle ^{\zeta }e^{\theta |v|^{2}}}{\langle u\rangle ^{\zeta }e^{\theta
|u|^{2}}}\mathrm{d}u \ \lesssim \ \langle v\rangle ^{-1}.  \label{int_k}
\end{equation}%
\end{lemma}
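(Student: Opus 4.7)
The proof of Lemma \ref{lemma_K} splits into the pointwise bound and the integral estimate. For the pointwise bound, I would extract it directly from the formulas in \eqref{k_estimate} in standard Grad fashion. For $\mathbf{k}_1$, the angular integral of $q_0$ is bounded, leaving $|\mathbf{k}_1(u,v)| \lesssim |u-v|^\kappa e^{-(|v|^2+|u|^2)/2}$; the identity $|v|^2 + |u|^2 = \frac{1}{2}|v-u|^2 + \frac{1}{2}|v+u|^2$ combined with the Cauchy--Schwarz consequence $(|v|^2-|u|^2)^2 \leq |v-u|^2|v+u|^2$ immediately yields the required Grad form with room to spare. For $\mathbf{k}_2$, the outer exponential already has the correct shape, so it remains to bound the inner integral over the hyperplane $w \perp (u-v)$: using $q_0 \lesssim 1$, $(|w|^2+|u-v|^2)^{\kappa/2} \lesssim 1 + |w|^\kappa + |u-v|^\kappa$ for $\kappa \in [0,1]$, and a two-dimensional Gaussian integration that exploits the decay in $w+\varsigma$, the inner integral is controlled by a polynomial in $|v-u|$ and $|v+u|$, which is absorbed by a slight loss of the Gaussian (converting the exponents $1/8$ to $1/10$), yielding the claimed bound.

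For the integral estimate, the mechanism for extracting $\langle v\rangle^{-1}$ is the angular integration of the factor $e^{-\varrho(|v|^2-|u|^2)^2/|v-u|^2}$. After setting $\eta = v-u$ and rotating coordinates so that $v = (|v|, 0, 0)$, spherical coordinates $\eta = r(\cos\phi, \sin\phi\cos\psi, \sin\phi\sin\psi)$ give the clean identity
\[
|v|^2 - |u|^2 = 2|v|r\cos\phi - r^2, \qquad \frac{(|v|^2-|u|^2)^2}{|v-u|^2} = (2|v|\cos\phi - r)^2.
\]
The weight $e^{\theta(|v|^2-|u|^2)}$ is then handled by completing the square: writing $s = |v|^2-|u|^2$ and leaving a small reserve $\varrho' \in (0, \varrho)$ behind, one finds
\[
-\varrho s^2/r^2 + \theta s \leq -\varrho' s^2/r^2 + \frac{\theta^2 r^2}{4(\varrho-\varrho')},
\]
so that the assumption $|\theta| < 2\varrho$ is exactly what allows $\varrho'' := \varrho - \theta^2/(4(\varrho-\varrho'))$ to remain strictly positive for $\varrho'$ small. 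The polynomial weight $\langle v\rangle^\zeta/\langle u\rangle^\zeta$ is dominated via the Peetre inequality by $\langle\eta\rangle^{|\zeta|}$ and absorbed by a further infinitesimal trimming of $\varrho''$.

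The estimate then reduces, after the trivial $\psi$ integration and the substitution $y = \cos\phi$, to the bound on
\[
\int_0^\infty (r^{\kappa+2} + r^\kappa) e^{-\varrho'' r^2} \left( \int_{-1}^1 e^{-\varrho' (2|v|y - r)^2}\, \mathrm{d}y \right) \mathrm{d}r.
\]
The inner integral, via $z = 2|v|y-r$, is bounded by $(2|v|)^{-1}\int_{\mathbb{R}} e^{-\varrho' z^2}\, \mathrm{d}z \lesssim |v|^{-1}$, which produces the gain $\langle v\rangle^{-1}$ whenever $|v|\geq 1$; for $|v|\leq 1$ the conclusion is automatic from the boundedness of the entire integral together with $\langle v\rangle^{-1} \geq 2^{-1/2}$. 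The radial integral converges because $\kappa > 0$ makes the integrand integrable at $r = 0$ while the Gaussian kills the tail. The subtlest point throughout is the tension between the weight $e^{\theta(|v|^2-|u|^2)}$, which tries to destroy the Gaussian in $s^2/r^2$, and the necessity of retaining enough of that Gaussian to deliver $\langle v\rangle^{-1}$ through the angular integration; the hypothesis $|\theta| < 2\varrho$ is precisely the threshold at which this balance can be struck.
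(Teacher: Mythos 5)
Your proof is correct and follows essentially the same route as the paper's: you set $\eta = v-u$, handle the weight $e^{\theta(|v|^2-|u|^2)}$ by completing the square with a reserve $\varrho'$ (which is exactly the paper's negative-discriminant observation for the quadratic form in $|\eta|$ and $v\cdot\eta/|\eta|$, and in both arguments $|\theta|<2\varrho$ is precisely what keeps the remaining Gaussian strictly positive), and then extract $\langle v\rangle^{-1}$ from the integration in the direction of $v$. The only cosmetic difference is that you work in spherical coordinates and keep the full Gaussian $e^{-\varrho'(2|v|y-r)^2}$, integrating it directly via the substitution $z=2|v|y-r$, whereas the paper first weakens the negative-definite quadratic form to the bound $-\{\tfrac{|\eta|^2}{2}+|v\cdot\eta|\}$ and then, in Cartesian coordinates with $\eta=\eta_\parallel+\eta_\perp$, integrates $e^{-C|v||\eta_\parallel|}$ over the parallel variable. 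Both give the same $|v|^{-1}$ gain; your version is arguably a bit more transparent about where it comes from, at the small cost of carrying the Jacobian bookkeeping.
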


\begin{proof}
The proof is based on \cite{Guo10}. Note that
\[
 \frac{\langle v\rangle^\zeta e^{\theta|v|^2}}{\langle u \rangle^\zeta e^{\theta|u|^2}}
 \ \lesssim \  [1+|v-u|^2]^{\frac{\zeta}{2}} e^{-\theta(|u|^2 -|v|^2)}.
\]
Set $v-u = \eta$ and $u =v-\eta$ in the integration. Now we compute the total exponent of the integrand as
\begin{equation*}
\begin{split}
&-\varrho|\eta|^2 -\varrho \frac{||\eta|^2-2 v\cdot \eta|^2}{|\eta|^2} -\theta\{|v-\eta|^2 -|v|^2\}
=  - 2\varrho|\eta|^2 + 4\varrho \{v\cdot \eta\} - 4\varrho \frac{|v\cdot \eta|^2}{|\eta|^2} -\theta\{|\eta|^2 -2v\cdot \eta\}\\
& =\left(-\theta-2\varrho\right) |\eta|^2 + \left( 4\varrho+ 2\theta\right)v\cdot \eta -4\varrho\frac{|v\cdot \eta|^2}{|\eta|^2}.
\end{split}
\end{equation*}
Since $-2\varrho <\theta< 2\varrho,$ the discriminant of the above quadratic form of $|\eta|$ and $\frac{v\cdot \eta}{|\eta|}$ is negative: $\left(4\varrho+ 2\theta\right)^2 +16 \varrho(-\theta-2\varrho)   = 4\theta^2 -16 \varrho^2<0$. We thus have
\begin{equation*}
-\varrho|\eta|^2 -\varrho \frac{||\eta|^2-2 v\cdot \eta|^2}{|\eta|^2} -\theta\{|v-\eta|^2 -|v|^2\} \
\lesssim_{\varrho, \theta} \ - \Big\{\frac{|\eta|^2}{2}+|v\cdot \eta|\Big\} .
\end{equation*}
Hence, for $0\leq \kappa \leq 1$ the integration is bounded by
\begin{equation*}
\begin{split}
\int_{\mathbb{R}^3} \Big\{|\eta|^\kappa + |\eta|^{-2+\kappa}\Big\}\langle\eta\rangle  ^{\zeta } e^{- {C_{\varrho,\theta}}   |\eta|^2 }  \ \lesssim_{\varrho, \theta, \kappa} \ 1.
\end{split}
\end{equation*}%
Therefore in order to prove Lemma \ref{lemma_K} it suffices to consider the case $|v|\geq 1$. We make another change of variables $\eta_\parallel = \left\{\eta\cdot \frac{v}{|v|}\right\}\frac{v}{|v|}$ and $\eta_\perp = \eta-\eta_\parallel$, so that $|v\cdot\eta|=|v||\eta_\parallel|$ and $|v-u|\geq |\eta_\perp|.$ We can absorb $\langle \eta \rangle^\zeta, \ |\eta|\langle \eta \rangle^\zeta$ by $e^{-C_{\varrho,\theta}|\eta|^2}$, and bound the integration by, for $0 < \kappa \leq 1,$ \begin{equation*}
\begin{split}
& \ \ \int_{\mathbb{R}^3} \big\{ 1+|\eta|^{-2+ \kappa}\big\} e^{-C_{\varrho,\theta}\left\{\frac{|\eta|^2}{2}+|v\cdot \eta|\right\}} \mathrm{d} \eta \ \leq \ \int_{\mathbb{R}^3}  \big\{ 1+|\eta|^{-2+ \kappa }\big\} e^{-\frac{C_{\varrho,\theta}}{2}|\eta|^2}     e^{-C_{\varrho,\theta}|v\cdot \eta|} \mathrm{d} \eta\\
&\leq \int_{\mathbb{R}^2} \{1+|\eta_\perp|^{-2+\kappa  }\} e^{-\frac{C_{\varrho,\theta}}{2}|\eta_{\perp}|^2} \left\{\int_{\mathbb{R}}  e^{-C_{\varrho,\theta} |v|\times |\eta_\parallel|} \mathrm{d} |\eta_\parallel|\right\} \mathrm{d} \eta_\perp\\
& \lesssim \langle v\rangle^{-1 } \int_{\mathbb{R}^2} \{1+|\eta_\perp|^{-2+\kappa  }\} e^{-\frac{C_{\varrho,\theta}}{2}|\eta_{\perp}|^2} \left\{ \int_{0}^\infty  e^{-C_{\varrho,\theta}  y } \mathrm{d} y\right\} \mathrm{d} \eta_\perp, \ \ (y=|v||\eta_\parallel|),\\
& \lesssim \langle v\rangle^{-1 } .
\end{split}
\end{equation*}
\end{proof} We define
\begin{equation}\label{v_gamma}
\Gamma_{\mathrm{gain},v}(g_{1},g_{2})(v):=\int_{\mathbb{R}^{3}}\int_{\mathbb{S}^{2}} B(v-u,\omega) \nabla_v(\sqrt{\mu})(u) g_{1}(u^{\prime})g_{2}(v^{\prime})  \mathrm{d} \omega \mathrm{d} u,
\end{equation}
where $u^{\prime} =u-[(u-v)\cdot \omega]\omega$ and $v^{\prime} = v+ [(u-v)\cdot \omega]\omega$.

\begin{lemma}
\label{lemma_operator}
(i) For $0 < \theta < \frac{1}{4}$ and $0 \leq \kappa\leq 1$, there exists $C_{\theta}>0$ such that, for $(i,j)=(1,2)$ or $(2,1),$
\begin{equation}\label{Gamma_k}
\left\vert \Gamma_{\mathrm{gain}} (g_{1},g_{2})(v)\right\vert   \ \lesssim_{\theta} \ ||   e^{\theta
|v|^{2}}g_{i}||_{\infty } \int_{%
\mathbb{R}^{3}} \frac{e^{-C_{\theta} |v-u|^{2} }}{|v-u|^{2-\kappa}}|g_{j}(u)|\mathrm{d}u,
\end{equation}
and
\begin{equation}\notag
\begin{split}
\big\vert
\Gamma_{\mathrm{gain}}  (g_1,g_2)(v)\big\vert  & \ \lesssim_{\theta} \ \langle v\rangle^{\kappa} e^{-
 {\theta}   |v|^{2}}||  e^{ {\theta} |v|^2}g_1 ||_\infty ||  e^{ {\theta} |v|^2}g_2 ||_\infty,\\
  \left\vert
\Gamma_{\mathrm{gain},v} (g_1,g_2)(v)\right\vert & \ \lesssim_{\theta} \  \langle v\rangle^{\kappa}e^{-%
 \theta |v|^{2}}||  e^{\theta|v|^2}g_1||_\infty|| e^{\theta|v|^2}g_2||_\infty,\\
 |\nu(  \sqrt{\mu} g_{1}) g_{2}(v)|    & \ \lesssim_{ \theta} \    
 ||   e^{\theta|v|^{2}}g_{2}||_{\infty} \int_{\mathbb{R}^{3}}  \frac{e^{-C_{\theta} |v-u|^{2} }}{|v-u|^{2-\kappa}} |g_{1}(u)|\mathrm{d}u   .
 \end{split}
\end{equation}

\noindent(ii) For $p\in[1,\infty)$ and $0 < \theta < \frac{1}{4},$ and for $(i,j)=(1,2)$ or $(i,j)=(2,1),$
\begin{equation}\notag
\begin{split}
\Vert  \Gamma_{\mathrm{gain}}(g_{1} ,g_{2} )   \Vert _{p}      \ 
&
\lesssim_{\theta,p} \ || e^{\theta|v|^{2}}g_{i}  ||_{\infty} \Vert g_{j} \Vert _{p},\\ 
 \Vert  \nu(\sqrt{\mu} g_{1} )g_{2}     \Vert _{p} \ 
&
\lesssim_{\theta,p} \ || e^{\theta|v|^{2}}g_{2}  ||_{\infty} \Vert g_{1} \Vert _{p},\\ 
\big|\iint_{\Omega \times \mathbb{R}^{3}}\Gamma_{\mathrm{gain}} (g_{1},g_{2})g_{3}%
\mathrm{d}v\mathrm{d}x\big| \ &
 \lesssim_{\theta,p} \
|| e^{\theta|v|^{2}} g_{i}||_{\infty} \Vert  g_{j}\Vert _{p}\Vert  g_{3}\Vert _{q}, \\
\big|\iint_{\Omega \times \mathbb{R}^{3}}  \nu(\sqrt{\mu} g_{1})  g_{2} g_{3}%
\mathrm{d}v\mathrm{d}x\big| \ &
 \lesssim_{\theta,p} \
|| e^{\theta|v|^{2}} g_{2}||_{\infty} \Vert  g_{1}\Vert _{p}\Vert  g_{3}\Vert _{q}.
\end{split}
\end{equation}

\noindent(iii) For $p\in[1,\infty)$ and $0 < \theta < \frac{1}{4},$ and for $(i,j)=(1,2)$ or $(i,j)=(2,1),$
\begin{equation}\notag
\begin{split}
  \Vert \nabla _{v}[\Gamma_{\mathrm{gain}}(g_{1},g_{2}) ] \Vert
_{p}  
\ &\lesssim_{\theta,p} \  \sum_{(i,j)} || e^{\theta|v|^{2}}g_{i}  ||_{\infty}   \Vert \nabla _{v}g_{j}\Vert _{p} ,\\
 \Vert  \nu(\sqrt{\mu} \nabla_{v} g_{1} )g_{2}    \Vert _{p}
\ &\lesssim_{\theta,p} \   || e^{\theta|v|^{2}}g_{2}  ||_{\infty}   \Vert \nabla _{v}g_{1}\Vert _{p},\\
  \big|\iint_{\Omega\times\mathbb{R}^{3}}\nabla_v\Gamma_{\mathrm{gain}}(g_{1},g_{2})g_{3}
\mathrm{d} v \mathrm{d} x \big| \ &    \lesssim_{\theta,p} \   \sum_{(i,j)} 
|| e^{\theta|v|^{2}}g_{i} ||_{\infty} || \nabla_{v}g_{j} ||_{p} || g_{3}||_{q},\\
  \big|\iint_{\Omega\times\mathbb{R}^{3}}  \nu(\sqrt{\mu} \nabla_{v}g_{1}) g_{2}g_{3}
\mathrm{d} v \mathrm{d} x \big| \ &    \lesssim_{\theta,p} \   
|| e^{\theta|v|^{2}}g_{2} ||_{\infty} || \nabla_{v}g_{1} ||_{p} || g_{3}||_{q}.
\end{split}
\end{equation}

\noindent(iv) Let $[Y,W]=[Y(x,v), W(x,v)]\in \Omega\times \mathbb{R}^{3}$. For $0<\theta < \frac{1}{4}$ and $\partial_{\mathbf{e}}$ with $\mathbf{e} \in \{x,v\},$
%
\begin{equation}\notag
\begin{split} 
    &  |\partial_{\mathbf{e}} \Gamma_{\mathrm{gain}} (g,g)(Y , W ) | \\
\lesssim & \  |\partial_{\mathbf{e}} Y|||  e^{\theta|v|^{2}} g||_{\infty} \int_{\mathbb{R}^{3}} \frac{e^{- {C_{\theta}|u-W|^{2}} }}{|u-W|^{2-\kappa}} |\nabla_{x}g(Y,u)| \mathrm{d}u  \\
&  + \ 
 |\partial_{\mathbf{e}} W|||  e^{\theta|v|^{2}} g||_{\infty} \int_{\mathbb{R}^{3}}  \frac{e^{- {C_{\theta}|u-W|^{2}} }}{|u-W|^{2-\kappa}} |\nabla_{v}g(Y,u)| \mathrm{d}u + \langle v\rangle^{\kappa} e^{-  {\theta} |v|^{2}} |\partial_{\mathbf{e}} W|  ||  e^{\theta|v|^{2}}  g||_{\infty}^{2}. 
 \end{split}
\end{equation}

%
%
\end{lemma}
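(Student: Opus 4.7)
The plan is to reduce all four statements to the Grad kernel representation (\ref{grad})--(\ref{k_estimate}) combined with the kernel estimates of Lemma \ref{lemma_K}. \emph{For parts (i)--(ii):} to bound $\Gamma_{\mathrm{gain}}(g_1,g_2)(v)$ pointwise, I would estimate $|g_i(u')|\le\|e^{\theta|\cdot|^2}g_i\|_\infty\,e^{-\theta|u'|^2}$ and exploit the energy conservation $|u|^2+|v|^2=|u'|^2+|v'|^2$ to rewrite
\[
\sqrt{\mu(u)}\,e^{-\theta|u'|^2} \;=\; e^{|v|^2/4}\,e^{-(\frac{1}{4}+\theta)|u'|^2 \,-\, \frac{1}{4}|v'|^2}.
\]
Since $\theta<1/4$ both exponents are strictly negative, so the remaining $\omega,u$-integration against $B(v-u,\omega)$ admits the same Grad change of variables that produced $\mathbf{k}_2$ in (\ref{k_estimate}), and Lemma \ref{lemma_K} yields the claimed kernel bound on $\Gamma_{\mathrm{gain}}$ with the new velocity variable playing the role of $u$. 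The companion $\langle v\rangle^\kappa e^{-\theta|v|^2}$ bound follows by also bounding $g_j$ in $L^\infty$ and applying the second half of Lemma \ref{lemma_K} with $\zeta=0$. The estimate for $\Gamma_{\mathrm{gain},v}$ is identical up to the harmless extra factor $|\nabla_v\sqrt\mu(u)|\lesssim|u|\sqrt{\mu(u)}$ absorbed by the Gaussian, and $\nu(\sqrt\mu g_1)g_2$ is immediate from the $\mathbf{k}_1$ representation of (\ref{grad}) after pulling $g_2(v)$ out in $L^\infty$. The $L^p$ inequalities of (ii) then follow from (i) by Minkowski, since $\sup_v\int\frac{e^{-C_\theta|v-u|^2}}{|v-u|^{2-\kappa}}\,du<\infty$ by Lemma \ref{lemma_K}; the dual pairings are H\"older.

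\emph{Part (iii).} The algebraic heart is the substitution $\eta=v-u$, which recasts
\[
\Gamma_{\mathrm{gain}}(g_1,g_2)(v)=\iint B(\eta,\omega)\sqrt{\mu(v-\eta)}\,g_1(v-\eta+(\eta\cdot\omega)\omega)\,g_2(v-(\eta\cdot\omega)\omega)\,d\omega\,d\eta,
\]
so that $\nabla_v$ commutes with the integral and splits into three pieces: the derivative on $\sqrt{\mu(v-\eta)}$ produces, up to sign, $\Gamma_{\mathrm{gain},v}(g_1,g_2)$, while the derivatives on the two $g_i$'s return $\Gamma_{\mathrm{gain}}(\nabla_v g_1,g_2)$ and $\Gamma_{\mathrm{gain}}(g_1,\nabla_v g_2)$ after undoing the substitution. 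Each piece is then $L^p$-controlled by (i)--(ii); the bound on $\nu(\sqrt\mu\nabla_v g_1)g_2$ is a direct instance of (ii) with $g_1$ replaced by $\nabla_v g_1$, and the dual pairings again follow by H\"older.

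\emph{Part (iv).} Apply the chain rule to $\Gamma_{\mathrm{gain}}(g,g)(Y,W)$ in its defining integral form. Derivatives falling through $Y$ in the first argument of each $g$ produce $\partial_{\mathbf{e}}Y$ times $\Gamma_{\mathrm{gain}}(\nabla_x g,g)(Y,\cdot)(W)+\Gamma_{\mathrm{gain}}(g,\nabla_x g)(Y,\cdot)(W)$, bounded by the first right-hand side term via (i). Derivatives falling through $W$ produce $\partial_{\mathbf{e}}W\cdot(\nabla_v\Gamma_{\mathrm{gain}})(g(Y,\cdot),g(Y,\cdot))(W)$; using the decomposition of (iii) this splits into a $\Gamma_{\mathrm{gain},v}(g,g)(W)$ piece, controlled pointwise by $\lesssim\langle v\rangle^\kappa e^{-\theta|v|^2}\|e^{\theta|\cdot|^2}g\|_\infty^2$ via (i) to yield the third right-hand side term, plus two $\Gamma_{\mathrm{gain}}(g,\nabla_v g)(W)$ pieces that (i) bounds by the second right-hand side term. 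The main obstacle is book-keeping the $W$-dependence through both $B(W-u,\omega)$ and the post-collisional velocities $u',v'$ simultaneously, but the $\eta=W-u$ substitution from (iii) handles all three dependencies in one stroke, so no estimate beyond parts (i)--(iii) is required.
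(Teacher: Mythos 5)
Your proposal is correct and follows essentially the same route as the paper's own proof: bound $g_i$ at the post-collisional point by its weighted $L^\infty$ norm, use energy conservation to redistribute the Gaussian decays, invoke the Grad/Carleman change of variables of \cite{Guo03} to extract the kernel $\mathbf{k}_2$, and then apply Lemma~\ref{lemma_K}; the $L^p$ bounds of (ii), the commutation of $\nabla_v$ under the $\eta=v-u$ shift in (iii), and the chain-rule splitting in (iv) are all exactly the paper's steps. Two small remarks: what you call ``Minkowski'' in (ii) is used correctly (it is the translation-invariance version of Young's convolution inequality, whereas the paper instead splits the kernel by H\"older and applies Fubini, an equivalent computation); and for the pointwise bound $\lesssim\langle v\rangle^{\kappa}e^{-\theta|v|^2}$ it is essential to retain the full Carleman kernel with the cross-diffusion factor $e^{-\varrho(|v|^2-|u|^2)^2/|v-u|^2}$ before applying the second half of Lemma~\ref{lemma_K}, since the simplified kernel of \eqref{Gamma_k} only yields decay $e^{-\theta'|v|^2}$ with $\theta'<\theta$; the paper sidesteps this by pulling out $e^{-\theta|v|^2}$ exactly via energy conservation before any kernel reduction, which is the cleaner bookkeeping.
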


\begin{proof}
\textit{(i)} First we show (\ref{Gamma_k}) for $(i,j)=(1,2)$. Clearly
\[
|\Gamma_{\mathrm{gain}} (g_{1}, g_{2})| \ \lesssim \ |\Gamma_{\mathrm{gain}} (e^{-\theta |v|^{2}} , |g_{2}|)| \times || e^{\theta|v|^{2}} g_{1} ||_{\infty}.
\] 
Then we follow the Grad estimate, page 315 of \cite{Guo03}, to bound $|\Gamma_{\mathrm{gain}} (e^{-\theta |v|^{2}} , |g_{2}|)|$ by $\int_{\mathbb{R}^{2}}\mathbf{k}_{2}(v,u)|g_{2}(u)|\mathrm{d}u$ with different exponent of $\mathbf{k}_{2}(v,u).$ Then we use Lemma \ref{lemma_K} to conclude (\ref{Gamma_k}).

For the second estimate we use (\ref{gain_Gamma})
\begin{equation}\notag
\begin{split}
|\Gamma_{\mathrm{gain}}(g_{1}, g_{2})(v)|& \lesssim \Gamma_{\mathrm{gain}}(e^{-\theta|v|^{2}}, e^{-\theta|v|^{2}}) \times || e^{\theta|v|^{2}} g_{1} ||_{\infty}|| e^{\theta|v|^{2}} g_{2} ||_{\infty} \\
&= e^{-\theta|v|^{2}}\iint B(v-u,\omega) \sqrt{\mu(u)}  e^{-\theta|u|^{2}} \mathrm{d}\omega \mathrm{d}u\times || e^{\theta|v|^{2}} g_{1} ||_{\infty}|| e^{\theta|v|^{2}} g_{2} ||_{\infty}\\
& \lesssim \langle v\rangle^{\kappa} e^{-\theta|v|^{2}}|| e^{\theta|v|^{2}} g_{1} ||_{\infty}|| e^{\theta|v|^{2}} g_{2} ||_{\infty},
\end{split}
\end{equation}
 where we have used $|u^{\prime}|^{2} + |v^{\prime}|^{2} =|u |^{2} + |v |^{2}.$ The third estimate follows similarly with $\nabla_{u}(\sqrt{\mu})(u) \lesssim \mu(u)^{1/2-\delta}$ for any $\delta>0.$ The forth estimate follows from
 \[
 e^{-\theta|v|^{2}}  \nu(\sqrt{\mu} g_{1})(v) \lesssim    \int_{\mathbb{R}^{3}} |v-u|^{\kappa}  e^{-\theta|v|^{2}} \sqrt{\mu(u)} |g_{1}(u)|  \mathrm{d}u
 \lesssim  \int_{\mathbb{R}^{3}}   \frac{e^{-C_{\theta}|v-u|^{2}  }}{|v-u|^{2-\kappa}}  |g_{1}(u)|  \mathrm{d}u
 ,
 \]
 and $e^{-\theta|v|^{2}}|v-u|^{\kappa} \sqrt{\mu(u)}\lesssim  \frac{e^{-C_{\theta}|v-u|^{2}  }}{|v-u|^{2-\kappa}}.$

\vspace{4pt}

\noindent\textit{(ii)} First two estimates are a direct consequence of \textit{(i)}:
\begin{equation}\notag
\begin{split}
|| \Gamma_{\mathrm{gain}} (g_{1}, g_{2}) ||_{p} & \lesssim || e^{\theta |v|^{2}} g_{i} ||_{\infty} \Big|\Big| \Big( \int_{u } \frac{e^{-C_{\theta} |v-u|^{2}  }}{|v-u|^{2-\kappa}}\Big)^{1/q}   \Big(  \int_{u}  \frac{e^{-C_{\theta} |v-u|^{2}  }}{|v-u|^{2-\kappa}} | g_{j} (u)|^{p}   \Big)^{1/p}   \Big|\Big|_{L^{p}_{v}}\\
&\lesssim  || e^{\theta |v|^{2}} g_{i} ||_{\infty}  \Big( \int_{u } \frac{e^{-C_{\theta} | u|^{2}  }}{| u|^{2-\kappa}}\Big)^{1/q}  
\Big( \int_{u}  |g_{j}(u)|^{p}\int_{v}  \frac{e^{-C_{\theta} | v-u|^{2}  }}{| v-u|^{2-\kappa}} \Big)^{1/p}  \\
& \lesssim  || e^{\theta |v|^{2}} g_{i} ||_{\infty}
 \Big( \int_{u } \frac{e^{-C_{\theta} | u|^{2}  }}{| u|^{2-\kappa}}\Big) || g_{j}||_{p}\\ & \lesssim || e^{\theta |v|^{2}} g_{i} ||_{\infty}
   || g_{j}||_{p}.
\end{split}
\end{equation}
Using the forth estimate of \textit{(i)}, the same proof holds for $|| \nu(\sqrt{\mu} g_{1}) g_{2} ||_{p} \lesssim_{\theta,p} || e^{\theta|v|^{2}} g_{2}||_{\infty} || g_{1} ||_{p}.$ 

For the third estimate we use (\ref{Gamma_k}) to bound as
\begin{equation}\notag
\begin{split}
  & || e^{\theta |v|^{2}} g_{i} ||_{\infty} \iiint_{\Omega \times \mathbb{R}^{3} \times \mathbb{R}^{3}}  \frac{e^{-C_{\theta} |v-u|^{2}}}{|v-u|^{2-\kappa}} | g_{j} (x,u)| |g_{3}(x,v)| \mathrm{d}u \mathrm{d}v  \mathrm{d}x \\
 \lesssim & \ \Big( \iiint  \frac{e^{-C_{\theta} |v-u|^{2}}}{|v-u|^{2-\kappa}} | g_{j} (x,u)|^{p}\Big)^{1/p}  \Big( \iiint  \frac{e^{-C_{\theta} |v-u|^{2}}}{|v-u|^{2-\kappa}} | g_{3} (x,u)|^{q}\Big)^{1/q} \\
 \lesssim & \ || g_{j} ||_{p} || g_{3} ||_{q}.
\end{split}
\end{equation}
The same proof holds with exchanging $i$ and $j$. Using the forth estimate of \textit{(i)}, the same proof holds for the forth estimate.

\vspace{4pt}

\noindent\textit{(iii)} We compute the velocity derivative of $\Gamma_{\mathrm{gain}}$ after the change of variable $u:=v-u$:
\begin{equation}\notag\label{Gammav}
\begin{split}
 \nabla_v \Gamma_{\mathrm{gain}}(g_{1},g_{2})
 & \ = \ 
 \nabla_{v} \Big[\int_{\mathbb{R}^{3}}  \int_{\mathbb{S}^{2}} 
 B(u,\omega) \sqrt{\mu(u)}  g_{1} (u-(u\cdot \omega) \omega) g_{2} (v+ (u\cdot \omega) \omega)
 \mathrm{d}\omega\mathrm{d}u\Big]
 \\
& \ = \ \Gamma_{\mathrm{gain}}(g_1,\nabla_v g_2) + \Gamma_{\mathrm{gain}}(\nabla_v g_1, g_2) + \Gamma_{\mathrm{gain},v} (g_1,g_2).\end{split}
\end{equation} 
The two first terms are estimated directly by \textit{(ii)}. For the $\Gamma_{\mathrm{gain},v}$ we use the fact $|\nabla_v(\sqrt{\mu})(v-u)|\le C {\mu(v-u)}^{1/4}$ and then apply \textit{(ii)}. The other estimates are direct consequence of the previous estimates.

\textit{(iv)} It suffices to show the following computation: For $0<\theta< \frac{1}{4},$
\begin{equation}\label{nabla_Gamma}
\begin{split}
&|\partial_{\mathbf{e}} \Gamma_{\mathrm{gain}} (g,g)(Y , W ) |\\
&= \ \ \Big| \partial_{\mathbf{e}}  \int_{\mathbb{S}^{2}}\int_{\mathbb{R}^{3}} |u|^{\kappa} q_{0} \Big(\frac{u}{|u|} \cdot \omega  \Big) e^{-\frac{|u+W|^{2}}{4}}  g(Y,W+[ u \cdot \omega] \omega) g(Y, W + u -(u\cdot \omega) \omega)
\mathrm{d}\omega \mathrm{d}u\Big|\\
 &= \ \ |\Gamma_{\mathrm{gain}}( \partial_{\mathbf{e}}  Y\cdot \nabla_{x}g,g)(Y,W)| + |\Gamma_{\mathrm{gain}}(g, \partial_{\mathbf{e}}  Y\cdot \nabla_{x}g )(Y,W)| \\
 & \ \  + |\Gamma_{\mathrm{gain}}( \partial_{\mathbf{e}}  W\cdot \nabla_{v}g,g)(Y,W) |+ |\Gamma_{\mathrm{gain}}(g, \partial_{\mathbf{e}}  W\cdot \nabla_{v}g )(Y,W)|\\
 & \ \ +\Big|\int_{\mathbb{S}^{2}}\int_{\mathbb{R}^{3}} |u|^{\kappa} q_{0} \Big(\frac{u}{|u|} \cdot \omega  \Big)
 (\frac{-1}{2}) (u+W)\cdot  \partial_{\mathbf{e}} W
  \sqrt{\mu(u+W)}\\
  & \ \ \ \ \ \ \ \ \    \ \ \ \ \ \ \ \ \ \ \   \ \ \ \ \ \ \ \ \ \ \ \ \ \ \ \ \ \ \ \ \ \  \ \times  g(Y,W+[ u \cdot \omega] \omega) g(Y, W + u -(u\cdot \omega) \omega)
\mathrm{d}\omega \mathrm{d}u\Big|\\
&  \lesssim | \partial_{\mathbf{e}}  Y|||  e^{\theta|v|^{2}} g||_{\infty} \int_{\mathbb{R}^{3}}  \frac{e^{-C_{\theta}|u-W|^{2}  }}{|u-W|^{2-\kappa}} |\partial_{x}g(Y,u)| \mathrm{d}u \\
& \ \ +
 | \partial_{\mathbf{e}}  W|||  e^{\theta|v|^{2}} g||_{\infty} \int_{\mathbb{R}^{3}}  \frac{e^{-C_{\theta}|u-W|^{2}  }}{|u-W|^{2-\kappa}} |\partial_{v}g(Y,u)| \mathrm{d}u + |\partial_{\mathbf{e}} W| \langle v\rangle^{\kappa}e^{-\theta|v|^{2}} ||   e^{\theta|v|^{2}} g||_{\infty}^{2},
\end{split}
\end{equation}
 where we have used the change of variables $u-V\mapsto u$.

\end{proof}

\begin{lemma}[Local Existence]\label{local_existence}
For $0\leq \theta< 1/4 $, if $||  e^{\theta|v|^{2}} f_{0 } ||_{\infty} < + \infty$ then there exists $T>0$ depending on $||  e^{\theta|v|^{2}} f_{0 } ||_{\infty}$ such that there exists unique $F=\mu + \sqrt{\mu}f$ solves the Boltzmann equation (\ref{Boltzmann_F}) in $[0,T]$ and satisfies the initial condition and boundary conditions (\ref{diffuseBC}), (\ref{specularBC}), (\ref{bbBC}) respectively and $F(t,x,v) \geq 0$ on $[0,T]\times \bar{\Omega} \times\mathbb{R}^{3}$ and $f$ satisfies, for some $0<\theta^{\prime} < \theta,$
\begin{equation}\label{bounded}
\sup_{0 \leq t\leq T}||   e^{\theta^\prime|v|^{2}}f(t) ||_{\infty} \lesssim P(||   e^{\theta|v|^{2}} f_{0 } ||_{\infty}),
\end{equation}
for some polynomial $P$. Moreover if $f_{0}$ is continuous and satisfies the compatibility conditions (\ref{compatibility_condition_1}), (\ref{compatibility_specular}), (\ref{compatibility_bb}) respectively then $f$ is continuous away from the grazing set $\gamma_{0}$.  

Moreover, for $0\leq \bar{\theta}  < \frac{1}{4}$, if $||   e^{ \bar{\theta}  |v|^{2}} \partial_{t}f_{0 } ||_{\infty} \equiv \big|\big|  e^{\bar{\theta} |v|^{2}} \frac{-v\cdot \nabla_{x}F_{0} + Q(F_{0},F_{0})  }{\sqrt{\mu}} \big|\big|_{\infty} < + \infty$ and compatibility conditions (\ref{compatibility_condition_1}), (\ref{compatibility_specular}), (\ref{compatibility_bb}) respectively, then 
\begin{equation}\label{bounded_t}
\sup_{0 \leq t\leq T^{*}}|| e^{\bar{\theta} |v|^{2}} \partial_{t}f(t) ||_{\infty} \lesssim P( ||   e^{ \bar{\theta} |v|^{2}} \partial_{t}f_{0 } ||_{\infty}) +  {P}( ||   e^{\theta|v|^{2}} f_{0 } ||_{\infty}),
\end{equation}
for some polynomial $P$. 

Furthermore for the diffuse and bounce-back boundary conditions if $F_{0} = \mu + \sqrt{\mu} g_{0}$ with $||  e^{\theta|v|^{2}} g_{0 } ||_{\infty}\ll 1$ for $0 < \theta < \frac{1}{4}$ then the results hold for all $t\geq0$. For the specular reflection boundary condition, if $\xi$ is real analytic ($\xi$ is real analytic), and if $||  e^{\theta|v|^{2}} g_{0 } ||_{\infty}\ll 1$ for $0 < \theta < \frac{1}{4}$ then the results hold for all $t\geq0$. . 
\end{lemma}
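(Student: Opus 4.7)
The plan is to run the standard weighted-$L^\infty$ monotone iteration of Guo \cite{Guo10}, adapted to each of the three boundary conditions. Define iterates $f^{n+1}$ by solving the linearised problem
$$\partial_t f^{n+1}+v\cdot\nabla_x f^{n+1}+\nu(\sqrt{\mu}f^n)f^{n+1}=\Gamma_{\mathrm{gain}}(f^n,f^n),\qquad f^{n+1}(0)=f_0,$$
with the corresponding boundary condition (\ref{diffuseBC}), (\ref{specularBC}), or (\ref{bbBC}), starting from $f^0\equiv f_0$. Each step is a linear transport equation that can be solved explicitly along the generalized characteristics $(X_{\mathbf{cl}},V_{\mathbf{cl}})$ of Definition \ref{cycles} via Duhamel. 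Since $\Gamma_{\mathrm{gain}}\ge 0$, $\nu(\sqrt{\mu}f^n)\ge 0$ whenever $f^n\ge 0$, and each boundary operator preserves nonnegativity (positive weighted average for diffuse, positive permutations for specular and bounce-back), an induction gives $F^{n+1}=\sqrt{\mu}f^{n+1}\ge 0$ for every $n$.

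To close the a priori bound (\ref{bounded}), multiply by the weight $e^{\theta'|v|^2}$ with some $\theta'<\theta$. Lemma \ref{lemma_operator}(i) yields $e^{\theta'|v|^2}|\Gamma_{\mathrm{gain}}(f^n,f^n)(v)|\lesssim \langle v\rangle^{\kappa}e^{-(\theta-\theta')|v|^2}\|e^{\theta|v|^2}f^n\|_\infty^2$, uniformly bounded in $v$, while the loss term $\nu(\sqrt{\mu}f^n)f^{n+1}$ contributes only a helpful exponential damping along characteristics. The diffuse boundary integral in (\ref{diffuseBC}) is bounded in weighted $L^\infty$ because $c_\mu\sqrt{\mu(v)}$ dominates $e^{-\theta'|v|^2}$ and absorbs any $L^1_u$ moment of $f^n$, and the specular/bounce-back boundary values trivially preserve the norm. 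Setting $M_n(t)=\sup_{0\le s\le t}\|e^{\theta'|v|^2}f^n(s)\|_\infty$, the Duhamel representation produces a Volterra-type inequality $M_{n+1}(t)\le \|e^{\theta|v|^2}f_0\|_\infty+Ct\,P\bigl(M_n(t)\bigr)$ with $P$ polynomial (coming from the quadratic $\Gamma_{\mathrm{gain}}$). For $T=T(\|e^{\theta|v|^2}f_0\|_\infty)$ chosen small this iteration is stable, and an analogous contraction bound on $f^{n+1}-f^n$ in the same norm gives convergence to a unique solution $f$ of (\ref{boltzamnn_f}).

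For the continuity claim away from $\gamma_0$, one uses that whenever $(x,v)\notin\gamma_0$ the generalized backward trajectory $(X_{\mathbf{cl}},V_{\mathbf{cl}})$ depends continuously on $(t,x,v)$ in the convex setting (see \cite{Guo10,Kim11}), and the compatibility conditions (\ref{compatibility_condition_1}), (\ref{compatibility_specular}), (\ref{compatibility_bb}) match $f_0$ to the incoming boundary value, so no jump appears at the first bounce. The estimate (\ref{bounded_t}) on $\partial_t f$ is obtained by running the same iteration for $g:=\partial_t f$, which solves a linear-in-$g$ equation with source $\Gamma_{\mathrm{gain}}(g,f)+\Gamma_{\mathrm{gain}}(f,g)-\nu(\sqrt{\mu}g)f-\nu(\sqrt{\mu}f)g$; each boundary condition is preserved under $\partial_t$ (cf.\ (\ref{boundary_t}) in the diffuse case), and the compatibility conditions guarantee that the formal initial datum $\partial_t f_0$ given by (\ref{f0}) satisfies the same boundary condition, so the identical Gronwall argument closes. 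Finally, the global-in-time assertion for $F_0=\mu+\sqrt{\mu}g_0$ with $\|e^{\theta|v|^2}g_0\|_\infty\ll 1$ is not proved from scratch here: it is exactly the small-data $L^\infty$ stability of \cite{Guo10,EGKM} for the diffuse and bounce-back cases, and its extension under analyticity of $\xi$ for specular reflection given in \cite{Guo10}.

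The main obstacle is the specular case, for which in a generic (non-spherical) convex domain the trajectory may undergo many bounces on $[0,T]$. This is handled by the velocity lemma (Lemma \ref{velocity_lemma}), which keeps $\alpha$ uniformly bounded away from zero along any trajectory with $(x,v)\notin\gamma_0$, forcing locally finitely many reflections on $[0,T]$ and allowing the Duhamel representation together with the weighted-$L^\infty$ closure to extend across each specular cycle without accumulating unbounded boundary contributions.
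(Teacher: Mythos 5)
Your iteration scheme, positivity argument, continuity reasoning, and references to the literature for the global-in-time statements all parallel the paper's proof, so the overall plan is the right one. However, the heart of the argument — closing the weighted $L^\infty$ estimate along the Duhamel representation — has a genuine gap.

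You propose to close the Volterra inequality in the fixed norm $M_n(t)=\sup_{0\le s\le t}\|e^{\theta'|v|^2}f^n(s)\|_\infty$ with $\theta'<\theta$ fixed. The estimate you cite,
\[
e^{\theta'|v|^2}\big|\Gamma_{\mathrm{gain}}(f^n,f^n)(v)\big|\lesssim \langle v\rangle^{\kappa}e^{-(\theta-\theta')|v|^2}\big\|e^{\theta|v|^2}f^n\big\|_\infty^2,
\]
is indeed uniformly bounded in $v$, but the right-hand side involves the \emph{stronger} norm $\|e^{\theta|v|^2}f^n\|_\infty$, not $M_n=\|e^{\theta'|v|^2}f^n\|_\infty$. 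The iterates $f^n$ for $n\ge1$ do not remain in the $\theta$-weighted space: from Duhamel, $e^{\theta|v|^2}f^{1}(t)\lesssim \|e^{\theta|v|^2}f_0\|_\infty + t\,\langle v\rangle^{\kappa}\|e^{\theta|v|^2}f_0\|_\infty^2$, which is unbounded in $v$. If instead you apply Lemma~\ref{lemma_operator}(i) at level $\theta'$ so that only $M_n$ appears, you get $e^{\theta'|v|^2}|\Gamma_{\mathrm{gain}}(f^n,f^n)(v)|\lesssim\langle v\rangle^{\kappa}M_n^2$, and the $\langle v\rangle^{\kappa}$ factor is not integrable against the constant-in-$v$ Duhamel kernel; the loss term $\nu(\sqrt{\mu}f^n)$ gives no help here since it has no uniform lower bound in the non-perturbative regime $F=\sqrt{\mu}f$. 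So the inequality $M_{n+1}(t)\le \|e^{\theta|v|^2}f_0\|_\infty+Ct\,P(M_n(t))$ does not actually follow, and the iteration does not close at a fixed weight.

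The paper closes the estimate with the \emph{time-decaying weight} $e^{(\theta-t)|v|^2}$ rather than a fixed $e^{\theta'|v|^2}$. Using the collisional energy identity $|u'|^2+|v'|^2=|u|^2+|v|^2$, the weighted Duhamel integrand becomes $e^{(\theta-t)|v|^2}e^{-(\theta-s)|u'|^2}e^{-(\theta-s)|v'|^2}=e^{-(t-s)|v|^2}e^{-(\theta-s)|u|^2}$, so the gain term acquires a genuine exponential damping $e^{-(t-s)|v|^2}$ in the time integral. Splitting into $|v|\le N$ and $|v|>N$, the contribution is then bounded by $\{N^{-2}+Nt\}$ times $\bigl(\sup_{s\le t}\|e^{(\theta-s)|v|^2}f^m(s)\|_\infty\bigr)^2$, which can be made small by taking $N\gg1$ and then $T\ll\theta$, closing the uniform bound with $\theta'=\theta-T$. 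This mechanism — monotone weight decay in time traded for velocity decay in the Duhamel kernel — is the essential ingredient missing from your proposal; without it the $\langle v\rangle^\kappa$ growth from the hard-potential kernel cannot be absorbed.
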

\begin{proof}
 We use the positive preserving iteration of \cite{Guo10,Kim11}
\begin{equation}\label{positive_iteration}
\partial_{t} F^{m+1}  + v\cdot \nabla_{x} F^{m+1} + \nu(F^{m}) F^{m+1} = Q_{\mathrm{gain}}(F^{m},F^{m}), \ \ \ F^{m+1}|_{t=0}=F_{0}\geq 0,
\end{equation}
which is equivalent to, with $F^{m}:= \sqrt{\mu}f^{m},$ 
\begin{equation}\label{fm}
\partial_{t} f^{m+1} + v\cdot \nabla_{x} f^{m+1} + \nu(F^{m}) f^{m+1}  = \Gamma_{\mathrm{gain}}(f^{m},f^{m}), \ \ \ f^{m+1}|_{t=0}=f_{0}.
\end{equation}
The starting of this iteration is $F^{0}\equiv F_{0}\geq 0, \ f^{0} \equiv f_{0}$ and let $F^{-m}\equiv F^{0} , \ f^{-m}\equiv f^{0}$ for all $m\in\mathbb{N}$. 

Along the trajectory we have the Duhamel formula (ignoring the boundary condition):
\begin{equation}\notag
\begin{split}
f^{m+1}(t,x,v)& = 
e^{- \int^{t}_{0}\nu(\sqrt{\mu} f^{m})(s,  X_{\mathbf{cl}}(s), V_{\mathbf{cl}}(s)) \mathrm{d}s  } f_{0} ( X_{\mathbf{cl}}(0),  V_{\mathbf{cl}}(0)) \\
& \ + \int^{t}_{0} e^{- \int^{t}_{s}\nu(\sqrt{\mu} f^{m})(\tau,  X_{\mathbf{cl}}(\tau),  V_{\mathbf{cl}}(\tau))}  \Gamma_{\mathrm{gain}}(f^{m},f^{m})(s,  X_{\mathbf{cl}}(s),  V_{\mathbf{cl}}(s))  \mathrm{d}s.
\end{split}
\end{equation}
The local existence theorem without boundary is standard:
\begin{equation}\notag
\begin{split}
&|e^{(\theta-t) |v|^{2}}  f^{m+1}(t,x,v) |\\
 \lesssim & \ |e^{(\theta-t) |v|^{2}}  f_{0} |  +  \int^{t}_{0}  | \Gamma_{\mathrm{gain}} (f^{m},f^{m})(s,  X_{\mathbf{cl}}(s),  V_{\mathbf{cl}}(s)) | \mathrm{d}s\\
 \lesssim& \ ||e^{ \theta  |v|^{2}}  f_{0} ||_{\infty} + e^{(\theta-t) |v|^{2}}\int^{t}_{0} \iint_{\mathbb{R}^{3} \times \mathbb{S}^{2}}
B(V_{\mathbf{cl}}(s)-u,\omega) \sqrt{\mu(u)} |f^{m}(s, X_{\mathbf{cl}}(s),u^{\prime})|  |f^{m}(s, X_{\mathbf{cl}}(s),v^{\prime})| \\
\lesssim & \ ||e^{ \theta  |v|^{2}}  f_{0} ||_{\infty} + \big(\sup_{0 \leq s\leq t} || e^{(\theta-s)|v|^{2}} f^{m}(s)||_{\infty}\big)^{2}
\int_{0}^{t}\iint B(v-u,\omega) \sqrt{\mu(u)} e^{(\theta-t) |v|^{2}} e^{-(\theta-s) |u^{\prime}|^{2}} e^{-(\theta-s) |v^{\prime}|^{2}}\\
\lesssim & \ ||e^{ \theta  |v|^{2}}  f_{0} ||_{\infty} + \big(\sup_{0 \leq s\leq t} || e^{(\theta-s)|v|^{2}} f^{m}(s)||_{\infty}\big)^{2}
\int_{0}^{t} e^{-(t-s)|v|^{2}} \int_{u} |v-u|^{\kappa} \sqrt{\mu(u)} \\
\lesssim & \ ||e^{ \theta  |v|^{2}}  f_{0} ||_{\infty} + \big(\sup_{0 \leq s\leq t} || e^{(\theta-s)|v|^{2}} f^{m}(s)||_{\infty}\big)^{2}
\int_{0}^{t} e^{-(t-s)|v|^{2}} \langle v\rangle \{ \mathbf{1}_{|v|>N} + \mathbf{1}_{|v|\leq N}\} \mathrm{d}s\\
\lesssim & \ ||e^{ \theta  |v|^{2}}  f_{0} ||_{\infty} + \big(\sup_{0 \leq s\leq t} || e^{(\theta-s)|v|^{2}} f^{m}(s)||_{\infty}\big)^{2}
\int_{0}^{t} e^{-(t-s)|v|^{2}} \langle v\rangle \{ \mathbf{1}_{|v|>N} + \mathbf{1}_{|v|\leq N}\} \mathrm{d}s\\
\lesssim & \ ||e^{ \theta  |v|^{2}}  f_{0} ||_{\infty} + \big(\sup_{0 \leq s\leq t} || e^{(\theta-s)|v|^{2}} f^{m}(s)||_{\infty}\big)^{2}
\Big\{ \frac{1}{N^{2}} + Nt\Big\}.
\end{split}
\end{equation}

Now we choose sufficiently large $N\gg 1$ and then small $0< T\ll \theta$ to obtain the uniform-in-$m$ estimate
\begin{equation}\label{local_estimate}
\sup_{0 \leq t \leq T}|| e^{\theta^{\prime}|v|^{2} } f^{m+1}(t)||_{\infty} \lesssim ||e^{ \theta  |v|^{2}}  f_{0} ||_{\infty}, 
\end{equation}
for some $0< \theta^{\prime} < \theta.$

With the boundary condition the Duhamel form is evolved with boundary conditions accordingly:

\textit{(i)} Diffuse reflection boundary condition, on $(x,v)\in\gamma_{-},$
\begin{equation} \label{diffuse_BC_m}
\begin{split}
f^{m +1}(t,x,v)  \ = \ c_{\mu} \sqrt{\mu(v)} \int_{n(x)\cdot u>0} f^{m}(t,x,u) \sqrt{\mu(u)} \{n(x)\cdot u\} \mathrm{d}u. 
\end{split}
\end{equation}

\textit{(ii)} Specular reflection boundary condition, on $(x,v)\in\gamma_{-},$
\begin{equation}\label{specular_BC_m}
f^{m+1}(t,x,v)  \ = \ f^{m}(t,x,R_{x} v),
\end{equation}
where $R_{x}v= v-2n(x) (n(x)\cdot v).$ 

\textit{(iii)} Bounce-back reflection boundary condition, on $(x,v)\in\gamma_{-},$ 
\begin{equation}\label{bb_BC_m}
f^{m+1}(t,x,v)   \ = \ f^{m}(t,x,-v).
\end{equation}
We follow the proof of \cite{Guo10,Kim11} to have same estimate of (\ref{local_estimate}).
\end{proof}

 \vspace{8pt}

\section{\large{Traces and the In-flow Problems}}

\vspace{4pt}


Recall the almost grazing set $\gamma _{+}^{\varepsilon }$ defined in (\ref%
{def:gamma_epsilon}). We first estimate the outgoing trace on $\gamma
_{+}\setminus \gamma _{+}^{\varepsilon }$. We remark that for the outgoing part, our estimate is global in time without cut-off, in contrast to the general trace
theorem.

\begin{lemma}
\label{le:ukai} Assume that $\varphi =\varphi (v)$ is $L_{loc}^{\infty }(%
\mathbb{R}^{3})$. For any small parameter $\varepsilon >0$, there exists a
constant $C_{\varepsilon ,T,\Omega }>0$ such that for any $h$ in $%
L^{1}([0,T],L^{1}(\Omega \times \mathbb{R}^{3}))$ with $\partial _{t}h+v\cdot
\nabla _{x}h+\varphi h$ is in $L^{1}([0,T],L^{1}(\Omega \times \mathbb{R}%
^{3}))$, we have for all $0\leq t\leq T,$
\begin{equation*}
\int_{0}^{t}\int_{\gamma _{+}\setminus \gamma _{+}^{\varepsilon }}|h|\mathrm{%
d}\gamma \mathrm{d}s\leq C_{\varepsilon ,T,\Omega }\left[ \
||h_{0}||_{1}+\int_{0}^{t}\big\{\Vert h(s)\Vert _{1}+\big{\Vert} \lbrack \partial
_{t}+v\cdot \nabla _{x}+\varphi ]h(s)\big{\Vert} _{1}\big\}\mathrm{d}s\ \right] .
\end{equation*}%
Furthermore, for any $(s,x,v)$ in $[0,T]\times \Omega \times \mathbb{R}^{3}$
the function $h(s+s^{\prime },x+s^{\prime }v,v)$ is absolutely continuous in
$s^{\prime }$ in the interval $[-\min \{t_{\mathbf{b}}(x,v),s\},\min \{t_{%
\mathbf{b}}(x,-v),T-s\}]$.
\end{lemma}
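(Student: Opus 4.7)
The plan is to represent $h$ at the outgoing boundary via a Duhamel formula along backward characteristics and convert the resulting boundary integral into an interior integral by a change of variables. The geometric input is that on $\gamma_+ \setminus \gamma_+^\varepsilon$ one has the uniform bounds $n(x)\cdot v \geq \varepsilon$ and $|v| \leq 1/\varepsilon$; by strict convexity of $\Omega$ (and smoothness of $n$), there exists $\delta = \delta(\varepsilon, \Omega) > 0$ such that for every $(x,v) \in \gamma_+ \setminus \gamma_+^\varepsilon$ the segment $\{x - \tau v: \tau \in [0,\delta]\}$ lies in $\bar\Omega$, and the map $\Phi_v: \{x \in \partial\Omega: n(x)\cdot v \geq \varepsilon\} \times (0,\delta) \to \Omega$, $(x,\tau) \mapsto x - \tau v$, is a diffeomorphism onto its image with Jacobian $\mathrm{d} y = (n(x)\cdot v)\,\mathrm{d} S_x\,\mathrm{d}\tau$.

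Granting the ``furthermore'' part (so that one has a rigorous Duhamel representation along characteristics), for $\delta' \in [0, \min\{s, \delta\}]$, $(x,v) \in \gamma_+\setminus\gamma_+^\varepsilon$, and a.e. $s \in [0,t]$, I would use the identity
\begin{equation*}
h(s,x,v) = e^{-\delta'\varphi(v)} h(s-\delta', x-\delta'v, v) + \int_0^{\delta'} e^{-\tau\varphi(v)}\big[(\partial_t + v\cdot\nabla_x + \varphi)h\big](s-\tau, x-\tau v, v)\,\mathrm{d}\tau.
\end{equation*}
Since $\varphi \in L^\infty_{\mathrm{loc}}$, the exponential factors are bounded for $|v| \leq 1/\varepsilon$. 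I would then take absolute values, average over $\delta' \in [0, \min\{s,\delta\}]$, weight by $|n\cdot v| \leq 1/\varepsilon$, and apply the change of variables $\Phi_v$ to transform the boundary integration over $\partial\Omega \times (0,\delta)$ into an interior integration over $\Omega$. The pullback term contributes $C_\varepsilon \int_0^t \|h(s')\|_1 \,\mathrm{d} s'$ when $s\geq \delta$, together with an initial-data contribution $C_\varepsilon \|h_0\|_1$ coming from the boundary layer $s \in [0,\delta]$ (where one replaces $\delta'$ by $s$ and uses $h(0,\cdot)=h_0$); the source term contributes $C_\varepsilon \int_0^t \|(\partial_t + v\cdot\nabla_x + \varphi)h(s')\|_1\,\mathrm{d} s'$. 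Summing produces the stated estimate.

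The main obstacle is justifying the Duhamel identity and establishing the absolute continuity claim, which requires extracting pointwise information from distributional data. I would use a DiPerna--Lions type mollification: extend $h$ and $g := (\partial_t + v\cdot\nabla_x + \varphi)h$ by zero outside $[0,T]\times \bar\Omega \times \mathbb{R}^3$, and let $h_\eta$ be a smooth convolution approximation in all variables. Then $s'\mapsto h_\eta(s+s', x+s'v, v)$ is $C^1$ with derivative $[(\partial_t + v\cdot\nabla_x) h_\eta](s+s', x+s'v, v)$. The Friedrichs commutator lemma gives $(\partial_t + v\cdot\nabla_x) h_\eta - g_\eta + \varphi h_\eta \to 0$ in $L^1_{t,x,v}$. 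Applying Fubini to a chosen subsequence converging a.e.\ and passing to the limit, one obtains that for a.e.\ $(s,x,v)$ the function $s'\mapsto h(s+s', x+s'v, v)$ is absolutely continuous on $[-\min\{t_{\mathbf{b}}(x,v),s\}, \min\{t_{\mathbf{b}}(x,-v),T-s\}]$ with a.e.\ derivative $(g - \varphi h)(s+s', x+s'v, v)$; integrating this ODE from $-\delta'$ to $0$ yields the required Duhamel identity, and the rest of the argument proceeds as sketched.
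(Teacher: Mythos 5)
Your proof is correct and follows essentially the same route as the paper's. Both arguments rest on the same three ingredients: (i) the Duhamel representation of $h$ along backward characteristics starting from $\gamma_+$; (ii) the lower bound $t_{\mathbf{b}}(x,v)\gtrsim_\Omega \varepsilon^3$ on $\gamma_+\setminus\gamma_+^\varepsilon$ coming from strict convexity, which justifies integrating (or, in your phrasing, averaging over $\delta'$) the Duhamel identity over a characteristic segment of length bounded below; and (iii) the Jacobian identity $|n(x)\cdot v|\,\mathrm{d}S_x\,\mathrm{d}\tau=\mathrm{d}y$ converting the boundary integral into an interior $L^1$ norm, with a separate treatment of the initial layer $s\in[0,\delta]$ to produce the $\|h_0\|_1$ term. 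The only stylistic divergence is that the paper packages step (iii) into the integration identity (\ref{integration}) quoted from Cercignani--Illner--Pulvirenti and verifies the Jacobian explicitly only for the initial-layer term, while you perform the change of variables uniformly and explicitly; and you supply a DiPerna--Lions mollification argument for the ``furthermore'' absolute-continuity statement, which the paper leaves implicit. Neither difference changes the substance.
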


\begin{proof}
With a proper change of variables (e.g. Page 247 in \cite{CIP}) we have
\begin{equation}\label{integration}
\begin{split}
&  \int_0^T \iint_{\Omega\times\mathbb{R}^3} h(t,x,v\mathrm{d}) \mathrm{d} v\mathrm{d} x \mathrm{d} t\\
& = \int^0_{-\min\{T,t_{\mathbf{b}}(x,v)\}} \iint_{\Omega\times\mathbb{R}^3} h(T+s,x+sv,v) \mathrm{d} v\mathrm{d} x\mathrm{d} s \\
& \  + \int_0^{\min\{T,t_{\mathbf{b}}(x,-v)\}} \iint_{\Omega\times\mathbb{R}^3} h(0+s,x+sv,v) \mathrm{d} v \mathrm{d} x \mathrm{d} s \\
&  \ + \int_0^T \int_{\gamma_+} \int^0_{-\min\{t,t_{\mathbf{b}}(x,v)\}}
h(t+s,x+sv,v) \mathrm{d} s \mathrm{d} \gamma
\mathrm{d} t \\
& \ + \int_0^T \int_{\gamma_-} \int_0^{\min\{T-t,t_{\mathbf{b}}(x,-v)\}} h(t+s,x+sv,v) \mathrm{d} s \mathrm{d} \gamma\mathrm{d} t. 
\end{split}
\end{equation}
For $(t,x,v)\in [0,T]\times\gamma_+$ and $0 \leq s\leq \min \{t,t_{\mathbf{b}}(x,v)\},$
\begin{equation*}
\begin{split}
h(t,x,v)  = h(t-s,x-sv,v) e^{-\varphi(v)s}  + \int^0_{-s} e^{\varphi(v)\tau} [\partial_t  h+ v\cdot \nabla_x h + \varphi(v) h](t+\tau, x+\tau v,v) \mathrm{d} \tau.
\end{split}
\end{equation*}
Now for $(t,x,v) \in [\varepsilon_1,T]\times \gamma_+\setminus \gamma_+^\varepsilon,$
we integrate over $\int_{\varepsilon_1}^T \int_{\gamma_+\setminus\gamma_+^\varepsilon}
\int^0_{\min\{t,t_{\mathbf{b}}(x,v)\}}$ to get
\begin{equation}\notag
\begin{split}
&\min\{\varepsilon_1 , \varepsilon^3\} \times \int^T_{\varepsilon_1} \int_{\gamma_+\setminus\gamma^\varepsilon_+} | h(t,x,v) | \mathrm{d} \gamma \mathrm{d} t \\
& \lesssim  \ \min_{[\varepsilon_1,T]\times [\gamma_+\setminus\gamma_+^\varepsilon]}\{t,t_{\mathbf{b}}(x,v)\} \times \int^T_{\varepsilon_1} \int_{\gamma_+\setminus\gamma^\varepsilon_+} |h(t,x,v) |\mathrm{d} \gamma \mathrm{d} t\\
& \lesssim \int_0^T  \int_{\gamma_+} \int^0_{- \min \{t,t_{\mathbf{b}}(x,v)\}}|h(t+s,x+sv,v)|   \mathrm{d} s  \mathrm{d} \gamma \mathrm{d} t \\
&  \ \ + T \int_0^T \int_{\gamma_+} \int^0_{-\min\{t,t_{\mathbf{b}}(x,v)\}}  |\partial_t h + v\cdot \nabla_x h + \varphi h|(t+ \tau, x+\tau v,v)\mathrm{d} \tau \mathrm{d} \gamma\mathrm{d} t\\
 & \lesssim  \int_0^T ||h(t)||_1\mathrm{d} t + \int^T_0 || [\partial_t  + v\cdot \nabla_x   + \varphi]h(t)  ||_1\mathrm{d} t,
\end{split}
\end{equation}
where we have used the integration identity (\ref{integration}), and (40) of \cite{Guo10} to obtain
$t_{\mathbf{b}}(x,v) \geq C_{\Omega}  {|n(x)\cdot v|}/ {|v|^2} \geq C_{\Omega}  \varepsilon^3$ for $(x,v) \in \gamma_+\setminus \gamma_+^\varepsilon$. Now we choose $\varepsilon_1 = \varepsilon_1(\Omega,\varepsilon)$ as
\[
\varepsilon_1 \leq C_{\Omega} \varepsilon^3 \leq \inf_{(x,v)\in \gamma_+\setminus \gamma_+^\varepsilon} t_{\mathbf{b}}(x,v).
\]
We only need to show, for $\varepsilon_1 \leq  C_{\Omega} \varepsilon^3,$
\begin{equation}\notag
\begin{split}
\int^{\varepsilon_1}_0 \int_{\gamma_+\setminus \gamma_+^\varepsilon} |h(t,x,v)| \mathrm{d} \gamma \mathrm{d} t \lesssim_{\Omega, \varepsilon,\varepsilon_1} || h_0||_1 + \int_0^{\varepsilon_1} ||[\partial_t + v\cdot \nabla_x +\varphi] h(t) ||_1\mathrm{d} t.
\end{split}
\end{equation}
Because of our choice $\varepsilon$ and $\varepsilon_1$,  $t_{\mathbf{b}}(x,v) >t$ for all $(t,x,v) \in [0,\varepsilon_1]\times \gamma_+\setminus\gamma_+^\varepsilon.$ Then
\[
|h(t,x,v)| \lesssim |h_0(x-tv,v)|+ \int_0^t \Big{|}[\partial_t + v\cdot \nabla_x + \varphi(v)]h(s,x-(t-s)v,v)\Big{|}\mathrm{d} s,
\]
where the second contribution is bounded, from (\ref{integration}), by
\begin{equation}\notag
\begin{split}
&\int^{\varepsilon_1}_0 \int_{\gamma_+\setminus \gamma_+^\varepsilon} \int_0^t \Big{|}[\partial_t + v\cdot \nabla_x + \varphi(v)]h(s,x-(t-s)v,v)\Big{|}\mathrm{d} s \mathrm{d} \gamma \mathrm{d} t \\
\lesssim& \int_0^{\varepsilon_1} || [\partial_t + v\cdot \nabla_x + \varphi(v)]h(t)||_1 \mathrm{d} t.
\end{split}
\end{equation}

Consider the initial datum contribution of $|h_0(x-tv,v)|$: We may assume $\partial_{x_3}\xi(x_0)\neq 0$. By the implicit function theorem $\partial\Omega$ can be represented locally by the graph $\eta=\eta(x_1,x_2)$ satisfying $\xi(x_1,x_2,\eta(x_1,x_2))=0$ and $(\partial_{x_1}\eta(x_1,x_2),\partial_{x_2}\eta(x_1,x_2))= (-\partial_{x_1}\xi / \partial_{x_3}\xi,-\partial_{x_2}\xi / \partial_{x_3}\xi )$ at $(x_1,x_2,\eta(x_1,x_2)).$ We define the change of variables
\begin{equation}\notag
\begin{split}
  (x,t)\in \partial\Omega \cap\{x\sim  x_0\} \times [0,\varepsilon_1] \mapsto y = x-tv \in \bar{\Omega},
\end{split}
\end{equation}
where $
\left|\frac{\partial y}{\partial(x,t)}\right|  = -v_1\frac{\partial_{x_1}\xi}{\partial_{x_3}\xi} - v_2 \frac{\partial_{x_2}\xi}{\partial_{x_3}\xi}- v_3.
$

Therefore
\begin{equation*}
\begin{split}
|n(x)\cdot v|\mathrm{d} S_x \mathrm{d} t &= (n(x)\cdot v) \Big[1+ \big(\frac{\partial_{x_1}\xi}{\partial_{x_3}\xi}\big)^2
+ \big(\frac{\partial_{x_2}\xi}{\partial_{x_3}\xi}\big)^2 \Big]^{1/2} \mathrm{d} x_1 \mathrm{d} x_2 \mathrm{d} t\\
& = \left[ -v_1\frac{\partial_{x_1}\xi}{\partial_{x_3}\xi} - v_2 \frac{\partial_{x_2}\xi}{\partial_{x_3}\xi}- v_3 \right]\mathrm{d} x_1 \mathrm{d} x_2 \mathrm{d} t = \mathrm{d} y,
\end{split}
\end{equation*}
and
$\int_0^{\varepsilon_1} \int_{\gamma_+\setminus \gamma_+^\varepsilon \cap \{x\sim x_0\}}
 |h_0(x-tv,v)| \mathrm{d} \gamma \mathrm{d} t \ \lesssim_{\varepsilon,\varepsilon_1, x_0}
\ \iint_{\Omega\times\mathbb{R}^3} |h_0(y,v)| \mathrm{d} y \mathrm{d} v.$ Since $\partial\Omega$
is compact we can choose a finite covers of $\partial\Omega$ and repeat the same argument for each piece to conclude \begin{equation}\notag
\int_0^{\varepsilon_1} \int_{\gamma_+\setminus \gamma_+^\varepsilon } |h_0(x-tv,v)| \mathrm{d} \gamma \mathrm{d} t \ \lesssim_{\Omega, \varepsilon,\varepsilon_1} \ \iint_{\Omega\times\mathbb{R}^3} |h_0(y,v)| \mathrm{d} y \mathrm{d} v.
\end{equation}\end{proof}

\begin{lemma}[Green's Identity]
\label{Green} For $p\in[1,\infty)$ assume that $f,\partial_t f + v\cdot
\nabla_x f \in L^p ([0,T];L^p(\Omega\times\mathbb{R}^3))$ and $%
f_{\gamma_-} \in L^p ([0,T];L^p(\gamma))$. Then $f  \in C^0( [0,T];
L^p(\Omega\times\mathbb{R}^3))$ and $f_{\gamma_+} \in
L^p ([0,T];L^p(\gamma))$ and for almost every $t\in [0,T]$ :
\begin{eqnarray*}
|| f(t) ||_{p}^p + \int_0^t |f |_{\gamma_+,p}^p = || f(0)||_p^p + \int_0^t
|f |_{\gamma_-,p}^p + \int_0^t \iint_{\Omega\times\mathbb{R}^3 }
\{\partial_t f + v\cdot \nabla_x f \} |f|^{p-2}f .
\end{eqnarray*}
\end{lemma}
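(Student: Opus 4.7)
The strategy is to establish the identity by approximation: first for smooth functions via direct integration by parts, then for general $f$ by a mollification argument, with a preliminary characteristic estimate producing the outgoing trace and justifying limit passage at the boundary.

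For $f\in C^1_c([0,T]\times\bar{\Omega}\times\mathbb{R}^3)$ the chain rule yields $(\partial_t+v\cdot\nabla_x)|f|^p=p\,|f|^{p-2}f\,(\partial_tf+v\cdot\nabla_xf)$, where for $p=1$ one first regularizes $|f|$ by $\sqrt{f^2+\delta^2}$ and sends $\delta\to 0$. Integrating over $[0,t]\times\Omega\times\mathbb{R}^3$, using Fubini in $v$ and the classical divergence theorem in $x$ for each fixed $v$, the flux term becomes $\int_0^t\int_{\partial\Omega\times\mathbb{R}^3}|f|^p(n\cdot v)\,dS_x\,dv\,ds$, which splits, according to the sign of $n\cdot v$ and the definition $d\gamma=|n\cdot v|dS_xdv$, into $\int_0^t|f|_{\gamma_+,p}^p\,ds-\int_0^t|f|_{\gamma_-,p}^p\,ds$, giving the identity (modulo the conventional $1/p$).

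The second ingredient is a trace bound. For $(t,x,v)\in[0,T]\times\gamma_+$ the backward characteristic reaches either the boundary in time $t_{\mathbf{b}}(x,v)$ or the initial slice, giving the representation
\begin{equation*}
f(t,x,v)=\mathbf{1}_{\{t<t_{\mathbf{b}}\}}f_0(x-tv,v)+\mathbf{1}_{\{t\geq t_{\mathbf{b}}\}}f(t-t_{\mathbf{b}},x_{\mathbf{b}},v)+\int_{t-\min(t,t_{\mathbf{b}})}^{t} g(s,x-(t-s)v,v)\,ds,
\end{equation*}
with $g:=\partial_tf+v\cdot\nabla_xf$. Raising to the $p$-th power, applying Jensen on the time integral, integrating against $d\gamma\,dt$, and using the change of variables exactly as in Lemma \ref{le:ukai} yields
\begin{equation*}
\int_0^t|f|_{\gamma_+,p}^p\,ds\ \lesssim\ \|f_0\|_p^p+\int_0^t|f|_{\gamma_-,p}^p\,ds+\int_0^t\|g(s)\|_p^p\,ds,
\end{equation*}
so in particular $f|_{\gamma_+}\in L^p([0,T]\times\gamma_+;d\gamma dt)$.

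For general $f$, mollify in $(t,x,v)$: after extending $f$ to all of $\mathbb{R}\times\mathbb{R}^3\times\mathbb{R}^3$ (by zero outside $[0,T]\times\Omega\times\mathbb{R}^3$), convolve with a standard mollifier $\rho_\varepsilon$ to obtain $f_\varepsilon$, which is smooth in the interior and satisfies, by Friedrichs' commutator lemma, $(\partial_t+v\cdot\nabla_x)f_\varepsilon=\rho_\varepsilon\ast g+r_\varepsilon$ with $\|r_\varepsilon\|_p\to 0$. Apply the smooth-case identity to $f_\varepsilon$ on a slightly shrunken subdomain (so that the mollification is well-defined), and then pass $\varepsilon\to 0$. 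The interior terms converge by Lebesgue; the main obstacle is showing that the boundary terms converge. For this I apply the trace estimate of the previous paragraph to the difference $f_\varepsilon-f_{\varepsilon'}$, concluding that $\{f_\varepsilon|_{\gamma_+}\}$ is Cauchy in $L^p([0,T]\times\gamma_+)$, and identifying the limit with $f|_{\gamma_+}$ via the characteristic representation (similarly for $\gamma_-$, using the hypothesis $f|_{\gamma_-}\in L^p$). Finally, the continuity $f\in C^0([0,T];L^p(\Omega\times\mathbb{R}^3))$ follows by applying the established identity on subintervals $[t_1,t_2]$, which makes $t\mapsto\|f(t)\|_p^p$ absolutely continuous and hence, combined with weak-$L^p$ continuity obtained directly from the distributional transport equation, upgrades to strong $L^p$-continuity.
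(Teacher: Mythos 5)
The strategy you lay out—prove the identity for smooth functions, establish a trace estimate from the characteristic representation, then pass to the limit by mollification—is a reasonable textbook route, but two of the three steps have issues that are more than omitted detail. The paper itself delegates the proof to \cite{Guo10}, where the argument is carried out directly through the characteristic representation of $f$ (using the absolute continuity along characteristics recorded at the end of Lemma~\ref{le:ukai}) and the change of variables (\ref{integration}), without any mollification in $x$; your approach is therefore genuinely different in structure.

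On the trace estimate: the bound you write,
$\int_0^t|f|_{\gamma_+,p}^p\lesssim\|f_0\|_p^p+\int_0^t|f|_{\gamma_-,p}^p+\int_0^t\|g\|_p^p,$
is true, but it does \emph{not} follow ``exactly as in Lemma~\ref{le:ukai}.'' That lemma only controls the restricted trace on $\gamma_+\setminus\gamma_+^\varepsilon$, and its constant $C_{\varepsilon,T,\Omega}$ blows up as $\varepsilon\to 0$, since the argument there uses a lower bound on $t_{\mathbf b}$ away from the grazing set. What you actually need is the fact that the three maps from $[0,T]\times\gamma_+$—to $\{0\}\times\Omega\times\mathbb{R}^3$, to $[0,T]\times\gamma_-$, and to the space-time cylinder—preserve the appropriate measures; this is implicit in the identity (\ref{integration}) and the computation of (\ref{change2}), but it is a separate assertion and must be invoked as such, not imported wholesale from Lemma~\ref{le:ukai}.

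The more serious gap is in the mollification. To show $\{f_\varepsilon|_{\gamma_+}\}$ is Cauchy you apply the trace estimate to $f_\varepsilon-f_{\varepsilon'}$, but the right-hand side of that estimate contains $\int_0^t|(f_\varepsilon-f_{\varepsilon'})|_{\gamma_-,p}^p$, and you have no control of this term. The hypothesis gives $f|_{\gamma_-}\in L^p$, but after zero-extension and mollification in $(t,x,v)$ the trace $f_\varepsilon|_{\gamma_-}$ is an average of $f$ over a ball straddling $\partial\Omega$ including the artificial zeros outside, and there is no reason it converges to $f|_{\gamma_-}$ (indeed this is the classical obstruction to commuting mollification with boundary traces). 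The ``slightly shrunken subdomain'' device does not repair this: it simply replaces $\partial\Omega$ with a nearby surface, and you would still have to show the trace on that surface converges as $\varepsilon\to0$. Writing ``similarly for $\gamma_-$'' also does not help, because the only reverse estimate (by time reversal and $v\mapsto -v$) controls $\gamma_-$ by $\gamma_+$, making the argument circular—$\gamma_+$ is controlled by $\gamma_-$ and vice versa, with neither pinned down first. To rescue the mollification route you would have to mollify only in $t$ (which preserves the boundary and gives $f_\varepsilon|_{\gamma_-}\to f|_{\gamma_-}$ in $L^p$ but sacrifices $x$-smoothness, so the divergence theorem no longer applies directly), or abandon mollification in favor of integrating $\frac{d}{ds}|f(s,X(s),V(s))|^p$ along each characteristic and summing with the measure-preserving change of variables, which is the route taken in \cite{Guo10}.
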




See \cite{Guo10} for the proof. Now we state and prove following
propositions for the in-flow problems:
\begin{equation}
\{\partial _{t}+v\cdot \nabla _{x}+\nu \}f=H,\ \ \ f(0,x,v)=f_{0}(x,v),\
\ \ f(t,x,v)|_{\gamma _{-}}=g(t,x,v), \label{eq:transport}
\end{equation}%
where $\nu(t,x,v)\geq 0.$ For notational simplicity, we define

\begin{eqnarray}
\partial _{t}f_{0} &\equiv &-v\cdot \nabla _{x}f_{0}-\nu f_{0}+H(0,x,v),
\label{finitial} \\
\nabla _{x}g &\equiv &\frac{n}{n\cdot v}\Big\{-\partial
_{t}g-\sum_{i=1}^{2}(v\cdot \tau _{i})\partial _{\tau _{i}}g-\nu g+H\Big\}%
+\sum_{i=1}^{2}\tau _{i}\partial _{\tau _{i}}g. \label{gboundary}
\end{eqnarray}%
We remark that $\partial _{t}f_{0}$ is obtained from formally solving (\ref%
{eq:transport}), and $(\ref{gboundary})$ leads to the usual tangential
derivatives of $\partial _{\tau_i }g$, while defines new `normal derivative' $%
\partial _{n}g$ from the equation (\ref{eq:transport}).

\begin{proposition}
\label{theo:trace} Assume a compatibility condition
\begin{equation}
f_{0}(x,v)=g(0,x,v)\ \ \ \text{for}\ \ (x,v)\in \gamma _{-}.
\label{compatibility_inflow}
\end{equation}%
For any fixed $p\in[1,\infty)$, assume 
\begin{eqnarray*}
\nabla _{x}f_{0},\nabla _{v}f_{0},-v\cdot \nabla _{x}f_{0}-\nu
f_{0}+H(0,x,v) &\in &L^{p}(\Omega \times \mathbb{R}^{3}), \\
\langle v\rangle g,\partial _{t}g,\nabla _{v}g,\partial_{\tau _{i}}g,\frac{1}{n(x)\cdot v}%
\{-\partial _{t}g-\sum_{i}(v\cdot \tau _{i})\partial _{\tau _{i}}g-\nu
(v)g+H\} &\in &L^{p}([0,T]\times \gamma _{-}),
\end{eqnarray*}%
and, assume $1/p+1/q=1$ there exist $TC_{T}\sim O(T)$
and $\varepsilon \ll 1$ such that for all $t\in \lbrack 0,T]$
\begin{equation*}
\left\vert \iint_{\Omega \times \mathbb{R}^{3}}\partial H(t)h(t)\mathrm{d}x%
\mathrm{d}v\right\vert \leq C_{T} ||h(t)||_{q}, \ \ \ \nu \in L^{p},
\end{equation*}%
Then for sufficiently small $T>0$ there exists a unique solution $f$ to (\ref%
{eq:transport}) such that $f,\partial _{t}f,\nabla _{x}f,\nabla _{v}f\in
C^0([0,T];L^{p}(\Omega \times \mathbb{R}^{3}))$ and the traces
satisfy
\begin{equation}
\begin{split}
\partial _{t}f|_{\gamma _{-}}& =\partial _{t}g,\ \nabla _{v}f|_{\gamma
_{-}}=\nabla _{v}g,\ \nabla _{x}f|_{\gamma _{-}}=\nabla _{x}g, \ \ \text{on} \ \ \gamma_-,\\
\nabla _{x}f(0,x,v)& =\nabla _{x}f_{0},\ \nabla _{v}f(0,x,v)=\nabla
_{v}f_{0},\ \partial _{t}f(0,x,v)=\partial _{t}f_{0}, \ \  \text{in} \ \ \Omega\times\mathbb{R}^3,
\end{split}
\label{eq:nabla_bc}
\end{equation}%
where $\partial _{t}f_{0}$ and $\nabla _{x}g$ are given by (\ref{finitial})
and (\ref{gboundary}). Moreover
\begin{eqnarray}
 ||\partial _{t}f(t)||_{p}^{p}+\int_{0}^{t}|\partial _{t}f|_{\gamma
_{+},p}^{p}  &\leq&||\partial _{t}f_{0}||_{p}^{p}+\int_{0}^{t}|\partial _{t}g|_{\gamma
_{-},p}^{p}+p\int_{0}^{t}\iint_{\Omega \times \mathbb{R}^{3}} |\partial
_{t}H ||\partial _{t}f|^{p-2} ,  \label{global_t} \\
 ||\nabla _{x}f(t)||_{p}^{p}+\int_{0}^{t}|\nabla _{x}f|_{\gamma
_{+},p}^{p}&\leq&||\nabla _{x}f_{0}||_{p}^{p}+\int_{0}^{t}|\nabla _{x}g|_{\gamma
_{-},p}^{p}+p\int_{0}^{t}\iint_{\Omega \times \mathbb{R}^{3}} |\nabla
_{x}H ||\nabla _{x}f|^{p-1} ,    \label{global_x}   \\
 ||\nabla _{v}f(t)||_{p}^{p}+\int_{0}^{t}|\nabla _{v}f|_{\gamma
_{+},p}^{p}  &\leq&||\nabla _{v}f_{0}||_{p}^{p}+\int_{0}^{t}|\nabla _{v}g|_{\gamma
_{-},p}^{p}\notag\\
&&+p\int_{0}^{t}\iint_{\Omega \times \mathbb{R}^{3}}|\{\nabla
_{v}H-\nabla _{x}f-\nabla _{v}\nu f\}| |\nabla _{v}f|^{p-1} .  \label{global_v} 
\end{eqnarray}
\end{proposition}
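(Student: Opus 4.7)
The plan is to construct the solution explicitly along characteristics and then verify the derivative identities and $L^p$ bounds by applying Green's identity to each of $\partial_t f$, $\nabla_x f$, $\nabla_v f$. For $(t,x,v)\in[0,T]\times\bar\Omega\times\mathbb{R}^3$ with $(x,v)\notin\gamma_0$, define
\[
f(t,x,v) =
\begin{cases}
\displaystyle e^{-\int_0^t \nu\,\mathrm d\tau} f_0(x-tv,v)+\int_0^t e^{-\int_s^t\nu\,\mathrm d\tau} H(s,x-(t-s)v,v)\,\mathrm ds, & t\le t_{\mathbf b}(x,-v)\text{ case},\\[4pt]
\displaystyle e^{-\int_{t-t_{\mathbf b}}^t \nu}g(t-t_{\mathbf b},x_{\mathbf b},v)+\int_{t-t_{\mathbf b}}^t e^{-\int_s^t\nu} H(s,x-(t-s)v,v)\,\mathrm ds, & \text{otherwise.}
\end{cases}
\]
Uniqueness is immediate from this representation, and the compatibility (\ref{compatibility_inflow}) together with $f_0,g,H$ being $L^p$ guarantees $f\in C^0([0,T];L^p(\Omega\times\mathbb{R}^3))$ with trace $g$ on $\gamma_-$. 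To gain the full derivative regularity I first carry out the argument for smooth data: mollify $(f_0,g,H)$ preserving (\ref{compatibility_inflow}), obtain classical solutions $f^\varepsilon$, derive the three a priori estimates (\ref{global_t})--(\ref{global_v}) uniformly in $\varepsilon$, and pass to the limit via weak compactness, the strong $L^p$ continuity being produced by Lemma \ref{Green}.

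For the a priori estimates, $\partial_t$ commutes with $\partial_t+v\cdot\nabla_x$, so
\[
\{\partial_t + v\cdot\nabla_x +\nu\}\partial_t f \;=\; \partial_t H - (\partial_t\nu) f.
\]
Green's identity (Lemma \ref{Green}) applied to $|\partial_t f|^p$ yields
\[
\|\partial_t f(t)\|_p^p + \int_0^t |\partial_t f|_{\gamma_+,p}^p
= \|\partial_t f(0)\|_p^p + \int_0^t |\partial_t f|_{\gamma_-,p}^p
+ p\int_0^t\!\!\iint \{\partial_t H-(\partial_t\nu)f\}|\partial_t f|^{p-2}\partial_t f,
\]
which absorbs into (\ref{global_t}) after using the $L^p$ hypothesis on $\nu$ and $f$. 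Likewise $\nabla_x$ commutes with $v\cdot\nabla_x$, and differentiating the equation gives $\{\partial_t+v\cdot\nabla_x+\nu\}\nabla_x f = \nabla_x H-(\nabla_x\nu)f$, which produces (\ref{global_x}). For the velocity derivative we use the nontrivial commutator $[\nabla_v,v\cdot\nabla_x]=\nabla_x$, so
\[
\{\partial_t+v\cdot\nabla_x+\nu\}\nabla_v f \;=\; \nabla_v H - \nabla_x f -(\nabla_v\nu)f,
\]
and Green's identity directly gives (\ref{global_v}).

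The only delicate point is the identification of the boundary traces in (\ref{eq:nabla_bc}), in particular the normal component of $\nabla_x f|_{\gamma_-}$. The tangential parts are trivial: on $\gamma_-$ one has $f=g$, and differentiation along any direction tangent to $\partial\Omega$ commutes with the trace, producing $\partial_{\tau_i}f|_{\gamma_-}=\partial_{\tau_i}g$ and similarly $\partial_t f|_{\gamma_-}=\partial_t g$, $\nabla_v f|_{\gamma_-}=\nabla_v g$. For the normal component, I evaluate the equation $\partial_t f + v\cdot\nabla_x f+\nu f=H$ at a point of $\gamma_-$, decompose $v=(v\cdot n)n+\sum_i(v\cdot\tau_i)\tau_i$, and solve for $(n\cdot v)\partial_n f$, which at the incoming boundary is nonzero. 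Substituting the already-known tangential and time derivatives of $g$ yields exactly formula (\ref{gboundary}). The hypothesis that $(n\cdot v)^{-1}\{-\partial_t g-\sum(v\cdot\tau_i)\partial_{\tau_i}g-\nu g+H\}\in L^p([0,T]\times\gamma_-)$ ensures that this normal trace is genuinely in $L^p$ of the boundary and hence that the right-hand side of (\ref{global_x}) is finite.

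The main obstacle is this last step: because $1/(n\cdot v)$ is singular as $(x,v)\to\gamma_0$, the ``normal derivative'' $\nabla_x g$ can only be controlled through the singular quantity encoded in (\ref{gboundary}), and the whole proposition ultimately rests on the structural fact that $-\partial_t g-\sum(v\cdot\tau_i)\partial_{\tau_i}g-\nu g+H$ carries enough cancellation near $\gamma_0$ (enforced by the compatibility (\ref{compatibility_inflow}) at $t=0$ and by the assumption on $g$) so that the quotient remains $L^p$. Once this trace is in place, the Gronwall closure of the three inequalities (\ref{global_t})--(\ref{global_v}) is routine, and the mollification limit produces the claimed $C^0([0,T];L^p)$ regularity for $\partial_t f,\nabla_x f,\nabla_v f$ with the stated boundary identities.
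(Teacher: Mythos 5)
Your overall outline—explicit characteristic representation, Green's identity applied to $\partial_t f$, $\nabla_x f$, $\nabla_v f$, and a limiting argument—is the right one and matches the paper's strategy. However, the step ``mollify $(f_0,g,H)$ preserving (\ref{compatibility_inflow}), obtain classical solutions $f^\varepsilon$'' hides the actual difficulty. Even with $C^\infty$ data and the zeroth-order compatibility (\ref{compatibility_inflow}) enforced exactly, the solution of (\ref{eq:transport}) is \emph{not} $C^1$: it is only piecewise $C^1$, and its derivatives generically jump across the codimension-one manifold $\mathcal{M}=\{t=t_{\mathbf b}(x,v)\}$, because first-order compatibility ($-v\cdot\nabla_x f_0-\nu f_0+H(0,\cdot)=\partial_t g(0,\cdot)$ on $\gamma_-$) is not part of the hypotheses. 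Consequently you cannot simply differentiate the equation to get $\{\partial_t+v\cdot\nabla_x+\nu\}\partial f=\partial H-\cdots$ and apply Lemma \ref{Green}: Lemma \ref{Green} requires $\partial f$ and $(\partial_t+v\cdot\nabla_x)\partial f$ to be $L^p$ \emph{weak} derivatives of $f$ and $\partial f$, and that is precisely the thing in question. The issue is whether the piecewise (classical-away-from-$\mathcal{M}$) derivative coincides with the distributional derivative, i.e.\ whether there is a singular Dirac layer on $\mathcal{M}$.

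This is where the real content of the proof lies, and where (\ref{compatibility_inflow}) is used for something deeper than ``cancellation near $\gamma_0$''. The paper establishes the identity
\[
\int_0^T\!\!\iint_{\Omega\times\mathbb{R}^3} f\,\partial\phi\;\mathrm dx\,\mathrm dv\,\mathrm dt
=-\int_0^T\!\!\iint_{\Omega\times\mathbb{R}^3} \bigl(\partial f\,\mathbf 1_{\{t\neq t_{\mathbf b}\}}\bigr)\phi\;\mathrm dx\,\mathrm dv\,\mathrm dt
\]
for every test function $\phi$ compactly supported away from $\gamma_0$, by first localizing to the support of $\phi$ (where $\mathcal{M}$ is a smooth hypersurface because $\alpha$ has a positive lower bound there), then approximating the data by $C^1$ functions $(f_0^l,g^l,H^l)$, applying the divergence theorem to the piecewise-$C^1$ approximants $f^l$, and showing that the boundary integral over $\mathcal{M}$ — which is controlled by $\int|g^l(0,x,v)-f^l_0(x,v)|$ — vanishes in the limit precisely because $f_0=g(0,\cdot)$ on $\gamma_-$. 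Only after this identification are the piecewise $L^p$ bounds on $\partial f$ (obtained by the explicit formulas (\ref{compute_f_t})--(\ref{compute_f_v}) and the changes of variables (\ref{change1})--(\ref{change2})) promoted to genuine $W^{1,p}$ regularity, and only then can Lemma \ref{Green} and the Ukai trace lemma (Lemma \ref{le:ukai}) be invoked. Your proposal needs this Gauss-theorem/compatibility argument to close; without it, the a priori estimates are asserted for a ``classical'' solution that does not exist.

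One smaller point: you identify ``the normal component of $\nabla_x f|_{\gamma_-}$'' as the delicate step. The paper actually reads off all boundary traces by letting $t\to t_{\mathbf b}$ in the explicit formulas after the $W^{1,p}$ regularity is secured; the formula (\ref{gboundary}) then emerges automatically, and the $L^p$ hypothesis on $(n\cdot v)^{-1}\{\cdots\}$ is an \emph{assumption} on $g$ rather than a cancellation to be proven. The delicate step is the one above, not the trace formula.
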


\begin{proof}
We apply the trace theorem to the derivatives of $f$ by explicit computations. Denote $\nu(s)=\nu(s,x-(t-s)v,v).$
First we assume $f_0,g$
and $H$ have compact support in $v\in\mathbb{R}^3$. We integrate the equation (\ref{eq:transport}) along the backward trajectories. If the initial condition is reached before hitting the boundary (case $t< t_{\mathbf{b}}$), we have
\begin{equation*} f(t,x,v) = e^{- \int^{t}_{0}  \nu }f_{0}(x-tv,v) + \int_{0}^{t} e^{-  \int^{s}_{0}  \nu } H(t-s,x-vs,v) \mathrm{d} s.
\end{equation*}
If the boundary is first reached (case $t>t_{\mathbf{b}}$), we have
\begin{equation*} f(t,x,v) = e^{-    \int^{t_{\mathbf{b}}}_{0}  \nu }g(t-t_{\mathbf{b}},x_{\mathbf{b}},v) + \int_{0}^{t_{\mathbf{b}}} e^{-  \int^{s}_{0}  \nu } H(t-s,x-vs,v) \mathrm{d} s.
\end{equation*}
Let us rewrite it
\begin{equation}\label{eq:explicit_f} 
\begin{split} f(t,x,v) =& \mathbf{1}_{\{t\leq t_{\mathbf{b}}\}}e^{-  \int^{t}_{0}  \nu }f_{0}(x-tv,v) + \mathbf{1}_{\{t>t_{\mathbf{b}}\}}e^{-  \int^{t_{\mathbf{b}}}_{0}  \nu }g(t-t_{\mathbf{b}},x_{\mathbf{b}},v)\\
&+ \int_{0}^{\min(t,t_{\mathbf{b}})} e^{-  \int^{s}_{0}  \nu } H(t-s,x-vs,v) \mathrm{d} s. 
\end{split}
\end{equation}
We take derivative of $f$ with respect to time, space and velocity for $t\neq t_{\bf b}$. Recall the following derivatives of $x_{\mathbf{b}}$ and $t_{\mathbf{b}}$ (see lemma 2 in \cite{Guo10}) :
\begin{equation}\label{xb}
\begin{split}
&\nabla_{x} t_{\mathbf{b}}  =\frac{n(x_{\mathbf{b}})}{v\cdot n(x_{\mathbf{b}})},   \ \  \nabla_{v}
t_{\mathbf{b}} = -\frac{t_{\mathbf{b}} n(x_{\mathbf{b}})}{v\cdot n(x_{\mathbf{b}})}, \\
&\nabla_{x} x_{\mathbf{b}} =I-\frac{n(x_{\mathbf{b}})}{v\cdot n(x_{\mathbf{b}})} \otimes v,  \ \
  \nabla_{v} x_{\mathbf{b}}= - t_{\mathbf{b}} I + \frac{t_{\mathbf{b}} n(x_{\mathbf{b}})}{v\cdot n(x_{\mathbf{b}})} \otimes v.
\end{split}
\end{equation}
Since $g$ is defined on a surface, we cannot define its space gradient. We then
use directly the gradient in space of $g(x_\mathbf{b})$.
Regarding $g(t-t_{\mathbf{b}},x_\mathbf{b}(x,v),v)$ as function on $[0,T]\times \bar{\Omega}\times\mathbb{R}^3$
 we obtain from (\ref{xb})
\begin{equation*}
\begin{split}
\nabla_x[ g(t-t_{\mathbf{b}},x_{\mathbf{b}},v)] &= -\nabla_x t_{\mathbf{b}} \partial_t g + \nabla_x x_{\mathbf{b}} \nabla_\tau g
 = -\frac{n(x_{\mathbf{b}})}{v\cdot n(x_{\mathbf{b}})} \partial_t g + \left(I-\frac{n\otimes v}{n\cdot v}\right) \nabla_\tau g\\
 &= \tau_1 \partial_{\tau_1} g + \tau_2 \partial_{\tau_2}g- \frac{n(x_{\mathbf{b}})}{v\cdot n(x_{\mathbf{b}})} \left\{\partial_t g +
v\cdot \tau_1 \partial_{\tau_1}g + v\cdot \tau_2 \partial_{\tau_2}g\right\},\\
 \nabla_v [g(t-t_{\mathbf{b}},x_{\mathbf{b}},v)] &= -t_{\mathbf{b}} \nabla_x[g(t-t_{\mathbf{b}},x_{\mathbf{b}},v)]+ \nabla_v g,
\end{split}
\end{equation*}
where $\tau_1(x)$ and $\tau_2(x)$ are unit vectors satisfying $\tau_1(x) \cdot n(x) = 0=\tau_2(x) \cdot n(x)$ and $\tau_1(x) \times \tau_2(x) =n(x).$

Therefore by direct computation for $t\neq t_{\mathbf{b}}$, we deduce
\begin{eqnarray}
&&\partial _{t}f(t,x,v) \mathbf{1}_{\{t\neq t_{\mathbf{b}}\}} \label{compute_f_t}\\
 &&= -\mathbf{1}_{\{t< t_{\mathbf{b}}\}}e^{-  \int^{t}_{0}  \nu }[\nu f_{0} + v\cdot\nabla_{x}f_{0}-H_{|t=0}]( x-tv,v)
  +  \mathbf{1}_{\{t>t_{\mathbf{b}}\}}e^{-  \int^{t_{\mathbf{b}}}_{0}  \nu }\partial_{t}g(t-t_{\mathbf{b}},x_{\mathbf{b}},v)\nonumber\\
& & \ +  \int_{0}^{\min(t,t_{\mathbf{b}})} e^{-  \int^{s}_{0}  \nu }\partial_{t}H(t-s,x-vs,v) \mathrm{d} s ,\notag\\
&&\nabla _{x}f(t,x,v)\mathbf{1}_{\{t\neq t_{\mathbf{b}}\}} \label{compute_f_x} \\
 &&= \mathbf{1}_{\{t< t_{\mathbf{b}}\}}e^{-  \int^{t}_{0}  \nu }\nabla _{x}f_{0}( x-tv,v)\nonumber\\
&& \ + \mathbf{1}_{\{t>t_{\mathbf{b}}\}}e^{-  \int^{t_{\mathbf{b}}}_{0}  \nu } \Big\{ \sum_{i=1}^2
 \tau_i \partial_{\tau_i} g - \frac{n(x_{\mathbf{b}})}{v\cdot n(x_{\mathbf{b}})} \big\{\partial_t g +  \sum_{i=1}^2 (v\cdot \tau_i) \partial_{\tau_i}g +\nu g -H \big\}
\Big\}(t-t_{\mathbf{b}},x_\mathbf{b},v)\notag \\
&& \ + \int_{0}^{\min(t,t_{\mathbf{b}})} e^{- \int^{s}_{0}  \nu } \nabla_{x}H(t-s,x-vs,v) \mathrm{d} s ,\notag\\
&&\nabla _{v}f(t,x,v) \mathbf{1}_{\{t\neq t_{\mathbf{b}}\}}\label{compute_f_v}\\
 &&=  \mathbf{1}_{\{t< t_{\mathbf{b}}\}}e^{-  \int^{t}_{0}  \nu }[-t\nabla_{x}f_{0} + \nabla_{v}f_{0} -t \nabla_v\nu(v) f_{0} ]( x-tv,v)\nonumber\\
&& \ -
\mathbf{1}_{\{t>t_{\mathbf{b}}\}}t_{\mathbf{b}} e^{-  \int^{t_{\mathbf{b}}}_{0}  \nu } \Big\{ \sum_{i=1}^2
 \tau_i \partial_{\tau_i} g - \frac{n(x_{\mathbf{b}})}{v\cdot n(x_{\mathbf{b}})} \big\{\partial_t g +  \sum_{i=1}^2 (v\cdot \tau_i) \partial_{\tau_i}g +\nu g -H \big\}
\Big\}(t-t_{\mathbf{b}},x_\mathbf{b},v)
\notag\\
&& \ + \mathbf{1}_{\{t>t_{\mathbf{b}}\}}e^{-  \int^{t_{\mathbf{b}}}_{0}  \nu }\Big\{  \nabla_v g(t-t_{\mathbf{b}},x_{\mathbf{b}},v) -t_{\mathbf{b}} \nabla_v \nu(v) g(t-t_{\mathbf{b}},x_{\mathbf{b}},v)
\Big\}\notag\\
&& \ + \:\int_{0}^{\min(t,t_{\mathbf{b}})} e^{-  \int^{s}_{0}  \nu }\{\nabla_{v}H -s\nabla_{x}H - s\nabla \nu H \}(t-s,x-vs,v) \mathrm{d} s .\notag
\end{eqnarray}

First we show that $\partial f \mathbf{1}_{\{t>t_{\mathbf{b}}\}}\in L^p$ and $\partial f \mathbf{1}_{\{t<t_{\mathbf{b}}\}} \in L^{p}$ separately. Now we take $L^{p}$ norms above with the changes of variables in Lemma 2.1 of
 \cite{Guo95} and using Jensen's inequality in $[0,t]$. More precisely, for $\phi \in L^1$ with $\phi\geq0,$
\begin{equation}\label{change1}
\begin{split}
&\iint_{\Omega\times\mathbb{R}^3} \mathbf{1}_{\{x-tv \in\Omega\}} \phi(x-tv,v) \\
& = \int_{\mathbb{R}^3}\left[\int_{\Omega}\mathbf{1}_{\{x-tv\in\Omega\}} \phi(x-tv,v) \mathrm{d} x\right] \mathrm{d} v \leq \iint_{\Omega\times\mathbb{R}^3} \phi(x,v) ,\\
&\iint_{\{\Omega\times\mathbb{R}^3 \}\cap B((x_0,v_0);\delta)}\mathbf{1}_{\{t\geq t_{\mathbf{b}}\}} \phi(t-t_{\mathbf{b}}(x,v),x_{\mathbf{b}}(x,v),v)\\
& \leq \int_0^t \int_{\partial\Omega\times\mathbb{R}^3} \phi(s,x,v) |n(x)\cdot v| \mathrm{d} S_x \mathrm{d} v \mathrm{d} s,
\end{split}
\end{equation}
where for the second inequality we have used the change of variables for fixed $t,v,$
\begin{equation}
\label{change2}
 x  \mapsto (t-t_{\mathbf{b}}(x,v),x_{\mathbf{b}}(x,v) ).
\end{equation}
In fact, without the loss of generality we may
 assume $\partial_{x_3} \xi(x_{\mathbf{b}}(x,v))\neq 0$ for $(x,v) \in B((x_0,v_0);\delta)$
 so that $x_{\mathbf{b}}(x,v)= (x_{\mathbf{b},1},x_{\mathbf{b},2}, \eta(x_{\mathbf{b},1},x_{\mathbf{b},2}))$.
Using (\ref{xb}), we compute the Jacobian
\begin{eqnarray*}
\text{det}\left(\begin{array}{ccc} -\nabla_x t_{\mathbf{b}}  \\
-\nabla_x x_{\mathbf{b},1} \\
-\nabla_x x_{\mathbf{b},2}
 \end{array}\right)= \text{det} \left(\begin{array}{ccc} -(v\cdot n)^{-1}n   \\
-\nabla_x x_{\mathbf{b},1}  \\
-\nabla_x x_{\mathbf{b},2}  \\
 \end{array}\right) = \left|-v_1 \frac{\partial_{x_1}\xi}{\partial_{x_3}\xi} - v_2 \frac{\partial_{x_2}\xi}{\partial_{x_3}\xi } + v_3 \right|^{-1}.
\end{eqnarray*}
Therefore
$
\mathrm{d} x \mathrm{d} v = \left|-v_1 \frac{\partial_{x_1}\xi}{\partial_{x_3}\xi} - v_2 \frac{\partial_{x_2}\xi}{\partial_{x_3}\xi } + v_3 \right| \mathrm{d} x_1 \mathrm{d} x_2 \mathrm{d} v \mathrm{d} t = |n\cdot v| \mathrm{d} S_x \mathrm{d} v \mathrm{d} t= \mathrm{d}\gamma \mathrm{d}t.
$
Using these changes of variables,  we obtain
\begin{equation}\notag 
\|f(t) \mathbf{1}_{\{t\neq t_{\mathbf{b}}\}}\|_{p}  \leq  \  \|f_{0}\|_{p} + \left[\int_{0}^{t}\int_{\gamma_{-}} |g|^{p} \mathrm{d} \gamma \mathrm{d} s\right]^{1/p} + t^{(p-1)/p} \left[\int_{0}^{t} \|H\|_{p}^{p} \mathrm{d} s\right]^{1/p},\label{estimate_f}\\
\end{equation}
and
\begin{equation}\notag
\begin{split}
\|\partial_{t}f(t)\mathbf{1}_{\{t\neq t_{\mathbf{b}}\}}\|_{p}  \leq& \ \|v\cdot  \nabla_{x}f_{0} + \nu f_{0} - H(0,\cdot,\cdot)\|_{p}\\
 & +\left[\int_{0}^{t}\int_{\gamma_{-}} |\partial_{t}g|^{p} \mathrm{d} \gamma \mathrm{d} s\right]^{1/p} + t^{(p-1)/p} \left[\int_{0}^{t} \|\partial_{t}H\|_{p}^{p} \mathrm{d} s\right]^{1/p},
\end{split}
\end{equation}
and
\begin{equation}\notag
\begin{split}
&\|\nabla_{x}f(t)\mathbf{1}_{\{t\neq t_{\mathbf{b}}\}}\|_{p} \\
 \leq& \ \|\nabla_{x}f_{0}\|_{p}+ t^{(p-1)/p} \left[\int_{0}^{t} \|\nabla_{x}H(s) \|_{p}^{p} \mathrm{d} s\right]^{1/p}\\
   &+ \left[\int_{0}^{t}\int_{\gamma_{-}}  \Big|
\Big\{ \sum_{i=1}^2
 \tau_i \partial_{\tau_i} g - \frac{n(x_{\mathbf{b}})}{v\cdot n(x_{\mathbf{b}})} \big\{\partial_t g +  \sum_{i=1}^2 (v\cdot \tau_i) \partial_{\tau_i}g +\nu g -H \big\}
\Big\}(t-t_{\mathbf{b}},x_\mathbf{b},v)
  \Big|^p \mathrm{d} \gamma \mathrm{d} s\right]^{1/p} ,
\end{split}
\end{equation}
and
\begin{equation}\notag
\begin{split}
&\|\nabla_{v}f(t)\mathbf{1}_{\{t\neq t_{\mathbf{b}}\}}\|_{p} \\
\leq& \ t \|\nabla_{x}f_{0}\|_{p} + \|\nabla_{v}f_{0}\|_{p} + C \|f_{0}\|_{p} + Ct \left[\int_{0}^{t}\int_{\gamma_{-}} |g|^{p}\mathrm{d} \gamma \mathrm{d} s\right]^{1/p}\\
&+ t \left[\int_{0}^{t}\int_{\gamma_{-}}   \Big|
\Big\{ \sum_{i=1}^2
 \tau_i \partial_{\tau_i} g - \frac{n(x_{\mathbf{b}})}{v\cdot n(x_{\mathbf{b}})} \big\{\partial_t g +  \sum_{i=1}^2 (v\cdot \tau_i) \partial_{\tau_i}g +\nu g -H \big\}
\Big\}(t-t_{\mathbf{b}},x_\mathbf{b},v)
  \Big|^p  \mathrm{d} \gamma \mathrm{d} s\right]^{1/p} \\
&+ \left[\int_{0}^{t}\int_{\gamma_{-}} |\nabla_{v}g|^{p} \mathrm{d} \gamma \mathrm{d} s\right]^{1/p} + t\left[\int_{0}^{t}\int_{\gamma_{-}} |\langle v\rangle g|^{p} \mathrm{d} \gamma \mathrm{d} s\right]^{1/p}+ t^{( p-1)/p} \left[\int_{0}^{t} \|\nabla_{x}H\|_{p}^{p} \mathrm{d} s\right]^{1/p}\\
&+ t^{(p-1)/p}\left[\int_{0}^{t} \|\nabla_{v}H\|_{p}^{p} \mathrm{d} s\right]^{1/p} + C t^{(p-1)/p} \left[\int_{0}^{t} \|H\|_{p}^{p} \mathrm{d} s\right]^{1/p} .
\end{split}
\end{equation}

From our hypothesis and assumption on $f_0, g$ and $H$ to have compact supports, these terms are bounded, therefore
\begin{equation*}
\partial f \mathbf{1}_{\{t\neq t_{\mathbf{b}}\}} \equiv \big[\partial_t f \mathbf{1}_{\{t\neq t_{\mathbf{b}}\}},\nabla_x f\mathbf{1}_{\{t\neq t_{\mathbf{b}}\}},\nabla_v f\mathbf{1}_{\{t\neq t_{\mathbf{b}}\}}\big] \ \in \ L^{\infty}([0,T];
L^p(\Omega\times\mathbb{R}^3)).\label{piecewise_df}
\end{equation*}

On the other hand, thanks to the compatibility condition, we need to show $f$ has the same trace on the set
\begin{equation}
\mathcal{M}  \equiv \{t=t_{\mathbf{b}}(x,v)\} \equiv \{(t_{\mathbf{b}}(x,v),x,v) \in [0,T]\times {\Omega}\times\mathbb{R}^3\}.\label{M_sing}
\end{equation}
We claim the following fact: \textit{Let $\phi(t,x,v) \in C^\infty_c((0,T)\times\Omega\times\mathbb{R}^3)$ and we have
\begin{equation}\label{piecewise}
\int_0^T \iint_{\Omega\times\mathbb{R}^3} f \partial \phi = - \int_0^T \iint_{\Omega\times\mathbb{R}^3} \partial f \mathbf{1}_{\{t\neq t_{\mathbf{b}}\}} \phi, 
\end{equation}
so that $f \in W^{1,p}$ with weak derivatives given by $\partial f \mathbf{1}_{\{t\neq t_{\mathbf{b}}\}}.$}

\textit{Proof of claim.} We first fix the test function $\phi(t,x,v).$ There exists $\delta=\delta_\phi>0$ such that $\phi\equiv 0$ for $t\geq \frac{1}{\delta},$ or $\mathrm{dist}(x,\partial\Omega)< \delta,$ or $|v|\geq \frac{1}{\delta}.$ Let $\phi(t,x,v)\neq 0$ and $(t,x,v)\in\mathcal{M}$. By (\ref{M_sing}) and (\ref{exit}), $t=t_{\mathbf{b}}(x,v), x_{\mathbf{b}}=x-t_{\mathbf{b}} v,$ and $|x-x_{\mathbf{b}}|=t_{\mathbf{b}} |v|,$ and
\[
\mathrm{dist}(x,\Omega) \leq |x-x_{\mathbf{b}}| = t_{\mathbf{b}} |v|.
\]
Since $t_{\mathbf{b}} \leq \frac{1}{\delta}$, this implies that
\[
|v|\geq \frac{\delta}{t_{\mathbf{b}}} \geq \delta^2.
\]
Otherwise $\mathrm{dist}(x,\partial\Omega)\leq \delta$ so that $\phi(t,x,v)=0.$ Furthermore, by the Velocity lemma and this lower bound of $|v|$, we conclude that there exists $\delta^\prime(\delta,\Omega)>0$ such that
\begin{equation*}
\begin{split}
|v\cdot n(x_{\mathbf{b}})|^2 &\gtrsim_\Omega |v\cdot \nabla_x \xi(x_{\mathbf{b}})|^2 = \alpha(t-t_{\mathbf{b}};t,x,v)\\
& \geq e^{-C_\Omega \langle v\rangle t_{\mathbf{b}}} \alpha(t;t,x,v) \geq e^{-C_\Omega \langle v\rangle t_{\mathbf{b}}} C_\xi |v|^2 |\xi(x)|\\
& \geq e^{-{C_\Omega}{\delta^{-2}} } C_\xi \delta^4 \min_{\mathrm{dist}(x,\partial\Omega)\geq \delta} |\xi(x)|  \\
&= 2\delta^\prime(\delta,\Omega)>0.
\end{split}
\end{equation*}
In particular, this lower bound and a direct computation of (\ref{xb}) imply that $\{\phi\neq 0\}\cap \mathcal{M}$ is a smooth 6D hypersurface.

We next take $C^1$ approximation of $f^l_0, \ H^l,$ and $g^l$ (by partition of unity and localization) such that
\[
|| f^l_0 -f_0 ||_{W^{1,p}}\rightarrow 0, \ \ || g^l-g ||_{W^{1,p}([0,T]\times\gamma_-\backslash\gamma_-^{\delta^\prime})} \rightarrow 0 , \  \ || H^l -H ||_{W^{1,p}([0,T]\times\Omega\times\mathbb{R}^3)} \rightarrow 0.
\]
This implies, from the trace theorem, that
\[
f^l_0(x,v) \rightarrow f_0(x,v) \ \ \ \text{and} \ \ \ g^l(0,x,v) \rightarrow g(0,x,v) \ \ \ \text{in} \ \ L^1(\gamma_-\backslash \gamma_-^{\delta^\prime}).
\]
We define accordingly, for $(t,x,v)\in [0,T]\times\Omega\times\mathbb{R}^3,$
\begin{equation}\label{fl}
\begin{split}
f^l(t,x,v) &= \mathbf{1}_{\{t< t_{\mathbf{b}}\}} e^{-  \int^{t}_{0}  \nu } f_0^l(x-tv,v) + \mathbf{1}_{\{t>t_{\mathbf{b}}\}} e^{-  \int^{t_{\mathbf{b}}}_{0}  \nu } g^l(t-t_{\mathbf{b}},x_{\mathbf{b}},v)\\
& \ \  + \int^{\min\{t,t_{\mathbf{b}}\}}_0 e^{-  \int^{s}_{0}  \nu } H^l(t-s,x-sv,v) \mathrm{d}s,
\end{split}
\end{equation}
and $f^l_\pm(t,x,v)\equiv   \mathbf{1}_{\{t\gtrless t_{\mathbf{b}}\}}f^l$. Therefore for all $(x,v)\in\gamma_-$,
\begin{equation*}
\begin{split}
f^l_+(s,x+sv,v) -f^l_- (s,x+sv,v) &= e^{-\int^{s}_{0} \nu} g^l(0,x,v) - e^{ -\int^{s}_{0} \nu} f^l_0(x,v).
\end{split}
\end{equation*}
Since $\{\phi \neq 0\}\cap \mathcal{M}$ is a smooth hypersurface, we apply the Gauss theorem to $f^l$ to obtain
\begin{equation}\label{gauss}
\begin{split}
\iiint \partial_{\mathbf{e}} \phi f^l  \mathrm{d}x\mathrm{d}v\mathrm{d}t &= \iint [f^l_+ - f^l_-] \phi \ \mathbf{e}\cdot \mathbf{n}_{\mathcal{M}} \mathrm{d}\mathcal{M} \\ 
& \ \ - \left\{ \iiint_{t>t_{\mathbf{b}}} \phi \ \partial_{\mathbf{e}}f_{+}^l \mathrm{d}x\mathrm{d}v\mathrm{d}t
 +
 \iiint_{t<t_{\mathbf{b}}} \phi \ \partial_{\mathbf{e}}f_{-}^l \mathrm{d}x\mathrm{d}v\mathrm{d}t
\right\},
\end{split}
\end{equation}
where $\partial_\mathbf{e}= [\partial_t,\nabla_x,\nabla_v]=[\partial_t, \partial_{x_1}, \partial_{x_2}, \partial_{x_3}, \partial_{v_1},\partial_{v_2},\partial_{v_3}]$ and $$\mathbf{n}_\mathcal{M}=\frac{1}{\sqrt{1+ |\nabla_{x} t_{\mathbf{b}}|^{2} + |\nabla_{v} t_{\mathbf{b}}|   }} ( 1, - \nabla_{x} t_{\mathbf{b}}, -\nabla_{v} t_{\mathbf{b}} )\in\mathbb{R}^7.$$ We have used $(s,x+sv,v)$ and $(x,v)\in\gamma_-$ as our parametrization for the manifold $\mathcal{M}\cap \{\phi\neq0\}$, so that $n(x_{\mathbf{b}}(x,v))\cdot v\geq 2\delta^\prime$ is equivalent to $n(x)\cdot v\geq 2\delta^\prime$. Therefore the above hypersurface integration over $\{t\neq t_{\mathbf{b}}\}$ is bounded by
\begin{equation*}
\begin{split}
&\lesssim_{\phi, \delta} \int_0^{\frac{1}{\delta}} \int_{n(x)\cdot v\geq 2\delta^\prime} |f^l_+(s,x+sv,v)-f^l_-(s,x+sv,v)| \mathrm{d}S_x \mathrm{d}v\mathrm{d}s\\
&\lesssim_{\phi,\delta} \int_{n(x)\cdot v\geq 2\delta^\prime} | g^l(0,x,v)- f^l_0(s,v)| \mathrm{d}S_x \mathrm{d}v \rightarrow 0 , \ \ \ \text{as} \ l\rightarrow \infty,
\end{split}
\end{equation*}
since the compatibility condition $f_0(x,v)= g(0,x,v)$ for $(x,v)\in\gamma_-$. Clearly, taking difference of (\ref{fl}) and (\ref{eq:explicit_f}), we deduce $f^l \rightarrow f$ strongly in $L^p{(\{\phi\neq0\})}$ due to the first estimate of (\ref{estimate_f}). Furthermore, due to (\ref{estimate_f}), we have a uniform-in-$l$ bound of $f^l_{\pm}$ in $W^{1,p}(\{t\gtrless t_{\mathbf{b}}, \ \phi\neq0\})$ such that, up to subsequence,
$$
\partial_{\mathbf{e}} f^l_{+} \rightharpoonup\partial_{\mathbf{e}} f \mathbf{1}_{\{t>t_{\mathbf{b}}\}}, \ \ \partial_{\mathbf{e}} f^l_- \rightharpoonup \partial_{\mathbf{e}} f \mathbf{1}_{\{t<t_{\mathbf{b}}\}}, \  \ \ \text{weakly in} \ L^p(\{\phi\neq0\}).
$$
Finally we conclude the claim (\ref{piecewise}) by letting $l\rightarrow \infty$ in \eqref{gauss}.

Now notice that from its explicit form \eqref{eq:explicit_f}, and since all
the data are compactly supported in velocity, $f$ is itself
compactly supported in velocity. Recall $\partial=[\partial_t, \nabla_x,\nabla_v]$.
From this and the $L^p$ bounds above,
we conclude
\begin{equation}\label{partialf}
\{\partial_t+v\cdot\nabla_x + \nu \}\partial f=\partial H-\partial v\cdot \nabla_x f-\partial \nu f \in L^p.
\end{equation}
 By the trace theorem (Lemma \ref{le:ukai}), traces of $\partial_t f,\nabla_x f, \nabla_v f$ exist.
To evaluate these traces, we take derivatives along characteristics.
Letting $t\rightarrow t_{\mathbf{b}}$ and $t\rightarrow 0$, we deduce  (\ref{eq:nabla_bc}). From the Green's identity, Lemma \ref{Green}, we have (\ref{global_t}), (\ref{global_x}) and (\ref{global_v}), and therefore we conclude $\partial f \in C^0([0,T]; L^p ).$

In order to remove the compact support assumption we employ the cut-off function $\chi$ used in (\ref{dist}). Define $f^m=\chi(|v|/m) f$ then $f^m$ satisfies
\begin{eqnarray}
\{\partial_t + v\cdot\nabla_x +  \chi(|v|/m)  \nu \} f^m = \chi(|v|/m)H,\label{cutoff1}\\
f^m(0,x,v) = \chi(|v|/m) f_0, \ \ f^m|_{\gamma_-} = \chi(|v|/m) g\notag\label{cutoff2}.
\end{eqnarray}
Note that $\nabla_v [\chi(|v|/m)g] =\chi(|v|/m) \nabla_v g + g\nabla_v \chi(|v|/m) $ and $\chi(|v|/m) f_0(x,v) = \chi(|v|/m)g(0,x,v)$ for $(x,v)\in\gamma_-$.
Apply previous result to compute the traces of the derivatives of $f^m$. It is standard (using Green's identity) to show that $\partial_t f^m, \nabla_x f^m$ and $\nabla_v f^m$ are Cauchy and we can pass a limit.
\end{proof}

We now study weighted $W^{1,p}$ estimate. Recall (\ref{dist}). We first
define an effective collision frequency:
\begin{equation}
\begin{split}
\nu _{\varpi,\beta }(t,x,v)=  \nu(v)+  \varpi \langle v\rangle -\beta \alpha^{ -1}[v\cdot \nabla_{x} \alpha]
,
\end{split}
\label{nula}
\end{equation}%
and
\begin{equation}\label{df}
 [\partial_{t} + v\cdot \nabla_{x} + \nu_{\varpi, \beta}] (e^{-\varpi \langle v\rangle t} \alpha^{\beta} f) = e^{-\varpi \langle v\rangle t} \alpha^{\beta} [\partial_{t}f + v\cdot \nabla_{x}f + \nu f]  .
\end{equation}
Due to (\ref{alpha_inv}) and $\varpi \gg1$, $\nu _{\varpi,\beta}(t,x,v)\sim   \beta \langle v\rangle $.

\begin{proposition}
\label{inflowW1p} Let $f$ be a solution of (\ref{eq:transport}). Assume (\ref%
{compatibility_inflow}) and $\langle v\rangle g\in L^{\infty }([0,T]\times
\gamma _{-}),$ and $\nu, \langle v\rangle H\in L^{\infty }([0,T]\times \Omega \times
\mathbb{R}^{3})$. For any fixed $p\in[2,\infty]$, assume
\begin{eqnarray*}
 e^{-\varpi \langle v\rangle t} \alpha^{\beta}\partial _{t}g,\  e^{-\varpi \langle v\rangle t} \alpha^{\beta}\nabla _{\tau }g
&\in &L^{\infty }([0,T];L^{p}(\gamma _{-})), \\
 e^{-\varpi \langle v\rangle t} \alpha^{\beta}\big\{|\nabla _{\tau }g|+\frac{1}{n(x)\cdot v}\big(%
|\partial _{t}g|+\langle v\rangle |\nabla _{\tau }g|+|H|\big)\big\} &\in
&L^{\infty }([0,T];L^{p}(\gamma _{-})), \\
 e^{-\varpi \langle v\rangle t} \alpha^{\beta}\big|-v\cdot \nabla _{x}f_{0}-\nu  f_{0}+H_{0}\big|
&\in &L^{p}({\Omega }\times \mathbb{R}^{3}),
\end{eqnarray*}%
and assume $1/p+1/q=1$ there exist $TC_{T}=O(T)$
and $\varepsilon \ll 1$ such that for all $t\in \lbrack 0,T]$
\begin{equation*}
\left\vert \iint_{\Omega \times \mathbb{R}^{3}} e^{-\varpi \langle v\rangle t} \alpha^{\beta} \partial
H(t)h(t)\right\vert \leq C_{T}\big\{||h(t)||_{q}+\varepsilon ||\nu
_{l,\beta }^{1/q}h(t)||_{q}\big\}.
\end{equation*}%
Then $f(t,x,v)$ satisfies
\begin{equation*}
||f(t)||_{\infty }\leq ||f_{0}||_{\infty }+\sup_{0\leq s\leq
t}||g(s)||_{\infty }+\Big{|}\Big{|}\int_{0}^{t}H(s)ds\Big{|}\Big{|}_{\infty }.
\end{equation*}%
Recall $\partial =[\partial _{t},\nabla _{x},\nabla _{v}],$ then%
\begin{eqnarray*}
\\
\{\partial _{t}+v\cdot \nabla _{x}+\nu _{\varpi, \beta}\}[ e^{-\varpi \langle v\rangle t} \alpha^{\beta}\partial f] &=& e^{-\varpi \langle v\rangle t} \alpha^{\beta}
\big[  -\partial v\cdot \nabla_{x} f - \partial \nu f  + \partial H \big]
 , \\
 e^{-\varpi \langle v\rangle t} \alpha^{\beta} \partial f|_{t=0} &=& e^{-\varpi \langle v\rangle t} \alpha^{\beta}\partial f_0, \ \ \
 e^{-\varpi \langle v\rangle t} \alpha^{\beta}\partial f|_{\gamma _{-}}  \ = \  e^{-\varpi \langle v\rangle t} \alpha^{\beta}[\partial g|_{\gamma _{-}}],
 \\
\end{eqnarray*}%
where $[\partial g|_{\gamma _{-}}]$ is given in (\ref{eq:nabla_bc}).
Moreover, recalling (\ref{finitial}) and (\ref{gboundary}), we have for $%
2\leq p<\infty, $%
\begin{equation}\label{green_alpha}
\begin{split}
& \int_{\Omega \times \mathbb{R}^{3}}| e^{-\varpi \langle v\rangle t} \alpha^{\beta}\partial
f(t)|^{p}+\int_{0}^{t}\int_{\Omega \times \mathbb{R}^{3}}\nu _{\varpi,\beta }|%
 e^{-\varpi \langle v\rangle t} \alpha^{\beta}\partial f|^{p}+\int_{0}^{t}\int_{\gamma _{+}}| e^{-\varpi \langle v\rangle t} \alpha^{\beta}\partial f|^{p}   \\
&    \lesssim \int_{\Omega \times \mathbb{R}^{3}}| e^{-\varpi \langle v\rangle t} \alpha^{\beta}\partial
f_{0}|^{p}+\int_{0}^{t}\int_{\gamma _{-}}| e^{-\varpi \langle v\rangle t} \alpha^{\beta}\partial
g|^{p}  \\
& \  +\int_{0}^{t}\int_{\Omega \times \mathbb{R}^{3}}| e^{-\varpi \langle v\rangle t} \alpha^{\beta} \partial H - e^{-\varpi \langle v\rangle t} \alpha^{\beta}\partial v \cdot   \nabla_x f -\partial \nu  e^{-\varpi \langle v\rangle t} \alpha^{\beta} f|| e^{-\varpi \langle v\rangle t} \alpha^{\beta} \partial f|^{p-1} , \\ 
& || e^{-\varpi \langle v\rangle t} \alpha^{\beta}\partial f(t)||_{\infty }  \\
&  \lesssim  || e^{-\varpi \langle v\rangle t} \alpha^{\beta}\partial f_{0}||_{\infty }+|| e^{-\varpi \langle v\rangle t} \alpha^{\beta}\partial g||_{\infty } \\
& +\int_{0}^{t}|| e^{-\varpi \langle v\rangle t} \alpha^{\beta} \partial H - \partial v \cdot  e^{-\varpi \langle v\rangle t} \alpha^{\beta}\nabla_x f -\partial \nu  e^{-\varpi \langle v\rangle t} \alpha^{\beta} f||_{\infty } ,\text{ }\ \ \ \ \text{for }p=\infty .   
\end{split}
\end{equation}
\end{proposition}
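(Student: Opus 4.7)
The plan is to reduce the weighted estimate to the unweighted framework of Proposition \ref{theo:trace} plus the Green identity (Lemma \ref{Green}), with the weight $e^{-\varpi\langle v\rangle t}\alpha^{\beta}$ absorbed into a shifted collision frequency $\nu_{\varpi,\beta}$. The $L^{\infty}$ bound on $f$ itself follows immediately from the explicit Duhamel formula \eqref{eq:explicit_f}: along each characteristic $f$ is a convex-type combination of the initial datum contribution, the boundary datum $g$, and a time integral of $H$, all multiplied by $e^{-\int_0^s\nu}\le 1$, which yields $\|f(t)\|_{\infty}\le \|f_0\|_\infty+\sup_{s\le t}\|g(s)\|_\infty+\|\int_0^t H(s)\,\mathrm{d}s\|_\infty$.

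For the weighted derivative estimate, I first invoke Proposition \ref{theo:trace}: the hypotheses on $f_0,g,H$ given here are stronger than those required there (the weight $\alpha^\beta$ only helps at $\gamma_0$, and the polynomial $\langle v\rangle$ moments provide the required $L^p$ integrability once the grazing set is approached), so $\partial f=[\partial_t f,\nabla_x f,\nabla_v f]\in C^0([0,T];L^p(\Omega\times\mathbb{R}^3))$ exists with trace values given by \eqref{eq:nabla_bc} and satisfies the transport equation
\begin{equation*}
\{\partial_t+v\cdot\nabla_x+\nu\}\partial f=\partial H-\partial v\cdot\nabla_x f-\partial\nu\, f.
\end{equation*}
Multiplying this by $e^{-\varpi\langle v\rangle t}\alpha^{\beta}$ and using the product identity \eqref{df} (with the analogous derivative applied to $\partial f$ in place of $f$), I obtain the weighted transport equation
\begin{equation*}
\{\partial_t+v\cdot\nabla_x+\nu_{\varpi,\beta}\}\bigl(e^{-\varpi\langle v\rangle t}\alpha^{\beta}\partial f\bigr)=e^{-\varpi\langle v\rangle t}\alpha^{\beta}\bigl[\partial H-\partial v\cdot\nabla_x f-\partial\nu\, f\bigr],
\end{equation*}
where $\nu_{\varpi,\beta}$ is defined in \eqref{nula}. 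The boundary value at $t=0$ is $e^{-\varpi\langle v\rangle t}\alpha^{\beta}\partial f_0$ and on $\gamma_-$ it is $e^{-\varpi\langle v\rangle t}\alpha^{\beta}\partial g$ as listed in \eqref{eq:nabla_bc}; on $\gamma_0$ the factor $\alpha^{\beta}$ kills the singular $1/(n\cdot v)$ occurring in the definition \eqref{gboundary} of $\nabla_x g$ since $\alpha(x,v)\gtrsim |n(x)\cdot v|^2$ by Definition \ref{K_D}, so the boundary integrand is finite.

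To conclude \eqref{green_alpha} for finite $p$, I apply Lemma \ref{Green} with $h=e^{-\varpi\langle v\rangle t}\alpha^{\beta}\partial f$: the left side produces $\|h(t)\|_p^p+\int_0^t|h|_{\gamma_+,p}^p$, while on the right side the $\nu_{\varpi,\beta}$ term contributes $p\int_0^t\iint\nu_{\varpi,\beta}|h|^p$ which moves to the left (using $\nu_{\varpi,\beta}\ge 0$ for $\varpi$ large by \eqref{llarge} and \eqref{alpha_inv}), the initial and $\gamma_-$ boundary terms appear with sign $+$, and the source produces $p\int_0^t\iint e^{-\varpi\langle v\rangle s}\alpha^{\beta}[\partial H-\partial v\cdot\nabla_x f-\partial\nu\, f]\cdot|h|^{p-2}h$, bounded by H\"older by the claimed right-hand side of \eqref{green_alpha}. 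For $p=\infty$ I instead integrate the weighted transport equation along characteristics: between bounces $e^{-\varpi\langle v\rangle t}\alpha^{\beta}\partial f$ satisfies an ODE whose integrating factor $e^{-\int_0^s\nu_{\varpi,\beta}}$ is bounded by $1$, yielding the stated Duhamel-type $L^\infty$ bound. The technical point to watch is the assumption on $\iint e^{-\varpi\langle v\rangle t}\alpha^{\beta}\partial H\,h$, which is formulated precisely so the source contribution can be absorbed by a small-$\varepsilon$ multiple of $\|\nu_{\varpi,\beta}^{1/q}h\|_q$ plus lower-order $\|h\|_q$; this is the one place where the structure of $\nu_{\varpi,\beta}\sim\beta\langle v\rangle$ is essential, and it is the main obstacle to making the Gronwall closure rigorous.
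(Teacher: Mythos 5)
Your reduction to Proposition~\ref{theo:trace} contains a genuine gap. You claim the hypotheses of Proposition~\ref{inflowW1p} are stronger than those of Proposition~\ref{theo:trace}, but the implication fails: since $\alpha^{\beta}$ \emph{vanishes} on $\gamma_0$, knowing $e^{-\varpi\langle v\rangle t}\alpha^{\beta}\partial_t g\in L^p(\gamma_-)$ gives no control on $\partial_t g$ in unweighted $L^p(\gamma_-)$ near grazing --- the boundary data may blow up like $\alpha^{-\beta}$, which is admissible under Proposition~\ref{inflowW1p} yet puts $\partial_t g\notin L^p(\gamma_-)$ whenever $\beta\geq 1/p$. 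The admissible ranges actually used downstream ($\frac{p-2}{2p}<\beta<\frac{p-1}{2p}$ with $p\geq 4$ in Theorem~\ref{weigh_W1p}, and $\beta=1/2$ with $p=\infty$) do fall in this regime. The $\langle v\rangle$ moments you appeal to address velocity decay, not the spatial degeneracy at $\gamma_0$, so they do not repair this. Consequently you cannot assert that $\partial f\in C^0([0,T];L^p(\Omega\times\mathbb{R}^3))$ exists unweighted; only the $\alpha^\beta$-weighted quantity is controlled, and that is exactly what the proposition asserts.

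The paper's proof does not invoke Proposition~\ref{theo:trace} as a black box. After cutting off the data in $v$, it multiplies the explicit piecewise Duhamel formulas \eqref{compute_f_t}--\eqref{compute_f_v} by $e^{-\varpi\langle v\rangle t}\alpha^{\beta}$ and then uses the Velocity Lemma (Lemma~\ref{velocity_lemma}) to compare the weight evaluated at $(t,x,v)$ with the weight evaluated at the backward trajectory's landing point (on the initial plane or on $\gamma_-$): on the cut-off support the ratio is bounded by $e^{C_{m,\beta}t}$. This turns the weighted $L^p$ hypotheses on $f_0$, $g$, $H$ directly into weighted $L^p$ control of $\partial f$, with no intermediate unweighted estimate, and the $\alpha^{\beta}$ factor on $\gamma_-$ cancels the $1/(n\cdot v)$ singularity of \eqref{gboundary} exactly as you noted. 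From there the rest of your sketch --- deriving the weighted transport equation from \eqref{df} and \eqref{partialf}, identifying the traces by testing against functions multiplied by $e^{-\varpi\langle v\rangle t}\alpha^{\beta}$, and applying Lemma~\ref{Green} for $p<\infty$ (resp.\ integrating along characteristics for $p=\infty$) --- does match the paper. To close your argument, replace the black-box invocation of Proposition~\ref{theo:trace} by this direct weighted-Duhamel estimate combined with the Velocity Lemma.
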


\begin{proof}
First we assume $f_0, g$ and $H$ have compact supports in $\{v\in\mathbb{R}^3 : |v|<m\}$. We estimate $\partial f$ in the bulk. From the velocity lemma (Lemma \ref{velocity_lemma}), we have
\begin{equation}\notag
\begin{split}
&\sup_{t\leq
t_{\mathbf{b}}}\frac{ e^{-\varpi \langle v\rangle t} \alpha^{\beta}(x,v)}{%
  \alpha^{\beta}(x-tv,v)} \leq e^{
C_{m,\beta}   t}, \ \
\sup_{t\geq t_{\mathbf{b}}}%
\frac{ e^{-\varpi \langle v\rangle t} \alpha^{\beta}}{%
 e^{-\varpi \langle v\rangle (t-t_{\mathbf{b}})} \alpha^{\beta}(x_{\mathbf{b}},v)} \leq e^{ C_{m,\beta}
t_{\mathbf{b}}},\\   \
&
\sup_{\max\{t-t_{\mathbf{b}},0\}\leq s\leq t}\frac{ e^{-\varpi \langle v\rangle t} \alpha^{\beta}}{%
 e^{-\varpi \langle v\rangle (t-s)} \alpha(x-sv,v)^{\beta} } \leq e^{ C_{m,\beta}   s}.
\end{split}
\end{equation}
Multiply $ e^{-\varpi \langle v\rangle t} \alpha^{\beta}$ by the above direct computations
and use the above inequalities to get
\begin{equation}\label{dpartialf}
\begin{split}
& e^{-\varpi \langle v\rangle t} \alpha^{\beta} |%
\partial
_{t}f(t,x,v)|\\ &\lesssim \
e^{C_{m, \beta }t}e^{-\int^{t}_{0}\nu }   \alpha^{\beta} \big|[\nu
f_{0} +
v\cdot\nabla_{x}f_{0}-H|_{t=0}]( x-tv,v)\big| \mathbf{1}_{\{t<t_{\mathbf{b}}\}}\\
&  \ +
e^{C_{m,\beta }t_{\mathbf{b}}}e^{-\int^{t_{\mathbf{b}}}_{0}\nu} e^{-\varpi \langle v\rangle (t-t_{\mathbf{b}})} \alpha^{\beta} \partial_{t}\big|g(t-t_{\mathbf{b}},x_{
\mathbf{b}},v)\big| \mathbf{1}_{\{t>t_{\mathbf{b}}\}}\\
 & \ +
\int_{0}^{\min(t,t_{\mathbf{b}})}
e^{C_{m,\beta  }s}e^{-\int^{s}_{0}\nu} e^{-\varpi \langle v\rangle (t-s)} \alpha^{\beta}
\big|\partial_{t}  H(t-s,x-vs,v)
\big|\mathrm{d} s ,\\
& e^{-\varpi \langle v\rangle t} \alpha^{\beta} |\nabla
_{x}f(t,x,v)|\\   &\lesssim \
e^{C_{m,\beta }t}e^{-\int^{t}_{0}\nu}
 \alpha^{\beta} \big|\nabla _{x}f_{0}( x-tv,v)\big|\mathbf{1}_{\{t<t_{\mathbf{b}}\}}+
e^{C_{m,\beta }t_{\mathbf{b}}}e^{-\int^{t_{\mathbf{b}}}_{0}\nu} \sum_{i=1}^2
 \tau_i
 e^{-\varpi \langle v\rangle( t-t_{\mathbf{b}})} \alpha^{\beta}
|\partial_{\tau_i} g(t-t_{\mathbf{b}},x_\mathbf{b},v)|\mathbf{1}_{\{t>t_{\mathbf{b}}\}}
\\
& \ +e^{C_{m,\beta }t_{\mathbf{b}}}e^{- \int^{t_{\mathbf{b}}}_{0}\nu}    n(x_{\mathbf{b}}) \frac{
e^{-\varpi \langle v\rangle (t-t_{\mathbf{b}} )} \alpha^{\beta}
(
x_{\mathbf{b}},v)}{
|v\cdot n(x_{\mathbf{b}})|} \Big| \Big\{\partial_t g +  \sum_{i=1}^2 (v\cdot
\tau_i)
\partial_{\tau_i}g +\nu g -H \Big\}
 (t-t_{\mathbf{b}},x_\mathbf{b}%
,v)\Big|\mathbf{1}_{\{t>t_{\mathbf{b}}\}}\\
& \ + \int_{0}^{\min(t,t_{\mathbf{b}})} e^{C_{m, \beta}s} e^{-\int^{s}_{0}\nu}
 e^{-\varpi \langle v\rangle (t-s)} \alpha^{\beta} \big|
\nabla_{x}H(t-s,x-vs,v)\big| \mathrm{d} s
,\\ 
& e^{-\varpi \langle v\rangle t} \alpha^{\beta} |\nabla
_{v}f(t,x,v)| \\  &\lesssim \
e^{C_{m,\beta }t}e^{-\int^{t}_{0}\nu}  \alpha^{\beta}\big|[-t\nabla_{x}f_{0}
+
\nabla_{v}f_{0} -t \nabla_v\nu(v) f_{0} ]( x-tv,v)\big|\mathbf{1}_{\{t<t_{\mathbf{b}}\}}\\
& \ +
e^{C_{m,\beta }t_{\mathbf{b}}}
e^{-\int^{t_{\mathbf{b}}}_{0}\nu}   \sum_{i=1}^2
 \tau_i  e^{-\varpi \langle v\rangle (t-t_{\mathbf{b}})} \alpha^{\beta}
|\partial_{\tau_i} g
(t-t_{\mathbf{b}},x_{\mathbf{b}},v)|\mathbf{1}_{\{t >t_{\mathbf{b}}\}}
\\
 & \ + e^{C_{m,\beta }t_{\mathbf{b}}} e^{-\int^{t_{\mathbf{b}}}_{0}
\nu} n(x_{\mathbf{b}})\frac{
 e^{-\varpi \langle v\rangle( t-t_{\mathbf{b}})} \alpha^{\beta} }{|v\cdot n(x_{\mathbf{b}})|}
\Big|\Big\{\partial_t g +
\sum_{i=1}^2 (v\cdot \tau_i) \partial_{\tau_i}g
+\nu g -H \Big\}
 (t-t_{\mathbf{b}},x_%
\mathbf{b},v)\Big|\mathbf{1}_{\{t>t_{\mathbf{b}}\}}
\\
 & \ +
e^{C_{m,\beta }t_{\mathbf{b}}} e^{-\int^{t_{\mathbf{b}}}_{0} \nu}
 e^{-\varpi \langle v\rangle (t-t_{\mathbf{b}})} \alpha^{\beta}  \big\{ | \nabla_v
g(t-t_{\mathbf{b}},x_{\mathbf{b}},v)| +|t_{\mathbf{b}} \nabla_v \nu(v)| |
g(t-t_{\mathbf{b}},x_{\mathbf{b}},v) |
\big\}\mathbf{1}_{\{t>t_{\mathbf{b}}\}}\\
& \ + \int_{0}^{\min(t,t_{\mathbf{b}})}
e^{C_{m,\beta }s}
e^{-\int^{s}_{0}\nu} e^{-\varpi \langle v\rangle (t-s)} \alpha^{\beta}\big|\{\nabla_{v}H -s\nabla_{x}H -
s\nabla \nu H
\}(t-s,x-vs,v) \big|\mathrm{d} s .
\end{split}
\end{equation}
Following (\ref{change1}) and (\ref{change2}) of Proposition 1 and using the condition of Proposition 2, we deduce
\begin{equation}\notag
\begin{split}
|| e^{-\varpi \langle v\rangle t} \alpha^{\beta}\partial_t f(t)||_p &\lesssim_{t,m,\beta } ||
  \alpha^{\beta} [v\cdot \nabla_x f_0 + \nu f_0 -H(0,\cdot,\cdot)]||_p + \left[
\int_0^t ||
 e^{-\varpi \langle v\rangle s} \alpha^{\beta}  \partial_t g(s) ||_{\gamma,p}^p\mathrm{d} s\right]^{1/p}\\
& \ + \left[
\int_0^t ||  e^{-\varpi \langle v\rangle s} \alpha^{\beta} \partial_t H(s)  ||_{p}^p\mathrm{d} s\right]
^{1/p},\\
||
 e^{-\varpi \langle v\rangle t} \alpha^{\beta} \nabla_x f(t)||_p & \lesssim_{t,m,\beta } ||
  \alpha^{\beta}
\nabla_x f_0 ||_p + \sum_{i=1}^2 \left[\int_0^t ||
 e^{-\varpi \langle v\rangle s} \alpha^{\beta}
\partial_{\tau_i} g(s) ||_{\gamma,p}^p\mathrm{d} s\right]^{1/p} \\
& \ + \left[
\int_0^t \Big{|}\Big{|} \frac{ e^{-\varpi \langle v\rangle t} \alpha^{\beta}}{v\cdot n}
\{\partial_t g + \sum
(v\cdot \tau_i) \partial_{\tau_i}g+ \nu g -H\} \Big{|}
\Big{|} _{\gamma,p}^p
\mathrm{d} s\right]^{1/p}\\
 & \ +\left[\int_0^t ||  e^{-\varpi \langle v\rangle s} \alpha^{\beta}
\nabla_x H(s)
||_{p}^p\mathrm{d} s\right]^{1/p},  
\end{split}
\end{equation}
\begin{equation}\notag
\begin{split}
|| e^{-\varpi \langle v\rangle t} \alpha^{\beta}  \nabla_v
f(t)||_p &
\lesssim_{t,m,\beta }||  \alpha^{\beta}  \nabla_v f_0 ||_p +
\sum_{i=1}^2 \left[
\int_0^t ||  e^{-\varpi \langle v\rangle s} \alpha^{\beta}  \partial_{\tau_i} g(s)
||_{\gamma,p}^p\mathrm{d} s\right]
^{1/p}+ \sup_{ 0 \leq s\leq t}|| \langle
v\rangle  g(s) ||_\infty \\
 & \ +
\left[\int_0^t \Big{|}\Big{|} \frac{%
 e^{-\varpi \langle v\rangle t} \alpha^{\beta}}{v\cdot n} \{\partial_t g +
\sum (v\cdot \tau_i)
\partial_{\tau_i}g+ \nu g -H\} \Big{|}\Big{|}
_{\gamma,p}^p \mathrm{d} s\right]
^{1/p}\\
& \ + \left[\int_0^t || e^{-\varpi \langle v\rangle s} \alpha^{\beta} \nabla_v
g (s) ||_p^p \mathrm{d} s\right]
^{1/p} \\
  & \ +\left[\int_0^t || e^{-\varpi \langle v\rangle s} \alpha^{\beta}
\nabla_v H(s)  ||_{p}^p +  ||
 e^{-\varpi \langle v\rangle s} \alpha^{\beta}  \nabla_x H(s)  ||_{p}^p \mathrm{d} s
\right]^{1/p}
   + \sup_{ 0
\leq s\leq t} ||\langle v \rangle H(s)||_\infty.
\end{split}
\end{equation}
By the hypothesis of
Proposition 2 and assumption on $f_0,g$ and $H$ to have compact support, the right hand sides are bounded and hence $ e^{-\varpi \langle v\rangle t} \alpha^{\beta} \partial_t f ,  e^{-\varpi \langle v\rangle t} \alpha^{\beta} \nabla_x f,$ and $ e^{-\varpi \langle v\rangle t} \alpha^{\beta} \nabla_v f$ are in $L^\infty([0,T]; L^p(\Omega\times\mathbb{R}^3))$.

Since $f_0, g$ and $H$ are compactly supported on
$\{v\in\mathbb{R}^3 :|v|\leq m\}$, the derivatives $ e^{-\varpi \langle v\rangle t} \alpha^{\beta}  \partial_t
f,$  $e^{-\varpi \langle v\rangle t} \alpha^{\beta}
\nabla_x f$ and $ e^{-\varpi \langle v\rangle t} \alpha^{\beta}  \nabla_v f$
are compactly
supported on $\{v\in\mathbb{R}^3 : |v|\leq m\}$ and hence from (\ref{df})
and (\ref{partialf})
\begin{equation}\notag
\{\partial_t + v\cdot\nabla_x + \nu_{\varpi,\beta}\}
[ e^{-\varpi \langle v\rangle t} \alpha^{\beta}
\partial f]=  e^{-\varpi \langle v\rangle t} \alpha^{\beta}\partial H-
\partial
v\cdot
 e^{-\varpi \langle v\rangle t} \alpha^{\beta}\nabla_x f
-\partial \nu(v)  e^{-\varpi \langle v\rangle t} \alpha^{\beta} f.
\end{equation}

Moreover, from the general definition of traces, by choosing a
test function
multiplied by $ e^{-\varpi \langle v\rangle t} \alpha^{\beta}$, we deduce $ e^{-\varpi \langle v\rangle t} \alpha^{\beta}
\partial f$
has the
same trace as $ e^{-\varpi \langle v\rangle t} \alpha^{\beta} [\partial f|_\gamma]$.

Now we can apply Lemma
\ref{Green} to have (\ref{green_alpha}) which does not depend on the
velocity cut-off. Therefore for the general case, we
use (\ref{cutoff1}) and pass a limit to conclude the proof.
\end{proof}

 \vspace{12pt}

\section{\large{Dynamical Non-local to Local Estimate} }
 
\vspace{4pt}

The main purpose of this section is to prove Lemma \ref{lemma_nonlocal} and its variants (Lemma \ref{lemma_nonlocal_u}).

We first prove the dynamical non-local to local estimates for the stochastic (diffuse) cycles:

\begin{proof}[\textbf{Proof of (1) of Lemma \protect\ref{lemma_nonlocal}}]
Since $\frac{\langle u\rangle^{r}}{\langle v\rangle^{r}} \lesssim \{ 1+
|v-u|^{2}\}^{\frac{r}{2}}$ and $\langle V_{\mathbf{cl}}(s)-u\rangle^{r}
e^{-\theta |V_{\mathbf{cl}}(s)-u|^{2}}\lesssim e^{-C_{\theta,r} |V_{\mathbf{%
cl}}(s)-u|^{2}}$, it suffices to consider $r=0$ case. We prove (\ref{nonlocal}).

\noindent\textit{Step 1.} We show
that
\begin{equation}  \label{int_xi}
\int_{\mathbb{R}^{3}}\frac{e^{-\theta |v-u|^{2}}}{|v-u|^{2-\kappa }[\alpha
(X_{\mathbf{cl}} (s;t,x,v),u  )]^{\beta}} \mathrm{d}u \ \lesssim  \ \frac{1}{%
|v|^{2\beta-1} |\xi(X_{\mathbf{cl}} (s;t,x,v))  |^{\beta-\frac{1}{2}}} .
\end{equation}
For fixed $s\in \lbrack 0,t_{\mathbf{b}}(x,v))$ and therefore fixed $X_{%
\mathbf{cl}}(s) = x-(t_{\mathbf{b}}(x,v) -s)v\in \bar{\Omega}$.

Firstly, we consider the case of $|\xi(x)|\leq \delta_{\Omega}\ll 1$. From the
assumption, we have $\nabla \xi(x) \neq 0$ and therefore there is uniquely
determined unit vector $n(X_{\mathbf{cl}}(s))= \frac{\nabla \xi(X_{\mathbf{cl%
}}(s))}{|\nabla \xi(X_{\mathbf{cl}}(s))|}$. We choose two unit vector $%
\tau_{1}$ and $\tau_{2}$ so that $\{ \tau _{1},\tau _{2},n(X_{\mathbf{cl}%
}(s))\}$ is an orthonormal basis of $\mathbb{R}^{3}$.

We decompose the velocity variables $u \in \mathbb{R}^{3}$ as
\begin{equation*}
u=u_{n}n(X_{\mathbf{cl}}(s))+u_{\tau }\cdot \tau =u_{n}n(X_{\mathbf{cl}%
}(s))+\sum_{i=1}^{2}u_{\tau ,i}\tau _{i}.
\end{equation*}
We note that $u_{\tau }\in \mathbb{R}^{2}$ and $u_{n}\in \mathbb{R}$ are
completely free coordinates. Therefore using the Fubini's theorem we can
rearrange the order of integration freely. Now we split, for $0\leq s\leq t_{%
\mathbf{b}}(x,v),$
\begin{equation}
\begin{split}
& \int_{\mathbb{R}^{3}}\frac{ e^{-\theta|v-u|^{2}}}{|v-u|^{2-\kappa }}\frac{1%
}{[\alpha (   X_{\mathbf{cl}}(s;t,x,v),u   )]^{\beta} }\mathrm{d}u \\
\lesssim & \int_{\mathbb{R}^{2}} \int_{\mathbb{R}}\frac{e^{- \theta|v-u|^{2}}%
}{|v-u|^{2-\kappa }\big[ |u_{n}|^{2}+|\xi (X_{\mathbf{cl}}(s))||u|^{2} \big]%
^{\beta} } \mathrm{d}u_{n}\mathrm{d}u_{\tau } \\
=& \  {\int_{|u|\geq 5|v|}} \ + \  {%
\int_{|u|\leq \frac{|v|}{2}}}  \ + \  {\int_{\frac{%
|v|}{2}\leq |u|\leq 5|v|}} =  \mathbf{(I)} +  \mathbf{(II)} +  \mathbf{(III)}.
\end{split}
\notag
\end{equation}

For the first term $\mathbf{(I)}$ we use, for $|u|\geq 5|v|$ (therefore $%
|v|\leq \frac{|u|}{5}$),
\begin{equation*}
|u-v|^{2}=\frac{|u-v|^{2}}{2}+\frac{|u-v|^{2}}{2}\geq \frac{\frac{ |u|^{2}}{2%
}-|v|^{2}}{2}+\frac{ \frac{ |u|^{2}}{2}-|v|^{2}}{2}\geq \frac{23}{4} |v|^{2}+%
\frac{23}{100} |u|^{2} \gtrsim |v|^{2}+|u|^{2},
\end{equation*}%
and we use $\big[ |u_{n}|^{2} + |\xi| |u|^{2} \big]^{\beta} \geq \big[%
|u_{n}|^{2} + 25 |\xi| |v|^{2}\big]^{\beta} \gtrsim \big[|u_{n}|^{2} +
|\xi||v|^{2}\big]^{\beta}$ for $|u| \geq 5|v|$ to have
\begin{equation}
\mathbf{(I)}\lesssim  e^{- {C}
|v|^{2}}\int_{\mathbb{R}^{2}} \mathrm{d}u_{\tau} \frac{  e^{-C|u_{\tau}|^{2}}}{ |v_{\tau }-u_{\tau }|^{2-\kappa }}  \int_{\mathbb{R}}  \mathrm{d}u_{n}
\frac{e^{- {C} |u_{n}|^{2}}}{ \big[%
|u_{n}|^{2}+|\xi ||v|^{2}\big]^{\beta}}     .
\notag
\end{equation}
Since $\frac{1}{| v_{\tau} - u_{\tau} |^{2-\kappa}} \in L^{1}_{\mathrm{loc}%
}(\{ u_{\tau} \in \mathbb{R}^{2}\})$ for $\kappa >0$ we first integrate over $u_{\tau}$ is finite. Then 
\begin{equation}
\begin{split}
\mathbf{(I)}\lesssim & \ e^{- {C} |v|^{2}}\int_{\mathbb{R}}\frac{e^{- C
|u_{n}|^{2}}}{\big[ |u_{n}|^{2}+|\xi ||v|^{2} \big]^{\beta}}
\mathrm{d}u_{n} \\
\lesssim & \ e^{- {C} |v|^{2}} \Big\{\int_{ 10}^{\infty}\frac{e^{- C
|u_{n}|^{2}}}{|u_{n}|^{2\beta} \mathbf{1}_{\{ |u_{n}| \geq 10 \}} } \mathrm{d%
}|u_{n}| + \int_{0}^{10} \frac{ \mathrm{d}|u_{n}| }{\big[|u_{n}|^{2} + |\xi|
|v|^{2}\big]^{\beta}}\Big\}  \\
\lesssim & \ \big(1+\int_{0}^{10}\frac{\mathrm{d}|u_{n}|}{ \big[%
|u_{n}|^{2}+|\xi ||v|^{2} \big]^{\beta}}\big) {e^{-C|v|^{2}}} \lesssim \ {%
e^{- C |v|^{2}}} \big(1+\int_{0}^{10} \frac{\mathrm{d}[|\xi |^{\frac{1}{2}%
}|v|\tan \theta ]}{|\xi |^{\beta}|v|^{2\beta}(1+\tan ^{2}\theta )^{\beta}}%
\big) \\
\lesssim & \ {e^{- C |v|^{2}}} \Big(1+ \frac{1 }{|v|^{2\beta -1}}\frac{1}{%
|\xi |^{ \beta-{1}/{2}}} \int_{0}^{\pi/2} (\cos\theta)^{2 \beta-2} \mathrm{d}
\theta \Big) \lesssim e^{-C|v|^{2}}\Big(1+\frac{1 }{|v|^{2\beta -1}}\frac{1}{%
|\xi |^{ \beta-{1}/{2}}}\Big) \\
\lesssim & \ \frac{e^{-C_{\theta}|v|^{2}} }{|v|^{2\beta -1}}\frac{1}{|\xi  (X_{\mathbf{cl}}(s;t,x,v))
|^{\beta-1/2}} ,
\end{split}
\notag
\end{equation}%
where we have used a change of variables: $|u_{n}|=|\xi |^{\frac{1}{2}%
}|v|\tan \theta$ and $\mathrm{d}|u_{n}| = |\xi|^{\frac{1}{2}} |v| \sec^{2}
\theta \mathrm{d} \theta$ and $(\cos\theta)^{2\beta -2} \in L^{1}_{\mathrm{%
loc}}(\{ \theta \in [0, \frac{\pi}{2}] \})$ for $\beta> \frac{1}{2} $.

For the second term $\mathbf{(II)}$, we use $|v-u|\geq |v|-|u| \geq |v|-%
\frac{|v|}{2} \geq \frac{|v|}{2}$ from $|u|\leq \frac{|v|}{2}$, and apply
the change of variables $u\mapsto |v|u$ to have
\begin{equation}
\begin{split}
\mathbf{(II)}& \ \lesssim \ \frac{1}{|v|^{2-\kappa}} {\int_{|u_{n}|+|u_{\tau
}|\leq \frac{|v|}{2}}\frac{e^{-C|v|^{2}} \mathrm{d}u_{n}\mathrm{d}u_{\tau }}{
\big[|u_{n}|^{2}+|\xi ||u_{\tau }|^{2}\big]^{\beta}}} \\
& \ = \ \frac{1}{|v|^{2-\kappa}} \int_{|v|( |u_{n}| + |u_{\tau}|) \leq \frac{%
|v|}{2}} \frac{e^{-C|v|^{2}} |v| \mathrm{d} u_{n} |v|^{2} \mathrm{d}u_{\tau}
}{ \big[ |v|^{2}|u_{n}|^{2} + |\xi | |v|^{2} |u_{\tau}|^{2} \big]^{\beta} }
\\
& \ \lesssim \ \frac{e^{-C|v|^{2}}}{|v|^{2\beta -\kappa -1 }}%
\int_{|u_{\tau}| \leq \frac{1}{2}} \int_{ |u_{n }|\leq \frac{1}{2}}\frac{1%
}{ \big[|u_{n}|^{2}+|\xi ||u_{\tau }|^{2}\big]^{\beta}} \mathrm{d}u_{n}
\mathrm{d}u_{\tau}.
\end{split}
\notag
\end{equation}

Now we apply the change of variables $|u_{n}| = |\xi|^{\frac{1}{2}}
|u_{\tau}| \tan\theta$ for $\theta \in [0,\frac{\pi}{2}]$ with $\mathrm{d}%
u_{n} = |\xi|^{\frac{1}{2}} |u_{\tau}| \sec^{2 } \theta \mathrm{d}\theta$ to
have
\begin{equation}
\begin{split}
\mathbf{(II)}& \ \lesssim \ \frac{e^{-C|v|^{2}}}{|v|^{2\beta -\kappa -1}}
\int_{|u_{\tau}| \leq \frac{1}{2}} \mathrm{d} u_{\tau} \int_{0}^{\frac{\pi}{2%
}} \frac{|\xi|^{\frac{1}{2}} |u_{\tau}| \sec^{2}\theta \mathrm{d}\theta }{ %
\big[ |\xi| |u_{\tau}|^{2} \tan^{2}\theta + |\xi| |u_{\tau}|^{2}\big]^{\beta}%
} \\
& \ \lesssim \ \frac{e^{-C |v|^{2}}}{|v|^{2\beta -\kappa-1} |\xi|^{\beta-1/2}%
} \int_{|u_{\tau}| \leq \frac{1}{2}} \frac{\mathrm{d} u_{\tau}}{%
|u_{\tau}|^{2\beta-1}} \int_{0}^{\pi/2} (\cos\theta)^{2\beta -2}\mathrm{d}%
\theta \\
& \ \lesssim \frac{e^{-C |v|^{2}}}{|v|^{2\beta -\kappa-1} |\xi|^{\beta-1/2}},
\end{split}
\notag
\end{equation}
where we have used $\frac{1}{|u_{\tau}|^{2\beta-1}} \in L^{1}_{\text{loc}%
}(\{ u_{\tau} \in \mathbb{R}^{2}\})$ for $\beta< \frac{3}{2}$ and $%
(\cos\theta)^{2\beta -2} \in L^{1}_{\text{loc}}(\{ \theta \in [0,\frac{\pi}{2%
}] \})$ for $\beta > \frac{1}{2}.$

For the last term $\mathbf{(III)}$, we use the lower bound of $|u|$ ($%
|u|\geq \frac{|v|}{2}$) to have $\big[  |u_{n}|^{2} +|\xi||u|^{2} \big]%
^{\beta} \geq \big[  |u_{n}|^{2} + |\xi| \frac{|v|^{2}}{4} \big]^{\beta}
\gtrsim \big[  |u_{n}|^{2} + |\xi| |v|^{2} \big]^{\beta}$ and
\begin{equation}
\begin{split}
\int_{\frac{|v|}{2}\leq |u|\leq 5|v|} & \lesssim {\int_{ 0\leq |u_{\tau
}|\leq 5{|v|}}\frac{e^{-\frac{C}{2}|v_{\tau }-u_{\tau }|^{2}}}{|v_{\tau
}-u_{\tau }|^{2-\kappa }}}\mathrm{d}u_{\tau }\int_{0 }^{5|v|}\frac{1}{\big[%
|u_{n}|^{2}+|\xi ||v|^{2}\big]^{\beta}}\mathrm{d}u_{n} \\
& \lesssim \int_{0}^{5|v|}\frac{1}{\big[|u_{n}|^{2}+|\xi ||v|^{2}\big]%
^{\beta}}\mathrm{d}u_{n},
\end{split}
\notag
\end{equation}
where we have used $\frac{1}{|u_{\tau }|^{2-\kappa }}\in L^{1}_{\mathrm{loc}%
}(\mathbb{R}^{2})$ for $\kappa >0.$ We apply a change of variables: $%
|u_{n}|=|\xi |^{1/2}|v |\tan \theta $ for $\theta \in \lbrack 0,\pi /2]$
with $\mathrm{d}|u_{n}| = |\xi|^{\frac{1}{2}} |v|\sec^{2} \theta \mathrm{d}%
\theta$. Hence
\begin{equation}
\begin{split}
\mathbf{(III)}& \lesssim \int_{0}^{5|v|}\frac{1}{\big[|u_{n}|^{2}+|\xi
||v|^{2}\big]^{\beta}}\mathrm{d}u_{n}=\int^{ \frac{\pi}{2}}_{0}\frac{
(\cos\theta)^{2\beta -2} }{|\xi |^{\beta- \frac{1}{2}}|v|^{2\beta -1} }
\mathrm{d}\theta \lesssim \frac{1}{|v|^{2\beta -1}} \frac{1}{|v|^{ 2\beta -1}%
},
\end{split}
\notag
\end{equation}
where we used $(\cos\theta)^{2\beta -2}\in L^{1}_{\text{loc}}(\{ \theta \in
[0,\frac{\pi}{2}] \})$ for $\beta> \frac{1}{2}$. Overall, we combine the
estimates of $\mathbf{(I)},\mathbf{(II)}$ and $\mathbf{(III)}$ to conclude (%
\ref{int_xi}).

Secondly, we consider the case of $|\xi(x)|>\delta_{\Omega}.$ Then we can
choose any orthonormal basis, for example standard basis $\{\tau_{1},
\tau_{2}, n\} = (e_{1},e_{2},e_{3})$, to decompose the velocity variables $u
\in \mathbb{R}^{3}$ as $u= u_{1}e_{1} + u_{2} e_{2} + u_{3} e_{3}:=
u_{\tau,1} e_{1} + u_{\tau,2} e_{2} + u_{n} e_{3}$. Then
\begin{equation}
\begin{split}
\alpha(X_{\mathbf{cl}}(s),u)& = |u\cdot \nabla \xi( X_{\mathbf{cl}}(s)
)|^{2} - 2\xi(X_{\mathbf{cl}}(s)) \{u\cdot \nabla^{2} \xi(X_{\mathbf{cl}%
}(s))\cdot u\} \\
&\geq 2|\xi(X_{\mathbf{cl}}(s))| \{u\cdot \nabla^{2} \xi(X_{\mathbf{cl}%
}(s))\cdot u\} \\
& =\delta_{\Omega} C_{\xi} |u|^{2} + |\xi(X_{\mathbf{cl}}(s))| \{u\cdot
\nabla^{2} \xi(   X_{\mathbf{cl}}(s )  )\cdot u\} \\
&\gtrsim |u_{n}|^{2} + |\xi(   X_{\mathbf{cl}}(s;t,x,v) )||u|^{2}.
\end{split}
\notag
\end{equation}
Then we follow all the proof with the same decomposition for $v:= v_{\tau,1}
e_{1} + v_{\tau,2} e_{2} + v_{n} e_{3}$ as well to conclude (\ref{int_xi})
for $|\xi(x)|> \delta_{\Omega}.$

\vspace{4pt}

\noindent\textit{Step 2. \ } In this step we establish (\ref{sigma}) and (\ref{COV_xi_s}).

We first assume $v\cdot \nabla\xi(x)\geq 0$ and $x\in \partial \Omega.$ There exist $\sigma _{1},\sigma _{2}>0$ such that
\begin{equation}  \label{sigma}
\begin{split}
& |v\cdot\nabla \xi ( x-(t_{\mathbf{b}}(x,v)-s) v ) |\gtrsim \sqrt{\alpha (
x-(t_{\mathbf{b}}(x,v)-s) v,v)}\  \\
& \ \ \ \ \ \ \ \ \ \ \ \ \ \ \ \ \ \ \ \ \ \ \ \ \ \ \ \ \ \ \ \ \ \ \ \ \
\ \ \ \ \ \ \ \ \ \ \ \ \ \ \ \ \ \ \ \ \ \text{for all }s\in \lbrack
0,\sigma _{1}]\cup \lbrack t_{\mathbf{b}}(x,v)-\sigma _{2},t_{\mathbf{b}%
}(x,v)], \\
\end{split}%
\end{equation}
and $|v|\sqrt{-\xi ( x-(t_{\mathbf{b}}(x,v)-s) v )}\gtrsim \sqrt{\alpha (
x-(t_{\mathbf{b}}(x,v)-s) v ,v)} \ \   \text{for all }s\in \lbrack \sigma
_{1},t_{\mathbf{b}}(x,v)-\sigma _{2}].$ The mapping $s \mapsto \xi(x-(t_{%
\mathbf{b}}(x,v) -s )v)$ is one-to-one and onto on $s \in [0,\sigma_{1}]$ or
on $s \in [t_{\mathbf{b}}(x,v) -\sigma_{2}, t_{\mathbf{b}}(x,v)]$. Moreover
this mapping $s \mapsto \xi(x-(t_{\mathbf{b}}(x,v) -s )v)$ is diffeomorphism
and we have a change of variables on $s \in [0,\sigma_{1}] \ \text{ or } \
s\in [t_{\mathbf{b}}(x,v) - \sigma_{2}, t_{\mathbf{b}}(x,v)]$.
\begin{equation}  \label{COV_xi_s}
\mathrm{d}s = \frac{\mathrm{d} |\xi|}{ | \nabla \xi (x-(t_{\mathbf{b}} (x,v)
-s )v ) \cdot v |} \lesssim \frac{\mathrm{d}|\xi|}{\sqrt{\alpha (x-(t_{%
\mathbf{b}} (x,v) -s )v )}}.
\end{equation}

Firstly we prove (\ref{sigma}). Recall the definition of $\alpha$ in Definition \ref{K_D}. It suffices to show
\begin{equation}
\begin{split}
& |v\cdot \nabla \xi ( x-(t_{\mathbf{b}}(x,v)-s) v) |\geq |v|\sqrt{-\xi (
x-(t_{\mathbf{b}}(x,v)-s) v)}, \ \ s\in \lbrack 0,\sigma _{1}]\cup \lbrack
t_{\mathbf{b}}(x,v)-\sigma _{2},t_{\mathbf{b}}(x,v)], \\
& |v\cdot \nabla \xi ( x-(t_{\mathbf{b}}(x,v)-s) v) |\leq |v|\sqrt{-\xi (
x-(t_{\mathbf{b}}(x,v)-s) v)}, \ \ s\in \lbrack \sigma _{1},t_{\mathbf{b}%
}(x,v)-\sigma _{2}].\notag
\end{split}
\end{equation}

If $v=0$ or $v\cdot \nabla\xi(x)=0$ then (\ref{sigma}) holds clearly.
Therefore we may assume $v\neq 0$ and $v\cdot \nabla \xi(x)>0$. Due to the Velocity lemma, $v\cdot \frac{ \nabla \xi (x)}{|\nabla \xi (x)|}>0$ and $%
v\cdot \frac{\nabla\xi (x_{\mathbf{b}}(x,v) )}{|\nabla \xi (x_{\mathbf{b}%
}(x,v))|}<0$. By the mean value theorem we choose $t^{\ast }\in (0,t_{\mathbf{b}}(x,v))$ solving $v\cdot \nabla \xi
(x-(t_{\mathbf{b}}(x,v)-t^{*})v)=0$. Moreover due to the convexity of $\xi$ we have
%
\begin{equation*}
\frac{d}{ds}\Big(v\cdot \nabla \xi (x-(t_{\mathbf{b}}(x,v)-s)v)\Big)=v\cdot
\nabla ^{2}\xi (X_{\mathbf{cl}}(s))\cdot v\geq C_{\xi }|v|^{2},
\end{equation*}%
and therefore $t^{\ast }\in (0,t_{\mathbf{b}}(x,v))$ is uniquely determined. Clearly we have $v\cdot \nabla \xi (x-(t_{%
\mathbf{b}}(x,v)-s)v)\geq 0$ for $s\in \lbrack t^{\ast },t_{\mathbf{b}%
}(x,v)] $ and $v\cdot \nabla \xi (x-(t_{\mathbf{b}}(x,v)-s)v)\leq 0$ for $%
s\in \lbrack 0,t^{\ast }]$.

Define $\Phi (s)=\big\{|v\cdot \nabla \xi (x-(t_{\mathbf{b}%
}(x,v)-s)v)|^{2}+|v|^{2}\xi (x-(t_{\mathbf{b}}(x,v)-s)v)\big\}.$ Since $2%
\big(v\cdot \nabla ^{2}\xi (x-(t_{\mathbf{b}}(x,v)-s)v)\cdot v\big)%
+|v|^{2}>0 $ we have
\begin{equation*}
\frac{d}{ds}\Phi (s)=\big(v\cdot \nabla \xi (x-(t_{\mathbf{b}}(x,v)-s)v)\big)%
\Big\{2\big(v\cdot \nabla ^{2}\xi (x-(t_{\mathbf{b}}(x,v)-s)v)\cdot v\big)%
+|v|^{2}\Big\},
\end{equation*}
is strictly negative for $s\in \lbrack 0,t^{\ast }]$ and is strictly
positive for $s\in \lbrack t^{\ast },t_{\mathbf{b}}(x,v)]$. Note that $\Phi
(0)>0$ and $\Phi(t_{ \mathbf{b}}(x,v))>0$ from $v\cdot \frac{\nabla \xi(x)}{%
|\nabla \xi(x)|} >0$ and $v\cdot \frac{\nabla \xi(x_{\mathbf{b}}(x,v))}{%
|\nabla \xi(x_{\mathbf{b}}(x,v))|} <0$. Note that $\Phi$ is continuous
function on the interval $[0,t_{\mathbf{b}}(x,v)]$ so that it has a minimum.
If $\min_{[0,t_{\mathbf{b}}(x,v)]}\Phi (s)\leq 0,$ there exist $\sigma
_{1},\sigma _{2}>0$ satisfying
\begin{equation}
\begin{split}
\Phi (t_{\mathbf{b}}(x,v)+\sigma _{1})&=\Phi (t_{\mathbf{b}%
}(x,v))+\int_{0}^{\sigma _{1}}\frac{d}{ds}\Phi (s)\mathrm{d}s=0, \\
\Phi (t_{\mathbf{b}}(x,v)-\sigma _{2})&=\Phi (t_{\mathbf{b}}(x,v))-\int_{t_{%
\mathbf{b}}(x,v) -\sigma _{2}}^{t_{\mathbf{b}}(x,v)}\frac{d}{ds}\Phi (s)%
\mathrm{d}s=0,
\end{split}
\notag
\end{equation}%
then $\sigma _{1}\leq t^{\ast }$ and $t_{\mathbf{b}}(x,v)-\sigma _{2}\geq
t^{\ast }$ and there is no other $s\in [0,t_{\mathbf{b}}(x,v)]$ satisfying $%
\Phi (s)=0$. Moreover we have $\Phi (s)\leq 0$ for $s\in \lbrack \sigma
_{1},t_{\mathbf{b}}(x,v)-\sigma _{2}]$. If $\min_{[0,t_{\mathbf{b}%
}(x,v)]}\Phi (s)>0,$ there does not exist such $\sigma _{1}$ and $\sigma
_{2} $ then we let $\sigma _{1}=t^{\ast }$ and $\sigma _{2}=t_{\mathbf{b}%
}(x,v)-t^{\ast }$. This proves (\ref{sigma}).

Secondly we prove (\ref{COV_xi_s}). By the proof of ({\ref{sigma}}) and the fact
\begin{equation*}
\frac{d |\xi|}{ds} = -\frac{d}{ds} \xi ( x-(t_{\mathbf{b}}(x,v) -s)v) = -
v\cdot \nabla_{x} \xi(x-(t_{\mathbf{b}}(x,v) -s)v ),
\end{equation*}
and the inverse function theorem we prove (\ref{COV_xi_s}).

\vspace{8pt}

\noindent\textit{Step 3. \ } For small $0 < \tilde{\delta}\ll 1,$ we define
\begin{equation}  \label{sigma_delta}
\tilde{\sigma}_{1} := \min \Big\{\sigma_{1}, \tilde{\delta} \frac{\sqrt{%
\alpha(x,v)}}{|v|^{2}}\Big\} , \ \ \ \tilde{\sigma}_{2} := \min \Big\{%
\sigma_{2}, \tilde{\delta} \frac{\sqrt{\alpha(x,v)}}{|v|^{2}}\Big\}.
\end{equation}
Then both of (\ref{sigma}) and (\ref{COV_xi_s}) hold on $s\in [0,\tilde{
\sigma}_{1}]\cup [t_{\mathbf{b}}(x,v)-\tilde{ \sigma}_{2}, t_{\mathbf{b}%
}(x,v)]$ without constant changing. Moreover, if $s\in [0,\tilde{ \sigma}%
_{1}]\cup [t_{\mathbf{b}}(x,v)-\tilde{ \sigma}_{2}, t_{\mathbf{b}}(x,v)]$
then by the Velocity lemma
\begin{equation}  \label{max_xi}
\max \{|\xi|\} := \max_{s\in [0, \tilde{\sigma}_{1}] \cup [t_{\mathbf{b}%
}(x,v) - \tilde{\sigma}_{2}, t_{\mathbf{b}}(x,v) ]} |\xi(X_{\mathbf{cl}%
}(s))| \ \lesssim \ \tilde{\delta} \frac{\alpha(x,v)}{|v|^{2}}.
\end{equation}
On $s\in [ \tilde{ \sigma}_{1}, t_{\mathbf{b}}(x,v) -\tilde{\sigma}_{2} ]$
we have the following estimate with $\tilde{\delta}-$dependent constant:
\begin{equation}  \label{lower_delta}
\begin{split}
&|v| \sqrt{-\xi(x-(t_{\mathbf{b}}(x,v)-s)v)} \  \ \gtrsim_{\xi, \tilde{\delta}} \ \  \sqrt{\alpha(x-(t_{\mathbf{b}}(x,v)-s)v,v)}.
\end{split}%
\end{equation}

The proof of (\ref{max_xi}) is due to, for $s\in \lbrack 0,\tilde{\sigma}%
_{1}],$
\begin{equation}\label{tz}
\begin{split}
|\xi (x-(t_{\mathbf{b}}(x,v)-s)v)| &\leq  \int_{0}^{s}|v\cdot \nabla \xi
(x-(t_{\mathbf{b}}(x,v)-\tau )v)|\mathrm{d}\tau \\
&\lesssim  \sqrt{\alpha (x,v)}|s|  \lesssim  \min \left\{ \sqrt{\alpha }t_{Z},\frac{\tilde{\delta}\alpha }{%
|v|^{2}}\right\} \equiv B,
\end{split}
\end{equation}

where we have used $\alpha (X_{\mathbf{cl}}(\tau ),V_{\mathbf{cl}}(\tau
))\lesssim _{\xi }\alpha (x,v)$ from the Velocity lemma (Lemma \ref%
{velocity_lemma}). The proof for $s\in \lbrack t_{\mathbf{b}}(x,v)-\tilde{%
\sigma}_{2},t_{\mathbf{b}}(x,v)]$ is exactly same.

Now we prove (\ref{lower_delta}). Recall that $t^{*} \in[0, t_{\mathbf{b}%
}(x,v)]$ in the previous step: $v\cdot \nabla \xi(x-(t_{\mathbf{b}%
}(x,v)-t^{*})v)=0$. Clearly $|\xi(X_{\mathbf{cl}}(s))|$ is an increasing
function on $s\in [0, t^{*}]$ and a decreasing function on $s\in [t^{*}, t_{%
\mathbf{b}}(x,v)]$. This is due to the convexity of $\xi$:
\begin{equation*}
\frac{d^{2}}{ds^{2}} [-\xi(s-(t_{\mathbf{b}}(x,v)-s)v)] = v\cdot
\nabla\xi(x-(t_{\mathbf{b}}(x,v)-s)v) \cdot v \gtrsim_{\xi} |v|^{2},
\end{equation*}
and $v\cdot \nabla \xi(x)>0$ and $v\cdot \nabla \xi(x_{\mathbf{b}}(x,v))<0.$


Therefore
\begin{equation}
\begin{split}
-\xi (x-(t_{\mathbf{b}}(x,v)-s)v)& =-\xi (x)-\int_{t_{\mathbf{b}%
}(x,v)}^{s}v\cdot \nabla \xi (x-(t_{\mathbf{b}}(x,v)-\tau )v)\mathrm{d}\tau
\\
& =\int_{s}^{t_{\mathbf{b}}(x,v)}v\cdot \nabla \xi (x-(t_{\mathbf{b}%
}(x,v)-\tau )v)\mathrm{d}\tau \\
& \geq (t_{\mathbf{b}}(x,v)-s)(v\cdot \nabla \xi (x-(t_{\mathbf{b}%
}(x,v)-s)v)) \\
& \geq \tilde{\sigma}_{2}|v\cdot \nabla \xi (x-\tilde{\sigma}_{2}v)|\ \ \
\text{for}\ \ s\in \lbrack t^{\ast },t_{\mathbf{b}}(x,v)-\tilde{\sigma}_{2}],
\end{split}
\notag
\end{equation}%
\begin{equation}
\begin{split}
-\xi (x-(t_{\mathbf{b}}(x,v)-s)v)& =-\xi (x_{\mathbf{b}}(x,v))-\int_{0}^{s}v%
\cdot \nabla \xi (x-(t_{\mathbf{b}}(x,v)-\tau )v)\mathrm{d}\tau \\
& \geq s|v\cdot \nabla \xi (x-(t_{\mathbf{b}}-s)v)| \\
& \geq \tilde{\sigma}_{1}|v\cdot \nabla \xi (x_{\mathbf{b}}(x,v)+\tilde{%
\sigma}_{1}v)|\ \ \ \text{for}\ \ s\in \lbrack 0,t^{\ast }].
\end{split}
\notag
\end{equation}%
%
%
%

Hence, for $s \in [\tilde{\sigma}_{1}, t_{\mathbf{b}}(x,v)-\tilde{\sigma}%
_{2}],$
\begin{equation}
\begin{split}
|\xi(x -(t_{\mathbf{b}} -s )v )| &\geq \min \Big\{ |\xi(x - \tilde{\sigma}%
_{2} v )|, | \xi( x_{\mathbf{b}}(x,v) + \tilde{\sigma}_{1} v ) |\Big\} \\
&\geq \min \Big\{ \tilde{\sigma}_{2} |v \cdot \nabla \xi(x -\tilde{\sigma}%
_{2} v)|, \tilde{\sigma}_{1} | v \cdot \nabla \xi(x_{\mathbf{b}} (x,v)+
\tilde{\sigma}_{1} v)|\Big\}.
\end{split}
\notag
\end{equation}
From the definition of $\tilde{\sigma}_{1}$ and $\tilde{\sigma}_{2}$ in (\ref%
{sigma_delta}) we have
\begin{equation}  \label{xi_lower1_tilde}\notag
|v |^{2} |\xi(x -(t_{\mathbf{b}}(x,v) -s) v)| \geq \tilde{\delta} \sqrt{%
\alpha(x , v )} \min \Big\{ |v \cdot \nabla \xi(x -\tilde{\sigma}_{2} v)|, |
v \cdot \nabla \xi(x_{\mathbf{b}} (x,v)+ \tilde{\sigma}_{1} v)|\Big\} .
\end{equation}
Without loss of generality we may assume $|v \cdot \nabla \xi(x -\tilde{%
\sigma}_{2} v)|=\min \big\{ 
|v \cdot \nabla \xi(x -\tilde{\sigma}_{2} v)|, 
| v \cdot \nabla \xi(x_{\mathbf{b}} (x,v)+ \tilde{\sigma}_{1} v)|\big\}.$
Then by the Velocity lemma we have $\sqrt{\alpha(x,v)}\gtrsim_{\xi}
|v||\xi(x-\tilde{\sigma}_{2} v)|^{1/2}$. Then we choose $s= t_{\mathbf{b}%
}(x,v) - \tilde{\sigma}_{2}$ to have $|v|^{2} |\xi(x-\tilde{\sigma}_{2}
v)|\geq \tilde{\delta} |v| |\xi(x-\tilde{\sigma}_{2} v)|^{1/2} \times
|v\cdot \nabla \xi(x-\tilde{\sigma}_{2}v)| $ and
\begin{equation}
\begin{split}
|v| |\xi(x-\tilde{\sigma}_{2} v)|^{1/2}\gtrsim \tilde{\delta} \times |v\cdot
\nabla \xi(x-\tilde{\sigma}_{2}v)|.
\end{split}
\notag
\end{equation}
The left hand side is the lower bound of $|v|^{2}|\xi(x-(t_{\mathbf{b}%
}(x,v)-s)v)|$ for $s\in [\tilde{\sigma}_{1}, t_{\mathbf{b}}(x,v)-\tilde{%
\sigma}_{2}]$ and the right hand side is bounded below by the Velocity
lemma: $e^{-\mathcal{C}|v|t_{\mathbf{b}}(x,v)} \alpha(x,v)\gtrsim_{\xi}
\alpha(x,v)$. Therefore we conclude (\ref{lower_delta}).

\vspace{8pt}

\noindent \textit{Step 4. \ } We prove (\ref{nonlocal}). From (\ref%
{int_xi})
\begin{equation}
\begin{split}
& \int_{0}^{t_{\mathbf{b}}(x,v)}\int_{\mathbb{R}^{3}}e^{-l\langle v\rangle
(t-s)}\frac{e^{-\theta |v-u|^{2}}}{|v-u|^{\kappa }\alpha (x-(t_{\mathbf{b}%
}(x,v)-s)v,u)}Z(s,v)\mathrm{d}u\mathrm{d}s \\
& \lesssim {\int_{0}^{t_{\mathbf{b}}(x,v)}e^{-l\langle v\rangle (t-s)}\frac{1%
}{|v|^{2\beta -1}|\xi |^{\beta -\frac{1}{2}}}Z(s,v)\mathrm{d}s}.
\end{split}
\notag
\end{equation}%
According to (\ref{sigma_delta}) we split the time integration as
\begin{equation}\notag
{\int_{0}^{t_{\mathbf{b}}(x,v)}e^{-l\langle v\rangle (t-s)}\frac{1}{%
|v|^{2\beta -1}|\xi |^{\beta -\frac{1}{2}}}Z(s,v)\mathrm{d}s}=\underbrace{%
\int_{0}^{\tilde{\sigma}_{1}}+\int_{t_{\mathbf{b}}(x,v)-\tilde{\sigma}%
_{2}}^{t_{\mathbf{b}}(x,v)}}_{\mathbf{(IV)}}+\underbrace{\int_{\tilde{\sigma}%
_{1}}^{t_{\mathbf{b}}(x,v)-\tilde{\sigma}_{2}}}_{\mathbf{(V)}}.
\end{equation}%
For the first two terms $(\mathbf{IV})$, we use the mapping of (\ref%
{COV_xi_s})
\begin{equation*}
s\in \lbrack 0,\tilde{\sigma}_{1}]\cup \lbrack t_{\mathbf{b}}(x,v)-\tilde{%
\sigma}_{2},t_{\mathbf{b}}(x,v)]\mapsto |\xi (x-(t_{\mathbf{b}%
}(x,v)-s)v)|\in \big[0,B\big),
\end{equation*}%
where the range of $|\xi |$ has been bounded in (\ref{max_xi}), and $B$ is
given by (\ref{tz}). By the change of variables of (\ref{COV_xi_s})
\begin{equation}
\begin{split}
\mathbf{(IV)}& \lesssim \ \sup_{0\leq s\leq t_{\mathbf{b}}(x,v)}\{e^{-l%
\langle v\rangle (t-s)}Z(s,v)\}\frac{1}{|v|^{2\beta -1}}\int_{0}^{C\tilde{%
\delta}\frac{\alpha (x,v)}{|v|^{2}}}\frac{1}{|\xi |^{\beta -1/2}}\frac{%
\mathrm{d}|\xi |}{\sqrt{\alpha (x,v)}} \\
& \lesssim \ \sup_{0\leq s\leq t_{\mathbf{b}}(x,v)}\{e^{-l\langle v\rangle
(t-s)}Z(s,v)\}\frac{1}{|v|^{2\beta -1}}\frac{1}{\sqrt{\alpha (x,v)}}\Big[%
|\xi |^{-\beta +\frac{3}{2}}\Big]_{|\xi |=0}^{|\xi |=B},
\end{split}
\notag
\end{equation}%
where we have used $\beta <\frac{3}{2}$. The lemma follows with $B$ given by
(\ref{tz}).

For $\mathbf{(V)}$ we use $\sqrt{\alpha(X_{\mathbf{cl}}(s))}\lesssim_{\xi,%
\tilde{\delta}} |v|\sqrt{-\xi(X_{\mathbf{cl}}(s))}$ for $s\in [ \tilde{\sigma%
}_{1}, t_{\mathbf{b}}(x,v)-\tilde{\sigma}_{2}]$, from (\ref{sigma}), to have
\begin{equation*}
\frac{1}{|v|^{2\beta -1} |\xi|^{\beta-\frac{1}{2}}} = \frac{1}{\big(|v|
\sqrt{-\xi} \ \big)^{2(\beta -\frac{1}{2})}} \lesssim \frac{1}{
[\alpha(x,v)]^{\beta-\frac{1}{2}}}.
\end{equation*}
Finally
\begin{equation*}
\mathbf{(V)} \lesssim \frac{1}{[\alpha(x,v)]^{\beta- 1/2}} \int_{0}^{t_{%
\mathbf{b}}(x,v)} e^{- l \langle v\rangle (t-s)} Z(s,v) \mathrm{d}s \lesssim
\frac{O(l^{-1}) }{\langle v\rangle[\alpha(x,v)]^{\beta- 1/2}} \sup_{0 \leq
s\leq t} \{ e^{-l\langle v\rangle (t-s)} Z(s,x,v)\} .
\end{equation*}

Now we assume $x\notin \partial \Omega .$ We find $\bar{x}\in \partial
\Omega $ and $\bar{t}_{\mathbf{b}}$ so that
\begin{equation*}
x-(t_{\mathbf{b}}(x,v)-s)v=\bar{x}-(\bar{t}_{\mathbf{b}} -s)v.
\end{equation*}%
Therefore, by the \textit{Step 1} and the fact $\bar{x}\in \partial \Omega $%
, we have
\begin{equation*}
\begin{split}
& \int_{0}^{\bar{t}_{\mathbf{b}}} \int_{\mathbb{R}^{3}} e^{-l \langle
v\rangle (t-s)} \frac{e^{-\theta |v-u|^{2}}}{|v-u|^{2-\kappa} \big[\alpha(%
\bar{x} -(\bar{t}_{\mathbf{b}} -s )v,u )\big]^{\beta}} Z(s,v) \mathrm{d}u
\mathrm{d}s \\
& \lesssim \ \ \ \int_{0}^{\bar{t}_{\mathbf{b}}} e^{-l \langle v\rangle
(t-s)} \frac{ e^{-C|v-u|^{2}}}{|v|^{2\beta -1} |\xi|^{\beta-\frac{1}{2}}}
Z(s,v)\mathrm{d}s.
\end{split}%
\end{equation*}%
We then deduce our lemma since $\alpha (\bar{x},v)\backsim \alpha (x,v)$ via
the Velocity Lemma with the fact $\bar{t}_{\mathbf{b}}|v|\lesssim_{\Omega}
1. $ \end{proof}

\begin{proof}[\textbf{Proof of (2) Lemma \protect\ref{lemma_nonlocal}}]
 It suffices to consider $r=0$ case. 
For the specular cycles and the bounce-back cycles it is important to control the \textit{number of bounces}, 
\[
\ell_{*}(s)= \ell_{*}(s;t,x,v)\in\mathbb{N} \ \ \ \text{if} \ \ \   t^{\ell_{*}+1} \leq s< t^{\ell_{*}}.
\]

Here we prove $t_{\mathbf{b}}(x,v) \simeq \alpha(x,v)^{1/2}/|v|^{2}.$ Recall (40) of \cite{Guo10} or (2.4) of \cite{EGKM}: if $\Omega$ is bounded and $\partial\Omega$(i.e. $\xi$) is smooth then for $(x,v) \in\gamma_{-}$
\begin{equation}\label{40}
t_{\mathbf{b}}(x,v)  \gtrsim_{\Omega} \frac{ \sqrt{\alpha(x,v)}}{|v|^{2}}.
\end{equation}
It suffices to prove $t_{\mathbf{b}}(x,v)  \gtrsim_{\Omega} \frac{ |n(x)\cdot v|}{|v|^{2}}.$ For $x\in\partial\Omega$ there exists $0 < \delta \ll 1$ such that
\[
\sup_{\substack{y \in\partial\Omega \\ |x-y|< \delta}} \frac{|(x-y)\cdot n(x)|}{|x-y|^{2}} \lesssim \max_{\substack{y \in\partial\Omega \\ |x-y|< \delta}} |\nabla^{2} \xi(x)|.
\]
If $|x-y|\geq \delta$ then $\frac{|(x-y)\cdot n(x)|}{|x-y|^{2}} \leq \delta^{-2} |(x-y)\cdot n(x)| \lesssim_{\delta,\Omega}1.$ By the compactness of $\Omega$ and $\partial\Omega$ we have $|(x-y)\cdot n(x)|\lesssim |x-y|^{2}$ for all $x,y \in\partial\Omega.$ Taking the inner product of $x-x_{\mathbf{b}}(x,v)=t_{\mathbf{b}}(x,v)v$ with $n(x)$ we have
\[
t_{\mathbf{b}}(x,v)| v\cdot n(x)|= |(x-x_{\mathbf{b}}(x,v))\cdot n(x)|\lesssim |x-x_{\mathbf{b} }(x,v)|^{2} = C_{\Omega} |v|^{2} |t_{\mathbf{b}}(x,v)|^{2},
\]
and this proves (\ref{40}).

If $\Omega$ is convex (\ref{convex}) then for $(x,v)\in\gamma_{-}$
\begin{equation}\label{41}
t_{\mathbf{b}}(x,v)\lesssim_{\xi} \frac{\sqrt{\alpha(x,v)}}{|v|^{2}}.
\end{equation}
It suffices to show $t_{\mathbf{b}}(x,v)\lesssim_{\xi} \frac{|n(x)\cdot v|}{|v|^{2}}.$ Since $\xi(x)=0=\xi(x-t_{\mathbf{b}}(x,v)v)$ for $(x,v)\in\gamma_{-}$, we have
\begin{equation}\notag
\begin{split}
0 &= \xi(x-t_{\mathbf{b}}(x,v)v) = \xi(x) + \int^{t_{\mathbf{b}}(x,v)}_{0} [-v\cdot \nabla_{x} \xi(x-sv)] \mathrm{d}s\\
&= [- v\cdot \nabla_{x}\xi(x)] t_{\mathbf{b}}(x,v) + \int^{t_{\mathbf{b}}(x,v)}_{0} \int^{s}_{0} \{v\cdot \nabla_{x}^{2} \xi(x-\tau v)\cdot v\}  \mathrm{d}\tau \mathrm{d}s.
\end{split}
\end{equation}
By the convexity of $\xi$ in (\ref{convex}) we have $[v\cdot \nabla_{x}\xi(x)] t_{\mathbf{b}}(x,v)   \ \geq \ \frac{(t_{\mathbf{b}}(x,v))^{2}}{2} C_{\xi} |v|^{2},$
and therefore this proves (\ref{41}).

An important consequence of Velocity lemma is that for the specular cycles
$$\alpha(X_{\mathbf{cl}}(s;t,x,v), V_{\mathbf{cl}}(s;t,x,v)  ) \gtrsim e^{-\mathcal{C}|v||t-s|} \alpha(x,v),$$
and therefore for the specular cycles
\begin{equation}\label{number_control}
\begin{split}
\ell_{*}(s;t,x,v) &\leq \frac{|t-s|}{ \min_{0 \leq \ell \leq \ell_{*}(s;t,x,v)}  |t^{\ell}-t^{\ell+1}| } \lesssim
 \frac{|t-s|}{ \min_{0 \leq \ell \leq \ell_{*}(s;t,x,v)}  \frac{ \sqrt{\alpha(x^{\ell}, v^{\ell})}}{|v^{\ell}|^{2}} } \\
 &\lesssim \frac{|t-s||v|^{2}}{\sqrt{\alpha(x,v)}}e^{\mathcal{C}|v|(t-s)}.
\end{split}
\end{equation}
Remark that for the bounce-back cycles we do not have the growth term $e^{\mathcal{C}|v|(t-s)}$. This is because of the fact $\alpha(X_{\mathbf{cl}}(s),V_{\mathbf{cl}}(s))$ is either $\alpha(x^{1},v^{1})$ or $\alpha(x^{2},v^{2}),$ and the fact $|t-t^{2}| \leq 2|t^{1}-t^{2}| \lesssim \frac{C_{\Omega}}{|v|}$ for the bounded domain.

We consider the specular BC case first. For fixed $(x,v)$ we use the following notation $\alpha(s):= \alpha(s;t,x,v) := \alpha(X_{\mathbf{cl}}(s;t,x,v), V_{\mathbf{cl%
}}(s;t,x,v)).$

Firstly we consider the estimate (\ref{specular_nonlocal}) for $|v| <
\delta. $ Using (\ref{number_control}),
\begin{equation}
\begin{split}
&\mathbf{1}_{\big\{ |v| \leq \delta \big\}}\int_{0}^{t} \int_{\mathbb{R}%
^{3}} e^{-l \langle v\rangle (t-s)} \frac{e^{-\theta |V_{\mathbf{cl}} (s)
-u|^{2}}}{|V_{\mathbf{cl}} (s) -u|^{2-\kappa}} \frac{Z(s,x,v)}{\big[ %
\alpha(X_{\mathbf{cl}}(s;t,x,v),u) \big]^{\beta}} \mathrm{d}u \mathrm{d}s \\
&\lesssim \sum_{\ell=0}^{\ell_{*}(0;t,x,v)} \int^{t^{\ell}}_{t^{\ell+1}}
\int_{\mathbb{R}^{3}} e^{-l \langle v\rangle (t-s)} \frac{e^{-\theta |
v^{\ell}-u |^{2}}}{|v^{\ell} -u|^{2-\kappa}} \frac{Z(s,x,v)}{\big[ %
\alpha(x^{\ell} -(t^{\ell} -s) v^{\ell},u) \big]^{\beta}} \mathrm{d}u
\mathrm{d}s \\
&\lesssim \frac{t|v|^{2} e^{\mathcal{C}t\delta}}{\big[ \alpha(x,v)\big]%
^{1/2}} \sup_{\ell} \Big\{ \frac{O(\tilde{\delta})e^{ \mathcal{C}t\delta} }{%
|v|^{2} [\alpha(x,v)]^{ \beta-1} } \sup_{t^{\ell+1} \leq s \leq t^{\ell}} \{
e^{-l \langle v\rangle(t-s) } Z(s,x,v)\} \\
& \ \ \ \ \ \ \ \ \ \ \ \ \ \ \ \ \ \ \ \ \ \ \ \ \ \ \ +\frac{C_{\tilde{%
\delta}}e^{ \mathcal{C}t\delta} }{[\alpha(x,v)]^{\beta- 1/2}} \int_{
t^{\ell+1}}^{t^{\ell}} e^{-l \langle v\rangle (t-s)} Z(s,x,v) \mathrm{d}s %
\Big\},
\end{split}
\notag
\end{equation}
where we have used (\ref{nonlocal}). By (\ref{40}) and (\ref{41}) and the Velocity lemma (Lemma \ref%
{velocity_lemma}) we have $|t^{\ell} -t^{\ell+1}| \lesssim_{\xi} \frac{\sqrt{%
\alpha(x,v)}}{|v|^{2}} e^{ \mathcal{C}t|v|}\lesssim_{\xi,\delta} \frac{%
\sqrt{\alpha(x,v)}}{|v|^{2}} e^ {\mathcal{C}t \delta}$ and hence we deduce (%
\ref{specular_nonlocal}) for $|v|< \delta$ by
\begin{equation*}
\mathbf{1}_{\big\{|v| < \delta \big\}}\int \cdots \ \lesssim_{\xi} \ \frac{O(%
\tilde{\delta} + l^{-1})te^{2\mathcal{C}t\delta}}{\big[\alpha(x,v)\big]%
^{\beta-\frac{1}{2}}} \sup_{ 0 \leq s\leq t} \big\{ e^{-l \langle v \rangle
(t-s)} Z(s,x,v)\big\}.
\end{equation*}

Now we consider $|v|\geq \delta.$ We split the time interval as
\begin{equation}  \label{time_splitting}
[0,t] \ = \ [ t- \frac{1}{|v|}, t] \ \cup \ \bigcup_{j=1}^{[t|v|]+1} [t-(j+1)%
\frac{1}{|v|}, t- j \frac{1}{|v|}].
\end{equation}

Consider the first time section $[ t- \frac{1}{|v|}, t]$. Due to (\ref{number_control}) we bound
\begin{equation*}
\sup_{ s\in [ t- \frac{1}{|v|}, t] }\ell_{*}(s;t,x,v) \lesssim_{\xi} \frac{%
\frac{1}{|v|} |v|^{2} e^{\mathcal{C}\frac{1}{|v|}|v| }}{[\alpha(x,v)]^{1/2}}
\lesssim \frac{|v| e^{\mathcal{C}}}{[\alpha(x,v)]^{1/2}},
\end{equation*}
and for $s\in [ t- \frac{1}{|v|}, t], \ $ $e^{-\mathcal{C}} \alpha(x,v) \lesssim \alpha(X_{\mathbf{cl}}(s;t,x,v),V_{%
\mathbf{cl}}(s;t,x,v)) \lesssim e^{\mathcal{C} } \alpha(x,v),$
and $
|t^{\ell}-t^{\ell+1}| \lesssim \frac{[\alpha(x,v)]^{1/2} e^{\mathcal{C}}}{%
|v|^{2}}$ due to the Velocity lemma.
Then we use (\ref{nonlocal}) to have
\begin{equation}
\begin{split}
& \int_{t-1/|v|}^{t} \int_{\mathbb{R}^{3}} e^{-l \langle v\rangle (t-s)}
\frac{e^{-\theta |V_{\mathbf{cl}} (s) -u|^{2}}}{|V_{\mathbf{cl}} (s)
-u|^{2-\kappa}} \frac{Z(s,x,v)}{\big[ \alpha(X_{\mathbf{cl}}(s;t,x,v),u) %
\big]^{\beta}} \mathrm{d}u \mathrm{d}s \\
&\lesssim \sum_{\ell=0}^{\ell_{*}(0;t,x,v)} \int^{t^{\ell}}_{t^{\ell+1}}
\int_{\mathbb{R}^{3}} e^{-l \langle v\rangle (t-s)} \frac{e^{-\theta |
v^{\ell}-u |^{2}}}{|v^{\ell} -u|^{2-\kappa}} \frac{Z(s,x,v)}{\big[ %
\alpha(x^{\ell} -(t^{\ell} -s) v^{\ell},u) \big]^{\beta}} \mathrm{d}u
\mathrm{d}s \\
&\lesssim \frac{C_{\xi}|v|}{[\alpha(x,v)]^{1/2}} \sup_{\ell} \frac{O(\tilde{%
\delta})e^{C_{\xi}} }{|v|^{2} [\alpha(x,v)]^{ \beta-1} } \sup_{t^{\ell+1}
\leq s \leq t^{\ell}} \{ e^{-l \langle v\rangle(t-s) } Z(s,x,v)\} \\
& \ \ + \sum_{\ell =0}^{\ell_{*}(0,t,x,v)} \frac{C_{\tilde{\delta}%
}e^{C_{\xi}} }{[\alpha(x,v)]^{\beta- 1/2}} \int_{ t^{\ell+1}}^{t^{\ell}}
e^{-l \langle v\rangle (t-s)} Z(s,x,v) \mathrm{d}s \\
& \lesssim \frac{O(\tilde{\delta})}{ |v| [\alpha(x,v)]^{\beta-1/2}} \sup_{0
\leq s\leq t} \{ e^{-Cl \langle v\rangle (t-s)} Z(s,x,v)\} + \frac{C_{\tilde{%
\delta}, \xi}}{[\alpha(x,v)]^{\beta-1/2}} \int^{t}_{0} e^{-Cl \langle
v\rangle (t-s)} Z(s,x,v) \mathrm{d}s \\
& \lesssim \Big(\frac{O(\tilde{\delta})}{ |v| [\alpha(x,v)]^{\beta-1/2}} +
\frac{C_{\tilde{\delta},\xi }}{ l \langle v\rangle [\alpha(x,v)]^{\beta-1/2}}
\Big) \sup_{0 \leq s\leq t} \{ e^{-Cl \langle v\rangle (t-s)} Z(s,x,v)\}.
\end{split}
\notag
\end{equation}

Now we consider time sections $[t-(j+1)\frac{1}{|v|}, t- j \frac{1}{|v|}]$
for $j\geq 1.$ Assume that
\begin{equation*}
[t-(j+1) \frac{1}{|v|}, t-j \frac{1}{|v|}] \subset [t^{\ell_{j+1}-1},
t^{\ell_{j+1}}] \cup \cdots \cup [t^{\ell_{j}+1}, t^{\ell_{j}} ],
\end{equation*}
and $[t-(j+1) \frac{1}{|v|}, t-j \frac{1}{|v|}] \cap [t^{\ell_{j+1}-2},
t^{\ell_{j+1}-1}]= \emptyset$ and $[t-(j+1) \frac{1}{|v|}, t-j \frac{1}{|v|}%
] \cap [t^{\ell_{j } }, t^{\ell_{j }-1}]= \emptyset$.

Note that for all $s \in[t-(j+1)\frac{1}{|v|}, t- j \frac{1}{|v|}] $
\begin{equation*}
e^{-C_{\xi } j} \alpha(t) \lesssim \alpha(s) \lesssim e^{C_{\xi } j}
\alpha(t),
\end{equation*}
and
\begin{equation*}
\ell_{j+1} - \ell_{j} \lesssim \frac{(j+1) \frac{1}{|v|} - j \frac{1}{|v|}}{%
\frac{\sqrt{\alpha( t-j \frac{1}{|v|} )}}{|v|^{2}}} \lesssim \frac{|v|}{%
\sqrt{\alpha(t)}} e^{C_{\xi} j},
\end{equation*}
and for $\ell \in [\ell_{j+1}-1, \ell_{j} ]$
\begin{equation*}
|t^{\ell}-t^{\ell+1}| \lesssim \frac{\sqrt{\alpha( t-j \frac{1}{|v|} )}}{%
|v|^{2}} \lesssim \frac{\sqrt{\alpha(t)}}{|v|^{2}} e^{C_{\xi} j}.
\end{equation*}
From (\ref{nonlocal}), for all $\ell \in [\ell_{j+1}-1, \ell_{j} ]$
\begin{equation}
\begin{split}
& \int^{t^{\ell+1}}_{t^{\ell}} \int_{\mathbb{R}^{3}} e^{-l \langle v\rangle
(t-s)} \frac{e^{-\theta |V_{\mathbf{cl}} (s) -u|^{2}}}{|V_{\mathbf{cl}} (s)
-u|^{2-\kappa}} \frac{Z(s,x,v)}{\big[ \alpha(X_{\mathbf{cl}}(s;t,x,v),u) %
\big]^{\beta}} \mathrm{d}u \mathrm{d}s \\
&\lesssim \frac{ \tilde{\delta}}{|v|^{2}\alpha(t-j/|v|)^{\beta-1 }}
\sup_{[t^{\ell},t^{\ell+1}]} \{ e^{-l \langle v\rangle (t-s)} Z \} + \frac{C_{\tilde{\delta}}
}{\alpha(t-j/|v|)^{\beta-1/2} } \int^{t^{\ell}}_{t^{\ell+1}} e^{-l \langle
v\rangle (t-s)}Z,
\end{split}
\notag
\end{equation}
is bounded by
\begin{equation}
\begin{split}
&\frac{\tilde{\delta} e^{C_{\xi}j}}{ |v|^{2} \alpha(t)^{\beta-1}} e^{-\frac{l%
}{2} j } \sup_{[t^{\ell},t^{\ell+1}]} \{ e^{-\frac{l}{2} \langle v\rangle
(t-s)} Z \} + \frac{e^{ C_{\xi} j}}{\alpha(t)^{\beta-1/2}} e^{-\frac{l}{2}j}
\int^{t^{\ell}}_{t^{\ell+1}} e^{-\frac{l}{2} \langle v\rangle (t-s)}Z \\
&\lesssim \frac{e^{-C l j} }{ |v|^{2} \alpha(t)^{\beta-1}} \sup_{0 \leq s
\leq t} \{ e^{-\frac{l}{2} \langle v\rangle (t-s)} Z(s) \} ,
\end{split}
\notag
\end{equation}
where we have used the fact $t-s \geq j \frac{1}{|v|}$ for $s \in[t-(j+1)%
\frac{1}{|v|}, t- j \frac{1}{|v|}]$.

Therefore
\begin{equation}
\begin{split}
& \int^{ t-j \frac{1}{|v|} }_{ t-(j+1) \frac{1}{|v|} } \int_{\mathbb{R}^{3}}
e^{-l \langle v\rangle (t-s)} \frac{e^{-\theta |V_{\mathbf{cl}} (s) -u|^{2}}%
}{|V_{\mathbf{cl}} (s) -u|^{2-\kappa}} \frac{Z(s,x,v)}{\big[ \alpha(X_{%
\mathbf{cl}}(s;t,x,v),u) \big]^{\beta}} \mathrm{d}u \mathrm{d}s \\
&\lesssim |\ell_{j+1}-\ell_{j}| \sup_{\ell_{j+1} \leq \ell \leq \ell_{j} }
\int^{t^{\ell+1}}_{t^{\ell}} \cdots \\
& \lesssim \frac{|v|}{\sqrt{\alpha(t)}} e^{C_{\xi} j} \times \frac{e^{- lj /4}}{%
|v|^{2} \alpha(t)^{\beta-1}} \sup_{ 0 \leq s \leq t} \{ e^{-\frac{l}{2}
\langle v\rangle (t-s)} Z(s,x,v) \} \\
& \lesssim \frac{e^{-   lj/4}}{|v| \alpha(t)^{\beta-1/2}} \sup_{ 0
\leq s \leq t} \{ e^{-\frac{l}{2} \langle v\rangle (t-s)} Z(s,x,v) \}.
\end{split}
\notag
\end{equation}

Now we sum up all contributions of $[t-(j+1)\frac{1}{|v|}, t- j \frac{1}{|v|}%
]$ for $j\geq 1:$
\begin{equation}
\begin{split}
\sum_{j=1}^{t|v|} \int^{t-j/|v|}_{t-(j+1)/|v|}& \leq \sum_{j=1}^{t|v|} \frac{%
e^{-  lj /8 }}{|v| \alpha(t)^{\beta-1/2}} \sup_{ 0 \leq s \leq t} \{
e^{-\frac{l}{2} \langle v\rangle (t-s)} Z(s,x,v) \} \\
& \lesssim \frac{ e^{- l/8}}{|v| [\alpha(x,v)]^{\beta-1/2}} \sup_{ 0
\leq s \leq t} \{ e^{-\frac{l}{2} \langle v\rangle (t-s)} Z(s,x,v) \}.
\end{split}
\notag
\end{equation}
where we used $\sum_{j=1}^{t|v|} e^{-  lj/8}= e^{-  l/16}
\sum_{j=2}^{t|v|} e^{-C^{\prime} lj} \leq e^{-  l/16}. $

These prove (\ref{specular_nonlocal}). For the bounce-back case we set $%
\mathcal{C}=0$ and we have same conclusion.
\end{proof}

\begin{lemma}\label{lemma_nonlocal_u}  Let $(t,x,v)\in \lbrack 0,\infty )\times \bar{\Omega}%
\times \mathbb{R}^{3}$ and $Z\geq 0.$

(1) For $0< \varepsilon \ll 1$ and $\frac{1}{2}<\beta <1$ and $0<\kappa \leq 1$, we have
\begin{equation}
\begin{split}
& \int_{0}^{t_{\mathbf{b}}(x,v)}\int_{\mathbb{R}^{3}}e^{-l\langle v\rangle
(t-s)}\frac{e^{-\theta |v-u|^{2}}}{|v-u|^{2-\kappa }[\alpha (x-(t_{\mathbf{b}%
}(x,v)-s)v, {u})]^{\beta }}\frac{|v|}{| {u}|}\frac{\langle u\rangle
^{r}}{\langle v\rangle ^{r}}Z(s,x,v)\mathrm{d}u\mathrm{d}s \\
& \lesssim _{\theta ,r}\ \ \frac{O(\varepsilon)}{|v|^{2}[\alpha
(x,v)]^{\beta -1}}\sup_{s\in \lbrack 0,t_{\mathbf{b}}(x,v)]}\{e^{-l\langle
v\rangle (t-s)}Z(s,x,v)\} \\
& \ \ \ \ \ \ \ \ +\frac{C_{ \varepsilon} }{[\alpha (x,v)]^{\beta -1/2}}%
\int_{0}^{t_{\mathbf{b}}(x,v)}e^{-Cl\langle v\rangle (t-s)}Z(s,x,v)\mathrm{d}%
s.
\end{split}
\label{nonlocal_u}
\end{equation}


(2) Let $[X_{\mathbf{cl}}(s;t,x,v), V_{\mathbf{cl%
}}(s;t,x,v)]$ be either the specular backward trajectory or the bounce-back backward trajectory in Definition \ref{cycles}. For $0 < \varepsilon \ll 1$ and $\frac{1}{2}< \beta < 1$ and $0< \kappa\leq 1$ and $r\in\mathbb{R}$, there exists $l \gg_{\xi} 1$ and $C=C_{l,\beta, \xi, r}>0$ such that
\begin{equation}  \label{specular_nonlocal_u}
\begin{split}
& \int_{0}^{t} \int_{\mathbb{R}^{3}} e^{- l\langle v\rangle (t-s)} \frac{%
e^{-\theta|V_{\mathbf{cl}}(s)-u|^{2}}}{|V_{\mathbf{cl}}(s)-u|^{2-\kappa}}
\frac{|v|}{|u|}\frac{\langle u\rangle^{r}}{\langle v\rangle^{r}} \frac{%
Z(s,x,v)}{ \big[ \alpha(X_{\mathbf{cl}}(s;t,x,v),u) \big]^{\beta}} \mathrm{d}%
u\mathrm{d}s \\
& \lesssim_{\xi,r} \frac{O_{\beta, \kappa, r}(\varepsilon) }{\langle v\rangle \big[\alpha(x,v)\big]^{\beta-1/2} }
\sup_{0 \leq s\leq t} \big\{ e^{- C_{l, \beta, \xi,r} \langle v\rangle
(t-s)} Z(s,x,v) \big\}.
\end{split}%
\end{equation}
\end{lemma}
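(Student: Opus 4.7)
The proof follows the three-step structure of the proof of (1) of Lemma \ref{lemma_nonlocal}, with the only substantive modification occurring in the first (pointwise-in-$s$) step. My plan is to establish the analog of (\ref{int_xi}):
\begin{equation*}
\int_{\mathbb{R}^{3}}\frac{e^{-\theta |v-u|^{2}}}{|v-u|^{2-\kappa }[\alpha(X_{\mathbf{cl}}(s;t,x,v),u)]^{\beta}}\frac{|v|}{|u|}\,\mathrm{d}u \ \lesssim \ \frac{1}{|v|^{2\beta-1}\,|\xi(X_{\mathbf{cl}}(s;t,x,v))|^{\beta-1/2}},
\end{equation*}
which is formally the same bound as (\ref{int_xi}). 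Once this is in hand, Steps 2--4 of the proof of (1) of Lemma \ref{lemma_nonlocal} (the construction of $\tilde{\sigma}_1,\tilde{\sigma}_2$, the change of variable $\mathrm{d}s\simeq \mathrm{d}|\xi|/|v\cdot\nabla\xi|$, and the split of the $s$-integral into boundary/bulk pieces) go through verbatim and deliver (\ref{nonlocal_u}). For Part (2), I will then repeat the time-decomposition $[0,t]=[t-1/|v|,t]\cup\bigcup_{j=1}^{[t|v|]+1}[t-(j+1)/|v|,t-j/|v|]$ exactly as in the proof of (2) of Lemma \ref{lemma_nonlocal}, using the Velocity Lemma to control the number of bounces on each piece and extracting smallness from $e^{-l\langle v\rangle(t-s)}$ for $j\geq 1$.

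To establish the pointwise bound I decompose the $u$-integral into the three regions $|u|\geq 5|v|$, $|u|\leq |v|/2$, and $|v|/2\leq|u|\leq 5|v|$. On regions (I) and (III), $\frac{|v|}{|u|}=O(1)$, so the bound (\ref{int_xi}) applies directly. The essential new work is on region (II), where the singularity of $\frac{1}{|u|}$ must be reconciled with the degeneracy of $\alpha(x,u)$ as $u\to 0$.

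On region (II), I rescale $u=|v|\tilde{u}$ and use the homogeneity $\alpha(x,\lambda u)=\lambda^{2}\alpha(x,u)$ together with the lower bound $\alpha(x,\tilde u)\gtrsim |\tilde{u}_n|^2+|\xi||\tilde u|^2$ (from the decomposition $\tilde u=\tilde u_n n+\tilde u_\tau$). Since $|v-u|\geq |v|/2$, the region (II) contribution is bounded by
\begin{equation*}
|v|^{1+\kappa-2\beta}e^{-C|v|^{2}}\int_{|\tilde u|\leq 1/2}\frac{\mathrm{d}\tilde u}{|\tilde u|\big[|\tilde u_{n}|^{2}+|\xi|\,|\tilde u|^{2}\big]^{\beta}}.
\end{equation*}
Passing to spherical-type coordinates $\tilde u_{n}=r\cos\phi$, $\tilde u_{\tau}=r\sin\phi\cdot\omega$ with $\omega\in S^{1}$, the integral factors as
\begin{equation*}
\Big(\int_{0}^{1/2}r^{1-2\beta}\,\mathrm{d}r\Big)\cdot\Big(\int_{0}^{\pi}\frac{\sin\phi\,\mathrm{d}\phi}{[\cos^{2}\phi+|\xi|]^{\beta}}\Big).
\end{equation*}
The radial integral converges if and only if $\beta<1$, which is precisely the sharper restriction on $\beta$ required in Lemma \ref{lemma_nonlocal_u} compared with Lemma \ref{lemma_nonlocal}. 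The angular integral is handled, as in Step 1 of the original proof, by the change of variable $\cos\phi=|\xi|^{1/2}\tan\theta$ and yields the factor $|\xi|^{-(\beta-1/2)}$ provided $\beta>1/2$. Combining these and using $e^{-C|v|^{2}}|v|^{1+\kappa-2\beta}\lesssim |v|^{-(2\beta-1)}$ on all of $(0,\infty)$ gives the desired pointwise bound.

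The main obstacle is the region (II) estimate: one must verify that the $\frac{1}{|u|}$ singularity, which prevents the naive scaling estimate for $\beta\in[1,\tfrac{3}{2})$, is exactly compensated by the extra factor $|v|$ (i.e., the net $|v|$-exponent matches that of (\ref{int_xi})) once one exploits the quadratic homogeneity of $\alpha$ and the $L^{1}$ decay $\int_{0}^{1/2}r^{1-2\beta}\mathrm{d}r<\infty$ for $\beta<1$. With this pointwise estimate secured, the remaining arguments of Steps 2--4 and the time-decomposition for Part (2) require no new ideas.
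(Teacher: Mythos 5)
Your proof is correct and follows the paper's overall strategy: reduce to the pointwise-in-$s$ bound on the $u$-integral (the analog of (\ref{int_xi})), then invoke the time-integration machinery of Steps 2--4 from the proof of part (1) of Lemma \ref{lemma_nonlocal} verbatim, and finally reuse the bounce-counting and time-decomposition argument from part (2) to obtain (\ref{specular_nonlocal_u}). The only new work is the small-$|u|$ region of the $u$-integral, and there your computation takes a modestly different path than the paper's. The paper keeps fixed coordinates $u = u_n n + u_\tau$, bounds $1/|u|\leq 1/|u_\tau|$, and splits according to whether $|u_n|\gtrless|\xi|^{1/2}|u_\tau|$, treating the two sub-cases by hand. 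You instead rescale $u=|v|\tilde u$ using the quadratic homogeneity $\alpha(x,\lambda u)=\lambda^2\alpha(x,u)$ (immediate from Definition \ref{K_D}) and pass to spherical coordinates, so the radial and angular pieces decouple: $\int_0^{1/2} r^{1-2\beta}\,\mathrm{d}r<\infty$ exhibits the restriction $\beta<1$ directly, while $\int_{-1}^{1}(w^2+|\xi|)^{-\beta}\,\mathrm{d}w\lesssim|\xi|^{1/2-\beta}$ under $w=|\xi|^{1/2}\tan\theta$ produces the $|\xi|^{-(\beta-1/2)}$ factor and the constraint $\beta>1/2$. Both routes land on the same pointwise bound; yours makes the roles of the homogeneity and of the two $\beta$-constraints transparent at the cost of an extra change of variables, whereas the paper's is a more direct but less structured estimate.
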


Now we prove a variant of (1) of Lemma \ref{lemma_nonlocal} with extra $\frac{|v|}{|u|}.$

\begin{proof}[\textbf{Proof of Lemma \ref{lemma_nonlocal_u}}]
 We prove (\ref{nonlocal_u}). Due to \textit{Step 2}
and \textit{Step 3} in the proof of \textit{(1)} of Lemma \ref{lemma_nonlocal}, it suffices to show
\begin{equation}  \label{int_xi_u}
\int_{\mathbb{R}^{3}} \frac{|v| e^{-\theta |v-u|^{2}} \mathrm{d}u }{%
|v-u|^{2-\kappa} |u| \big[\alpha(x-(t_{\mathbf{b}}(x,v)-s)v,u)\big]^{\beta}}
\lesssim \frac{1}{|v|^{2\beta-1} |\xi(x-(t_{\mathbf{b}}(x,v)-s) v)|^{\beta-%
\frac{1}{2}}}.
\end{equation}
As \textit{Step 1} in the proof of \textit{(1)}, for fixed $s$ and $x-(t_{%
\mathbf{b}}(x,v)-s)v$, we decompose $u=u_{\tau,1}\tau_{1} +
u_{\tau,2}\tau_{2} + u_{n} n$ where $\{ \tau _{1},\tau _{2},n\}$ is the
orthonormal basis that we chose in the proof of \textit{(1)}.

Now we split as
\begin{equation}
\begin{split}
&\int_{\mathbb{R}^{3}} \frac{|v| e^{-\theta |v-u|^{2}} \mathrm{d}u }{%
|v-u|^{2-\kappa} |u| \big[\alpha(x-(t_{\mathbf{b}}(x,v)-s)v,u)\big]^{\beta}}\\
&\lesssim_{\xi}  \int_{\mathbb{R}^{2}} \int_{\mathbb{R}}\frac{|v|
e^{-\theta|v-u|^{2}}\mathrm{d}u_{n}\mathrm{d}u_{\tau }}{|v-u|^{2-\kappa }|u|%
\big\{ |u_{n}|^{2}+|\xi (X_{\mathbf{cl}}(s))||u|^{2}\big\}^{\beta }} \\
 &= \int_{|u|\geq \frac{|v|}{5}}+\int_{|u|\leq \frac{|v|}{5}}.
\end{split}
\notag
\end{equation}%
For the first term, we have $\frac{|v| }{|u|}\leq 5 $ so we reduce it to the
previous case (\ref{int_xi})
\begin{equation*}
\int_{|u|\geq \frac{|v|}{5}}\lesssim \int_{\mathbb{R}^{3}}\frac{
e^{-\theta|v-u|^{2}}\mathrm{d}u_{n}\mathrm{d}u_{\tau }}{|v-u|^{2-\kappa }%
\big\{|u_{n}|^{2}+|\xi (X_{\mathbf{cl}}(s))||u|^{2}\big\}^{\beta }},
\end{equation*}%
which is bounded by $\frac{1}{|v|^{2\beta -1}|\xi |^{\beta -1/2}}.$

Now we consider the case of $|u|\leq \frac{|v|}{5}$. For fixed $0<
\kappa\leq 1$
\begin{equation*}
\frac{|v|}{|v-u|^{2-\kappa }} \ \lesssim \ \frac{|v| }{|v|^{2-\kappa }} \
\lesssim \ {|v|^{-1+ \kappa}},
\end{equation*}
and we have, from $|v-u|^{2} = \frac{|v-u|^{2}}{2} + \frac{|v-u|^{2}}{2}
\geq \frac{4^{2}}{2\cdot 5^{2}}|v|^{2} + \frac{4^{2}}{2}|u|^{2},$
\begin{equation*}
e^{-\theta|v-u|^{2}} \leq e^{-C_{\theta}|v|^{2}} e^{-C_{\theta}|u|^{2}}.
\end{equation*}

We split $\int_{\mathbb{R}^{3}}\mathrm{d}u=\int_{|u_{n}|\geq |\xi
|^{1/2}|u_{\tau }|}+\int_{|u_{n}|\leq |\xi |^{1/2}|u_{\tau }|}$ to have
(Note $\frac{1}{2}< \beta <1 $)
\begin{equation}
\begin{split}
\int_{|u_{n}|\geq |\xi |^{1/2}|u_{\tau }|}& \lesssim \frac{e^{-C|v|^{2}}}{
|v|^{1-\kappa} } \int_{\mathbb{R}^{2}}\frac{e^{-C_{\theta}|u_{\tau }|^{2}}}{%
|u_{\tau }|} \int^{|v|/5}_{ |\xi |^{1/2}|u_{\tau }|}|u_{n}|^{-2\beta
}e^{-C_{\theta}|u_{n}|^{2}}\mathrm{d}|u_{n}|\mathrm{d}u_{\tau } \\
& \lesssim \frac{e^{-C|v|^{2}}}{ |v|^{1-\kappa} } \int_{|u_{\tau}|\leq \frac{%
|v|}{5}}\frac{e^{-C_{\theta}|u_{\tau }|^{2}}}{|u_{\tau }|} \int_{|\xi
|^{1/2}|u_{\tau }|}^{\frac{|v|}{5}}\frac{\mathrm{d}u_{n}}{|u_{n}|^{2\beta }}%
 \mathrm{d}u_{\tau } \\
& \lesssim \frac{e^{-C|v|^{2}}}{ |v|^{1-\kappa} } \Big\{ |v|
+|v|^{-2\beta+1} \int_{|u_{\tau}| \leq \frac{|v|}{5}} \frac{ \mathrm{d}%
u_{\tau}}{|u_{\tau}|} + \frac{1}{|\xi|^{\beta -\frac{1}{2}}}
\int_{|u_{\tau}| \leq \frac{|v|}{5}} |u_{\tau}|^{-2\beta}\mathrm{d}u_{\tau} %
\Big\} \\
& \lesssim e^{-C|v|^{2}} |v|^{\kappa}\big\{ 1+ \frac{1}{|v|^{2\beta -1}}(1+
\frac{1}{|\xi|^{ \beta-\frac{1}{2}}}) \big\} \\
& \lesssim \frac{e^{-C|v|^{2}}}{|v|^{2\beta -1} |\xi|^{\beta-\frac{1}{2}}} ,
\\
\int_{|u_{n}|\leq |\xi |^{1/2}|u_{\tau }|}& \lesssim \frac{e^{-C|v|^{2}}}{
|v|^{1-\kappa} } \int_{|u_{\tau}| \leq \frac{|v|}{5} }\frac{e^{-
C_{\theta}|u_{\tau }|^{2}}}{|\xi |^{\beta }|u_{\tau }|^{2\beta }|u_{\tau }|}%
\int_{|u_{n}|\leq |\xi |^{1/2}|u_{\tau }|}\mathrm{d}u_{n}\mathrm{d}u_{\tau }
\\
& \lesssim \frac{e^{-C|v|^{2}}}{ |v|^{1-\kappa} } \int_{ |u_{\tau}| \leq
\frac{|v|}{5}}\frac{e^{-C_{\theta}|u_{\tau }|^{2}}}{|\xi |^{\beta
-1/2}|u_{\tau }|^{2\beta }}\mathrm{d}u_{\tau } \\
&\lesssim \frac{ |v|^{-2\beta +\kappa+1} e^{-C|v|^{2}}}{|\xi |^{\beta -1/2}}
\lesssim \frac{e^{-C|v|^{2}}}{|v|^{2\beta -1} |\xi|^{\beta-\frac{1}{2}}} .
\end{split}
\notag
\end{equation}%
Therefore, combining the cases of $|u|\leq \frac{|v|}{5}$ and $|u|\geq \frac{%
|v|}{5},$ we conclude (\ref{int_xi_u}).

The proof of (\ref{specular_nonlocal_u}), (2) of Lemma \ref{lemma_nonlocal_u} is a
direct consequence of (\ref{int_xi_u}) and the proof of (\ref%
{specular_nonlocal}).
\end{proof}

\section{\large{Diffuse Reflection BC}}

 \vspace{4pt}


\noindent
{   \large{{{4.1 $ \  {W^{1,p} (1< p< 2)}$ Estimate   }}}} 
 \vspace{6pt}

Consider the iteration (\ref{positive_iteration}) with (\ref{diffuse_BC_m}) and with $f^{0}\equiv f_{0}$, and with the compatibility condition for the
initial datum (\ref{compatibility_condition_1}). Remark that the normalized
Maxwellian is $\mu (v)=e^{-\frac{|v|^{2}}{2}}$. From Lemma \ref{local_existence}, we
have a uniform bound (\ref{bounded}) for $0< T \ll 1$. We apply Proposition 1 for $m=1,2,...$ with 
\begin{equation}\notag
\nu= \nu(\sqrt{\mu}f^{m})\geq 0 ,\ \  H= \Gamma_{\mathrm{gain}} (f^{m},f^{m}),\ \ g=c_{\mu }\sqrt{\mu (v)}\int_{n\cdot
u>0}f^{m}(t,x,u)\sqrt{\mu (u)}\{n(x)\cdot
u\}\text{d}u.  \label{hgm}
\end{equation}
For $\partial_{\mathbf{e}}=[ \partial_{x},\partial _{v}]$, $\partial
f^{m}$ satisfies
\begin{equation}
\{\partial _{t}+v\cdot \nabla _{x}+\nu (\sqrt{\mu}f^{m})\}\partial f^{m+1}=\mathcal{G}%
^{m},\ \ \ \partial f^{m+1}(0,x,v)=\partial f_{0}(x,v),  \label{seq_w_p}
\end{equation}
where
\begin{equation}\label{G_m}
\begin{split}
 \mathcal{G}^{m}& \ = \ -[\partial v]\cdot \nabla _{x}f^{m+1}-\partial [\nu
(\sqrt{\mu}f^{m}) ]f^{m+1}+\partial \lbrack  \Gamma_{\mathrm{gain}} (f^{m},f^{m})],    \\
 |\mathcal{G}^{m}|& \ \lesssim \ 
 | \nabla_{x} f^{m+1}| +  e^{-\frac{\theta}{2}|v|^{2}} || e^{\theta|v|^{2}} f_{0} ||_{\infty}^{2} + P( ||e^{\theta|v|^{2}} f_{0} ||_{\infty}) \int_{\mathbb{R}^{3}} \frac{e^{-C_{\theta} |v-u|^{2}  }}{|v-u|^{2-\kappa}} |\partial f^{m} (u)| \mathrm{d}u
 ,  
 \end{split}
\end{equation}%
where we have used \textit{(iv)} of Lemma \ref{lemma_operator} and (\ref{bounded}) of Lemma \ref{local_existence}.
%

On $(x,v)\in \gamma _{-}$, from (\ref{hgm}), (\ref{gboundary}) and (\ref{boundary_tau}), the
boundary condition is bounded by
\begin{equation} \label{seq_w_p_boundary}
\begin{split}
|\partial f^{m+1}(t,x,v)| \lesssim & \  \sqrt{\mu (v)}\left( 1+\frac{%
\langle v\rangle }{|n(x)\cdot v|}\right) \int_{n(x)\cdot u>0}|\partial f^{m}(t,x,u)|\langle u\rangle \sqrt{\mu }%
\{n(x)\cdot u\}\mathrm{d}u  \\
& +\frac{1}{|n(x)\cdot v|}\Big\{\nu (\sqrt{\mu}f^{m})|f^{m+1}| +|\Gamma_{\mathrm{gain}}
(f^{m},f^{m})|\Big\}  \\
\lesssim & \  \sqrt{\mu (v)}\left( 1+\frac{\langle v\rangle }{|n(x)\cdot
v|}\right) \int_{n(x)\cdot u>0}|\partial f^{m}(t,x,u)|\mu ^{1/4}\{n(x)\cdot u\}\mathrm{d}u  \\
&   +   \frac{   e^{-\frac{\theta}{2} |v|^{2}  }}{|n(x)\cdot v|}   P(||   e^{\theta|v|^{2}}  f_{0}||_{\infty }  ).  
\end{split}
\end{equation}%
 Set $\partial f^0 = [\partial_t f^0, \nabla_x f^0, \nabla_v f^0]=[0,0,0]$.

Now we are ready to prove Theorem \ref{Global_p}:

\begin{proof}[\textbf{Proof of Thoerem \ref{Global_p}}]
We claim that  for $1\leq p<2$, if $0<T \ll 1$ (therefore, (\ref{bounded}) and (\ref{bounded_t}) for $0  < \theta < \frac{1}{4}$ from Lemma \ref{local_existence}), and the
compatibility condition (\ref{compatibility_condition_1}) then uniformly-in-$%
m$,%
\begin{equation}
\sup_{0\leq t\leq T_{\ast }}||\partial f^{m}||_{p}^{p}+\int_{0}^{T_{\ast
}}|\partial f^{m}|_{\gamma ,p}^{p} \lesssim _{\Omega ,T_{\ast }}||\partial
f_{0}||_{p}^{p}+P(|| e^{\theta|v|^{2}}f_{0}||_{\infty }),
\label{global_bounded}
\end{equation}
 for some polynomial $P.$

Recall that the time derivative of the initial datum is defined as $%
\partial_t f_0\equiv -v\cdot\nabla_x f_0 -\nu(\sqrt{\mu} f_{0}) f_{0}+\Gamma_{\mathrm{gain}}(f_0,f_0)$. We remark that the sequence (\ref{positive_iteration}) is the one used in Lemma \ref{local_existence} and shown to be Cauchy in $L^\infty$. Therefore the limit function $f$ is a solution of the Boltzmann equation with the diffuse boundary condition. On the other hand, due to the weak lower semi-continuity for $L^p$ in the case of $p>1$, once we
have (\ref{global_bounded}) then we pass a limit $\partial f^m \rightharpoonup
\partial f$ weakly in $\sup_{t\in [0,T_*]}||\cdot ||_p^p$ and $\partial f^m|_{\gamma}
  \rightharpoonup \partial f|_{\gamma}$ in $\int_0^{T_*}|\cdot|^p_{\gamma,p}$ to conclude that $\partial f$ satisfies the same estimate of (\ref{global_bounded}).
 Repeat the same
procedure for $[T_*,2T_*], [2T_*,3T_*],\cdots,$ to conclude Theorem \ref%
{Global_p}.

We prove the claim (\ref{global_bounded}) by induction. From Proposition \ref{theo:trace}, $\partial f^1$ exists.
Because of our choice $\partial f^0$ the estimate (\ref{global_bounded}) is
valid for $m=1$. Now assume that $ \partial f^i$ exists and (\ref{%
global_bounded}) is valid for all $i=1,2,\cdots,m $. Applying Proposition
\ref{theo:trace} to show that $\partial f^{m+1}$ exists and to get $(\ref{global_t}), (\ref{global_x}), \text{and }(\ref{global_v})$, we have
\begin{equation}\label{abstract_green_n}
\begin{split}
&
\sup_{0 \leq s\leq t}|| \partial f^{m+1}(s)||_{p}^p + \int_0^t |  \partial
f^{m+1}|^p_{\gamma_+,p}   \\
 \lesssim& \ \  ||   \partial f_0||_{p}^p + \int_0^t
|  \partial f^{m+1}|^p_{\gamma_-,p}  + \int_{0}^{t} \iint_{\Omega\times \mathbb{R}^{3}} |\mathcal{G}^{m}| |\partial f^{m+1}|^{p-1} \\
\lesssim & \ \ 
||   \partial f_0||_{p}^p + \int_0^t
|  \partial f^{m+1}|^p_{\gamma_-,p}  + P(|| e^{\theta|v|^{2}} f_{0} ||_{\infty}  )\Big\{  \int_{0}^{t} || \partial f^{m+1} (s)||_{p}^{p} +   \int_{0}^{t} || \partial f^{m} (s)||_{p}^{p}\Big\},
\end{%
split}
\end{equation}
where we have used (\ref{G_m}) and Lemma \ref{lemma_operator} and H\"older inequality.
%
%
%
%
%

Now we consider the boundary contributions. We use (\ref{seq_w_p_boundary}) to obtain

\begin{equation}\nonumber
\begin{split}
 \int_0^t
\int_{\gamma_-} |  \partial f^{m +1}(s)| ^p 
\lesssim_p&
\sup_{x\in\partial\Omega}
\left(\int_{\gamma_-} \sqrt{\mu(v)}^p
\left(|n\cdot v| + \frac{\langle v\rangle^p}{|n\cdot v|^{p-1}}\right)\mathrm{d}
v\right)  \\
&  \ \ \ \ \  \times \int_0^t \int_{\partial\Omega} \left[ \int_{u\cdot n(x)>0} |\partial f^m(s,x,u)|\mu^{1/4}(u)\{n\cdot u\}\mathrm{d} u\right]^p\mathrm{d} S_x\mathrm{d} s  \notag \\
&  \ +\sup_{x\in\partial\Omega}
\left(\int_{\gamma_-} \langle
v\rangle^{-p \beta  }|n\cdot v|^{1-p}\mathrm{d} v\right)\times t || e^{\theta|v|^{2}} f_0||_\infty^p\notag\\
 \lesssim_p  & \int_0^t \int_{\partial\Omega} \left[ \int_{u\cdot n(x)>0} |\partial f^m(s,x,u)|\mu^{1/4}(u)\{n\cdot u\}\mathrm{d} u\right]^p\mathrm{d} S_x\mathrm{d} s   +  t  P(|| e^{\theta|v|^{2}}  f_0||_\infty ).\label{global_grazing}
\end{split}
\end{equation}
Now we focus on $  \int_0^t \int_{\partial\Omega} \left[ \int_{u\cdot n(x)>0} |\partial f^m(s,x,u)|\mu^{1/4}(u)\{n\cdot u\}\mathrm{d} u\right]^p\mathrm{d} S_x\mathrm{d} s  $. Recall (\ref{def:gamma_epsilon}). We split the $\{u \in\mathbb{R}^3 : n(x)\cdot u>0\}$ as
\begin{equation}\label{split}
\begin{split}
\int_0^t \int_{\partial\Omega} \left[\int_{n\cdot u>0} |\partial f^m| \mu^{1/4}\{n\cdot u\} \mathrm{d} u\right]^p  \lesssim_p \int_0^t \int_{\partial\Omega} \left[\int_{ (x,u)\in\gamma_+ \backslash \gamma_+^\varepsilon}  \mathrm{d} u\right]^p + \int_0^t \int_{\partial\Omega} \left[\int_{ (x,u)\in \gamma_+^\varepsilon} \mathrm{d} u\right]^p.
\end{split}
\end{equation}
We use H\"{o}lder's inequality to bound
\begin{equation}\notag
\begin{split}
\left[\int_{(x,u)\in\gamma_+^\varepsilon}   \mathrm{d} u\right]^{p} \leq \left[\int_{(x,u)\in\gamma_+^\varepsilon} \mu^{\frac{p}{4(p-1)}} \{n\cdot u\}\mathrm{d} u\right]^{p-1} \left[\int_{(x,u)\in\gamma_+^\varepsilon} |\partial f^m(s,x,u)|^p\{n(x)\cdot u\} \mathrm{d} u\right],
\end{split}
\end{equation}
to bound the second term of (\ref{split})
\begin{equation}\label{global_grazing}
\int_0^t \int_{\partial\Omega} \left[\int_{(x,u)\in\gamma_+^\varepsilon} \mathrm{d} u\right]^p \lesssim_p \varepsilon \int_0^t |\partial f^m(s)|_{\gamma_+,p}^p \mathrm{d} s.
\end{equation}
For the first term (non-grazing part) of (\ref{split}) we use H\"{o}lder's inequality and Lemma \ref{le:ukai} and Lemma \ref{lemma_K} and Lemma \ref{lemma_operator}
for $f^m$ to estimate
%
\begin{equation}\label{global_non_grazing}
\begin{split}
 & \int_0^t \int_{\partial\Omega} \left[\int_{(x,u)\in\gamma_+\backslash \gamma_+^\varepsilon}\mathrm{d} u\right]^p \\
&\lesssim_\varepsilon  || \partial f_0||_p^p+ \int_0^t || \partial
f^{m}(s)||_p^p \mathrm{d} s+ \int_0^t \iint_{\Omega\times\mathbb{R}^3}
\big|\mathcal{G}^{m} \big| |\partial
f^{m}|^{p-1}   \\ 
& \lesssim_\varepsilon  ||
\partial f_0||_p^p+P( ||e^{\theta|v|^{2}} f_0||_\infty)   \sum_{i=m,m-1}\int_0^t
||\partial f^{i}(s)||_p^p  + t P(|| e^{\theta|v|^{2}}f_0||_\infty^{p} ).
\end{split}
\end{equation}

Putting together the estimates (\ref{abstract_green_n}), (\ref{global_grazing}), (\ref{%
global_non_grazing}), and choosing sufficiently small $0< \varepsilon \ll 1,0< T\ll 1,$, we deduce that
\begin{equation}\notag
\begin{split}
&\sup_{0 \leq t\leq T } ||\partial f^{m+1}(t)||_p^p + \int_0^{T } |\partial f^{m+1}|_{\gamma_+,p}^p  \\
 &\lesssim_{T ,\Omega}   || \partial f_0 ||_p^p +  {P} (||  e^{\theta|v|^{2}} f_0||_\infty) 
 + \frac{1}{8} \max_{i=m,m-1}\Big\{\sup_{0\leq t \leq T_*}||\partial f^i(t)||_{p}^p + \int_0^{T_*}|\partial f^i|_{\gamma_+,p}^p  \Big\}.\label{iteratem}
\end{split}
\end{equation}

To conclude the proof we use the following fact from \cite{EGKM} : Suppose $a_i\geq 0, D\geq 0$ and $A_i= \max\{a_i,a_{i-1},\cdots, a_{i-(k-1)}\}$ for fixed $k \in\mathbb{N}$.
\begin{equation}
 {If} \ \ a_{m+1} \leq \frac{1}{8}A_m+D \ \ \text{then} \ \ A_m \ \leq \ \frac{1}{8} A_0 + \left(\frac{8}{7}\right)^2 D, \ \ \text{for} \ \frac{m}{k} \gg 1.\label{aAD}
\end{equation}
{\it Proof of (\ref{aAD}):}  In fact, we can iterate for $m, m-1,...$ to get
\begin{eqnarray*}
a_m &\leq& \frac{1}{8} \max\{\frac{1}{8}A_{m-2}+D,A_{m-2}\}+D \leq \frac{1}{8}A_{m-2} + (1+\frac{1}{8})D\\
&\leq& \frac{1}{8} \max\{\frac{1}{8}A_{m-3}+D, A_{m-3}\}+ (1+\frac{1}{8})D
 \leq \frac{1}{8} A_{m-3} + (1+\frac{1}{8}+\frac{1}{8^2})D\\
&\leq& \frac{1}{8} A_{m-k} + \frac{8}{7}D.
\end{eqnarray*}
Similarly $a_{m-i} \leq \frac{1}{8}A_{m-k} + \frac{8}{7}D$ for all $i=0,1,\cdots,k-1.$ Therefore if $1 \ll m/k \in \mathbb{N}$,
\begin{eqnarray*}
A_m &=&  \max\{a_m,a_{m-1},\cdots, a_{m-(k-1)}\}
 \leq   \frac{1}{8}A_{m-k}+ \frac{8}{7}D\\
 &\leq& \frac{1}{8^2} A_{m-2k} + \frac{8}{7}(1+\frac{1}{8})D \leq \frac{1}{8^3}A_{m-3k} + \frac{8}{7}(1+\frac{1}{8}+ \frac{1}{8^2})D\\
 &\leq& \left(\frac{1}{8} \right)^{ \left[\frac{m}{k}\right]} A_{m-\left[\frac{m}{k}\right]k} + \left(\frac{8}{7}\right)^2 D \leq  \left(\frac{1}{8}\right)^{\frac{m}{k}}A_0 + \left(\frac{8}{7}\right)^2 D \ \leq  \ \frac{1}{8}A_0 + \left(\frac{8}{7}\right)^2 D.
\end{eqnarray*}
This completes the proof of (\ref{aAD}).

In (\ref{aAD}), setting $k=2$ and
\begin{equation*}
\begin{split}
a_i  = \sup_{0 \leq t \leq T_*} ||\partial f^{i}(t)||_p^p + \int_0^t |\partial f^{i}|_{\gamma_+,p}^p  ,\ \ \ \ \
D = C_{T_*,\Omega} \big\{  || \partial f_0 ||_p^p + {P}( || \langle v\rangle^\beta f_0||_\infty )\big\},
\end{split}
\end{equation*}
and applying (\ref{aAD}), we complete the proof of the lemma.\end{proof}

The following result indicates that Theorem 1 is optimal :

\begin{lemma}
\label{optimal} Let $\Omega=B(0;1)$ with $B(0;1)= \{ x\in \mathbb{R}^3 :
|x|<1\}$. There exists an initial datum $f_0(x,v)\in C^\infty$ with $f_0
\subset\subset B(0;1)\times B(0;1)$ so that the solution $f$ to
\begin{equation}
\begin{split}
\partial_t f + v\cdot \nabla_x f &=0, \ \ \ f|_{t=0}=f_0, \\
f(t,x,v)|_{\gamma_-} &= c_\mu \sqrt{\mu(v)} \int_{n(x)\cdot u >0}
f(t,x,u) \sqrt{\mu(u)} \{n(x)\cdot u\} \mathrm{d}
u,  \label{free_transport}
\end{split}
\end{equation}
satisfies
\begin{eqnarray*}
\int_0^{1} \int_{\gamma_-} |\nabla_x f(s,x,v)|^2 \mathrm{d} \gamma \mathrm{d}
s \ = \ + \infty,
\end{eqnarray*}
so that the estimate (\ref{global_p}) of Theorem 1 fails for $p=2$.
\end{lemma}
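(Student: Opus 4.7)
The plan is to exploit the diffuse boundary representation on $\gamma_-$ together with the free transport equation to force a $1/|n\cdot v|$ singularity in $\partial_n f|_{\gamma_-}$ that fails to be $L^2(\mathrm{d}\gamma)$ in three dimensions. First I use the boundary condition to write, on $\gamma_-$,
\[
f(s,x,v)=c_\mu\sqrt{\mu(v)}\,W(s,x), \qquad W(s,x):=\int_{n(x)\cdot u>0} f(s,x,u)\sqrt{\mu(u)}\{n(x)\cdot u\}\,\mathrm{d}u,
\]
so on $\partial\Omega$ the tangential and time derivatives decouple from $v$ up to the factor $\sqrt{\mu(v)}$: $\partial_s f = c_\mu\sqrt{\mu(v)}\partial_s W$ and $\nabla_\tau f = c_\mu\sqrt{\mu(v)}\nabla_\tau W$ (away from grazing). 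Evaluating $\partial_s f + v\cdot\nabla_x f=0$ at $(x,v)\in\gamma_-$ (so $v_n:=n(x)\cdot v<0$) yields
\[
\partial_n f(s,x,v) = -\frac{c_\mu\sqrt{\mu(v)}}{v_n}\bigl[\partial_s W(s,x)+v_\tau\cdot\nabla_\tau W(s,x)\bigr],
\]
and, since $|\nabla_x f|^2\ge|\partial_n f|^2$, we obtain the key lower bound
\[
\int_0^1\!\int_{\gamma_-}|\nabla_x f|^2\,\mathrm{d}\gamma\,\mathrm{d}s \ \ge\ c_\mu^2\int_0^1\!\int_{\partial\Omega}\!\int_{v_n<0}\frac{\mu(v)}{|v_n|}\bigl|\partial_s W + v_\tau\cdot\nabla_\tau W\bigr|^2\,\mathrm{d}v\,\mathrm{d}S_x\,\mathrm{d}s.
\]

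Next I produce an admissible $f_0$ for which $(\partial_s W,\nabla_\tau W)$ is nonzero on a nonempty open subset of $(0,1)\times\partial B(0;1)$. Take $f_0(x,v)=\chi(x)\psi(v)$ with $\chi\in C_c^\infty(B(\tfrac34 e_1;\rho))$ and $\psi\in C_c^\infty(B(\tfrac12 e_1; r))$ nonnegative and nontrivial, with $\rho, r$ so small that both supports sit compactly inside $B(0;1)$; compatibility (\ref{compatibility_condition_1}) holds trivially since $\chi$ vanishes on $\partial B(0;1)$. Along straight-line characteristics $f(s,x,u)=f_0(x-su,u)$ for $(x,u)\in\gamma_+$, valid throughout $s\in[0,1]$ before any reflection since the diameter of $\Omega$ is $2$, one gets $W(s,x)=\int_{n(x)\cdot u>0}\chi(x-su)\psi(u)\sqrt{\mu(u)}\{n(x)\cdot u\}\mathrm{d}u$. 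At $x_0:=e_1$ with $n(x_0)=e_1$ and $u\in B(\tfrac12 e_1; r)$, the support constraint $e_1-su\in B(\tfrac34 e_1;\rho)$ forces $s$ near $\tfrac12$: for $s\in[0,\tfrac14]$ one checks $|e_1-su-\tfrac34 e_1|\ge\tfrac14-\tfrac{s}{2}-sr>\rho$ so $W(s,x_0)\equiv 0$, while at $s=\tfrac12$ with $u$ close to $\tfrac12 e_1$ the integrand is positive so $W(\tfrac12,x_0)>0$. By smoothness of $W$ in $(s,x)\in(0,1)\times\partial\Omega$ and the mean value theorem, $\partial_s W(s_*,x_0)\ne 0$ for some $s_*\in(\tfrac14,\tfrac12)$, and by continuity $A:=\{(s,x):|\partial_s W(s,x)|+|\nabla_\tau W(s,x)|>0\}$ is a nonempty open, hence positive-measure, subset of $[0,1]\times\partial B(0;1)$.

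Finally, for each $(s,x)\in A$ the map $v_\tau\mapsto\partial_s W(s,x)+v_\tau\cdot\nabla_\tau W(s,x)$ is either a nonzero constant or a nondegenerate affine function on $\mathbb{R}^2$, so its zero set has two-dimensional Lebesgue measure zero. By Fubini and the factorization $\mu(v)=e^{-v_n^2/2}e^{-|v_\tau|^2/2}$,
\[
\int_{v_n<0}\frac{\mu(v)}{|v_n|}\bigl|\partial_s W+v_\tau\cdot\nabla_\tau W\bigr|^2\mathrm{d}v =\biggl(\int_{\mathbb{R}^2}\!e^{-|v_\tau|^2/2}\bigl|\partial_s W+v_\tau\cdot\nabla_\tau W\bigr|^2\mathrm{d}v_\tau\biggr)\biggl(\int_{-\infty}^{0}\frac{e^{-v_n^2/2}}{|v_n|}\mathrm{d}v_n\biggr)=+\infty,
\]
since the inner $v_n$-integral diverges logarithmically at $0$ while the outer integrand is strictly positive on a full-measure set of $v_\tau$. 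Integrating this $+\infty$ over the positive-measure set $A$ gives $\int_0^1\int_{\gamma_-}|\nabla_x f|^2\mathrm{d}\gamma\,\mathrm{d}s=+\infty$, so (\ref{global_p}) must fail at $p=2$. The real obstacle is the nonvanishing claim for $W$, which I dispatch by the support-based construction forcing $W(\cdot,x_0)$ to transition from identically zero to strictly positive on $[0,1]$; this sidesteps any delicate cancellation analysis of the velocity kernel.
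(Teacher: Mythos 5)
The core observation is the same as the paper's: on $\gamma_-$ the normal derivative $\partial_n f$ picks up a factor $1/|n\cdot v|$ which is not square integrable against $|n\cdot v|\,\mathrm{d}v$, and the prefactor (a time/tangential derivative of the boundary flux) is nontrivial. However, you run the argument as a direct construction while the paper argues by contradiction, and your direct route has a genuine gap in the control of the outgoing flux $W$.

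The gap is the sentence ``Along straight-line characteristics $f(s,x,u)=f_0(x-su,u)$ for $(x,u)\in\gamma_+$, valid throughout $s\in[0,1]$ before any reflection since the diameter of $\Omega$ is $2$.'' This is false as stated: for $(x,u)\in\gamma_+$ with $u$ nearly tangential, the backward exit time $t_{\mathbf b}(x,u)\lesssim |n(x)\cdot u|/|u|^2$ is arbitrarily small, and for $|u|$ large $t_{\mathbf b}\lesssim 2/|u|$ is also small. For those $u$ (which are \emph{not} restricted to the support of $\psi$, since the $W$-integral runs over all outgoing $u$), $f(s,x,u)$ at time $s>t_{\mathbf b}$ is determined by the diffuse reflection at the earlier time $s-t_{\mathbf b}$ at the foot point $x_{\mathbf b}$, not by $f_0$. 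Consequently the identity you write, $W(s,x)=\int\chi(x-su)\psi(u)\sqrt{\mu}\{n\cdot u\}\,\mathrm{d}u$, is only $W_{\mathrm{free}}$; the true $W$ equals $W_{\mathrm{free}}$ plus a nonnegative reflection term that feeds $W$ at earlier times and at other boundary points. Your assertion $W(s,x_0)\equiv 0$ for $s\le 1/4$ therefore does not follow from your computation. (The conclusion is true, but to get it one has to show the reflection term vanishes — e.g.\ by observing that for $s\le 1/4$ the integral equation gives $\sup_{[0,s]}|W|\le \lambda\sup_{[0,s]}|W|$ with $\lambda=\int_{t_{\mathbf b}\le 1/4}c_\mu\mu\{n\cdot u\}\,\mathrm{d}u<1$, which is an extra argument you do not supply. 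You also implicitly use $C^1$ regularity of $W$ for the mean-value theorem, which requires justification.) The paper's contradiction argument sidesteps this entirely: from the supposed finiteness it deduces $A\equiv 0$, i.e.\ $\partial_t W\equiv 0$, so $W$ is constant in time; since $f_0$ vanishes near $\partial\Omega$, $W(0,\cdot)=0$, hence $W\equiv 0$ for all time and there is \emph{never} any inflow; then $f$ is pure free transport and one exhibits $(t,x,v)\in\gamma_+$ with $f(t,x,v)=f_0(x-tv,v)=1>0$, contradicting $W\equiv 0$. Because zero inflow is deduced rather than shown, no Gronwall-type control of the reflection term is needed, and no differentiability of $W$ is required. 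If you prefer to keep your direct route, you need to insert the sub-Markovian estimate on the reflection kernel to justify $W\equiv 0$ on $[0,1/4]\times\partial\Omega$ and to argue the required regularity of $W$.
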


\begin{proof}
We prove by contradiction. Suppose $\int_0^{1} \int_{\gamma_-} |\partial f(s,x,v)|^2 \mathrm{d} \gamma \mathrm{d} s< +\infty.$ Then
$$
\partial_n f(t,x,v) = \frac{1}{n\cdot v} \Big\{-\partial_t f - (\tau_1 \cdot v ) \partial_{\tau_1}f - (\tau_2 \cdot v) \partial_{\tau_2}f\Big\}, \ \ \ for \ (x,v) \in \gamma_-.
$$
We use the boundary condition to define:
\begin{eqnarray*}
\partial_t f(t,x,v)|_{\gamma_-} &=& c_\mu \sqrt{\mu(v)}A(t,x)\equiv c_\mu\sqrt{\mu} \int_{n\cdot u >0} \partial_t f \sqrt{\mu} \{n\cdot u\} \mathrm{d} u,\\
\partial_{\tau_i} f(t,x,v)|_{\gamma_-}&=& c_\mu \sqrt{\mu(v)}B_i(t,x)\\
 &\equiv& c_\mu\sqrt{\mu} \int_{n\cdot u >0} \partial_{\tau_i} f \sqrt{\mu} \{n\cdot u\} \mathrm{d} u
 +c_\mu \sqrt{\mu} \int_{n\cdot u >0} \nabla_v f \frac{\partial \mathcal{T}}{\partial\tau_i} \mathcal{T}^{-1} u \sqrt{\mu} \{n\cdot u\} \mathrm{d} u.
\end{eqnarray*}
We make a change of variables
 $v_n =  v \cdot n(x) , \ v_{\tau_1} = v\cdot \tau_1(x), \ v_{\tau_2} = v\cdot \tau_2(x)$ to compute
\begin{equation*}
\begin{split}
&  \int_{\partial\Omega} \mathrm{d} S_x \int_0^\infty \mathrm{d} v_{n} \iint_{\mathbb{R}^2} \mathrm{d} v_{\tau_1} \mathrm{d} v_{\tau_2} \\
& \ \ \ \ \ \times  \frac{\mu(v)}{v_\perp}\Big\{
(A)^2 + (v_{\tau_1})^2 (B_1)^2 + (v_{\tau_2})^2 (B_2)^2 +2 v_{\tau_1} AB_1 + 2v_{\tau_2} AB_2 + 2v_{\tau_1} v_{\tau_2} B_1B_2
\Big\}\\
& = \int_0^\infty \mathrm{d} v_{n} \frac{e^{-\frac{|v_n|^2}{2}}}{v_n} \int_{\partial\Omega} \mathrm{d} S_x \big\{
(A)^2 + 2\pi (B_1)^2 + 2\pi (B_2)^2\big\}.
\end{split}
\end{equation*}
Note that the integration over $\partial\Omega$ is a function of $t$ only (independent of $v$). Since $\int_{0}^{\infty} \frac{ \mathrm{d} v_n}{v_n} =\infty,$ we conclude that $A=B_1=B_2\equiv0$ for $(t,x)\in[0,\infty)\times\partial\Omega$. In particular from $A(t,x)=0$ we have for all $t\geq 0$
\begin{eqnarray}
  \int_{n(x)\cdot u >0} f(t,x,u ) \sqrt{\mu(u)} \{n(x) \cdot u\} \mathrm{d} u
 =   \int_{n(x)\cdot u >0} f (0,x,u ) \sqrt{\mu(u)} \{n(x) \cdot u\} \mathrm{d} u.\label{zeroBC}
\end{eqnarray}
We now choose the initial datum to vanish near $\partial\Omega$ :
\begin{eqnarray*}
f_0(x,v) = \phi(|x|)\phi(|v|),
\end{eqnarray*}
where $\phi\in C^{\infty}([0,\infty))$ and $\phi\geq 0$ and $ \mathrm{supp} \phi \subset\subset  [0,1)$ and $\phi\equiv 1$ on $[0,\frac{1}{2}].$ Clearly $$c_\mu \sqrt{\mu(v)} \int_{n(x)\cdot u >0} f_0(x,u) \sqrt{\mu(u)} \{n(x)\cdot u\} \mathrm{d} u=0.$$ Hence $f(t,x,v)\geq 0$ from $f_0\geq 0$ and the zero inflow boundary condition from (\ref{zeroBC}) and the above equality. Moreover following the backward trajectory to the initial plane for $t\in [\frac{1}{8},\frac{1}{4}]$ and $(x,v)\in\gamma_+$ and $|v- \frac{x}{|x|}|< \frac{1}{64},$ and $|v| \in [\frac{1}{8},\frac{1}{2}],$
\begin{eqnarray*}
f(t,x,v) = f_0(x-tv,v) =1,
\end{eqnarray*}
which contradicts to $ c_\mu\sqrt{\mu(v)} \int_{n\cdot u >0} f(t,x,u) \sqrt{\mu(u)} \{n(x)\cdot u\} \mathrm{d} u
 = 0$ for $(t,x,v) \in [0,\infty)\times\gamma_-$ from (\ref{zeroBC}).
 \end{proof}

\vspace{15pt}

\noindent{ \large{4.2. Weighted $W^{1,p} \ (2\leq p< \infty )$ Estimate}}

\vspace{8pt}

We now establish the weighted $W^{1,p}$ estimate for $2\leq p<\infty $ with
the same iteration (\ref{positive_iteration}). From Lemma \ref{local_existence} for $0< \theta < \frac{1}{4}$, we have a uniform bound (\ref{bounded}) and (\ref{bounded_t}).
Recall the notation $\partial_{\mathbf{e}} =[ \nabla _{x},\nabla _{v}]$. Then $ e^{-\varpi \langle v\rangle t}[ \alpha(x,v)] 
^{\beta }\partial f^{m}$ satisfies
\begin{equation}
\label{abstract_w_p}
\begin{split}
[\partial _{t}+v\cdot \nabla _{x}+\nu _{\varpi,\beta } + \nu(\sqrt{\mu}f^{m})]   ( e^{-\varpi \langle v\rangle t} \alpha(x,v) 
^{\beta }\partial f^{m+1} ) &=   e^{-\varpi \langle v\rangle t}  \alpha(x,v) 
^{\beta }
  \mathcal{G}^{m},\\ 
 \alpha(x,v)
^{\beta }\partial
f^{m+1}(0,x,v)&=    \alpha(x,v)  
^{\beta }\partial f_{0}(x,v).
\end{split}
\end{equation}

Here $\nu _{\varpi,\beta }$ is defined in (\ref{nula}) and $\mathcal{G}^{m}$ is defined in (\ref{G}).
Recall from (\ref{G_m})
\begin{equation}\notag
\begin{split}
 &e^{- \varpi \langle v\rangle t} \alpha(x,v)^{\beta} |\mathcal{G}^{m}| \\
\lesssim   & \ 
e^{-\varpi \langle v\rangle t} \alpha(x,v)^{\beta}\Big\{
|\nabla_{x} f^{m+1}| + P(|| e^{\theta|v|^{2}} f_{0} ||_{\infty} ) \Big[e^{-\frac{\theta}{2}|v|^{2}}  +   \int_{\mathbb{R}^{3}} 
\frac{e^{-C_{\theta} |v-u|^{2}}}{|v-u|^{2-\kappa}} | \partial f^{m}(u)|  \mathrm{d}u \Big]
\Big\}.
\end{split}
\label{N}
\end{equation}%

For $(x,v)\in \gamma ,$ from (\ref{seq_w_p_boundary}), the
boundary condition is bounded for $\beta <\frac{p-1}{2p}$ by
\begin{equation}
\begin{split}
&e^{-\varpi \langle v\rangle t} [\alpha(x,v)]^{\beta}|\partial f^{m+1}(t,x,v)|\\
& \lesssim \  e^{-\varpi \langle v\rangle t} [\alpha(x,v)]^{\beta}\sqrt{\mu(v) }\Big(1+\frac{\langle v\rangle }{|n(x)\cdot v|}\Big)%
\int_{n\cdot u>0}|\partial f^{m}(t,x,u)|\langle
u\rangle \sqrt{\mu }\{n\cdot u\}\mathrm{d}u \\
& \ \ +\frac{ e^{-\varpi \langle v\rangle t} [\alpha(x,v)]^{\beta}}{|n(x)\cdot v|}e^{-\frac{\theta }{4}%
|v|^{2}}    {P}(  ||  e^{\theta |v|^{2}}f_0||_{\infty }). \\
\end{split}
\label{dmboundary}
\end{equation}
Set $f^{0}=f_{0}$ and $\partial f^{0}=[\partial _{t}f^{0},\nabla
_{x}f^{0},\nabla _{v}f^{0}]=[0,0,0]$. The main estimate is the following:
 
\begin{proof}[\textbf{Proof of Lemma \ref{weigh_W1p}}]
Fix $p\geq 2, \frac{p-2}{2p} < \beta < \frac{p-1}{2p}$ and $\varpi\gg_{\Omega}1$. We claim that there exists $\
0< T_*\ll 1$ such that we have the following uniformly-in-$m$%
,
\begin{equation}
\begin{split}
 \sup_{0 \leq t \leq T_*} ||  e^{-\varpi \langle v\rangle t} \alpha^{\beta} \partial f^m(t)||_p^p +
\int_0^{T_*} |  e^{-\varpi \langle v\rangle s} \alpha^{\beta}  \partial f^m  |_{\gamma,p}^p 
 \lesssim_{\Omega,T_*} \   {P} ( || 
e^{\theta|v|^2} f_0||_\infty )  ||    \alpha^{\beta}\partial f_0||_p^p,  \label{estimate_W_1_p} 
\end{split}
\end{equation} 
where ${P}$ is some polynomial.

Once we have (\ref{estimate_W_1_p}) then we pass to the limit, $ e^{-\varpi \langle v\rangle t} \alpha^{\beta} \partial f^m
\rightharpoonup  e^{-\varpi \langle v\rangle t} \alpha^{\beta} \partial f $ weakly with norms $\sup_{t \in
[0,T_*]}|| \cdot ||_p^p$ and $  e^{-\varpi \langle v\rangle t} \alpha^{\beta} \partial f ^m|_{\gamma} \rightharpoonup   e^{-\varpi \langle v\rangle t} \alpha^{\beta} \partial f |_\gamma$ in $\int_0^{T_*} |\cdot|_{\gamma,p}^p$ and $%
  e^{-\varpi \langle v\rangle t} \alpha^{\beta} \partial f$ satisfies (\ref{estimate_W_1_p}). Repeat the
same procedure for $[T_*,2T_*], [2T_*,3T_*],\cdots,$ up to the local existence time interval $[0,T^{*}]$ in Lemma \ref{local_existence} to conclude Theorem \ref%
{weigh_W1p}.

We prove (\ref{estimate_W_1_p}) by induction. From Proposition \ref{inflowW1p}, $\partial f^1$ exits. More
precisely we construct $\partial_t f^1, \nabla_x f^1$ first and then $
\nabla_v f^1$. Because of our choice of $\partial f^0,$ the estimate (\ref{estimate_W_1_p}) is valid for $m=1$. Now assume that $ \partial f^i$ exists
and (\ref{estimate_W_1_p}) is valid for all $i=1,2,\cdots,m $. Applying the weighted inflow estimate (Proposition \ref{inflowW1p}) we deduce that $\partial f^{m+1}$ exists.  From the Green's identity (Lemma
\ref{Green}) we have
\begin{equation}\label{abstract_green}
\begin{split}
&\sup_{0 \leq s\leq t}|| e^{-\varpi \langle v\rangle s} \alpha^{\beta} \partial f^{m+1}(s)||_{p}^p +
\int_0^t | e^{-\varpi \langle v\rangle s} \alpha^{\beta}  \partial f^{m+1}|^p_{\gamma_+,p} \\
&+  \int^{t}_{0} || \langle v\rangle^{1/p} e^{-\varpi \langle v\rangle s} \alpha^{\beta} \partial f^{m+1}  ||_{p}^{p}  \\
\lesssim& \ 
 ||    \alpha^{\beta}  \partial f_0||_{p} + t P ( || e^{\theta|v|^{2}} f_{0} ||_{\infty})+ \int_0^t | e^{-\varpi \langle v\rangle s} \alpha^{\beta}  \partial
f^{m+1}|^p_{\gamma_-,p} 
\\
& + (t+ \varepsilon)  \sup_{0 \leq s\leq t}||
 e^{-\varpi \langle v\rangle s} \alpha^{\beta}   \partial  f^{m+1}(s) ||_p^p +    \int_0^t \iint_{\Omega\times\mathbb{R}^3}
[ e^{-\varpi \langle v\rangle s} \alpha^{\beta}]^{p }|\mathcal{G}_{m}| |\partial f^{m+1}|^{p-1}\\
\lesssim & \  ||    \alpha^{\beta}  \partial f_0||_{p} + t P ( || e^{\theta|v|^{2}} f_{0} ||_{\infty})+ \int_0^t | e^{-\varpi \langle v\rangle s} \alpha^{\beta}  \partial
f^{m+1}|^p_{\gamma_-,p}   \\
& + (t+ \varepsilon)  \sup_{0 \leq s\leq t}||
 e^{-\varpi \langle v\rangle s} \alpha^{\beta}   \partial  f^{m+1}(s) ||_p^p  \\
&  + P(|| e^{\theta|v|^{2}} f_{0} ||_{\infty}) \int_{0}^{t} \iint_{\Omega\times \mathbb{R}^{3}} [ e^{- \varpi \langle v\rangle s}  \alpha^{\beta}]^{p  } |\partial f^{m+1}|^{p-1 }
 \int_{\mathbb{R}^{3}}  \frac{e^{-C_{\theta} |v-u|^{2} }}{|v-u|^{2-\kappa}}    |\partial f^{m}(u)|.
\end{split}
\end{equation}

\noindent { \textit{Step 1. Estimate for the nonlocal term}}: The
key estimate is the following: For $0<\beta  < \frac{ p-1 }{2p},$  $0 <
\theta < \frac{1}{4},$ and some $C_{\varpi, \beta ,p}>0$,
\begin{equation}
\sup_{x\in \Omega  }\int_{\mathbb{R}^3}
 \frac{e^{-C_\theta|v-u|^{2}}}{|v-u|^{2-\kappa}}
\frac{ [e^{- \frac{\varpi}{\beta} \langle v\rangle s} \alpha(x,v)  ] ^\frac{ \beta  p }{p-1}%
}{ [e^{- \frac{\varpi}{\beta} \langle u\rangle s} \alpha(x,u) ]^{%
\frac{ \beta  p }{p-1}}}\mathrm{d} u  \ \lesssim_{\Omega,\theta} \ \langle
v\rangle^{\frac{ \beta  p}{p-1}}e^{C_{\varpi, \beta ,p}s^{2}}. \label{Kd}
\end{equation}
%
%
%
%
%
First we assume $|\xi(x)| < \delta_{\Omega}$ so that $n(x):=\frac{\nabla\xi(x)}{|\nabla\xi(x)|}$ is well-defined. We decompose $u_n= u\cdot
n(x)= u\cdot \frac{\nabla \xi(x)}{|\nabla \xi(x)|}$ and $u_{\tau} =u- u_n n(x).
$
For $0 \leq\kappa\leq 1$ is bounded by
\begin{eqnarray*}
&&|v|^{\frac{ \beta  p }{p-1}}\int_{\mathbb{R}^3}   
  \frac{1}{|v-u|^{2- \kappa}} 
e^{-C_\theta{|v-u|^2}}
\frac{e^{-%
\frac{ \varpi   p}{ p-1}\langle v\rangle t}}{e^{-\frac{ \varpi    p }{%
 p-1 }\langle u\rangle t}}
\frac{1}{ |u \cdot \nabla \xi(x)|^{\frac{
\beta  p }{p-1}}}
\mathrm{d} u\\
&& \lesssim_\Omega \ |v |^{\frac{ \beta  p }{%
p-1}} \int_{\mathbb{R}^3}  |v-u|^{-2+\kappa} 
e^{-\frac{%
C_\theta |v-u|^2}{2}} e^{ \frac{\varpi    p }{  p-1 }t|v-u|} {|u_{n}|^{%
\frac{-\beta   p }{p-1}}} \mathrm{d} u\\
&& \lesssim_\Omega \ |v |^{\frac{
\beta p }{p-1}} e^{C_{\varpi,\beta  ,p}t^2} \int_{\mathbb{R}^2} \mathrm{d} u_{\tau}
\int_{\mathbb{R}} \mathrm{d} u_n
 |v-u|^{-2+\kappa} 
 e^{-\frac{%
C_\theta|v-u|^2}{4}} |u_n |^{-\frac{\beta   p }{p-1}}\\
&&
\lesssim_{\Omega} \ C_\kappa|v |^{\frac{ \beta  p }{p-1}} e^{C_{ \varpi, \beta
,p}t^2}. 
\end{eqnarray*}
where we have used
\begin{equation}
 e^{ \frac{ \varpi\beta  p }{  p-1 }t|v-u|}\lesssim e^{C_{\varpi,\beta  ,p}t^2}\times e^{-\frac{C_\theta
|v-u|^2}{4}},\label{exponent}
\end{equation}
for some $C_{\varpi,\beta  ,p}>0$%
. Furthermore we split the last integration as $\int_{ {|u_n|}/{2} \leq
|v_n- u_n|} + \int_{ {|u_n |}/{2} \geq |v_{n}- u_n| }$.
Both of them are bounded by
\begin{eqnarray*}
C\left[\int \frac{e^{-\frac{%
C_\theta|v_n-u_n|^2}{8}}}{|u_n |^{\frac{ \beta  p}{p-1} }}
\mathrm{d} u_{n}  + \int \frac{e^{-\frac{C_\theta|v_n-u_n|^2}{8}}}{%
|v_n-u_n |^{\frac{ \beta p}{p-1} }} \mathrm{d} u_{n}\right] \lesssim
\langle v_{n}\rangle^{-\frac{ \beta p}{p-1} }+1.
\end{eqnarray*}%
If $|\xi(x)|\geq \delta_{\Omega}$ then 
\[
\alpha(x,v) \geq 2|\xi(x)| \{ v\cdot \nabla^{2}\xi(x) \cdot v \} \gtrsim \delta_{\Omega} |v|^{2} \gtrsim \delta_{\Omega} |v_{3}|^{2},
\]
where $v=(v_{1},v_{2},v_{3})$ is the standard coordinate. We set $v_{3}=v_{n}$ and $v_{\tau}=(v_{1},v_{2})$ and follow the exactly same proof.
Therefore we conclude (\ref{Kd}).

Therefore
\begin{eqnarray*}
& &    e^{-\varpi \langle v\rangle s} \alpha^\beta    \Big| \int_{\mathbb{R}^3} \frac{e^{-C_{\theta}|v-u|^{2} } }{|v-u|^{2-\kappa}}  \partial f^m(u)\mathrm{d} u\Big|
\\
&\lesssim_\theta 
 &   \Big(\int_{\mathbb{R}^3} \frac{e^{-C_{\theta}|v-u|^{2} } }{|v-u|^{2-\kappa}} \frac{ [e^{- \frac{\varpi}{\beta} \langle u\rangle s} \alpha]^{ \beta {q} }}{  [e^{- \frac{\varpi}{\beta} \langle u\rangle s} \alpha]^{ \beta
{q} }} \mathrm{d} u\Big)^{\frac{1}{q}}
\Big(
\int_{\mathbb{R}^3} \frac{e^{- C_{\theta}|v-u|^{2} } }{|v-u|^{2-\kappa}} |  [e^{-\varpi \langle u\rangle s} \alpha]^\beta   \partial f^m   (u)|^p \mathrm{d} u
\Big)^\frac{1}{p%
}
 \\
&\lesssim_\theta&  
 \langle v \rangle^{\beta}
e^{Cs^2}\Big(\int_{\mathbb{R}^3} \frac{e^{- C_{\theta}|v-u|^{2} } }{|v-u|^{2-\kappa}}
|    e^{-\varpi \langle u\rangle s} \alpha ^\beta    \partial f^m  (u)|^p \mathrm{d} u
\Big)^\frac{1}{p} ,
\end{eqnarray*}
where at the last line we used $\frac{p-2}{2p}< \beta <%
\frac{p-1}{2p}$ so that $\langle v \rangle^\beta   \leq \langle v \rangle.$%

Finally we use H\"older estimate to bound the last term (nonlocal term) of (\ref{abstract_green}) by
\begin{equation} \label{abstract_dgamma}
\begin{split}
&  C t e^{C_{\varpi , \beta
,p}t^2}  {P}(  ||  e^{\theta |v|^2} f_{0}  ||_\infty  )
\sup_{0 \leq s \leq t}  \iint_{\Omega\times\mathbb{R}^3} | e^{-\varpi \langle v\rangle s} \alpha^
\beta  \partial f^m|^p 
\\
& \ \ + (\delta+ \varepsilon)  {P}( || e^{\theta|v|^2} f_{0}  ||_\infty ) 
\max_{i=m,m+1} \int_0^t
\iint_{\Omega\times\mathbb{R}^3} \langle v \rangle | e^{-\varpi \langle v\rangle s} \alpha^ \beta  \partial
f^i|^p.
\end{split}
\end{equation} 
\\
\\  \textit{Step 2. Boundary Estimate: } Recall (\ref{def:gamma_epsilon}). We use (\ref{dmboundary}) to estimate the contribution of $\gamma_-  $
\begin{equation}\label{d_lambda_boundary}
\begin{split}
& \int_0^t \int_{\gamma_-} |   e^{-\varpi \langle v\rangle s} \alpha(x,v) ^{\beta} 
  \partial f^{m+1}(s,x,v)|^p\\
&\lesssim_p \int_0^t \int_{\gamma_-}     [e^{-\varpi \langle v\rangle s} \alpha(x,v)^{\beta}]
^{  p} \sqrt{\mu}^p \Big(1+ \frac{\langle v\rangle }{|n(x)\cdot v|}\Big)^p \left[ \int_{n(x)\cdot u>0} |\partial f^m(s,x,u)| \mu^{1/4} \{n \cdot u\} \mathrm{d} u\right]^p 
\\
& \ \ + {P} (||  e^{\theta|v|^2}f_0||_\infty)\int_0^t \int_{\gamma_-} \frac{  [e^{-\varpi \langle v\rangle s} \alpha(x,v)^{\beta}]^{ p}}{|n(x)\cdot v|^p} e^{-\frac{\theta p}{4}|v|^2}\mathrm{d} \gamma\mathrm{d} s.
\end{split}
\end{equation}
Using $e^{-\varpi \langle v\rangle s} \alpha(x,v) \leq e^{-\frac{\varpi \langle v\rangle }{2}s} |\nabla_x \xi(x)\cdot v|^{2}$ for $x\in\partial\Omega$, the last term is bounded by
\begin{equation}\notag
C_{\Omega}  {P} ({|| e^{\theta|v|^2}f_0||_\infty }) \int_0^t \int_{\partial\Omega} \int_{\mathbb{R}^3}
|n(x)\cdot v|^{\beta p -p +1} e^{-\frac{\theta p}{4}|v|^2}
\mathrm{d} v \mathrm{d} S_x \mathrm{d} s \lesssim_{\Omega, p, \zeta} t {P} (|| e^{\theta|v|^2} f_0||_\infty),
\end{equation}
 for $\beta >\frac{p-2}{2p}$ so that $2\beta p -p +1 >-1.$

For the first term in (\ref{d_lambda_boundary}) we split as
\begin{equation*}
\left[\int_{n(x)\cdot u>0} \cdots \mathrm{d} u\right]^p \lesssim_p \left[\int_{(x,u)\in\gamma_+^\varepsilon}\cdots \mathrm{d} u\right]^p + \left[\int_{(x,u)\in\gamma_+ \backslash\gamma_+^\varepsilon}\cdots \mathrm{d} u\right]^p.
\end{equation*}
The $\gamma_+^\varepsilon$ contribution (grazing part) of (\ref{d_lambda_boundary}) is bounded by
\begin{equation}\notag
\begin{split}
&C_p\int_0^t \int_{\gamma_-}
  [e^{-\varpi \langle v\rangle s} \alpha(x,v)^{\beta}]^{       p} \sqrt{\mu}^{p} \Big(|n\cdot v|+ \frac{\langle v\rangle^p}{|n\cdot v|^{p-1}}\Big) \\
  & \ \ \ \times 
\left|
\int_{(x,u)\in \gamma_+^\varepsilon}
   e^{-\varpi \langle u\rangle s} \alpha(x,u) ^{\beta   }  \partial f^{m }  \{n  \cdot u\}^{1/p} \frac{\{n \cdot
u\}^{1/q}  {\mu}^{1/4}}{   e^{-\varpi \langle u\rangle s} \alpha(x,u) ^{\beta   }} \mathrm{d} u
\right|^p \mathrm{d} v \mathrm{d} S_x \mathrm{d} s
\\
&\lesssim_{\Omega,p } \int_0^t \int_{ \gamma_-}   [e^{-\varpi \langle v\rangle s} \alpha( v)^{\beta}]^{ p}\Big(|n\cdot v|+ \frac{\langle v\rangle^p}{|n\cdot v|^{p-1}}\Big)
\sqrt{\mu }^{p} \\
& \ \ \ \times\left[\int_{(x,u)\in\gamma_+}   [e^{-\varpi \langle v\rangle s} \alpha( u)^{\beta}]^{   p} |\partial f ^{m }|^p \{n
\cdot u\} \mathrm{d} u\right]
 \left[\int_{(x,u)\in
\gamma_+^\varepsilon}    [e^{-\varpi \langle u\rangle s} \alpha( u)^{\beta}]^{-{   q } } \mu^{q/4} \{n  \cdot
u\}
\mathrm{d} u\right]^{p/q}  \mathrm{d} v \mathrm{d} S_x \mathrm{d}
s,\\
& \lesssim_{\Omega,p,\varpi,\beta  } \varepsilon^a
e^{C_{\varpi, \beta ,p}t^2} \int_0^t  |   e^{-\varpi \langle v\rangle s} \alpha ^{ \beta  } \partial f^{m }(s)
|_{\gamma_+,p}^p  \mathrm{d} s,
\end{split}
\end{equation}
where we used  $  [e^{-\varpi \langle v\rangle s} \alpha(x,v)]\leq  |\nabla \xi (x)\cdot v|^{2}\lesssim_\Omega |n(x)\cdot v|^{2}$ and, for $\beta > \frac{p-2}{2p}$ ($2\beta p -p +1>-1$),
\begin{equation*}
\begin{split}
  &[e^{-\varpi \langle v\rangle s} \alpha(x,v)^{\beta}]^{ p} \Big(|n\cdot v| + \frac{\langle v\rangle^p}{|n\cdot v|^{p-1}}\Big)\sqrt{\mu}^p\\
   &\lesssim_{\Omega} \Big(|n(x)\cdot v|^{1+2\beta p} + \langle v\rangle^p |n(x)\cdot v|^{ 2\beta p- p+1}\Big) \sqrt{\mu(v)}^{p} \in L^1(\{v\in\mathbb{R}^3\}),
\end{split}
\end{equation*}
and, here, $a>0$ is determined via
, with $\frac{p-1}{p}=\frac{1}{q}$,
\begin{equation*}
\begin{split}
&\int_{ \gamma_+^\varepsilon}  [e^{- \frac{\varpi}{\beta} \langle u\rangle s} \alpha(x,u)]^{-\frac{ \beta p }{p-1}} \mu^{%
\frac{p}{4(p-1)}} \{n  \cdot u\} \mathrm{d} u\\
& \lesssim_{\Omega} \int_{  \gamma_+^\varepsilon}
\Big[e^{-\frac{\frac{\varpi}{\beta}\langle u \rangle s}{2}}
|u\cdot \nabla \xi(x)|\Big]^{-\frac{ \beta  p }{p-1}} e^{-\frac{p}{4(p-1)}%
|u|^2} 
|n  \cdot u|\mathrm{d} u\\
& \lesssim_{\Omega} \int_{
\gamma_+^\varepsilon}|u\cdot n|^{1-\frac{ \beta  p }{p-1}} e^{\frac{\varpi
   }{2(p-1)}\langle u \rangle s} e^{-\frac{p}{4(p-1)}|u|^2}
\mathrm{d}
u  \\
&\lesssim_{\Omega} e^{C_{\varpi, \beta ,p}s^2}\int_{
\gamma_+^\varepsilon} |u\cdot n|^{1-\frac{ \beta p }{p-1}} e^{-\frac{p}{%
8(p-1)}|u|^2} \mathrm{d} u   \\
& \lesssim_{\Omega,p} \varepsilon^a e^{C_{\varpi,\beta
,p}t^2},
\end{split}
\end{equation*}
\noindent for some $a>0$ since $1-\frac{ 2\beta p }{p-1}>-1$.

On the other
hand, for the non-grazing contribution $\gamma_+\backslash \gamma_+^\varepsilon$, we use a similar estimate to get
\begin{equation}\notag
\begin{split}
&\int_0^t \int_{\gamma_-}
  [e^{-\varpi \langle v\rangle s} \alpha(x,v)^{\beta}]^{  p} \sqrt{\mu}^p \Big(1+\frac{\langle v\rangle }{|n(x)\cdot v|}\Big)^p \left[
\int_{\gamma_+\backslash \gamma_{+}^\varepsilon}
|\partial f^m(s,x,u)| \mu(u)^{1/4} \{n(x)\cdot u\} \mathrm{d} u
\right]^p
\mathrm{d} \gamma \mathrm{d} s\\
&\lesssim_{\Omega} \int_0^t
\int_{\partial\Omega}
\int_{\mathbb{R}^3}
  [e^{-\varpi \langle v\rangle s} \alpha(x,v)^{\beta}]^{  p} 
\Big(|n\cdot v|+ \frac{\langle v\rangle ^p}{|n\cdot v|^{p-1}}\Big)\sqrt{\mu}^p\\
&  \ \ \  \times
\left[\int_{\gamma_+\backslash \gamma_+^\varepsilon}   e^{-\varpi \langle v\rangle s} \alpha(x,v)^\beta |\partial f^m(s,x,u)|\{n\cdot u\}^{1/p} \frac{\{n\cdot u\}^{1/q}\mu(u)^{1/4}}{  [e^{-\varpi \langle u\rangle s} \alpha(x,u)]^\beta} \mathrm{d} u\right]^p
\mathrm{d} v
\mathrm{d} S_x
 \mathrm{d} s\\
&\lesssim_{\Omega} \int_0^t \int_{\gamma_-}
  [e^{-\varpi \langle v\rangle s} \alpha(x,v)^{\beta}]^{ p} \Big(|n\cdot v|+\frac{\langle v\rangle^p}{|n\cdot v|^{p-1}}\Big)\sqrt{\mu}^p\\
  &  \ \ \  \times\left[\int_{\gamma_+\backslash \gamma_+^\varepsilon}     [e^{-\varpi \langle u\rangle s} \alpha(x,u)^{\beta}]^{ p} |\partial f^m|^p \{n\cdot u\}\mathrm{d} u\right]\left[\int_{\gamma_+}    [e^{-\varpi \langle u\rangle s} \alpha(x,u)^{\beta}] ^{- q} \mu^{q/4}\{n\cdot u\} \mathrm{d} u\right]^{p/q}
\mathrm{d} v \mathrm{d} S_x\mathrm{d} s\\
& \lesssim_\Omega e^{C_{\varpi,\beta,p}t^2} \int_0^t \int_{\gamma_+ \backslash \gamma_+^\varepsilon}    [e^{-\varpi \langle u\rangle s} \alpha(x,u)^{\beta}]^{ p} |\partial f^m(s)|^p \mathrm{d} \gamma \mathrm{d} s,
\end{split}
\end{equation}
where we used $\frac{p-2}{2p}< \beta < \frac{p-1}{2p}$ and
\begin{equation}\notag
\begin{split}
&\int_{\gamma_+}    [e^{-\varpi \langle u\rangle s} \alpha(x,u)^{\beta}]^{-  q} \mu(u)^{q/4} \{n(x)\cdot u\} \mathrm{d} u\\
& = \int_{\gamma_+}    [e^{-\varpi \langle u\rangle s} \alpha(x,u)^{\beta}]^{-\frac{  p}{p-1}} \mu(u)^{\frac{p}{4(p-1)}} \{n\cdot u\} \mathrm{d} u \lesssim_{\Omega,p} e^{C_{\varpi,\beta,p}t^2}.
\end{split}
\end{equation}
By Lemma \ref{le:ukai} and (\ref{%
abstract_w_p}), and (\ref{abstract_dgamma}) the non-grazing part is further bounded by
\begin{equation}%
\label{ab_w_non_graz}
\begin{split}
&\int_0^t \int_{\gamma_+\setminus
\gamma_+^\varepsilon} \\
&\lesssim_\varepsilon
 \int_0^t||  \alpha^\beta
\partial f_0||_p^p+\int_0^t ||  e^{-\varpi \langle v\rangle s}\alpha^\beta    \partial f^{m}||_p^p+
\int_0^t \iint_{\Omega\times\mathbb{R}^3} | \mathcal{G}^{m}  | [ e^{-\varpi \langle v\rangle s}\alpha^\beta]^{p} |\partial f^m|^{p-1}  \notag\\
&\lesssim  
\int_0^t||  \alpha^\beta   \partial f_0||_p^p+ \int_0^t || e^{-\varpi \langle v\rangle s}\alpha^\beta
\partial f^{m}||_p^p   +  t \sup_{0 \leq s\leq t}
||   e^{-\varpi \langle v\rangle s}\alpha^\beta   \partial f^m(s)||_p^p + (1+t)  {P}(  ||  e^{\theta|v|^2} f_0||_\infty )\\
& \ \ +  C t e^{C_{\varpi , \beta
,p}t^2}  {P}(  ||  e^{\theta |v|^2} f_{0}  ||_\infty  )
\sup_{0 \leq s \leq t}  \iint_{\Omega\times\mathbb{R}^3} | e^{-\varpi \langle v\rangle s} \alpha^
\beta  \partial f^m|^p 
\\
& \ \ + (\delta+ \varepsilon)  {P}( || e^{\theta|v|^2} f_{0}  ||_\infty ) 
\max_{i=m,m+1} \int_0^t
\iint_{\Omega\times\mathbb{R}^3} \langle v \rangle | e^{-\varpi \langle v\rangle s} \alpha^ \beta  \partial
f^i|^p.
\end{split}
\end{equation}%

\noindent In summary, the boundary contribution of (\ref{abstract_green}) is
controlled by, for all $0 \leq t\leq T ,$
\begin{equation}
\begin{split}%
\label{abstract_boundary}
 &\int_0^t |    e^{-\varpi \langle v\rangle s} \alpha ^\beta   \partial f^m
(s)|_{\gamma_-,p}^p \mathrm{d} s\\
 \lesssim& \  \int_0^{T }||      \alpha(v)^\beta   \partial
f_0||_p^p+\varepsilon^a \int_0^{T } |    e^{-\varpi \langle v\rangle s} \alpha  ^\beta  \partial
f^m|_{\gamma_+,p}^p\\
& +  T  \max_{i=m-1,m} \sup_{0 \leq t \leq
T_*} ||     e^{-\varpi \langle v\rangle t} \alpha  ^\beta   \partial f^i (t)||_p^p +   {P}(||  
e^{\theta|v|^2}f_0||_\infty) 
\notag\\ 
& +  C t e^{C_{\varpi , \beta
,p}t^2}  {P}(  ||  e^{\theta |v|^2} f_{0}  ||_\infty  )
\sup_{0 \leq s \leq t}  \iint_{\Omega\times\mathbb{R}^3} | e^{-\varpi \langle v\rangle s} \alpha^
\beta  \partial f^m|^p 
\\
& + (\delta+ \varepsilon)  {P}( || e^{\theta|v|^2} f_{0}  ||_\infty ) 
\max_{i=m,m+1} \int_0^t
\iint_{\Omega\times\mathbb{R}^3} \langle v \rangle | e^{-\varpi \langle v\rangle s} \alpha^ \beta  \partial
f^i|^p.
\end{split}
\end{equation}

\noindent Finally we collect the terms to deduce 
\begin{equation*}
\begin{split}
&\sup_{0 \leq t \leq T }||    e^{-\varpi \langle v\rangle t} \alpha ^\beta   \partial
f^{m+1} (t)||_p^p  + \int_0^{T } ||\langle v\rangle ^{1/p}
   e^{-\varpi \langle v\rangle s} \alpha ^\beta    \partial f^{m+1}||_p^p \\
   &  + \int_0^{T } |    e^{-\varpi \langle v\rangle s} \alpha ^\beta  \partial
f^{m+1}|_{\gamma_+,p}^p \mathrm{d}s  \\
\leq & \ C_{T ,\Omega}\big\{||
  \alpha  ^{ \beta   } \partial f_0||_p^p +  {P} (||  e^{\theta|v|^2}f_0||_\infty)
\big\}+\big\{\varepsilon +\delta  +T 
e^{C_{\varpi, \beta ,p}(T )^2}\big\}  {P}(||  e^{\theta|v|^2}f_0||_\infty )  \\
&\times \max_{i=m,m-1} \Big\{
\sup_{0 \leq t \leq T }|| \alpha  ^\beta    \partial f^{i}(t)||_p^p  +
\int_0^{T } |    e^{-\varpi \langle v\rangle s} \alpha  ^\beta   \partial f^{i}|_{\gamma_+,p}^p 
+ \int_0^{T } || \langle v\rangle ^{1/p}     e^{-\varpi \langle v\rangle t} \alpha  ^\beta   \partial f^{i}||_p^p
\Big\}.
\end{split}
\end{equation*}
Recall $C_{\varpi,\beta  ,p}$ from (\ref{Kd}). Choose $0<T \ll
1,$ and $0<\varepsilon\ll 1, \ 0<\delta \ll 1
$ and hence
\begin{equation*}
\begin{split}
& \sup_{0 \leq t \leq T }|| e^{-\varpi \langle v\rangle t} \alpha  ^\beta
\partial f^{m+1}(t)||_p^p  + \int_0^{T } |    e^{-\varpi \langle v\rangle t} \alpha  ^\beta    \partial
f^{m+1}|_{\gamma_+,p}^p    \\
\leq  & \ C_{T ,\Omega}\big\{   ||
 \alpha ^{\beta    } \partial f_0||_p^p +  {P} (|| e^{\theta|v|^2}f_0||_\infty)\}
 \\
& 
 +  \frac{1}{8}\max_{i=m,m-1} \Big\{
 \sup_{0 \leq t \leq
T }||   e^{-\varpi \langle v\rangle t} \alpha ^\beta   \partial f^{i}(t)||_p^p  + \int_0^{T } |
  e^{-\varpi \langle v\rangle t} \alpha ^\beta  \partial f^{i}|_{\gamma_+,p}^p  
\Big\}.
\end{split}
\end{equation*}
Set 
\begin{eqnarray*}
a_i&=& \sup_{0 \leq t \leq
T_*}||    e^{-\varpi \langle v\rangle t} \alpha  ^\beta  \partial f^{m+1}(t)||_p^p  + \int_0^{T } |
   e^{-\varpi \langle v\rangle t} \alpha ^\beta    \partial f^{m+1}|_{\gamma_+,p}^p  ,\\
D&=&C_{T ,\Omega}\big\{||   \alpha  ^{ \beta } \partial
f_0||_p^p + {P} (||  e^{\theta|v|^2}f_0||_\infty ) \big\}.
\end{eqnarray*}
Apply (\ref{aAD}) with $k=2$ to complete the proof.\end{proof}

\vspace{4pt}

\noindent{ \large{4.3.  Weighted $\mathbf{C^1}$ {Estimate}}}

\vspace{8pt}

We start with the same iterative sequences (\ref{abstract_w_p}) with $%
\beta =\frac{1}{2}$. 
For $(x,v)\in \gamma ,$ note that $\sqrt{\alpha(x,v)}=|n(x)\cdot v|.$ Recall $\mathcal{G}$ in (\ref{G_m}). We define
\begin{equation}\label{N}
\mathcal{N}^{m} (t,x,v): = e^{- \varpi \langle v\rangle t} \sqrt{\alpha(x,v)} \mathcal{G}^{m}(t,x,v).
\end{equation}
From (\ref{dmboundary})
with $\beta =\frac{1}{2},$ we have, for $(x,v)\in\gamma_-,$
\begin{equation}\label{boundaryC1}
\begin{split}
& e^{-\varpi \langle v\rangle t}| \sqrt{ \alpha(x,v)}\partial f^{m+1}(t,x,v)|\\
& \lesssim \ \langle v\rangle c_{\mu }%
\sqrt{\mu (v)}\int_{n(x)\cdot u>0}   e^{-\varpi \langle u\rangle t} \sqrt{ \alpha(x,u)} |\partial
f^{m}(t,x,u)|e^{ {\varpi\langle u\rangle } t}\langle
u\rangle \sqrt{\mu (u)}\mathrm{d}u \\
& \ \ +e^{- {\frac{\theta}{4} } |v|^{2}} {P}(||  e^{\theta |v|^{2}}f_0||_{\infty
} ).
\end{split}%
\end{equation}%
%
%
%

Recall the stochastic cycles in Definition \ref{cycles} : For $(t,x,v)$ with $%
(x,v)\notin \gamma_0$ and let $(t^0,x^0,v^0)=(t,x,v)$. For $v^\ell \cdot
n(x^{\ell+1})>0$ we define the $(\ell+1)-$component of the back-time cycle as
\begin{equation}\notag
(t^{\ell+1},x^{\ell+1},v^{\ell+1}) = (t^k-t_{\mathbf{b}}(x^\ell,v^\ell),x_{\mathbf{b}%
}(x^\ell,v^\ell),v^{\ell+1}).  \label{stochastic}
\end{equation}

\begin{lemma}
\label{iteration} If $t^{1}<0$ then
\begin{equation}
| e^{-\varpi \langle v\rangle t} {\alpha(x,v)}^{1/2}  \partial f^{m+1}(t,x,v)|\lesssim \ ||    \alpha(x,v)^{1/2} \partial
f_{0}||_{\infty }+\int_{0}^{t}|\mathcal{N}^{m}(s,x-(t-s)v,v)|\mathrm{d}s.
\label{t1<0}
\end{equation}

If $t^1 > 0$ then
\begin{equation}
\begin{split}  \label{t1geq0}
& | e^{-\varpi \langle v\rangle t} \alpha(x,v)^{1/2} \partial f^{m+1}(t,x,v)| \\
\lesssim& \ \int_{t^{1}}^t |\mathcal{N}^m(s,x-(t-s)v,v)| \mathrm{d} s + e^{-%
\frac{\theta}{4}|v|^2} {P}(||  e^{\theta |v|^2} f_0||_{\infty}) \\
&+ \frac{1}{{w}(v)} \int_{\prod_{j=1}^{\ell-1}\mathcal{V}_j} \sum_{i=1}^{\ell-1}
1_{\{t^{\ell+1} < 0 < t^{\ell}\}} \ |   { \alpha }^{1/2} \partial f^{m+1-i}(0,x^{i}-t^{i}
v^{i},v^{i}) | \ \mathrm{d} \Sigma_i^{\ell-1} \\
&+ \frac{1}{w(v)} \int_{\prod_{j=1}^{\ell-1}\mathcal{V}_j} \sum_{i=1}^{\ell-1}
1_{\{t^{i+1} < 0 < t^i\}} \int_{0} ^{t^i} |\mathcal{N}%
^{m-i}(s,x^i-(t^i-s)v^i,v^i)| \ \mathrm{d} s \ \mathrm{d} \Sigma_i^{\ell-1} \\
&+ \frac{1}{w(v)} \int_{\prod_{j=1}^{\ell-1}\mathcal{V}_j} \sum_{i=1}^{\ell-1}
1_{\{t^{i+1} < 0 \}} \int_{t^{i+1}}^{t^i} |\mathcal{N}%
^{m-i}(s,x^i-(t^i-s)v^i,v^i)| \ \mathrm{d} s \ \mathrm{d}\Sigma_i^{\ell-1} \\
&+ \frac{1}{w(v)}\int_{\prod_{j=1}^{\ell-1}\mathcal{V}_j} \sum_{i=2}^{\ell-1}
1_{\{t^{i-1} < 0 \}} e^{-\frac{\theta}{4}|v^{i-1}|^2}   {P}(||  e^{\theta |v|^2}
f_0||_{\infty})  \ \mathrm{d%
}\Sigma_{i-1}^{\ell-1} \\
&+ \frac{1}{w(v)} \int_{\prod_{j=1}^{\ell-1}\mathcal{V}_j} \mathbf{1}%
_{\{t^{\ell}>0 \}} | e^{-\varpi \langle v^{\ell-1}  \rangle t^{\ell}} \alpha(x^{\ell}, v^{\ell-1})^{1/2}\partial f^{ m+1 -\ell}(t^{\ell},x^{\ell}, v^{\ell-1})| \
\mathrm{d}\Sigma_{\ell-1}^{\ell-1},
\end{split}%
\end{equation}
where $\mathcal{V}_j = \{v^j \in \mathbb{R}^3 : n(x^j)\cdot v^j >0\}$ and
\begin{equation}\notag
{w}(v)=\frac{c_\mu}{\langle v\rangle\sqrt{\mu(v)}},  \label{w}
\end{equation}
and
\begin{equation*}
\mathrm{d}\Sigma_i^{\ell-1} 
 =  \Big\{\Pi_{j=i+1}^{\ell-1}
\mu(v^j)c_\mu|n(x^j)\cdot v^j| \mathrm{d} v^j \Big\}
 \Big\{ {w}(v^i)e^{ {%
\varpi\langle v^i \rangle} t^i} \langle v^i \rangle^2 c_\mu \mu(v^i) \mathrm{d} v^i %
\Big\}
 \Big\{ \Pi_{j=1}^{i-1} e^{ {\varpi\langle v^j \rangle} t^j} \langle
v^j \rangle^2 c_\mu \mu(v^j) \mathrm{d} v^j \Big\}.
\end{equation*}
\end{lemma}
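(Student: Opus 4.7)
\medskip

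\noindent\textbf{Proof proposal for Lemma \ref{iteration}.} Write $h^{m+1}(s,y,v):=e^{-\varpi\langle v\rangle s}\sqrt{\alpha(y,v)}\partial f^{m+1}(s,y,v)$. From (\ref{abstract_w_p}) with $\beta=1/2$, $h^{m+1}$ satisfies a transport equation $[\partial_t+v\cdot\nabla_x+\nu_{\varpi,1/2}+\nu(\sqrt{\mu}f^m)]h^{m+1}=e^{-\varpi\langle v\rangle t}\sqrt{\alpha}\,\mathcal{G}^m$, whose damping coefficient is nonnegative for $\varpi\gg_\Omega 1$, so that the integrating factor $\exp(-\int\nu_{\varpi,1/2}+\nu)\leq 1$ contributes only harmlessly. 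The plan is to run Duhamel along each piece of the stochastic cycle, and unfold the boundary values produced at each bounce via the diffuse boundary representation (\ref{boundaryC1}), stopping the unfolding after at most $\ell-1$ iterations.

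\medskip

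If $t^1<0$, the backward ray $s\mapsto x-(t-s)v$ never leaves $\bar\Omega$ on $[0,t]$, so Duhamel along this single segment immediately yields (\ref{t1<0}), with the initial-data contribution $\sqrt{\alpha(x-tv,v)}\partial f_0$ bounded by $\|\alpha^{1/2}\partial f_0\|_\infty$ and the source term $\int_0^t|\mathcal{N}^m(s,x-(t-s)v,v)|\,\mathrm{d}s$.

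\medskip

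The substantive case is $t^1>0$. The iteration alternates three moves, indexed by $i=1,2,\dots,\ell-1$. First, I integrate Duhamel from $t^{i+1}$ (or from $0$ if $t^{i+1}<0$) up to $t^i$ along the segment $\{x^i-(t^i-s)v^i\}$ of the stochastic cycle; this produces either the source integral $\int_{t^{i+1}}^{t^i}|\mathcal{N}^{m-i}|$ when the segment ends at the next boundary, or the initial contribution $|\alpha^{1/2}\partial f^{m+1-i}(0,x^i-t^iv^i,v^i)|+\int_0^{t^i}|\mathcal{N}^{m-i}|$ when $t^{i+1}<0<t^i$ — these account respectively for the third and (first+second) sums of (\ref{t1geq0}). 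Second, at every boundary point $(t^i,x^i,v^{i-1})$ reached during the iteration I apply the diffuse bound (\ref{boundaryC1}). Exploiting that $\sqrt{\alpha(x^i,v^{i-1})}=|n(x^i)\cdot v^{i-1}|$ collapses the factor $\sqrt{\alpha}(1+\langle v^{i-1}\rangle/|n\cdot v^{i-1}|)$ to $\lesssim\langle v^{i-1}\rangle$ (this is the key point where the $1/|n\cdot v|$ singularity is tamed), the boundary value is rewritten as $\frac{1}{w(v^{i-1})}\int_{\mathcal{V}_i}|h^{m-i}(t^i,x^i,v^i)|\,e^{\varpi\langle v^i\rangle t^i}\langle v^i\rangle\sqrt{\mu(v^i)}\,\mathrm{d}v^i$, plus the tail $e^{-\theta|v^{i-1}|^2/4}P$ which accumulates into the fifth sum of (\ref{t1geq0}). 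Third, I continue the backward cycle from $(t^i,x^i,v^i)$ by computing $t^{i+1}=t^i-t_{\mathbf{b}}(x^i,v^i)$ and $x^{i+1}=x_{\mathbf{b}}(x^i,v^i)$.

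\medskip

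Iterating this loop $\ell-1$ times and collecting the four families of stopped contributions (initial-data, two source-term types, and the $e^{-\theta|v|^2/4}P$ tail), together with the residual $\mathbf{1}_{\{t^\ell>0\}}|h^{m+1-\ell}(t^\ell,x^\ell,v^{\ell-1})|$ that survives all $\ell-1$ unfoldings, produces the expansion (\ref{t1geq0}). The measure $\mathrm{d}\Sigma_i^{\ell-1}$ records the accumulated weights: for $j<i$ the factor $e^{\varpi\langle v^j\rangle t^j}\langle v^j\rangle^2 c_\mu\mu(v^j)\,\mathrm{d}v^j$ is what (\ref{boundaryC1}) contributes at each previously-unfolded bounce; at $j=i$ the extra factor $w(v^i)$ appears because the outgoing velocity at the stopping step is no longer followed by another boundary evaluation, so its $\langle v^i\rangle\sqrt{\mu(v^i)}$ pairing is different from the preceding bounces; and for $j>i$ one artificially inserts the normalized diffuse probability measure $\mu(v^j)c_\mu|n(x^j)\cdot v^j|\,\mathrm{d}v^j$ (which integrates to $1$ by the definition of $c_\mu$) so that every summand is written uniformly as an integral over $\prod_{j=1}^{\ell-1}\mathcal{V}_j$.

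\medskip

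The hardest piece of the argument is purely combinatorial: tracking the bookkeeping of the exponential weights $e^{\varpi\langle v^j\rangle t^j}$, the Gaussian weights $\sqrt{\mu(v^j)}$, and the $\langle v^j\rangle$ factors through each of the three moves, and verifying that the special index $i$ indeed carries the $w(v^i)$ weight while $j\neq i$ carries its other natural weight. Once the inductive hypothesis is correctly formulated for $\mathrm{d}\Sigma_i^{\ell-1}$ at fixed $\ell$ and $i$, each pass through moves (a)–(c) either preserves the hypothesis or terminates the iteration in one of the four stopped forms, and (\ref{t1geq0}) follows by straightforward induction on the number of completed bounces.
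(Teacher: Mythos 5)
Your proposal follows essentially the same approach as the paper: Duhamel along each characteristic segment, the diffuse boundary representation (\ref{boundaryC1}) unfolded at each bounce (with the crucial observation that $\sqrt{\alpha(x,v)}=|n(x)\cdot v|$ on $\partial\Omega$ kills the $1/|n\cdot v|$ singularity), and induction on the number of completed bounces, with $\mathrm{d}\Sigma_i^{\ell-1}$ recording the accumulated weights. Your reading of the measure structure — the $w(v^i)$ weight at the stopping index and the artificially completed probability measures $\mu(v^j)c_\mu|n\cdot v^j|\,\mathrm{d}v^j$ for $j>i$ — correctly explains the bookkeeping, and your choice of $\beta=1/2$ in (\ref{abstract_w_p}) and the notation $\nu_{\varpi,1/2}$ is the intended one (the paper writes $\beta=1$ by an apparent slip).
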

Remark that $\mathrm{d}\Sigma _{i}^{\ell-1}$ is not a probability measure!
\begin{proof}
For $t^1<0$ we use (\ref{abstract_w_p}) with $\beta=1$ to obtain
\begin{equation*}
 e^{-\varpi \langle v\rangle t} \alpha(x,v)^{1/2}  \partial f^{m+1}(t,x,v) \lesssim   \alpha(x-tv,v)^{1/2} \partial f_0(x-tv,v) + \int_0^t e^{-\nu_{\varpi,1}(v)(t-s)} \mathcal{N}^m(s,x-(t-s)v,v) \mathrm{d} s.
\end{equation*}
Consider the case of $t^1>0$. We prove by the induction on $\ell$, the number of iterations. First for $\ell=1$, along the characteristics, for $t^1 > 0$, we have
\begin{eqnarray*}
&&e^{-\varpi \langle v\rangle t} \alpha^{1/2} \partial f^{m+1}(t,x,v)\\
&\lesssim& e^{-\nu_{\varpi,1}(t-t_1)}  e^{-\varpi \langle v\rangle t^{1}} \alpha^{1/2} \partial f^{m+1}(t^1,x^1,v)  + \int_{t^{1}}^t
e^{ -\nu_{\varpi,1}(t-s) } \mathcal{N}^m(s,x-(t-s)v,v) \mathrm{d} s.
\end{eqnarray*}
Now we apply (\ref{boundaryC1}) to the first term above to further estimate
\begin{equation}
\begin{split}\label{1}
 &e^{-\varpi \langle v\rangle t} \alpha^{1/2} |\partial f^{m+1}(t,x,v)|\\
  &\lesssim \ e^{-\nu_{\varpi,1}(v)(t-t^1)} e^{-\frac{\theta}{4}|v|^2}P( || e^{\theta|v|^2}f_0||_{\infty}  )  + \int_{t^1}^t e^{-\nu_{\varpi,1}(v)(t-s)} |\mathcal{N}^m (s,x-(t-s)v,v)| \mathrm{d} s \\
& \ \ + e^{-\nu_{\varpi,1}(v)(t-t^1)} \langle v\rangle c_\mu \sqrt{\mu(v)} \int_{\mathcal{V}_{1}} e^{-\varpi \langle v^{1} \rangle t^{1}} \alpha^{1/2} | \partial  f^m(t^1,x^1,v^1)| e^{ {\varpi\langle v^1 \rangle} t^1} \langle v^1 \rangle \sqrt{\mu(v^1)} \mathrm{d} v^1\\ 
& \lesssim \ e^{-\frac{\theta}{4}|v|^2} P(|| e^{\theta|v|^2}f_0||_{\infty}) + \int_{t^1}^t   |\mathcal{N}^m (s,x-(t-s)v,v)| \\
& \ \ +\frac{c_\mu}{w(v)} \int_{\mathcal{V}_1}  e^{-\varpi \langle v^{1} \rangle t^{1}} \alpha^{1/2} | \partial f^m(t^1,x^1,v^1)| e^{ {\varpi\langle v^1 \rangle} t^1}w(v^1) \langle v^1 \rangle^2 \mu(v^1) \mathrm{d} v^1 ,
\end{split}
\end{equation}
where $w(v)$ is defined in (\ref{w}). Now we continue to express $\partial f^m(t^1,x^1,v^1)$ via backward trajectory to get
\begin{equation*}
\begin{split}\label{m}
&  e^{-\varpi \langle v^{1} \rangle t^{1}} \alpha(x^{1},v^{1})^{1/2}  | \partial f^m(t^1,x^1,v^1)|\\
& \leq   \    \mathbf{1}_{\{t^2 < 0 < t^1 \}}\Big\{ \alpha^{1/2} | \partial f^m (0,x^1-t^1v^1,v^1) |
+ \int_0^{t^1} |\mathcal{N}^{m-1}(s,x^1-(t^1-s)v^1,v^1)| \mathrm{d} s
\Big\} \\
 & \ \  + \mathbf{1}_{\{t^2 >0\}} \Big\{  e^{- \varpi \langle v^{1} \rangle t^{2}}
 \alpha^{1/2} |\partial f^m(t^2,x^2,v^1)| + \int_{t^2}^{t^1} |\mathcal{N}^{m-1}(s,x^1-(t^1-s)v^1,v^1) |\mathrm{d} s
\Big\}.
\end{split}
\end{equation*}
Therefore we conclude from (\ref{1}) that
\begin{eqnarray*}
\begin{split}
 & e^{-\varpi \langle v \rangle t } \alpha(x ,v )^{1/2}  |\partial f^{m+1}(t,x,v)|\\
\lesssim &\int_{t^1}^t   |\mathcal{N}^m (s,x-(t-s)v,v)| \mathrm{d} s+ e^{-\frac{\theta}{4}|v|^2} P(||      e^{\theta|v|^2}f_{0}||_{\infty})  \\
&+  \frac{1}{w(v)} \int_{\mathcal{V}_1} \mathbf{1}_{\{t^2  < 0 < t^1\}} \alpha(x^{1}-t^{1}v^{1},v^{1})^{1/2} | \partial f_{0} (x^1-t^1 v^1,v^1) | e^{ {\varpi \langle v^1 \rangle } t^1} w(v^1) \langle v^1 \rangle^2 c_\mu \mu(v^1) \mathrm{d} v^1\\
&+  \frac{1}{w(v)} \int_{\mathcal{V}_1} \mathbf{1}_{\{t^2 < 0 < t^1\}} \int_0^{t^1} |\mathcal{N}^{m-1}(s,x^1-(t^1-s)v^1,v^1)| \mathrm{d} s e^{ {\varpi \langle v^1 \rangle } t^1} w(v^1) \langle v^1 \rangle^2 c_\mu\mu(v^1) \mathrm{d} v^1\\
&+  \frac{1}{w(v)} \int_{\mathcal{V}_1} \mathbf{1}_{\{t^2 >0\}} \int_{t^2}^{t^1} |\mathcal{N}^{m-1}(s,x^1-(t^1-s)v^1,v^1)| \mathrm{d} s e^{ {\varpi \langle v^1 \rangle } t^1} w(v^1) \langle v^1 \rangle^2 c_\mu\mu(v^1) \mathrm{d} v^1\\
&+  \frac{1}{w(v)} \int_{\mathcal{V}_1} \mathbf{1}_{\{t^2 > 0 \}}  e^{-\varpi \langle v^{1} \rangle t^{2}} \alpha(x^{2},v^{1})^{1/2}  | \partial  f^m(t^2,x^2,v^1) | e^{ {\varpi \langle v^1 \rangle } t^1} w(v^1) \langle v^1 \rangle^2 c_\mu \mu(v^1) \mathrm{d} v^1,
\end{split}
\end{eqnarray*}
and it equals (\ref{t1geq0}) for $\ell=2.$

Assume (\ref{t1geq0}) is valid for $\ell \in \mathbb{N}$. We use (\ref{boundaryC1}) and express the last term of (\ref{t1geq0}) as
\begin{equation}
\begin{split}\label{1_m}
& \mathbf{1}_{\{t^{\ell}>0\}}   e^{- \varpi \langle v^{\ell-1} \rangle t^{\ell}} \alpha(x^{\ell}, v^{\ell-1}) |\partial f^{m+1-k}(t^{\ell},x^{\ell},v^{\ell-1})|\\
& \lesssim \langle v^{\ell-1} \rangle c_\mu \sqrt{\mu(v^{\ell-1})}
\int_{\mathcal{V}_{\ell}} \mathbf{1}_{\{t^{\ell}>0\}} e^{-\varpi \langle v^{\ell} \rangle t^{\ell}} \alpha^{1/2}| \partial  f^{m+1-(k+1)}(t^{\ell},x^{\ell},v^{\ell})| e^{{\varpi \langle v^{\ell} \rangle }t^{\ell}} \langle v^{\ell} \rangle \sqrt{\mu(v^{\ell})} \mathrm{d} v^{\ell}\\
&  \ + e^{-\frac{\theta}{4}|v_{k-1}|^2} P( ||  e^{\theta|v|^2} f_0||_{\infty} ).
\end{split}
\end{equation}
Then we decompose $\mathbf{1}_{\{t^{\ell}>0\}} e^{-\varpi \langle v^{\ell}  \rangle t^{\ell}} \alpha^{1/2}| \partial f^{m+1-(\ell+1)}(t^{\ell},x^{\ell},v^{\ell})|= \mathbf{1}_{\{t^{\ell+1}< 0 < t^{\ell}\}} +\mathbf{1}_{\{t^{\ell+1}>0\}},$
where the first part hits the initial plane as
\begin{equation}\label{3}
\begin{split}
& \mathbf{1}_{\{t^{\ell+1} < 0 < t^{\ell}\}} e^{-\varpi \langle v^{\ell} \rangle t^{\ell}}\alpha^{1/2} | \partial  f^{m+1-(\ell+1)}(t^{\ell},x^{\ell},v^{\ell})|\\
 \lesssim & \ \alpha^{1/2} | \partial  f_0(x^{\ell}-t^{\ell}v^{\ell},v^{\ell})| + \int_0^{t^{\ell}} | \mathcal{N}^{m+1-(\ell+2)}(s,x^{\ell}-(t^{\ell}-s)v^{\ell},v^{\ell}) | \mathrm{d} s,
\end{split}
\end{equation}
and the second part hits at the boundary as
\begin{equation}\label{4}
\begin{split}
&\mathbf{1}_{\{t^{\ell+1}>0\}} e^{-\varpi \langle v\rangle t} \alpha^{1/2} | \partial  f^{m+1-(\ell+1)}(t^{\ell},x^{\ell},v^{\ell})|\\
\lesssim & \ e^{-\varpi \langle v^{\ell} \rangle t^{\ell+1}}\alpha^{1/2} | \partial  f^{m+1-(\ell+1) }(t^{\ell+1},x^{\ell+1},v^\ell)|\\
& + \int_{t^{\ell+1}}^{t^{\ell}} | \mathcal{N}^{ m+1-(\ell+2)} (s,x^{\ell}-(t^{\ell}-s)v^{\ell},v^{\ell}) | \mathrm{d} s.
 \end{split}
\end{equation}
To summarize, from (\ref{1_m}) upon integrating over $\prod_{j=1}^{\ell-1}\mathcal{V}_j$, we obtain a bound for the last term of (\ref{t1geq0}) as
\begin{equation*}
\begin{split}
&  \frac{1}{w(v)} \int_{\prod_{j=1}^{\ell-1}\mathcal{V}_j}
\mathbf{1}_{\{t^{\ell} >0\}} | e^{-\varpi \langle v^{\ell-1}  \rangle t^{\ell}} \alpha^{1/2} \partial f^{m+1-\ell}(t^{\ell},x^{\ell},v^{\ell-1})| \mathrm{d} \Sigma_{\ell-1}^{\ell-1}\\
&\lesssim P(||  e^{\theta|v|^2} f_0||_{\infty})  \frac{1}{w(v)} \int_{\prod_{j=1}^{\ell-1}\mathcal{V}_j} \mathbf{1}_{\{t^{\ell}>0\}} e^{-\frac{\theta}{4}|v^{\ell-1}|^2} \mathrm{d} \Sigma_{\ell-1}^{\ell-1}\\
&  \ +  \frac{1}{w(v)}\int_{\prod_{j=1}^{\ell}\mathcal{V}_j} \mathbf{1}_{\{t^{\ell} >0\}}  e^{-\varpi \langle v^{\ell} \rangle t^{\ell}} \sqrt{\alpha} |\partial f^{m+1-(\ell+1)}(t^{\ell},x^{\ell},v^{\ell})| \mathrm{d} \Sigma_\ell^\ell,
\end{split}
\end{equation*}
where by (\ref{3}) and (\ref{4}), the last term is bounded by
\begin{equation*}
\begin{split}
& \frac{1}{w(v)}\int_{\prod_{j=1}^{\ell}\mathcal{V}_j}
\langle v^{\ell-1}\rangle c_{\mu}\sqrt{\mu(v^{\ell-1})} \sqrt{\mu(v^{\ell})} \langle v^{\ell}\rangle e^{ {\varpi\langle v^{\ell}\rangle} t^{\ell}} \mathrm{d} v^{\ell}\\
&  \ \ \ \times
\ \prod_{j=1}^{\ell-2}\Big\{   e^{ {\varpi \langle v^{j} \rangle} t^{j}} \langle v^{j}\rangle^2 c_{\mu}  \mu(v^{j}) \mathrm{d} v^{j}\Big\} \Big\{w(v^{\ell-1}) e^{ {\varpi \langle v^{\ell-1}\rangle } t^{\ell-1}}\langle v^{\ell-1} \rangle^2\mu(v^{\ell-1})\mathrm{d} v^{\ell-1}\Big\}
\\
& \times\bigg\{ \mathbf{1}_{\{t^{\ell+1} < 0 < t^{\ell}\}}\Big[
\alpha^{1/2} |\partial f(0,x^{\ell}-t^{\ell}v^{\ell},v^{\ell})|  +   \int_0^{t^{\ell}}  | \mathcal{N}^{m-\ell-2}(s,x^{\ell}-(t^{\ell}-s)v^{\ell},v^{\ell})| \mathrm{d} s
\Big]\\
&  \ \ \ \ + \mathbf{1}_{\{t^{\ell+1}>0\}} \Big[
e^{-\varpi \langle v^{\ell} \rangle t^{\ell+1}}  \alpha^{1/2}|\partial f^{m-\ell-1}(t^{\ell+1},x^{\ell+1},v^\ell)| + \int_{t^{\ell+1}}^{t^{\ell}} |\mathcal{N}^{m-\ell-2}(s,x^{\ell}-(t^{\ell}-s)v^{\ell},v^{\ell}) |\mathrm{d} s
\Big]\bigg\}.
\end{split}
\end{equation*}
Now we use (\ref{w}) to conclude Lemma \ref{iteration}.
\end{proof}

\begin{lemma}
\label{tk} There exists $\ell_0( \varepsilon)>0$ such that for $\ell\geq \ell_0$ and
for all $(t,x,v) \in [0,1] \times \bar{\Omega} \times \mathbb{R}^3,$ we
have
\begin{eqnarray*}
\int_{\prod_{j=1}^{\ell-1}\mathcal{V}_j} \mathbf{1}_{\{t^{\ell}(t,x,v,v^{1},\cdots,
v^{\ell-1}) > 0\}} \mathrm{d}\Sigma_{\ell-1}^{\ell-1} & \lesssim_{\Omega} & \left(%
\frac{1}{2}\right)^{- {\ell}/5}.
\end{eqnarray*}
\end{lemma}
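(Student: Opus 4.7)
The plan is to carry out a pigeonhole/measure-splitting argument along the lines of Guo \cite{Guo10} and EGKM, adapted to the weighted measure $\mathrm{d}\Sigma_{\ell-1}^{\ell-1}$. (The exponent on the right-hand side is read as $(1/2)^{\ell/5}$, i.e.\ geometric decay.) The smallness will come from combining Gaussian decay for large velocities with a linear-in-$\varepsilon$ grazing estimate, while the number of ``slow'' bounces is controlled by a bounded-domain pigeonhole on the time constraint.

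First I would fix parameters $N\gg 1$ and $\varepsilon\ll 1$ (chosen at the end depending only on $\Omega$) and decompose each factor
\[
\mathcal{V}_j = \mathcal{V}_j^{\mathrm{b}} \cup \mathcal{V}_j^{\mathrm{g}}, \qquad \mathcal{V}_j^{\mathrm{g}} := \bigl\{ v^j \in \mathcal{V}_j :\ |v^j|\leq N,\ n(x^j)\cdot v^j \geq \varepsilon \bigr\}.
\]
On the good part, the already-established lower bound (see (\ref{40}) in the proof of Lemma \ref{lemma_nonlocal}(2)) gives
\[
t^{j} - t^{j+1} = t_{\mathbf{b}}(x^j, v^j) \gtrsim_{\Omega} \frac{|n(x^j)\cdot v^j|}{|v^j|^2} \gtrsim_{\Omega} \frac{\varepsilon}{N^2} =: c_{0}(N,\varepsilon).
\]
On the bad part, since $t \leq 1$ one has $e^{\varpi\langle v^j\rangle t^j}\mu(v^j) \lesssim e^{-|v^j|^2/4}$, and each of the three types of weight appearing in $\mathrm{d}\Sigma_{\ell-1}^{\ell-1}$ (namely $w(v^{\ell-1})e^{\varpi\langle v^{\ell-1}\rangle t^{\ell-1}}\langle v^{\ell-1}\rangle^2 c_\mu \mu(v^{\ell-1})$ for $j=\ell-1$, and $e^{\varpi\langle v^j\rangle t^j}\langle v^j\rangle^2 c_\mu\mu(v^j)$ for $j<\ell-1$) is uniformly dominated by a Gaussian density $C_\Omega \, e^{-|v^j|^2/8}$. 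Splitting $\mathcal{V}_j^{\mathrm{b}} \subset \{|v^j|\geq N\} \cup \{|n(x^j)\cdot v^j|\leq \varepsilon\}$ and using the standard estimates $\int_{|v|\geq N}e^{-|v|^2/8}\mathrm{d}v \lesssim e^{-N^2/16}$ and $\int_{|n\cdot v|\leq \varepsilon}e^{-|v|^2/8}\mathrm{d}v \lesssim \varepsilon$ then gives, uniformly in $x^j$,
\[
\int_{\mathcal{V}_j^{\mathrm{b}}} \bigl(\text{weight at index }j\bigr) \, \mathrm{d}v^j \ \leq \ \eta(N,\varepsilon),
\]
with $\eta(N,\varepsilon)\to 0$ as $N\to\infty$ and $\varepsilon\to 0$.

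Next I would invoke the pigeonhole on the time constraint $t^\ell > 0$ with $t\in[0,1]$: since
\[
1 \ \geq\ t - t^\ell \ =\ \sum_{j=1}^{\ell-1}\bigl(t^j - t^{j+1}\bigr) \ \geq\ c_0 \cdot \#\bigl\{j\in\{1,\ldots,\ell-1\}:\ v^j\in \mathcal{V}_j^{\mathrm{g}}\bigr\},
\]
on $\{t^\ell>0\}$ the number of good indices is at most $K := \lfloor 1/c_0 \rfloor$. Denoting by $I\subset\{1,\ldots,\ell-1\}$ the set of good indices and bounding the integrand by the crude constant $C_{N,\varepsilon}$ (coming from $w$ and $\langle v\rangle^2 e^{\varpi\langle v\rangle}$) on each good factor while using the bad-set estimate on each bad factor, I get
\[
\int_{\prod_j \mathcal{V}_j} \mathbf{1}_{\{t^\ell > 0\}}\, \mathrm{d}\Sigma_{\ell-1}^{\ell-1} \ \leq\ \sum_{k=0}^{K} \binom{\ell-1}{k} C_{N,\varepsilon}^{\, k}\, \eta(N,\varepsilon)^{\ell-1-k} \ \leq\ C_{N,\varepsilon,K}\, \ell^{K}\, \eta(N,\varepsilon)^{\ell-1-K}.
\]

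The main obstacle is to verify that the non-probability weights in $\mathrm{d}\Sigma_{\ell-1}^{\ell-1}$—in particular the factor $w(v^{\ell-1})$, which grows like $e^{|v^{\ell-1}|^2/4}$, together with the time-exponential $e^{\varpi\langle v^j\rangle t^j}$—are absorbed by the Maxwellian weights on the bad set, so that $\eta(N,\varepsilon)$ is genuinely small and independent of $\ell$. Once this is settled, fixing $\varepsilon$ small and then $N$ large so that $\eta(N,\varepsilon) \leq (1/2)^{1/4}$ and, say, $C_{N,\varepsilon}\,\eta(N,\varepsilon)^{-1} \leq (1/2)^{-1/20}$, the bound above is dominated by $(1/2)^{\ell/5}$ for all $\ell \geq \ell_0(\varepsilon)$, which is the claim.
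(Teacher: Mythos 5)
Your plan reproduces the paper's argument essentially step for step: bound $\mathrm{d}\Sigma_{\ell-1}^{\ell-1}$ above by a product of (fractional) Maxwellians, split each $\mathcal{V}_j$ into a grazing/large-velocity bad part of small Gaussian measure and a good part on which $t^j-t^{j+1}$ is bounded below, pigeonhole the time constraint $t^\ell>0$ with $t\le 1$ to limit the number of good bounces, and sum with binomial coefficients to get geometric decay. The only cosmetic difference is that you keep two parameters $(N,\varepsilon)$ where the paper uses a single $\delta$ (with $|v^j|\le \delta^{-1}$, $n(x^j)\cdot v^j\ge\delta$, giving $t^j-t^{j+1}\gtrsim\delta^3$ via Lemma 6 of \cite{Guo10}), and you correctly flag that the apparent growth $w(v^{\ell-1})\sim e^{|v^{\ell-1}|^2/4}$ is harmless because $w(v)\mu(v)\langle v\rangle^2 = c_\mu \langle v\rangle\sqrt{\mu(v)}$ is still Gaussian.
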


\begin{proof}
The proof is based on Lemma 23 of \cite{Guo10}. We note that, for some fixed constant $C_0>0,$
\begin{eqnarray*}
 d\Sigma_{\ell-1}^{\ell-1}
&\leq&
w(v^{\ell-1}) e^{ {\varpi \langle v^{\ell-1} \rangle } t^{\ell-1}} \langle v^{\ell-1} \rangle^2 c_{\mu}\mu(v^{\ell-1})
  \Pi_{j=1}^{\ell-2} e^{{\varpi \langle v^j \rangle}t^j} \langle v^j \rangle^2 c_{\mu}\mu(v^j) \mathrm{d} v^1 \dots
\mathrm{d} v^{\ell-1}\\
&\leq&\Pi_{j=1}^{\ell-1} \{C^\prime e^{C^\prime t^2} \mu(v^j)^\frac{1}{4}\} \ \mathrm{d} v^1 \dots
\mathrm{d} v^{\ell-1}
 \leq  \{ C_0\}^{\ell} \Pi_{j=1}^{\ell-1}\mu(v^j)^{\frac{1}{4}} \mathrm{d} v^j.
\end{eqnarray*}
Choose $\delta=\delta(C_0)>0$ small and define
\begin{eqnarray*}
\mathcal{V}_{j}^{\delta} \equiv \{v^j \in \mathcal{V}_j : v^j \cdot n(x^j)
\geq \delta , \ |v^j| \leq \delta^{-1}\},
\end{eqnarray*}
where we have
$\int_{\mathcal{V}_j \backslash \mathcal{V}_j^\delta} C_0 \mu(v^j)^{\frac{1}{4}} \lesssim
 \delta$ for some $C_0>0.$ Choose sufficiently small $\delta>0.$

On the other hand if $v^j \in \mathcal{V}_j^{\delta}$ then by Lemma 6 of \cite{Guo10}, $(t^j -t^{j+1})  \ \geq \  {\delta^3}/{C_\Omega}.$
Therefore if $t^{\ell}  \geq 0$ then there can be at most $\left\{\left[\frac{C_\Omega}{%
\delta^3}\right]+1\right\}$ numbers of $v^m \in \mathcal{V}_m^\delta$ for $1\leq m
\leq \ell-1$. Equivalently there are at least $\ell-2- \left[\frac{C_\Omega}{%
\delta^3}\right]$ numbers of $v^{m_i} \in \mathcal{V}_{m_i} \backslash \mathcal{V}_{m_i}^\delta$. Hence from $\{C_0\}^\ell = \{C_0\}^m \times \{C_0\}^{\ell-1-m}$, we have

\begin{eqnarray*}
&&\int_{\prod_{j=1}^{\ell-1}\mathcal{V}_j} 1_{\{t^{\ell}(t,x,v,v^{1},\cdots, v^{\ell-1})
> 0\}} d\Sigma_{\ell-1}^{\ell-1} \\
&\leq& \sum_{m=1}^{\left[\frac{C_\Omega}{\delta^3}\right]+1} \int_{\Big\{\begin{array}{ccc}{\text{there are exactly } m \text{ of } v_{m_i} \in \mathcal{V}_{m_i}^\delta} \\ {\small \text{ and } \ \ell-1-m \
of \ v_{m_i} \in \mathcal{V}_{m_i} \backslash \mathcal{V}_{m_i}^{\delta}}\end{array}\Big\}}
  \prod_{j=1}^{\ell-1} C_0 \mu(v^j)^{1/4} \mathrm{d} v^j \\
&\leq& \sum_{m=1}^{\left[\frac{C_\Omega}{\delta^3}\right]+1} \left(%
\begin{array}{ccc}
\ell-1  \\
m
\end{array}%
\right) \left\{ \int_{\mathcal{V}}C_0 \mu(v)^{1/4}\mathrm{d} v\right\}^{m} \left\{ \int_{%
\mathcal{V}\backslash \mathcal{V}^\delta}C_0 \mu(v)^{1/4}\mathrm{d} v\right\}^{\ell-1-m} \\
&\leq& \left(\left[\frac{C_\Omega}{\delta^3}\right]+1\right) \{\ell-1\}^{\left[%
\frac{C_\Omega}{\delta^3}\right]+1} \{ \delta\}^{\ell-2-\left[\frac{%
C_\Omega}{\delta^3}\right]} \left\{ \int_{\mathcal{V}} C_0\mu(v)^{1/4}
dv\right\}^{\left[\frac{C_\Omega}{\delta^3}\right]+1} \lesssim  \frac{\ell}{N} \{Ck\}^{\frac{\ell}{N}} \left(\frac{\ell}{N}\right)^{-\frac{N\ell}{10}} \\&\leq& \{CN\}^{\frac{\ell}{N}} \left(\frac{\ell}{N}\right)^{\frac{\ell}{N}} \left(\frac{\ell}{N}\right)^{-\frac{\ell}{N} \frac{N^2}{20}}
 \leq  \left(\frac{\ell}{N}\right)^{\frac{\ell}{N} \left(- \frac{N^2}{20} + 3\right) } \leq \left(\frac{1}{\ell/N}\right)^{\frac{-\frac{N^2}{20} + 3}{N}\ell} \leq \left(\frac{1}{2}\right)^{-\ell}
,
\end{eqnarray*}
where we have chosen $\ell=N \times \left(\left[\frac{C_\Omega}{\delta^3}\right]%
+1\right)$ and $N=\left(\left[\frac{C_\Omega}{\delta^3}\right]%
+1\right)   \gg C>1$.
\end{proof}


Now we are ready to prove the weighted $C^1$ part of the main theorem:
 \begin{proof}[\textbf{Proof of weighted $C^1$ part in Theorem \ref{weigh_W1p}}]

First we show $W^{1,\infty}$ estimate.
Recall that we use the same sequences (\ref{abstract_w_p}) with $\beta=\frac{1}{2}$ used for the weighted $W^{1,p}$ estimate $(2\leq p <\infty)$. We estimate along the stochastic cycles with (\ref{t1<0}) and (\ref{t1geq0}).  For $t^1<0$, the backward trajectory first hits $t=0$. From Lemma \ref{iteration} and Lemma \ref{lemma_operator} for (\ref{N}), we deduce
\begin{equation}\notag\label{est_c1_t1leq0}
\begin{split}
& \sup_{0 \leq t\leq T }|| \mathbf{1}_{\{t_1<0\}}  e^{-\varpi \langle v \rangle t}  \alpha^{1/2}\partial f^{m+1}(t) ||_\infty   \\
&\lesssim ||  \alpha^{1/2} \partial f_0 ||_{\infty} + {P} (||  e^{\theta|v|^2} f_0||_{\infty})  +   T  \sup_{0\leq t\leq T } ||  e^{-\varpi \langle v\rangle t} \alpha^{1/2}\partial f^{m+1}(t)||_{\infty}  \\
 & +
 \underbrace{ \int^{t}_{t^{1}}
\int_{\mathbb{R}^{3}} e^{-\varpi \langle v\rangle (t-s)}
 \frac{e^{-C_{\theta} |v-u|^{2}}}{|v-u|^{2-\kappa}}  \frac{  \alpha(x,v)^{1/2} }{  \alpha(x,u)^{1/2} }
\mathrm{d}u
 \mathrm{d}s}\times  {P} (|| e^{\theta|v|^2} f_0||_{\infty}) \max_{m} \sup_{0 \leq t \leq T_{*}} ||  e^{-\varpi \langle v \rangle t}  \alpha^{1/2}\partial f^{m+1}(t) ||_\infty,
\end{split}
 \end{equation}
where we have used (\ref{exponent}). Note that, for any $  \beta>\frac{1}{2}$,
\begin{equation}\label{avoid_onehalf}
\frac{1}{\alpha(x,u)^{1/2}} \lesssim \frac{1}{\alpha(x,u)^{\beta}} + 1
\end{equation}

We apply (\ref{nonlocal}) to bound the underbrace term as, for $1\geq \beta>\frac{1}{2}$, 
\begin{equation}\label{nonlocal_diffuse}
\begin{split}
&\Big\{\mathbf{1}_{|v|\lesssim 1} \frac{ \alpha(x,v)^{\frac{1}{2} + \frac{3}{4} -\frac{\beta}{2} } t_{Z}^{\frac{3}{2}-\beta} }{ |v|^{2\beta-1}}  
+ \mathbf{1}_{|v|\gtrsim 1} \frac{\varepsilon^{\frac{3}{2} -\beta  } \alpha(x,v)^{\frac{1}{2}}}{ |v|^{2} \alpha(x,v)^{\beta-1} }\Big\}    
+ \frac{ \alpha(x,v)^{\frac{1}{2}}}{  \varpi\langle  v\rangle \varepsilon^{2} \alpha(x,v)^{ \beta-\frac{1}{2}}}\\
&  \lesssim \ t^{\frac{3}{2}-\beta} + \varepsilon^{\frac{3}{2}-\beta}  + \frac{1}{\varepsilon^{2} \varpi},
\end{split}
\end{equation}
where we used $\alpha(x,v) \lesssim |v|^{2}.$
 
If $t^{1} (t,x,v) \geq 0$, the backward trajectory first hits the boundary, then from (\ref{t1geq0}) we have the following
line-by-line estimate
\begin{equation*}
\begin{split}
&| \mathbf{1}_{\{  t_{1} >0 \}}e^{-\varpi \langle v\rangle t} \alpha^{1/2}  \partial f^{m+1}(t,x,v)|\\
\leq& \ 
 \underbrace{ \int_{t^{1}}^{t} \int_{\mathbb{R}^{3}}e^{-\varpi \langle v\rangle (t-s)}  \frac{e^{-C_{\theta}|V_{\mathbf{cl}}(s)-u|^{2}  }}{|V_{\mathbf{cl}}(s)-u|^{2\kappa}} \frac{\alpha(X_{\mathbf{cl}}(s), V_{\mathbf{cl}}(s))^{\frac{1}{2}}}{ \alpha(X_{\mathbf{cl}}(s), u)^{\frac{1}{2}}} \mathrm{d}u \mathrm{d}s}
|| e^{-\varpi \langle v\rangle s}\alpha^{1/2} \partial f^{m}(s)||_{\infty} 
  \\
&   +  {P}( ||   e^{\theta|v|^{2}} f_{0}||_{\infty}  )+ \ell(C e^{Ct^{2}})^{\ell} \max_{1 \leq i \leq \ell-1} || \alpha^{\frac{1}{2}} \partial f_{0}^{m+1-i} ||_{\infty}
\\
&+  \ell(C e^{Ct^{2}})^{\ell} \langle v\rangle \sqrt{\mu(v)} \times
\underbrace{\max_{i}
\int_{0}^{t^{i}} \int_{\mathbb{R}^{3}} 
e^{-\varpi \langle v^{i} \rangle (t-s)}    \frac{e^{-C_{\theta}|V_{\mathbf{cl}}(s)-u|^{2}  }}{|V_{\mathbf{cl}}(s)-u|^{2\kappa}}
\frac{\alpha(X_{\mathbf{cl}}(s), V_{\mathbf{cl}}(s))^{\frac{1}{2}}}{\alpha(X_{\mathbf{cl}}(s),u)^{\frac{1}{2}}} \mathrm{d}u \mathrm{d}s}\\
& \ \ \times  \max_{1 \leq i \leq k-1} \sup_{ 0\leq s\leq t} ||  e^{-\varpi \langle v\rangle t} \alpha^{1/2} \partial  f^{m+1-i}(t)||_{\infty}
\\
&+   \ell(C e^{Ct^{2}})^{\ell}  {P}( || e^{\theta|v|^{2}} f_{0}||_{\infty}) +    \left(\frac{1}{2}\right)^{-\frac{\ell%
}{5}}\sup_{0 \leq s\leq t} ||  e^{-\varpi \langle v\rangle s} \alpha^{1/2}  \partial f^{m+1- \ell }(s)||_\infty,
\end{%
split}
\end{equation*}
where we have used (\ref{abstract_w_p}), Lemma \ref{tk}, and Lemma \ref{lemma_operator} for (\ref{N}) and (\ref{exponent}). For the underbraced terms we apply (\ref{avoid_onehalf}) and (\ref{nonlocal_diffuse}). Therefore
\begin{equation}\notag
\begin{split}
&| \mathbf{1}_{\{  t_{1} >0 \}}e^{-\varpi \langle v\rangle t} \alpha^{1/2}  \partial f^{m+1}(t,x,v)|\\
\lesssim & \ C_{\ell } C^{C\ell t^{2}} \big\{  t^{\frac{3}{2}-\beta} + \varepsilon^{\frac{3}{2}- \beta} + \frac{1}{\varepsilon^{2} \varpi}  \big\} \times \max_{0 \leq i \leq m}
\sup_{0 \leq s\leq t} || e^{-\varpi \langle v\rangle s} \alpha^{1/2} \partial f^{i}(s)||_{\infty}\\
& +C_{\ell } C^{C\ell t^{2}} \max_{0 \leq i \leq m} || \alpha^{1/2} \partial f_{0}^{i}||_{\infty} 
+ \left(\frac{1}{2}\right)^{-\frac{\ell}{5}}   \max_{0 \leq i \leq m} \sup_{0 \leq s\leq t} || e^{-\varpi \langle v\rangle s} \alpha^{1/2} \partial f^{i}(s)||_{\infty}.
\end{split}
\end{equation}

We choose a large $\ell$ and then small $t$ and then small $\varepsilon$ and then finally large $\varpi$ to conclude 
%
%
%
%
%
%
\begin{eqnarray*}
 \sup_{0 \leq t \leq  T_*} || e^{-\varpi \langle v\rangle t} \alpha^{1/2}
\partial f^{m+1}(t)||_{\infty}  & \leq& 
\frac{1}{8} \max_{m-\ell \leq i \leq m }
\sup_{0 \leq t \leq  T_*} || e^{-\varpi \langle v\rangle t} \alpha^{1/2}
 \partial f^{i }(t)||_{\infty} \\
 &&+||
 \alpha^{1/2}  \partial f_0||_\infty +   {P}( ||    e^{\theta|v|^{2}}  \partial f_0||_\infty) .
\end{eqnarray*}
Set $D=  ||
 \alpha^{1/2}  \partial f_0||_\infty +    {P}( ||    e^{\theta|v|^{2}}  \partial f_0||_\infty)  ,$
\begin{eqnarray*}
a_i = \sup_{0\leq t \leq  T_*} || e^{-\varpi \langle v\rangle t} \alpha^{1/2}
\partial f^{i}(t)||_\infty, \ \ A_i = \max\{ a_i, a_{i-1},\cdots,
a_{i-(\ell-1)}\},
\end{eqnarray*}
then we have $a_{m+1} \leq \frac{1}{8}A_m +
D.$ Use (\ref{aAD}) to conclude
\begin{eqnarray*}
\sup_{0 \leq t \leq T_*}
 || e^{-\varpi \langle v\rangle t} \alpha^{1/2}\partial f (t)||_{\infty} \lesssim  ||   \alpha^{1/2}  \partial f_0
||_{\infty} + {P}( || e^{\theta|v|^2} f_0||_{\infty}).
\end{eqnarray*}
The
existence and uniqueness and the estimate in Theorem \ref{weigh_W1p} are
clear for short time $T_*>0$. We follow the same procedure for $t\in
[T_*,2T_*]$ to conclude
$$
\sup_{T_* \leq t \leq 2T_*} || e^{-\varpi \langle v\rangle t} \alpha^{1/2}\partial f (t)||_{\infty}
\lesssim_{\Omega,T_*} || e^{-\varpi \langle v\rangle T_{*}} \partial f(T_*)||_\infty + {P} (||  e^{\theta|v|^2}
f_0||_{\infty}) .
$$

Then we conclude the weighted $W^{1,\infty}$ part of Theorem \ref{weigh_W1p} following the same procedure for $[T_*,2T_*], [2T_*,3T_*],\cdots.$

Now
we consider the continuity of $ e^{-\varpi \langle v\rangle t}  \alpha^{1/2}\partial f$. Remark that for each step $%
 e^{-\varpi \langle v\rangle t}  \alpha^{1/2} \partial f^m$ satisfies the condition of Proposition 2.
Therefore we conclude $ e^{-\varpi \langle v\rangle t}  \alpha^{1/2} \partial f^m \in C^1([0,T_*]\times\bar{\Omega}%
\times\mathbb{R}^3)$. Now we follow $W^{1,\infty}$ estimate part for $ e^{-\varpi \langle v\rangle t}  \alpha^{1/2}
[\partial f^{m+1}-\partial f^m]$ to show that $e^{-\varpi\langle v\rangle t} \alpha^{1/2}\partial f^m$ is Cauchy
in $L^\infty.$ Then $ e^{-\varpi \langle v\rangle t}  \alpha^{1/2} \partial f^{m} \rightarrow e^{-\varpi \langle v\rangle t}  \alpha^{1/2} \partial f$
strongly in $L^\infty$ so that $ e^{-\varpi \langle v\rangle t}  \alpha^{1/2}\partial f \in C^0([0,T_*]\times\bar{%
\Omega}\times\mathbb{R}^3)$.
\end{proof}

  \vspace{4pt}

\section{\large{Specular Reflection BC}}

\vspace{4pt}

We denote the standard spherical coordinate $\mathbf{x}_{\parallel} = \mathbf{x}_{\parallel} (\omega) =(\mathbf{x}_{\parallel,1}, \mathbf{x}_{\parallel,2})$ for $\omega \in\mathbb{S}^{2}$
\[
\omega = (\cos \mathbf{x}_{\parallel,1}(\omega) \sin \mathbf{x}_{\parallel,2}(\omega), \sin  \mathbf{x}_{\parallel,1}(\omega) \sin \mathbf{x}_{\parallel,2}(\omega), \cos \mathbf{x}_{\parallel,2}(\omega)),
\]
where $\mathbf{x}_{\parallel,1}(\omega) \in [0,2\pi)$ is the azimuth and $ \mathbf{x}_{\parallel,2} (\omega)\in [0,\pi)$ is the inclination. 

We define an orthonormal basis of $\mathbb{R}^{3}$
\begin{equation}\notag
\begin{split}
 {\hat{ {r}}}(\omega) & \ : = \ (\cos  \mathbf{x}_{\parallel,1}(\omega) \sin \mathbf{x}_{\parallel,2}(\omega), \sin \mathbf{x}_{\parallel,1}(\omega) \sin \mathbf{x}_{\parallel,2}(\omega), \cos \mathbf{x}_{\parallel,2}(\omega)),\\
 {\hat{{\phi}}}(\omega) & \ : = \ (\cos  \mathbf{x}_{\parallel,1}(\omega) \cos \mathbf{x}_{\parallel,2}(\omega), \sin \mathbf{x}_{\parallel,1}(\omega) \cos \mathbf{x}_{\parallel,2}(\omega), -\sin \mathbf{x}_{\parallel,2}(\omega)),\\
 {\hat{{\theta}}} (\omega) & \ := \ (-\sin  \mathbf{x}_{\parallel,1}(\omega), \cos  \mathbf{x}_{\parallel,1}(\omega),0).
\end{split}
\end{equation}
Moreover, ${\hat{ {r}}}\times  {\hat{\phi}} = {\hat{\theta}}, \     {\hat{\phi}}\times  {\hat{\theta}} =  {\hat{r}}, \  {\hat{\theta}}\times  {\hat{r}} = {\hat{\phi}},$ and
\begin{equation}\label{Dr}
 \partial_{ \mathbf{x}_{\parallel,1}}  { \hat{r}} = \sin \mathbf{x}_{\parallel,2} \  {\hat{\theta}},  \ \ \ \partial_{\mathbf{x}_{\parallel,2}}  {\hat{r}} =  {\hat{\phi}},
\end{equation}
where $ \partial_{ \mathbf{x}_{\parallel,1}}  { \hat{r}}$ does not vanish (non-degenerate) away from $\mathbf{x}_{\parallel,2} =0$ or $\pi.$

\begin{lemma}\label{chart_lemma}
Assume $\mathbf{0}=(0,0,0)\in\Omega$ and $\Omega$ is convex  (\ref{convex}). Fix
\[
\mathbf{p}=(z, w)\in \partial\Omega\times \mathbb{S}^{2} \ \text{ with } \ n(z)\cdot w=0.
\]
We define the north pole $\mathcal{N}_{\mathbf{p}} \in\partial\Omega$ and the south pole $\mathcal{S}_{\mathbf{p}} \in\partial\Omega$ as
\[
\mathcal{N}_{\mathbf{p}} := |\mathcal{N}_{\mathbf{p}}| ( \frac{z}{|z|}\times w) \in\partial\Omega, \ \ \ \mathcal{S}_{\mathbf{p}} := - |\mathcal{S}_{\mathbf{p}}| ( \frac{z}{|z|}\times w) \in\partial\Omega,
\]
where $\partial_{\mathbf{x}_{\parallel,1}} \hat{r}$ is degenerate. We define the straight-line $\mathcal{L}_{\mathbf{p}}$ passing both poles
\[
\mathcal{L}_{\mathbf{p}} := \{ \tau \mathcal{N}_{\mathbf{p}} + (1-\tau) \mathcal{S}_{\mathbf{p}} : \tau\in \mathbb{R}\}.
\]

(i) There exists a smooth map
\begin{equation}\label{x_parallel}
\begin{split}
 \mathbf{\eta}_{\mathbf{p}} \ : \ \ &  \ \ \ [0,2\pi)\times (0,\pi) \ \ \ \ \rightarrow \ \ \partial\Omega \backslash\{\mathcal{N}_{\mathbf{p}} , \mathcal{S}_{\mathbf{p}}  \}, \\
  &\mathbf{x}_{\parallel_{\mathbf{p}}}:= (\mathbf{x}_{\parallel_{\mathbf{p}},1  }, \mathbf{x}_{\parallel_{\mathbf{p}},2  })     \mapsto \ \ \ \  \mathbf{\eta}_{\mathbf{p}}( \mathbf{x}_{\parallel_{\mathbf{p}}}) ,
\end{split}
\end{equation}
which is one-to-one and onto. Here on $[0,2\pi)\times (0,\pi)$ we have $\partial_{i} \mathbf{\eta}_{\mathbf{p}}:= \frac{\partial \mathbf{\eta}_{\mathbf{p}}}{\partial \mathbf{x}_{\parallel_{\mathbf{p}} ,i  }}\neq 0$ and
 \begin{equation}\label{nondegenerate_eta}
 \frac{\partial   \mathbf{\eta}_{ \mathbf{p}} }{\partial\mathbf{x}_{\parallel_{ \mathbf{p}},1}}  ( \mathbf{x}_{\parallel_{\mathbf{p}} }  ) \times \frac{\partial   \mathbf{\eta}_{ \mathbf{p}} }{\partial\mathbf{x}_{\parallel_{\mathbf{p}},2}}  ( \mathbf{x}_{\parallel_{\mathbf{p}} }  )\neq 0.
 \end{equation}
We define
$$\mathbf{n}_{ \mathbf{p}}:= n\circ\mathbf{\eta}_{ \mathbf{p}} : [ 0 ,2\pi) \times (0,\pi) \rightarrow  \mathbb{S}^{2}.$$

\vspace{8pt}

(ii) We define the $\mathbf{p}-$spherical coordinate:

\vspace{4pt}

For $\delta>0, \ \delta_{1}>0 , \ C>0,$ we have a smooth one-to-one and onto map

\vspace{4pt}

\begin{equation}\notag
\begin{split} 
\Phi_{\mathbf{p}} :  [0,C\delta) \times [0, 2\pi) \times (\delta_{1},\pi-\delta_{1}) \times \mathbb{R}\times \mathbb{R}^{2} & \   \rightarrow \  \{x\in \bar{\Omega} : |\xi(x)|< \delta\} \backslash B_{C\delta_{1}}(\mathcal{L}_{\mathbf{p}})\times \mathbb{R}^{3},\\
(\mathbf{x}_{\perp_{\mathbf{p}}}, \mathbf{x}_{\parallel_{\mathbf{p}},1 }, \mathbf{x}_{\parallel_{\mathbf{p}},2 }, \mathbf{v}_{\perp_{\mathbf{p}}}, \mathbf{v}_{\parallel_{\mathbf{p}},1},  \mathbf{v}_{\parallel_{\mathbf{p}},2}) & \ \mapsto \ 
\Phi_{\mathbf{p}} (\mathbf{x}_{\perp_{\mathbf{p}}}, \mathbf{x}_{\parallel_{\mathbf{p}},1 }, \mathbf{x}_{\parallel_{\mathbf{p}},2 }, \mathbf{v}_{\perp_{\mathbf{p}}}, \mathbf{v}_{\parallel_{\mathbf{p}},1},  \mathbf{v}_{\parallel_{\mathbf{p}},2}),
\\
\end{split}
\end{equation}
\vspace{4pt}

where $B_{C\delta_{1}}(\mathcal{L}_{\mathbf{p}}): =\{ x\in \mathbb{R}^{3} : |x - y |< C\delta_{1} \text{ for some } y \in \mathcal{L}_{\mathbf{p}}  \}.$

Explicitly,
\begin{equation}\label{polar}
\Phi_{\mathbf{p}} ( \mathbf{x}_{\perp_{\mathbf{p}}},\mathbf{x}_{\parallel_{\mathbf{p}}} ,  \mathbf{v}_{\perp_{\mathbf{p}}},\mathbf{v}_{\parallel_{\mathbf{p}}}):=
\left[\begin{array}{c}
\mathbf{x}_{\perp_{\mathbf{p}}}[- \mathbf{n}_{\mathbf{p}}   (\mathbf{x}_{\parallel_{\mathbf{p}}})     ]  + \mathbf{\eta}_{ \mathbf{p}}(\mathbf{x}_{\parallel_{ \mathbf{p}}}) \\
  \mathbf{v}_{\perp_{ \mathbf{p}}} [-    \mathbf{n}_{\mathbf{p}}   (\mathbf{x}_{\parallel_{ \mathbf{p}}}) ]  + \mathbf{v}_{\parallel_{ \mathbf{p}}} \cdot \nabla \mathbf{\eta}_{ \mathbf{p}} (\mathbf{x}_{\parallel_{ \mathbf{p}}}) + \mathbf{x}_{\perp_{ \mathbf{p}}} \mathbf{v}_{\parallel_{ \mathbf{p}}} \cdot \nabla [-\mathbf{n}_{ \mathbf{p}} (\mathbf{x}_{\parallel_{ \mathbf{p}}})]
  \end{array} \right],
\end{equation}
where $\nabla \mathbf{\eta}_{\mathbf{p}}= (\partial_{1} \mathbf{\eta}_{\mathbf{p}}, \partial_{2} \mathbf{\eta}_{\mathbf{p}})= ( \frac{\partial \mathbf{\eta}_{\mathbf{p}}   }{\partial \mathbf{x}_{\parallel_{   \mathbf{p } ,1  }   }} ,\frac{\partial \mathbf{\eta}_{\mathbf{p}}   }{\partial \mathbf{x}_{\parallel_{   \mathbf{p } ,2  }   }}  )$ and $\nabla \mathbf{n}_{\mathbf{p}}= (\partial_{1} \mathbf{n}_{\mathbf{p}}, \partial_{2} \mathbf{n}_{\mathbf{p}}  )= (\frac{\partial \mathbf{ n}_{\mathbf{p}}   }{\partial \mathbf{x}_{\parallel_{   \mathbf{p } ,1  }   }},\frac{\partial \mathbf{n}_{\mathbf{p}}   }{\partial \mathbf{x}_{\parallel_{   \mathbf{p } ,2  }   }}  ).$

We fix an inverse map
\begin{equation}\notag
\begin{split}
\Phi_{\mathbf{p}}^{-1 }   :    \{x\in \bar{\Omega} : |\xi(x)|< \delta\}\backslash  B_{C\delta^{\prime}}(\mathcal{L}_{\mathbf{p}})    \times \mathbb{R}^{3}    & \ \rightarrow \  [0,C\delta) \times  [0, 2\pi) \times (\delta_{1},\pi-\delta_{1}) \times \mathbb{R}\times \mathbb{R}^{2}.
\end{split}
\end{equation}
In general this choice is not unique but once we fix the range as above then an inverse map is uniquely determined.

We denote, for $(x,v) \in   \{x\in \bar{\Omega} : |\xi(x)|< \delta\}\backslash  B_{C\delta^{\prime}}(\mathcal{L}_{\mathbf{p}})    \times \mathbb{R}^{3}$
\[
(\mathbf{x}_{\perp_{\mathbf{p}} }, \mathbf{x}_{\parallel_{\mathbf{p}},1}, \mathbf{x}_{\parallel_{\mathbf{p}},2}, \mathbf{v}_{\perp_{\mathbf{p}}}, \mathbf{v}_{\parallel_{\mathbf{p}},1}, \mathbf{v}_{\parallel_{\mathbf{p}},2})=  \Phi_{\mathbf{p}}^{-1} (x,v).
\]

 \vspace{8pt}

(iii) For $|\xi(X_{\mathbf{cl}}(s;t,x,v)) |< \delta$ and $|X_{\mathbf{cl}}(s;t,x,v)-\mathcal{L}_{\mathbf{p}}|> C\delta_{1}$ we define
\begin{equation}\notag
\begin{split}
 (\mathbf{X}_{\mathbf{p}}(s;t,x,v), \mathbf{V}_{\mathbf{p}}(s;t,x,v))& := \Phi^{-1}_{\mathbf{p}} ( {X}_{\mathbf{cl}}(s;t,x,v), {V}_{\mathbf{cl}}(s;t,x,v))\\
& := (\mathbf{x}_{\perp_{\mathbf{p}}}(s;t,x,v), \mathbf{x}_{\parallel_{\mathbf{p}}}(s;t,x,v),\mathbf{v}_{\perp_{\mathbf{p}}}(s;t,x,v), \mathbf{v}_{\parallel_{\mathbf{p}}}(s;t,x,v) )
.
\end{split}
\end{equation}
Then $|v|\simeq |\mathbf{V}_{\mathbf{p}}| $ and
\begin{equation}\label{ODE_ell}
\begin{split}
 &\dot{\mathbf{x}} _{\perp_{\mathbf{p}}}(s;t,x,v) = \mathbf{v}_{\perp_{ \mathbf{p}}} (s;t,x,v),\\
 &\dot{\mathbf{x}} _{\parallel_{ \mathbf{p}}}(s;t,x,v) = \mathbf{v}_{\parallel_{ \mathbf{p}}} (s;t,x,v), \\
 &\dot{\mathbf{v}} _{\perp_{ \mathbf{p}}}(s;t,x,v) = F_{\perp_{ \mathbf{p}}}(\mathbf{x}_{ \mathbf{p}}(s;t,x,v), \mathbf{v}_{ \mathbf{p}}(s;t,x,v)),\\
 &\dot{\mathbf{v}} _{\parallel_{ \mathbf{p}}}(s;t,x,v) = F_{\parallel_{ \mathbf{p}}}(\mathbf{x}_{ \mathbf{p}}(s;t,x,v), \mathbf{v}_{ \mathbf{p}}(s;t,x,v)).
\end{split}
\end{equation}
Here
\begin{equation}\label{F_perp}
\begin{split}
F_{\perp_{ \mathbf{p}}}&=F_{\perp_{\mathbf{p}}}(\mathbf{x}_{\perp_{ \mathbf{p}}}, \mathbf{x}_{\parallel_{ \mathbf{p}}},   \mathbf{v}_{\parallel_{ \mathbf{p}}})\\
&=  \sum_{j,k=1}^{2} \mathbf{v}_{\parallel_{ \mathbf{p}},k} \mathbf{v}_{\parallel_{ \mathbf{p}},j} \ \partial_{j} \partial_{k} \mathbf{\eta}_{ \mathbf{p}}(\mathbf{x}_{\parallel_{ \mathbf{p}}}) \cdot \mathbf{n}_{ \mathbf{p}}(\mathbf{x}_{\parallel_{ \mathbf{p}}})  
  - \mathbf{x}_{\perp_{ \mathbf{p}}} \sum_{k=1}^{2} \mathbf{v}_{\parallel_{ \mathbf{p}},k} (\mathbf{v}_{\parallel_{ \mathbf{p}}}\cdot \nabla) \partial_{k} \mathbf{n}_{ \mathbf{p}}(\mathbf{x}_{\parallel_{ \mathbf{p}}}) \cdot \mathbf{n}_{ \mathbf{p}}(\mathbf{x}_{\parallel_{ \mathbf{p}}}),
\end{split}
\end{equation}
where
$$\sum_{j,k=1}^{2} \mathbf{v}_{\parallel_{ \mathbf{p}},k} \mathbf{v}_{\parallel_{ \mathbf{p}},j} \ \partial_{j} \partial_{k} \mathbf{\eta}_{ \mathbf{p}}(\mathbf{x}_{\parallel_{ \mathbf{p}}}) \cdot \mathbf{n}_{ \mathbf{p}}(\mathbf{x}_{\parallel_{ \mathbf{p}}})\lesssim_{\xi}- |\mathbf{v}_{\parallel}|^{2},$$
and
\begin{equation}\label{F||}
\begin{split}
F_{\parallel_{ \mathbf{p}}}&=F_{\parallel_{ \mathbf{p}}}(\mathbf{x}_{\perp_{ \mathbf{p}}}, \mathbf{x}_{\parallel_{ \mathbf{p}}}, \mathbf{v}_{\perp_{ \mathbf{p}}}, \mathbf{v}_{\parallel_{ \mathbf{p}}})\\
& =  \sum_{i} G_{ \mathbf{p},ij} (\mathbf{x}_{\perp_{ \mathbf{p}}}, \mathbf{x}_{\parallel_{ \mathbf{p}}})\frac{(-1)^{i }}{\mathbf{n}_{ \mathbf{p}}(\mathbf{x}_{\parallel_{ \mathbf{p}}}) \cdot (\partial_{1} \mathbf{\eta}_{ \mathbf{p}}(\mathbf{x}_{\parallel_{ \mathbf{p}}}) \times \partial_{2} \mathbf{\eta}_{ \mathbf{p}}(\mathbf{x}_{\parallel_{ \mathbf{p}}}))} \\
& \ \ \  \times \big\{
2 \mathbf{v}_{\perp_{ \mathbf{p}}} \mathbf{v}_{\parallel_{ \mathbf{p}}} \cdot \nabla \mathbf{n}_{ \mathbf{p}} (\mathbf{x}_{\parallel_{ \mathbf{p}}}) - \mathbf{v}_{\parallel_{ \mathbf{p}}} \cdot  \nabla^{2} \mathbf{\eta}_{ \mathbf{p}}(\mathbf{x}_{\parallel_{ \mathbf{p}}}) \cdot \mathbf{v}_{\parallel_{ \mathbf{p}}} + \mathbf{x}_{\perp_{ \mathbf{p}}} \mathbf{v}_{\parallel_{ \mathbf{p}}} \cdot \nabla^{2} \mathbf{n}_{ \mathbf{p}}(\mathbf{x}_{\parallel_{ \mathbf{p}}}) \cdot \mathbf{v}_{\parallel_{ \mathbf{p}}}
\big\} \\
& \ \ \  \   \cdot \big\{ \mathbf{n}_{ \mathbf{p}} (\mathbf{x}_{\parallel_{ \mathbf{p}}}) \times \partial_{i+1} \mathbf{\eta}_{ \mathbf{p}}(\mathbf{x}_{\parallel_{ \mathbf{p}}})  \big\},
\end{split}
\end{equation}
where a smooth bounded function $G_{ \mathbf{p},ij} (\mathbf{x}_{\perp_{ \mathbf{p}}}, \mathbf{x}_{\parallel_{\mathbf{p}}})$ is specified in (\ref{G}).


\vspace{8pt}

(iv) Let $\mathbf{q} = (y,u) \in \partial\Omega \times \mathbb{S}^{2}$ with $n(y)\cdot u=0$ and $\mathbf{p}\sim \mathbf{q}$ and
\[
  {\Phi}_{\mathbf{p}} ( \mathbf{x}_{\perp_{  \mathbf{p}   }}, \mathbf{x}_{\parallel_{ \mathbf{p}}}, \mathbf{v}_{\perp_{\mathbf{p}}}, \mathbf{v}_{\parallel_{ \mathbf{p}}}) =(x,v)
= {\Phi}_{\mathbf{q}} ( \mathbf{x}_{\perp_{ \mathbf{q}   }}, \mathbf{x}_{\parallel_{ \mathbf{q}}}, \mathbf{v}_{\perp_{ \mathbf{q}}}, \mathbf{v}_{\parallel_{ \mathbf{q}}}) .
\]
Then
\begin{equation}\label{chart_changing}
\begin{split}
  \frac{\partial (\mathbf{x}_{\perp_{   \mathbf{p} }}, \mathbf{x}_{\parallel_{   \mathbf{p} }}, \mathbf{v}_{\perp_{   \mathbf{p} }}, \mathbf{v}_{\parallel_{   \mathbf{p} }})}{\partial ( \mathbf{x}_{\perp_{ \mathbf{q} }}, \mathbf{x}_{\parallel_{ \mathbf{q} }}, \mathbf{v}_{\perp_{ \mathbf{q} }}, \mathbf{v}_{\parallel_{ \mathbf{q} }})} = \nabla \Phi_{ \mathbf{q} }^{-1} \nabla \Phi_{  \mathbf{p} } 
=   
\mathbf{Id}_{6,6} + O_{\xi}(|\mathbf{p}-\mathbf{q}|)
\left[
\begin{array}{ccc|ccc}
0 & 0 & 0 &  &  &  \\
0 & 1 & 1 &  & \mathbf{0}_{3,3} & \\
0 & 1 & 1 &  & &   \\ \hline
0 & 0 & 0 & 0 & 0 & 0 \\
0 & |v| &  |v|  & 0 & 1 & 1 \\
0 &  |v|  &  |v|  & 0 & 1 & 1
\end{array}
\right].
\end{split}
\end{equation}

\end{lemma}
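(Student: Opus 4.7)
The plan is to build the chart in four stages, following closely the classical construction of geographic spherical coordinates, and then derive the characteristic ODEs by the chain rule.

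\textbf{Step 1: Construction of $\eta_{\mathbf{p}}$ and verification of (\ref{nondegenerate_eta}).} Since $\mathbf{0}\in\Omega$ and $\Omega$ is strictly convex, the radial map $\omega \in\mathbb{S}^2 \mapsto \partial\Omega$, defined by $\omega \mapsto r(\omega)\omega$ with $r(\omega) = \sup\{r\geq 0 : r\omega \in \bar{\Omega}\}$, is a smooth diffeomorphism. Composing with the spherical parametrization $\mathbf{x}_{\parallel} \mapsto \hat{r}(\mathbf{x}_{\parallel})$ rotated so that its poles lie at $\mathcal{N}_{\mathbf{p}}/|\mathcal{N}_{\mathbf{p}}|$ and $\mathcal{S}_{\mathbf{p}}/|\mathcal{S}_{\mathbf{p}}|$ yields $\eta_{\mathbf{p}}$. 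Away from the poles, (\ref{Dr}) shows $\partial_{\mathbf{x}_{\parallel,1}}\hat{r}$ and $\partial_{\mathbf{x}_{\parallel,2}}\hat{r}$ are linearly independent, and the chain rule combined with $\nabla r(\omega) \not\parallel \omega$ (a consequence of strict convexity) then gives (\ref{nondegenerate_eta}), with $\partial_1\eta_{\mathbf{p}} \times \partial_2\eta_{\mathbf{p}}$ parallel to $\mathbf{n}_{\mathbf{p}}$.

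\textbf{Step 2: The tubular map $\Phi_{\mathbf{p}}$ and its inverse.} The position formula (\ref{polar}) is just the standard tubular neighborhood map $x = \eta_{\mathbf{p}}(\mathbf{x}_{\parallel_{\mathbf{p}}}) - \mathbf{x}_{\perp_{\mathbf{p}}}\mathbf{n}_{\mathbf{p}}(\mathbf{x}_{\parallel_{\mathbf{p}}})$, which is a diffeomorphism on $|\mathbf{x}_{\perp_{\mathbf{p}}}| < \delta$ by the inverse function theorem (its Jacobian at $\mathbf{x}_{\perp_{\mathbf{p}}}=0$ has columns $-\mathbf{n}_{\mathbf{p}}, \partial_1\eta_{\mathbf{p}},\partial_2\eta_{\mathbf{p}}$, nondegenerate by Step 1). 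The velocity formula in (\ref{polar}) is then obtained by differentiating $x(\mathbf{x}_{\perp_{\mathbf{p}}},\mathbf{x}_{\parallel_{\mathbf{p}}})$ along the trajectory, so that $v = \dot{\mathbf{x}}_{\perp_{\mathbf{p}}}(-\mathbf{n}_{\mathbf{p}}) + \dot{\mathbf{x}}_{\parallel_{\mathbf{p}}}\cdot\nabla\eta_{\mathbf{p}} + \mathbf{x}_{\perp_{\mathbf{p}}}\dot{\mathbf{x}}_{\parallel_{\mathbf{p}}}\cdot\nabla(-\mathbf{n}_{\mathbf{p}})$; identifying $\mathbf{v}_{\perp_{\mathbf{p}}} = \dot{\mathbf{x}}_{\perp_{\mathbf{p}}}$ and $\mathbf{v}_{\parallel_{\mathbf{p}}} = \dot{\mathbf{x}}_{\parallel_{\mathbf{p}}}$ completes the definition. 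The equivalence $|v|\simeq |\mathbf{V}_{\mathbf{p}}|$ follows since the change of basis matrix has bounded determinant and inverse on the domain considered.

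\textbf{Step 3: Derivation of (\ref{ODE_ell}).} The first two equations are the definition $\mathbf{v}_{\perp_{\mathbf{p}}}=\dot{\mathbf{x}}_{\perp_{\mathbf{p}}}, \ \mathbf{v}_{\parallel_{\mathbf{p}}}=\dot{\mathbf{x}}_{\parallel_{\mathbf{p}}}$ from Step 2. The nontrivial input is $\dot{V} = 0$ between bounces: differentiating the velocity expression and projecting onto the basis $\{-\mathbf{n}_{\mathbf{p}},\partial_1\eta_{\mathbf{p}},\partial_2\eta_{\mathbf{p}}\}$ yields a linear system for $(\dot{\mathbf{v}}_{\perp_{\mathbf{p}}}, \dot{\mathbf{v}}_{\parallel_{\mathbf{p}}})$. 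Projecting onto $\mathbf{n}_{\mathbf{p}}$ isolates $\dot{\mathbf{v}}_{\perp_{\mathbf{p}}}$ and produces (\ref{F_perp}); the sign $\sum v_jv_k \partial_j\partial_k\eta\cdot\mathbf{n} \lesssim -|\mathbf{v}_{\parallel}|^2$ follows from twice differentiating $\mathbf{n}_{\mathbf{p}}\cdot \partial_k\eta_{\mathbf{p}}\equiv 0$, which identifies $\partial_j\partial_k\eta_{\mathbf{p}}\cdot\mathbf{n}_{\mathbf{p}}$ with the (negative of the) second fundamental form of $\partial\Omega$, bounded below in absolute value by $C_\xi$ from (\ref{convex}). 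Projecting onto the tangential directions $\mathbf{n}_{\mathbf{p}}\times\partial_{i+1}\eta_{\mathbf{p}}$ and inverting via Cramer's rule (with denominator $\mathbf{n}_{\mathbf{p}}\cdot(\partial_1\eta_{\mathbf{p}}\times\partial_2\eta_{\mathbf{p}})$, nonvanishing off the poles) yields (\ref{F||}), with $G_{\mathbf{p},ij}$ being the matrix entries arising from the inversion; these are manifestly smooth and bounded on the coordinate domain.

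\textbf{Step 4: The chart-change estimate (\ref{chart_changing}).} Since both charts share the same normal coordinate (the signed distance to $\partial\Omega$ is intrinsic), the $(\mathbf{x}_{\perp_{\mathbf{p}}},\mathbf{x}_{\perp_{\mathbf{q}}})$ block is the identity and the first row/column of the Jacobian takes the claimed form. The tangential block $\partial\mathbf{x}_{\parallel_{\mathbf{p}}}/\partial\mathbf{x}_{\parallel_{\mathbf{q}}}$ is the Jacobian of the diffeomorphism of the sphere obtained by changing the pole from $\mathbf{q}$ to $\mathbf{p}$; a direct computation shows it equals the identity when $\mathbf{p}=\mathbf{q}$, so by smoothness the deviation is $O(|\mathbf{p}-\mathbf{q}|)$. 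Differentiating the velocity expression in (\ref{polar}) with respect to $\mathbf{x}_{\parallel_{\mathbf{q}}}$ at fixed $v$ introduces the factor $\mathbf{v}_{\parallel_{\mathbf{q}}}\cdot\nabla\eta_{\mathbf{q}}\sim|v|$ (since the tangential velocity has size $|v|$ in the almost-grazing regime we care about), accounting for the $|v|$ entries in the lower-left block, while the lower-right block is the restriction to the tangent plane of a rotation between the two frames, giving again identity plus $O(|\mathbf{p}-\mathbf{q}|)$.

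The main obstacle is Step 3, and specifically the algebraic bookkeeping needed to put $F_{\parallel_{\mathbf{p}}}$ into the explicit form (\ref{F||}) while simultaneously checking that the denominator in Cramer's rule is bounded away from zero outside a neighborhood of $\mathcal{L}_{\mathbf{p}}$ of width $\sim \delta_1$. All the delicate cancellations between the $\mathbf{x}_{\perp_{\mathbf{p}}}\nabla(-\mathbf{n}_{\mathbf{p}})$ correction in the position formula and the curvature terms $\nabla^2\eta_{\mathbf{p}}, \nabla\mathbf{n}_{\mathbf{p}}$ arising from differentiating along the trajectory must be tracked carefully, as these are what will be fed into the matrix-method estimates for $\partial X_{\mathbf{cl}}$ and $\partial V_{\mathbf{cl}}$ in Theorem \ref{theorem_Dxv}.
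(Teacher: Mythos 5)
Your sketch follows the paper's own construction step for step: the radial parametrization of $\partial\Omega$ via a rotated spherical chart for $\eta_{\mathbf{p}}$, the tubular-neighborhood map lifted to phase space for $\Phi_{\mathbf{p}}$, projection of $\dot{v}=0$ onto the frame $\{-\mathbf{n}_{\mathbf{p}},\partial_1\eta_{\mathbf{p}},\partial_2\eta_{\mathbf{p}}\}$ with Cramer-type inversion of $\delta_{ki}+\mathbf{x}_{\perp}a_{ki}$ for the ODEs, and the intrinsicness of $(\mathbf{x}_{\perp},\mathbf{v}_{\perp})$ (via uniqueness of the nearest-point projection) plus smoothness in $\mathbf{p}$ for (\ref{chart_changing}). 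One small imprecision in Step 1: the non-degeneracy (\ref{nondegenerate_eta}) comes from the leading term $-\sin(\mathbf{x}_{\parallel,2})R_{\mathbf{p}}^{2}\,\mathcal{O}_{\mathbf{p}}^{-1}\hat{r}$ being nonzero away from the poles (i.e. from $R_{\mathbf{p}}>0$), while the transversality $\nabla\xi(\eta_{\mathbf{p}})\cdot\eta_{\mathbf{p}}\neq 0$ (the actual payoff of $\mathbf{0}\in\Omega$ and convexity) is what makes $R_{\mathbf{p}}$ well defined and smooth—not quite the "$\nabla r\not\parallel\omega$" you invoke, though the conclusion is correct.
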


\begin{proof}[Proof of (i) in Lemma \ref{chart_lemma}] Denote
\begin{eqnarray*}
&\frac{z}{|z|}= {\hat{r}}(\frac{z}{|z|}):= (\cos \mathbf{x}_{\parallel,1}  (\frac{z}{|z|})\sin   \mathbf{x}_{\parallel,2} (\frac{z}{|z|} ), \sin  \mathbf{x}_{\parallel,1} (\frac{z}{|z|}) \sin  \mathbf{x}_{\parallel,2} ( \frac{z}{|z|}), \cos  \mathbf{x}_{\parallel,2} ( \frac{z}{|z|}) ),&\\
&w=  w_{ \mathbf{x}_{\parallel,2}  } \mathbf{\hat{\phi}}  (\frac{z}{|z|})+ w_{ \mathbf{x}_{\parallel,1}   }  \mathbf{\hat{\theta}}(\frac{z}{|z|}): =
(w \cdot  \mathbf{\hat{\phi}}(\frac{z}{|z|}) ) \mathbf{\hat{\phi}}(\frac{z}{|z|})+ (w\cdot  \mathbf{\hat{\theta}}(\frac{z}{|z|}))   \mathbf{\hat{\theta}}(\frac{z}{|z|})
,&\\
&\frac{z}{|z|}\times w = w_{ \mathbf{x}_{\parallel,2}  } \mathbf{\hat{\theta}}(\frac{z}{|z|}) - w_{  \mathbf{x}_{\parallel,1} } \mathbf{\hat{\phi}}(\frac{z}{|z|}).&
\end{eqnarray*}
We define the rotational matrix which maps $\{ \frac{z}{|z|}, w, \frac{z}{|z|}\times w \} \mapsto \{ \mathbf{e}_{1}, \mathbf{e}_{2}, \mathbf{e}_{3}\}$:
\begin{equation}\notag
\begin{split}
 \mathcal{O}_{\mathbf{p}} = \left[\begin{array}{c} \frac{z}{|z|} \\ w \\ \frac{z}{|z|} \times w  \end{array} \right]_{3\times 3}
=  \left[\begin{array}{c}   {\hat{r}}(\frac{z}{|z|})  \\ w_{ \mathbf{x}_{\parallel_{\mathbf{p}},2 }} \mathbf{\hat{\phi}}(\frac{z}{|z|}) + w_{ \mathbf{x}_{\parallel_{\mathbf{p}},1 } } \mathbf{\hat{\theta}}(\frac{z}{|z|}) \\
- w_{ { \mathbf{x}_{\parallel_{\mathbf{p}},1 } }  } \mathbf{\hat{\phi}}(\frac{z}{|z|}) + w_{   \mathbf{x}_{\parallel_{\mathbf{p}},2 }  } \mathbf{\hat{\theta}}(\frac{z}{|z|})
 \end{array}\right]_{3\times 3}.
\end{split}
\end{equation}

For $x \in\partial\Omega$ with $x\neq \mathcal{N}_{\mathbf{p}}$ and $x\neq \mathcal{S}_{\mathbf{p}}$ we define
\begin{equation}\notag
( \mathbf{x}_{\parallel_{\mathbf{p}},1}  , \mathbf{x}_{\parallel_{ \mathbf{p}},2}) \in [0, 2\pi) \times (0,\pi), \ \   \text{such that}  \ \  {\hat{r}}(\mathbf{x}_{\parallel_{ \mathbf{p}},1},  \mathbf{x}_{\parallel_{ \mathbf{p}},2}) = \mathcal{O}_{ \mathbf{p}}\Big( \frac{x}{|x|}\Big).
\end{equation}

Now we define $ {R}_{ \mathbf{p}} : [ 0 ,2\pi) \times [0,\pi) \rightarrow (0,\infty)$ such that
\begin{equation}\label{radius}
\xi(   {R}_{ \mathbf{p}} ( \mathbf{x}_{\parallel_{ \mathbf{p}},1},\mathbf{x}_{\parallel_{ \mathbf{p}},2} ) \mathcal{O}_{ \mathbf{p}}^{-1}  {\hat{r}} ( \mathbf{x}_{\parallel_{ \mathbf{p}},1},\mathbf{x}_{\parallel_{ \mathbf{p}},2} )  )=0.
\end{equation}
 We also define $\mathbf{\eta}_{ \mathbf{p}} : [ 0 ,2\pi) \times [0,\pi) \rightarrow \partial\Omega $ such that
\begin{equation}\notag
\begin{split}
 \mathbf{\eta}_{ \mathbf{p}}  ( \mathbf{x}_{\parallel_{ \mathbf{p}},1},\mathbf{x}_{\parallel_{ \mathbf{p}},2} )= R_{ \mathbf{p}}  ( \mathbf{x}_{\parallel_{ \mathbf{p}},1},\mathbf{x}_{\parallel_{ \mathbf{p}},2} ) \mathcal{O}_{ \mathbf{p}}^{-1} {\hat{r}} ( \mathbf{x}_{\parallel_{ \mathbf{p}},1},\mathbf{x}_{\parallel_{ \mathbf{p}},2} ).
\end{split}
\end{equation}

Directly, from (\ref{Dr}) and (\ref{radius}), with fixed $\mathbf{p}= (z,w),$
\begin{equation}\notag
\begin{split}
\frac{\partial R_{ \mathbf{p}}}{\partial\mathbf{x}_{\parallel_{ \mathbf{p}},1} } ( \mathbf{x}_{\parallel_{ \mathbf{p}},1},\mathbf{x}_{\parallel_{ \mathbf{p}},2} )  &=  \frac{- \sin(\mathbf{x}_{\parallel_{ \mathbf{p}},2} )  R_{\mathbf{p}}  \nabla\xi(\mathbf{\eta}_{\mathbf{p}} ( \mathbf{x}_{\parallel_{ \mathbf{p}},1},\mathbf{x}_{\parallel_{ \mathbf{p}},2} ))\cdot   \mathcal{O}_{ \mathbf{p}}^{-1} \mathbf{\hat{\theta}} ( \mathbf{x}_{\parallel_{ \mathbf{p}},1},\mathbf{x}_{\parallel_{ \mathbf{p}},2} ) }{   \nabla\xi(\mathbf{\eta}_{\mathbf{p}}  ( \mathbf{x}_{\parallel_{ \mathbf{p}},1},\mathbf{x}_{\parallel_{ \mathbf{p}},2} ))  \cdot \mathcal{O}_{ \mathbf{p}}^{-1}  {\hat{r}} ( \mathbf{x}_{\parallel_{ \mathbf{p}},1},\mathbf{x}_{\parallel_{ \mathbf{p}},2} ) }\\
&=
\frac{- \sin( \mathbf{x}_{\parallel_{ \mathbf{p}},2})  [ R_{ \mathbf{p}}  ( \mathbf{x}_{\parallel_{ \mathbf{p}},1},\mathbf{x}_{\parallel_{ \mathbf{p}},2} ) ]^{2}
\nabla\xi(\mathbf{\eta}_{ \mathbf{p}}  ( \mathbf{x}_{\parallel_{ \mathbf{p}},1},\mathbf{x}_{\parallel_{ \mathbf{p}},2} ))
\cdot
\mathcal{O}_{ \mathbf{p}}^{-1} \mathbf{\hat{\theta}} ( \mathbf{x}_{\parallel_{ \mathbf{p}},1},\mathbf{x}_{\parallel_{ \mathbf{p}},2} )}{\nabla \xi (\mathbf{\eta}_{ \mathbf{p}} ( \mathbf{x}_{\parallel_{ \mathbf{p}},1},\mathbf{x}_{\parallel_{ \mathbf{p}},2} )) \cdot \mathbf{\eta}_{ \mathbf{p}} ( \mathbf{x}_{\parallel_{ \mathbf{p}},1},\mathbf{x}_{\parallel_{ \mathbf{p}},2} )   },\\
\frac{\partial  R_{ \mathbf{p}}}{\partial \mathbf{x}_{\parallel_{ \mathbf{p}},2}} ( \mathbf{x}_{\parallel_{ \mathbf{p}},1},\mathbf{x}_{\parallel_{ \mathbf{p}},2} )&=  \frac{-  R_{ \mathbf{p}}
( \mathbf{x}_{\parallel_{ \mathbf{p}},1},\mathbf{x}_{\parallel_{ \mathbf{p}},2} )
\nabla \xi( \mathbf{\eta}_{ \mathbf{p}}  ( \mathbf{x}_{\parallel_{ \mathbf{p}},1},\mathbf{x}_{\parallel_{ \mathbf{p}},2} )) \cdot
\mathcal{O}_{ \mathbf{p}}^{-1} \mathbf{\hat{\phi}} ( \mathbf{x}_{\parallel_{ \mathbf{p}},1},\mathbf{x}_{\parallel_{ \mathbf{p}},2} ) }{\nabla \xi ( \mathbf{\eta}_{ \mathbf{p}}   ( \mathbf{x}_{\parallel_{ \mathbf{p}},1},\mathbf{x}_{\parallel_{\mathbf{p}},2} )  ) \cdot \mathcal{O}_{\mathbf{p}}^{-1}  {\hat{r}} ( \mathbf{x}_{\parallel_{\mathbf{p}},1},\mathbf{x}_{\parallel_{\mathbf{p}},2} )   }\\
&=
\frac{-  [R_{\mathbf{p}}  ( \mathbf{x}_{\parallel_{\mathbf{p}},1},\mathbf{x}_{\parallel_{\mathbf{p}},2} )]^{2}
\nabla \xi( \mathbf{\eta}_{\mathbf{p}}  ( \mathbf{x}_{\parallel_{\mathbf{p}},1},\mathbf{x}_{\parallel_{\mathbf{p}},2} )) \cdot
\mathcal{O}_{\mathbf{p}}^{-1} \mathbf{\hat{\phi}} ( \mathbf{x}_{\parallel_{\mathbf{p}},1},\mathbf{x}_{\parallel_{\mathbf{p}},2} ) }{ \nabla \xi ( \mathbf{\eta}_{\mathbf{p}}   ( \mathbf{x}_{\parallel_{\mathbf{p}},1},\mathbf{x}_{\parallel_{\mathbf{p}},2} )   ) \cdot  \mathbf{\eta}_{\mathbf{p}} ( \mathbf{x}_{\parallel_{\mathbf{p}},1},\mathbf{x}_{\parallel_{\mathbf{p}},2} )  },
\end{split}
\end{equation}
where $ \nabla \xi ( \mathbf{\eta}_{\mathbf{p}}   ( \mathbf{x}_{\parallel_{\mathbf{p}},1},\mathbf{x}_{\parallel_{\mathbf{p}},2} )   ) \cdot  \mathbf{\eta}_{\mathbf{p}} ( \mathbf{x}_{\parallel_{\mathbf{p}},1},\mathbf{x}_{\parallel_{\mathbf{p}},2} )\neq 0$. 

And by (\ref{Dr})
\begin{equation}\notag
\begin{split}
\frac{\partial   \mathbf{\eta}_{ \mathbf{p}} }{\partial\mathbf{x}_{\parallel_{ \mathbf{p}},1}}  ( \mathbf{x}_{\parallel_{ \mathbf{p}},1},\mathbf{x}_{\parallel_{ \mathbf{p}},2} )& = \frac{\partial R_{ \mathbf{p}} }{\partial \mathbf{x}_{\parallel_{ \mathbf{p}},1}} \mathcal{O}_{ \mathbf{p}}^{-1}  {\hat{r}} + \sin(\mathbf{x}_{\parallel_{ \mathbf{p}},2})R_{\mathbf{p}} \mathcal{O}^{-1}_{ \mathbf{p}} \mathbf{\hat{\theta}},\\
\frac{\partial    \mathbf{\eta}_{ \mathbf{p}}}{\partial \mathbf{x}_{\parallel_{\mathbf{p}},2}}  ( \mathbf{x}_{\parallel_{\mathbf{p}},1},\mathbf{x}_{\parallel_{ \mathbf{p}},2} ) & = \frac{\partial R_{ \mathbf{p}}}{\partial \mathbf{x}_{\parallel_{ \mathbf{p}},2}} \mathcal{O}_{ \mathbf{p}}^{-1}  {\hat{r}} +
R_{ \mathbf{p}} \mathcal{O}^{-1}_{ \mathbf{p}} \mathbf{\hat{\phi}}.
\end{split}
\end{equation}
Directly we check a non-degenerate condition (\ref{nondegenerate_eta})
\begin{equation}\notag
\begin{split}
 &\frac{\partial   \mathbf{\eta}_{\mathbf{p}} }{\partial\mathbf{x}_{\parallel_{\mathbf{p}},1}}  ( \mathbf{x}_{\parallel_{\mathbf{p}} }  ) \times \frac{\partial   \mathbf{\eta}_{\mathbf{p}} }{\partial\mathbf{x}_{\parallel_{\mathbf{p}},2}}  ( \mathbf{x}_{\parallel_{\mathbf{p}} }  )\\
&=  R_{\mathbf{p}} \frac{\partial R_{\mathbf{p}}}{\partial \mathbf{x}_{\parallel_{\mathbf{p}},1}} \mathcal{O}_{\mathbf{p}}^{-1} \mathbf{\hat{\theta}}
+ \sin (\mathbf{x}_{\parallel_{\mathbf{p}},2}) R_{p} \frac{\partial R_{\mathbf{p}}}{\partial \mathbf{x}_{\parallel_{\mathbf{p}},2}} \mathcal{O}_{\mathbf{p}}^{-1} \mathbf{\hat{\phi}}
- \sin (\mathbf{x}_{\parallel_{\mathbf{p}},2}) R_{\mathbf{p}}^{2} \mathcal{O}_{\mathbf{p}}^{-1}  {\hat{r}}\neq 0.
\end{split}
\end{equation}

\vspace{8pt}

\textit{Proof of (ii) of Lemma \ref{chart_lemma}.} 
We fix $\mathbf{p}=(z,w)$ and drop $\mathbf{p}-$index (for the chart) in this step. Define
\begin{equation}\label{Phi}
\Phi_{1} : [0, \infty) \times [0,2\pi)\times (0,\pi)  \rightarrow  \bar{\Omega}\backslash \mathcal{L}_{\mathbf{p}}, \ \ \   \Phi_{ 1}(\mathbf{x}_{\perp  }, \mathbf{x}_{\parallel   }) =   \mathbf{\eta  }(\mathbf{x}_{\parallel   }) + \mathbf{x}_{\perp    } [-\mathbf{n} (\mathbf{x}_{\parallel  })] .
\end{equation}
Note that this mapping is surjective: For any $x\in\bar{\Omega}\backslash \mathcal{L}_{\mathbf{p}},$ there exists (could be several) $\mathbf{x}_{\parallel  }\in [0, 2\pi) \times (0,\pi)$ satisfying $|x- \eta (\mathbf{x}_{\parallel })|^{2}=\min_{ \mathbf{y}_{\parallel}\in   \mathbb{S}^{2}  }|x- \mathbf{\eta}  (\mathbf{y}_{\parallel})|^{2}$ ($\mathbb{S}^{2}$ is compact).  Then $(x- \mathbf{\eta}(\mathbf{x}_{\parallel}^{*}))\cdot \frac{\partial \mathbf{\eta}}{\partial \mathbf{x}_{\parallel,i}}(\mathbf{x}_{\parallel}^{*})=0$ for $i=1,2$. Since $\nabla \mathbf{\eta}(\mathbf{x}_{\parallel}) \neq 0$ from (\ref{nondegenerate_eta}) and $\xi( \mathbf{\eta}(\mathbf{x}_{\parallel}))=0$, we have
$0\equiv \nabla \xi(\mathbf{\eta}(\mathbf{x}_{\parallel})) \cdot \frac{\partial \mathbf{\eta}}{\partial \mathbf{x}_{\parallel,i}} (\mathbf{x}_{\parallel})$. Due to (\ref{nondegenerate_eta}), we conclude $\frac{x-\mathbf{\eta}(\mathbf{x}_{\parallel}^{*})}{| x-\mathbf{\eta}(\mathbf{x}_{\parallel}^{*})   |} = [-\mathbf{n}(\mathbf{x}_{\parallel}^{*})]$ and
$x= \mathbf{\eta}(\mathbf{x}_{\parallel}^{*}) + (x-\mathbf{\eta}(\mathbf{x}_{\parallel}^{*}) ) = \mathbf{\eta}(\mathbf{x}_{\parallel}^{*})  +|x-\mathbf{\eta}(\mathbf{x}_{\parallel}^{*}) | [- \mathbf{n}(\mathbf{x}_{\parallel}^{*})].$

Since $\mathbf{\eta}$ and $\xi$ (therefore $n$ and $\mathbf{n}$) are smooth, the $\Phi_{1}$ is smooth. The Jacobian matrix is
\begin{equation}\label{jac_Phi1}
\frac{\partial \Phi_{1}(\mathbf{x}_{\perp}, \mathbf{x}_{\parallel})}{\partial (\mathbf{x}_{\perp}, \mathbf{x}_{\parallel})}
= \bigg[\begin{array}{ccc} && \\  -\mathbf{n}(\mathbf{x}_{\parallel}) & \substack{ \frac{\partial \mathbf{\eta}}{\partial \mathbf{x}_{\parallel,1}} (\mathbf{x}_{\parallel}) \\ + \mathbf{x}_{\perp} [- \frac{\partial \mathbf{n}}{\partial \mathbf{x}_{\parallel,1}} (\mathbf{x}_{\parallel}) ] } &  \substack{ \frac{\partial \mathbf{\eta}}{\partial \mathbf{x}_{\parallel,2}} (\mathbf{x}_{\parallel})  \\  + \mathbf{x}_{\perp} [ -\frac{\partial \mathbf{n}}{\partial \mathbf{x}_{\parallel,2}} (\mathbf{x}_{\parallel}) ] }\\ &&\end{array}\bigg]_{3\times 3},
\end{equation}
where $\bigg[-\mathbf{n}(\mathbf{x}_{\parallel})\bigg],
\bigg[\substack{
\frac{\partial \mathbf{\eta}}{\partial \mathbf{x}_{\parallel,1}} (\mathbf{x}_{\parallel}) \\
 + \mathbf{x}_{\perp} [-\frac{\partial \mathbf{n}}{\partial \mathbf{x}_{\parallel,1}} (\mathbf{x}_{\parallel}) ]
}
\bigg]
,\bigg[ \substack{\frac{\partial \mathbf{\eta}}{\partial \mathbf{x}_{\parallel,2}} (\mathbf{x}_{\parallel}) \\ + \mathbf{x}_{\perp} [-\frac{\partial \mathbf{n}}{\partial \mathbf{x}_{\parallel,2}} (\mathbf{x}_{\parallel})  ]}   \bigg]$ are column vectors in $\mathbb{R}^{3}$. By the basic linear algebra, the Jacobian (a determinant of the Jacobian matrix) equals
\begin{equation}\notag
\begin{split}
-\mathbf{n}  \cdot (  \frac{\partial \eta}{\partial \mathbf{x}_{\parallel,1}}
 \times   \frac{\partial \eta}{\partial \mathbf{x}_{\parallel,2}}  ) + \mathbf{x}_{\perp}\mathbf{ n}\cdot (   \frac{\partial  \mathbf{n}  }{\partial \mathbf{x}_{\parallel,1}}      \times  \frac{\partial \eta}{\partial \mathbf{x}_{\parallel,2}}  ) - \mathbf{x}_{\perp} \mathbf{n}\cdot  (   \frac{\partial  \mathbf{n}  }{\partial \mathbf{x}_{\parallel,2}}      \times  \frac{\partial \eta}{\partial \mathbf{x}_{\parallel,1}}  ) -|\mathbf{x}_{\perp}|^{2} \mathbf{n} \cdot 
 (  \frac{\partial \mathbf{n}}{ \partial \mathbf{x}_{\parallel,1}}   \times   \frac{\partial \mathbf{n}}{ \partial \mathbf{x}_{\parallel,2}}).\\
\end{split}
\end{equation}
We use the facts $\nabla\mathbf{\eta}(\mathbf{x}_{\parallel}) \neq 0$ and $\xi(\mathbf{\eta}(\mathbf{x}_{\parallel}))=0$ and
$$0\equiv \nabla \xi(\mathbf{\eta}(\mathbf{x}_{\parallel})) \cdot \frac{\partial \mathbf{\eta}}{\partial \mathbf{x}_{\parallel,i}} (\mathbf{x}_{\parallel})  = |\nabla \xi(\mathbf{\eta}(\mathbf{x}_{\parallel}))| \left(\mathbf{n} (\mathbf{x}_{\parallel})  \cdot \frac{\partial \mathbf{\eta}}{\partial \mathbf{x}_{\parallel,i}} (\mathbf{x}_{\parallel})
\right),$$ and therefore
\[
 -\mathbf{n}(\mathbf{x}_{\parallel}) \cdot \left(  \frac{\partial \mathbf{\eta}}{\partial \mathbf{x}_{\parallel,1}} (\mathbf{x}_{\parallel})\times   \frac{\partial \mathbf{\eta}}{\partial \mathbf{x}_{\parallel,2}} (\mathbf{x}_{\parallel}) \right) \neq 0, \ \ \ \text{for all } \mathbf{x}_{\parallel} \in  [0,2\pi)\times (0,\pi),
\]
to conclude that there exists small $\delta>0$ such that if $|\mathbf{x}_{\perp}| \leq \delta$ and $\mathbf{x}_{\parallel} \in [0, 2\pi ) \times (0,\pi)$ then
\[
\text{det} \left(\frac{\partial \Phi_{1}(\mathbf{x}_{\perp}, \mathbf{x}_{\parallel})}{\partial (\mathbf{x}_{\perp}, \mathbf{x}_{\parallel})} \right)
 =- \mathbf{n}(\mathbf{x}_{\parallel}) \cdot \left(  \frac{\partial \mathbf{\eta}}{\partial \mathbf{x}_{\parallel,1}} (\mathbf{x}_{\parallel})\times   \frac{\partial \mathbf{\eta}}{\partial \mathbf{x}_{\parallel,2}} (\mathbf{x}_{\parallel}) \right) + O_{\xi}(|\mathbf{x}_{\perp}|) \neq 0.
\]
We use the inverse function theorem and we choose an inverse map $$\Phi_{1}^{-1} : \Phi_{1}([0,\delta) \times [0,2\pi) \times (0,\pi))\rightarrow [0,\delta) \times [0,2\pi) \times (0,\pi).$$
Note that in general there are infinitely many inverse maps.

If $x\in  \Phi_{1}([0,\delta) \times [0,2\pi) \times (0,\pi))$ then
\[
 \Phi_{1}^{-1}(x):=(\mathbf{x}_{\perp}, \mathbf{x}_{\parallel}) \ \ \ \text{and} \ \ \ x= \mathbf{\eta}(\mathbf{x}_{\parallel}) + \mathbf{x}_{\perp} [-\mathbf{n}(\mathbf{x}_{\parallel})].
\]
Since $\Phi_1$ is surjective onto $\bar{\Omega}\backslash \mathcal{L}_{\mathbf{p}}$, for $x\in\bar{\Omega}\backslash \mathcal{L}_{\mathbf{p}}$ and $\mathbf{x}_{\perp}\geq 0,$
\begin{equation}\notag
\begin{split}
\xi(x) &=\xi ( \mathbf{\eta}(\mathbf{x}_{\parallel}) + \mathbf{x}_{\perp}[- \mathbf{n}(\mathbf{x}_{\parallel})])\\
& = \xi(\mathbf{\eta}(\mathbf{x}_{\parallel})) + \int^{\mathbf{x}_{\perp}}_{0} \frac{d}{d s} \xi( \mathbf{\eta}(\mathbf{x}_{\parallel}) + s[-\mathbf{n}(\mathbf{x}_{\parallel})]) \mathrm{d}s\\
& = \int^{\mathbf{x}_{\perp}}_{0} [- \mathbf{n}(\mathbf{x}_{\parallel})]\cdot \nabla \xi(\mathbf{\eta}(\mathbf{x}_{\parallel} ) + s [-\mathbf{n}(\mathbf{x}_{\parallel})])  \mathrm{d}s \\
&= \int^{\mathbf{x}_{\perp}}_{0} \Big\{ [-\mathbf{n}(\mathbf{x}_{\parallel})]\cdot \nabla \xi(\mathbf{\eta}(\mathbf{x}_{\parallel})) + \int^{s}_{0} \mathbf{n}(\mathbf{x}_{\parallel})  \cdot \nabla^{2} \xi (\mathbf{\eta}(\mathbf{x}_{\parallel})  + \tau [- \mathbf{n}(\mathbf{x}_{\parallel})] ) \cdot \mathbf{n}(\mathbf{x}_{\parallel}) \mathrm{d}\tau  \Big\}\mathrm{d}s,
\end{split}
\end{equation}
and by the convexity of $\xi$ we have the following equivalent relation:

For all $x\in \bar{\Omega}$ there exists (not uniquely) $(\mathbf{x}_{\perp}, \mathbf{x}_{\parallel}) \in [0,\infty) \times[0,2\pi)\times (0,\pi)$ satisfying $x=\mathbf{x}_{\perp} [-\mathbf{n}(\mathbf{x}_{\parallel})] + \mathbf{\eta}(\mathbf{x}_{\parallel})$. Then for all $(\mathbf{x}_{\perp}, \mathbf{x}_{\parallel})$ with $x= \mathbf{x}_{\perp}[- \mathbf{n}(\mathbf{x}_{\parallel})] + \mathbf{\eta}(\mathbf{x}_{\parallel})$ we have
\begin{equation}\label{xi_xperp}
\begin{split}
|\nabla \xi(\mathbf{\eta}(\mathbf{x_{\parallel}}))| |\mathbf{x}_{\perp}| - \frac{1}{C_{\xi}} \frac{|\mathbf{x}_{\perp}|^{2}}{2}
 \ &\leq \ |\xi(x)|
 = |\xi (\mathbf{\eta}(\mathbf{x}_{\parallel}) + \mathbf{x}_{\perp}[- \mathbf{n}(\mathbf{x}_{\parallel})]) |\\
 \ &\leq  \ |\nabla \xi(\mathbf{\eta}(\mathbf{x}_{\parallel}))| | \mathbf{x}_{\perp}  | -   C_{\xi} \frac{|\mathbf{x}_{\perp}|^{2}}{2}.
 \end{split}
\end{equation}
Therefore there exists $0<C_{1} \ll 1$ such that if $|\xi(x)| \leq C_{1} \delta$ then $|\mathbf{x}_{\perp}| < \delta$ and hence there exists unique $(\mathbf{x}_{\perp}, \mathbf{x}_{\parallel})$ and all the above computations hold.

Next we define
\[
\Phi( \mathbf{x}_{\perp}, \mathbf{x}_{\parallel}, \mathbf{v}_{\perp}, \mathbf{v}_{\parallel}) =\left(\begin{array}{cc}{\mathbf{x}_{\perp} [-\mathbf{n}(\mathbf{x}_{\parallel})] + \mathbf{\eta}(\mathbf{x}_{\parallel})}  \\ { \mathbf{v}_{\perp} [-\mathbf{n}(\mathbf{x}_{\parallel})] + \mathbf{v}_{\parallel} \cdot \nabla_{ \mathbf{x}_{\parallel}}\mathbf{ \eta}(\mathbf{x}_{\parallel})- \mathbf{x}_{\perp} \mathbf{v}_{\parallel} \cdot \nabla_{\mathbf{x}_{\parallel}} \mathbf{n}(\mathbf{x}_{\parallel})} \end{array} \right).
\]
The Jacobian matrix is
\begin{equation}\label{jac_Phi}
\frac{\partial \Phi( \mathbf{x}_{\perp}, \mathbf{x}_{\parallel}, \mathbf{v}_{\perp}, \mathbf{v}_{\parallel})}{\partial ( \mathbf{x}_{\perp}, \mathbf{x}_{\parallel}, \mathbf{v}_{\perp}, \mathbf{v}_{\parallel})} = \left[\begin{array}{ccc|ccc}
 &
 \frac{\partial \Phi_{1}(\mathbf{x}_{\perp}, \mathbf{x}_{\parallel})}{\partial (\mathbf{x}_{\perp}, \mathbf{x}_{\parallel})}
 &
  & &\mathbf{0}_{3,3} & \\ \hline
-\mathbf{v}_{\parallel}  \cdot \nabla_{\mathbf{x}_{\parallel}} \mathbf{n}(\mathbf{x}_{\parallel})&
 \substack{ -\mathbf{v}_{\perp} \frac{ \partial \mathbf{n}}{\partial \mathbf{x}_{\parallel,1}}(\mathbf{x}_{\parallel}) \\
 + \mathbf{v}_{\parallel} \cdot \nabla_{\mathbf{x}_{\parallel}} \frac{\partial \mathbf{\eta}}{\partial \mathbf{x}_{\parallel,1}} (\mathbf{x}_{\parallel})  \\
  - \mathbf{x}_{\perp} \mathbf{v}_{\parallel} \cdot \nabla_{\mathbf{x}_{\parallel}} \frac{\partial \mathbf{n}}{\partial \mathbf{x}_{\parallel,1}}(\mathbf{x}_{\parallel})  }&
\substack{ -\mathbf{v}_{\perp} \frac{\partial \mathbf{n}}{\partial \mathbf{x}_{\parallel,2}}(\mathbf{x}_{\parallel}) \\ + \mathbf{v}_{\parallel} \cdot \nabla_{\mathbf{x}_{\parallel}} \frac{\partial \mathbf{\eta}}{\partial \mathbf{x}_{\parallel,2}} (\mathbf{x}_{\parallel})  \\ - \mathbf{x}_{\perp} \mathbf{v}_{\parallel} \cdot \nabla_{\mathbf{x}_{\parallel}} \frac{\partial \mathbf{n}}{\partial \mathbf{x}_{\parallel,2}}(\mathbf{x}_{\parallel})  }
&
&
 \frac{\partial \Phi_{1}(\mathbf{x}_{\perp}, \mathbf{x}_{\parallel})}{\partial (\mathbf{x}_{\perp}, \mathbf{x}_{\parallel})}
&
\end{array}\right].
\end{equation}
The Jacobian (a determinant of the Jacobian matrix) equals
\[
\text{det} \left(  \frac{\partial \Phi( \mathbf{x}_{\perp}, \mathbf{x}_{\parallel}, \mathbf{v}_{\perp}, \mathbf{v}_{\parallel})}{\partial ( \mathbf{x}_{\perp}, \mathbf{x}_{\parallel}, \mathbf{v}_{\perp}, \mathbf{v}_{\parallel})} \right) = \left( \text{det} \left( \frac{\partial \Phi_{1}(\mathbf{x}_{\perp}, \mathbf{x}_{\parallel})}{\partial (\mathbf{x}_{\perp}, \mathbf{x}_{\parallel})} \right)\right)^{2} \neq 0,
\]
for $|\xi(x)| \leq \delta$ (and therefore $|\mathbf{x}_{\perp}| \leq C\delta$) and $\mathbf{x}_{\parallel,{2}} \in (0,\pi)$. By the inverse function theorem we have the inverse mapping $\Phi^{-1}$.

\vspace{8pt}

\textit{Proof of (iii) of Lemma \ref{chart_lemma}.} From $\dot{v}=0$ and the second equation of (\ref{polar}) equals
\begin{equation}\label{dotv0}
\begin{split}
0  = & \dot{\mathbf{v}}_{\perp} (s) [-\mathbf{n}(\mathbf{x}_{\parallel}(s))] -2 \mathbf{v}_{\perp} (s)\mathbf{v}_{\parallel} \cdot \nabla \mathbf{n}(\mathbf{x}_{\parallel}(s))+ \dot{\mathbf{v}}_{\parallel} (s)\cdot \nabla \mathbf{\eta}(\mathbf{x}_{\parallel}(s))\\
& + \mathbf{v}_{\parallel} \cdot \nabla^{2} \mathbf{\eta}(\mathbf{x}_{\parallel} ) \cdot \mathbf{v}_{\parallel}   - \mathbf{x}_{\perp} \dot{\mathbf{v}}_{\parallel} \cdot \nabla \mathbf{n}(\mathbf{x}_{\parallel}) + \mathbf{x}_{\perp} \mathbf{v}_{\parallel} \cdot \nabla^{2} \mathbf{n}(\mathbf{x}_{\parallel}) \cdot \mathbf{v}_{\parallel}.
\end{split}
\end{equation}
We take the inner product with $\mathbf{n}(\mathbf{x}_{\parallel}(s))$ to the above equation to have
\begin{equation}\label{dot_v_perp}
\dot{\mathbf{v}}_{\perp}(s) =   [ \mathbf{v}_{\parallel} \cdot \nabla^{2} \mathbf{ \eta}(\mathbf{x}_{\parallel}) \cdot \mathbf{v}_{\parallel}] \cdot \mathbf{n}(\mathbf{x}_{\parallel})+ \mathbf{x}_{\perp} [ \mathbf{v}_{\parallel} \cdot \nabla^{2} \mathbf{n}(\mathbf{x}_{\parallel})\cdot \mathbf{v}_{\parallel}] \cdot \mathbf{n}( \mathbf{x}_{\parallel}) := F_{\perp} ( \mathbf{v}_{\perp}, \mathbf{v}_{\parallel}, \mathbf{x}_{\parallel} ),
 \end{equation}
where we have used the fact $\nabla \mathbf{n} \perp \mathbf{n}$ and $\nabla \mathbf{\eta} \perp \mathbf{n}$.

Since $0=\xi(\mathbf{\eta}(\mathbf{x}_{\parallel}))$ we take $\mathbf{x}_{\parallel,i}$ and $\mathbf{x}_{\parallel,j}$ derivatives to have
\[
0=\partial_{\mathbf{x}_{\parallel,j}} \big[  \sum_{k} \partial_{k}\xi \partial_{\mathbf{x}_{\parallel},i} \mathbf{\eta}_{k} \big]
 = \sum_{k,m} \partial_{k}\partial_{m} \xi \partial_{\mathbf{x}_{\parallel, j}} \mathbf{\eta}_{m} \partial_{\mathbf{x}_{\parallel,i}} \mathbf{\eta}_{k} + \sum_{k}
 \partial_{k} \xi \partial_{\mathbf{x}_{\parallel,i}}\partial_{\mathbf{x}_{\parallel,j}} \mathbf{\eta}_{k}   ,
\]
we have from the convexity (\ref{convex})
\[
\big[\mathbf{v}_{\parallel} \cdot \nabla^{2} \mathbf{\eta} \cdot \mathbf{v}_{\parallel}\big] \cdot \mathbf{n} =
\sum_{i,j,k}\frac{ \mathbf{v}_{\parallel,{i}} \partial_{k}\xi \partial_{i  } \partial_{j} \mathbf{\eta}_{k} \mathbf{v}_{\parallel,j}  }{| \nabla \xi|} = - \sum_{i,j,k,m} \frac{  \{ \mathbf{v}_{\parallel,i} \partial_{i} \mathbf{\eta}_{m} \} \partial_{k} \partial_{m} \xi \{  \partial_{j} \mathbf{\eta}_{m} \mathbf{v}_{\parallel,j} \} }{|\nabla \xi|}\lesssim_{\xi} -|\mathbf{v}_{\parallel}|^{2}.
\]

Define $a_{ij}(\mathbf{x}_{\parallel})$ via
\begin{equation}\notag
\left[\begin{array}{cc} a_{11} & a_{12} \\ a_{21} & a_{22} \end{array} \right] = \left[\begin{array}{cc}  \partial_{1} \mathbf{n} \cdot \partial_{1} \mathbf{n} & \partial_{1 } \mathbf{n} \cdot \partial _{2} \mathbf{n} \\ \partial_{2}   \mathbf{n}  \cdot \partial_{1}   \mathbf{n}  & \partial_{2 }   \mathbf{n}  \cdot \partial_{2}   \mathbf{n}  \end{array}\right] \left[\begin{array}{cc}  \partial_{1} \mathbf{\eta} \cdot \partial_{1}   \mathbf{\eta}  & \partial_{1 }  \mathbf{\eta}  \cdot \partial _{2}   \mathbf{\eta}  \\ \partial_{2}   \mathbf{\eta}  \cdot \partial_{1}   \mathbf{\eta} & \partial_{2 }   \mathbf{\eta} \cdot \partial_{2}  \mathbf{\eta}  \end{array}\right]^{-1},
\end{equation}
where $\text{det} (\partial_{i} \mathbf{\eta} \cdot \partial_{j} \mathbf{\eta}) = |\partial_{1} \mathbf{\eta}\times \partial_{2} \mathbf{\eta}|^{2} \neq 0$ due to (\ref{nondegenerate_eta}). Then $\nabla \mathbf{n}$ is generated by $\nabla \mathbf{\eta}$ :
\[
-\partial_{i} \mathbf{n} (\mathbf{x}_{\parallel}) = \sum_{k} a_{ik}(\mathbf{x}_{\parallel}) \partial_{k} \mathbf{\eta}(\mathbf{x}_{\parallel}).
\]
We take the inner product (\ref{dotv0}) with $(-1)^{i+1} ( \mathbf{n}(\mathbf{x}_{\parallel}) \times \partial_{i} \mathbf{n}(\mathbf{x}_{\parallel}))$ to have
\begin{equation}\notag
\begin{split}
&\sum_{k} ( \delta_{ki} + \mathbf{x}_{\perp} a_{ki}) \dot{\mathbf{v}}_{\parallel,k} \\
&=\frac{(-1)^{i+1}}{-\mathbf{n}(\mathbf{x}_{\parallel}) \cdot (\partial_{1} \mathbf{\eta}(\mathbf{x}_{\parallel}) \times \partial_{2} \mathbf{\eta}(\mathbf{x}_{\parallel}))} \\
& \    \times \Big\{-
2 \mathbf{v}_{\perp} \mathbf{v}_{\parallel} \cdot \nabla \mathbf{n} (\mathbf{x}_{\parallel}) + \mathbf{v}_{\parallel} \cdot  \nabla^{2} \mathbf{\eta}(\mathbf{x}_{\parallel}) \cdot \mathbf{v}_{\parallel} - \mathbf{x}_{\perp} \mathbf{v}_{\parallel} \cdot \nabla^{2} \mathbf{n}(\mathbf{x}_{\parallel}) \cdot \mathbf{v}_{\parallel}
\Big\} \cdot ( -\mathbf{n} (\mathbf{x}_{\parallel}) \times \partial_{i+1} \mathbf{\eta}(\mathbf{x}_{\parallel})),
\end{split}
\end{equation}
where we used the notational convention for $\partial_{i+1} \mathbf{\eta}$, the index $i+1 \ \text{mod } 2$ . For $|\xi(x)| \ll 1$(and therefore $|\mathbf{x}_{\perp}|\ll 1$) the matrix $\delta_{ki} + \mathbf{x}_{\perp} a_{ki}$ is invertible: there exists the inverse matrix $G_{ij}$ such that $\sum_{i} (\delta_{ki} + \mathbf{x}_{\perp} a_{ki}(\mathbf{x}_{\parallel})) G_{ij} (\mathbf{x}_{\perp}, \mathbf{x}_{\parallel}) = \delta_{kj}.$ Therefore we have
\begin{equation}\label{dot_v_||}
\begin{split}
\dot{\mathbf{v}}_{\parallel,j} &= \sum_{i} G_{ij} (\mathbf{x}_{\perp}, \mathbf{x}_{\parallel})\frac{(-1)^{i+1}}{-\mathbf{n}(\mathbf{x}_{\parallel}) \cdot (\partial_{1} \mathbf{\eta}(\mathbf{x}_{\parallel}) \times \partial_{2} \mathbf{\eta}(\mathbf{x}_{\parallel}))}\\
& \ \ \ \times  \Big\{-
2 \mathbf{v}_{\perp} \mathbf{v}_{\parallel} \cdot \nabla \mathbf{n} (\mathbf{x}_{\parallel}) + \mathbf{v}_{\parallel} \cdot  \nabla^{2}\mathbf{ \eta}(\mathbf{x}_{\parallel}) \cdot \mathbf{v}_{\parallel} - \mathbf{x}_{\perp} \mathbf{v}_{\parallel} \cdot \nabla^{2} \mathbf{n}(\mathbf{x}_{\parallel}) \cdot \mathbf{v}_{\parallel}
\Big\}\\
& \ \ \ \ \  \cdot (-\mathbf{n} (\mathbf{x}_{\parallel}) \times \partial_{i+1} \mathbf{\eta}(\mathbf{x}_{\parallel}))\\
&:= F_{\parallel,j}(\mathbf{x}_{\perp}, \mathbf{x}_{\parallel}, \mathbf{v}_{\perp}, \mathbf{v}_{\parallel}).
\end{split}
\end{equation}
Here
\begin{equation}\label{G}
\begin{split}
&\left[\begin{array}{cc}G_{11} & G_{12} \\ G_{21} & G_{22} \end{array} \right] \\
& = \frac{1}{1 + \mathbf{x}_{\perp} (a_{11} + a_{22}) + (\mathbf{x}_{\perp})^{2} (a_{11} a_{22} - a_{12} a_{21})} \left[\begin{array}{cc}  1+ \mathbf{x}_{\perp} a_{22} & -\mathbf{x}_{\perp} a_{12} \\ - \mathbf{x}_{\perp} a_{21} & 1+ \mathbf{x}_{\perp} a_{11}\end{array} \right],
\\
& \left[\begin{array}{cc} a_{11} & a_{12} \\ a_{21} & a_{22} \end{array} \right] \\ &= \frac{1}{|\partial_{1} \mathbf{\eta}|^{2} |\partial_{2} \mathbf{\eta}|^{2} - (\partial_{1}\mathbf{ \eta} \cdot \partial_{2} \mathbf{\eta})^{2}}
 \\
& \ \ \times
\left[\begin{array}{cc}
|\partial_{1} \mathbf{n}|^{2} |\partial_{2} \mathbf{\eta}|^{2} -(\partial_{1}\mathbf{n} \cdot \partial_{2} \mathbf{n}) (\partial_{1} \mathbf{\eta} \cdot \partial_{2} \mathbf{\eta})  &- |\partial_{1}\mathbf{n}|^{2} (\partial_{1} \mathbf{\eta} \cdot \partial_{2} \mathbf{\eta}) + (\partial_{1} \mathbf{n} \cdot \partial_{2} \mathbf{n}) |\partial_{1}\mathbf{ \eta}|^{2} \\
(\partial_{1} \mathbf{n} \cdot \partial_{2} \mathbf{n}) |\partial_{2} \mathbf{\eta}|^{2} - |\partial_{2} \mathbf{n}|^{2} (\partial_{1} \mathbf{\eta} \cdot \partial_{2}\mathbf{ \eta}) & - (\partial_{1} \mathbf{n} \cdot \partial_{2} \mathbf{n}) (\partial_{1}\mathbf{ \eta}\cdot \partial_{2} \mathbf{\eta}) + |\partial_{2} \mathbf{n}|^{2}| \partial_{1} \mathbf{\eta}|^{2}
 \end{array} \right].
\end{split}
\end{equation}

\vspace{8pt}

\textit{Proof of (iv) of Lemma \ref{chart_lemma}.} Let $\mathbf{q}=(y,u) \in \partial\Omega \times \mathbb{S}^{2}$ with $n(y)\cdot u=0$ and $\mathbf{p}\sim \mathbf{q}.$ First we claim
\begin{equation}\label{pq}
\begin{split}
\mathbf{x}_{\perp_{\mathbf{p}}}& = \mathbf{x}_{\perp_{\mathbf{q}}},\\
\mathbf{\eta}_{\mathbf{p}}  (\mathbf{x}_{\parallel_{\mathbf{p}}})& = \mathbf{\eta}_{\mathbf{q}} (\mathbf{x}_{\parallel_{\mathbf{q}}}),\\
\mathbf{v}_{\perp_{\mathbf{p}}} &= \mathbf{v}_{\perp_{\mathbf{q}}},\\
\mathbf{v}_{\parallel_{\mathbf{p}}} \cdot \nabla \mathbf{\eta}_{\mathbf{p}} (\mathbf{x}_{\parallel_{\mathbf{p}}}) -  \mathbf{x}_{\perp_{\mathbf{p}}} \mathbf{v}_{\parallel_{\mathbf{p}}} \cdot \nabla \mathbf{n}_{\mathbf{p}} (\mathbf{x}_{\parallel_{\mathbf{p}}}) & =
\mathbf{v}_{\parallel_{\mathbf{q}}} \cdot \nabla \mathbf{\eta}_{\mathbf{q}} (\mathbf{x}_{\parallel_{\mathbf{q}}}) -  \mathbf{x}_{\perp_{\mathbf{q}}} \mathbf{v}_{\parallel_{\mathbf{q}}} \cdot \nabla \mathbf{n}_{\mathbf{q}} (\mathbf{x}_{\parallel_{\mathbf{q}}}).
\end{split}
\end{equation}

Once we show the first two equalities then the third and fourth equalities are clearly valid because $\mathbf{n}_{\mathbf{p}} \perp \mathbf{v}_{\parallel}  \cdot \nabla_{\mathbf{x}_{\parallel}} \eta_{\mathbf{p}}$ and $\mathbf{n}_{\mathbf{p}} \perp \mathbf{v}_{\parallel}  \cdot \nabla_{\mathbf{x}_{\parallel}}  \mathbf{n}_{\mathbf{p}}$ for all $\mathbf{v}_{\parallel} \in \mathbb{R}^{2}.$ (Since $\xi(\eta_{\mathbf{p}})=0$ we have $\mathbf{v}_{\parallel,i} \partial_{\mathbf{x}_{\parallel,i}} [\xi(\eta_{\mathbf{p}}(\mathbf{x}_{\parallel,1} , \mathbf{x}_{\parallel,2}))]= 
\mathbf{v}_{\parallel,i} \partial_{\mathbf{x}_{\parallel,i}} \eta_{\mathbf{p}} \cdot \nabla_{\mathbf{x}_{\parallel}} \xi =0,$ and since $\mathbf{n}_{\mathbf{p}} \cdot \mathbf{n}_{\mathbf{p}} =1$ we have $\mathbf{n}_{\mathbf{p}} \cdot [\mathbf{v}_{\parallel} \cdot \nabla_{\mathbf{x}_{\parallel}} \mathbf{n}_{\mathbf{p}} ]=0)$

Now we prove the first two equalities of (\ref{pq}) and it suffices to prove the second one. And it suffices to show that for $x\in \bar{\Omega}$ with $|\xi(x)| \ll 1$ there exists a unique $x^{*} \in \partial\Omega \cap B(x,\delta)$ for some $0<\delta \ll 1$ such that 
\begin{equation}\label{x*}
|x-x^{*}|^{2} = \min_{y \in\partial\Omega, y \sim x} |x-y|^{2}.
\end{equation}
By the definition of $(\ref{Phi})$ the uniqueness of such $x^{*}$ in (\ref{x*}) implies $\eta_{\mathbf{p}}(\mathbf{x}_{\parallel_{\mathbf{p}}})= x^{*} = \eta_{\mathbf{q}}(\mathbf{x}_{\parallel_{\mathbf{q}}}).$

The existence of such $x^{*} \in \partial \Omega$ is clear from the compactness of $\partial\Omega$. Without loss of generality (up to rotation) we may assume $\partial_{x_{3}}\xi(y) \neq 0$ for $y\sim x^{*}$ and $\partial_{x_{1}}\xi(x^{*})=0= \partial_{x_{2}}\xi(x^{*}).$ Then we can find the graph $a : (x_{1},x_{2}) \mapsto \mathbb{R}$ but $\xi(x_{1},x_{2},a(x_{1}, x_{2}))=0$ when $x^{*} = (x^{*}_{1}, x^{*}_{2}, a(x^{*}_{1}, x^{*}_{2}))\in \partial\Omega.$ By the implicit function theorem,
\begin{equation}\notag 
\partial_{x_{1}} a= - \partial_{x_{1}} \xi / \partial_{x_{3}} \xi, \ \ \ 
\partial_{x_{2}} a = - \partial_{x_{2}} \xi / \partial_{x_{3}} \xi,
\end{equation}
and $\partial_{x_{1}}a(x^{*}_{1}, x^{*}_{2})=0= \partial_{x_{2}} a(x^{*}_{1}, x^{*}_{2}).$ 

Clearly $x^{*}= (x^{*}_{1}, x^{*}_{2}, a(x^{*}_{1}, x^{*}_{2}))$ satisfies $\big{|} (x_{1},x_{2},x_{3})- (x_{1}^{*}, x_{2}^{*}, a(x_{1}^{*}, x_{2}^{*}))\big{|}\ll 1$ and 
\begin{equation}\notag
\begin{split}
\frac{\partial}{\partial x_{i}^{*}} \big{|} (x_{1},x_{2},x_{3})- (x_{1}^{*}, x_{2}^{*}, a(x_{1}^{*}, x_{2}^{*}))\big{|}^{2}
=
 - \Big\{
 (x_{i} - x_{i}^{*}) + (x_{3} - a(x_{1}^{*}, x_{2}^{*})) \frac{\partial a}{\partial x_{i}^{*}}(x_{1}^{*}, x_{2}^{*})
 \Big\}
=0, \ \ \text{for}  \ i=1,2,
\end{split}
\end{equation}
if and only if (\ref{x*}) holds. We take $x_{i}^{*}-$derivative to get
\begin{equation}\notag
1 +  \Big( \frac{\partial a}{\partial x_{i}^{*}}  (x^{*}_{1}, x^{*}_{2}) \Big)^{2}
- (x_{3}- a(x^{*}_{1}, x^{*}_{2})   ) \frac{\partial^{2} a}{\partial x_{i}^{*} \partial x_{i}^{*}}(x^{*}_{1}, x^{*}_{2})
= 1 -  (x_{3}- a(x^{*}_{1}, x^{*}_{2})   ) \frac{\partial^{2} a}{\partial x_{i}^{*} \partial x_{i}^{*}}(x^{*}_{1}, x^{*}_{2}) \neq 0,
\end{equation}
for $|x_{3} - a(x_{1}^{*}, x_{2}^{*})|    \ll_{\xi}1.$ Using the inverse function theorem we have a uniquely determined $x^{*} : \{ y \in \bar{\Omega} : y \sim x \} \rightarrow \partial\Omega \cap B(x,\delta)$. This proves our claim (uniqueness of $x^{*}$ in (\ref{x*})) and therefore we prove (\ref{pq}).

From the second equality of (\ref{pq}) and (\ref{x_parallel})
\begin{equation}\notag
\begin{split}
 {\hat{r}}(\mathbf{x}_{\parallel_{\mathbf{q}},1},  \mathbf{x}_{\parallel_{\mathbf{q}},2}) = \mathcal{O}_{\mathbf{q}} \mathcal{O}^{-1}_{\mathbf{p}}  {\hat{r}}(\mathbf{x}_{\parallel_{\mathbf{p}},1},  \mathbf{x}_{\parallel_{\mathbf{p}},2}).
\end{split}
\end{equation}
Therefore for $i=1,2,$
\begin{equation}\notag
\begin{split}
&\sum_{j=1,2}\frac{\partial {\hat{r}}}{\partial \mathbf{x}_{\parallel_{\mathbf{q}},j}} (\mathbf{x}_{\parallel_{\mathbf{q}}})  \frac{\partial \mathbf{x}_{\parallel_{\mathbf{q}},j}}{\partial \mathbf{x}_{\parallel_{\mathbf{p}},i}} = \mathcal{O}_{\mathbf{q}} \mathcal{O}_{\mathbf{p}}^{-1} \frac{\partial   {\hat{r}}}{\partial \mathbf{x}_{\parallel_{\mathbf{p}},i}}(\mathbf{x}_{\parallel_{\mathbf{p}}}) ,
\end{split}
\end{equation}
and from (\ref{Dr})
\begin{equation}\notag
\begin{split}
&\left[\begin{array}{c|c|c} & & \\
 -\sin (\mathbf{x}_{\parallel_{\mathbf{q}},2})   {\hat{r}}(\mathbf{x}_{\parallel_{\mathbf{q}}})
&  \sin(\mathbf{x}_{\parallel_{\mathbf{q}},2}) \mathbf{\hat{\theta}}(\mathbf{x}_{\parallel_{\mathbf{q}}})
& \mathbf{\hat{\phi}}(\mathbf{x}_{\parallel_{\mathbf{q}}})
\\
& &
\end{array} \right]
\left[\begin{array}{c|c}
0 &  \mathbf{0}_{1,2}  \\ \hline
\mathbf{0}_{2,1} & \frac{\partial \mathbf{x}_{\parallel_{\mathbf{q}}}}{\partial \mathbf{x}_{\parallel_{\mathbf{p}}}}
 \end{array} \right]_{3\times 3}\\
 &=
 \mathcal{O}_{\mathbf{q}} \mathcal{O}_{\mathbf{p}}^{-1}
  \left[\begin{array}{c|c|c} & & \\
\mathbf{0}_{3,1}
& \sin(\mathbf{x}_{\parallel_{\mathbf{p}},2}) \mathbf{\hat{\theta}}(\mathbf{x}_{\parallel_{\mathbf{p}}})  & \mathbf{\hat{\phi}}(\mathbf{x}_{\parallel_{\mathbf{p}}}) \\
& &
\end{array} \right],
\end{split}
\end{equation}
where we used $  \mathbf{\hat{\theta}} \times \mathbf{\hat{\phi}} = -   {\hat{r}}.$

For $\mathbf{x}_{\parallel_{\mathbf{p}},2}, \mathbf{x}_{\parallel_{\mathbf{q}},2} \notin\{ 0, \pi\}$,
\begin{equation}\notag
\begin{split}
 \left[\begin{array}{ccc}
 0 & 0 & 0\\
0& \frac{\partial \mathbf{x}_{\parallel_{\mathbf{q}},1}}{\partial \mathbf{x}_{\parallel_{\mathbf{p}},1}} & \frac{\partial \mathbf{x}_{\parallel_{\mathbf{q}},1}}{\partial \mathbf{x}_{\parallel_{\mathbf{p}},2}}    \\
0& \frac{\partial \mathbf{x}_{\parallel_{\mathbf{q}},2}}{\partial \mathbf{x}_{\parallel_{ \mathbf{p}},1}} & \frac{\partial \mathbf{x}_{\parallel_{\mathbf{q}},2}}{\partial \mathbf{x}_{\parallel_{\mathbf{p}},2}}
  \end{array} \right]
 &=  \left[\begin{array}{c}
   \frac{-1}{\sin (\mathbf{x}_{\parallel_{\mathbf{q}},2})}
 {\hat{r}}(\mathbf{x}_{\parallel_{\mathbf{q}}})^{T}\\
 \frac{1}{\sin (\mathbf{x}_{\parallel_{\mathbf{q}},2})}
\mathbf{\hat{\theta}}(\mathbf{x}_{\parallel_{\mathbf{q}}})^{T}\\
\mathbf{\hat{\phi}}(\mathbf{x}_{\parallel_{\mathbf{q}}})^{T}
 \end{array} \right]
 \mathcal{O}_{\mathbf{q}} \mathcal{O}_{\mathbf{p}}^{-1}
 \left[\begin{array}{c|c|c} & & \\
\mathbf{0}_{3,1}
& \sin(\mathbf{x}_{\parallel_{\mathbf{p}},2}) \mathbf{\hat{\theta}}(\mathbf{x}_{\parallel_{\mathbf{p}}})  & \mathbf{\hat{\phi}}(\mathbf{x}_{\parallel_{\mathbf{p}}}) \\
& &
\end{array} \right] .
\end{split}
\end{equation}

Here $\mathcal{O}_{ \mathbf{q}} = \mathcal{O}_{\mathbf{p}} + O_{\xi}(|\mathbf{p}-\mathbf{q}|) ,$ and $\sin(\mathbf{x}_{\parallel_{\mathbf{p}},2}) \mathbf{\hat{\theta}}(\mathbf{x}_{\parallel_{\mathbf{p}}})=  \sin(\mathbf{x}_{\parallel_{\mathbf{q}},2}) \mathbf{\hat{\theta}}(\mathbf{x}_{\parallel_{\mathbf{q}}})   + O_{\xi}(|\mathbf{p}-\mathbf{q}|)$ and $ \mathbf{\hat{\phi}}(\mathbf{x}_{\parallel_{\mathbf{p}}}) =  \mathbf{\hat{\phi}}(\mathbf{x}_{\parallel_{\mathbf{q}}}) +  O_{\xi}(|\mathbf{p}-\mathbf{q}|)$.

Therefore for $\mathbf{x}_{\parallel_{\mathbf{p}},2}, \mathbf{x}_{\parallel_{\mathbf{q}},2} \notin\{ 0, \pi\}$
\begin{equation}\label{Xx}
\bigg[ \frac{\partial \mathbf{x}_{\parallel_{\mathbf{ q}}}}{\partial \mathbf{x}_{\parallel_{\mathbf{p}}}}  \bigg]_{2\times 2} \ \lesssim \ \mathbf{Id}_{2,2} + O_{\xi}(|\mathbf{p}-\mathbf{q}|).
\end{equation}

From the third equality of (\ref{pq})
\begin{equation}\notag
\begin{split}
&\left[\begin{array}{c|c|c}
- \mathbf{n}_{\mathbf{q}} (\mathbf{x}_{\parallel_{\mathbf{q}}}) &
\substack{ \partial_{1} \mathbf{\eta}_{\mathbf{q}}(\mathbf{x}_{\parallel_{\mathbf{q}}})  \\ - \mathbf{x}_{\perp_{\mathbf{q}}} \partial_{1} \mathbf{n}_{\mathbf{q}}(\mathbf{x}_{\parallel_{\mathbf{q}}}) } &
\substack{\partial_{2} \mathbf{\eta}_{\mathbf{q}}  (\mathbf{x}_{\parallel_{\mathbf{q}}})\\  - \mathbf{x}_{\perp_{\mathbf{q}}} \partial_{2} \mathbf{n}_{\mathbf{q}}(\mathbf{x}_{\parallel_{\mathbf{q}}}) }
\end{array} \right]
\left[\begin{array}{c|c}
0 & \mathbf{0}_{1,2}\\ \hline
\mathbf{0}_{2,1} & \frac{\partial \mathbf{v}_{\parallel_{\mathbf{q}}}}{\partial \mathbf{x}_{\parallel_{\mathbf{p}}}}
\end{array}
\right] = \Big[\begin{array}{c|c|c}\mathbf{0}_{3,1} & Z_{1} & Z_{2}  \end{array}\Big],
\end{split}
\end{equation}
and
\begin{equation}\notag
\begin{split}
Z_{i} &=  \sum_{j=1}^{2} \mathbf{v}_{\parallel_{\mathbf{p}},j} \sum_{m=1}^{2}
\Big(\partial_{m} \partial_{j} \mathbf{\eta}_{\mathbf{p}} - \mathbf{x}_{\perp_{\mathbf{p}}} \partial_{m}\partial_{j} \mathbf{n}_{\mathbf{p}}  \Big) \Big(  \delta_{mi} - \frac{\partial \mathbf{x}_{\parallel_{\mathbf{q}},m}}{\partial \mathbf{x}_{\parallel_{\mathbf{p}},i} } \Big)
  \lesssim  O_{\xi}(1) |v| |\mathbf{p}-\mathbf{q}|,
\end{split}
\end{equation}
where we have used (\ref{Xx}).

Therefore 
\begin{equation}\notag
\begin{split}
 \left[
\begin{array}{c|c}
0 & \mathbf{0}_{1,2}\\ \hline
\mathbf{0}_{2,1} & \frac{\partial \mathbf{v}_{\parallel_{\mathbf{q}}}}{\partial \mathbf{x}_{\parallel_{\mathbf{p}}}}
\end{array}
\right]  &= \frac{1}{[-\mathbf{n}_{\mathbf{q}}] \cdot \big([\partial_{1} \mathbf{\eta}_{\mathbf{q}} - \mathbf{x}_{\perp_{\mathbf{q}}} \partial_{1} \mathbf{n}_{\mathbf{q}} ] \times
[\partial_{2} \mathbf{\eta}_{\mathbf{q}} - \mathbf{x}_{\perp_{\mathbf{q}}} \partial_{2} \mathbf{n}_{\mathbf{q}}]
\big)}   \\
& \ \ \ \  \times \left[
\begin{array}{c}
(\partial_{1} \mathbf{\eta}_{\mathbf{q}} - \mathbf{x}_{\perp_{\mathbf{q}}} \partial_{1} \mathbf{n}_{\mathbf{q}}) \times( \partial_{2} \mathbf{\eta}_{\mathbf{q}}- \mathbf{x}_{\perp_{\mathbf{q}}} \partial_{2} \mathbf{n}_{\mathbf{q}}) \\
(\partial_{2} \mathbf{\eta}_{\mathbf{q}} - \mathbf{x}_{\perp_{\mathbf{q}} }   \partial_{2} \mathbf{n}_{\mathbf{q}}) \times (- \mathbf{n}_{\mathbf{q}})
 \\
(-\mathbf{n}_{\mathbf{q}}) \times ( \partial_{1} \mathbf{\eta}_{\mathbf{q}} - \mathbf{x}_{\perp_{\mathbf{q}}} \partial_{1} \mathbf{n}_{\mathbf{q}})
\end{array}
\right]  \Big[\begin{array}{c|c|c}\mathbf{0}_{3,1} & Z_{1} & Z_{2}  \end{array}\Big],
\end{split}
\end{equation}
and hence from the above estimate of ${Z_{i}}$ we have
\begin{equation}\notag
\begin{split}
 \bigg[
  \frac{\partial \mathbf{v}_{\parallel_{\mathbf{q}}}}{\partial \mathbf{x}_{\parallel_{\mathbf{p}}}}
\bigg]_{2\times 2} &\lesssim_{\xi} | {v} ||\mathbf{p}-\mathbf{q}|.
\end{split}
\end{equation}

Again from the third equality of (\ref{pq})
\begin{equation}\notag
\begin{split}
&\left[\begin{array}{c|c|c}
-\mathbf{n}_{\mathbf{q}} (\mathbf{x}_{\parallel_{\mathbf{q}}} ) & \substack{  \partial_{1} \mathbf{\eta}_{\mathbf{q}}(\mathbf{x}_{\parallel_{\mathbf{q}}}) \\ - \mathbf{x}_{\perp_{\mathbf{q}}} \partial_{1} \mathbf{n}_{\mathbf{q}}(\mathbf{x}_{\parallel_{\mathbf{q}}})    } & \substack{ \partial_{2}  \mathbf{\eta}_{\mathbf{q}}(\mathbf{x}_{\parallel_{\mathbf{q}}})   \\  - \mathbf{x}_{\perp_{\mathbf{q}}} \partial_{2} \mathbf{n}_{\mathbf{q}} (\mathbf{x}_{\parallel_{\mathbf{q}}}) }
\end{array} \right]
\left[\begin{array}{c|cc}
1& 0  \\  \hline
0 & \frac{\partial \mathbf{v}_{\parallel_{\mathbf{q}} }  }{\partial \mathbf{v}_{\parallel_{\mathbf{p}} }}   
 \end{array} \right]\\
& =
 \left[\begin{array}{c|c|c}
-\mathbf{n}_{\mathbf{p}} (\mathbf{x}_{\parallel_{\mathbf{p}}}) &  \substack{ \partial_{1} \mathbf{\eta}_{\mathbf{p}} (\mathbf{x}_{\parallel_{\mathbf{p}}}) \\ - \mathbf{x}_{\perp_{\mathbf{p}}} \partial_{1} \mathbf{n}_{\mathbf{p}}  (\mathbf{x}_{\parallel_{\mathbf{p}}}) }
& \substack{ \partial_{2} \mathbf{\eta}_{\mathbf{p}} (\mathbf{x}_{\parallel_{\mathbf{p}}}) \\ - \mathbf{x}_{\perp_{\mathbf{p}}} \partial_{2} \mathbf{n}_{\mathbf{p}}  (\mathbf{x}_{\parallel_{\mathbf{p}}}) }
 \end{array}  \right].
 \end{split}
\end{equation}
Since
\begin{equation}\notag
\begin{split}
& \left[\begin{array}{c|c|c}
-\mathbf{n}_{\mathbf{p}} (\mathbf{x}_{\parallel_{\mathbf{p}}}) &  \substack{ \partial_{1} \mathbf{\eta}_{\mathbf{p}} (\mathbf{x}_{\parallel_{\mathbf{p}}}) \\ - \mathbf{x}_{\perp_{\mathbf{p}}} \partial_{1} \mathbf{n}_{\mathbf{p}}  (\mathbf{x}_{\parallel_{\mathbf{p}}}) }
& \substack{ \partial_{2} \mathbf{\eta}_{\mathbf{p}} (\mathbf{x}_{\parallel_{\mathbf{p}}}) \\ - \mathbf{x}_{\perp_{\mathbf{p}}} \partial_{2} \mathbf{n}_{\mathbf{p}}  (\mathbf{x}_{\parallel_{\mathbf{p}}}) }
 \end{array}  \right]\\
&  = \left[\begin{array}{c|c|c}
-\mathbf{n}_{\mathbf{q}} (\mathbf{x}_{\parallel_{\mathbf{q}}} ) & \substack{  \partial_{1} \mathbf{\eta}_{\mathbf{q}}(\mathbf{x}_{\parallel_{\mathbf{q}}}) \\ - \mathbf{x}_{\perp_{\mathbf{q}}} \partial_{1} \mathbf{n}_{\mathbf{q}}(\mathbf{x}_{\parallel_{\mathbf{q}}})    } & \substack{ \partial_{2}  \mathbf{\eta}_{\mathbf{q}}(\mathbf{x}_{\parallel_{\mathbf{q}}})   \\  - \mathbf{x}_{\perp_{\mathbf{q}}} \partial_{2} \mathbf{n}_{\mathbf{q}} (\mathbf{x}_{\parallel_{\mathbf{q}}}) }
\end{array} \right] + O_{\xi}(|\mathbf{p}-\mathbf{q}|),
\end{split}
\end{equation}
we have
\[
 \left[\begin{array}{cc}
  \frac{\partial \mathbf{v}_{\parallel_{\mathbf{q}},1}  }{\partial \mathbf{v}_{\parallel_{\mathbf{p}},1}} & \frac{\partial \mathbf{v}_{\parallel_{\mathbf{q}},1}  }{\partial \mathbf{v}_{\parallel_{\mathbf{p}},2}} \\
  \frac{\partial \mathbf{v}_{\parallel_{\mathbf{q}},2}  }{\partial \mathbf{v}_{\parallel_{\mathbf{p}},1}} & \frac{\partial \mathbf{v}_{\parallel_{\mathbf{q}},2}  }{\partial \mathbf{v}_{\parallel_{\mathbf{p}},2}}
 \end{array} \right]= \mathbf{Id}_{2,2} + O_{\xi}(|\mathbf{p}-\mathbf{q}|).
\]

\end{proof}

\vspace{4pt}

 We are ready to prove Theorem \ref{theorem_Dxv}:

\begin{proof}[\textbf{Proof of Theorem \ref{theorem_Dxv}}]
First we consider the case of $t<t_{\mathbf{b}}(x,v).$ In this case
 \[
(X_{\mathbf{cl}}(s;t,x,v), V_{\mathbf{cl}}(s;t,x,v) ) =(x-(t-s)v,v).
\]
Directly
\begin{equation}\notag\label{Dxv_interior}
\begin{split}
 \frac{\partial(  {X}_{\mathbf{cl}}(s;t,x,v), V_{\mathbf{cl}}(s,t,x,v))}{\partial (t,x,v)} 
 & = \left[\begin{array}{cccccc}   -v & \mathbf{Id}_{3, 3} & -(t-s)\mathbf{Id}_{3,3} \\ \mathbf{0}_{3,1} & \mathbf{0}_{3,3} & \mathbf{Id}_{3,3} \end{array}\right]_{6\times 7}\\
&:= \left[\begin{array}{c|ccc|ccc}
-v_{1} & 1 & 0 & 0  & -(t-s) & 0 & 0 \\
-v_{2} & 0 & 1 & 0 & 0 & -(t-s) & 0 \\
-v_{3} & 0 & 0 & 1 & 0 & 0 & -(t-s) \\ \hline
0 & 0 & 0 & 0 & 1 & 0 & 0 \\
0 & 0 & 0 & 0 & 0 & 1 & 0 \\
0 & 0 & 0 & 0 & 0 & 0 & 1 \end{array}
   \right]
,
\end{split}
\end{equation}
 where $\mathbf{Id}_{m,m}$ is the $m$ by $m$ identity matrix and $\mathbf{0}_{m,n}$ is the $m$ by $n$ zero matrix.

 \vspace{8pt}

Now we consider the case of $t\geq t_{\mathbf{b}}(x,v)$. We split our proof into 10 steps.

 \vspace{8pt}

\noindent{\textit{Step 1. Moving frames and grouping with respect to the scaling $t|v|=L_{\xi}$, with fixed $0< L_{\xi}\ll 1.$   }  }

\vspace{4pt}

Fix $(t,x,v)\in [0,\infty)\times \bar{\Omega}\times \mathbb{R}^{3}.$ Also we fix small constant $\delta=\delta_{\xi}>0$ which depends on the domain. We define, at the boundary,
\begin{equation}
\mathbf{r}^{\ell} : = \frac{|\mathbf{v}_{\perp}^{\ell}|}{|v^{\ell}|} = \frac{|v\cdot n(x^{\ell})|}{|v|} = \frac{|V_{\mathbf{cl}}(t^{\ell};t,x,v)\cdot n(X_{\mathbf{cl}} (t^{\ell};t,x,v))|}{|v|}.\label{r}
\end{equation}

Bounces $\ell$(and $(t^{\ell}, x^{\ell},v^{\ell})$) are categorized as \textit{Type I} or \textit{Type II}:
\begin{equation}\label{type_r}
\begin{split}
 \text{a bounce } \ell \text{ is \textit{Type I (almost grazing)} if and only if } & \mathbf{r}^{\ell} \leq   \sqrt{\delta}, \\
 \text{a bounce } \ell \text{ is \textit{Type II (non-grazing)} if and only if }  & \mathbf{r}^{\ell} >  \sqrt{\delta} .
\end{split}
\end{equation}
%
%
%
%
%

Let $s_{*} \in [t^{\ell+1}, t^{\ell}]$ such that $|\xi(X_{\mathbf{cl}}(s_{*};t^{\ell},x^{\ell},v^{\ell})) |= \max_{t^{\ell+1} \leq \tau \leq t^{\ell}}
|\xi(X_{\mathbf{cl}}(\tau;t^{\ell},x^{\ell},v^{\ell}))|.$ Since $\frac{d^{2}}{ds^{2}} \xi(X_{\mathbf{cl}}(s;t^{\ell},x^{\ell},v^{\ell}))=\frac{d^{2}}{ds^{2}} \xi(x^{\ell}-(t^{\ell}-s)v^{\ell}) = v^{\ell}\cdot \nabla_{x}^{2} \xi(x^{\ell}-(t^{\ell}-s)v^{\ell})\cdot
v^{\ell} >0$ there exists a unique $s$ solving $\frac{d}{ds} \xi(X_{\mathbf{cl}}(s;t^{\ell},x^{\ell},v^{\ell})) =  v^{\ell}\cdot
\nabla_{x}\xi(X_{\mathbf{cl}}(s;t^{\ell},x^{\ell},v^{\ell}))=0$ which is $s_{*}$. Note that $v^{\ell}\cdot \nabla \xi(x^{\ell}-(t^{\ell}-s) v^{\ell})$ is monotone in either one of the interval $(t^{\ell+1}, s_{*})$ or $(s_{*}, t^{\ell})$. Without of generality we may assume $|t^{\ell}-s_{*}| \geq \frac{1}{2}|t^{\ell+1}-t^{\ell}|.$ Then
\begin{equation}\notag
\begin{split}
|\xi(X_{\mathbf{cl}}(s_{*} ;t^{\ell}, x^{\ell},v^{\ell})  )| & = \Big| \int^{t^{\ell}} _{ s_{*}} v^{\ell} \cdot \nabla \xi (x^{\ell} - (t^{\ell}-s)v^{\ell}, v^{\ell}) \Big|= \Big| \int^{t^{\ell}}_{s_{*}} \int^{t^{\ell}}_{s } v^{\ell} \cdot \nabla^{2} \xi(x^{\ell} - (t^{\ell}-\tau)v^{\ell}, v^{\ell}) \cdot v^{\ell}
\Big|\\
& \simeq_{\xi} \frac{|v^{\ell}|^{2}|t^{\ell}-s_{*}|^{2}}{2} \simeq_{\xi}   \Big(\sup_{s\in [t^{\ell+1}, t^{\ell}]}\frac{|v^{\ell} \cdot n(X_{\mathbf{cl}}(s))|}{|v^{\ell}|}\Big)^{2},
\end{split}
\end{equation}
where we used (\ref{40}) and (\ref{41}) and the Velocity lemma (Lemma \ref{velocity_lemma}).

Therefore if a bounce $\ell$ is \textit{Type I} then $\max_{t^{\ell+1} \leq \tau \leq t^{\ell}} |\xi(X_{\mathbf{cl}}(\tau;t,x,v))| \leq C\delta$. If a bounce $\ell$ is \textit{Type II} then $|\xi(X_{\mathbf{cl}}(\tau;t,x,v))| > C \delta \text{ for some } \tau \in [t^{\ell+1},t^{\ell}]$.

\vspace{4pt}

Now we assign a coordinate chart for each bounce $\ell$ (moving frames).

For \textit{Type I} bounce $\ell$ in (\ref{type_r}), we assign $ {\mathbf{p}}^{\ell}\in \partial\Omega\times \mathbb{S}^{2}$ and $ {\mathbf{p}}^{\ell}-$spherical coordinates in Lemma \ref{chart_lemma} and (\ref{polar}): we choose $ {\mathbf{p}}^{\ell}:=(z^{\ell}, w^{\ell})$ on $\partial\Omega\times \mathbb{S}^{2}$ with $n(z^{\ell} ) \cdot w^{\ell}=0$ such that $z^{\ell}$ and $w^{\ell}$ do not depends on $(t,x,v)$ and
\begin{equation}\label{pl}
|z^{\ell} - x^{\ell}| < \mathbf{r}^{\ell}, \ \ \ \Big|w^{\ell} - \frac{v^{\ell} - {(v^{\ell}\cdot n(z^{\ell}))}n(z^{\ell})}{|  v^{\ell} - {(v^{\ell}\cdot n(z^{\ell}))}n(z^{\ell})  |} \Big| < \mathbf{r}^{\ell}.
\end{equation}
Note that, by the definition of \textit{Type I }$ bounce, |v^{\ell} - (v^{\ell} \cdot n(z^{\ell}) n(z^{\ell}))|^{2}=|v|^{2} - |\mathbf{v}_{\perp}^{\ell}|^{2}\gtrsim |v|^{2}(1-\delta) \gtrsim_{\delta} |v|^{2}$ and hence $w^{\ell}$ is well-defined.

Moreover
\begin{equation}\label{L}
|X_{\mathbf{cl}}(s;t,x,v)- \mathcal{L}_{\mathbf{p}^{\ell}}| \gtrsim C_\delta>0,
\end{equation}
for $|v||t^{\ell}-s| \leq  \frac{1}{100} \min_{x\in\partial\Omega} |x|.$ This is due to the fact that the projection of $V_{\mathbf{cl}}(s)$ on the plane passing $z^{\ell}$ and perpendicular to $n(z^{\ell}) \times w^{\ell}$ is at most $|v|$ but the distance from $z^{\ell}$ to the origin(the projection of poles $\mathcal{N}_{\mathbf{p}^{\ell}}$ and $\mathcal{S}_{\mathbf{p}^{\ell}}$) has lower bound $\frac{1}{10} \min_{x\in \partial\Omega}|x|,$ $s\sim t^{\ell}.$

For \textit{Type II} bounce $\ell$$(t^{\ell},x^{\ell},v^{\ell})$, we choose $\mathbf{p}^{\ell}= (z^{\ell}, w^{\ell})$ with $|z^{\ell}-x^{\ell}|\leq \sqrt{\delta}$ but we choose arbitrary $w^{\ell} \in\mathbb{S}^{2}$ satisfying $n(z^{\ell})\cdot w^{\ell}=0$. We choose $\mathbf{p}^{\ell}-$spherical coordinate in Lemma \ref{chart_lemma} and (\ref{polar}) with this $\mathbf{p}^{\ell}.$ Note that unlike \textit{Type I}, this $\mathbf{p}^{\ell}-$spherical coordinate might not be defined for $s\in [t^{\ell+1}, t^{\ell}]$ but only defined near the boundary.

Whenever the moving frame is defined (for all $\tau \in (t^{\ell+1}, t^{\ell}]$ when $\ell$ is \textit{Type I}, and $\tau \sim t^{\ell}$ when $\ell$ is \textit{Type II}) we denote
\[
(\mathbf{X}_{\mathbf{\ell}}(\tau), \mathbf{V}_{\mathbf{\ell}}(\tau))=
( \mathbf{x}_{\perp_\mathbf{\ell}}(\tau), \mathbf{x}_{\parallel_\mathbf{\ell}}(\tau), \mathbf{v}_{\perp_\mathbf{\ell}}(\tau), \mathbf{v}_{\parallel_{\mathbf{\ell}}}(\tau)) 
: = \Phi^{-1}_{\mathbf{p}^{\ell}}(X_{\mathbf{cl}}(\tau), V_{\mathbf{cl}}(\tau)).
\]
Especially at the boundary we denote
\[
(\mathbf{x}_{\perp_{\ell}}^{\ell},\mathbf{x}_{\parallel_{\ell}}^{\ell} , \mathbf{v}_{\perp_{\ell}}^{\ell},\mathbf{v}_{\parallel_{\ell}}^{\ell} )
:= \lim_{\tau \uparrow t^{\ell}} (\mathbf{X}_{\mathbf{\ell}}(\tau), \mathbf{V}_{\mathbf{\ell}}(\tau))
, \ \ \ \text{with  } \mathbf{x}_{\perp_{\ell}}^{\ell}=0, \ \mathbf{v}_{\perp_{\ell}}^{\ell}\geq 0.
\]
Then we define
\[
( \mathbf{x}_{\perp_{\ell}}^{\ell+1}, \mathbf{x}_{\parallel_{\ell}}^{\ell+1},  \mathbf{v}_{\parallel_{\ell}}^{\ell+1} ) = 
\lim_{\tau \downarrow t^{\ell+1}} (   \mathbf{x}_{\perp_{\ell}}(\tau), \mathbf{x}_{\parallel_{\ell}}(\tau),  \mathbf{v}_{\parallel_{\ell}}(\tau) ),
\]
and
\begin{equation}\label{v_perp}
  \mathbf{v}_{\perp_{\ell}}^{\ell+1}:= - \lim_{\tau \downarrow t^{\ell+1}} \mathbf{v}_{\perp_{\ell}}(\tau).
\end{equation}


Now we regroup the indices of the specular cycles, without order changing, as
\begin{equation}\notag
\begin{split}
\{0,1,2,\cdots,   \ell_{*}-1, \ell_{*}\}  =\{0\} \cup  \mathcal{G}_{1} \cup \mathcal{G}_{2} \cup \cdots \cup\mathcal{G}_{[\frac{|t-s||v|}{L_{\xi}}]} \cup \mathcal{G}_{[\frac{|t-s||v|}{L_{\xi}}]+1} 
,
\end{split}
\end{equation}
where $\big[ a \big]\in\mathbb{N}$ is the greatest integer less than or equal to $a$. Each group is
\begin{equation}\label{group}
\begin{split}
\mathcal{G}_{1} &= \{ 1, \cdots, \ell_{1}-1, \ell_{1}\},\\
 \mathcal{G}_{2} &= \{ \ell_{1}, \ell_{1}+1,\cdots , \ell_{2}-1,\ell_{2}\},    \\
 & \ \   \vdots\\
\mathcal{G}_{[\frac{|t-s||v|}{L_{\xi}}]} &= \{\ell_{[\frac{|t-s||v|}{L_{\xi}}]-1}  ,\ell_{[ \frac{|t-s||v|}{{L_{\xi}}}]-1 }+1,\cdots,\ell_{[ \frac{|t-s||v|}{L_{\xi}}]}-1,\ell_{[ \frac{|t-s||v|}{L_{\xi}}]}  \},\\
 \mathcal{G}_{[ \frac{|t-s||v|}{L_{\xi}}]+1}  &= \{\ell_{[ \frac{|t-s||v|}{L_{\xi}}] }  ,\ell_{[ \frac{|t-s||v|}{L_{\xi}}] } +1,\cdots,\ell_{*}
  \},
\end{split}
\end{equation}
where $\ell_{1} = \inf\{ \ell \in\mathbb{N} : |v|\times |t^{0} - t^{\ell_{1}}| \geq L_{\xi} \}$ and inductively
\begin{equation}\label{L_xi}
\ell_{i} = \inf\{ \ell \in\mathbb{N} :    |v|\times |t^{\ell_{i}} - t^{\ell_{i+1}}| \geq L_{\xi}\},
\end{equation}
and we have denoted $\ell_{*} = \ell_{[ \frac{|t-s||v|}{L_{\xi}}]+1}$.

By the chain rule, with the assigned $\mathbf{p}^{\ell}-$spherical coordinate (moving frame), we have for fixed $0 \leq s \leq t$ and $s \in (t^{\ell_{*}+1}, t^{\ell_{*}})$
\begin{equation}\label{chain}
\begin{split}
&\frac{\partial (s,X_{\mathbf{cl}}(s;t,x,v),V_{\mathbf{cl}}(s;t,x,v))}{\partial (t,x,v)}\\
 = &\underbrace{\frac{\partial (s,X_{\mathbf{cl}}(s), V_{\mathbf{cl}}(s))}{\partial (t^{\ell_{*}},0, \mathbf{x}_{\parallel_{\ell_{*}}}^{\ell_{*}}, \mathbf{v}_{\perp_{\ell_{*}}}^{\ell_{*}}, \mathbf{v}_{\parallel_{\ell_{*}}}^{\ell_{*}})}  }_{\text{from the last bounce to the }s-\text{plane}} \\
\times& \underbrace{\prod_{i=1}^{ [\frac{|t-s||v|}{L_{*}}]} \underbrace{\frac{\partial (t^{\ell_{i+1}},0, \mathbf{x}_{\parallel_{\ell_{i+1}}}^{\ell_{i+1}}, \mathbf{v}_{\perp_{\ell_{i+1}}}^{\ell_{i+1}}, \mathbf{v}_{\parallel_{\ell_{i+1}}}^{\ell_{i+1}})}{\partial (t^{\ell_{i+1}-1},0, \mathbf{x}_{\parallel_{{\ell_{i+1}-1}} }^{\ell_{i+1}-1}, \mathbf{v}_{\perp_{{\ell_{i+1}-1}}}^{\ell_{i+1}-1}, \mathbf{v}_{\parallel_{{\ell_{i+1}-1}}}^{\ell_{i+1}-1})}
 \times \cdots \times
 \frac{\partial (t^{\ell_{i}+1},0, \mathbf{x}_{\parallel_{{\ell_{i} +1}}}^{\ell_{i} +1}, \mathbf{v}_{\perp_{{\ell_{i} +1}}}^{\ell_{i}+1}, \mathbf{v}_{\parallel_{{\ell_{i} +1}}}^{\ell_{i}+1})}{\partial (t^{\ell_{i} },0, \mathbf{x}_{\parallel_{ {\ell_{i} }}}^{\ell_{i}  }, \mathbf{v}_{\perp_{ {\ell_{i} }}}^{\ell_{i} }, \mathbf{v}_{\parallel_{ {\ell_{i} }}}^{\ell_{i} })}  }_{i-\text{th intermediate group}} }_{\text{whole intermediate groups}}\\
 \times & \underbrace{ \frac{\partial (t^{1}, 0 , \mathbf{x}_{\parallel_{1}}^{1}, \mathbf{v}_{\perp_{1}}^{1}, \mathbf{v}_{\parallel_{1}}^{1})}{\partial (t,x,v)}}_{\text{from the } t-\text{plane to the first bounce} }.
\end{split}
\end{equation}

\vspace{4pt}

\noindent\textit{Step 2. From the last bounce $\ell_{*}$ to the $s-$plane}

We choose $s^{ \ell_{*}}\in (\frac{t^{\ell_{*}}+s}{2},t^{\ell_{*}})\subset(s,t^{\ell_{*}})$ such that $|v||t^{\ell_{*}} - s^{\ell_{*}}|\ll 1$ and the $\ell_{*}-$spherical coordinate $(\mathbf{X}_{\ell_{*}}(s^{\ell_{*}}), \mathbf{V}_{\ell_{*}}(s^{\ell_{*}}))$ is well-defined regardless of {type}s of $\ell_{*}$ in (\ref{type_r}). Notice that $s^{\ell_{*}}$ is independent of $t^{\ell_{*}}$ and $s$ so that $\frac{\partial s^{\ell_{*}}}{\partial t^{\ell_{*}}}=0= \frac{\partial s^{\ell_{*}} }{\partial s  }.$

By the chain rule,
 \begin{equation}\notag
 \begin{split}
&\frac{\partial (s,X_{\mathbf{cl}}(s), V_{\mathbf{cl}}(s))}
{\partial (t^{\ell_{*}},0, \mathbf{x}_{\parallel_{\ell_{*}}}^{\ell_{*}}, \mathbf{v}_{\perp_{\ell_{*}}}^{\ell_{*}}, \mathbf{v}_{\parallel_{\ell_{*}}}^{\ell_{*}})}\\
&= \frac{\partial (s,X_{\mathbf{cl}}(s), V_{\mathbf{cl}}(s))}
{\partial (s^{\ell_{*}}, \mathbf{X}_{\ell_{*}}(s^{\ell_{*}}),   \mathbf{V}_{\ell_{*}}(s^{\ell_{*}})  )}
   \frac{\partial (s^{\ell_{*}}, \mathbf{x}_{\perp_{\ell_{*}}}(s^{\ell_{*}}),  \mathbf{x}_{\parallel_{\ell_{*}}}(s^{\ell_{*}}),   \mathbf{v}_{\perp_{\ell_{*}}}(s^{\ell_{*}}), \mathbf{v}_{\parallel_{\ell_{*}}}(s^{\ell_{*}})
    )}{\partial (t^{\ell_{*}},0, \mathbf{x}_{\parallel_{\ell_{*}}}^{\ell_{*}}, \mathbf{v}_{\perp_{\ell_{*}}}^{\ell_{*}}, \mathbf{v}_{\parallel_{\ell_{*}}}^{\ell_{*}})}
\\
& = \frac{\partial (s,X_{\mathbf{cl}}(s), V_{\mathbf{cl}}(s))}{\partial (s^{\ell_{*}},  {X}_{ \mathbf{cl} }(s^{\ell_{*}}),   {V}_{ \mathbf{cl}   }(s^{\ell_{*}})  )}
\frac{\partial (s^{\ell_{*}},  {X}_{  \mathbf{cl}   } (s^{\ell_{*}}),   {V}_{ \mathbf{cl}   }(s^{\ell_{*}})  )}{\partial (s^{\ell_{*}}, \mathbf{X}_{ {\ell_{*}}}(s^{\ell_{*}}),   \mathbf{V}_{ {\ell_{*}}}(s^{\ell_{*}})  )}
   \frac{\partial (s^{\ell_{*}}, \mathbf{x}_{\perp_{\ell_{*}}}(s^{\ell_{*}}),  \mathbf{x}_{\parallel_{\ell_{*}}}(s^{\ell_{*}}),   \mathbf{v}_{\perp_{\ell_{*}}}(s^{\ell_{*}}), \mathbf{v}_{\parallel}(s^{\ell_{*}})
    )}{\partial (t^{\ell_{*}},0, \mathbf{x}_{\parallel_{\ell_{*}}}^{\ell_{*}}, \mathbf{v}_{\perp_{\ell_{*}}}^{\ell_{*}}, \mathbf{v}_{\parallel_{\ell_{*}}}^{\ell_{*}})}.
 \end{split}
 \end{equation}
Firstly, we claim
\begin{equation}\label{ss1}
\frac{\partial (s,X_{   \mathbf{cl}}(s), V_{ \mathbf{cl}  }(s))}{\partial (s^{\ell_{*}}, \mathbf{X}_{ {\ell_{*}}}(s^{\ell_{*}}),   \mathbf{V}_{ {\ell_{*}}}(s^{\ell_{*}})  )}
=
\left[\begin{array}{ccc}
0 & \mathbf{0}_{1,3} & \mathbf{0}_{1,3}\\
-V_{\mathbf{cl}}(s^{\ell_{*}})& O_{\xi}(1) (1+|v||s^{\ell_{*}}-s|) & O_{\xi}(1)  |s^{\ell_{*}}-s|\\
\mathbf{0}_{3,1} & O_{\xi}(1)  |v| & O_{\xi}(1)
\end{array} \right].
\end{equation}
Since
\[
X_{\mathbf{cl}}(s) = X_{\mathbf{cl}}(s^{\ell_{*}}) - (s^{\ell_{*}}-s) V_{\mathbf{cl}}(s^{\ell_{*}}), \ \ V_{\mathbf{cl}}(s) = V_{\mathbf{cl}}(s^{\ell_{*}}),
\]
and $s^{\ell_{*}}$ is independent of $s$, we have
\begin{equation}\notag
\begin{split}
\frac{\partial (s,X_{\mathbf{cl}}(s), V_{\mathbf{cl}}(s))}{\partial (s^{\ell_{*}},  {X}_{\mathbf{cl}}(s^{\ell_{*}}),   {V}_{\mathbf{cl}}(s^{\ell_{*}})  )}&= \left[\begin{array}{ccc}
0 & \mathbf{0}_{1,3} & \mathbf{0}_{1,3} \\
-V_{\mathbf{cl}}(s^{\ell_{*}}) & \mathbf{Id}_{3,3} & -(s^{\ell_{*}}-s) \mathbf{Id}_{3,3}\\
\mathbf{0}_{3,1} & \mathbf{0}_{3,3} & \mathbf{Id}_{3,3}
\end{array} \right].
\end{split}
\end{equation}
Due to Lemma \ref{chart_lemma},
\begin{equation}\notag
\begin{split}
&\frac{\partial (s^{\ell_{*}},  {X}_{\mathbf{cl}}(s^{\ell_{*}}),   {V}_{\mathbf{cl}}(s^{\ell_{*}})  )}{\partial (s^{\ell_{*}}, \mathbf{X}_{ {\ell_{*}}}(s^{\ell_{*}}),   \mathbf{V}_{ {\ell_{*}}}(s^{\ell_{*}})  )}\\
&=  {\left[ \begin{array}{c|ccc|ccc}
1 &   & \mathbf{0}_{1,3} &  &  & \mathbf{0}_{1,3} &   \\ \hline
\mathbf{0}_{3,1}& -\mathbf{n}_{{\ell_{*}}}  &
   \substack{  \partial_{ 1} \mathbf{\eta}_{{\ell_{*}}}  \\
- \mathbf{x}_{\perp_{{\ell_{*}}}}  \partial_{ 1} \mathbf{n}_{{\ell_{*}}}  }&
 \substack{   \partial_{2} \mathbf{\eta}_{{\ell_{*}}}\\
-\mathbf{x}_{\perp_{{\ell_{*}}}}  \partial_{2} \mathbf{n}_{{\ell_{*}}}   }
 & & \mathbf{0}_{3,3} & \\ \hline
\mathbf{0}_{3,1}&
-\mathbf{v}_{\parallel_{{\ell_{*}}}} \cdot \nabla_{ \mathbf{x}_{\parallel_{{\ell_{*}}}} } \mathbf{n}_{{\ell_{*}}}
&  \substack{ \mathbf{v}_{\parallel_{{\ell_{*}}}} \cdot \nabla  \partial_{  1  } \mathbf{\eta}_{{\ell_{*}}} \\
-  \mathbf{v}_{\perp_{{\ell_{*}}}}\partial_{1}  \mathbf{n}_{{\ell_{*}}}   \\
-  \mathbf{x}_{\perp_{{\ell_{*}}}}  \mathbf{v}_{\parallel_{{\ell_{*}}}}\cdot \nabla \partial_{ 1} \mathbf{n}_{{\ell_{*}}}   }
 & \substack{ \mathbf{v}_{\parallel_{{\ell_{*}}}} \cdot \nabla \partial_{2} \mathbf{\eta}_{{\ell_{*}}} \\
- \mathbf{v}_{\perp_{{\ell_{*}}}} \partial_{ 2 } \mathbf{n}{\ell_{*}} \\
-  \mathbf{x}_{\perp_{{\ell_{*}}}}  \mathbf{v}_{\parallel_{{\ell_{*}}}}\cdot \nabla  \partial_{2} \mathbf{n}_{{\ell_{*}}}  }
    & -\mathbf{n}_{{\ell_{*}}}  
    & \substack{\partial_{1} \mathbf{\eta}_{{\ell_{*}}} \\
   - \mathbf{x}_{\perp_{{\ell_{*}}}}\partial_{1} \mathbf{n}_{{\ell_{*}}} }
     & \substack{\partial_{2} \mathbf{\eta}_{{\ell_{*}}}  \\ - \mathbf{x}_{\perp_{{\ell_{*}}}} \partial_{2} \mathbf{n}}_{{\ell_{*}}}
  \end{array}\right],}
\end{split}
\end{equation}
 where all entries are evaluated at $(\mathbf{X}_{ {\ell_{*}}}(s^{\ell_{*}}), \mathbf{V}_{ {\ell_{*}}}(s^{\ell_{*}})).$ The multiplication of above two matrices gives (\ref{ss1}).

Secondly, we claim that whenever $\mathbf{p}^{\ell}-$spherical coordinate is defined for all $\tau \in [s^{\ell}, t^{\ell}]$
\begin{equation}\label{s1tstar}
\begin{split}
&\frac{\partial (s^{\ell }, \mathbf{x}_{\perp_{{\ell }}}(s^{\ell }),  \mathbf{x}_{\parallel_{{\ell }}}(s^{\ell }),   \mathbf{v}_{\perp_{{\ell }}}(s^{\ell }), \mathbf{v}_{\parallel_{{\ell }}}(s^{\ell })
    )}{\partial (t^{\ell },0, \mathbf{x}_{\parallel_{{\ell }}}^{\ell }, \mathbf{v}_{\perp_{{\ell }}}^{\ell }, \mathbf{v}_{\parallel_{{\ell }}}^{\ell })}\\
=& \tiny{ \left[\begin{array}{c|cc|cc}
0 & 0& \mathbf{0}_{1,2} & 0 &   \mathbf{0}_{1,2}  \\ \hline
 -\mathbf{v}_{\perp}(s^{\ell}) & 0 & O_{\xi}(1) |v|^{2} |t^{\ell}-s^{\ell}|^{2} & O_{\xi}(1)   |t^{\ell}-s^{\ell}|  &  O_{\xi}(1)     |v| |t^{\ell}-s^{\ell}|^{2 }   \\
-\mathbf{v}_{\parallel}(s^{\ell}) & \mathbf{0}_{2,1}  &\mathbf{Id}_{2,2}+ O_{\xi}(1) |v|^{2}|t^{\ell}-s^{\ell}|^{2} &  O_{\xi}(1)   |v| |t^{\ell}-s^{\ell}|^{2} &
O_{\xi}(1)|t^{\ell}-s^{\ell}| ( \mathbf{Id}_{2,2}+  |v||t^{\ell}-s^{\ell}|)   \\ \hline
O_{\xi}(1)|v|^{2} & 0  &O_{\xi}(1) |v|^{2}|t^{\ell}-s^{\ell}| &1+O_{\xi}(1)|v||t^{\ell}-s^{\ell}| &    O_{\xi}(1)|v||t^{\ell}-s^{\ell}| \\
O_{\xi}(1)|v|^{2} & \mathbf{0}_{2,1}  &O_{\xi}(1) |v|^{2}|t^{\ell}-s^{\ell}| & O_{\xi}(1)|v||t^{\ell}-s^{\ell}| & \mathbf{Id}_{2,2}+  O_{\xi}(1)|v||t^{\ell}-s^{\ell}|
\end{array}\right] .}
\end{split}
\end{equation}
In this step we just need (\ref{s1tstar}) for $\ell=\ell_{*}$ but we need (\ref{s1tstar}) for general $\ell$ in Step 8.

Clearly the first raw is identically zero since $s^{\ell}$ is chosen to be independent of $(t^{\ell}, \mathbf{x}_{\parallel_{\ell}}^{\ell}, \mathbf{v}_{\perp_{\ell}}^{\ell}, \mathbf{v}_{\parallel_{\ell}}^{\ell})$. The first column (temporal derivatives) holds due to the fact that the characteristics ODE (\ref{ODE_ell}) is autonomous. More explicitly,
\begin{equation}\notag
\begin{split}
&\frac{\partial}{\partial t^{\ell}}  (\mathbf{X}_{ \ell}(s^{\ell}; t^{\ell}, x^{\ell}, v^{\ell}), \mathbf{V}_{ \ell}(s^{\ell}; t^{\ell}, x^{\ell}, v^{\ell}))\\ &=  \frac{\partial}{\partial t^{\ell}}  (\mathbf{X}_{ \ell}(s^{\ell}-t^{\ell}; 0, x^{\ell}, v^{\ell}), \mathbf{V}_{ \ell}(s^{\ell}-t^{\ell}; 0, x^{\ell}, v^{\ell})) \\
& = -\frac{\partial}{\partial s^{\ell}} (\mathbf{X}_{ \ell}(s^{\ell}; t^{\ell}, x^{\ell}, v^{\ell}), \mathbf{V}_{ \ell}(s^{\ell}; t^{\ell}, x^{\ell}, v^{\ell}))
\\
&=  -(\mathbf{V}_{ \ell  }(s^{\ell}; t^{\ell}, x^{\ell}, v^{\ell}), F(\mathbf{X}_{ \ell}(s^{\ell}; t^{\ell}, x^{\ell}, v^{\ell}), \mathbf{V}_{ \ell}(s^{\ell}; t^{\ell}, x^{\ell}, v^{\ell}))\\
& = (- \mathbf{v}_{\perp}(s^{\ell}), - \mathbf{v}_{\parallel}(s^{\ell}), O_{\xi}(1)|v|^{2}, O_{\xi}(1)|v|^{2}).
\end{split}
\end{equation}

Now we prove the the remainder. Firstly we claim that if the $\mathbf{p}^{ \ell}-$spherical coordinate is well-defined for $ t^{\ell+1}< \tau < t^{\ell}$ ($\tau$ is independent of $t^{\ell}$) then
\[
[\mathbf{X}_{ \ell}(\tau;t,x,v), \mathbf{V}_{ \ell}(\tau;t,x,v)] \equiv [\mathbf{X}_{ \ell}(\tau;t^{\ell} ,0, \mathbf{x}_{\parallel_{\ell}}^{\ell} , \mathbf{v}_{\perp_{\ell}}^{\ell}, \mathbf{v}_{\parallel_{\ell}}^{\ell} ), \mathbf{V}_{ \ell} (\tau;t^{\ell} ,0, \mathbf{x}_{\parallel_{\ell}}^{\ell} , \mathbf{v}_{\perp_{\ell}}^{\ell}, \mathbf{v}_{\parallel_{\ell}}^{\ell} )]
\]
and
\begin{equation}\label{Dxv_free}
\begin{split}
|\partial_{\mathbf{x}^{\ell}_{\parallel_{\ell}}   } \mathbf{X}_{ \ell   }(\tau)| & \ \lesssim \ e^{C_{\xi} |v|   |    \tau - t^{\ell}   |   } \ \lesssim \ 1,\\
|\partial_{\mathbf{v}_{ {\ell}}^{\ell}  } \mathbf{X}_{ \ell}(\tau)| & \ \lesssim \ | \tau - t^{\ell} |e^{C_{\xi} |v|| \tau - t^{\ell}  |} \ \lesssim \ 
  | \tau - t^{\ell}  | ,\\
|\partial_{\mathbf{x}^{\ell}_{\parallel_{\ell}}  } \mathbf{V}_{ \ell}(\tau)| &  \ \lesssim \ |v| \times |v|| \tau - t^{\ell}  |e^{C_{\xi}|v||  \tau - t^{\ell} |} \ \lesssim \ |v|^{2}| \tau - t^{\ell} |  ,\\
|\partial_{\mathbf{v}^{\ell}_{ {\ell}}  } \mathbf{V}_{ \ell}(\tau)| & \ \lesssim \ e^{C_{\xi}|v|| \tau - t^{\ell}  |} \ \lesssim \ 1,
\end{split}
\end{equation}
where $\partial_{\mathbf{v}_{ {\ell}}^{\ell}  } = [\partial_{\mathbf{v}_{\perp_{\ell}}^{\ell}  }, \partial_{\mathbf{v}_{\parallel_{\ell}}^{\ell}  }].$

If the $\mathbf{p}^{\ell}-$spherical coordinate is well-defined for $t^{\ell+1} < \tau < s < t^{\ell}$ then
\begin{equation}\notag
\begin{split}
&[\mathbf{X}_{\ell}(\tau;t,x,v), \mathbf{V}_{\ell}(\tau; t,x,v)  ] \\
\equiv & \ [ \mathbf{X}_{\ell}(\tau; s, \mathbf{X}_{\ell} (\tau;t,x,v),\mathbf{V}_{\ell} (\tau;t,x,v) ),  
\mathbf{V}_{\ell}(\tau; s, \mathbf{X}_{\ell} (\tau;t,x,v),\mathbf{V}_{\ell} (\tau;t,x,v) )  ]
,
\end{split}
\end{equation}
and 
\begin{equation}\label{Dxv_free_s}
\begin{split}
|\partial_{ \mathbf{X}_{\ell}(s)  } \mathbf{X}_{ \ell   }(\tau)| & \ \lesssim \ e^{C_{\xi} |v|   |    \tau - s  |   } \ \lesssim \ 1,\\
|\partial_{ \mathbf{V}_{\ell}(s) } \mathbf{X}_{ \ell}(\tau)| & \ \lesssim \ | \tau - s |e^{C_{\xi} |v|| \tau - s  |} \ \lesssim \ 
  | \tau - s  | ,\\
|\partial_{ \mathbf{X}_{\ell}(s)  }  \mathbf{V}_{ \ell}(\tau)| &  \ \lesssim \ |v| \times |v|| \tau - s  |e^{C_{\xi}|v||  \tau - s |} \ \lesssim \ |v|^{2}| \tau - s |  ,\\
|\partial_ { \mathbf{V}_{\ell}(s) }  \mathbf{V}_{ \ell}(\tau)| & \ \lesssim \ e^{C_{\xi}|v|| \tau - s  |} \ \lesssim \ 1.
\end{split}
\end{equation}

\noindent\textit{Proof of }(\ref{Dxv_free}) and (\ref{Dxv_free_s}). From (\ref{F_perp}) and (\ref{F||}), $\dot{\mathbf{x}}_{\parallel_{ \ell}} = \mathbf{v}_{\parallel_{ \ell}}, \ \dot{\mathbf{x}}_{\perp_{ \ell}} = \mathbf{v}_{\perp_{ \ell}}$ and $\dot{\mathbf{v}}_{\perp_{ \ell}} =  {F}_{\perp_{ \ell}}$ and $\dot{\mathbf{v}}_{\parallel_{ \ell}} =  {F}_{\parallel_{ \ell}}$. Denote $\partial = [  \frac{\partial }{ \partial {\mathbf{x}_{\parallel_{ \ell}}^{\ell} } },  \frac{\partial }{\partial { \mathbf{v}_{\perp_{ \ell}}^{\ell} } },  \frac{\partial }{ \partial { \mathbf{v}_{\parallel_{ \ell}}^{\ell} }} ].$ From (\ref{F_perp}) and (\ref{F||}),
\begin{equation}\label{D_F}
\begin{split}
|\partial F_{\perp}| & \ \lesssim \   | {v} |^{2} \{ |\partial \mathbf{x}_{\perp}|  +  | \partial\mathbf{x}_{\parallel}| \}  +  |v| |\partial \mathbf{v}_{\parallel}|    ,\\
|\partial F_{\parallel}| & \ \lesssim \  |v|^{2} \{  |\partial \mathbf{x}_{\perp}| + |\partial \mathbf{x}_{\parallel}| \}   
+ |v| \{ |\partial \mathbf{v}_{\perp}| + |\partial \mathbf{v}_{\parallel}|\}.
\end{split}
\end{equation}
Now we use a single (rough) bound of $|\partial F_{\perp}|  + |\partial F_{\parallel}| \lesssim  |v|^{2} \{  |\partial \mathbf{x}_{\perp}| + |\partial \mathbf{x}_{\parallel}| \}   
+ |v| \{ |\partial \mathbf{v}_{\perp}| + |\partial \mathbf{v}_{\parallel}|\}$ to have
\begin{equation}\notag
\begin{split}
\frac{d}{d\tau} \{  |\partial  \mathbf{v}_{\perp_{ \ell   } }(\tau)|  +   |\partial  \mathbf{v}_{\parallel_{ \ell   } }(\tau)|   \} 
&\lesssim \  |\partial F_{\perp_{\ell}}(\tau)|   +  |\partial F_{\parallel_{\ell}}(\tau)|\\
&\lesssim \ |v|^{2} \big\{     |\partial \mathbf{x}_{\perp_{\ell   }}(\tau)|   + |\partial \mathbf{x}_{\parallel_{\ell   }}(\tau)| \big\}+ |v|\big\{    |\partial \mathbf{v}_{\perp_{\ell   }}(\tau)| + |\partial \mathbf{v}_{\parallel _{\ell   }}(\tau)|  \big\}.
\end{split}
\end{equation}
Combining with $ \frac{d}{d\tau}[\mathbf{x}_{\perp_{\ell}}(\tau), \mathbf{x}_{\parallel_{\ell}}(\tau) ]= [\mathbf{v}_{\perp_{\ell}}(\tau), \mathbf{v}_{\parallel_{\ell}}(\tau) ]$,

\vspace{4pt}

\begin{equation}\notag
\begin{split}
&\frac{d}{d\tau}\left[\begin{array}{ccccc}|\partial \mathbf{x}_{\perp_{\ell   }}(\tau)| + |\partial \mathbf{x}_{\parallel_{\ell   }}(\tau)| \\ |\partial \mathbf{v}_{\perp_{\ell   }}(\tau)| + |\partial \mathbf{v}_{\parallel_{\ell   }}(\tau)|   \end{array}\right]  \ \lesssim_{\xi}  \ \left[\begin{array}{cc} 0 & 1 \\ |v|^{2} & |v|\end{array}\right]  \left[\begin{array}{ccccc}|\partial \mathbf{x}_{\perp_{\ell   }}(\tau)| + |\partial \mathbf{x}_{\parallel_{\ell   }}(\tau)| \\ |\partial \mathbf{v}_{\perp_{\ell   }}(\tau)| + |\partial \mathbf{v}_{\parallel_{\ell   }}(\tau)|   \end{array}\right].
 \end{split}
\end{equation}

\vspace{4pt}

We diagonalize the matrix as
\begin{equation*}
\left[
\begin{array}{cc}
0 & 1    \\
|v|^{2} & |v|
\end{array}%
\right] =\left[
\begin{array}{cc }
1 & 1    \\
\frac{1+\sqrt{5}}{2}|v| & \frac{1-\sqrt{5}}{2}|v|
\end{array}%
\right] \left[
\begin{array}{cc }
\frac{1+\sqrt{5}}{2}|v| & 0   \\
0 & \frac{1-\sqrt{5}}{2}|v|
\end{array}%
\right] \left[
\begin{array}{cc }
-\frac{1-\sqrt{5}}{2\sqrt{5}} & \frac{1}{|v|\sqrt{5}}   \\
\frac{1+\sqrt{5}}{2\sqrt{5}} & \frac{-1}{|v|\sqrt{5}}
\end{array}%
\right] := PDP^{-1}.
\end{equation*}%
Now
\begin{equation*}
\begin{split}
 \left[
\begin{array}{cc}
|\partial \mathbf{x}_{\parallel }(\tau)|+|\partial \mathbf{x}_{\perp }(\tau)|   \\
|\partial \mathbf{v}_{\parallel }(\tau)|+|\partial \mathbf{v}_{\perp }(\tau)|
\end{array}%
\right]  \leq& P e^{C_{\xi}  |\tau -t^{\ell} |D   }
  P^{-1}    \left[
\begin{array}{c }
|\partial \mathbf{x}_{\parallel }(t^{\ell})|+|\partial \mathbf{x}_{\perp }(t^{\ell})|    \\
|\partial \mathbf{v}_{\parallel }(t^{\ell})|+|\partial \mathbf{v}_{\perp }(t^{\ell})|
\end{array}%
\right],
\end{split}
\end{equation*}
which is further bounded as, by matrix multiplication, 
\begin{equation*}\notag
\begin{split}
& \leq    \tiny{\left[
\begin{array}{cc }
-\frac{1-\sqrt{5}}{2\sqrt{5}}e^{C_{\xi}\frac{1+\sqrt{5}}{2}|v|| \tau - t^{\ell}|}+\frac{1+%
\sqrt{5}}{2\sqrt{5}}e^{C_{\xi}\frac{1-\sqrt{5}}{2}|v|| \tau -t^{\ell}   |} & \frac{1}{\sqrt{%
5}|v|}e^{\frac{C_{\xi }}{2}|v||  \tau -t^{\ell}   |}\Big\{e^{\frac{C_{\xi}\sqrt{5}}{2}%
|v||   \tau -t^{\ell}   |}-e^{C_{\xi}\frac{-\sqrt{5}}{2}|v||  \tau -t^{\ell}   |}\Big\}    \\
\frac{|v|}{\sqrt{5}}e^{C_{\xi}\frac{|v|}{2}|  \tau -t^{\ell}   |}\Big\{e^{C_{\xi}\frac{%
\sqrt{5}}{2}|v||  \tau -t^{\ell}   |}-e^{-C_{\xi}\frac{\sqrt{5}}{2}|v|| \tau -t^{\ell}   |}\Big\} & \frac{1+%
\sqrt{5}}{2\sqrt{5}}e^{C_{\xi}\frac{1+\sqrt{5}}{2}|v|| \tau -t^{\ell}   |}-\frac{1-\sqrt{5}%
}{2\sqrt{5}}e^{C_{\xi }\frac{1-\sqrt{5}}{2}|v||  \tau -t^{\ell}   |}
\end{array}%
\right]}\\
& \ \ \ \times  \left[
\begin{array}{cc }
|\partial \mathbf{x}_{\parallel }(t^{\ell})|+|\partial \mathbf{x}_{\perp }(t^{\ell})|    \\
|\partial \mathbf{v}_{\parallel }(t^{\ell})|+|\partial \mathbf{v}_{\perp }(t^{\ell})|
\end{array}%
\right]  \\
&\leq  \left[
\begin{array}{c }
e^{C_{\xi }|v|| \tau -t^{\ell}   |}\big\{|\partial \mathbf{x}_{\parallel }(t^{\ell})|+|\partial \mathbf{x}_{\perp
}(  t^{\ell})|\big\}+|  \tau -t^{\ell}  |e^{C_{\xi }|v|| \tau -t^{\ell}   |}\big\{|\partial \mathbf{v}_{\parallel }(t^{\ell})|+|\partial
\mathbf{v}_{\perp }(t^{\ell})|\big\}    \\
|v|^{2}| \tau -t^{\ell}  |e^{C_{\xi }|v||  \tau -t^{\ell} |}\big\{|\partial \mathbf{x}_{\parallel }(t^{\ell})|+|\partial
\mathbf{x}_{\perp }(t^{\ell})|\big\}+ e^{C_{\xi }|v|| \tau -t^{\ell}   |}\big\{|\partial \mathbf{v}_{\parallel
}(t^{\ell})|+|\partial \mathbf{v}_{\perp }(t^{\ell} )|\big\}
\end{array}%
\right].
\end{split}
\end{equation*}%
Since $|v||\tau - t^{\ell}| \lesssim_{\xi} 1$, this proves our claim (\ref{Dxv_free}). The proof of (\ref{Dxv_free_s}) is exactly same but we use $\partial = [ \partial_{\mathbf{X}_{\ell}}(s), \partial_{\mathbf{V}_{\ell}}(s) ]$ to conclude the proof.

From the characteristics ODE, (\ref{ODE_ell}) in the $\mathbf{p}^{\ell}-$spherical coordinate,
\begin{eqnarray*}
\mathbf{x}_{\perp_{\ell }} (s^{\ell}) &=&\mathbf{v}_{\perp_{ \ell}
}^{\ell}(s^{\ell}-t^{\ell})+\int_{t^{\ell}}^{s^{\ell}}\int_{t^{\ell}}^{\tau }  {F}_{\perp_{\ell}}
(s^{\prime };t^{\ell},\mathbf{x}_{\parallel_{\ell} }^{\ell},\mathbf{v}_{\parallel _{\ell}}^{\ell},\mathbf{v}_{\perp _{\ell}}^{\ell})%
\mathrm{d}s^{\prime }\mathrm{d}\tau , \\
\mathbf{x}_{\parallel _{ \ell}}(s^{\ell}) &=&\mathbf{x}_{\parallel _{ \ell}}^{\ell}+\mathbf{v}_{\parallel_{ \ell}
}^{\ell}(s^{\ell}-t^{\ell})+\int_{t^{\ell}}^{s^{\ell}}\int_{t^{\ell}}^{\tau } {F}_{\parallel
}(s^{\prime };t^{\ell},\mathbf{x}_{\parallel _{ \ell}}^{\ell},\mathbf{v}_{\parallel_{ \ell} }^{\ell},\mathbf{v}_{\perp _{ \ell} }^{\ell})%
\mathrm{d}s^{\prime }\mathrm{d}\tau , \\
\mathbf{v}_{\perp_{ \ell} }(s^{\ell}) &=&\mathbf{v}_{\perp_{ \ell} }^{\ell}+\int_{t^{\ell}}^{s^{\ell}} {F}_{\perp_{ \ell} }(\tau
;t^{n},\mathbf{x}_{\parallel _{ \ell}}^{\ell},\mathbf{v}_{\parallel _{ \ell}}^{\ell},\mathbf{v}_{\perp  \ell}^{\ell})\mathrm{d}\tau  , \\
\mathbf{v}_{\parallel_{ \ell} }(s^{\ell}) &=&\mathbf{v}_{\parallel _{ \ell}}^{\ell}+\int_{t^{\ell}}^{s^{\ell}}  {F}_{\parallel_{ \ell}
}(\tau ;t^{\ell},\mathbf{x}_{\parallel _{ \ell}}^{\ell},\mathbf{v}_{\parallel_{ \ell} }^{\ell},\mathbf{v}_{\perp_{ \ell} }^{\ell})\mathrm{d}%
\tau.
\end{eqnarray*}%
Plugging (\ref{Dxv_free}) into (\ref{F_perp}) and (\ref{F||}) and collecting terms, we deduce for $|v||s^{\ell}-t^{\ell}|\lesssim 1$
\begin{eqnarray*}
\frac{\partial \mathbf{x}_{\perp_{\ell  } }(s^{\ell})}{\partial \mathbf{x}_{\parallel_{\ell} }^{\ell}} &\leq
&C_{\Omega }|v|^{2}|s^{\ell}-t^{\ell}|^{2}(1+|v||s^{\ell}-t^{\ell}|)e^{|v||s^{\ell}-t^{\ell}|}\lesssim
_{\Omega } |v|^{2}|s^{\ell}-t^{\ell}|^{2}, \\
\frac{\partial \mathbf{x}_{\perp_{\ell} }(s^{\ell})}{\partial \mathbf{v}_{\perp_{\ell} }^{\ell}} &\leq & |s^{\ell}-t^{\ell}|+ C_{\Omega} |v||s^{\ell}-t^{\ell}|^{2} (1+|v ||s^{\ell}-t^{\ell}|)e^{|v ||s^{\ell}-t^{\ell}|}%
\lesssim _{\Omega }|s^{\ell}-t^{\ell}|,\\
\frac{\partial \mathbf{x}_{\perp_{\ell} }(s^{\ell})}{\partial \mathbf{v}_{\parallel_{\ell} }^{\ell}} &\leq
&C_{\Omega }|v||s^{\ell}-t^{\ell}|^{2}(1+|v||s^{\ell}-t^{\ell}|) e^{|v ||s^{\ell}-t^{\ell}|} \lesssim_{\Omega} |v||s^{\ell}-t^{\ell}| ,\\
\frac{\partial \mathbf{x}_{\parallel_{\ell} }(s^{\ell})}{\partial \mathbf{x}_{\parallel_{\ell} }^{\ell}} &\leq
&\mathbf{Id}_{2,2}+C_{\Omega }|v |^{2}|s^{\ell}-t^{\ell}|^{2}(1+|v||s^{\ell}-t^{\ell}|)e^{{%
|v||s^{\ell}-t^{\ell}|}} \leq  \mathbf{Id}_{2,2} +O_{\Omega}(1)|v|^{2}|s^{\ell}-t^{\ell}|^{2}
, \\
\frac{\partial \mathbf{x}_{\parallel_{\ell} }(s^{\ell})}{\partial \mathbf{v}_{\perp_{\ell} }^{\ell}} &\leq
&C_{\Omega }|s^{\ell}-t^{\ell}|^{2} |v|(1+ |v||s^{\ell}-t^{\ell}|)e^{|v ||s^{\ell}-t^{\ell}|}\lesssim _{\Omega } |v||s^{\ell}-t^{\ell}|^{2}
, \\
\frac{\partial \mathbf{x}_{\parallel_{\ell} }(s^{\ell})}{\partial \mathbf{v}_{\parallel_{\ell} }^{\ell}} &\leq
&|s^{\ell}-t^{\ell}|\Big\{\mathbf{Id}_{2,2}+C_{\Omega
}|v||s^{\ell}-t^{\ell}|(1+|v||s^{\ell}-t^{\ell}|)e^{|v||s^{\ell}-t^{\ell}|}\Big\}\\
&\leq&
|s^{\ell}-t^{\ell}| \mathbf{Id}_{2,2} + O_{\Omega}(1)|v||s^{\ell}-t^{\ell}|^{2} ,\\
\frac{\partial \mathbf{v}_{\perp_{\ell} }(s^{\ell})}{\partial \mathbf{x}_{\parallel_{\ell} }^{\ell}} &\leq
&C_{\Omega } |s^{\ell}-t^{\ell}| |v|^{2}(1+ |v||s^{\ell}-t^{\ell}|)   e^{|v||s^{\ell}-t^{\ell}|}\lesssim _{\Omega }    |v|^{2} |s^{\ell}-t^{\ell}| , \\
\frac{\partial \mathbf{v}_{\perp_{\ell} }(s^{\ell})}{\partial \mathbf{v}_{\perp_{\ell} }^{\ell}} &\leq
&1+C_{\Omega }|s^{\ell}-t^{\ell}||v| (1+ |v||s^{\ell}-^{\ell}|)e^{|v||s^{\ell}-t^{\ell}|}\leq 1+ O_{\Omega}(1) |v||s^{\ell}-t^{\ell}|,
\\
\frac{\partial \mathbf{v}_{\perp_{\ell} }(s^{\ell})}{\partial \mathbf{v}_{\parallel_{\ell} }^{\ell}} &\leq
&C_{\Omega } |s^{\ell}-t^{\ell}| |v| (1+ |v||s^{\ell}-t^{\ell}|) e^{|v||s^{\ell}-t^{\ell}|}\lesssim
_{\Omega } |v|  |s^{\ell}-t^{\ell}|,\\
\frac{\partial \mathbf{v}_{\parallel_{\ell} }(s^{\ell})}{\partial \mathbf{x}_{\parallel_{\ell} }^{\ell}} &\leq
&C_{\Omega }|s^{\ell}-t^{\ell}| |v|^{2}(1+ |v||s^{\ell}-t^{\ell}|)
e^{|v||s^{\ell}-t^{\ell}|}\lesssim _{\Omega } |v|^{2} |s^{\ell}-t^{\ell}| ,
\\
\frac{\partial \mathbf{v}_{\parallel _{\ell}}(s^{\ell})}{\partial \mathbf{v}_{\perp_{\ell} }^{\ell}} &\leq
&C_{\Omega }|s^{\ell}-t^{\ell}||v|(1+|v||s^{\ell}-t^{\ell}|)e^{|v||s^{\ell}-t^{\ell}|}\lesssim
_{\Omega } |v|  |s^{\ell}-t^{\ell}|, \\
\frac{\partial \mathbf{v}_{\parallel_{\ell} }(s^{\ell})}{\partial \mathbf{v}_{\parallel_{\ell} }^{\ell}} &\leq
&\mathbf{Id}_{2,2}+C_{\Omega
}|s^{\ell}-t^{\ell}||v|(1+ |v||s^{\ell}-t^{\ell}|)e^{|v||  s^{\ell}-t^{\ell}|}\leq \mathbf{Id}_{2,2} + O_{\Omega}(1)|v||s^{\ell}-t^{\ell}| , \\
\end{eqnarray*}
and this proves the claim (\ref{s1tstar}).

\vspace{8pt}

\noindent\textit{Step 3. From $t-$plane to the first bounce}

\vspace{4pt}

We choose $s^{1} \in (t^{1}, \frac{t^{1}+t}{2})\subset (t^{1},t)$ such that $|v||t^{1}-s^{1}|\ll 1$ and the polar coordinate $(\mathbf{X}_{1}(s^{1}), \mathbf{V}_{1}(s^{1}))$ is well-defined. More precisely we choose $0 < \Delta  $ such that $|v||t-\Delta -t^{1}| \ll 1$ and define
\begin{equation}
s^{1}:= t - \Delta.\label{Delta_step3}
\end{equation}

Then, by the chain rule,
 \begin{equation}\notag
 \begin{split}
& \frac{\partial (t^{1}, 0 , \mathbf{x}_{\parallel_{1}}^{1} , \mathbf{v}_{\perp_{1}}^{1} , \mathbf{v}_{\parallel_{1}}^{1}  )}{\partial (t,x,v)}\\
 & = \frac{\partial (t^{1}, 0 , \mathbf{x}_{\parallel_{1}}^{1}, \mathbf{v}_{\perp_{1}}^{1}, \mathbf{v}_{\parallel_{1}}^{1})}{\partial (s^{1}, X_{\mathbf{cl}}(s^{1} ), V_{\mathbf{cl}}(s^{1} ))}
\frac{\partial (s^{1}, X_{\mathbf{cl}}(s^{1}), V_{\mathbf{cl}}(s^{1}))}{\partial (t,x,v)}
\\
&= \frac{\partial (t^{1}, 0 , \mathbf{x}_{\parallel_{1}}^{1}, \mathbf{v}_{\perp_{1}}^{1}, \mathbf{v}_{\parallel_{1}}^{1})}{ \partial (s^{1}, \mathbf{x}_{\perp_{1}}(s^{1}), \mathbf{x}_{\parallel_{1}}(s^{1}), \mathbf{v}_{\perp_{1}}(s^{1}), \mathbf{v}_{\parallel_{1}}(s^{1}) )}
\frac{\partial (s^{1}, \mathbf{X}_{1}(s^{1}),  \mathbf{V}_{ 1}(s^{1})    )}{\partial (s^{1}, X_{ \mathbf{cl}}(s^{1} ), V_{ \mathbf{cl}}(s^{1} ))}
\frac{\partial (s^{1}, X_{\mathbf{cl}}(s^{1}), V_{ \mathbf{cl}}(s^{1}))}{\partial (t,x,v)}.
 \end{split}
 \end{equation}
We fix $\mathbf{p}^{1}-$spherical coordinate and drop the index of the chart.

 Firstly, we claim
 \begin{equation}\label{t1s1}
 \begin{split}
& \frac{\partial (t^{1}, 0 , \mathbf{x}_{\parallel}^{1}, \mathbf{v}_{\perp}^{1}, \mathbf{v}_{\parallel}^{1})}{ \partial (s^{1}, \mathbf{x}_{\perp}(s^{1}), \mathbf{x}_{\parallel}(s^{1}), \mathbf{v}_{\perp}(s^{1}), \mathbf{v}_{\parallel}(s^{1}) )}\\
\lesssim_{\Omega} & {\left[\begin{array}{c|cc|cc}
1 &  \frac{1}{|\mathbf{v}_{\perp}^{1}|}  &   \frac{|v|^{2}|s^{1}-t^{1}|^{2}}{|\mathbf{v}_{\perp}^{1}|} & \frac{|s^{1}-t^{1}|}{|\mathbf{v}_{\perp}^{1}|} & \frac{|v||s^{1}-t^{1}|^{2}}{|\mathbf{v}_{\perp}^{1}|}  \\ \hline
0 & 0 & \mathbf{0}_{1,2} & 0 & \mathbf{0}_{1,2} \\
\mathbf{0}_{2,1} &     \frac{|v|}{|\mathbf{v}_{\perp}^{1}|}  + |v|^{2}|s^{1}-t^{1}|^{2} & \mathbf{Id}_{2,2} + |v||s^{1}-t^{1}| &   \frac{|s^{1}-t^{1}||v|}{|\mathbf{v}_{\perp}^{1}|} + |s^{1}-t^{1}|^{2}|v| & |s^{1}-t^{1}|\\ \hline
0& \frac{|v|^{2}}{|\mathbf{v}_{\perp}^{1}|} + |v|^{2}|s^{1}-t^{1}|  & \frac{|v|^{2}}{|\mathbf{v}^{1}_{\perp}|} + |v|^{2}|s^{1}-t^{1}| & 1+ |v||s^{1}-t^{1}| & |v||s^{1}-t^{1}| \\
\mathbf{0}_{2,1} & \frac{|v|^{2}}{|\mathbf{v}_{\perp}^{1}|} + |v|^{2}|s^{1}-t^{1}| &   |v|^{2}|s^{1}-t^{1}|   & 1+ |v||s^{1}-t^{1}|  &  \mathbf{Id}_{2}+ |v| |s^{1}-t^{1}| \end{array}\right]}.
 \end{split}
 \end{equation}

The $t^{1}$ is determined via $\mathbf{x}_{\perp}(t^{1})=0$, i.e.
\begin{equation}\label{equation_t1}
0= \mathbf{x}_{\perp}(s^{1}) - \mathbf{v}_{\perp}(s^{1}) (s^{1}-t^{1}) + \int^{s^{1}}_{t^{1}} \int^{s^{1}}_{s} F_{\perp}(\mathbf{X}_{\mathbf{cl}}(\tau), \mathbf{V}_{\mathbf{cl}}(\tau)) \mathrm{d}\tau \mathrm{d}s,
\end{equation}
where $\mathbf{X}_{\mathbf{cl}}(\tau)  = \mathbf{X}_{\mathbf{cl}}(\tau; s^{1}, \mathbf{X}_{\mathbf{cl}}(s^{1};t,x,v) , \mathbf{V}_{\mathbf{cl}}(s^{1};t,x,v) ), \mathbf{V}_{\mathbf{cl}}(\tau)   =\mathbf{V}_{\mathbf{cl}} (\tau; s^{1}, \mathbf{X}_{\mathbf{cl}}(s^{1};t,x,v) , \mathbf{V}_{\mathbf{cl}}(s^{1};t,x,v) ).$

Recall that, from (\ref{dot_v_perp}) and (\ref{dot_v_||}) and (\ref{v_perp}),
\begin{equation}\notag
\begin{split}
\mathbf{v}_{\perp}^{1} &= - \lim_{s \downarrow t^{1}} \mathbf{v}_{\perp}(s) = - \mathbf{v}_{\perp}(s^{1}) + \int^{s^{1}}_{t^{1}} F_{\perp}(\mathbf{X}_{\mathbf{cl}}(\tau), \mathbf{V}_{\mathbf{cl}}(\tau)) \mathrm{d}\tau,\\
\mathbf{x}_{\parallel}^{1} & = \mathbf{x}_{\parallel}(s^{1}) - (s^{1}-t^{1}) \mathbf{v}_{\parallel}(s^{1}) + \int^{s^{1}}_{t^{1}} \int^{s^{1}}_{\tau} F_{\parallel}(\mathbf{X}_{\mathbf{cl}}(\tau), \mathbf{V}_{\mathbf{cl}}(\tau)) \mathrm{d}\tau \mathrm{d}s^{1},\\
\mathbf{v}_{\parallel}^{1} &= \mathbf{v}_{\parallel}(s^{1}) - \int^{s^{1}}_{t^{1}} F_{\parallel}(\mathbf{X}_{\mathbf{cl}}(\tau), \mathbf{V}_{\mathbf{cl}}(\tau)) \mathrm{d}\tau.
\end{split}
\end{equation}
Note that since ODE is autonomous we have $\frac{\partial t^{1}}{\partial s} =1 , \ \frac{\partial (\mathbf{x}^{1} , \mathbf{v}^{1})}{\partial s^{1}}=0$. From the fact $|s^{1}-t^{1}| \lesssim_{\xi}\min\{  \frac{| \mathbf{v}_{\perp}^{1}|}{|v|^{2}} , t \}$ and (\ref{40}) and (\ref{41}),
and (\ref{Dxv_free_s}) and (\ref{D_F}) to have
\begin{equation}\notag
\begin{split}
\frac{\partial t^{1}}{\partial \mathbf{x}_{\perp }(s^{1})}& =\frac{1}{%
 \mathbf{v}_{\perp }^{1} }\left\{
1+\int^{s^{1}}_{t^{1}}\int^{s^{1}}_{s}\frac{\partial }{\partial \mathbf{x}_{\perp
}(s^{1})}F_{\perp }( \mathbf{X}_{\mathbf{cl}}(\tau), \mathbf{V}_{\mathbf{cl}}(\tau)   )\mathrm{d}\tau \mathrm{d}s\right\}\\
&  \lesssim_{\xi }  \frac{1}{|\mathbf{v}_{\perp}^{1}|} \left\{1 + \int^{s^{1}}_{t^{1}} \int^{s^{1}}_{s} \big[1+ |v| (s^{1}-\tau) \big] |v|^{2} e^{C_{\xi}|v|(s^{1}-\tau)}
 \mathrm{d}\tau\mathrm{d}s\right\} \\
 & \lesssim_{\xi} \frac{1}{|\mathbf{v}_{\perp}^{1}|} \Big\{   1+ \big[ 1+ |v||s^{1}-t^{1}|\big] |v|^{2}|s^{1}-t^{1}|^{2} e^{C_{\xi}|v||s^{1}-t^{1}|}  \Big\}  \lesssim_{\xi,t} \frac{1}{|\mathbf{v}_{\perp}^{1}|},\\ 
\frac{\partial t^{1}}{\partial \mathbf{x}_{\parallel }(s^{1})}& =\frac{1}{%
 \mathbf{v}^{1}_{\perp } }\int^{s^{1}}_{t^{1}}%
\int^{s^{1}}_{s}\frac{\partial }{\partial \mathbf{x}_{\parallel }(s^{1})}F_{\perp
}(    \mathbf{X}_{\mathbf{cl}}(\tau), \mathbf{V}_{\mathbf{cl}}(\tau)     )\mathrm{d}\tau \mathrm{d}s\\
& \lesssim_{\xi} \frac{1}{|\mathbf{v}_{\perp}^{1}|} \left\{ \int_{t^{1}}^{s^{1}} \int^{s^{1}}_{s} \big[1+ |v| (s^{1}-\tau) \big] |v|^{2} e^{C_{\xi}|v|(s^{1}-\tau)}  \mathrm{d}\tau \mathrm{d}s    \right\} \\
& \lesssim_{\xi} \frac{1}{|\mathbf{v}_{\perp}^{1}|}   \big[ 1+ |v||s^{1}-t^{1}|\big] |v|^{2}|s^{1}-t^{1}|^{2} e^{C_{\xi}|v||s^{1}-t^{1}|}   \lesssim_{\xi,t}
\frac{|v|^{2}|s^{1}-t^{1}|^{2}}{|\mathbf{v}_{\perp}^{1}|}
,\\
\frac{\partial t^{1}}{\partial \mathbf{v}_{\perp }(s^{1})}& =\frac{1}{\mathbf{v}_{\perp }^{1}}%
\left\{ (t^{1}-s^{1})+\int^{s^{1}}_{t^{1}}\int^{s^{1}}_{s}\frac{\partial }{%
\partial \mathbf{v}_{\perp }(s^{1})}F_{\perp }( \mathbf{X}_{\mathbf{cl}}(\tau), \mathbf{V}_{\mathbf{cl}}(\tau)  )\mathrm{d}\tau \mathrm{d}%
s\right\} \\
& \lesssim_{\xi} \frac{|s^{1}-t^{1}|}{|\mathbf{v}_{\perp}^{1}|} + \frac{1}{|\mathbf{v}_{\perp}^{1}|} \int^{s^{1}}_{t^{1}} \int^{s^{1}}_{s}   |v|   \big[1+|v||s^{1}-\tau|\big]   e^{C_{\xi}|v| (s^{1}-\tau)  } \mathrm{d}\tau \mathrm{d} s    \\
& \lesssim_{\xi} \frac{|s^{1}-t^{1}|}{|\mathbf{v}_{\perp}^{1}|} \Big\{ 1+ |v| |s^{1}-t^{1}| e^{C_{\xi}|v||s^{1}-t^{1}|}  \Big\} \lesssim_{\xi,t}  \frac{|s^{1}-t^{1}|}{|\mathbf{v}_{\perp}^{1}|}
,\\
\frac{\partial t^{1}}{\partial \mathbf{v}_{\parallel }(s_{1})}& =\frac{1}{
 \mathbf{v}_{\perp }^{1} }\int^{s_{1}}_{t^{1}}%
\int^{s_{1}}_{s}\frac{\partial }{\partial \mathbf{v}_{\parallel }(s_{1})}F_{\perp
}( \mathbf{X}_{\mathbf{cl}}(\tau), \mathbf{V}_{\mathbf{cl}}(\tau) )\mathrm{d}\tau \mathrm{d}s\\
&  \lesssim_{\xi}  \frac{1}{|\mathbf{v}_{\perp}^{1}|} \int^{s^{1}}_{t^{1}} \int^{s^{1}}_{s}   |v| \big[1+|v||s^{1}-\tau|\big]  e^{C_{\xi}|v| (s^{1}-\tau)  } \mathrm{d}\tau \mathrm{d} s   \\
&   \lesssim_{\xi} \frac{|s^{1}-t^{1}|}{|\mathbf{v}_{\perp}^{1}|}  |v|  |s^{1}-t^{1}|  e^{C_{\xi}|v||s^{1}-t^{1}|} \\& \lesssim_{\xi,t}
\frac{|v||s^{1}-t^{1}|^{2}   }{|\mathbf{v}_{\perp}^{1}|}.
\end{split}%
\end{equation}%
Together with the above estimates and (\ref{Dxv_free_s}) and (\ref{D_F}),
\begin{equation}\notag
\begin{split}
\frac{\partial \mathbf{x}^{1}_{\parallel}}{\partial \mathbf{x}_{\perp}(s^{1})} & = \frac{\partial t^{1}}{\partial \mathbf{x}_{\perp}(s^{1})} \mathbf{v}_{\parallel}^{1} + \int^{s^{1}}_{t^{1}} \int^{s^{1}}_{s} \frac{\partial}{\partial \mathbf{x}_{\perp}(s^{1})} F_{\parallel}(\mathbf{X}_{\mathbf{cl}}(\tau), \mathbf{V}_{\mathbf{cl}}(\tau)) \mathrm{d}\tau \mathrm{d}s\\
& \lesssim_{\xi}  \frac{|v|}{|\mathbf{v}_{\perp}^{1}|} +     \big[ 1+ |v||s^{1}-t^{1}|\big] |v|^{2}|s^{1}-t^{1}|^{2} e^{C_{\xi}|v||s^{1}-t^{1}|} \lesssim_{\xi,t}  \frac{|v|}{|\mathbf{v}_{\perp}^{1}|} + |v|^{2}|s^{1}-t^{1}|^{2}   ,\\
\frac{\partial \mathbf{x}_{\parallel}^{1}}{\partial  \mathbf{x}_{\parallel}(s^{1})} &
 = \mathbf{Id}_{2,2}  + \mathbf{v}_{\parallel}^{1} \frac{\partial t^{1}}{\partial \mathbf{x}_{\parallel}(s^{1})} + \int^{s^{1}}_{t^{1}} \int^{s^{1}}_{s} \frac{\partial}{\partial \mathbf{x}_{\parallel}(s^{1})} F_{\parallel}(  \mathbf{X}_{\mathbf{cl}}(\tau), \mathbf{V}_{\mathbf{cl}}(\tau) ) \mathrm{d}\tau \mathrm{d}s\\
&   \lesssim_{\xi,t} \mathbf{Id}_{2,2} +   |v|^{2}|s^{1}-t^{1}|^{2} \lesssim_{\xi,t} \mathbf{Id}_{2,2} + |v||s^{1}-t^{1}| ,\\
\frac{\partial \mathbf{x}_{\parallel}^{1}}{\partial  \mathbf{v}_{\perp}(s^{1})} & = \frac{\partial t^{1}}{\partial \mathbf{v}_{\perp}(s^{1})} \mathbf{v}_{\parallel}^{1} + \int^{s^{1}}_{t^{1}} \int^{s^{1}}_{s} \frac{\partial }{\partial \mathbf{v}_{\perp}(s^{1})} F_{\parallel}(  \mathbf{X}_{\mathbf{cl}}(\tau), \mathbf{V}_{\mathbf{cl}}(\tau)  ) \mathrm{d}\tau \mathrm{d}s\\
&   \lesssim_{\xi, t}  \frac{|s^{1}-t^{1}||v|}{|\mathbf{v}_{\perp}^{1}|} + |s^{1}-t^{1}|^{2} |v|
, 
\end{split}
\end{equation}
\begin{equation}\notag
\begin{split}
\frac{\partial \mathbf{x}_{\parallel}^{1}}{\partial \mathbf{v}_{\parallel}(s^{1})} & = -(s^{1}-t^{1}) \mathbf{Id}_{2,2} + \mathbf{v}_{\parallel}^{1} \frac{\partial t^{1}}{\partial \mathbf{v}_{\parallel}(s^{1})} + \int^{s^{1}}_{t^{1}} \int^{s^{1}}_{s} \frac{\partial}{\partial \mathbf{v}_{\parallel}(s^{1})} F_{\parallel} ( \mathbf{X}_{\mathbf{cl}}(\tau), \mathbf{V}_{\mathbf{cl}}(\tau)   ) \mathrm{d}\tau \mathrm{d}s  \\
&\lesssim_{\xi}- (s^{1}-t^{1}) \mathbf{Id}_{2,2} + |v| \frac{|s^{1}-t^{1}|}{|\mathbf{v}_{\perp}^{1}|} |v||s^{1}-t^{1}| + |v||s^{1}-t^{1}|^{2} \big[1+ |v||s^{1}-t^{1}|\big] \\
& \lesssim_{\xi,t} |s^{1}-t^{1}| \Big(1+ \frac{|v|^{2} |s^{1}-t^{1}|}{|\mathbf{v}_{\perp}^{1}|}\Big)   \lesssim_{\xi,t} |s^{1}-t^{1}|
.
\end{split}
\end{equation}
Moreover by (\ref{Dxv_free_s}) and (\ref{D_F})
\begin{equation}\notag
\begin{split}
\frac{\partial \mathbf{v}_{\perp }^{1}}{\partial \mathbf{x}_{\perp }(s^{1})} &=\frac{%
-F_{\perp }( x^{1},v)}{\mathbf{v}_{\perp }^{1}}-\frac{F_{\perp }(x^{1},v)}{\mathbf{v}_{\perp }^{1}}%
\int_{t^{1}}^{s^{1}}\int^{s^{1}}_{s}\frac{\partial F_{\perp }(  \mathbf{X}_{\mathbf{cl}}(\tau), \mathbf{V}_{\mathbf{cl}}(\tau)    )  }{\partial \mathbf{x}_{\perp
}(s^{1})}    \mathrm{d}\tau \mathrm{d}s+\int^{s^{1}}_{t^{1}}%
\frac{\partial F_{\perp }(   \mathbf{X}_{\mathbf{cl}}(\tau), \mathbf{V}_{\mathbf{cl}}(\tau) )}{\partial \mathbf{x}_{\perp }(s^{1})}\mathrm{d}\tau
\\
& \lesssim   \frac{F_{\perp}(x^{1},v)}{|\mathbf{v}_{\perp}^{1}|} + \Big( |v| + \frac{F_{\perp}(x^{1},v)}{|\mathbf{v}_{\perp}^{1}|}|v||s^{1}-t^{1}| \Big) \big[1+|v| |s^{1}-t^{1}|\big] |v||s^{1}-t^{1}| e^{C_{\xi}|v||s^{1}-t^{1}|}\\
&\lesssim \frac{|v|^{2}}{|\mathbf{v}_{\perp}^{1}|} + |v|^{2}|s^{1}-t^{1}|
,\\
\frac{\partial \mathbf{v}_{\perp }^{1}}{\partial \mathbf{x}_{\parallel }(s^{1})} &=\frac{%
-F_{\perp }(x^{1},v)}{\mathbf{v}_{\perp }^{1}}+\int^{s^{1}}_{t^{1}}\frac{\partial }{%
\partial \mathbf{x}_{\parallel }(s^{1})}F_{\perp }(  \mathbf{X}_{\mathbf{cl}}(\tau), \mathbf{V}_{\mathbf{cl}}(\tau))\mathrm{d}\tau\\
& \lesssim
\frac{|F_{\perp}(x^{1},v)|}{|\mathbf{v}_{\perp}^{1}|} +   |v|^{2}|s^{1}-t^{1}| \big(1+ |v||s^{1}-t^{1}|\big) e^{C_{\xi}|v||s^{1}-t^{1}|} \lesssim \frac{|v|^{2}}{|\mathbf{v}_{\perp}^{1}|} + |v|^{2}|s^{1}-t^{1}|
 ,\\
\frac{\partial \mathbf{v}_{\perp}^{1}}{\partial \mathbf{v}_{\perp}(s^{1})} & = -1 + \frac{(s^{1}-t^{1})F_{\perp}(x^{1},v)}{\mathbf{v}_{\perp}^{1}} -\frac{F_{\perp}(x^{1},v)}{\mathbf{v}_{\perp}^{1}} \int^{s^{1}}_{t^{1}} \int^{s^{1}}_{s} \frac{\partial F_{\perp}(\mathbf{X}_{\mathbf{cl}}(\tau), \mathbf{V}_{\mathbf{cl}}(\tau)) }{\partial \mathbf{v}_{\perp}(s^{1})}  -\int_{s_{1}}^{t^{1}}\frac{\partial F_{\perp
}( \mathbf{X}_{\mathbf{cl}}(s), \mathbf{V}_{\mathbf{cl}}(s)    ) }{\partial \mathbf{v}_{\perp }(s_{1})}\mathrm{d}s \\
&\lesssim -1 + \frac{|s^{1}-t^{1}| |F_{\perp}(x^{1},v)| }{|\mathbf{v}_{\perp}^{1}|} \Big\{ 1+ |v| |s^{1}-t^{1}|  e^{C_{\xi}|v||s^{1}-t^{1}|}\Big\}
+ |v|  |s^{1}-t^{1}| e^{C_{\xi}|v||s^{1}-t^{1}|}\\
&\lesssim_{\xi}  1+ |v||s^{1}-t^{1}|,\\
\frac{\partial \mathbf{v}_{\perp }^{1}}{\partial \mathbf{v}_{\parallel }(s^{1})} &=\frac{%
-F_{\perp }(x^{1},v)}{\mathbf{v}_{\perp }^{1}}\int_{s^{1}}^{t^{1}}\int_{s^{1}}^{s}\frac{%
\partial F_{\perp }(   \mathbf{X}_{\mathbf{cl}}(\tau), \mathbf{V}_{\mathbf{cl}}(\tau) ) }{\partial \mathbf{v}_{\parallel }(s^{1})}\mathrm{d}\tau
\mathrm{d}s-\int_{s^{1}}^{t^{1}}\frac{\partial  F_{\perp }( \mathbf{X}_{\mathbf{cl}}(\tau), \mathbf{V}_{\mathbf{cl}}(\tau))}{\partial \mathbf{v}_{\parallel
}(s^{1})}\mathrm{d}\tau \\
& \lesssim  \frac{|v|^{2}}{|\mathbf{v}_{\perp}^{1}|}|s^{1}-t^{1}|^{2}|v| + |s^{1}-t^{1}||v| \lesssim |v||s^{1}-t^{1}| \Big(1+ \frac{|s^{1}-t^{1}||v|^{2}}{|\mathbf{v}_{\perp}^{1}|}\Big)\lesssim |v||s^{1}-t^{1}|,
\end{split}
\end{equation}
and
\begin{equation}\notag
\begin{split}
\frac{\partial \mathbf{v}_{\parallel}^{1}}{\partial \mathbf{x}_{\perp}(s^{1})} & = \frac{\partial t^{1}}{\partial \mathbf{x}_{\perp}(s^{1})} F_{\parallel}(x^{1},v) - \int^{s^{1}}_{t^{1}} \frac{\partial}{\partial \mathbf{x}_{\perp}(s^{1})} F_{\parallel} (\mathbf{X}_{\mathbf{cl}}(\tau), \mathbf{V}_{\mathbf{cl}}(\tau)) \mathrm{d}\tau\\
& \lesssim_{\xi} \frac{|F_{\parallel}(x^{1},v)|}{|\mathbf{v}_{\perp}^{1}|} \big\{  1+ [1+|v||s^{1}-t^{1}|] |v|^{2} |s^{1}-t^{1}|^{2} e^{C_{\xi}|v||s^{1}-t^{1}|}  \big\}
+  |v|^{2} |s^{1}-t^{1}| [1+|v||s^{1}-t^{1}|] e^{C_{\xi}|v||s^{1}-t^{1}|} \\
& \lesssim_{\xi,t} \frac{|v|^{2}}{|\mathbf{v}_{\perp}^{1}|} +   |v|^{2}|s^{1}-t^{1}|
,\\
\frac{\partial \mathbf{v}_{\parallel}^{1}}{\partial \mathbf{x}_{\parallel}(s^{1})} & = \frac{\partial t^{1}}{\partial \mathbf{x}_{\parallel}(s^{1})} F_{\parallel}(x^{1},v) -\int^{s^{1}}_{t^{1}} \frac{\partial}{\partial \mathbf{x}_{\parallel}(s^{1})} F_{\parallel}(  \mathbf{X}_{\mathbf{cl}}(\tau), \mathbf{V}_{\mathbf{cl}}(\tau)  ) \mathrm{d}\tau\\
& \lesssim \frac{|F_{\parallel}(x^{1},v)|}{|\mathbf{v}_{\perp}^{1}|}\big[ 1+|v||s^{1}-t^{1}| \big] |v|^{2}|s^{1}-t^{1}|^{2} e^{C_{\xi}|v||s^{1}-t^{1}|} +   |v|^{2}|s^{1}-t^{1}| \big[1+|v||s^{1}-t^{1}|\big] e^{C_{\xi}|v||s^{1}-t^{1}|}\\
&\lesssim_{\xi,t}  |v|^{2}|s^{1}-t^{1}|
,
\end{split}
\end{equation}
\begin{equation}\notag
\begin{split}
\frac{\partial \mathbf{v}_{\parallel}^{1}}{\partial \mathbf{v}_{\perp}(s^{1})} & = \frac{\partial t^{1}}{\partial \mathbf{v}_{\perp}(s^{1})} F_{\parallel} (x^{1},v) - \int^{s^{1}}_{t^{1}} \frac{\partial}{\partial \mathbf{v}_{\perp}(s^{1})} F_{\parallel} (\mathbf{X}_{\mathbf{cl}}(\tau), \mathbf{V}_{\mathbf{cl}}(\tau)) \mathrm{d}\tau\\
&\lesssim \frac{|s^{1}-t^{1}||v|^{2}}{|\mathbf{v}_{\perp}^{1}|}  + |v| |s^{1}-t^{1}|  \lesssim 1+|v||s^{1}-t^{1}|
,\\
\frac{\partial \mathbf{v}_{\parallel}^{1}}{\partial \mathbf{v}_{\parallel} (s^{1})} & = \mathbf{Id}_{2,2} + \frac{\partial t^{1}}{\partial \mathbf{v}_{\parallel}(s^{1})} F_{\parallel}(x^{1},v) - \int^{s^{1}}_{t^{1}} \frac{\partial }{\partial \mathbf{v}_{\parallel}(s^{1})} F_{\parallel} (\mathbf{X}_{\mathbf{cl}}(\tau), \mathbf{V}_{\mathbf{cl}}(\tau) \mathrm{d}\tau\\
& \lesssim \mathbf{Id}_{2,2}  + \frac{|v|^{3}|s^{1}-t^{1}|^{2}}{|\mathbf{v}_{\perp}^{1}|} + |s^{1}-t^{1}||v| \big[1+|v||s^{1}-t^{1}|\big] \lesssim_{\xi,t}
\mathbf{Id}_{2,2}   + |v||s^{1}-t^{1}|
.
\end{split}
\end{equation}

Secondly, we claim
\begin{equation}\label{s1t}
\begin{split}
 &\frac{\partial (s^{1}, \mathbf{X}_{1}(s^{1}),  \mathbf{V}_{1}(s^{1})    )} {\partial (t,x,v)}
 =\frac{\partial (s^{1}, \mathbf{X}_{1}(s^{1}),  \mathbf{V}_{1}(s^{1})    )}{\partial (s^{1}, X_{\mathbf{cl}}(s^{1} ), V_{\mathbf{cl}}(s^{1} ))}
\frac{\partial (s^{1}, X_{\mathbf{cl}}(s^{1}), V_{\mathbf{cl}}(s^{1}))}{\partial (t,x,v)}
\\
&=
 \left[\begin{array}{c|c|ccccccccccccccc}
1 & \mathbf{0}_{1,3} & \mathbf{0}_{1,3}\\ \hline
 & \frac{(\partial_{1} \mathbf{\eta} \times \partial_{2}  \mathbf{\eta})^{T}}{  \mathbf{n}   \cdot (\partial_{1} \mathbf{\eta} \times \partial_{2} \mathbf{\eta})} + O_{\xi}(|v||t^{1}-s^{1}|) &  O_{\xi} ( |t-s^{1}|)\\
\mathbf{0}_{3,1} & \frac{(\partial_{2} \mathbf{\eta} \times   \mathbf{n} )^{T}}{  \mathbf{n} \cdot (\partial_{1}  \mathbf{\eta} \times \partial_{2}  \mathbf{\eta})} +  O_{\xi}(|v||t^{1}-s^{1}|) &    O_{\xi}( |t-s^{1}|)\\
 & \frac{(    \mathbf{n}  \times  \partial_{2} \mathbf{\eta})^{T}}{  \mathbf{n} \cdot (\partial_{1} \mathbf{\eta} \times \partial_{2}  \mathbf{\eta})} +  O_{\xi}(|v||t^{1}-s^{1}|)& O_{\xi}( |t-s^{1}|)\\ \hline
  & O_{\xi}(  |v|) &  \frac{(\partial_{1}  \mathbf{\eta} \times \partial_{2}  \mathbf{\eta})^{T}}{   \mathbf{n} \cdot (\partial_{1}  \mathbf{\eta} \times \partial_{2} \mathbf{\eta})} +  O_{\xi}(|v| |t-s^{1}| ) \\
\mathbf{0}_{3,1}&O_{\xi}(  |v|) &  \frac{(\partial_{2}  \mathbf{\eta} \times   \mathbf{n} )^{T}}{   \mathbf{n} \cdot (\partial_{1}  \mathbf{\eta} \times \partial_{2} \mathbf{\eta})} +  O_{\xi}(|v| |t-s^{1}| )\\
 &O_{\xi}( |v|) &  \frac{( \mathbf{n}   \times \partial_{1} \mathbf{\eta})^{T}}{   \mathbf{n} \cdot (\partial_{1}  \mathbf{\eta} \times \partial_{2} \mathbf{\eta})} + O_{\xi}(|v| |t-s^{1}| )
\end{array} \right],
\end{split}
\end{equation}
where the entries are evaluated at $(\mathbf{X}_{1}(s^{1}), \mathbf{V}_{1}(s^{1}))$. Note that $|v||t^{1}-s^{1}|\lesssim_{\xi } 1$.

Clearly
\begin{equation}\notag
\begin{split}
 \left[\begin{array}{ccccccc}  {\partial s^{1}}/{\partial (s^{1}, X_{\mathbf{cl}}(s^{1} ), V_{\mathbf{cl}}(s^{1} ))} \\
\partial \mathbf{X}_{\mathbf{cl}}(s^{1}) / {\partial (s^{1}, X_{\mathbf{cl}}(s^{1} ), V_{\mathbf{cl}}(s^{1} ))} \\
\partial \mathbf{V}_{\mathbf{cl}}(s^{1}) / {\partial (s^{1}, X_{\mathbf{cl}}(s^{1} ), V_{\mathbf{cl}}(s^{1} ))}
\end{array}\right] =
\left[\begin{array}{c|c}1 & \mathbf{0}_{1,6}   \\ \hline
\mathbf{0}_{6,1} &   \frac{\partial (\mathbf{X}_{\mathbf{cl}}(s^{1}), \mathbf{V}_{\mathbf{cl}}(s^{1})   )}{ \partial (X_{\mathbf{cl}} (s^{1}), V_{\mathbf{cl}}(s^{1})   )}  \\
  \end{array}\right].
\end{split}
\end{equation}
Now we consider the right lower 6 by 6 submatrix.
Recall, from (\ref{jac_Phi})
\begin{equation}\notag
\begin{split}
& \frac{\partial ( {X}_{\mathbf{cl}}(s^{1}),  {V}_{\mathbf{cl}}(s^{1}))}{\partial (\mathbf{X}_{\mathbf{cl}}(s^{1}), \mathbf{V}_{\mathbf{cl}}(s^{1}))}
= \frac{\partial \Phi (  \mathbf{X}_{\mathbf{cl}}(s^{1}), \mathbf{V}_{\mathbf{cl}}(s)  )}{\partial ( \mathbf{X}_{\mathbf{cl}}(s^{1}), \mathbf{V}_{\mathbf{cl}}(s) )}
 : =  \left[\begin{array}{c|c} A & \mathbf{0}_{3,3} \\ \hline B & A   \end{array}\right]  + \mathbf{x}_{\perp} \left[\begin{array}{c|c}   \mathbf{0}_{3,3}  & \mathbf{0}_{3,3} \\ \hline D &  \mathbf{0}_{3,3}   \end{array}\right].
\end{split}
\end{equation}
Note that, from (\ref{jac_Phi1}) and (\ref{nondegenerate_eta}),
\begin{equation}\notag
\begin{split}
\text{det}(A)&=\text{det} \Big[ \begin{array}{ccc} [- \mathbf{n}(\mathbf{x}_{\parallel})] & \partial_{\mathbf{x}_{\parallel,1}} \mathbf{\eta}(\mathbf{x}_{\parallel}) & \partial_{\mathbf{x}_{\parallel,2}} \mathbf{\eta}(\mathbf{x}_{\parallel})  \end{array}  \Big]  =
[-\mathbf{n}(\mathbf{x}_{\parallel}) ] \cdot \big( \partial_{\mathbf{x}_{\parallel,1}} \mathbf{\eta}(\mathbf{x}_{\parallel}) \times \partial_{\mathbf{x}_{\parallel,2}} \mathbf{\eta}(x_{\parallel}) \big) \neq 0,\\
A^{-1} &= \frac{1}{[-\mathbf{n}]\cdot (\partial_{\mathbf{x}_{\parallel,1}} \mathbf{\eta} \times \partial_{\mathbf{x}_{\parallel,2}} \mathbf{\eta})}  \bigg[ ( \partial_{\mathbf{x}_{\parallel,1}} \mathbf{\eta} \times \partial_{\mathbf{x}_{\parallel,2}} \mathbf{\eta} )^{T},  ( \partial_{\mathbf{x}_{\parallel,2}} \mathbf{\eta}  \times [-\mathbf{n}] )^{T},   ([-\mathbf{n}]\times \partial_{\mathbf{x}_{\parallel,1}} \mathbf{\eta} )^{T}\bigg].
\end{split}
\end{equation}
From basic linear algebra
\begin{equation}\notag
\begin{split}
&\text{det}\left( \frac{\partial ( {X}_{\mathbf{cl}}(s^{1}),  {V}_{\mathbf{cl}}(s^{1}))}{\partial (\mathbf{X}_{\mathbf{cl}}(s^{1}), \mathbf{V}_{\mathbf{cl}}(s^{1}))}  \right)= \text{det} \left[\begin{array}{c|c} A  & \mathbf{0}_{3,3} \\ \hline B+ \mathbf{x}_{\perp} D & A  \end{array}   \right]
= \{\det(A )\}^{2}    = \big\{[-\mathbf{n}] \cdot (\partial_{1} \mathbf{\eta} \times \partial_{2} \mathbf{\eta})\big\}^{2} ,
\end{split}
\end{equation}
and $\left( \frac{\partial ( {X}_{\mathbf{cl}}(s^{1}),  {V}_{\mathbf{cl}}(s^{1}))}{\partial (\mathbf{X}_{\mathbf{cl}}(s^{1}), \mathbf{V}_{\mathbf{cl}}(s^{1}))}  \right)$ is invertible. By the basic linear algebra
\begin{equation}\label{XVchange}
 \begin{split}
 &\frac{\partial (\mathbf{X}_{\mathbf{cl}}(s^{1}), \mathbf{V}_{\mathbf{cl}}(s^{1})   )}{   \partial (  X_{\mathbf{cl}} (s^{1}), V_{\mathbf{cl}}(s^{1}) )} =
\left[ \frac{   \partial (  X_{\mathbf{cl}} (s^{1}), V_{\mathbf{cl}}(s^{1}) )}{\partial (\mathbf{X}_{\mathbf{cl}}(s^{1}), \mathbf{V}_{\mathbf{cl}}(s^{1})   )}\right]^{-1} = \left[ \begin{array}{c|c} A   & \mathbf{0}_{3,3} \\ \hline B+ \mathbf{x}_{\perp} D & A  \end{array}\right]^{-1} \\
 &= \left[\begin{array}{c|c} A ^{-1} & \mathbf{0}_{3,3} \\ \hline - A ^{-1} (B+\mathbf{x}_{\perp} D)  A ^{-1} &  A ^{-1}   \end{array}\right] = \left[\begin{array}{cc} A^{-1}(\mathbf{x}_{\parallel})  & \mathbf{0}_{3,3} \\ |v|  + O_{\xi}(\mathbf{x}_{\perp}) & A^{-1}(\mathbf{x}_{\parallel})      \end{array}\right],
\end{split}
\end{equation}
and we obtain
 \begin{equation}\notag
 \begin{split}
 \frac{\partial (s^{1}, \mathbf{X}_{\mathbf{cl}}(s^{1}) ,   \mathbf{V}_{\mathbf{cl}}(s^{1}) )}{\partial (s^{1}, X_{\mathbf{cl}}(s^{1}) ,   {V}_{\mathbf{cl}}(s^{1}) )}  =  \left[\begin{array}{c|c|c }1 & \mathbf{0}_{1,3} & \mathbf{0}_{1,3} \\ \hline
 0 &  \frac{(\partial_{1} \mathbf{\eta} \times \partial_{2} \mathbf{\eta})^{T}}{ [-\mathbf{n}] \cdot (\partial_{1} \mathbf{\eta} \times \partial_{2} \mathbf{\eta})}   &  \\
 0&     \frac{(\partial_{2} \mathbf{\eta} \times [-\mathbf{n}])^{T}}{ [-\mathbf{n}] \cdot (\partial_{1} \mathbf{\eta} \times \partial_{2} \mathbf{\eta})}    &  \mathbf{0}_{3,3} \\
 0 &  \frac{( [-\mathbf{n}] \times \partial_{1} \mathbf{\eta})^{T}}{ [-\mathbf{n}] \cdot (\partial_{1} \mathbf{\eta} \times \partial_{2} \mathbf{\eta})}   \\ \hline
 0& O_{\xi}(1)(  |v|) &  \frac{(\partial_{1} \mathbf{\eta} \times \partial_{2} \mathbf{\eta})^{T}}{ [-\mathbf{n}] \cdot (\partial_{1} \mathbf{\eta} \times \partial_{2} \mathbf{\eta})}     \\
 0  &  O_{\xi}(1)(|v|)&   \frac{(\partial_{2} \mathbf{\eta} \times [-\mathbf{n}])^{T}}{ [-\mathbf{n}] \cdot (\partial_{1} \mathbf{\eta} \times \partial_{2} \mathbf{\eta})}    \\
 0 & O_{\xi}(1)(|v|) &  \frac{( [-\mathbf{n}] \times \partial_{1} \mathbf{\eta})^{T}}{ [-\mathbf{n}] \cdot (\partial_{1} \mathbf{\eta} \times \partial_{2} \mathbf{\eta})}     \end{array}\right].
 \end{split}
 \end{equation}

From $X_{\mathbf{cl}}(s_{1};t,x,v) = x-(t-s_{1})v= x-\Delta \times v$ and $V_{\mathbf{cl}}(s_{1};t,x,v)=v,$
\begin{equation}\notag
\begin{split}
&\frac{\partial (s_{1}, X_{\mathbf{cl}}(s_{1}), V_{\mathbf{cl}}(s_{1}))}{\partial (t,x,v)}
=
\left[\begin{array}{ccc} 1 & \mathbf{0}_{1, 3} & \mathbf{0}_{1, 3} \\  \mathbf{0}_{3,1} & \mathbf{Id}_{3,3} & -(t-s^{1})\mathbf{Id}_{3,3} \\ \mathbf{0}_{3, 1} & \mathbf{0}_{3, 3} & \mathbf{Id}_{3,3}  \end{array}\right].
\end{split}
\end{equation} Finally we multiply above two matrices and use $|\mathbf{x}_{\perp}(s^{1})|\lesssim  |v||t^{1}-s^{1}|$ to conclude the second claim (\ref{s1t}).

\vspace{8pt}

\noindent \textit{Step 4. Estimate of  ${\partial (t^{\ell+1}, 0 , \mathbf{x}_{\parallel_{{\ell+1}}}^{\ell+1}, \mathbf{v}_{\perp_{{\ell+1}}}^{\ell+1}, \mathbf{v}_{\parallel_{{\ell+1}}}^{\ell+1})}/{\partial (t^{\ell}, 0 , \mathbf{x}_{\parallel_{{\ell}}}^{\ell}, \mathbf{v}_{\perp_{{\ell}}}^{\ell}, \mathbf{v}_{\parallel_{{\ell}}}^{\ell})}$}

\vspace{4pt}

Recall $
\mathbf{r}^{\ell}$ from (\ref{r}). We show that there exists $M =M_{\xi,t}\gg 1$, which is only depending on $\Omega$, such that for all $\ell \in \mathbb{N}$ and $0 \leq t^{\ell+1} \leq t^{\ell} \leq t$ and $v\in \mathbb{R}^{3},$
\begin{equation}\label{l_to_l+1}
\begin{split}
J^{\ell+1}_{\ell}&:= \frac{\partial (t^{\ell+1},0,\mathbf{x}_{\parallel_{\ell+1}
}^{\ell+1}, \mathbf{v}_{\perp_{\ell+1} }^{\ell+1},\mathbf{v}_{\parallel_{\ell+1} }^{\ell+1})}{\partial (t^{\ell},0,\mathbf{x}_{\parallel_{\ell}
}^{\ell},\mathbf{v}_{\perp_{\ell} }^{\ell},\mathbf{v}_{\parallel_{\ell} }^{\ell})}  \\
& \leq \left[\begin{array}{c|ccc|ccc}
 1& 0 & \frac{M}{|v|} \mathbf{r}^{\ell+1}&  \frac{M}{|v|} \mathbf{r}^{\ell+1}& \frac{M}{|v |^{2}} &  \frac{M}{|v|^{2}} \mathbf{r}^{\ell+1} &  \frac{M}{|v|^{2}}  \mathbf{r}^{\ell+1}\\ \hline
 0 & 0 & 0 & 0 & 0 & 0 & 0\\
 0 & 0 & 1 + M \mathbf{r}^{\ell+1} & M \mathbf{r}^{\ell+1} & \frac{M}{|v|}    & \frac{M}{|v|} \mathbf{r}^{\ell+1}   &  \frac{M}{|v|} \mathbf{r}^{\ell+1}  \\
  0 & 0 &  M \mathbf{r}^{\ell+1} & 1 +M \mathbf{r}^{\ell+1} & \frac{M}{|v|}    & \frac{M}{|v|} \mathbf{r}^{\ell+1}   &  \frac{M}{|v|} \mathbf{r}^{\ell+1}  \\ \hline
  0 & 0 & M|v|( \mathbf{r}^{\ell+1})^{2} & M|v|( \mathbf{r}^{\ell+1})^{2} & 1+M\mathbf{r}^{\ell+1} & M (\mathbf{r}^{\ell+1})^{2} & M (\mathbf{r}^{\ell+1})^{2}\\
  0 & 0 & M|v|\mathbf{r}^{\ell+1} & M|v| \mathbf{r}^{\ell+1} & M  & 1+ M\mathbf{r}^{\ell+1} & M \mathbf{r}^{\ell+1}\\
     0 & 0 & M|v|\mathbf{r}^{\ell+1} & M|v| \mathbf{r}^{\ell+1} & M &  M\mathbf{r}^{\ell+1} & 1+M \mathbf{r}^{\ell+1}
 \end{array}\right] \\
 &:=     \underbrace{  J(\mathbf{r}^{\ell+1})   }_{\text{  Definition of } J(\mathbf{r}^{\ell+1})}.
\end{split}
\end{equation}

We also denote the Jacobian matrix within a single $\mathbf{p}^{\ell}-$ spherical coordinate:
\[
\tilde{J}_{\ell}^{\ell+1} : =  \frac{\partial (t^{\ell+1},0,\mathbf{x}_{\parallel_{\ell }
}^{\ell+1}, \mathbf{v}_{\perp_{\ell } }^{\ell+1},\mathbf{v}_{\parallel_{\ell } }^{\ell+1})}{\partial (t^{\ell},0,\mathbf{x}_{\parallel_{\ell}
}^{\ell},\mathbf{v}_{\perp_{\ell} }^{\ell},\mathbf{v}_{\parallel_{\ell} }^{\ell})}.
\]

Note this bound (\ref{l_to_l+1}) holds for both \textit{Type I} and \textit{Type II} in (\ref{type_r}). We split the proof for each \textit{Type}:

\vspace{4pt}

\noindent\textit{Proof of (\ref{l_to_l+1}) when $\mathbf{r}^{\ell}< \sqrt{\delta}$ and $\mathbf{r}^{\ell+1}< \sqrt{\delta}$: }  Note that $\mathbf{p}^{\ell}-$spherical coordinate is well-defined of all $\tau\in [t^{\ell+1}, t^{\ell}]$. Due to the chart changing
\begin{equation*}
 { \frac{\partial (t^{\ell+1},0,\mathbf{x}_{\parallel_{\ell+1}
}^{\ell+1}, \mathbf{v}_{\perp_{\ell+1} }^{\ell+1},\mathbf{v}_{\parallel_{\ell+1} }^{\ell+1})}{\partial (t^{\ell},0,\mathbf{x}_{\parallel_{\ell}
}^{\ell},\mathbf{v}_{\perp_{\ell} }^{\ell},\mathbf{v}_{\parallel_{\ell} }^{\ell})} } =
  \left[\begin{array}{c|c} 1 & \mathbf{0}_{1,6}\\  \hline \mathbf{0}_{6,1} & \nabla \Phi_{\mathbf{p}^{\ell}}^{-1} \nabla \Phi_{\mathbf{p}^{\ell+1}} \end{array} \right]
 \underbrace{  \frac{\partial (t^{\ell+1},0,\mathbf{x}_{\parallel_{\ell }
}^{\ell+1}, \mathbf{v}_{\perp_{\ell } }^{\ell+1},\mathbf{v}_{\parallel_{\ell } }^{\ell+1})}{\partial (t^{\ell},0,\mathbf{x}_{\parallel_{\ell}
}^{\ell},\mathbf{v}_{\perp_{\ell} }^{\ell},\mathbf{v}_{\parallel_{\ell} }^{\ell})}}_{=\tilde{J}_{\ell}^{\ell+1}}.
\end{equation*}
Note that
\begin{equation}\notag
\begin{split}
|\mathbf{p}^{\ell}-\mathbf{p}^{\ell+1}| \leq& \  |z^{\ell} - z^{\ell+1}| + | u^{\ell}- u^{\ell+1}|\\
 \lesssim& \ |z^{\ell} - x^{\ell }| +  |x^{\ell} - x^{\ell+1}|  +  |z^{\ell+1} - x^{\ell+1}| \\
 &+ \Big|u^{\ell} - \frac{v^{\ell} - (v^{\ell} \cdot n(z^{\ell}) )n(z^{\ell})}{|{v^{\ell} - (v^{\ell} \cdot n(z^{\ell})) n(z^{\ell})}|}\Big|
+ \Big|u^{\ell+1} - \frac{v^{\ell+1} - (v^{\ell+1} \cdot n(z^{\ell+1}) )n(z^{\ell+1})}{|{v^{\ell+1} - (v^{\ell+1} \cdot n(z^{\ell+1})) n(z^{\ell+1})}|}\Big|\\
&+ \frac{|\mathbf{v}_{\perp}^{\ell} | + |\mathbf{v}_{\perp}^{\ell+1}| + |v||x^{\ell} - z^{\ell}| + |v||x^{\ell+1} - z^{\ell+1}| }{{|{v^{\ell} - (v^{\ell} \cdot n(z^{\ell}) )n(z^{\ell} )}|}}\\
\lesssim_{\xi} &  \ \mathbf{r}^{\ell}.
\end{split}
\end{equation}
where we have used $\mathbf{r}^{\ell} \leq C \sqrt{\delta}$ (therefore$|v^{\ell} - (v^{\ell} \cdot n(z^{\ell})) n(z^{\ell}) | \gtrsim (1-\sqrt{\delta})|v|$) and $(\ref{pl})$ and (\ref{40}).

In order to show (\ref{l_to_l+1}) it suffices to show that ${\tilde{J}_{\ell}^{\ell+1}}$ is bounded as (\ref{l_to_l+1}):
\begin{equation}\label{tilde_J}
\tilde{J}_{\ell}^{\ell+1}  \ \leq  \  J(\mathbf{r}^{\ell+1}).
\end{equation}

This is due to the following matrix multiplication
\begin{equation}\notag
\begin{split}
  &\left[\begin{array}{c|c} 1 & \mathbf{0}_{1,6}\\  \hline \mathbf{0}_{6,1} & \nabla \Phi_{\mathbf{p}^{\ell}}^{-1} \nabla \Phi_{\mathbf{p}^{\ell+1}} \end{array} \right]\tilde{J}_{\ell}^{\ell+1}\\
  &\leq
 \left[
\begin{array}{c|ccc|ccc}
1 & & \mathbf{0}_{1,3}  &  &&  \mathbf{0}_{1,3} & \\ \hline
&1 & 0 & 0 &  &  &  \\
  \mathbf{0}_{3,1} &0 & 1+ C\mathbf{r}^{\ell+1} &  C\mathbf{r}^{\ell+1} &  & \mathbf{0}_{3,3} & \\
&0 &  C\mathbf{r}^{\ell+1} &  1+C\mathbf{r}^{\ell+1} &  & &   \\ \hline
&0 & 0 & 0 & 1 & 0 & 0 \\
  \mathbf{0}_{3,1} &0 &C\mathbf{r}^{\ell+1} |v| & C\mathbf{r}^{\ell+1} |v|  & 0 &  1+C\mathbf{r}^{\ell+1} &  C\mathbf{r}^{\ell+1} \\
&0 &  C\mathbf{r}^{\ell+1}|v|  &  C\mathbf{r}^{\ell+1}|v|  & 0 &  C\mathbf{r}^{\ell+1} & 1+C\mathbf{r}^{\ell+1}
\end{array}
\right]J(\mathbf{r}^{\ell+1})\\
&\leq J( C \mathbf{r}^{\ell+1}),
\end{split}
\end{equation}
where we used (\ref{chart_changing}).

Now we prove the claim (\ref{tilde_J}). We fix the $\mathbf{p}^{\ell}-$spherical coordinate and drop the index $\ell$ for the chart.
%

If $\mathbf{v}_{\perp}^{\ell}=0$ then $t^{\ell+1}=t^{\ell}$. Otherwise if $\mathbf{v}_{\perp}^{\ell} \neq 0$ then $t^{\ell+1}$ is determined through
\begin{equation}
0=\mathbf{v}_{\perp}^{\ell}(t^{\ell+1}-t^{\ell})+\int^{t^{\ell}  }_{ t^{\ell+1}}\int^{t^{\ell}}_{s}F_{\perp }(
\mathbf{X}_{\ell}(\tau; t^{\ell}, x^{\ell},v^{\ell}), \mathbf{V}_{\ell}(\tau;t^{\ell}, x^{\ell},v^{\ell} )
)\mathrm{d}\tau
\mathrm{d}s.\label{define}
\end{equation}
Since the ODE for $[\mathbf{X}_{\ell}(\tau;t,x,v), \mathbf{V}_{\ell}(\tau;t,x,v)]$ is autonomous,
\begin{equation}\notag
0=\mathbf{v}_{\perp}^{\ell}(t^{\ell+1}-t^{\ell})+\int^{t^{\ell}- t^{\ell+1}}_{0}   \int^{0}_{t^{\ell+1}-t^{\ell} + s} F_{\perp }(
\mathbf{X}_{\ell}(\tau; 0, x^{\ell}, v^{\ell}), \mathbf{V}_{\ell}(\tau; 0, x^{\ell}, v^{\ell} )
)\mathrm{d}\tau
\mathrm{d}s. 
\end{equation}
We take $t^{\ell}-$derivative to have
\begin{equation}\notag
\begin{split}
0&=  \frac{\partial (t^{\ell+1} - t^{\ell})}{\partial t^{\ell}}  \Big\{ \mathbf{v}_{\perp}^{\ell}  -   \int_{0}^{ t^{\ell} - t^{\ell+1} } 
F_{\perp} (\mathbf{X}_{\ell} (t^{\ell+1} -t^{\ell} +s; 0, x^{\ell} ,v^{\ell} ),\mathbf{V}_{\ell} (t^{\ell+1} -t^{\ell} +s; 0, x^{\ell} ,v^{\ell} ) ) \mathrm{d}s \Big\}\\
&= \frac{\partial (t^{\ell+1} - t^{\ell})}{\partial t^{\ell}} \Big\{  \mathbf{v}_{\perp}^{\ell} - \int_{t^{\ell+1}}^{t^{\ell}}  F_{\perp} (\mathbf{X}_{\ell} (  s; t^{\ell}  , x^{\ell} ,v^{\ell}  ), \mathbf{V}_{\ell}( s; t^{\ell}, x^{\ell} ,v^{\ell} )  ) \mathrm{d}s  \Big\}  \\
& =  \frac{\partial (t^{\ell+1} - t^{\ell})}{\partial t^{\ell}}  (- \mathbf{v}_{\perp}^{\ell+1}),
\end{split}
\end{equation}
where we used the definition
\begin{equation}\label{v_ell}
\mathbf{v}_{\perp}^{\ell+1} = - \lim_{s\downarrow t^{\ell+1}} \mathbf{v}_{\perp}(s) = -\mathbf{v}_{\perp}^{\ell} + \int^{t^{\ell}}_{t^{\ell+1}}  F_{\perp}(\mathbf{X}_{\mathbf{cl}}(\tau;t,x,v),\mathbf{V}_{\mathbf{cl}}(\tau;t,x,v) ) \mathrm{d}\tau.
\end{equation}
Therefore we conclude $$\frac{\partial t^{\ell+1}}{\partial t^{\ell}}=1.$$ Then combining with
\begin{eqnarray*} 
\mathbf{x}_{\parallel}^{\ell+1} &=& \mathbf{x}_{\parallel}^{\ell} + \int^{t^{\ell+1} -t^{\ell}}_{0} \mathbf{v}_{\parallel} (s;0, x^{\ell}, v^{\ell}) \mathrm{d}s,\\
\mathbf{v} ^{\ell+1} &=& \mathbf{v} ^{\ell} + \int^{t^{\ell+1} -t^{\ell}}_{0} F  (s;0,x^{\ell}, v^{\ell}) \mathrm{d}s,
\end{eqnarray*}
we conclude
\[
\frac{\partial \mathbf{x}_{\parallel}^{\ell+1}}{\partial t^{\ell}}  = \frac{\partial \mathbf{v}_{\parallel}^{\ell+1}}{\partial t^{\ell}} = \frac{\partial \mathbf{v}_{\perp}^{\ell+1}}{\partial t^{\ell}}= 0.
\]

Now we use $|t^{\ell}-t^{\ell+1}| \lesssim_{\xi,t} \min \{\frac{|\mathbf{v}_{\perp}^{\ell+1}|}{|v|^{2}},1\}$, from (\ref{41}), and (\ref{Dxv_free}) and (\ref{D_F}) to have 
\begin{equation}\label{tx}
\begin{split}
\frac{\partial t^{\ell+1}}{\partial \mathbf{x}_{\parallel }^{\ell}}& =\frac{1}{\mathbf{v}_{\perp
}^{\ell+1}}\int_{t^{\ell}}^{t^{\ell+1}}\int_{t^{\ell}}^{s}\frac{\partial }{\partial
\mathbf{x}_{\parallel }^{\ell}}F_{\perp }(   \mathbf{X}_{\ell}(\tau),   \mathbf{V}_{\ell}(\tau))\mathrm{d}\tau \mathrm{d}s\\
&\lesssim  \frac{|t^{\ell}-t^{\ell+1}|^{2}}{|\mathbf{v}_{\perp}^{\ell+1}|} |v^{\ell}|^{2} [1+ |v^{\ell}||t^{\ell}-t^{\ell+1}|] \lesssim \frac{|v^{\ell}|^{2} |t^{\ell}-t^{\ell+1}|^{2}}{|\mathbf{v}_{\perp}^{\ell+1}|} \\
&\lesssim_{\xi,t}  |t^{\ell}-t^{\ell+1}| \lesssim_{\xi,t} \frac{1}{|v|} \frac{|\mathbf{v}_{\perp}^{\ell+1}|}{|v|}
, \\
\frac{\partial t^{\ell+1}}{\partial \mathbf{v}_{\perp }^{\ell}}& =\frac{1}{\mathbf{v}_{\perp }^{\ell+1}}%
\left\{ (t^{\ell+1}-t^{\ell})+\int_{t^{\ell}}^{t^{\ell+1}}\int_{t^{\ell}}^{s}\frac{\partial
}{\partial \mathbf{v}_{\perp }^{\ell}}F_{\perp }(   \mathbf{X}_{\ell}(\tau), \mathbf{V}_{\ell}(\tau) )\mathrm{d}\tau \mathrm{d}%
s\right\} \\
&\lesssim \{1+ |v^{\ell}|  |t^{\ell}-t^{\ell+1}| \}\frac{|t^{\ell}-t^{\ell+1}|}{|\mathbf{v}_{\perp}^{\ell+1}|} \lesssim_{\xi,t}  \frac{|t^{\ell} -t^{\ell+1}|}{|\mathbf{v}_{\perp}^{\ell+1}|} \lesssim_{\xi,t} \frac{1}{|v|^{2}}, \\
\frac{\partial t^{\ell+1}}{\partial \mathbf{v}_{\parallel }^{\ell}}& =\frac{1}{\mathbf{v}_{\perp
}^{\ell+1}}\int^{t^{\ell}}_{t^{\ell+1}}\int^{t^{\ell}}_{s}\frac{\partial }{\partial
\mathbf{v}_{\parallel }^{\ell}}F_{\perp }(  \mathbf{X}_{\ell}(\tau),   \mathbf{V}_{\ell}(\tau))\mathrm{d}\tau \mathrm{d}s\lesssim \frac{|v^{\ell}|  |t^{\ell}-t^{\ell+1}|^{2}}{|\mathbf{v}_{\perp}^{\ell+1}|} \lesssim_{\xi,t} \frac{1}{|v|^{2}} \frac{|\mathbf{v}_{\perp}^{\ell+1}|}{|v|}.
\end{split} 
\end{equation}

We use (\ref{tx}) and (\ref{Dxv_free}) and (\ref{D_F}) and (\ref{41}) to have 
\begin{equation}\notag
\begin{split}
\frac{\partial \mathbf{x}_{\parallel }^{\ell+1}}{\partial \mathbf{x}_{\parallel }^{\ell}}&
=\mathbf{Id}_{2,2}+\frac{\mathbf{v}_{\parallel }^{\ell+1}}{\mathbf{v}_{\perp }^{\ell+1}}%
\int_{t^{\ell}}^{t^{\ell+1}}\int_{t^{\ell}}^{s}\frac{\partial }{\partial \mathbf{x}_{\parallel
}^{\ell}}F_{\perp }(\mathbf{X}_{\mathbf{cl}}(\tau), \mathbf{V}_{\mathbf{cl}}(\tau) )\mathrm{d}\tau \mathrm{d}s+\int_{t^{\ell}}^{t^{\ell+1}}%
\int_{t^{\ell}}^{s}\frac{\partial }{\partial \mathbf{x}_{\parallel }^{\ell}}F_{\parallel
}( \mathbf{X}_{\mathbf{cl}}(\tau), \mathbf{V}_{\mathbf{cl}}(\tau)   )\mathrm{d}\tau \mathrm{d}s\\
&\lesssim \mathbf{Id}_{2,2} + \Big( 1+ \frac{|v^{\ell}|}{|\mathbf{v}_{\perp}^{\ell+1}|}\Big)  |t^{\ell+1}-t^{\ell}|^{2} |v^{\ell}|^{2}
\lesssim \mathbf{Id}_{2,2} + \frac{|\mathbf{v}_{\perp}^{\ell}|}{|v|},\\
\frac{\partial \mathbf{x}_{\parallel }^{\ell+1}}{\partial \mathbf{v}_{\perp }^{\ell}}& =\Big\{\frac{%
t^{\ell+1}-t^{\ell}}{\mathbf{v}_{\perp }^{\ell+1}}+\frac{1}{\mathbf{v}_{\perp }^{\ell+1}}%
\int_{t^{\ell}}^{t^{\ell+1}}\int_{t^{\ell}}^{s}\frac{\partial }{\partial \mathbf{v}_{\perp
}^{\ell}}F_{\perp }( \mathbf{X}_{\mathbf{cl}}(\tau), \mathbf{V}_{\mathbf{cl}}(\tau) )\mathrm{d}\tau \mathrm{d}s\Big\}\mathbf{v}_{\parallel
}^{\ell+1}\\
& \ \ \ \ \ \ +\int_{t^{\ell}}^{t^{\ell+1}}\int_{t^{\ell}}^{s}\frac{\partial }{\partial
\mathbf{v}_{\perp }^{\ell}}F_{\parallel }( \mathbf{X}_{\mathbf{cl}}(\tau), \mathbf{V}_{\mathbf{cl}}(\tau) )\mathrm{d}\tau \mathrm{d}s\\
&\lesssim |t^{\ell}-t^{\ell+1}| \frac{|v^{\ell}|}{|\mathbf{v}_{\perp}^{\ell+1}|} + |t^{\ell}-t^{\ell+1}|^{2} \frac{|v^{\ell}|^{2}}{|\mathbf{v}_{\perp}^{\ell+1}|}+ |t^{\ell}-t^{\ell+1}|^{2} |v^{\ell}| \lesssim  \frac{1}{|v|}  ,  \\
\frac{\partial \mathbf{x}_{\parallel }^{\ell+1}}{\partial \mathbf{v}_{\parallel }^{\ell}}&
=(t^{\ell+1}-t^{\ell})+\frac{\mathbf{v}_{\parallel }^{\ell+1}}{\mathbf{v}_{\perp }^{\ell+1}}%
\int_{t^{\ell}}^{t^{\ell+1}}\int_{t^{\ell}}^{s}\frac{\partial }{\partial \mathbf{v}_{\parallel
}^{\ell}}F_{\perp }( \mathbf{X}_{\mathbf{cl}}(\tau), \mathbf{V}_{\mathbf{cl}}(\tau) )\mathrm{d}\tau \mathrm{d}s \\
& \ \ \ \ \ \  +\int_{t^{\ell}}^{t^{\ell+1}}%
\int_{t^{\ell}}^{s}\frac{\partial }{\partial \mathbf{v}_{\parallel }^{\ell}}F_{\perp }(  \mathbf{X}_{\mathbf{cl}}(\tau), \mathbf{V}_{\mathbf{cl}}(\tau)
)\mathrm{d}\tau \mathrm{d}s\\
&\lesssim  |t^{\ell}-t^{\ell+1}| + \Big(1+ \frac{|v^{\ell}|}{|\mathbf{v}_{\perp}^{\ell+1}|}\Big) |t^{\ell}-t^{\ell+1}|^{2} |v^{\ell}| \big[ 1+ |v^{\ell}||t^{\ell}-t^{\ell+1}|\big] \\
&\lesssim  |t^{\ell}-t^{\ell+1}| +  \Big(1+ \frac{|v^{\ell}|}{|\mathbf{v}_{\perp}^{\ell+1}|}\Big)    |t^{\ell}-t^{\ell+1}|^{2} |v^{\ell}| \lesssim \frac{1}{|v|} \frac{|\mathbf{v}_{\perp}^{\ell+1}|}{|v|}.
\end{split}
\end{equation}

Now we move to $D \mathbf{v}_{\perp}^{\ell+1}$ estimates. First we claim the crucial estimate of  $t^{\ell}-t^{\ell+1}$:
\begin{equation}
(t^{\ell}-t^{\ell+1})F_{\perp }(\mathbf{x}^{\ell+1}, \mathbf{v}^{\ell})=2\mathbf{v}_{\perp }^{\ell+1}+O_{\xi
}(1)|t^{\ell}-t^{\ell+1}|^{2}|v^{\ell}|^{3}.
\label{time_gap}
\end{equation}
As (\ref{define}), we use the fact $\mathbf{x}_{\perp}^{\ell} = 0 = \mathbf{x}_{\perp}^{\ell+1}$ and the definition $\mathbf{v}_{\perp }^{\ell+1}=-\lim_{s\downarrow t^{\ell+1}} \mathbf{v}_{\perp }(s)$ and 
\begin{equation}\notag
\begin{split}
\dot{\mathbf{v}}_{\perp}(s) & \ = \ F_{\perp}(\mathbf{X}_{\ell}(s;t^{\ell},\mathbf{x}^{\ell}, \mathbf{v}^{\ell}),\mathbf{V}_{\ell}(s;t^{\ell},\mathbf{x}^{\ell}, \mathbf{v}^{\ell})) \\
& \  =  \ F_{\perp}(\mathbf{X}_{\ell}(s;t^{\ell+1},\mathbf{x}^{\ell+1}, \mathbf{v}^{\ell+1}),\mathbf{V}_{\ell}(s;t^{\ell+1},\mathbf{x}^{\ell+1}, \mathbf{v}^{\ell+1})),
\end{split}
\end{equation} 
to conclude the similar identity of (\ref{define})
\begin{equation}\label{define_l+1}
0=-\mathbf{v}_{\perp
}^{\ell+1}(t^{\ell}-t^{\ell+1})+\int_{t^{\ell+1}}^{t^{\ell}}\int_{t^{\ell+1}}^{s}   F_{\perp}(\mathbf{X}_{\ell}(\tau;t^{\ell+1},\mathbf{x}^{\ell+1}, \mathbf{v}^{\ell+1}),\mathbf{V}_{\ell}(\tau;t^{\ell+1},\mathbf{x}^{\ell+1}, \mathbf{v}^{\ell+1}))\mathrm{d}%
\tau \mathrm{d}s.
\end{equation}
For $t^{\ell+1}<\tau< t^{\ell},$ we have
\begin{equation}\notag 
\begin{split}
&F_{\perp} ( \mathbf{X}_{\mathbf{cl}}(\tau; t^{\ell}, \mathbf{x}^{\ell}, \mathbf{v}^{\ell}), \mathbf{V}_{\mathbf{cl}}(\tau; t^{\ell}, \mathbf{x}^{\ell}, \mathbf{v}^{\ell})) \\
& = F_{\perp} (\mathbf{x}^{\ell}, \mathbf{v}^{\ell}) + \int^{\tau}_{t^{\ell}} \frac{\partial}{\partial \tau} F_{\perp} (  \mathbf{X}_{\mathbf{cl}}(\tau;t^{\ell },\mathbf{x}^{\ell }, \mathbf{v}^{\ell }),\mathbf{V}_{\mathbf{cl}}(\tau;t^{\ell },\mathbf{x}^{\ell }, \mathbf{v}^{\ell})   ) \mathrm{d}\tau, 
\end{split}
\end{equation}
and
\begin{equation}\notag
\begin{split}
& F_{\perp} ( \mathbf{X}_{\mathbf{cl}}(\tau; t^{\ell+1}, \mathbf{x}^{\ell+1}, \mathbf{v}^{\ell+1}), \mathbf{V}_{\mathbf{cl}}(\tau; t^{\ell+1}, \mathbf{x}^{\ell+1}, \mathbf{v}^{\ell+1})) \\
& = F_{\perp} (\mathbf{x}^{\ell+1}, \mathbf{v}^{\ell}) + \int^{\tau}_{t^{\ell+1}} \frac{\partial}{\partial \tau}   F_{\perp} (  \mathbf{X}_{\mathbf{cl}}(\tau;t^{\ell },\mathbf{x}^{\ell }, \mathbf{v}^{\ell }),\mathbf{V}_{\mathbf{cl}}(\tau;t^{\ell },\mathbf{x}^{\ell }, \mathbf{v}^{\ell})   )  \mathrm{d}\tau.
\end{split} 
\end{equation}
Therefore 
\begin{equation}\notag
\begin{split}
F_{\perp }(\mathbf{X}_{\mathbf{cl}}(\tau;t^{\ell}, \mathbf{x}^{\ell} , \mathbf{v}^{\ell}) ,\mathbf{V}_{\mathbf{cl}}(\tau;t^{\ell}, \mathbf{x}^{\ell} , \mathbf{v}^{\ell})   )&=F_{\perp }(\mathbf{x}^{\ell+1}, \mathbf{v}^{\ell} )+O_{\xi
}(1)|t^{\ell+1}-t^{\ell}||v |^{3}.
\end{split}
\end{equation}
Plugging this into (\ref{define_l+1}) we have
\begin{equation*}
0=-\mathbf{v}_{\perp }^{\ell+1}(t^{\ell}-t^{\ell+1})+\frac{1}{2}(t^{\ell}-t^{\ell+1})^{2}F_{\perp
}( \mathbf{x}^{\ell+1} , \mathbf{v}^{\ell} )+O_{\xi }(1)|t^{\ell}-t^{\ell+1}|^{3}|v |^{3},
\end{equation*} 
and this proves our claim (\ref{time_gap}).

Using (\ref{time_gap}), we can find an extra cancellation in
terms of order of $t^{\ell}-t^{\ell+1}$ to get
\begin{equation}\notag
\begin{split}
\frac{\partial \mathbf{v}_{\perp }^{\ell+1}}{\partial \mathbf{x}_{\parallel }^{\ell}}& =\frac{%
-F_{\perp }(\mathbf{x}^{\ell+1}, \mathbf{v}^{\ell})}{\mathbf{v}_{\perp }^{\ell+1}}\int^{t^{\ell}}_{t^{\ell+1}}\int^{t^{\ell}}_{s}%
\frac{\partial }{\partial \mathbf{x}_{\parallel }^{\ell}}F_{\perp }( \mathbf{X}_{\mathbf{cl}}(\tau), \mathbf{V}_{\mathbf{cl}}(\tau) )\mathrm{d}\tau
\mathrm{d}s+\int^{t^{\ell}}_{t^{\ell+1}}\frac{\partial }{\partial \mathbf{x}_{\parallel
}^{\ell}}F_{\perp }(\mathbf{X}_{\mathbf{cl}}(\tau), \mathbf{V}_{\mathbf{cl}}(\tau) )\mathrm{d}\tau  \\
& =\Big\{\frac{(t^{\ell}-t^{\ell+1})F_{\perp }(\mathbf{x}^{k+1}, \mathbf{v}^{\ell})}{-2\mathbf{v}_{\perp }^{\ell+1}}+1%
\Big\}(t^{\ell}-t^{\ell+1})\frac{\partial }{\partial \mathbf{x}_{\parallel }^{\ell}}F_{\perp
}(\mathbf{x}^{\ell}, \mathbf{v}^{\ell})  \\
&   \ \ \ \ \ +O_{\xi }(1)\Big\{|t^{\ell}-t^{\ell+1}|^{2}|v^{\ell}|^{3}+\frac{%
|t^{\ell}-t^{\ell+1}|^{3}|v^{\ell}|^{3}}{|\mathbf{v}_{\perp }^{\ell+1}|}|\frac{\partial }{%
\partial \mathbf{x}_{\parallel }^{\ell}}F_{\perp }(\mathbf{x}^{\ell}, \mathbf{v}^{\ell})|\Big\}\\
&\lesssim_{\xi} \Big\{   -1 + O_{\xi}(1) \frac{|t^{\ell}-t^{\ell+1}|^{2 }   |v^{\ell}|^{3}}{|\mathbf{v}_{\perp}^{\ell+1}|} + 1\Big\} |t^{\ell}-t^{\ell+1}|  |v^{\ell}|^{2} +|t^{\ell}-t^{\ell+1}|^{2}|v^{\ell}|^{3} \Big\{ 1+ \frac{|t^{\ell}-t^{\ell+1}||v^{\ell}|^{2}}{|\mathbf{v}_{\perp}^{\ell+1}|}  \Big\}
\\
&\lesssim _{\xi}  |t^{\ell}-t^{\ell+1}|^{2} |v^{\ell}|^{3} \Big(1+ \frac{|t^{\ell}-t^{\ell+1}||v^{\ell}|^{2}}{|\mathbf{v}_{\perp}^{\ell+1}|} \Big) \lesssim_{\xi}   |t^{\ell}-t^{\ell+1}|^{2} |v^{\ell}|^{3} , \\
&\lesssim_{\xi,t}  \frac{|\mathbf{v}_{\perp}^{\ell+1}|^{2}}{|v^{\ell}|}  ,
\end{split}
\end{equation}
\begin{equation}\notag
\begin{split}
\frac{\partial \mathbf{v}_{\perp }^{\ell+1}}{\partial \mathbf{v}_{\perp }^{\ell}}& =-1-\frac{%
\partial t^{\ell+1}}{\partial \mathbf{v}_{\perp }^{\ell}}F_{\perp
}(\mathbf{x}^{\ell+1}, \mathbf{v}^{\ell})+\int^{t^{\ell}}_{t^{\ell+1}}\frac{\partial }{\partial \mathbf{v}_{\perp }^{\ell}}%
F_{\perp }( \mathbf{X}_{\mathbf{cl}}(\tau), \mathbf{V}_{\mathbf{cl}}(\tau) )\mathrm{d}\tau \\
&= -1+\frac{F_{\perp }(\mathbf{x}^{\ell+1}, \mathbf{v}^{\ell})}{\mathbf{v}_{\perp }^{\ell+1}}(t^{\ell}-t^{\ell+1})-\frac{%
F_{\perp } (\mathbf{x}^{\ell+1}, \mathbf{v}^{\ell})}{\mathbf{v}_{\perp }^{\ell+1}}\int_{t^{\ell}}^{t^{\ell+1}}\int_{t^{\ell}}^{s}%
\frac{\partial }{\partial \mathbf{v}_{\perp }^{\ell}}F_{\perp }(\mathbf{X}_{\mathbf{cl}}(\tau), \mathbf{V}_{\mathbf{cl}}(\tau))\mathrm{d}\tau
\mathrm{d}s \\
& \ \ \ \ \ \ \ \ +\int^{t^{\ell}}_{t^{\ell+1}}\frac{\partial }{\partial \mathbf{v}_{\perp }^{\ell}}%
F_{\perp }( \mathbf{X}_{\mathbf{cl}}(\tau) , \mathbf{V}_{\mathbf{cl}}(\tau) )\mathrm{d}\tau \\
&=- 1+2+O_{\xi }(1)\frac{|t^{\ell}-t^{\ell+1}|^{2}|v^{\ell}|^{3}}{\mathbf{v}_{\perp }^{\ell+1}} \\
& \ \ \ \ \ -
\frac{F_{\perp } (\mathbf{x}^{\ell+1}, \mathbf{v}^{\ell})}{\mathbf{v}_{\perp }^{\ell+1}}\frac{(t^{\ell}-t^{\ell+1})^{2}}{2}%
\Big\{ \lim_{s \uparrow t^{\ell}}  \frac{\partial }{\partial \mathbf{v}_{\perp }^{\ell}}   F_{\perp }( \mathbf{X}_{\mathbf{cl}}(\tau), \mathbf{V}_{\mathbf{cl}}(\tau)  )+O_{\xi
}(1)|t^{\ell}-t^{\ell+1}||v^{\ell}|^{2}\Big\} \\
 & \ \ \ \ \ +(t^{\ell}-t^{\ell+1})\Big\{  \lim_{s \uparrow t^{\ell}}\frac{\partial }{\partial \mathbf{v}_{\perp }^{\ell}}F_{\perp
}( \mathbf{X}_{\mathbf{cl}}(\tau), \mathbf{V}_{\mathbf{cl}}(\tau)   )+O_{\xi }(1)|t^{\ell}-t^{\ell+1}||v^{\ell}|^{2}\Big\} \\
&  =1 + O_{\xi}(1) \Big\{\frac{|t^{\ell}-t^{\ell+1}|^{2}|v^{\ell}|^{3}}{|\mathbf{v}_{\perp}^{\ell+1}|}+ \frac{|t^{\ell}-t^{\ell+1}|^{3}}{|\mathbf{v}_{\perp}^{\ell+1}|}|v^{\ell}|^{3}  \Big| \lim_{s \uparrow t^{\ell}}\frac{\partial }{\partial \mathbf{v}_{\perp }^{\ell}}F_{\perp
}( \mathbf{X}_{\mathbf{cl}}(\tau), \mathbf{V}_{\mathbf{cl}}(\tau)   )\Big| + |t^{\ell}-t^{\ell+1}|^{2} |v^{\ell}|^{2}\Big\}\\
&\lesssim 1 + |t^{\ell}-t^{\ell+1}|^{2} |v^{\ell}|^{2} \Big\{ 1+ \frac{|v^{\ell}|}{|\mathbf{v}_{\perp}^{\ell+1}|} + \frac{|t^{\ell}-t^{\ell+1}||v^{\ell}|^{2}  }{|\mathbf{v}_{\perp}^{\ell+1}|}  \Big\} \lesssim_{\xi,t}  1+ \frac{|\mathbf{v}_{\perp}^{\ell+1}|}{|v^{\ell}|} ,\\
\frac{\partial \mathbf{v}_{\perp }^{\ell+1}}{\partial \mathbf{v}_{\parallel }^{\ell}}& =\frac{%
-F_{\perp }(\mathbf{x}^{\ell+1},  v^{\ell})}{ \mathbf{v}_{\perp }^{\ell+1}}\int^{t^{\ell}}_{t^{\ell+1}}\int^{t^{\ell}}_{s}%
\frac{\partial }{\partial  \mathbf{v}_{\parallel }^{\ell}}F_{\perp }( \mathbf{X}_{\mathbf{cl}}(\tau), \mathbf{V}_{\mathbf{cl}}(\tau) )\mathrm{d}\tau
\mathrm{d}s-\int^{t^{\ell}}_{t^{\ell+1}}\frac{\partial }{\partial  \mathbf{v}_{\parallel
}^{\ell}}F_{\perp }(  \mathbf{X}_{\mathbf{cl}}(\tau), \mathbf{V}_{\mathbf{cl}}(\tau) )\mathrm{d}\tau \\
& =\Big\{\frac{(t^{\ell}-t^{\ell+1})F_{\perp }(\mathbf{x}^{\ell+1}, \mathbf{v}^{\ell})}{-2 \mathbf{v}_{\perp }^{\ell+1}}+1%
\Big\}(t^{\ell}-t^{\ell+1})\frac{\partial }{\partial  \mathbf{v}_{\parallel }^{\ell}}F_{\perp
}(\mathbf{x}^{\ell}, \mathbf{v}^{\ell})  \\
& \ \ \ \  +O_{\xi }(1) |t^{\ell}-t^{\ell+1}|^{2} |v^{\ell}|^{2}
\Big\{\frac{|F_{\perp }(\mathbf{x}^{\ell+1}, \mathbf{v}^{\ell})|  |t^{\ell}-t^{\ell+1}| }{| \mathbf{v}_{\perp }^{\ell+1}|}%
+1\Big\}\\
& \lesssim_{\xi} |t^{\ell}-t^{\ell+1}|^{2}|v^{\ell}|^{2} \Big\{1+ \frac{|t^{\ell}-t^{\ell+1}||v^{\ell}|^{2}}{| \mathbf{v}_{\perp}^{\ell+1}|}\Big\} \lesssim_{ \xi,t}   \frac{| \mathbf{v}_{\perp}^{\ell+1}|^{2}}{|v^{\ell}|^{2}}.
\end{split}
\end{equation}
By $(\ref{Dxv_free})$ and (\ref{41}),
\begin{equation}\notag
\begin{split} 
\frac{\partial  \mathbf{v}_{\parallel }^{\ell+1}}{\partial \mathbf{x}_{\parallel }^{\ell}}& =\frac{%
F_{\parallel }(\mathbf{x}^{\ell+1}, \mathbf{v}^{\ell})}{\mathbf{v}_{\perp }^{\ell+1}}\int_{t^{\ell}}^{t^{\ell+1}}%
\int_{t^{\ell}}^{s}\frac{\partial }{\partial \mathbf{x}_{\parallel }^{\ell}}F_{\perp }( \mathbf{X}_{\mathbf{cl}}(\tau), \mathbf{V}_{\mathbf{cl}}(\tau)
)\mathrm{d}\tau \mathrm{d}s\\
& \ \ +\int_{t^{\ell}}^{t^{\ell+1}}\frac{\partial }{\partial
\mathbf{x}_{\parallel }^{\ell}}F_{\parallel }(  \mathbf{X}_{\mathbf{cl}}(\tau), \mathbf{V}_{\mathbf{cl}}(\tau) )\mathrm{d}\tau \lesssim  |t^{\ell}-t^{\ell+1}||v^{\ell}|^{2} \Big\{1+ \frac{|t^{\ell}-t^{\ell+1}||v^{\ell}|^{2}}{|\mathbf{v}_{\perp}^{\ell+1}|}\Big\} \\
&\lesssim_{\xi }  |t^{\ell}-t^{\ell+1}| |v^{\ell}|^{2} \lesssim_{\xi,t}   | \mathbf{v}_{\perp}^{\ell+1}|   ,\\
\frac{\partial  \mathbf{v}_{\parallel }^{\ell+1}}{\partial  \mathbf{v}_{\perp }^{\ell}}& =\frac{%
-(t^{\ell}-t^{\ell+1})F_{\parallel }(\mathbf{x}^{\ell+1}, \mathbf{v}^{\ell})}{ \mathbf{v}_{\perp }^{\ell+1}}+\frac{%
F_{\parallel }(\mathbf{x}^{\ell+1}, \mathbf{v}^{\ell})}{ \mathbf{v}_{\perp }^{\ell+1}}\int_{t^{\ell}}^{t^{\ell+1}}%
\int_{t^{\ell}}^{s}\frac{\partial }{\partial  \mathbf{v}_{\perp }^{\ell}}F_{\perp }(  \mathbf{X}_{\mathbf{cl}}(\tau), \mathbf{V}_{\mathbf{cl}}(\tau) )%
\mathrm{d}\tau \mathrm{d}s\\
& \ \ +\int_{t^{\ell}}^{t^{\ell+1}}\frac{\partial }{\partial
 \mathbf{v}_{\perp }^{\ell}}F_{\parallel }( \mathbf{X}_{\mathbf{cl}}(\tau), \mathbf{V}_{\mathbf{cl}}(\tau) )\mathrm{d}\tau\\
& \lesssim_{\xi,t}   \Big(1+ \frac{|v^{\ell}|}{| \mathbf{v}_{\perp}^{\ell+1}|}\Big) \min\{|v^{\ell}|, \frac{|\mathbf{v}_{\perp}^{\ell+1}|}{|v^{\ell}|}\} \lesssim_{\xi,t}   1+\frac{| \mathbf{v}_{\perp}^{\ell+1}|}{|v^{\ell}|} 
,\\
\frac{\partial  \mathbf{v}_{\parallel }^{\ell+1}}{\partial  \mathbf{v}_{\parallel }^{\ell}}&
=\mathbf{Id}_{2,2}+\frac{F_{\parallel }(\mathbf{x}^{\ell+1}, \mathbf{v}^{\ell})}{ \mathbf{v}_{\perp }^{\ell+1}}%
\int_{t^{\ell}}^{t^{\ell+1}}\int_{t^{\ell}}^{s}\frac{\partial }{\partial  \mathbf{v}_{\parallel
}^{\ell}}F_{\perp }(\mathbf{X}_{\mathbf{cl}}(\tau), \mathbf{V}_{\mathbf{cl}}(\tau) )\mathrm{d}\tau \mathrm{d}s +\int_{t^{\ell}}^{t^{\ell+1}}%
\frac{\partial }{\partial  \mathbf{v}_{\parallel }^{\ell}}F_{\parallel }(\mathbf{X}_{\mathbf{cl}}(\tau), \mathbf{V}_{\mathbf{cl}}(\tau) )\mathrm{d}%
\tau \\
& \lesssim_{\xi} \mathbf{Id}_{2,2} + |t^{\ell}-t^{\ell+1}||v^{\ell}|\big\{1+ \frac{|v^{\ell}|^{2} |t^{\ell}-t^{\ell+1}|}{|\mathbf{v}_{\perp}^{\ell+1}|}\big\}  \lesssim_{\xi,t} \mathbf{Id}_{2,2} +  \frac{| \mathbf{v}_{\perp}^{\ell+1}|}{|v^{\ell}|} .
\end{split}
\notag
\end{equation}
These estimates prove the claim (\ref{tilde_J}).

 \vspace{4pt}

\noindent\textit{Proof of (\ref{l_to_l+1}) for either $\mathbf{r}^{\ell}\geq \sqrt{\delta}$ or $\mathbf{r}^{\ell+1}\geq \sqrt{\delta}$: } 
Without loss of generality we assume $\mathbf{r}^{\ell} > C \sqrt{\delta}$ in (\ref{type_r}). 
Recall that we chose a $\mathbf{p}^{\ell}-$spherical coordinate as $\mathbf{p}^{\ell}= (z^{\ell}, w^{\ell})$ with $|z^{\ell}-x^{\ell}| \leq \sqrt{\delta}$ and any $w^{\ell} \in \mathbb{S}^{2}$ with $n(z^{\ell})\cdot w^{\ell} =0.$

Fix $\ell$. Let us choose fixed numbers $\Delta_{1}, \Delta_{2}>0$ such that $|v|\Delta_{1}\ll 1$ and $|v||t^{\ell+1}- (t^{\ell}-\Delta_{1}-\Delta_{2})|\ll 1$ so that
$$
s^{\ell}\equiv t^{\ell}-\Delta_{1}, \ \  s^{\ell+1}\equiv s^{\ell}-\Delta_{2} = t^{\ell} - \Delta_{1} -\Delta_{2},
$$
 satisfying $|v||t^{\ell+1}-s^{\ell+1} |= |v||t^{\ell+1}-(t^{\ell}-\Delta_{1}-\Delta_{2})|\ll 1$ and $|v||t^{\ell}-s^{\ell} | =|v||\Delta_{1}| \ll 1$ so that the spherical coordinates are well-defined for $s\in [t^{\ell+1},s ^{\ell+1}]$ and $ s\in[s ^{\ell}, t^{\ell}]$. 
 
 Notice that
 \[
 \frac{\partial t^{\ell+1}}{\partial s^{\ell+1} }
 = \frac{\partial( s^{\ell+1} + \Delta_{1} + \Delta_{2} -t_{\mathbf{b}}(x^{\ell},v^{\ell}))}{\partial s^{\ell+1}}
 =1,
 \ \ \frac{\partial s^{\ell+1}}{\partial s^{\ell}}  = \frac{\partial ( s^{\ell} -\Delta_{1})}{\partial s ^{\ell}}=1,
 \ \ \frac{\partial s ^{\ell}}{\partial t^{\ell}} = \frac{\partial (t^{\ell}-\Delta_{1})}{\partial t^{\ell}}=1.
 \]

By the chain rule,
\begin{equation*}
\begin{split}
& \frac{\partial ( t^{\ell+1}, 0, \mathbf{x}_{\parallel_{\ell+1}}^{\ell+1}, \mathbf{v}_{\perp_{\ell+1}}^{\ell+1}, \mathbf{v}_{\parallel_{\ell+1}}^{\ell+1}  )}{\partial (t^{\ell }, 0, \mathbf{x}_{\parallel_{\ell}}^{\ell }, \mathbf{v}_{\perp_{\ell}}^{\ell }, \mathbf{v}_{\parallel_{\ell}}^{\ell } )} \\
=&
 \frac{\partial ( t^{\ell+1}, 0, \mathbf{x}_{\parallel_{\ell+1}}^{\ell+1}, \mathbf{v}_{\perp_{\ell+1}}^{\ell+1}, \mathbf{v}_{\parallel_{\ell+1}}^{\ell+1}  )}
 {\partial( s^{\ell+1} , \mathbf{x}_{\perp_{\ell+1}}(s ^{\ell+1}), \mathbf{x}_{\parallel_{\ell+1}}(s ^{\ell+1}), \mathbf{v}_{\perp_{\ell+1}}(s ^{\ell+1}), \mathbf{v}_{\parallel_{\ell+1}}(s ^{\ell+1}) )}
\frac{\partial (s ^{\ell+1}, \mathbf{X}_{ {\mathbf{p}^{\ell+1}}}(s ^{\ell+1}), \mathbf{V}_{{\mathbf{p}^{\ell+1}}}(s ^{\ell+1}))}
{\partial (s ^{\ell+1}, X_{\mathbf{cl}}(s ^{\ell+1}),V_{\mathbf{cl}}(s ^{\ell+1}))}
\\
 &\times\frac{\partial (s^{\ell+1},  {X}_{\mathbf{cl}}(s ^{{\ell+1}}),  {V}_{\mathbf{cl}}(s ^{{\ell+1}}))}
{ \partial (s^{{\ell }},  {X}_{\mathbf{cl}}(s  ^{{\ell }}),  {V}_{\mathbf{cl}}(s ^{{\ell }}))}
 \frac{\partial (   s ^{{\ell }}, {X}_{\mathbf{cl}}(s ^{{\ell }}), {V}_{\mathbf{cl}}(s ^{{\ell }}))}
 {\partial (   s ^{{\ell }}, \mathbf{X}_{\mathbf{p}^{\ell}}(s^{{\ell }}), \mathbf{V}_{\mathbf{p}^{\ell}}(s ^{{\ell }}))}
 \frac{\partial(s ^{{\ell }}, \mathbf{x}_{\perp_{\ell}}(s ^{{\ell }}), \mathbf{x}_{\parallel_{\ell}}(s ^{{\ell }}), \mathbf{v}_{\perp_{\ell}}(s ^{{\ell }}), \mathbf{v}_{\parallel_{\ell}}(s ^{{\ell }}) )}
{\partial (t^{\ell }, 0, \mathbf{x}_{\parallel_{\ell}}^{\ell }, \mathbf{v}_{\perp_{\ell}}^{\ell }, \mathbf{v}_{\parallel_{\ell}}^{\ell } )}.
\end{split}
\end{equation*}

We can express that $t^{\ell+1} = t^{\ell} - t_{\mathbf{b}}(x^{\ell}, v^{\ell}) = s^{\ell+1} + \Delta_{1} + \Delta_{2}- t_{\mathbf{b}}(x^{\ell}, v^{\ell}).$ Let us regard $t^{\ell+1}$ as $t^{1}$ and $s^{\ell+1}$ as $s^{1}$ and $\Delta_{1} + \Delta_{2}$ as $\Delta$ in (\ref{Delta_step3}). Then we use (\ref{t1s1}) and (\ref{41}) to have
$$
  \frac{\partial ( t^{\ell+1}, 0, \mathbf{x}_{\parallel}^{\ell+1}, \mathbf{v}_{\perp}^{\ell+1}, \mathbf{v}_{\parallel}^{\ell+1}  )}{\partial( s^{\ell+1}, \mathbf{x}_{\perp}(s^{\ell+1} ), \mathbf{x}_{\parallel}(s^{\ell+1} ), \mathbf{v}_{\perp}(s^{\ell+1} ), \mathbf{v}_{\parallel}(s^{\ell+1} ) )}
\leq   \left[\begin{array}{c|c|c}
1 &  O_{\delta,\xi}(1)\frac{1}{|v|}  & O_{\delta,\xi}(1) \frac{1}{|v|^{2}} \\ \hline
0 & \mathbf{0}_{1,3} & \mathbf{0}_{1,3} \\
\mathbf{0}_{2,1} & O_{\delta,\xi}(1) & O_{\delta,\xi}(1) \frac{1}{|v|} \\ \hline
\mathbf{0}_{3,1} & O_{\delta,\xi}(1)|v| & O_{\delta,\xi}(1)
\end{array}\right].
  $$ 
From (\ref{XVchange})
\begin{equation}\notag
\begin{split}
 \frac{\partial (s^{\ell+1}, \mathbf{X}_{\mathbf{p}^{\ell+1}}(s ^{\ell+1}) ,   \mathbf{V}_{\mathbf{p}^{\ell+1}}(s ^{\ell+1}) )}{\partial (s ^{\ell+1}, X_{\mathbf{cl}}(s ^{\ell+1}) ,   {V}_{\mathbf{cl}}(s ^{\ell+1}) )}
 \lesssim_{\xi} \left[\begin{array}{c|c|c}
1 & \mathbf{0}_{1,3}  & \mathbf{0}_{1,3}\\ \hline
\mathbf{0}_{3,1} & O_{\xi}(1) & \mathbf{0}_{3,3}\\ \hline
\mathbf{0}_{3,1} & O_{\xi}(1)|v| & O_{\xi}(1)
\end{array} \right],
\end{split}
\end{equation}
and from $s^{\ell+1} = s^{\ell} -\Delta_{2}, \ X_{\mathbf{cl}}(s^{\ell+1} ) = X_{\mathbf{cl}}(s^{\ell} ) - (s^{\ell+1} -s^{\ell}) V_{\mathbf{cl}}(s^{\ell} ), \ V_{\mathbf{cl}}(s^{\ell +1} )=V_{\mathbf{cl}}(s^{\ell }),$
\begin{equation}\notag
\begin{split}
&\frac{\partial (s^{\ell+1}, X_{\mathbf{cl}}(s^{\ell+1}), V_{\mathbf{cl}}(s^{\ell+1} ))}{\partial (s^{\ell} ,X_{\mathbf{cl}}(s^{\ell} ), V_{\mathbf{cl}}(s^{\ell} ) )}
\lesssim_{\xi}  \left[\begin{array}{ccc}
1 & \mathbf{0}_{1,3} & \mathbf{0}_{1,3} \\
\mathbf{0}_{3,1}& \mathbf{Id}_{3,3} & |s_{1}-s_{2}|\mathbf{Id}_{3,3} \\
\mathbf{0}_{3,1} & \mathbf{0}_{3,3} & \mathbf{Id}_{3,3}
\end{array}\right],
\end{split}
\end{equation}
and from (\ref{jac_Phi})
\begin{equation}\notag
\begin{split}
& \frac{\partial (s^{\ell} , X_{\mathbf{cl}}(s^{\ell} ), V_{\mathbf{cl}}(s^{\ell} )  )}{\partial (s^{\ell} , \mathbf{X}_{\mathbf{p}^{\ell}}(s^{\ell} ), \mathbf{V}_{\mathbf{p}^{\ell}}(s^{\ell} )  )}
\lesssim_{\xi} \left[ \begin{array}{ccc}
1 &   \mathbf{0}_{1,3} &   \mathbf{0}_{1,3}   \\
\mathbf{0}_{3,1}&   O_{\xi}(1)
 &   \mathbf{0}_{3,3}   \\
\mathbf{0}_{3,1}   & |v|    & O_{\xi}(1)      \end{array}\right].
\end{split}
  \end{equation}
Recall (\ref{s1tstar}) to have
$$
\frac{\partial (s^{\ell} , \mathbf{x}_{\perp_{\ell}}(s^{\ell}),  \mathbf{x}_{\parallel_{\ell}}(s^{\ell}),   \mathbf{v}_{\perp_{\ell}}(s^{\ell}), \mathbf{v}_{\parallel_{\ell}}(s^{\ell})
    )}{\partial (t^{\ell },0, \mathbf{x}_{\parallel_{\ell}}^{\ell }, \mathbf{v}_{\perp_{\ell}}^{\ell }, \mathbf{v}_{\parallel_{\ell}}^{\ell })}
    \lesssim_{\xi}
     \left[\begin{array}{c|cc|c}
1 & 0 & \mathbf{0}_{1,2} & \mathbf{0}_{1,3}\\ \hline
O_{\xi}(1)|v|  & 0 & O_{\xi}(1) & O_{\xi}(1)|t^{\ell}-s_{1}| \\ \hline
O_{\xi}(1)|v|^{2} & 0 & O_{\xi}(1)|v| & O_{\xi}(1)
\end{array}\right].$$

By direct matrix multiplication
\begin{equation}\notag
\begin{split}
 \frac{\partial ( t^{\ell+1}, 0, \mathbf{x}_{\parallel_{{\ell+1}}}^{\ell+1}, \mathbf{v}_{\perp_{{\ell+1}}}^{\ell+1}, \mathbf{v}_{\parallel_{{\ell+1}}}^{\ell+1}  )}{\partial (t^{\ell }, 0, \mathbf{x}_{\parallel_{{\ell }}}^{\ell }, \mathbf{v}_{\perp_{{\ell }}}^{\ell }, \mathbf{v}_{\parallel_{{\ell }}}^{\ell } )}
   \lesssim_{ t,\xi} \left[\begin{array}{c|cc|c}1 & 0 & \frac{1}{|v|} & \frac{1}{|v|^{2}} \\ \hline
0 & 0 & \mathbf{0}_{1,2} & \mathbf{0}_{1,3}\\
\mathbf{0}_{2,1}   & \mathbf{0}_{2,1} & 1 & \frac{1}{|v|}\\ \hline
 \mathbf{0}_{3,1} & \mathbf{0}_{3,1} & |v| & 1
\end{array}\right].
\end{split}
\end{equation}
Note that for \textit{Type II} we have $\mathbf{r}^{\ell+1} \gtrsim \sqrt{\delta}$ so that from (\ref{l_to_l+1})
\[
J(\mathbf{r}^{\ell+1}) \gtrsim \left[\begin{array}{c|cc|c}
1 & 0 & \frac{M}{|v|}\sqrt{\delta} &  \frac{M}{|v|^{2}} \min\{1, \sqrt{\delta}\} \\ \hline
0 & 0 & \mathbf{0}_{1,2} & \mathbf{0}_{1,3}\\
\mathbf{0}_{2,1} & \mathbf{0}_{2,1} & M \sqrt{\delta} & \frac{M}{|v|} \min \{  1, \sqrt{\delta} \} \\ \hline
\mathbf{0}_{3,1} & \mathbf{0}_{3,1} & M|v| \min \{ \delta, \sqrt{\delta}\} & M \min\{\delta,\sqrt{\delta}\}
\end{array} \right] \gtrsim_{\delta,t,\xi}  \frac{\partial ( t^{\ell+1}, 0, \mathbf{x}_{\parallel_{{\ell+1}}}^{\ell+1}, \mathbf{v}_{\perp_{{\ell+1}}}^{\ell+1}, \mathbf{v}_{\parallel_{{\ell+1}}}^{\ell+1}  )}{\partial (t^{\ell }, 0, \mathbf{x}_{\parallel_{{\ell }}}^{\ell }, \mathbf{v}_{\perp_{{\ell }}}^{\ell }, \mathbf{v}_{\parallel_{{\ell }}}^{\ell } )}
.\]
This proves our claim (\ref{l_to_l+1}) for \textit{Type II}.

 \vspace{8pt}

\noindent{\textit{Step 5. Eigenvalues and diagonalization of of (\ref{l_to_l+1})}}

\vspace{4pt}

By a basic linear algebra (row and column operations), the characteristic polynomial of (\ref{l_to_l+1}) 
 equals, with $\mathbf{r}= \mathbf{r}^{\ell+1},$
\begin{equation}\notag
\begin{split}
&\text{det}\left[\begin{array}{ccccccc} 1- \lambda & 0 & \frac{M}{|v|} \mathbf{r} & \frac{M}{|v|}\mathbf{r} & \frac{M}{|v|^{2}} & \frac{M}{|v|^{2}} \mathbf{r} & \frac{M}{|v|^{2}} \mathbf{r} \\
0 & -\lambda & 0 & 0 & 0 & 0 & 0 \\
0 & 0 & 1+M\mathbf{r} -\lambda  & M \mathbf{r} & \frac{M}{|v|} & \frac{M}{|v|} \mathbf{r} &  \frac{M}{|v|} \mathbf{r}\\
0 & 0 &  M \mathbf{r}& 1+M\mathbf{r} -\lambda  & \frac{M}{|v|} & \frac{M}{|v|} \mathbf{r} &  \frac{M}{|v|} \mathbf{r}\\
0 & 0 & M|v| \mathbf{r}^{2} & M |v| \mathbf{r}^{2} & 1+ M\mathbf{r} -\lambda & M \mathbf{r}^{2} & M \mathbf{r}^{2} \\
0 & 0 & M|v| \mathbf{r}  & M |v| \mathbf{r}  & M   & 1+ M\mathbf{r} -\lambda  & M \mathbf{r} \\
0 & 0 & M|v| \mathbf{r}  & M |v| \mathbf{r}  & M     & M \mathbf{r} & 1+ M\mathbf{r} -\lambda\\
\end{array}\right]\\
&= - \lambda( \lambda-1)^{5} [\lambda- (1+5M\mathbf{r})].
\end{split}
\end{equation}
Therefore eigenvalues are
\begin{equation}\label{eigenvalue}
\begin{split}
\lambda_{0} &=0, \ \ \lambda_{1}= \lambda_{2} = \lambda_{3} = \lambda_{4} = \lambda_{5} = 1,\\
\lambda_{7}&= 1+ 5M \mathbf{r}^{\ell+1} = 1+ 5M \frac{|\mathbf{v}_{\perp}^{\ell+1}|}{|v^{\ell+1}|}.
\end{split}
\end{equation}
Corresponding eigenvectors are
\begin{eqnarray*}
\left(\begin{array}{ccccccc}0 \\ 1 \\ 0 \\ 0 \\ 0 \\ 0 \\ 0  \end{array}\right), \left(\begin{array}{ccccccc}1 \\ 0 \\ 0 \\ 0 \\ 0 \\ 0 \\0   \end{array}\right),
 \left(\begin{array}{ccccccc}0 \\ 0 \\ 1 \\ -1 \\ 0 \\ 0 \\0   \end{array}\right)
 ,
 \left(\begin{array}{ccccccc}0 \\ 0 \\ 1 \\ 0 \\ -|v|\mathbf{r} \\ 0 \\0   \end{array}\right),
 \left(\begin{array}{ccccccc}0 \\ 0 \\ 1 \\ 0 \\ 0 \\ -|v| \\0   \end{array}\right),
 \left(\begin{array}{ccccccc}0 \\ 0 \\ 1 \\ 0 \\ 0 \\ 0 \\-|v|   \end{array}\right),
  \left(\begin{array}{ccccccc}1 \\ 0 \\ |v| \\ |v| \\ |v|^{2} \mathbf{r} \\ |v|^{2} \\|v|^{2}   \end{array}\right).
\end{eqnarray*}
Write $P=P(\mathbf{r}^{\ell}) $ as a block matrix of above column eigenvectors. Then
\begin{equation}  \label{diagonal_matrix}
\mathcal{P}= \left[\begin{array}{ccccccc}
0 & 1 & 0 & 0 & 0 & 0 & 1\\
1 & 0 & 0 & 0 & 0 & 0 & 0\\
0 & 0 & 1 & 1 & 1 & 1 & |v|\\
0 & 0 &-1 & 0 & 0 & 0 & |v|\\
0 & 0 & 0 &-|v|\mathbf{r} & 0 & 0 & |v|^{2} \mathbf{r}\\
0& 0& 0 & 0 & -|v| & 0 & |v|^{2}\\
0 & 0 & 0 & 0 & 0 & -|v| & |v|^{2}
 \end{array}\right], \  
\mathcal{P}^{-1}  =  \left[\begin{array}{ccccccc}
0 & 1 & 0 & 0 & 0 & 0 & 0 \\
1 & 0 & \frac{-1}{5|v|} & \frac{-1}{5|v|} & \frac{-1}{5|v|^{2} \mathbf{r}} & \frac{-1}{5|v|^{2}} & \frac{-1}{5|v|^{2}}\\
0 & 0 & \frac{1}{5} & \frac{-4}{5} & \frac{1}{5|v| \mathbf{r}} & \frac{1}{5|v|} & \frac{1}{5|v|} \\
0 & 0 & \frac{1}{5} & \frac{1}{5} & \frac{-4}{5|v| \mathbf{r}} & \frac{1}{5|v|} & \frac{1}{5|v|} \\
0 & 0 & \frac{1}{5} & \frac{1}{5} & \frac{1}{5|v| \mathbf{r}} & \frac{-4}{5|v|} & \frac{1}{5|v|} \\
0 & 0 & \frac{1}{5} & \frac{1}{5} & \frac{1}{5|v| \mathbf{r}} & \frac{1}{5|v|} & \frac{-4}{5|v|} \\
0 & 0 & \frac{1}{5|v|} & \frac{1}{5|v|} & \frac{1}{5|v|^{2} \mathbf{r}} & \frac{1}{5|v|^{2}} & \frac{1}{5|v|^{2}} \\
\end{array}\right].
\end{equation}
Therefore
\begin{equation}\notag
  {J}(\mathbf{r}) = \mathcal{P}(\mathbf{r}) \Lambda(\mathbf{r}) \mathcal{P}^{-1} (\mathbf{r}) ,
\end{equation}
and 
\[
 \Lambda(\mathbf{r}):= \text{diag}\Big[ 0, 1, 1, 1, 1, 1, 1+5M\mathbf{r}\Big],
 \]
where the notation $\text{diag}[a_{1},\cdots , a_{m}]$ is a $m\times m-$matrix with $a_{ii}=a_{i}$ and $a_{ij}=0$ for all $i\neq j.$

\vspace{8pt}

 \noindent{\textit{Step 6. The $i-$th intermediate group}}

\vspace{4pt}

 We claim that, for $i=1,2,\cdots, [\frac{|t-s| |v|}{L_{\xi}}]$,
\begin{equation}\label{one_group}
\begin{split}
&J^{\ell_{i+1}}_{\ell_{i+1}-1} \times \cdots \times J_{\ell_{i}}^{\ell_{i}+1}\\
 =& \ {\frac{\partial (t^{\ell_{i+1}},0, \mathbf{x}_{\parallel_{{\ell_{i+1}}}}^{\ell_{i+1}}, \mathbf{v}_{\perp_{{\ell_{i+1}}}}^{\ell_{i+1}}, \mathbf{v}_{\parallel_{{\ell_{i+1}}}}^{\ell_{i+1}})}{\partial (t^{\ell_{i+1}-1},0, \mathbf{x}_{\parallel_{{\ell_{i+1}}-1}}^{\ell_{i+1}-1}, \mathbf{v}_{\perp_{{\ell_{i+1}}-1}}^{\ell_{i+1}-1}, \mathbf{v}_{\parallel_{{\ell_{i+1}}-1}}^{\ell_{i+1}-1})}
 \times \cdots \times
 \frac{\partial (t^{\ell_{i}+1},0, \mathbf{x}_{\parallel_{{\ell_{i}+1}}}^{\ell_{i} +1}, \mathbf{v}_{\perp_{{\ell_{i}+1}}}^{\ell_{i}+1}, \mathbf{v}_{\parallel_{{\ell_{i}+1}}}^{\ell_{i}+1})}{\partial (t^{\ell_{i} },0, \mathbf{x}_{\parallel_{{\ell_{i}  }}}^{\ell_{i}  }, \mathbf{v}_{\perp_{{\ell_{i}  }}}^{\ell_{i} }, \mathbf{v}_{\parallel_{{\ell_{i}  }}}^{\ell_{i} })}  }\\
 \leq & \ 
 \mathcal{P}(\mathbf{r}_{i}) (\Lambda(\mathbf{r}_{i}))^{\frac{C_{\xi}}{\mathbf{r}_{i}}}  \mathcal{P}^{-1}( \mathbf{r}_{i}).
\end{split}
\end{equation}

By the definition of the group, $L_{\xi}\leq |v||t^{\ell_{i}}-t^{\ell_{i+1}}| \leq C_{1}<+\infty$ for all $i$. By the Velocity lemma(Lemma \ref{velocity_lemma}),
  \[
\frac{1}{\mathcal{C}_{1}} e^{-\frac{\mathcal{C}}{2}C_{1}} \mathbf{r}^{\ell_{i}}  \leq  \mathbf{r}^{\ell_{i+1}} \equiv \frac{|\mathbf{v}_{\perp}^{\ell_{i+1}}|}{|v|},   \mathbf{r}^{\ell_{i+1}-1} \equiv \frac{|\mathbf{v}_{\perp}^{\ell_{i+1}-1}|}{|v|}, \cdots ,   \mathbf{r}^{\ell_{i }+1} \equiv \frac{|\mathbf{v}_{\perp}^{\ell_{i}+1}|}{|v|},   \mathbf{r}^{\ell_{i}} \equiv \frac{|\mathbf{v}_{\perp}^{\ell_{i}}|}{|v|}
  \leq \mathcal{C}_{1} e^{\frac{\mathcal{C}}{2}C_{1}} \mathbf{r}^{\ell_{i}},
  \]
and define
\[
\mathbf{r}_{i}\equiv \mathcal{C}_{1} e^{\frac{\mathcal{C}}{2}C_{1}} \mathbf{r}^{\ell_{i}}.
\]
Then we have
\begin{equation}\label{bound_ri}
\frac{1}{(\mathcal{C}_{1})^{2}} e^{-\mathcal{C} C_{1}} \mathbf{r}_{i}  \ \leq  \ \mathbf{r}^{j} \ \leq \ \mathbf{r}_{i} \ \ \ \text{for all} \ \  \ell_{i+1}\leq j \leq \ell_{i}.
\end{equation}
From (\ref{l_to_l+1}), we have a uniform bound for all $\ell_{i+1}\leq j \leq \ell_{i}$
\[
J^{j+1}_{j}  \lesssim  {J}(\mathbf{r}_{i})= \mathcal{P}(\mathbf{r}_{i}) \Lambda(\mathbf{r}_{i}) \mathcal{P}^{-1}(\mathbf{r}_{i}).
\]
Therefore
\[
J^{\ell_{i+1}}_{\ell_{i+1}-1} \times \cdots \times J^{\ell_{i}+1}_{\ell_{i}} \leq \mathcal{P}(\mathbf{r}_{i}) [ \Lambda(\mathbf{r}_{i})]^{|\ell_{i+1}-\ell_{i}|} \mathcal{P}^{-1}(\mathbf{r}_{i}).
\]

Now we only left to prove $|\ell_{i+1}-\ell_{i}| \lesssim_{\Omega} \frac{1}{\mathbf{r}_{i}}$: For any $\ell_{i+1} \leq j \leq \ell_{i}$, we have $\xi(x^{j})=0=\xi(x^{j+1})=\xi(x^{j}-(t^{j}-t^{j+1})v^{j}).$ We expand $\xi(x^{j}-(t^{j}-t^{j+1})v^{j})$ in time to have
\begin{equation}\notag
\begin{split}
\xi(x^{j+1}) &=  \xi(x^{j}) + \int^{t^{j+1}}_{t^{j}} \frac{d}{ds} \xi(X_{\mathbf{cl}}(s)) \mathrm{d}s\\
&= \xi(x^{j}) + (v^{j}\cdot \nabla \xi(x^{j}) ) (t^{j+1}-t^{j}) + \int^{t^{j+1}}_{t^{j}} \int^{s}_{t^{j}} \frac{d^{2}}{d\tau^{2}} \xi(X_{\mathbf{cl}}(\tau))  \mathrm{d}\tau\mathrm{d}s,\\
\end{split}
\end{equation}
and  
\begin{equation}\notag
0 = (v^{j}\cdot \nabla \xi(x^{j})) (t^{j+1}-t^{j}) + \frac{(t^{j}-t^{j+1})^{2}}{2} (v^{j} \cdot \nabla^{2}\xi(X_{\mathbf{cl}}(\tau_{*})) \cdot v^{j}), \ \ \ \text{for some } \tau_{*} \in [t^{j+1},t^{j}].
\end{equation}
Therefore
\begin{equation}\notag
\begin{split}
\frac{v^{j}\cdot \nabla \xi(x^{j})}{|v|}& = (t^{j}-t^{j+1})|v| \frac{v^{j} \cdot \nabla^{2}\xi(X_{\mathbf{cl}}(\tau_{*})) \cdot v^{j}}{2|v|^{2}}.
\end{split}
\end{equation}
From the convexity (\ref{convex}), there exists $C_{2}\gg 1$
\begin{equation}\label{time_angle}
 \frac{1}{C_{2}}  |t^{j}-t^{j+1}||v|\leq|\mathbf{r}^{j}|=\frac{|\mathbf{v}_{\perp}^{j}|}{|v|}=\frac{|v^{j}\cdot \nabla \xi(x^{j})|}{|v|} \leq C_{2}|t^{j}-t^{j+1}||v|.
\end{equation}
Therefore we have a lower bound of $|v||t^{j}-t^{j+1}|$: $
|v||t^{j}-t^{j+1}| \geq \frac{1}{C_{2}} |\mathbf{r}^{j}| \geq  \frac{1}{(\mathcal{C}_{1})^{2}C_{2}} e^{-\mathcal{C}C_{1}} \mathbf{r}_{i},$
where we have used (\ref{bound_ri}).
Finally, using the definition of one group($1 \leq  |v||t^{\ell_{i}}-t^{\ell_{i+1}}| \leq C_{1}$), we have the following upper bound of the number of bounces in this one group($i-$th intermediate group)
\[
|\ell_{i}-\ell_{i+1}|  \leq \frac{|v||t^{\ell_{i}}-t^{\ell_{i+1}}|}{ \min_{\ell_{i} \leq j \leq \ell_{i+1}} |v||t^{j}-t^{j+1}|}  \leq \frac{C_{1}}{ \frac{1}{(\mathcal{C}_{1})^{2} C_{2}  } e^{-\mathcal{C} C_{1}} \mathbf{r}_{i} } \lesssim_{\xi} \frac{1}{\mathbf{r}_{i}},
\]
and this complete our claim (\ref{one_group}).

\vspace{8pt}

\noindent\textit{Step 7. Whole intermediate groups}

\vspace{4pt}
Recall $\mathcal{P}$ and $\mathcal{P}^{-1}$ from (\ref{diagonal_matrix}). We claim that, there exists $C_{3}>0$ such that
\begin{equation}\label{whole_group}
\begin{split}
&\prod_{i=1}^{[\frac{|t-s||v|}{L_{\xi}}]}J^{\ell_{i+1}}_{\ell_{i+1}-1} \times \cdots \times J_{\ell_{i}}^{\ell_{i}+1}
 \leq
 (C_{3})^{|t-s||v|}
  \mathcal{P}(\mathbf{r}_{[\frac{|t-s||v|}{L_{\xi}}]})    \mathcal{P}^{-1}( \mathbf{r}_{1}).
\end{split}
\end{equation}

From the one group estimate (\ref{one_group}),
\begin{equation}\notag
\begin{split}
&\prod_{i=1}^{[\frac{|t-s||v|}{L_{\xi}}]}J^{\ell_{i+1}}_{\ell_{i+1}-1} \times \cdots \times J_{\ell_{i}}^{\ell_{i}+1} \\
\lesssim  & \ \mathcal{P}(\mathbf{r}_{ [ \frac{|t-s||v|}{L_{\xi}} ]  })( \Lambda(\mathbf{r}_{ [ \frac{|t-s||v|}{L_{\xi}} ]}))^{\frac{C_{\xi}}{\mathbf{r}_{ [ \frac{|t-s||v|}{L_{\xi}} ]}}}\underbrace{ \mathcal{P}^{-1}(\mathbf{r}_{ [ \frac{|t-s||v|}{L_{\xi}} ]}) \times  \mathcal{P}(\mathbf{r}_{ [ \frac{|t-s||v|}{L_{\xi}} ]-1}) }\\
&\times   ( \Lambda(\mathbf{r}_{ [ \frac{|t-s||v|}{L_{\xi}} ]-1}))^{\frac{C_{\xi}}{\mathbf{r}_{ [ \frac{|t-s||v|}{L_{\xi}} ]-1}}} \underbrace{\mathcal{P}^{-1}(\mathbf{r}_{ [ \frac{|t-s||v|}{L_{\xi}} ]-1}) \times \ \  \ \ }\cdots \\
&\times \cdots\underbrace{\ \ \ \ \times  \mathcal{P}(\mathbf{r}_{i+1})}( \Lambda(\mathbf{r}_{i+1}))^{\frac{C_{\xi}}{\mathbf{r}_{i+1}}}  \underbrace{\mathcal{P}^{-1}(\mathbf{r}_{i+1})  \times\mathcal{P}(\mathbf{r}_{i})}( \Lambda(\mathbf{r}_{i}))^{\frac{C_{\xi}}{\mathbf{r}_{i}}} \underbrace{\mathcal{P}^{-1}(\mathbf{r}_{i}) \times
\mathcal{P}(\mathbf{r}_{i-1})}\\
&\times ( \Lambda(\mathbf{r}_{i-1}))^{\frac{C_{\xi}}{\mathbf{r}_{i-1}}} \underbrace{ \mathcal{P}^{-1}(\mathbf{r}_{i-1}) \times \ \ \ \ } \cdots\\
&\times \cdots \underbrace{ \ \ \ \ \times \mathcal{P}(\mathbf{r}_{2})}( \Lambda(\mathbf{r}_{2}))^{\frac{C_{\xi}}{\mathbf{r}_{2}}} \underbrace{\mathcal{P}^{-1}(\mathbf{r}_{2}) \times \mathcal{P}(\mathbf{r}_{1})}( \Lambda(\mathbf{r}_{1}))^{\frac{C_{\xi}}{\mathbf{r}_{1}}} \mathcal{P}^{-1}(\mathbf{r}_{1}).
\end{split}
\end{equation}

Now we focus on the underbraced matrix multiplication. Directly
\begin{equation}\notag
\begin{split}
 \mathcal{P}^{-1}(\mathbf{r}_{i+1})  \mathcal{P}(\mathbf{r}_{i})
=  \left[\begin{array}{ccccccc}
1 & 0 & 0 & 0 & 0 & 0 & 0 \\
0 & 1 & 0 &  \frac{-1+ \frac{\mathbf{r}_{i}}{\mathbf{r}_{i+1}}}{5|v|}  & 0 & 0 & \frac{1-\frac{\mathbf{r}_{i}}{\mathbf{r}_{i+1}}  }{5}\\
0 & 0 & 1 & \frac{1-\frac{\mathbf{r}_{i}}{\mathbf{r}_{i+1}}}{5} & 0 & 0 & |v|\frac{-1 + \frac{\mathbf{r}_{i}}{\mathbf{r}_{i+1}}}{5}\\
0 & 0 & 0 & \frac{1+ 4\frac{\mathbf{r}_{i}}{\mathbf{r}_{i+1}}}{5} & 0 & 0 & 4|v|\frac{1- \frac{\mathbf{r}_{i}}{\mathbf{r}_{i+1}}}{5}\\
0 & 0 & 0 & \frac{1- \frac{\mathbf{r}_{i}}{\mathbf{r}_{i+1}}}{5} & 1 & 0 & |v| \frac{-1+ \frac{\mathbf{r}_{i}}{\mathbf{r}_{i+1}}}{5} \\
0 &  0 & 0 & \frac{1-\frac{\mathbf{r}_{i}}{\mathbf{r}_{i+1}}}{5} & 0 & 1 & |v| \frac{-1 + \frac{\mathbf{r}_{i}}{\mathbf{r}_{i+1}}}{5} \\
0 &  0 & 0 & \frac{1-\frac{\mathbf{r}_{i}}{\mathbf{r}_{i+1}}}{5|v|} & 0 & 0 & \frac{4+ \frac{\mathbf{r}_{i}}{\mathbf{r}_{i+1}}}{5}
\end{array}\right]. 
\end{split}
\end{equation}
Due to the choice of $\mathbf{r}_{i}\equiv \mathcal{C}_{1} e^{\frac{\mathcal{C}}{2} C_{1}} \mathbf{r}^{\ell_{i}}$ in (\ref{bound_ri}) we have $\left|\frac{\mathbf{r}_{i}}{ \mathbf{r}_{i+1}}\right| = \left|\frac{\mathbf{r}^{\ell_{i}}}{ \mathbf{r}^{\ell_{i+1}}}\right| \leq C_{\xi},$
where we have used the Velocity lemma and (\ref{40}) and (\ref{41}): $\frac{1}{\mathcal{C}_{1} }e^{-\frac{\mathcal{C}}{2}C_{1}} \mathbf{r}^{\ell_{i+1}}\leq\frac{1}{\mathcal{C}_{1} }e^{-\frac{\mathcal{C}}{2}|t^{\ell_{i}}-t^{\ell_{i+1}}|} \mathbf{r}^{\ell_{i+1}}\leq \mathbf{r}^{\ell_{i}} \leq \mathcal{C}_{1} e^{\frac{\mathcal{C}}{2}|t^{\ell_{i}}-t^{\ell_{i+1}}|} \mathbf{r}^{\ell_{i+1}} \leq \mathcal{C}_{1} e^{\frac{\mathcal{C}}{2}C_{1}} \mathbf{r}^{\ell_{i+1}}.$

Therefore for sufficiently large $C_{\xi}>0$, for all $i$
\begin{equation}\label{Q}
\widetilde{ \mathcal{P}^{-1}(\mathbf{r}_{i+1}) \mathcal{P}(\mathbf{r}_{i})}  \ \leq  \ \mathcal{Q} := \left[\begin{array}{ccccccc}
1 & 0 & 0 & 0 & 0 & 0 & 0 \\
0 & 1  & 0 & \frac{C_{\xi}}{|v|} & 0 & 0 & C_{\xi} \\
0 & 0 & 1 & C_{\xi} & 0 & 0 & C_{\xi}|v| \\
0 & 0 & 0 & C_{\xi} & 0 & 0 & C_{\xi}|v|\\
0 & 0 & 0 & C_{\xi} & 1 & 0 & C_{\xi}|v| \\
0 & 0 & 0 & C_{\xi} & 0 & 1 & C_{\xi}|v|\\
0 & 0 & 0 & \frac{C_{\xi}}{|v|} & 0 & 0 & C_{\xi}
\end{array}\right],
\end{equation}
where we use a notation: For a matrix $A$,  the entries of a matrix $\widetilde{A}$ is an absolute value of the entries of $A$, i.e.
$(\widetilde{A} )_{ij}= |(A)_{ij}|.$

Again we diagonalize $\mathcal{Q}$ as
\begin{equation}\notag
\begin{split}
&\mathcal{Q} = \mathcal{F} \mathcal{A} \mathcal{F}^{-1}\\
&\tiny{ :=\left[\begin{array}{ccccccc} 1 & 0 & 0 & 0 & 0 & 0 & 0 \\
0 & 1 & 0 & 0 & 0 & 0 & \frac{2C_{\xi}}{2C_{\xi}-1} \\
0 & 0 & 1 & 0 & 0 & 0 & \frac{2C_{\xi}|v|}{2C_{\xi}-1}\\
0 & 0 & 0 & 0 & 0 & -|v| & |v|\\
0 & 0 & 0 & 1 & 0 & 0 & \frac{2C_{\xi}|v|}{2C_{\xi}-1}\\
0 & 0 & 0 & 0 & 1 & 0 & \frac{2C_{\xi}|v|}{2C_{\xi}-1}\\
0 & 0 & 0 & 0 & 0 & 1 & 1
 \end{array}\right] 
 \left[\begin{array}{ccccccc}1 &&&&&& \\ &1&&&&0& \\ &&1&&&& \\ &&&1&&& \\ &&&&1&& \\ &0&&&&0& \\&&&&&&2C_{\xi} \end{array}\right]
 \left[\begin{array}{ccccccc}
 1&0&0&0&0&0&0 \\
 0 &1&0& \frac{-C_{\xi}}{2C_{\xi}-1} \frac{1}{|v|}& 0 & 0 & \frac{-C_{\xi}}{2C_{\xi}-1} \\
 0&0&1 & \frac{-C_{\xi}}{2C_{\xi}-1}&0&0& \frac{-C_{\xi}|v|}{2C_{\xi}-1} \\
 0&0&0&\frac{-C_{\xi}}{2C_{\xi}-1}& 1&0 & \frac{-C_{\xi}|v|}{2 C_{\xi}-1} \\
0&0&0&\frac{-C_{\xi}}{2C_{\xi}-1}& 0&1 & \frac{-C_{\xi}|v|}{2 C_{\xi}-1} \\
 0&0&0& \frac{-1}{2|v|}& 0& 0& \frac{1}{2} \\
0 & 0 & 0& \frac{1}{2|v|}& 0& 0& \frac{1}{2} \\
 \end{array} \right],}
 \end{split}
 \end{equation}
 and directly
 \begin{equation}\label{Q_diag}
 \begin{split}
& \mathcal{Q}^{ [ \frac{|t-s||v|}{L_{\xi}} ]} = \mathcal{F} \mathcal{A}^{ [ \frac{|t-s||v|}{L_{\xi}} ]} \mathcal{F}^{-1}\\
& = 
  \mathcal{F} \
  \text{diag}\Big[  1, 1 , 1, 1, 1 , 0 , (2C_{\xi})^{ [ \frac{|t-s||v|}{L_{\xi}} ]}\Big] \
 \mathcal{F}^{-1}
  \\
 &= \left[\begin{array}{ccccccc}
   1 & 0    &0      &0      &0      &0      & 0       \\
   0 &   1  &  0&   \frac{1}{|v|} \frac{C_{\xi}}{2C_{\xi}-1} ((2C_{\xi})^{ [ \frac{|t-s||v|}{L_{\xi}} ]}  -1)   &     0 & 0     &    \frac{C_{\xi}}{2C_{\xi}-1} ( (2C_{\xi})^{ [ \frac{|t-s||v|}{L_{\xi}} ]}  -1)        \\
   0 &  0   &  1    &   \frac{C_{\xi}}{2C_{\xi}-1} ((2C_{\xi})^{[ \frac{|t-s||v|}{L_{\xi}} ]}  -1)    &  0    &0      &  |v|\frac{C_{\xi}}{2C_{\xi}-1} ((2C_{\xi})^{[ \frac{|t-s||v|}{L_{\xi}} ]}  -1)      \\
   0 &  0   &  0    &  \frac{(2C_{\xi})^{[ \frac{|t-s||v|}{L_{\xi}} ]}}{2}    & 0     & 0     & |v|    \frac{(2C_{\xi})^{ [ \frac{|t-s||v|}{L_{\xi}} ]}}{2}     \\
   0  &  0   & 0     & \frac{C_{\xi}}{2C_{\xi}-1} ((2C_{\xi})^{ [ \frac{|t-s||v|}{L_{\xi}} ]}-1)     &1      & 0     & |v|   \frac{C_{\xi}}{2C_{\xi}-1} ((2C_{\xi})^{ [ \frac{|t-s||v|}{L_{\xi}} ]}-1)     \\
   0  &  0   & 0     & \frac{C_{\xi}}{2C_{\xi}-1} ((2C_{\xi})^{ [ \frac{|t-s||v|}{L_{\xi}} ]}-1)     &0      & 1     & |v|   \frac{C_{\xi}}{2C_{\xi}-1} ((2C_{\xi})^{ [ \frac{|t-s||v|}{L_{\xi}} ]}-1)     \\
    0   &  0   &  0    &  \frac{1}{|v|} \frac{(2C_{\xi})^{ [ \frac{|t-s||v|}{L_{\xi}} ]}}{2}    & 0      &0      & \frac{(2C_{\xi})^{ [ \frac{|t-s||v|}{L_{\xi}} ]}}{2}
 \end{array} \right]
 .
\end{split}
\end{equation}

Notice that from (\ref{eigenvalue}) 
\begin{equation}\notag
\begin{split}
  \Big[ \Lambda(\mathbf{r}_{i})\Big]^{\frac{C_{\xi}}{\mathbf{r}_{i}}} \leq (1+ 5M\mathbf{r}_{i})^{\frac{C_{\xi}}{\mathbf{r}_{i}}} \ \mathbf{Id}_{7,7}
  \leq C_{\xi}^{\prime}  \ \mathbf{Id}_{7,7}.
\end{split}
\end{equation}
%
Now we use (\ref{one_group}) and take the absolute value of the entries and then use (\ref{Q}) and (\ref{Q_diag}), for $\tilde{t}:=t-s,$
\begin{equation}\notag
\begin{split}
&\prod_{i=1}^{ [ \frac{\tilde{t}|v|}{L_{\xi}} ]}J^{\ell_{i+1}}_{\ell_{i+1}-1} \times \cdots \times J_{\ell_{i}}^{\ell_{i}+1}\\
\leq &  \ \widetilde{\mathcal{P}(\mathbf{r}_{ [\frac{\tilde{t}|v|}{L_{\xi}}]})}( 1+5M\mathbf{r}_{ [\frac{\tilde{t}|v|}{L_{\xi}}]})^{\frac{C_{\xi}}{\mathbf{r}_{ [\frac{\tilde{t}|v|}{L_{\xi}}]}}}   \mathcal{Q}   ( 1+5M\mathbf{r}_{ [\frac{\tilde{t}|v|}{L_{\xi}}]-1})^{\frac{C_{\xi}}{\mathbf{r}_{ [\frac{\tilde{t}|v|}{L_{\xi}}]-1}}}  \mathcal{Q}  \times\cdots \\
& \ \times\cdots\times  \mathcal{Q} ( 1+ 5M \mathbf{r}_{i+1})^{\frac{C_{\xi}}{\mathbf{r}_{i+1}}}     \mathcal{Q}  ( 1+ 5M\mathbf{r}_{i})^{\frac{C_{\xi}}{\mathbf{r}_{i}}}  
 \mathcal{Q}  ( 1+ 5M \mathbf{r}_{i-1}) ^{\frac{C_{\xi}}{\mathbf{r}_{i-1}}}     \mathcal{Q}    \times
\cdots\\
& \ \times \cdots\times    \mathcal{Q} (1+5M\mathbf{r}_{2})^{\frac{C_{\xi}}{\mathbf{r}_{2}}}   
\mathcal{Q} 
 (1+5M\mathbf{r}_{1} )^{\frac{C_{\xi}}{\mathbf{r}_{1}}}     \widetilde{ \mathcal{P}^{-1}(\mathbf{r}_{1})}\\
 \leq & \    ( C_{\xi}^{\prime} ) ^{ { [\frac{\tilde{t}|v|}{L_{\xi}}]  }   }   \times  \widetilde{ \mathcal{P}(\mathbf{r}_{ [\frac{\tilde{t}|v|}{L_{\xi}}]})}    \mathcal{Q}^{ [\frac{\tilde{t}|v|}{L_{\xi}}]-1}   \widetilde{ \mathcal{P}^{-1}(\mathbf{r}_{1})}\\
 \leq & \  (C_{\xi}^{\prime})^{ [\frac{\tilde{t}|v|}{L_{\xi}}]} \times\widetilde{ \mathcal{P}(\mathbf{r}_{ [\frac{\tilde{t}|v|}{L_{\xi}}]}) }\mathcal{F} \mathcal{A}^{ [\frac{\tilde{t}|v|}{L_{\xi}}]} \mathcal{F}^{-1} \widetilde{ \mathcal{P}^{-1}(\mathbf{r}_{1})}.
 \end{split}
 \end{equation}

 Now we use the explicit form of (\ref{Q_diag}) to bound
\begin{equation}
\begin{split}
&     {C^{C\tilde{t}|v|} \tiny{ \left[\begin{array}{cc|cc|c|cc}
1 & 0 & \frac{ (C_{\xi})^{\tilde{t}|v|}}{|v|} &  \frac{ (C_{\xi})^{\tilde{t}|v|}}{|v|} & \frac{ (C_{\xi})^{\tilde{t}|v|}}{|v|^{2}} \frac{1}{|\mathbf{r}_{1}|} & \frac{ (C_{\xi})^{\tilde{t}|v|}}{|v|^{2}} & \frac{ (C_{\xi})^{\tilde{t}|v|}}{|v|^{2}}\\
0 & 1 & 0 & 0 & 0 & 0 & 0 \\ \hline
0 & 0 & (C_{\xi})^{\tilde{t}|v|} & (C_{\xi})^{\tilde{t}|v|} &  \frac{ (C_{\xi})^{\tilde{t}|v|}}{|v|} \frac{1}{|\mathbf{r}_{1}|} & \frac{ (C_{\xi})^{\tilde{t}|v|}}{|v|} & \frac{ (C_{\xi})^{\tilde{t}|v|}}{|v|}\\
0 & 0 & (C_{\xi})^{\tilde{t}|v|} & (C_{\xi})^{\tilde{t}|v|} &  \frac{ (C_{\xi})^{\tilde{t}|v|}}{|v|} \frac{1}{|\mathbf{r}_{1}|} & \frac{ (C_{\xi})^{\tilde{t}|v|}}{|v|} & \frac{ (C_{\xi})^{\tilde{t}|v|}}{|v|}\\ \hline
0 & 0 & |v|(C_{\xi})^{\tilde{t}|v|}  \Big|\mathbf{r}_{[\frac{\tilde{t}|v|}{L_{\xi}}]}\Big| & |v|  (C_{\xi})^{\tilde{t}|v|} \Big|\mathbf{r}_{[ \frac{\tilde{t}|v| }{L_{\xi}}]}\Big|&   (C_{\xi})^{\tilde{t}|v|} \frac{\Big|\mathbf{r}_{[ \frac{\tilde{t}|v|}{L_{\xi}}]}\Big|}{|\mathbf{r}_{1}|} & (C_{\xi})^{\tilde{t}|v|}  \Big|\mathbf{r}_{[ \frac{\tilde{t}|v|}{L_{\xi}}]}\Big| &(C_{\xi})^{\tilde{t}|v|}  \Big|\mathbf{r}_{[\frac{\tilde{t}|v|}{L_{\xi}}]}\Big| \\ \hline
0 & 0 & |v| (C_{\xi})^{\tilde{t}|v|}  &  |v| (C_{\xi})^{\tilde{t}|v|}  & (C_{\xi})^{\tilde{t}|v|}\frac{1}{|\mathbf{r}_{1}|} & (C_{\xi})^{\tilde{t}|v|} & (C_{\xi})^{\tilde{t}|v|}\\
0 & 0 & |v|(C_{\xi})^{\tilde{t}|v|} & |v|(C_{\xi})^{\tilde{t}|v|} & (C_{\xi})^{\tilde{t}|v|} \frac{1}{|\mathbf{r}_{1}|} & (C_{\xi})^{\tilde{t}|v|} & (C_{\xi})^{\tilde{t}|v|}
 \end{array} \right]} } \\
 & \lesssim  C^{C|t-s||v|}
 \left[\begin{array}{cc|c|c|c}
1 &0  & \frac{1}{|v|} & \frac{1}{|v| |\mathbf{v}_{\perp}^{1}|} & \frac{1}{|v|^{2}}  \\
 0 & 1 & \mathbf{0}_{1,2} & 0 & \mathbf{0}_{1,2}\\ \hline
 \mathbf{0}_{2,1} & \mathbf{0}_{2,1} &  O_{\xi}(1) & \frac{1}{|\mathbf{v}_{\perp}^{1}|} & \frac{1}{|v|} \\ \hline
 0 & 0 & |\mathbf{v}_{\perp}^{1}| & O_{\xi}(1) & \frac{|\mathbf{v}_{\perp}^{1}|}{|v|}\\ \hline
 \mathbf{0}_{2,1} & \mathbf{0}_{2,1} & |v| & \frac{|v|}{|\mathbf{v}_{\perp}^{1}|} & O_{\xi}(1)
 \end{array} \right]_{7\times7}\label{whole_inter}
,
\end{split}
\end{equation}
where we have used (\ref{time_angle}) and the Velocity lemma (Lemma \ref{velocity_lemma}) and (\ref{40}), (\ref{41}) and 
\[
\mathbf{r}_{i} = \mathcal{C}_{1} e^{\frac{\mathcal{C}}{2}C_{1}} \mathbf{r}^{i} \lesssim e^{C|t-s||v|} \frac{|\mathbf{v}_{\perp}^{1}|}{|v|}, \ \ \text{and} \ \frac{\mathbf{r}_{[ \frac{|t-s||v| }{L_{\xi}}]}}{\mathbf{r}_{1}}  = \frac{\mathbf{r}^{[  \frac{|t-s||v|}{L_{\xi}}]}}{\mathbf{r}^{1}}
= \frac{\Big|\mathbf{v}_{\perp}^{[ \frac{   |t-s||v| }{L_{\xi}}  ]}\Big|}{|\mathbf{v}_{\perp}^{1}|} \leq
\mathcal{C}_{1} e^{\frac{\mathcal{C}}{2} |v||t-s|}.
\]

\vspace{8pt}

\noindent\textit{Step 8. Intermediate summary for the matrix method and the final estimate for \textit{Type II}}

\vspace{4pt}

Recall from (\ref{chain}) and (\ref{s1tstar}), (\ref{whole_inter}), (\ref{t1s1}),
\begin{equation}\notag
\begin{split}
&\frac{\partial (s^{\ell_{*}},\mathbf{X}_{ \ell_{*}}(s^{\ell_{*}}),\mathbf{V}_{  \ell_{*}}(s^{\ell_{*}}))}{\partial (s^{1},\mathbf{X}_{1}(s^{1}),\mathbf{V}_{1}(s^{1}) )}\equiv
\frac{\partial (s^{\ell_{*}}, \mathbf{x}_{\perp_{\ell_{*}}}(s^{\ell_{*}}),\mathbf{x}_{\parallel_{\ell_{*}}}(s^{\ell_{*}}), \mathbf{v}_{\perp_{\ell_{*}}}(s^{\ell_{*}}), \mathbf{v}_{\parallel_{\ell_{*}}}(s^{\ell_{*}})   )}{\partial ( s^{1}, \mathbf{x}_{\perp_{1}}(s^{1}), \mathbf{x}_{\parallel_{1}}(s^{1}), \mathbf{v}_{\perp_{1}}(s^{1}), \mathbf{v}_{\parallel_{1}}(s^{1})   )}
\\
 = & 
\ \frac{ \partial (s^{\ell_{*}}, \mathbf{x}_{\perp_{\ell_{*}}}(s^{\ell_{*}}),\mathbf{x}_{\parallel_{\ell_{*}}}(s^{\ell_{*}}), \mathbf{v}_{\perp_{\ell_{*}}}(s^{\ell_{*}}), \mathbf{v}_{\parallel_{\ell_{*}}}(s^{\ell_{*}})   )}{\partial (t^{\ell_{*}},0, \mathbf{x}_{\parallel_{\ell_{*}}}^{\ell_{*}}, \mathbf{v}_{\perp_{\ell_{*}}}^{\ell_{*}}, \mathbf{v}_{\parallel_{\ell_{*}}}^{\ell_{*}})}    \\
&  \times\prod_{i=1}^{ [\frac{|t-s||v|}{L_{\xi}}]}  \frac{\partial (t^{\ell_{i+1}},0, \mathbf{x}_{\parallel_{ \ell_{i+1}}}^{\ell_{i+1}}, \mathbf{v}_{\perp_{ \ell_{i+1}}}^{\ell_{i+1}}, \mathbf{v}_{\parallel_{ \ell_{i+1}}}^{\ell_{i+1}})}
{\partial (t^{\ell_{i+1}-1},0, \mathbf{x}_{\parallel_{{\ell_{i+1}-1}}}^{\ell_{i+1}-1}, \mathbf{v}_{\perp_{{\ell_{i+1}-1}}}^{\ell_{i+1}-1}, \mathbf{v}_{\parallel_{{\ell_{i+1}-1}}}^{\ell_{i+1}-1})}
 \times \cdots \times
 \frac{\partial (t^{\ell_{i}+1},0, \mathbf{x}_{\parallel_{\ell_{i} +1}}^{\ell_{i} +1}, \mathbf{v}_{\perp_{\ell_{i} +1}}^{\ell_{i}+1}, \mathbf{v}_{\parallel_{\ell_{i} +1}}^{\ell_{i}+1})}{\partial (t^{\ell_{i} },0, \mathbf{x}_{\parallel_{\ell_{i} }}^{\ell_{i}  }, \mathbf{v}_{\perp_{\ell_{i} }}^{\ell_{i} }, \mathbf{v}_{\parallel_{\ell_{i} }}^{\ell_{i} })}   \\
& \times    \frac{\partial (t^{1}, 0 , \mathbf{x}_{\parallel_{1}}^{1}, \mathbf{v}_{\perp_{1}}^{1}, \mathbf{v}_{\parallel_{1}}^{1})}{\partial (s^{1}, \mathbf{x}_{\perp_{1}}(s^{1}), \mathbf{x}_{\parallel_{1}}(s^{1}), \mathbf{v}_{\perp_{1}}(s^{1}), \mathbf{v}_{\parallel_{1}}(s^{1}) )}
 \\
 \leq&  \ (\ref{s1tstar}) \times (\ref{whole_inter}) \times (\ref{t1s1}) .
 \end{split}
 \end{equation}
 Then directly we bound
 \begin{equation}\label{inter_1}
 \begin{split}
 \leq & \ (\ref{s1tstar}) \times C^{C|t-s||v|}\\
 &\times { \left[\begin{array}{c|cc|cc}
 1 & \frac{1}{|\mathbf{v}_{\perp}^{1}|} + \frac{|v|}{|\mathbf{v}_{\perp}^{1}|^{2}} + |t^{1}-s^{1}| & \frac{1}{|v|} + \frac{|v|}{|\mathbf{v}_{\perp}^{1}|^{2}}   + |s^{1}-t^{1}| & \frac{1}{|v||\mathbf{v}_{\perp}^{1}|}   + |s^{1}-t^{1}|^{2} & \frac{1}{|v|^{2}} + \frac{|s^{1}-t^{1}|}{|v|} \\ \hline
0 & 0 & \mathbf{0}_{1,2} & 0 & \mathbf{0}_{1,2} \\
 \mathbf{0}_{2,1} & \frac{|v|^{2}}{|\mathbf{v}_{\perp}^{1}|^{2}} + \frac{|v|}{|\mathbf{v}_{\perp}^{1}|} + |v||s^{1}-t^{1}| & 1+ \frac{|v|^{2}}{|\mathbf{v}_{\perp}^{1}|^{2}}   & \frac{1}{|\mathbf{v}_{\perp}^{1}|}   + |s^{1}-t^{1}| & \frac{1}{|v|}  \\ \hline
0 & \frac{|v|^{2}}{|\mathbf{v}_{\perp}^{1}|} + |v| & |\mathbf{v}_{\perp}^{1}| + \frac{|v|^{2}}{|\mathbf{v}_{\perp}^{1}|} & O_{\xi}(1)  & \frac{|\mathbf{v}_{\perp}^{1}|}{|v|}\\
\mathbf{0}_{2,1} & \frac{|v|^{3}}{|\mathbf{v}_{\perp}^{1}|^{2}} + \frac{|v|^{2}}{|\mathbf{v}_{\perp}^{1}|} + |v|^{2} |s_{1}-t^{1}| & |v| + \frac{|v|^{3}}{|\mathbf{v}_{\perp}^{1}|^{2}} & \frac{|v|}{|\mathbf{v}_{\perp}^{1}|} + |v||s_{1}-t^{1}| & O_{\xi}(1)
 \end{array}
 \right] },
\end{split}
\end{equation}
 where we have used the Velocity lemma (Lemma \ref{velocity_lemma}) and (\ref{time_angle}), (\ref{40}), (\ref{41}) and
 \[
 |v||t^{1}-s^{1}| \leq \min \{  |v|(t_{\mathbf{b}}(x,v) + t_{\mathbf{b}}(x,-v)   ) , (t-s)|v| \} \lesssim_{\Omega} \min \{\frac{|\mathbf{v}_{\perp}^{1}|}{|v|}, (t-s)|v|  \}
 \lesssim_{\Omega} C^{C|t-s||v|} \min \{\frac{|\mathbf{v}_{\perp}^{1}|}{|v|}, 1 \}.
 \]
 Again we use the Velocity lemma (Lemma \ref{velocity_lemma}) and (\ref{time_angle}), (\ref{40}), (\ref{41}) and
\begin{eqnarray*}%
 |v||t^{\ell_{*}}-s^{\ell_{*}} | \leq \min\{ |v||t^{\ell_{*}} - t^{\ell_{*}+1}| ,| t-s| |v|\}\lesssim_{\Omega}  \min \{\frac{|\mathbf{v}_{\perp}^{\ell_{*}}|}{|v|}, |t-s||v|\}
 \lesssim_{\Omega} C^{C|t-s||v|}  \min \{\frac{|\mathbf{v}_{\perp}^{1}|}{|v|}, 1\}
 ,\\
\end{eqnarray*}
and
$|\mathbf{v}_{\perp}(s^{\ell_{*}})|   \lesssim_{\Omega} C^{C|v|(t-s)} |\mathbf{v}_{\perp}^{1}|$ to have, from (\ref{inter_1})
\begin{equation}\label{middle}
\begin{split}
\frac{\partial (s^{\ell_{*}},\mathbf{X}_{ \ell_{*}}(s^{\ell_{*}}),\mathbf{V}_{  \ell_{*}}(s^{\ell_{*}}))}{\partial (s^{1},\mathbf{X}_{1}(s^{1}),\mathbf{V}_{1}(s^{1}) )}
\lesssim C^{C|t-s||v|}  \left[\begin{array}{c|c|cc}
 0 &  \mathbf{0}_{1,3} & 0 & \mathbf{0}_{1,2}  \\ \hline
  |\mathbf{v}_{\perp}^{1}| &     \frac{|v|}{|\mathbf{v}_{\perp}^{1}|}  & \frac{1}{|v|}& \frac{1}{|v|} \\
 |v| &     \frac{|v|^{2}}{|\mathbf{v}_{\perp}^{1}|^{2}} 
 &  \frac{1}{|\mathbf{v}_{\perp}^{1}|} &   \frac{1}{|v|}\\ \hline
&&&   \\
|v|^{2}&      \frac{|v|^{3}}{|\mathbf{v}_{\perp}^{1}|^{2}} 
&   \frac{|v|}{|\mathbf{v}_{\perp}^{1}|} & O_{\xi}(1) \\
&&&
   \end{array}\right]_{7\times 7}.
\end{split}
\end{equation}

We consider the following case:
\begin{equation}\label{typeII}
\text{There exists } \ell \in [\ell_{*}(s;t,x,v), 0]  \text{ such that } \mathbf{r}^{\ell} \geq \sqrt{\delta}.
\end{equation}
Therefore $\ell$ is \textit{Type II} in (\ref{type_r}). Equivalently $\tau \in [t^{\ell+1}, t^{\ell}]$ for some $\ell_{*}\leq \ell \leq 0$ and $|\xi(X_{\mathbf{cl}}(\tau;t,x,v))|\geq C\delta$. By the Velocity lemma (Lemma \ref{velocity_lemma}), for all $1 \leq i \leq  \ell_{*}(s;t,x,v) ,$
\[
|\mathbf{r}^{i}| \ =  \ \frac{|\mathbf{v}_{\perp}^{i}|}{|v|} \ \gtrsim_{\xi} \ e^{-C_{\xi}|v||t^{i}-t^{\ell}|} |\mathbf{r}^{\ell}| \ \gtrsim_{\xi} \ e^{-C_{\xi}|v|(t-s)} \sqrt{\delta}.
\]
Especially, for all $1 \leq i \leq  \ell_{*}(s;t,x,v) ,$
\[
|\mathbf{r}^{1}| \gtrsim_{\xi} e^{-C_{\xi}|v|(t-s)} \sqrt{\delta}, \ \ \ \frac{1}{|\mathbf{r}^{i}|} = \frac{|v|}{|\mathbf{v}_{\perp}^{i}|} \lesssim_{\xi} \frac{e^{C_{\xi}|v|(t-s)}}{\sqrt{\delta}}.
\]
Note that $\ell_{*}(s;t,x,v) \lesssim \max_{i}\frac{|v||t-s|}{ \mathbf{r}^{i}} \lesssim_{\delta} C^{C|v||t-s|}.$

Therefore in the case of (\ref{typeII}), from (\ref{middle}),
\begin{equation}\notag
 \begin{split}
  \frac{\partial (s^{\ell_{*}}, \mathbf{X}_{\ell_{*}} (s^{\ell_{*}}), \mathbf{V}_{\ell_{*} } (s^{\ell_{*}})   )}{\partial (s^{1}, \mathbf{X}_{1} (s^{1}), \mathbf{V}_{1} (s^{1})   )} 
 &\lesssim C^{C(t-s)|v|} \left[\begin{array}{c|cc|ccc}
 0 & 0 & \mathbf{0}_{1,2} & 0 & \mathbf{0}_{1,2} &\\ \hline
  |\mathbf{v}_{\perp}^{1}| &  \frac{1}{\sqrt{\delta}}&     \frac{1}{\sqrt{\delta}}& \frac{1}{|v|}& \frac{1}{|v|}&\\
 |v| &  \frac{1}{ {\delta}}    &  \frac{1}{ {\delta}}    &  \frac{1}{|v|} \frac{1}{\sqrt{\delta}}&   \frac{1}{|v|}\\ \hline
&&&&& \\
|v|^{2}& |v|  \frac{1}{ {\delta}}   &     |v|     \frac{1}{ {\delta}}   & \frac{1}{\sqrt{\delta}} & 1 \\
&&&&&
   \end{array}\right]\\
   & \lesssim_{\delta} C^{C|v|(t-s)} \left[\begin{array}{c|c|c} 0 & \mathbf{0}_{1,3} & \mathbf{0}_{1,3} \\ \hline |v| & 1 & \frac{1}{|v|} \\ \hline |v|^{2} & |v|  & 1 \end{array}\right].
 \end{split}
 \end{equation}
Using (\ref{ss1}) and (\ref{s1t}) we conclude
\begin{equation}\label{final_Dxv_nongrazing}
\begin{split}
 &\frac{\partial (s,X_{\mathbf{cl}}(s;t,x,v),V_{\mathbf{cl}}(s;t,x,v))}{\partial (t,x,v)} \\
 &  \lesssim_{\delta,\xi} \ C^{C|v|(t-s)}  \ 
 \frac{\partial (s,X_{\mathbf{cl}}(s), V_{\mathbf{cl}}(s))}{\partial (s^{\ell_{*}}, \mathbf{X}_{\ell_{*} }(s^{\ell_{*}}), \mathbf{V}_{ \ell_{*}}(s^{\ell_{*}}))}
\left[\begin{array}{c|c|c} 0 & \mathbf{0}_{1,3} & \mathbf{0}_{1,3} \\ \hline |v| & 1 & \frac{1}{|v|} \\ \hline |v|^{2} & |v|  & 1 \end{array}\right]  \frac{   {\partial (s^{1}, \mathbf{x}_{\perp_{1}}(s^{1}), \mathbf{x}_{\parallel_{1}}(s^{1}), \mathbf{v}_{\perp_{1}}(s^{1}), \mathbf{v}_{\parallel_{1}}(s^{1}) )}       }{\partial (t,x,v)}\\
&  \lesssim_{\delta,\xi}\ C^{C|v|(t-s)} \left[\begin{array}{ccc} 0 & \mathbf{0}_{1,3} & \mathbf{0}_{1,3} \\ |v| & 1 & |s^{\ell_{*}}-s| \\ \mathbf{0}_{3,1} & |v| & 1 \end{array}\right]
 \left[\begin{array}{ccc} 0 & \mathbf{0}_{1,3} & \mathbf{0}_{1,3} \\   |v| & 1 & \frac{1}{|v|} \\  |v|^{2} & |v|  & 1 \end{array}\right]
 \left[\begin{array}{ccc} 1 & \mathbf{0}_{1,3} & \mathbf{0}_{1,3} \\ \mathbf{0}_{3,1} & 1 & |t-s^{1}| \\ \mathbf{0}_{3,1} & |v| &  1  \end{array}\right]\\
 & \lesssim_{\delta,\xi} \ C^{C|v|(t-s)}  \left[\begin{array}{ccc} 0 & \mathbf{0}_{1,3} & \mathbf{0}_{1,3} \\ |v| & 1 & \frac{1}{|v|} \\ |v|^{2} & |v| & 1 \end{array} \right].
\end{split}
\end{equation}

Now we only need to consider the remainder case of (\ref{typeII}), i.e.
\begin{equation}\label{typeI}
\text{ For all } \ell \in [\ell_{*}(s;t,x,v),0] , \text{ we have }  \mathbf{r}_{\ell} \leq \sqrt{\delta}.
\end{equation}
Note that in this case the moving frame($\mathbf{p}^{\ell}-$spherical coordinate) is well-defined for all $\tau \in [s,t]$. In next two step we use the ODE method to refine the submatrix  of (\ref{middle}): 
\begin{equation}\notag
\begin{split}
 \frac{\partial(\mathbf{x}_{\parallel_{\ell_{*}}}(s^{\ell_{*}}),   \mathbf{v}_{\parallel_{\ell_{*}}}(s^{\ell_{*}})   )}{\partial (\mathbf{x}_{\perp_{1}}(s^{1}), \mathbf{x}_{\parallel_{1}}(s^{1}), \mathbf{v}_{\perp_{1}}(s^{1}), \mathbf{v}_{\parallel_{1}}(s^{1})   )} 
 =
\left[\begin{array}{cccccccc}
 \frac{\partial\mathbf{x}_{\parallel_{\ell_{*}} }(s^{\ell_{*}})}{\partial \mathbf{x}_{\perp_{1}}(s^{1})  } & 
  \frac{\partial\mathbf{x}_{\parallel_{\ell_{*}} }(s^{\ell_{*}})}{\partial \mathbf{x}_{\parallel_{1} }(s^{1})  } & 
    \frac{\partial\mathbf{x}_{\parallel_{\ell_{*}} }(s^{\ell_{*}})}{\partial \mathbf{v}_{\perp_{1}}(s^{1})  } &  
    \frac{\partial\mathbf{x}_{\parallel_{\ell_{*}} }(s^{\ell_{*}})}{\partial \mathbf{v}_{\parallel_{1} }(s^{1})  } \\ 
     \frac{\partial\mathbf{v}_{\parallel_{\ell_{*}} }(s^{\ell_{*}})}{\partial \mathbf{x}_{\perp_{1}}(s^{1})  } &  
    \frac{\partial\mathbf{v}_{\parallel_{\ell_{*}} }(s^{\ell_{*}})}{\partial \mathbf{x}_{\parallel_{1} }(s^{1})  } &
    \frac{\partial\mathbf{v}_{\parallel_{\ell_{*}} }(s^{\ell_{*}})}{\partial \mathbf{v}_{\perp_{1}}(s^{1})  } &  
    \frac{\partial\mathbf{v}_{\parallel_{\ell_{*}} }(s^{\ell_{*}})}{\partial \mathbf{v}_{\parallel_{1} }(s^{1})  }  
\end{array}
\right]_{4\times 6}
.
\end{split}
\end{equation}

\vspace{8pt}

\noindent{\textit{Step 9. ODE method within the time scale $|t-s||v|\sim L_{\xi}$}}

\vspace{4pt}
 
Recall the end points (time) of intermediate groups from (\ref{group}):  
\begin{equation}\notag
\begin{split}
s < \underbrace{ t^{\ell_{*}} < t^{\ell_{[ \frac{|t-s||v|}{L_{\xi}}]} +1 } }_{{[ \frac{|t-s||v|}{L_{\xi}}]  +1 }}  <  \underbrace{t^{\ell_{[ \frac{|t-s||v|}{L_{\xi}}]   }} < t^{\ell_{[ \frac{|t-s||v|}{L_{\xi}}]-1}+1}   }_{{[ \frac{|t-s||v|}{L_{\xi}}]   }}
<  \cdots <  \underbrace{t^{\ell_{i }} < t^{\ell_{i-1}+1  }}_{i }  < \cdots
<  \underbrace{t^{\ell_{2}}<t^{\ell_{1} +1} }_{2}< \underbrace{t^{\ell_{1}}< t^{1} }_{1}< t,
\end{split}
\end{equation}
where the underbraced numbering indicates the index of the intermediate group. We further choose points independently on $(t,x,v)$ for all $i=1,2, \cdots, [\frac{|t-s||v|}{L_{\xi}}]:$ 
\begin{eqnarray*} 
& t^{\ell_{1} + 1} < s^{2} < t^{\ell_{1}},&
\\
& t^{\ell_{2}+1} < s^{3}< t^{\ell_{2}},&\\
& \vdots&\\
&t^{\ell_{i}+1}<s^{i+1}  <\underbrace{ t^{\ell_{i }}< \cdots \cdots< t^{\ell_{i-1}+1} }_{i -\text{intermediate group}}< s^{i }<   t^{\ell_{i-1} }, & \\ 
&\vdots &\\ 
&t^{\ell_{[ \frac{|t-s||v|}{L_{\xi}}] }+1} <  s^{\ell_{[ \frac{|t-s||v|}{L_{\xi}}] }+1}  < t^{\ell_{[ \frac{|t-s||v|}{L_{\xi}}] } }
. & \end{eqnarray*} 

%
%

We claim the following estimate at $s^{i+1 }$ via $s^{i}$: 
\begin{equation}\label{ODE_onegroup}
\begin{split}
&\left[\begin{array}{cc} |
  \frac{\partial \mathbf{x}_{\parallel_{\ell_{i}}} (s^{i+1})}{\partial {\mathbf{x}_{\perp_{1}}}(s^{1})  }|  &  |  \frac{\partial \mathbf{x}_{\parallel_{\ell_{i}}} (s^{i+1})}{\partial {\mathbf{x}_{\parallel_{1}}} (s^{1})  }|  \\
| \frac{\partial   \mathbf{v}_{\parallel _{\ell_{i}}}   (s^{i+1})}{  \partial {\mathbf{x}_{\perp _{1}}}(s^{1}) } |
& | \frac{\partial   \mathbf{v}_{\parallel _{\ell_{i}}}   ( s^{i+1})}{  \partial {\mathbf{x}_{\parallel _{1}}}(s^{1}) } |
\end{array} \right] \\
&\lesssim_{\delta,\xi}
\left[\begin{array}{cc} 1 & \frac{1}{|v|} \\ |v| & 1   \end{array} \right]
\left[\begin{array}{cc} |   \frac{\partial \mathbf{x}_{\parallel_{\ell_{i}}} ( s^{i })}{\partial {\mathbf{x}_{\perp_{1}}} (s^{1}) }|
& |   \frac{\partial \mathbf{x}_{\parallel_{\ell_{i}}}  ( s^{i })}{\partial {\mathbf{x}_{\parallel_{1}}} (s^{1})  }|
   \\
| \frac{\partial   \mathbf{v}_{\parallel _{\ell_{i}}}    ( s^{i })}{  \partial {\mathbf{x}_{\perp _{1}}} (s^{1})} |
&
| \frac{\partial   \mathbf{v}_{\parallel _{\ell_{i}}}    ( s^{i })}{  \partial {\mathbf{x}_{\parallel _{1}}} (s^{1})} |
 \end{array} \right]
+ e^{C|v||t-s^{i}|}
\left[\begin{array}{cc} 1 & \frac{1}{|v|} \\ |v| & 1   \end{array} \right]
\left[\begin{array}{cc}  0 & 0\\ |v| \Big(1+ \frac{|v|}{|\mathbf{v}_{\perp }^{1}|} \Big) &   |v| \Big(1+ \frac{|v|}{|\mathbf{v}_{\perp }^{1}|} \Big) \end{array} \right]  ,\\
&\left[\begin{array}{cc} |
  \frac{\partial \mathbf{x}_{\parallel_{\ell_{i}}}  (s^{i+1})}{\partial {\mathbf{v}_{\perp_{1}}}(s^{1})  }|  &  |  \frac{\partial \mathbf{x}_{\parallel_{\ell_{i}}}(s^{i+1})}{\partial {\mathbf{v}_{\parallel_{1}}}  (s^{1})}|  \\
| \frac{\partial   \mathbf{v}_{\parallel _{\ell_{i}}}   (s^{i+1})}{  \partial {\mathbf{v}_{\perp _{1}}} (s^{1})} |
& | \frac{\partial   \mathbf{v}_{\parallel _{\ell_{i}}}   (s^{i+1})}{  \partial {\mathbf{v}_{\parallel _{1}}} (s^{1})} |
\end{array} \right] \\
&\lesssim_{\delta,\xi}
\left[\begin{array}{cc} 1 & \frac{1}{|v|} \\ |v| & 1   \end{array} \right]
\left[\begin{array}{cc} |   \frac{\partial \mathbf{x}_{\parallel_{\ell_{i}}} ( s^{i})}{\partial {\mathbf{v}_{\perp_{1}}} (s^{1}) }|
& |   \frac{\partial \mathbf{x}_{\parallel_{\ell_{i}}}  ( s^{i})}{\partial {\mathbf{v}_{\parallel_{1}}}   (s^{1})}|
   \\
| \frac{\partial   \mathbf{v}_{\parallel _{\ell_{i}}}    ( s^{i})}{  \partial {\mathbf{v}_{\perp _{1}}}  (s^{1})} |
&
| \frac{\partial   \mathbf{v}_{\parallel _{\ell_{i}}}    ( s^{i})}{  \partial {\mathbf{v}_{\parallel _{1}}}  (s^{1})} |
 \end{array} \right]
+ e^{C|v||t-s^{i}|}
\left[\begin{array}{cc} 1 & \frac{1}{|v|} \\ |v| & 1   \end{array} \right]
\left[\begin{array}{cc}   0 &  0
\\ 1 & 1 \end{array} \right] .
\end{split}
\end{equation}

Within the $i-$th intermediate group, we fix $\mathbf{p}^{\ell_{i}}-$spherical coordinate in \textit{Step 9}. For the sake of simplicity we drop the index $\ell_{i}$.

Denote, from (\ref{F||}),
\begin{equation}\label{F||_decompose}
\begin{split}
F_{\parallel}(\mathbf{x}_{\perp}, \mathbf{x}_{\parallel}, \mathbf{v}_{\perp}, \mathbf{v}_{\parallel}) := D(\mathbf{x}_{\perp}, \mathbf{x}_{\parallel} ,\mathbf{v}_{\parallel})+E(\mathbf{x}_{\perp}, \mathbf{x}_{\parallel}, \mathbf{v}_{\parallel})\mathbf{v}_{\perp} ,
\end{split}
\end{equation}
where $D$ is a $\mathbf{r}^{3}$-vector-valued function and $E$ is a $3\times3$ matrix-valued function:
\begin{equation}\notag
\begin{split}
 D(\mathbf{x}_{\perp}, \mathbf{x}_{\parallel}, \mathbf{v}_{\parallel}) 
 =&\sum_{i} G_{ij} (\mathbf{x}_{\perp}, \mathbf{x}_{\parallel})\frac{(-1)^{i+1}}{-\mathbf{n}(\mathbf{x}_{\parallel}) \cdot (\partial_{1} \mathbf{\eta}(\mathbf{x}_{\parallel}) \times \partial_{2} \mathbf{\eta}(\mathbf{x}_{\parallel}))}\\
& \ \ \ \times  \Big\{  \mathbf{v}_{\parallel} \cdot  \nabla^{2}\mathbf{ \eta}(\mathbf{x}_{\parallel}) \cdot \mathbf{v}_{\parallel} - \mathbf{x}_{\perp} \mathbf{v}_{\parallel} \cdot \nabla^{2} \mathbf{n}(\mathbf{x}_{\parallel}) \cdot \mathbf{v}_{\parallel}
\Big\} \cdot (-\mathbf{n} (\mathbf{x}_{\parallel}) \times \partial_{i+1} \mathbf{\eta}(\mathbf{x}_{\parallel})),
\end{split}
\end{equation}
and 
\begin{equation}\notag
\begin{split}
 E(\mathbf{x}_{\perp}, \mathbf{x}_{\parallel}, \mathbf{v}_{\parallel}) 
 = \sum_{i} G_{ij} (\mathbf{x}_{\perp}, \mathbf{x}_{\parallel})\frac{(-1)^{i+1}}{-\mathbf{n}(\mathbf{x}_{\parallel}) \cdot (\partial_{1} \mathbf{\eta}(\mathbf{x}_{\parallel}) \times \partial_{2} \mathbf{\eta}(\mathbf{x}_{\parallel}))}   2 \mathbf{v}_{\perp} \mathbf{v}_{\parallel}\cdot \nabla \mathbf{n}(\mathbf{x}_{\parallel}) \cdot (-\mathbf{n} (\mathbf{x}_{\parallel}) \times \partial_{i+1} \mathbf{\eta}(\mathbf{x}_{\parallel})).
\end{split}
\end{equation}
Here $G_{ij}(\cdot,\cdot)$ is a smooth bounded function defined in (\ref{G}) and we used the notational convention $i\equiv i \ \mathrm{mod } \ 2$.

From Lemma \ref{chart_lemma} we take the time integration of (\ref{ODE_ell}) along the characteristics to have
\begin{equation}\notag
\begin{split}
\mathbf{x}_{\parallel}( s^{i+1}) & = \mathbf{x}_{\parallel}(s^{i}) - \int^{s^{i}}_{s^{i+1}} \mathbf{v}_{\parallel}(\tau ) \mathrm{d}\tau ,\\
\mathbf{v}_{\parallel}(s^{i+1}) & =  \mathbf{v}_{\parallel}(s^{i}) - \int^{s^{i}}_{s^{i+1}} \big\{  E(\mathbf{x}_{\perp}(\tau )  , \mathbf{x}_{\parallel}(\tau ), \mathbf{v}_{\parallel}(\tau ))\mathbf{v}_{\perp}(\tau ) +  D(\mathbf{x}_{\perp}(\tau ), \mathbf{x}_{\parallel}(\tau ) ,\mathbf{v}_{\parallel}(\tau ))\big\} \ \mathrm{d}\tau .
\end{split}
\end{equation}
Note that $\mathbf{v}_{\perp}(\tau )$ is not continuous with respect to the time $\tau$. Using (\ref{ODE_ell}) we rewrite this time integration as
\begin{equation}\notag
\begin{split}
\int^{s^{i}}_{s^{i+1}}  E(\mathbf{x}_{\perp}(\tau ), \mathbf{x}_{\parallel}(\tau ), \mathbf{v}_{\parallel}(\tau ))   \mathbf{v}_{\perp}(\tau ) \mathrm{d}\tau 
 =  
 \int^{s^{i}}_{t^{\ell_{i-1}+1}  } + \sum_{\ell= \ell_{i}-1}^{\ell_{i-1}+1} \int^{t^{\ell}}_{t^{\ell+1}} + \int^{t^{\ell_{i}  }}_{ s^{i+1}} ,\\
\end{split}
\end{equation}
then we use $\mathbf{v}_{\perp}(\tau ) = \dot{\mathbf{x}}_{\perp}(\tau )$ and the integration by parts to have
\begin{equation}\notag
\begin{split}
 &  \int_{t^{\ell_{i-1} +1} }^{s^{i}}   E(\mathbf{x}_{\perp}(\tau ), \mathbf{x}_{\parallel}(\tau ), \mathbf{v}_{\parallel}(\tau ) ) \dot{\mathbf{x}}_{\perp}(\tau ) \mathrm{d}\tau 
  +  \sum_{\ell=\ell_{i}-1}^{\ell_{i-1}+1} \int^{t^{\ell}}_{t^{\ell+1}}  E(\mathbf{x}_{\perp}(\tau ), \mathbf{x}_{\parallel}(\tau ),\mathbf{v}_{\parallel}(\tau ) )\dot{\mathbf{x}}_{\perp}(\tau ) \mathrm{d}\tau  \\
 &+ \int^{t^{\ell_{i}   }}_{s^{i+1}}  E(\mathbf{x}_{\perp}(\tau ), \mathbf{x}_{\parallel}(\tau ), \mathbf{v}_{\parallel}(\tau )) \dot{\mathbf{x}}_{\perp}(\tau ) \mathrm{d}\tau \\
 =& \  E(s^{i})    {\mathbf{x}}_{\perp}(s^{i}) -E( t^{\ell_{i-1}+1} )  \underbrace{\mathbf{x}_{\perp}(  t^{\ell_{i-1} +1})}_{=0} - \int_{t^{\ell_{i-1}+1}}^{s^{i}}
\big[{\mathbf{v}}_{\perp}(\tau ),  {\mathbf{v}}_{\parallel}(\tau ), {F}_{\parallel}(\tau )  \big]\cdot \nabla E(\tau )
   {\mathbf{x}}_{\perp}(\tau ) \mathrm{d}\tau \\
 & + \sum_{\ell=\ell_{i}-1}^{\ell_{i-1}+1 } \Big\{
 E( t^{\ell} )  \underbrace{\mathbf{x}_{\perp}(t^{\ell})}_{=0} -  E(t^{\ell+1})   \underbrace{{\mathbf{x}}_{\perp}(t^{\ell+1})}_{=0}  
  - \int_{t^{\ell+1}}^{t^{\ell}}
\big[{\mathbf{v}}_{\perp}(\tau ),  {\mathbf{v}}_{\parallel}(\tau ), {F}_{\parallel}(\tau )  \big]  \cdot \nabla E(\tau )
   {\mathbf{x}}_{\perp}(\tau ) \mathrm{d}\tau 
 \Big\}\\
 & +
 E( t^{\ell_{i} } )  \underbrace{\mathbf{x}_{\perp}( t^{\ell_{i}  }) }_{=0}- E(s^{i+1})  { {\mathbf{x}}_{\perp}(s^{i+1}) }
    - \int^{ t^{\ell_{i} } }_{s^{i+1}}
\big[{\mathbf{v}}_{\perp}(\tau ),  {\mathbf{v}}_{\parallel}(\tau ), {F}_{\parallel}(\tau )  \big]   \cdot \nabla E(\tau )
  {\mathbf{x}}_{\perp}(\tau ) \mathrm{d}\tau \\
 =& \  E( \mathbf{x}_{\perp}, \mathbf{x}_{\parallel}, \mathbf{v}_{\parallel} ) (s^{i}) \mathbf{x}_{\perp}(s^{i})  -  E(s^{i+1}) {\mathbf{x}}_{\perp}(s^{i+1})
 - \int_{s^{i}}^{s^{i+1}}   \big[{\mathbf{v}}_{\perp}(\tau ),  {\mathbf{v}}_{\parallel}(\tau ), {F}_{\parallel}(\tau )  \big]    \cdot \nabla E(\tau )  {\mathbf{x}}_{\perp}(\tau ) \mathrm{d}\tau ,
\end{split}
\end{equation}
where we have used the fact $X_{\mathbf{cl}}(t^{\ell}) \in\partial\Omega$(therefore $\mathbf{x}_{\perp}(t^{\ell})=0$) and the notation $E(\tau ) = E(\mathbf{x}_{\perp}(\tau ), \mathbf{x}_{\parallel}(\tau ), \mathbf{v}_{\parallel}(\tau )), \ D(\tau ) = D(\mathbf{x}_{\perp}(\tau ), \mathbf{x}_{\parallel}(\tau ),\mathbf{v}_{\parallel}(\tau )), \ F_{\parallel}(\tau  ) = F_{\parallel}( \mathbf{x}_{\perp}(\tau ), \mathbf{x}_{\parallel}(\tau ), \mathbf{v}_{\perp}(\tau ), \mathbf{v}_{\parallel}(\tau ) ).$

Overall we have
\begin{equation}\label{xv_mildform}
\begin{split}
\mathbf{x}_{\parallel}(s^{i+1}) & = \mathbf{x}_{\parallel}(s^{i}) - \int_{s^{i+1}}^{s^{i}} \mathbf{v}_{\parallel}(\tau ) \mathrm{d}\tau ,\\
\mathbf{v}_{\parallel}(s^{i+1}) & =  \mathbf{v}_{\parallel}( s^{i }) - E( s^{i }) \mathbf{x}_{\perp} ( s^{i }) +
 E ( s^{i+1})  \mathbf{x}_{\perp}( s^{i+1}) \\
 & \ \  + \int^{ s^{i }}_{ s^{i+1}}     \big[{\mathbf{v}}_{\perp}(\tau ),  {\mathbf{v}}_{\parallel}(\tau ), {F}_{\parallel}(\tau )  \big]    \cdot \nabla E(\tau )  {\mathbf{x}}_{\perp}(\tau ) \mathrm{d}\tau   - \int^{ s^{i }}_{ s^{i+1}} D(  \tau )  \mathrm{d}\tau .
\end{split}
\end{equation}
Denote $$\partial = [\partial_{\mathbf{x}_{\perp}(s^{1})}, \partial_{\mathbf{x}_{\parallel}(s^{1})}, \partial_{\mathbf{v}_{\perp}(s^{1})}, \partial_{\mathbf{v}_{\parallel} (s^{1})}]= [\frac{\partial}{\partial \mathbf{x}_{\perp} (s^{1})}, \frac{\partial}{\partial \mathbf{x}_{\parallel}  ( s^{1})}, \frac{\partial}{\partial \mathbf{v}_{\perp}  ( s^{1})}, \frac{\partial}{\partial \mathbf{v}_{\parallel}  ( s^{1})}].$$ 
 We claim that, in a sense of distribution on $(s^{1}, \mathbf{x}_{\perp}( s^{1}), \mathbf{x}_{\parallel}(  s^{1}), \mathbf{v}_{\perp}(  s^{1}), \mathbf{v}_{\parallel}(  s^{1})) \in [0,\infty) \times (0,C_{\xi}) \times  (0,2\pi] \times (\delta, \pi-\delta) \times \mathbb{R}\times \mathbb{R}^{2}$,
 \begin{equation}\label{piece_d}
 \begin{split}
& \big[\partial \mathbf{x}_{\perp}( s^{i+1};s^{1},\mathbf{x}(s^{1}),\mathbf{v}(s^{1})), 
\partial \mathbf{x}_{\parallel} ( s^{i+1};s^{1},\mathbf{x}(s^{1}),\mathbf{v}(s^{1})),  \  \partial \mathbf{v}_{\parallel} ( s^{i+1};s^{1},\mathbf{x}(s^{1}),\mathbf{v}(s^{1}))\big]\\
 &= \sum_{\ell  } \mathbf{1}_{[t^{\ell+1},t^{\ell})} (s^{i+1})[\partial \mathbf{x}_{\perp} , \partial \mathbf{x}_{\parallel} ,     \partial \mathbf{v}_{\parallel}  \big],\\
& \partial\big[ \mathbf{v}_{\perp} ( s^{i+1};s^{1},\mathbf{x}(s^{1}),\mathbf{v}(s^{1}))\mathbf{x}_{\perp} ( s^{i+1};s^{1},\mathbf{x}(s^{1}),\mathbf{v}(s^{1}))\big] = \sum_{\ell  } \mathbf{1}_{[t^{\ell+1},t^{\ell})}(s^{i+1}) \big\{   \partial \mathbf{v}_{\perp}  \mathbf{x}_{\perp} + \mathbf{v}_{\perp}   \partial \mathbf{x}_{\perp} \big\},
 \end{split}
 \end{equation}
i.e. the distributional derivatives of $[ \mathbf{x}_{\perp}, \mathbf{x}_{\parallel} , \mathbf{v}_{\parallel}]$ and   $\mathbf{v}_{\perp} \mathbf{x}_{\perp}$ equal the piecewise derivatives. Let $\phi(\tau^{\prime}, \mathbf{x}_{\perp}, \mathbf{x}_{\parallel}, \mathbf{v}_{\perp}, \mathbf{v}_{\parallel}) \in C^{\infty}_{c} ( [0,\infty) \times   (0,C_{\xi})\times \mathbb{S}^{2}\times\mathbb{R}\times\mathbb{R}^{2})$. Therefore $\phi\equiv 0$ when $\mathbf{x}_{\perp} < \delta$. For $\mathbf{x}_{\perp } \geq \delta$ we use the proof of Lemma \ref{chart_lemma}: For $x= \mathbf{\eta}(\mathbf{x}_{\parallel}) + \mathbf{x}_{\perp} [-\mathbf{n}(\mathbf{x}_{\parallel})],$
\[
|\mathbf{x}_{\perp}|  \ \lesssim_{\xi} \ \xi(x) = \xi(  \mathbf{\eta}(\mathbf{x}_{\parallel}) + \mathbf{x}_{\perp}[- \mathbf{n}(\mathbf{x}_{\parallel})] ) \ \lesssim_{\xi} \ |\mathbf{x}_{\perp}|,
\]
and therefore $\xi(x)\gtrsim_{\xi}\delta$ and $\alpha(x,v) \gtrsim_{\xi} |\xi(x)||v|^{2} \gtrsim_{\xi} |v|^{2 } \delta.$ Since we are considering the case $t-s> t_{\mathbf{b}}(x,v)$, from $|v|t_{\mathbf{b}}(x,v) \gtrsim \mathbf{x}_{\perp} \geq \delta$ we have $|v| \gtrsim_{\xi} \frac{\delta}{t-s}$ and finally we obtain the lower bound $
\alpha(x,v) \gtrsim_{\xi} \frac{\delta^{3}}{|t-s|^{2}}>0.$ By the Velocity lemma, for $(x,v)\in \text{supp}(\phi)$
\[
\alpha(x^{\ell},v^{\ell}) \gtrsim_{\xi} e^{-C|v||t^{1}-t^{\ell}|} \alpha(x,v) \gtrsim_{\xi} e^{-C|v|(t-s)} \frac{\delta^{3}}{|t-s|^{2}} \gtrsim_{\xi, |t-s|,\delta, \phi}1>0,
\]
where we used the fact that $\phi$ vanishes away from a compact subset $\text{supp}(\phi)$. Therefore $t^{\ell}(t,x,v)= t^{\ell}(t, \mathbf{x}_{\perp}, \mathbf{x}_{\parallel}, \mathbf{v}_{\perp}, \mathbf{v}_{\parallel})$ is smooth with respect to $\mathbf{x}_{\perp}, \mathbf{x}_{\parallel}, \mathbf{v}_{\perp}, \mathbf{v}_{\parallel}$ locally on $\text{supp}(\phi)$ and therefore $\mathcal{M}=\{(\tau^{\prime}, \mathbf{x}, \mathbf{v}) \in  \text{supp}(\phi): \tau^{\prime} = t^{\ell}(t, \mathbf{x}, \mathbf{v})  \}$ is a smooth manifold.

It suffices to consider the case $\tau^{\prime} \sim t^{\ell}(t,x,v)$. Denote $\partial_{\mathbf{e}}= [\partial_{\mathbf{x_{\perp}}}, \partial_{\mathbf{x}_{\parallel,1}}, \partial_{\mathbf{x}_{\parallel,2}}, \partial_{\mathbf{v_{\perp}}}, \partial_{\mathbf{v}_{\parallel,1}}, \partial_{\mathbf{v}_{\parallel,2}}]$ and $n_{\mathcal{M}}= \mathbf{e}_{1}$ to have
\begin{equation}\notag
\begin{split}
&\int_{\{(\tau^{\prime},\mathbf{x}, \mathbf{v}) \in \text{supp}(\phi)\}}[\partial_{\mathbf{e}} \mathbf{x}_{\perp}(\tau^{\prime};t,\mathbf{x}, \mathbf{v}), \partial_{\mathbf{e}} \mathbf{x}_{\parallel}(\tau^{\prime};t,\mathbf{x}, \mathbf{v}), \partial_{\mathbf{e}}\mathbf{v}_{\parallel}(\tau^{\prime};t,\mathbf{x}, \mathbf{v}) ]
\phi(\tau^{\prime},\mathbf{x}, \mathbf{v}) \mathrm{d} \mathbf{x} \mathrm{d}\mathbf{v} \mathrm{d}\tau^{\prime}\\
=& \int_{\tau^{\prime} < t^{\ell}} + \int_{\tau^{\prime} \geq t^{\ell}}\\
=& \int_\mathcal{M} \Big(\lim_{\tau^{\prime} \uparrow t^{\ell} } [\mathbf{x}_{\perp}(\tau^{\prime}), \mathbf{x}_{\parallel}(\tau^{\prime}), \mathbf{v}_{\parallel}(\tau^{\prime})]  - \lim_{\tau^{\prime} \downarrow t^{\ell}} [\mathbf{x}_{\perp}(\tau^{\prime}), \mathbf{x}_{\parallel}(\tau^{\prime}), \mathbf{v}_{\parallel} (\tau^{\prime}) ] \Big) \phi(\tau^{\prime}, \mathbf{x}, \mathbf{v})\{ \mathbf{e}\cdot n_{\mathcal{M}}\}  \mathrm{d}\mathbf{x} \mathrm{d}\mathbf{v}\\
&- \int_{\{\tau^{\prime} \neq t^{\ell}(t,\mathbf{x}, \mathbf{v})\}} [\mathbf{x}_{\perp}(\tau^{\prime}), \mathbf{x}_{\parallel}(\tau^{\prime}), \mathbf{v}_{\parallel}(\tau^{\prime})] \partial_{\mathbf{e}} \phi(\tau^{\prime}, \mathbf{x}, \mathbf{v}) \mathrm{d}\tau^{\prime} \mathrm{d}\mathbf{v} \mathrm{d}\mathbf{x} \\
=& - \int_{\{\tau^{\prime} \neq t^{\ell}(t,\mathbf{x}, \mathbf{v})\}} [\mathbf{x}_{\perp}(\tau^{\prime}), \mathbf{x}_{\parallel}(\tau^{\prime}), \mathbf{v}_{\parallel}(\tau^{\prime})] \partial_{\mathbf{e}} \phi(\tau^{\prime}, \mathbf{x}, \mathbf{v}) \mathrm{d}\tau^{\prime} \mathrm{d}\mathbf{v} \mathrm{d}\mathbf{x},
\end{split}
\end{equation}
where we used the continuity of $[ \mathbf{x}_{\perp}(\tau^{\prime};t,\mathbf{x}, \mathbf{v}),  \mathbf{x}_{\parallel}(\tau^{\prime};t,\mathbf{x}, \mathbf{v}),  \mathbf{v}_{\parallel}(\tau^{\prime};t,\mathbf{x}, \mathbf{v}) ]$ in terms of $\tau^{\prime}$ near $t^{\ell}(t,\mathbf{x}, \mathbf{v})$.

Note that $\mathbf{v}_{\perp}(\tau^{\prime};t, \mathbf{x}, \mathbf{v})$ is discontinuous around $\tau^{\prime} \sim t^{\ell}. (\lim_{\tau^{\prime} \downarrow t^{\ell}} \mathbf{v}_{\perp}(\tau^{\prime}) = - \lim_{\tau^{\prime} \uparrow t^{\ell}} \mathbf{v}_{\perp}(\tau^{\prime}))$ However with crucial $\mathbf{x}_{\perp}(\tau^{\prime})-$multiplication we have $\mathbf{x}_{\perp}(t^{\ell}) \mathbf{v}_{\perp}(t^{\ell})=0$ and therefore
\begin{equation}\notag
\begin{split}
&\int_{\{(\tau^{\prime},\mathbf{x}, \mathbf{v}) \in \text{supp}(\phi)\}} \partial_{\mathbf{e}} [ \mathbf{x}_{\perp}(\tau^{\prime};t,\mathbf{x},\mathbf{v}) \mathbf{v}_{\perp}(\tau^{\prime};t,\mathbf{x},\mathbf{v}) ]
\phi(\tau^{\prime},\mathbf{x}, \mathbf{v}) \mathrm{d} \mathbf{x} \mathrm{d}\mathbf{v} \mathrm{d}\tau^{\prime}\\
=& \int_{\tau^{\prime} < t^{\ell}} + \int_{\tau^{\prime} \geq t^{\ell}}\\
=& \int_\mathcal{M} \Big(\lim_{\tau^{\prime} \uparrow t^{\ell} } [\mathbf{x}_{\perp}(\tau^{\prime})  \mathbf{v}_{\perp}(\tau^{\prime})]  - \lim_{\tau^{\prime} \downarrow t^{\ell}} [\mathbf{x}_{\perp}(\tau^{\prime}) \mathbf{v}_{\perp} (\tau^{\prime}) ] \Big) \phi(\tau^{\prime}, \mathbf{x}, \mathbf{v})\{ \mathbf{e}\cdot n_{\mathcal{M}}\}  \mathrm{d}\mathbf{x} \mathrm{d}\mathbf{v}\\
&- \int_{\{\tau^{\prime} \neq t^{\ell}(t,\mathbf{x}, \mathbf{v})\}} [\mathbf{x}_{\perp}(\tau^{\prime})  \mathbf{v}_{\perp}(\tau^{\prime})] \partial_{\mathbf{e}} \phi(\tau^{\prime}, \mathbf{x}, \mathbf{v}) \mathrm{d}\tau^{\prime} \mathrm{d}\mathbf{v} \mathrm{d}\mathbf{x} \\
=& - \int_{\{\tau^{\prime} \neq t^{\ell}(t,\mathbf{x}, \mathbf{v})\}} [\mathbf{x}_{\perp}(\tau^{\prime};t,\mathbf{x},\mathbf{v})  \mathbf{v}_{\perp}(\tau^{\prime};t,\mathbf{x},\mathbf{v})] \partial_{\mathbf{e}} \phi(\tau^{\prime}, \mathbf{x}, \mathbf{v}) \mathrm{d}\tau^{\prime} \mathrm{d}\mathbf{v} \mathrm{d}\mathbf{x}.
\end{split}
\end{equation}

We apply (\ref{piece_d}) to (\ref{xv_mildform})
\begin{equation}\label{Dode}
\begin{split}
&\partial\mathbf{x}_{\parallel}(s^{i+1}) = \partial \mathbf{x}_{\parallel}(s^{i}) - \int^{s^{i}}_{s^{i+1}} \partial \mathbf{v}_{\parallel}(\tau ) \mathrm{d}\tau ,\\
&\partial \mathbf{v}_{\parallel}(s^{i+1}) = \partial E(s^{i+1}) \mathbf{x}_{\perp}(s^{i+1}) + E(s^{i+1}) \partial \mathbf{x}_{\perp}(s^{i+1}) + \partial \mathbf{v}_{\parallel}(s^{i }) - \partial [E(\mathbf{x}_{\perp}, \mathbf{x}_{\parallel}, \mathbf{v}_{\parallel}) \mathbf{x}_{\perp}](s^{i+1})\\
& \ \ + \int^{s^{i}}_{s^{i+1}}  \partial \mathbf{v}_{\perp}(\tau) \partial_{\mathbf{x}_{\perp}}E(\tau)  \mathbf{x}_{\perp}(\tau)+ \partial \mathbf{v}_{\parallel}(\tau)\cdot \nabla_{\mathbf{x}_{\parallel}}E(\tau)   \mathbf{x}_{\perp}(\tau) \mathrm{d}\tau  \\
& \ \ + \int^{ s^{i}}_{s^{i+1}}
\Big\{  \Big[ \partial \mathbf{x}_{\perp}(\tau ) \partial_{\mathbf{x}_{\perp}}E(\tau) + \partial \mathbf{x}_{\parallel}(\tau ) \cdot \nabla_{\mathbf{x}_{\parallel}}E(\tau) + \partial \mathbf{v}_{\parallel}(\tau) \cdot \nabla_{\mathbf{v}_{\parallel}} E(\tau) \Big]\mathbf{v}_{\perp}(\tau) \\
& \ \ \ \ \  + E(\tau) \partial \mathbf{v}_{\perp}(\tau)  + \partial \mathbf{x}_{\perp}(\tau) \partial_{\mathbf{x}_{\perp}} D(\tau) + \partial \mathbf{x}_{\parallel}(\tau) \cdot \nabla_{\mathbf{x}_{\parallel}} D(\tau)  + \partial \mathbf{v}_{\parallel}(\tau) \nabla_{\mathbf{v}_{\parallel}} D(\tau)    \Big\}
\cdot \nabla_{\mathbf{v}_{\parallel}} E(\tau) \mathbf{x}_{\perp}(\tau) \mathrm{d}\tau  \\
& \ \ + \int^{ s^{i}}_{s^{i+1}}
\Big\{  \mathbf{v}_{\perp}(\tau) [\partial \mathbf{x}_{\perp}(\tau), \partial \mathbf{x}_{\parallel}(\tau), \partial \mathbf{v}_{\parallel}(\tau)] \cdot \nabla \partial_{\mathbf{x}_{\perp}} E(\tau)
+ \mathbf{v}_{\parallel}(\tau) \cdot  [\partial \mathbf{x}_{\perp}(\tau), \partial \mathbf{x}_{\parallel}(\tau), \partial \mathbf{v}_{\parallel}(\tau)] \cdot \nabla \nabla_{\mathbf{x}_{\parallel}} E(\tau) \\
& \ \ \ \ \ \ \ \ \ \ \ \
+ F_{\parallel}(\tau) \cdot  [\partial \mathbf{x}_{\perp}(\tau), \partial \mathbf{x}_{\parallel}(\tau), \partial \mathbf{v}_{\parallel}(\tau)] \cdot \nabla \nabla_{\mathbf{v}_{\parallel}} E(\tau)
\Big\}\mathbf{x}_{\perp}(\tau)  \mathrm{d}\tau  \\
& \ \ + \int^{ s^{i}}_{s^{i+1}} \big\{
\mathbf{v}_{\perp}(\tau) \partial_{\mathbf{x}_{\perp}} E(\tau) + \mathbf{v}_{\parallel}(\tau) \cdot \nabla_{\mathbf{x}_{\parallel}} E(\tau) + F_{\parallel}(\tau) \cdot \nabla_{\mathbf{v}_{\parallel}} E(\tau)
\big\} \partial \mathbf{x}_{\perp}(\tau) \mathrm{d}\tau\\
& \ \ - \int^{ s^{i}}_{s^{i+1}}  \big[  \partial \mathbf{x}_{\perp}(\tau), \partial \mathbf{x}_{\parallel}(\tau), \partial \mathbf{v}_{\parallel}(\tau) \big] \cdot \nabla D(\tau) \mathrm{d}\tau.
\end{split}
\end{equation}
Now we use (\ref{middle}) to control $[\partial \mathbf{x}_{\perp}, \partial \mathbf{v}_{\perp}]$. Notice that we cannot directly use (\ref{middle}) since now we fix the chart for whole $i-$th intermediate group but the estimate (\ref{middle}) is for the moving frame. (For clarity, we write the index for the chart for this part.) Note the time of bounces within the $i-$th intermediate group ($|t^{\ell_{i-1}}-t^{\ell_{i}}||v| \sim L_{\xi}$) are
\[
t^{\ell_{i}+1} < s^{i+1} < t^{\ell_{i}} <  t^{\ell_{i}-1} < \cdots\cdots <t^{\ell_{i-1}+2}  < t^{\ell_{i-1}+1} < s^{i} < t^{\ell_{i-1}}.
\]

Now we apply (\ref{chart_changing}) and (\ref{middle}) to bound, for $\tau \in (s^{i+1},s^{i})$ and $\ell \in \{ \ell_{i}, \ell_{i}-1, \cdots, \ell_{i-1}+2, \ell_{i-1} +1 , \ell_{i-1}\}$
\begin{equation}\label{ell_1}
\begin{split}
&\frac{\partial (  \mathbf{x}_{\perp_{\ell}}(\tau ), \mathbf{x}_{\parallel_{\ell}}( \tau), \mathbf{v}_{\perp_{\ell}} ( \tau ), \mathbf{v}_{\parallel_{\ell}} ( \tau )   )}{\partial    (    \mathbf{x}_{\perp_{1}}(s^{1}),   \mathbf{x}_{\parallel_{1}}(s^{1})  , \mathbf{v}_{\perp_{1}}(s^{1}),   \mathbf{v}_{\parallel_{1}}(s^{1})   )}\\
=&\frac{\partial (  \mathbf{x}_{\perp_{\ell}}(\tau), \mathbf{x}_{\parallel_{\ell}}( \tau ), \mathbf{v}_{\perp_{\ell}} ( \tau ), \mathbf{v}_{\parallel_{\ell}} ( \tau )   )}{\partial (  \mathbf{x}_{\perp_{\ell_{i}}}(\tau ), \mathbf{x}_{\parallel_{\ell_{i}}}( \tau ), \mathbf{v}_{\perp_{\ell_{i}}} ( \tau ), \mathbf{v}_{\parallel_{\ell_{i}}} ( \tau )   )}
\frac{  \partial (   \mathbf{x}_{\perp_{\ell_{i}}}(\tau ), \mathbf{x}_{\parallel_{\ell_{i}}}( \tau ), \mathbf{v}_{\perp_{\ell_{i}}} ( \tau ), \mathbf{v}_{\parallel_{\ell_{i}}} ( \tau )   )  }{\partial    (    \mathbf{x}_{\perp_{1}}(s^{1}),   \mathbf{x}_{\parallel_{1}}(s^{1})  , \mathbf{v}_{\perp_{1}}(s^{1}),   \mathbf{v}_{\parallel_{1}}(s^{1})   )}\\
\lesssim&  \tiny{ e^{C|t-s||v|} \left\{ \mathbf{Id}_{6,6} + O_{\xi}(|\mathbf{p}^{\ell} - \mathbf{p}^{\ell_{i}}|)
\left[\begin{array}{ccc|ccc} 
0 & 0 & 0 & & & \\
0 & 1 & 1 & & \mathbf{0}_{3,3} & \\
0 & 1 & 1 &  & & \\ \hline
0 & 0 & 0 & 0 & 0 & 0 \\
0 & |v| & |v| & 0 & 1 & 1 \\
0 & |v| & |v| & 0 & 1 & 1
\end{array}
\right]
 \right\}  
  \left[\begin{array}{cccccc}
 \frac{|v|}{|\mathbf{v}_{\perp}^{1}|}  &  \frac{|v|}{|\mathbf{v}_{\perp}^{1}|}  &  \frac{|v|}{|\mathbf{v}_{\perp}^{1}|} & \frac{1}{|v|} & \frac{1}{|v|} & \frac{1}{|v|}\\
  \frac{|v|^{2}}{|\mathbf{v}_{\perp}^{1}|^{2}} &  \frac{|v|^{2}}{|\mathbf{v}_{\perp}^{1}|^{2}} &  \frac{|v|^{2}}{|\mathbf{v}_{\perp}^{1}|^{2}} & 
  \frac{1}{|\mathbf{v}_{\perp}^{1}|} & \frac{1}{|v|} & \frac{1}{|v|}\\
   \frac{|v|^{2}}{|\mathbf{v}_{\perp}^{1}|^{2}} &  \frac{|v|^{2}}{|\mathbf{v}_{\perp}^{1}|^{2}} &  \frac{|v|^{2}}{|\mathbf{v}_{\perp}^{1}|^{2}} & 
    \frac{1}{|\mathbf{v}_{\perp}^{1}|} & \frac{1}{|v|} & \frac{1}{|v|}\\
    \frac{|v|^{3}}{|\mathbf{v}_{\perp}^{1}|^{2}} &  \frac{|v|^{3}}{|\mathbf{v}_{\perp}^{1}|^{2}} &  \frac{|v|^{3}}{|\mathbf{v}_{\perp}^{1}|^{2}}
    & \frac{|v|}{|\mathbf{v}_{\perp}^{1}|} & O_{\xi}(1) & O_{\xi}(1)\\
     \frac{|v|^{3}}{|\mathbf{v}_{\perp}^{1}|^{2}}&  \frac{|v|^{3}}{|\mathbf{v}_{\perp}^{1}|^{2}} &  \frac{|v|^{3}}{|\mathbf{v}_{\perp}^{1}|^{2}}
   & \frac{|v|}{|\mathbf{v}_{\perp}^{1}|} & O_{\xi}(1) & O_{\xi}(1)\\    
      \frac{|v|^{3}}{|\mathbf{v}_{\perp}^{1}|^{2}}&  \frac{|v|^{3}}{|\mathbf{v}_{\perp}^{1}|^{2}} & \frac{|v|^{3}}{|\mathbf{v}_{\perp}^{1}|^{2}}
        & \frac{|v|}{|\mathbf{v}_{\perp}^{1}|} & O_{\xi}(1) & O_{\xi}(1)\\
\end{array}\right] }  \\
\lesssim &e^{C|t-s||v|}  
 \left[\begin{array}{cccccc}
 \frac{|v|}{|\mathbf{v}_{\perp}^{1}|}  &  \frac{|v|}{|\mathbf{v}_{\perp}^{1}|}  &  \frac{|v|}{|\mathbf{v}_{\perp}^{1}|} & \frac{1}{|v|} & \frac{1}{|v|} & \frac{1}{|v|}\\
  \frac{|v|^{2}}{|\mathbf{v}_{\perp}^{1}|^{2}} &  \frac{|v|^{2}}{|\mathbf{v}_{\perp}^{1}|^{2}} &  \frac{|v|^{2}}{|\mathbf{v}_{\perp}^{1}|^{2}} & 
  \frac{1}{|\mathbf{v}_{\perp}^{1}|} & \frac{1}{|v|} & \frac{1}{|v|}\\
   \frac{|v|^{2}}{|\mathbf{v}_{\perp}^{1}|^{2}} &  \frac{|v|^{2}}{|\mathbf{v}_{\perp}^{1}|^{2}} &  \frac{|v|^{2}}{|\mathbf{v}_{\perp}^{1}|^{2}} & 
    \frac{1}{|\mathbf{v}_{\perp}^{1}|} & \frac{1}{|v|} & \frac{1}{|v|}\\
    \frac{|v|^{3}}{|\mathbf{v}_{\perp}^{1}|^{2}} &  \frac{|v|^{3}}{|\mathbf{v}_{\perp}^{1}|^{2}} &  \frac{|v|^{3}}{|\mathbf{v}_{\perp}^{1}|^{2}}
    & \frac{|v|}{|\mathbf{v}_{\perp}^{1}|} & O_{\xi}(1) & O_{\xi}(1)\\
     \frac{|v|^{3}}{|\mathbf{v}_{\perp}^{1}|^{2}}&  \frac{|v|^{3}}{|\mathbf{v}_{\perp}^{1}|^{2}} &  \frac{|v|^{3}}{|\mathbf{v}_{\perp}^{1}|^{2}}
   & \frac{|v|}{|\mathbf{v}_{\perp}^{1}|} & O_{\xi}(1) & O_{\xi}(1)\\    
      \frac{|v|^{3}}{|\mathbf{v}_{\perp}^{1}|^{2}}&  \frac{|v|^{3}}{|\mathbf{v}_{\perp}^{1}|^{2}} & \frac{|v|^{3}}{|\mathbf{v}_{\perp}^{1}|^{2}}
        & \frac{|v|}{|\mathbf{v}_{\perp}^{1}|} & O_{\xi}(1) & O_{\xi}(1)\\
\end{array}\right]
,
\end{split}
\end{equation}
where we have used $|\mathbf{p}^{\ell} - \mathbf{p}^{\ell_{i}}| \lesssim 1.$


Together with (\ref{middle}), we have (for clarity, we write estimates for each derivative $\partial= [\partial_{\mathbf{x}_{\perp}}, \partial_{\mathbf{x}_{\parallel}}, \partial_{\mathbf{v}_{\perp}}, \partial_{\mathbf{v}_{\parallel}}]$):
\begin{equation}\notag
\begin{split}
| \partial_{\mathbf{x}_{\perp}  } {  \mathbf{x}_{\parallel}}  (s^{i+1})|& \lesssim_{\xi} \int^{s^{i}}_{s^{i+1}} |\partial_{\mathbf{x}_{\perp}} \mathbf{v}_{\parallel}(\tau )| \mathrm{d}\tau ,\\
|\partial_{\mathbf{x}_{\perp} } \mathbf{v}_{\parallel}(s^{i+1})| 
& \lesssim_{\xi}
|v||\mathbf{x}_{\perp}(\tau_{i})| |\partial_{\mathbf{x}_{\perp}} \mathbf{x}_{\parallel}(s^{i+1})| +  |\mathbf{x}_{\perp}(s^{i+1})| |\partial_{\mathbf{x}_{\perp}} \mathbf{v}_{\parallel}(s^{i+1})|
+   e^{C|v||t-s|} \Big[ \frac{|v|^{2}}{|\mathbf{v}_{\perp}^{\ell_{i-1}}|} 
+   |v|\Big]    \\
& \ \ \ + \int^{s^{i}}_{s^{i+1}}\Big\{     |v|^{2} |\partial_{\mathbf{x}_{\perp}} \mathbf{x}_{\parallel}(\tau )| 
  + |v| |\partial_{\mathbf{x}_{\perp}} \mathbf{v}_{\parallel}(\tau )|
  + e^{C|v||t-s|}\Big[  \frac{|v|^{4} |\mathbf{x}_{\perp}(\tau )|    }{|\mathbf{v}_{\perp}^{\ell_{i-1}}|^{2}}   +  \frac{|v|^{3}}{|\mathbf{v}_{\perp}^{\ell_{i-1}}| }\Big]
  \Big\}
\mathrm{d}\tau .
\end{split}
\end{equation}

We use (\ref{40}), (\ref{41}) and (\ref{xi_xperp}) and the condition $|\xi(X_{\mathbf{cl}}(\tau ))|<\delta$ for all $\tau  \in [s,t]$ to have, $\mathbf{x}_{\perp}(\tau ;t,\mathbf{x}, \mathbf{v})\lesssim_{\xi} |\xi(X_{\mathbf{cl}}(\tau;t,x, v) )|$ for all $\tau \in [s,t],$ and therefore
\begin{equation}\notag
\begin{split}
|v|^{2}|\mathbf{x}_{\perp}(\tau ;t,\mathbf{x}, \mathbf{v})| \ &\lesssim_{\xi} 2 \xi(X_{\mathbf{cl}}(\tau ;t,x,v)) \{ V_{\mathbf{cl}}(\tau ;t,x,v) \cdot \nabla^{2}\xi(X_{\mathbf{cl}}(\tau ;t,x,v))\cdot V_{\mathbf{cl}}(\tau ;t,x,v)  \}  \\ &\lesssim_{\xi} \ \alpha(\tau ;t,\mathbf{x}, \mathbf{v}) \ \lesssim_{\xi} \
e^{C|v||t-\tau |} |\mathbf{v}_{\perp}^{1}|^{2},
\end{split}
\end{equation}
where we used the convexity of $\xi$ in (\ref{convex}) and the Velocity lemma(Lemma \ref{velocity_lemma}).

Hence we rewrite as, for $0< \delta\ll 1,$
 \begin{equation}\label{ODE_group}
\begin{split}
|\frac{\partial  \mathbf{x}_{\parallel}(s^{i+1})}{ \partial {\mathbf{x}_{\perp}} }| & \lesssim_{\xi} |\frac{\partial  \mathbf{x}_{\parallel}(s^{i})}{ \partial {\mathbf{x}_{\perp}} }| +\int^{s^{i}}_{s^{i+1}} | \frac{ \partial \mathbf{v}_{\parallel}(\tau^{\prime})}{\partial{\mathbf{x}_{\perp}}}  | \mathrm{d}\tau^{\prime}
  ,\\
 |  \frac{ \partial   \mathbf{v}_{\parallel}(s^{i+1})}{  \partial {\mathbf{x}_{\perp}}} | - \delta   |v|
 | \frac{  \partial  \mathbf{x}_{\parallel}(s^{i+1}) }{   \partial {\mathbf{x}_{\perp}}  }|
& \lesssim_{\xi,\delta}  |  \frac{ \partial   \mathbf{v}_{\parallel}(s^{i})}{  \partial {\mathbf{x}_{\perp}}} | +
 \int^{s^{i}}_{s^{i+1}} \Big\{ |v|^{2} |   \frac{ \partial \mathbf{x}_{\parallel}(\tau^{\prime}) }{   \partial {\mathbf{x}_{\perp}}}   |+  |v|
  | \frac{ \partial  \mathbf{v}_{\parallel}(\tau^{\prime})}{  \partial {\mathbf{x}_{\perp}}}|  \Big\}
 \mathrm{d}\tau^{\prime} \\
 & \ \ \ +    |v|  e^{C|v||t-s |} (1+ \frac{|v|}{|\mathbf{v}_{\perp}^{1}|} ).\\
 \end{split}
 \end{equation}
 Similarly, from (\ref{Dode}) and (\ref{ell_1})
 \begin{equation}\notag
 \begin{split}
|   \frac{\partial  \mathbf{x}_{\parallel}(s^{i+1})}{    \partial {\mathbf{x}_{\parallel}}}  |
& \lesssim_{\xi } \  |   \frac{\partial  \mathbf{x}_{\parallel}(s^{i})}{    \partial {\mathbf{x}_{\parallel}}}  |
+ \int^{s^{i}}_{s^{i+1}} | \frac{\partial  \mathbf{v}_{\parallel}(\tau^{\prime})}{\partial {\mathbf{x}_{\parallel}} } | \mathrm{d}\tau^{\prime},\\
| \frac{\partial \mathbf{v}_{\parallel}(s^{i+1})}{  \partial {\mathbf{x}_{\parallel}} } |
 -\delta|v| |   \frac{\partial  \mathbf{x}_{\parallel}(s^{i+1})}{  \partial {\mathbf{x}_{\parallel}}} |
 & \lesssim_{\xi,\delta} |\frac{\partial \mathbf{v}_{\parallel}(s^{i}) }{\partial \mathbf{x}_{\parallel}}| +
|v| \Big(1+ \frac{|v|}{|\mathbf{v}_{\perp}^{1}|}\Big) e^{C|v||t-s |}  + \int^{s^{i}}_{s^{i+1}} \Big\{ |v|^{2} | \frac{\partial    \mathbf{x}_{\parallel}(\tau^{\prime})}{\partial {\mathbf{x}_{\parallel}}}  |
+ |v|  | \frac{\partial    \mathbf{v}_{\parallel}(\tau^{\prime})}{     \partial {\mathbf{x}_{\parallel}}} | \Big\}\mathrm{d}\tau^{\prime},\\
| \frac{\partial  \mathbf{x}_{\parallel}(s^{i+1})}{  \partial {\mathbf{v}_{\perp}}   }|
& \lesssim_{\xi } \
 | \frac{\partial   \mathbf{x}_{\parallel}(s^{i})   }{  \partial \mathbf{v}_{\perp}  } |+
 \int^{s^{i}}_{s^{i+1}}
| \frac{\partial  \mathbf{v}_{\parallel}(\tau^{\prime}) }{     \partial {\mathbf{v}_{\perp}} }| \mathrm{d}\tau^{\prime},\\
| \frac{\partial  \mathbf{v}_{\parallel}(s^{i+1})}{ \partial {\mathbf{v}_{\perp}} } | -\delta|v| |  \frac{\partial   \mathbf{x}_{\parallel}(s^{i+1})}{  \partial {\mathbf{v}_{\perp}}}|&
\lesssim_{\xi,\delta}
|\frac{\partial \mathbf{v}_{\parallel}(s^{i})}{\partial \mathbf{v}_{\perp}}|
+ e^{C|v||t-s |}
+  \int^{s^{i}}_{s^{i+1}}  \Big\{ |v|^{2}| \frac{\partial \mathbf{x}_{\parallel}(\tau^{\prime})}{ \partial {\mathbf{v}_{\perp}} } |  + |v|  |\frac{\partial \mathbf{v}_{\parallel}(\tau^{\prime})}{\partial {\mathbf{v}_{\perp}}} | \Big\} \mathrm{d}\tau^{\prime}
 ,\\
 | \frac{\partial  \mathbf{x}_{\parallel}(s^{i+1})}{  \partial  {\mathbf{v}_{\parallel}}}  | & \lesssim_{\xi } \
 |\frac{\partial \mathbf{x}_{\parallel}(s^{i})}{\partial \mathbf{v}_{\parallel}}|
+ \int^{s^{i}}_{s^{i+1}} |  \frac{\partial  \mathbf{v}_{\parallel}(\tau^{\prime})}{\partial {\mathbf{v}_{\parallel} } } | \mathrm{d}\tau^{\prime},\\
|\frac{\partial  \mathbf{v}_{\parallel}(s^{i+1})}{\partial {\mathbf{v}_{\parallel} }}|
-\delta|v| | \frac{\partial  \mathbf{x}_{\parallel}(s^{i+1})}{ \partial {\mathbf{v}_ {\parallel}}}  |& \lesssim_{\xi,\delta}
|\frac{\partial \mathbf{v}_{\parallel}(s^{i})  }{\partial  \mathbf{v}_{\parallel}}|
+ e^{C|v||t-s |} + \int^{s^{i}}_{s^{i+1}}\Big\{ |v|^{2} | \frac{\partial \mathbf{x}_{\parallel}(\tau^{\prime})}{ \partial {\mathbf{v}_{\parallel} }}  |   + |v|
| \frac{\partial  \mathbf{v}_{\parallel}(\tau^{\prime})}{\partial {\mathbf{v}_{\parallel} } } | \Big\}\mathrm{d}\tau^{\prime}.
\end{split}
\end{equation}

Now we claim a version of Gronwall's estimate: If $a(\tau),b(\tau),f(\tau),g(\tau)\geq 0$ for all $0 \leq \tau \leq t$, and satisfy, for $0<\delta\ll 1$
\begin{equation}\notag
\begin{split}
\left[\begin{array}{cc} 1 & 0 \\ -\delta |v| & 1\end{array} \right]
\left[\begin{array}{c} a( {\tau})  \\ b( {\tau})   \end{array} \right] \lesssim_{\xi}
\left[\begin{array}{cc} 0 & 1 \\ |v|^{2} & |v| \end{array}\right]
 \left[\begin{array}{c} \int^{t}_{\tau } a(\tau^{\prime}) \mathrm{d}\tau^{\prime} \\ \int^{t}_{\tau} b(\tau^{\prime}) \mathrm{d}\tau^{\prime} \end{array}\right] + \left[\begin{array}{c} g(t-\tau) \\  h(t-\tau) \end{array} \right]
\end{split}
\end{equation}
then
\begin{equation}\label{matrix_gronwall}
\begin{split}
\left[\begin{array}{c} a(\tau) \\ b(\tau) \end{array}\right] \lesssim_{\delta, \xi} \int^{t}_{\tau} e^{|v|(\tau^{\prime}-\tau)} \big\{g(\tau^{\prime}) + \frac{h(\tau^{\prime})}{|v|}\big\} \mathrm{d}\tau^{\prime} \left[ \begin{array}{c} |v| \\ |v|^{2} \end{array} \right] +  \left[\begin{array}{c} g(t-\tau) \\ \delta |v| g(t-\tau) + h(t-\tau) \end{array} \right].
\end{split}
\end{equation}

Define $\tilde{a}(\tau):= a(t-\tau), \tilde{b}(\tau):= b(t-\tau)$ and $A( {\tau}):= \int^{\tau}_{0} \tilde{a}(\tau^{\prime}) \mathrm{d} {\tau^{\prime}},
 B( {\tau}):= \int^{\tau}_{0} \tilde{b}(\tau^{\prime}) \mathrm{d} {\tau^{\prime}}.$
 Then
\begin{equation}\notag
\begin{split}
\frac{d}{d {\tau}} \left[\begin{array}{cc} 1 & 0 \\ -\delta |v| & 1 \end{array} \right] \left[\begin{array}{c} A( {\tau}) \\ B( {\tau}) \end{array} \right]& =
 \left[\begin{array}{cc} 1 & 0 \\ -\delta |v| & 1 \end{array} \right] \left[\begin{array}{c} \tilde{a}({\tau}) \\ b(\tilde{\tau}) \end{array} \right]
\lesssim_{\xi}\left[\begin{array}{cc} 0 & 1 \\ |v|^{2} & |v|\end{array}\right]
\left[\begin{array}{c} A(\tau) \\ B(\tau)\end{array}\right]
+ \left[\begin{array}{c} \tilde{g}({\tau}) \\    \tilde{h}({\tau}) \end{array}\right]\\
&\lesssim_{\xi } \left[\begin{array}{cc} \delta |v| & 1 \\ (1+\delta)  |v|^{2} & |v|\end{array}\right]   \left[\begin{array}{cc} 1 & 0 \\ -\delta |v| & 1 \end{array} \right]
\left[\begin{array}{c} A(\tau) \\ B(\tau)\end{array}\right]
+  \left[\begin{array}{c} \tilde{g}({\tau}) \\    \tilde{h}({\tau}) \end{array}\right].\\
\end{split}
\end{equation}
Using $\left[\begin{array}{cc} 0 & 1 \\ |v|^{2} & |v|\end{array} \right] \left[\begin{array}{cc} 1 & 0 \\ \delta|v| & 1 \end{array} \right] = \left[\begin{array}{cc} \delta|v| & 1 \\  (1+\delta)|v|^{2} & |v|   \end{array} \right]$ and the notation $\left[\begin{array}{c}  \tilde{{A}} ({\tau}) \\ \tilde{ {B}}({\tau})\end{array} \right] := \left[\begin{array}{cc} 1 & 0 \\ -\delta |v| & 1 \end{array} \right] \left[\begin{array}{c}A(\tau)\\ B(\tau)\end{array} \right],$
\begin{equation}\notag
\begin{split}
\frac{d}{d {\tau}} \left[\begin{array}{c} \tilde{A}(\tau) \\ \tilde{B}(\tau)\end{array} \right] & \lesssim_{\xi}
\left[\begin{array}{cc}  \delta|v| & 1 \\(1+\delta) |v|^{2} & |v| \end{array} \right]\left[\begin{array}{c} \tilde{A}(\tau) \\ \tilde{B}(\tau) \end{array} \right] +  \left[\begin{array}{c} \tilde{g}(\tau) \\    \tilde{h}(\tau)\end{array} \right],
\end{split}
\end{equation}
 We diagonalize $\left[\begin{array}{cc}  \delta|v| & 1 \\ (1+\delta) |v|^{2} & |v| \end{array} \right]$ as
\begin{equation*}
\begin{split}
 =& \left[\begin{array}{cc} 1 & 1 \\ \frac{(1-\delta)+ \sqrt{(1+\delta)^{2} + 4}}{2}|v| & \frac{(1-\delta)- \sqrt{(1+\delta)^{2} + 4}}{2}|v| \end{array} \right]
 \left[\begin{array}{cc}
 \frac{(1+\delta) + \sqrt{(1+\delta)^{2} +4}}{2}|v| & 0 \\ 0 &  \frac{(1+\delta) -\sqrt{(1+\delta)^{2} +4}}{2}|v| \end{array}\right]\\
&\times  \left[\begin{array}{cc}
 \frac{-(1-\delta) + \sqrt{(1+\delta)^{2} +4}}{2 \sqrt{(1+\delta)^{2} +4}} & \frac{1}{ \sqrt{(1+\delta)^{2} +4}}\frac{1}{|v|} \\
 \frac{(1-\delta) + \sqrt{(1+\delta)^{2} +4}}{2\sqrt{(1+\delta)^{2} +4}} & \frac{-1}{\sqrt{(1+\delta)^{2} +4}}\frac{1}{|v|}
  \end{array} \right].
\end{split}
\end{equation*}
Denote $ \left[\begin{array}{c}  { {\mathcal{A}}}( {\tau}) \\  {\mathcal{B}}( {\tau}) \end{array} \right]:=\left[\begin{array}{cc}
 \frac{-(1-\delta) + \sqrt{(1+\delta)^{2} +4}}{2 \sqrt{(1+\delta)^{2} +4}} & \frac{1}{ \sqrt{(1+\delta)^{2} +4}}\frac{1}{|v|} \\
 \frac{(1-\delta) + \sqrt{(1+\delta)^{2} +4}}{2\sqrt{(1+\delta)^{2} +4}} & \frac{-1}{\sqrt{(1+\delta)^{2} +4}}\frac{1}{|v|}
  \end{array} \right]\left[\begin{array}{c} \tilde{ {A}}( {\tau}) \\ \tilde{ {B}}( {\tau})\end{array} \right] $ to rewrite
\begin{equation*}
\begin{split}
\frac{d}{d {\tau}}
\left[\begin{array}{c}  { {\mathcal{A}}}( {\tau}) \\  {\mathcal{B}} ({\tau}) \end{array} \right]
  & \lesssim_{\xi}
\left[\begin{array}{cc}
 \frac{(1+\delta) + \sqrt{(1+\delta)^{2} +4}}{2}|v| & 0 \\ 0 &  \frac{(1+\delta) -\sqrt{(1+\delta)^{2} +4}}{2}|v| \end{array}\right]
 \left[\begin{array}{c}  { {\mathcal{A}}}( {\tau}) \\  {\mathcal{B}}( {\tau}) \end{array} \right]  \\
 & \ \ +
 \left[\begin{array}{cc}
 \frac{-(1-\delta) + \sqrt{(1+\delta)^{2} +4}}{2 \sqrt{(1+\delta)^{2} +4}} & \frac{1}{ \sqrt{(1+\delta)^{2} +4}}\frac{1}{|v|} \\
 \frac{(1-\delta) + \sqrt{(1+\delta)^{2} +4}}{2\sqrt{(1+\delta)^{2} +4}} & \frac{-1}{\sqrt{(1+\delta)^{2} +4}}\frac{1}{|v|}
  \end{array} \right]
  \left[\begin{array}{c} \tilde{g}(\tau) \\    \tilde{h}(\tau) \end{array} \right].
\end{split}
\end{equation*}
Therefore
\begin{equation}\notag
\begin{split}
&\left[\begin{array}{c}  { {\mathcal{A}}}( {\tau}) \\  {\mathcal{B}}( {\tau}) \end{array} \right]  \leq \left[\begin{array}{c}
 e^{C_{\xi,\delta}   \frac{(1+\delta) + \sqrt{(1+\delta)^{2} +4}}{2}|v| {\tau}}
  { {\mathcal{A}}}(0) \\   e^{C_{\xi,\delta}   \frac{(1+\delta) -\sqrt{(1+\delta)^{2} +4}}{2}|v|  {\tau} }  {\mathcal{B}}(0) \end{array} \right]\\
  & \ \ \  +  \int ^{\tau}_{0}   \left[\begin{array}{cc}
 e^{\frac{(1+\delta) + \sqrt{(1+\delta)^{2} +4}}{2}|v| (\tau-\tau^{\prime})}& 0 \\ 0 &  e^{\frac{(1+\delta) -\sqrt{(1+\delta)^{2} +4}}{2}|v| (\tau-\tau^{\prime})}\end{array}\right]  \left[\begin{array}{cc}
 \frac{-(1-\delta) + \sqrt{(1+\delta)^{2} +4}}{2 \sqrt{(1+\delta)^{2} +4}} & \frac{1}{ \sqrt{(1+\delta)^{2} +4}}\frac{1}{|v|} \\
 \frac{(1-\delta) + \sqrt{(1+\delta)^{2} +4}}{2\sqrt{(1+\delta)^{2} +4}} & \frac{-1}{\sqrt{(1+\delta)^{2} +4}}\frac{1}{|v|}
  \end{array} \right]
  \left[\begin{array}{c} \tilde{g}(\tau^{\prime}) \\    \tilde{h}(\tau^{\prime}) \end{array} \right] \mathrm{d}\tau^{\prime},
  \end{split}
  \end{equation}
  and then
  \begin{equation}\notag
  \begin{split}
   \left[\begin{array}{c}
A(\tau)\\ B(\tau)
\end{array} \right] &= \left[\begin{array}{cc} 1& 0 \\ \delta |v| & 1 \end{array} \right]  \left[\begin{array}{cc} 1 & 1 \\ \frac{(1-\delta)+ \sqrt{(1+\delta)^{2} + 4}}{2}|v| & \frac{(1-\delta)- \sqrt{(1+\delta)^{2} + 4}}{2}|v| \end{array} \right] \left[\begin{array}{c}  { {\mathcal{A}}}( {\tau}) \\  {\mathcal{B}}({\tau}) \end{array} \right]\\
&   \lesssim_{\xi,\delta}\int_{0}^{\tau} e^{C_{\xi,\delta}|v|(\tau-\tau^{\prime}) } \big\{ \tilde{g}(\tau^{\prime}) + \frac{\tilde{h}(\tau^{\prime})}{|v|}\big\} \mathrm{d}\tau^{\prime}  \left[\begin{array}{c} 1 \\ |v| \end{array} \right] .
   \end{split}
\end{equation}
Together with the first inequality(the condition of the claim)
\begin{equation}\notag
\begin{split}
\left[\begin{array}{c} a(\tau) \\ b(\tau) \end{array} \right]
& \lesssim_{\xi} \left[\begin{array}{cc} 0 & 1 \\ |v|^{2} & (1+\delta)|v| \end{array} \right]
\left[\begin{array}{c} A(t-\tau) \\ B(t-\tau) \end{array} \right] + \left[\begin{array}{c} g(t-\tau) \\ \delta |v| g(t-\tau) + h(t-\tau) \end{array} \right]\\
& \lesssim_{\xi,\delta} \int^{t}_{\tau} e^{|v|(\tau^{\prime}-\tau)} \big\{g(\tau^{\prime}) + \frac{h(\tau^{\prime})}{|v|}\big\} \mathrm{d}\tau^{\prime} \left[ \begin{array}{c} |v| \\ |v|^{2} \end{array} \right] +  \left[\begin{array}{c} g(t-\tau) \\ \delta |v| g(t-\tau) + h(t-\tau) \end{array} \right]\\
& \lesssim_{\xi,\delta} e^{C|v||t-\tau|} \left[\begin{array}{cc} 1 & \frac{1}{|v|} \\ |v| & 1 \end{array} \right]
 \left[\begin{array}{c} \sup |g| \\ \sup |h| \end{array} \right],
\end{split}
\end{equation}
and this proves the claim (\ref{matrix_gronwall}). We apply (\ref{matrix_gronwall}) to (\ref{ODE_group}) and we prove the claim (\ref{ODE_onegroup}).

\vspace{8pt}

\textit{Step 10. ODE method within the time scale $|t-s|\sim 1$: Refinement of the estimate (\ref{middle}) }

We claim that
\begin{equation}\label{ODE_whole}
\left[\begin{array}{cccc}
| \frac{\partial  \mathbf{x}_{\parallel_{\tilde{\ell}} }(s)}{\partial \mathbf{x}_{\perp_{1}}}  | & | \frac{\partial  \mathbf{x}_{\parallel_{\tilde{\ell}} }(s)}{\partial \mathbf{x}_{\parallel_{1}}}  |  &  | \frac{\partial  \mathbf{x}_{\parallel_{\tilde{\ell}} }(s)}{\partial \mathbf{v}_{\perp_{1}}}  |  &  | \frac{\partial  \mathbf{x}_{\parallel_{\tilde{\ell}} }(s)}{\partial \mathbf{v}_{\parallel_{1}}}  | \\
| \frac{\partial  \mathbf{v}_{\parallel_{\tilde{\ell}} }(s)}{\partial \mathbf{x}_{\perp_{1}}}  | & | \frac{\partial  \mathbf{v}_{\parallel_{\tilde{\ell}} }(s)}{\partial \mathbf{x}_{\parallel_{1}}}  |  &  | \frac{\partial  \mathbf{v}_{\parallel_{\tilde{\ell}} }(s)}{\partial \mathbf{v}_{\perp_{1}}}  |  &  | \frac{\partial  \mathbf{v}_{\parallel_{\tilde{\ell}} }(s)}{\partial \mathbf{v}_{\parallel_{1}}}  |
\end{array} \right]
\lesssim C^{C|v||t-s|}
\left[\begin{array}{cccc}
  \frac{|v|}{|\mathbf{v}_{\perp}^{1}|}  &   \frac{|v|}{|\mathbf{v}_{\perp}^{1}|}  & \frac{1}{|v|} & \frac{1}{|v|} \\
  \frac{|v|^{2}}{|\mathbf{v}_{\perp}^{1}|} &  \frac{|v|^{2}}{|\mathbf{v}_{\perp}^{1}| } & 1 & 1
\end{array}\right],
\end{equation}
where $\tilde{\ell}= [\frac{|t-s||v|}{L_{\xi}}].$

\noindent\textit{Proof of the claim (\ref{ODE_whole}).}
By the chain rule
\[
 \left[\begin{array}{cc} D_{\mathbf{x}} \mathbf{x}_{\parallel_{i}} & D_{\mathbf{v}} \mathbf{x}_{\parallel_{i}} \\
D_{\mathbf{x}} \mathbf{v}_{\parallel_{i}} & D_{\mathbf{v}} \mathbf{v}_{\parallel_{i}}
 \end{array}\right]
=
\frac{\partial( \mathbf{x}_{\parallel_{i}}, \mathbf{v}_{\parallel_{i}} )}{\partial(  \mathbf{x}_{\parallel_{i-1}}, \mathbf{v}_{\parallel_{i-1}}  )}
 \left[\begin{array}{cc} D_{\mathbf{x}} \mathbf{x}_{\parallel_{i-1}} & D_{\mathbf{v}} \mathbf{x}_{\parallel_{i-1}} \\
D_{\mathbf{x}} \mathbf{v}_{\parallel_{i-1}} & D_{\mathbf{v}} \mathbf{v}_{\parallel_{i-1}}
 \end{array}\right].
 \]
Note, from (\ref{chart_changing})
\[ 
  \frac{\partial( \mathbf{x}_{\parallel_{i}}, \mathbf{v}_{\parallel_{i}} )}{\partial(  \mathbf{x}_{\parallel_{i-1}}, \mathbf{v}_{\parallel_{i-1}}  )}
\leq C  \left[\begin{array}{c|c}1 & \mathbf{0} \\ \hline |v|  & 1 \end{array} \right]
 \leq C \left[\begin{array}{c|c}1 & \frac{1}{|v|} \\ \hline |v|  & 1 \end{array} \right]:= C\mathbf{B}.
\]
Denote
\[
\mathbf{D}_{i}(s) = \left[\begin{array}{cc} |D_{\mathbf{x}} \mathbf{x}_{\parallel_{i}  }(s) |& |D_{\mathbf{v}} \mathbf{x}_{\parallel_{i}  }(s) |  \\
|D_{\mathbf{x}} \mathbf{v}_{\parallel_{i}  }(s) |& |D_{\mathbf{v}} \mathbf{v}_{\parallel_{i}  }(s)   |
 \end{array} \right], \ \ \ \mathbf{G}:= \left[\begin{array}{cc} \mathbf{0} & \mathbf{0} \\  \frac{|v|^{2}}{|\mathbf{v}_{\perp}^{1}|}  & 1  \end{array} \right].
\]
Note that from (\ref{ODE_onegroup})
\[
\mathbf{D}_{i}(s^{i+1}) \leq C \mathbf{B} \mathbf{D}_{i}(s^{i}) + C \mathbf{B}\mathbf{G}.
\]
Therefore, by induction,
\begin{equation}\notag
\begin{split}
\mathbf{D}_{  [\frac{|t-s||v|}{L_{\xi}}]} ( s ) &\leq C \mathbf{D}_{  [\frac{|t-s||v|}{L_{\xi}}]} ( \tau_{  [\frac{|t-s||v|}{L_{\xi}}]} ) + C  \mathbf{B}\mathbf{G}\\
& \leq C^{2} \mathbf{B} \mathbf{D}_{  [\frac{|t-s||v|}{L_{\xi}}]-1} ( \tau_{  [\frac{|t-s||v|}{L_{\xi}}]} )+ C  \mathbf{B}\mathbf{G}\\
&\leq  C^{2} \mathbf{B} \mathbf{D}_{  [\frac{|t-s||v|}{L_{\xi}}]-1} ( \tau_{  [\frac{|t-s||v|}{L_{\xi}}]-1} ) + C^{3} \mathbf{B}  \mathbf{G}+ C  \mathbf{B}\mathbf{G}\\
&\leq C^{3} \mathbf{B}^{2} \mathbf{D}_{  [\frac{|t-s||v|}{L_{\xi}}]-1} ( \tau_{  [\frac{|t-s||v|}{L_{\xi}}]-2} ) +  \{ C^{2} \mathbf{B} + \mathbf{Id}  \}C  \mathbf{B}\mathbf{G}\\
&\leq C^{4} \mathbf{B}^{3}  \mathbf{D}_{  [\frac{|t-s||v|}{L_{\xi}}]-2} ( \tau_{  [\frac{|t-s||v|}{L_{\xi}}]-2} )
+     \{ C^{3} \mathbf{B}^{2}+C^{2} \mathbf{B} + \mathbf{Id}  \}C  \mathbf{B}\mathbf{G}\\
& \ \ \ \vdots\\
&\lesssim  C^{C|t-s||v|} \mathbf{B}^{C[|t-s||v|]} \mathbf{D}_{1}(\tau_{1}) + \sum_{i=0}^{C[|t-s||v|]} C^{i+1} \mathbf{B}^{i}  \mathbf{B} \mathbf{G}.
\end{split}
\end{equation}
But direct computation yields $\mathbf{B}^{j} \leq 2^{j} \mathbf{B}$. Therefore
\begin{equation}\notag
\begin{split}
\mathbf{D}_{[\frac{|t-s||v|}{L_{\xi}}]}(s) \lesssim C^{C|t-s||v|}  \mathbf{B}\big\{\mathbf{D}_{1}(\tau_{1}) +  \mathbf{B}\mathbf{G}\big\}.
\end{split}
\end{equation}
From (\ref{Dxv_free}) we have $\mathbf{D}_{1}(\tau_{1})\lesssim \left[\begin{array}{c|c} 1 & \frac{1}{|v|} \\ \hline |v| & 1   \end{array} \right]$ and we conclude our claim (\ref{ODE_whole}).

With these estimates, we refine (\ref{middle}) to give a final estimate for the case that $|\xi(X_{\mathbf{cl}}(\tau;t,x,v))|<\delta$ for all $\tau \in [s,t]$:
  \begin{equation}\label{middle_refined}
 \begin{split}
  \frac{\partial (s^{\ell_{*}}, \mathbf{x}_{\perp}(s^{\ell_{*}}), \mathbf{x}_{\parallel}(s^{\ell_{*}}), \mathbf{v}_{\perp}(s^{\ell_{*}}), \mathbf{v}_{\parallel}(s^{\ell_{*}})  )}{\partial (s^{1}, \mathbf{x}_{\perp}(s^{1}), \mathbf{x}_{\parallel}(s^{1}), \mathbf{v}_{\perp}(s^{1}), \mathbf{v}_{\parallel}(s^{1}) )}
 \lesssim C^{C|v|(t-s)} \left[\begin{array}{c|cc|ccc}
 0 & 0 & \mathbf{0}_{1,2} & 0 & \mathbf{0}_{1,2} &\\ \hline
  |\mathbf{v}_{\perp}^{1}| &   \frac{|v|}{|\mathbf{v}_{\perp}^{1}|} &   \frac{|v|}{|\mathbf{v}_{\perp}^{1}|}  & \frac{1}{|v|}& \frac{1}{|v|}&\\
 |v| &    \frac{|v|}{|\mathbf{v}_{\perp}^{1}|}   &   \frac{|v|}{|\mathbf{v}_{\perp}^{1}|}   & |t-s|&  |t-s|\\ \hline
|v|^{2}& \frac{|v|^{3}}{|\mathbf{v}_{\perp}^{1}|^{2}}   &    \frac{|v|^{3}}{|\mathbf{v}_{\perp}^{1}|^{2}}   &  \frac{|v|}{|\mathbf{v}_{\perp}^{1}|} & O_{\xi}(1) \\
|v|^{2}&   \frac{|v|^{2}}{|\mathbf{v}_{\perp}^{1}|}&  \frac{|v|^{2}}{|\mathbf{v}_{\perp}^{1}|}& O_{\xi}(1)&O_{\xi}(1) &
   \end{array}\right],
 \end{split}
 \end{equation}
and from (\ref{ss1}) and (\ref{s1t})
\begin{equation}\label{final_Dxv_grazing}
\begin{split}
 &\frac{\partial (s,X_{\mathbf{cl}}(s;t,x,v),V_{\mathbf{cl}}(s;t,x,v))}{\partial (t,x,v)} \\
 &  \lesssim   C^{C|v|(t-s)}  \ \frac{\partial (s,X_{\mathbf{cl}}(s), V_{\mathbf{cl}}(s))}{\partial (s^{\ell_{*}}, \mathbf{X}_{\mathbf{cl}}(s^{\ell_{*}}), \mathbf{V}_{\mathbf{cl}}(s^{\ell_{*}}))}
\left[\begin{array}{c|c|c} 0 & \mathbf{0}_{1,3} & \mathbf{0}_{1,3} \\ \hline
 |v| &   \frac{|v|}{|\mathbf{v}_{\perp}^{1}|} & \frac{1}{|v|} \\ \hline
 |v|^{2} &   \frac{|v|^{3}}{|\mathbf{v}_{\perp}^{1}|^{2}} &  \frac{|v|}{|\mathbf{v}_{\perp}^{1}|} \end{array}\right]  \frac{   {\partial (s^{1}, \mathbf{x}_{\perp}(s^{1}), \mathbf{x}_{\parallel}(s^{1}), \mathbf{v}_{\perp}(s^{1}), \mathbf{v}_{\parallel}(s^{1}) )}       }{\partial (t,x,v)}\\
&  \lesssim C^{C|v|(t-s)} \left[\begin{array}{ccc} 0 & \mathbf{0}_{1,3} & \mathbf{0}_{1,3} \\ |v| & 1 & |s^{\ell_{*}}-s| \\ \mathbf{0}_{3,1} & |v| & 1 \end{array}\right]
\left[\begin{array}{c|c|c} 0 & \mathbf{0}_{1,3} & \mathbf{0}_{1,3} \\ \hline
 |v| &   \frac{|v|}{|\mathbf{v}_{\perp}^{1}|} & \frac{1}{|v|} \\ \hline
 |v|^{2} &   \frac{|v|^{3}}{|\mathbf{v}_{\perp}^{1}|^{2}} &   \frac{|v|}{|\mathbf{v}_{\perp}^{1}|} \end{array}\right]   
 \left[\begin{array}{ccc} 1 & \mathbf{0}_{1,3} & \mathbf{0}_{1,3} \\ \mathbf{0}_{3,1} & 1 & |t-s^{1}| \\ \mathbf{0}_{3,1} & |v| &  1  \end{array}\right]\\
 & \lesssim  C^{C|v|(t-s)}
  \left[\begin{array}{c|c|c} 0 & \mathbf{0}_{1,3} & \mathbf{0}_{1,3} \\ \hline
  |v| &   \frac{|v|}{|\mathbf{v}_{\perp}^{1}|} & \frac{1}{|v|} \\ \hline
  |v|^{2} &  \frac{|v|^{3}}{|\mathbf{v}_{\perp}^{1}|^{2}} &   \frac{|v|}{|\mathbf{v}_{\perp}^{1}|}\end{array} \right].
\end{split}
\end{equation}
Finally from (\ref{final_Dxv_nongrazing}) and (\ref{final_Dxv_grazing}) we conclude, for all $\tau \in [s,t]$
\begin{equation}\notag
\begin{split}
\frac{\partial(  X_{\mathbf{cl}}(s;t,x,v), V_{\mathbf{cl}}(s;t,x,v))}{\partial (t,x,v)}
\leq
Ce^{C|v|(t-s)}    \left[\begin{array}{c|c|c}
  |v| &   \frac{|v|}{|\mathbf{v}_{\perp}^{1}|} & \frac{1}{|v|} \\ \hline
  |v|^{2} &   \frac{|v|^{3}}{|\mathbf{v}_{\perp}^{1}|^{2}} &   \frac{|v|}{|\mathbf{v}_{\perp}^{1}|}\end{array} \right]_{6\times 7}
\end{split}
\end{equation}
From the Velocity lemma(Lemma \ref{velocity_lemma}),
\begin{equation}\notag
\begin{split}
|\mathbf{v}_{\perp}^{1}| &= |v^{1}\cdot [-n(x^{1})] |= |V_{\mathbf{cl}}(t^{1};t,x,v) \cdot n(X_{\mathbf{cl}}(t^{1};t,x,v) ) |\\
&= \sqrt{\alpha(X_{\mathbf{cl}}(t^{1} ),V_{\mathbf{cl}}(t^{1} ))} \geq e^{\mathcal{C}|v| |t-t^{1}|}\alpha(x,v) \gtrsim \alpha(x,v),
\end{split}
\end{equation}
and this completes the proof for the case (\ref{typeII}).

\end{proof}

\begin{proof}[\textbf{Proof of Theorem \ref{main_specular}}]
We use the approximation sequence (\ref{fm}) with (\ref{specular_BC_m}). Due to Lemma \ref{local_existence} we have $\sup_{m} \sup_{0 \leq  t \leq T}||  e^{\theta|v|^{2}}f^{m}(t) ||_{\infty}\lesssim_{\xi, T}  P(   || 
e^{\theta^{\prime}|v|^{2}}  f_{0}||_{\infty})$.

Now we claim that the distributional derivatives coincide with the piecewise derivatives. This is due to Proposition \ref{theo:trace} and Proposition \ref{inflowW1p} together with an invariant property of $\Gamma(f,f)= \Gamma_{\mathrm{gain}}(f,f) - \nu(\sqrt{\mu}f)f:$ \textit{Assume $f^{m}(v)=f^{m-1}(\mathcal{O}v)$ holds for some orthonormal matrix. Then }
\begin{equation}\label{invariant}
\Gamma(f^{m}, f^{m})(v) = \Gamma(f^{m-1}, f^{m-1})(\mathcal{O}v).
\end{equation}
We apply Proposition 1 to have 
\begin{equation}\notag
\begin{split}
&f^{m}(t,x,v) \\
&= e^{- \int_{0}^{t} \sum_{\ell=0}^{\ell_{*} (0)} \mathbf{1}_{[t^{\ell+1} , t^{\ell} ) } (s) \nu(\sqrt{\mu} f^{m-\ell}) (s) \mathrm{d}s  } f_{0}(X_{\mathbf{cl}}(0), V_{\mathbf{cl}}(0))\\
& \ \ +  \int_{0}^{t}\sum_{\ell=0 }^{\ell_{*}(0)}  \mathbf{1}_{[t^{\ell+1},t^{\ell})}(s) e^{-\int^{t}_{s}
\sum_{j=0}^{\ell_{*}(s)} \mathbf{1}_{[{t^{j+1}},{t^{j}})}(\tau) \nu (F^{m-j})(\tau )%
\mathrm{d}\tau }     \Gamma_{\mathrm{gain}} (f^{m-\ell}, f^{m-\ell}) (s,X_{\mathbf{cl}}(s),V_{\mathbf{cl}}(s)) \mathrm{d}%
s.
\end{split}
\end{equation}

 Now we consider the spatial and velocity derivatives. In the sense of distributions, we have for $\partial_{\mathbf{e}} = [\partial_{x}, \partial_{v}]$ with $\mathbf{e} \in \{x,v\},$
\begin{equation}\label{deriv_spec}
\begin{split}
  \partial_{\mathbf{e}} f^{m}(t,x,v) = \text{I} _{\mathbf{e}}+ \text{II}_{\mathbf{e}} + \text{III}_{\mathbf{e}}. \end{split}
\end{equation}
Here
\[
\text{I}_{\mathbf{e}} \ = \ e^{- \int^{t}_{0}   \sum_{\ell=0}^{\ell_{*}(0)} \mathbf{1}_{[{t^{\ell+1}} ,{t^{\ell}})}(s) \nu(F^{m-\ell} )(s) \mathrm{d}s} \
 \partial_{\mathbf{e}}[ X_{\mathbf{cl}}(0),  V_{\mathbf{cl}}(0) ] \cdot \nabla_{x,v}f_{0}(X_{\mathbf{cl}}(0),V_{\mathbf{cl}}(0)),
\]
and
   \begin{equation}\notag
   \begin{split}
\text{II}_{\mathbf{e}}&= \int_{0}^{t}\sum_{\ell=0 }^{\ell_{*}(0)}  \mathbf{1}_{[t^{\ell+1},t^{\ell})}(s) e^{-\int^{t}_{s}
\sum_{j=0}^{\ell_{*}(s)} \mathbf{1}_{[{t^{j+1}},{t^{j}})}(\tau) \nu (F^{m-j})(\tau )%
\mathrm{d}\tau }     \partial_{\mathbf{e}} \big[    \Gamma_{\mathrm{gain}} (f^{m-\ell}, f^{m-\ell}) (s,X_{\mathbf{cl}}(s),V_{\mathbf{cl}}(s))\big] \mathrm{d}%
s\\
& -\int_{0}^{t}\sum_{\ell =0}^{\ell_{*}(0)}  \mathbf{1}_{[t^{\ell+1},t^{\ell})}(s) e^{-\int^{t}_{s}
\sum_{j} \mathbf{1}_{[{t^{j+1}},{t^{j}})}(\tau) \nu (F^{m-j})(\tau )%
\mathrm{d}\tau }  \int^{t}_{s}
\sum_{j=0}^{\ell_{*}(s)} \mathbf{1}_{[{t^{j+1}},{t^{j}})}(\tau) \partial_{\mathbf{e}} [ \nu (F^{m-j})(\tau, X_{\mathbf{cl}}(\tau), V_{\mathbf{cl}}(\tau) )]%
\mathrm{d}\tau   \\  
& \ \ \ \ \  \times  \Gamma_{\text{gain}}(f^{m-\ell}, f^{m-\ell}) (s,X_{\mathbf{cl}}(s),V_{\mathbf{cl}}(s)) \mathrm{d}%
s  \\
&   - e^{- \int^{t}_{0}   \sum_{\ell=0 }^{\ell_{*}(0)} \mathbf{1}_{[{t^{\ell+1}} ,{t^{\ell}})}(s) \nu(F^{m-\ell} )(s) \mathrm{d}s} \
   f_{0}(X_{\mathbf{cl}}(0),V_{\mathbf{cl}}(0)){  \int^{t}_{0}   \sum_{\ell=0 }^{\ell_{*}(0)} \mathbf{1}_{[{t^{\ell+1}} ,{t^{\ell}})}(s)  \partial_{\mathbf{e}} \big[  \nu(F^{m-\ell} )(s, X_{\mathbf{cl}}(s), V_{\mathbf{cl}}(s))  \big] \mathrm{d}s},
\end{split}
\end{equation}
and
\begin{equation}\notag
\begin{split}
\mathbf{III}_{\mathbf{e}}=& \sum_{\ell=0}^{\ell_{*}(0)} \big[ -\partial_{\mathbf{e}} t^{\ell} 
\lim_{s \uparrow t^{\ell} } \nu(\sqrt{\mu} f^{m-\ell})(s,X_{\mathbf{cl}}(s), V_{\mathbf{cl}}(s)) +  \partial_{\mathbf{e}} t^{\ell+1} \lim_{s\downarrow t^{\ell+1}} \mu(\sqrt{\mu} f^{m-\ell})(s,X_{\mathbf{cl}}(s), V_{\mathbf{cl}}(s))  \big]\\
& \ \ \ \ \ \  \times e^{-\int_{0}^{t}  \sum_{\ell=0}^{\ell_{*} (0)} \mathbf{1}_{[t^{\ell+1}, t^{\ell} )}(s) \nu(\sqrt{\mu} f^{m-\ell})(s)   }  \\
+& \sum_{\ell=0}^{\ell_{*}(0)}  \Big[\lim_{s\uparrow t^{\ell}} e^{- \int^{t}_{s} \sum_{j} \mathbf{1}_{[t^{j+1}, t^{j} ) } (\tau) \nu(F^{m-j})(\tau) \mathrm{d}\tau} \Gamma_{\mathrm{gain}}(f^{m-\ell}, f^{m-\ell}) (s,X_{\mathbf{cl}}(s), V_{\mathbf{cl}}(s))\\
& \ \ \ \ \ \ \ \ - \lim_{s\downarrow t^{\ell+1}} e^{- \int^{t}_{s} \sum_{j} \mathbf{1}_{[t^{j+1}, t^{j} ) } (\tau) \nu(F^{m-j})(\tau) \mathrm{d}\tau} \Gamma_{\mathrm{gain}}(f^{m-\ell}, f^{m-\ell}) (s,X_{\mathbf{cl}}(s), V_{\mathbf{cl}}(s))\\
+& \int_{0}^{t} \sum_{\ell} \mathbf{1}_{[t^{\ell+1}, t^{\ell} )}(s) \sum_{j=0}^{\ell_{*}(s)}\big[ -\lim_{\tau \downarrow t^{j}} \nu(F^{m-j})(\tau, X_{\mathbf{cl}}(\tau), V_{\mathbf{cl}}(\tau))  + \lim_{\tau \uparrow t^{j+1}} \nu(F^{m-j})(\tau, X_{\mathbf{cl}}(\tau), V_{\mathbf{cl}}(\tau))\big] \\
&  \ \ \ \ \  \ \ \ \times e^{-\int^{t}_{s} \sum_{j=0}^{\ell_{*} (s)}  \mathbf{1}_{[t^{j+1} , t^{j} ) }(\tau) \nu(F^{m-j} ) (\tau) \mathrm{d}\tau }\Gamma_{\mathrm{gain}}(f^{m-\ell} , f^{m-\ell})(s, X_{\mathbf{cl}}(s), V_{\mathbf{cl}}(s)).
\end{split}
\end{equation}
For $\mathbf{III}_{\mathbf{e}}$ we rearrange the summation and use (\ref{v_perp}) and apply (\ref{invariant})
\begin{equation}\notag
\begin{split}
\mathbf{III}_{\mathbf{e}} = & \sum_{\ell=0}^{\ell_{*}(0)} 
\Big[  - \nu(\sqrt{\mu} f^{m-\ell})(t^{\ell}, x^{\ell}, v^{\ell} ) + \nu(\sqrt{\mu} f^{m-\ell+1})( t^{\ell}, x^{\ell}, R_{x^{\ell}}v^{\ell})  \Big] \partial_{\mathbf{e}} t^{\ell}e^{-\int_{0}^{t}  \sum_{\ell=0}^{\ell_{*} (0)} \mathbf{1}_{[t^{\ell+1}, t^{\ell} )}(s) \nu(\sqrt{\mu} f^{m-\ell})(s)   } \\
+ &\sum_{\ell=0}^{\ell_{*}(0)} e^{- \int^{t}_{t^{\ell}} \sum_{j} \mathbf{1}_{[t^{j+1},t^{j}) }(\tau) \nu(\sqrt{\mu} f^{m-j}) (\tau) \mathrm{d}\tau  } 
\Big[ \Gamma_{\mathrm{gain}}( f^{m-\ell}, f^{m-\ell} )(t^{\ell}, x^{\ell}, v^{\ell}) -\Gamma_{\mathrm{gain}}( f^{m-\ell+1}, f^{m-\ell+1} )(t^{\ell}, x^{\ell}, R_{x^{\ell}}v^{\ell}) \Big]\\
+& \int_{0}^{t} \sum_{\ell} \mathbf{1}_{[t^{\ell+1}, t^{\ell} )}(s) ^{-\int^{t}_{s} \sum_{j=0}^{\ell_{*} (s)}  \mathbf{1}_{[t^{j+1} , t^{j} ) }(\tau) \nu(F^{m-j} ) (\tau) \mathrm{d}\tau }\Gamma_{\mathrm{gain}}(f^{m-\ell} , f^{m-\ell})(s, X_{\mathbf{cl}}(s), V_{\mathbf{cl}}(s))\\
&\times\sum_{\ell=0}^{\ell_{*}(s)} 
\Big[  - \nu(\sqrt{\mu} f^{m-\ell})(t^{\ell}, x^{\ell}, v^{\ell} ) + \nu(\sqrt{\mu} f^{m-\ell+1})( t^{\ell}, x^{\ell}, R_{x^{\ell}}v^{\ell})  \Big]\\
=&0.
\end{split}
\end{equation}

\noindent\textit{Proof of (\ref{invariant}).} The proof is due to the change of variables 
\[
\tilde{u} = \mathcal{O} u, \ \ \tilde{\omega} = \mathcal{O} \omega, \ \ \ 
\mathrm{d}\tilde{u} = \mathrm{d}u , \ \ \mathrm{d}\tilde{\omega} = \mathrm{d}\omega.
\]
Note
\begin{equation}\notag
\begin{split}
&\Gamma(f^{m}, f^{m})(v)\\
=&\int_{\mathbb{R}^{3}} \int_{\mathbb{S}^{2}} |v-u|^{\kappa}q_{0}(\frac{v-u}{|v-u|}\cdot \omega) \sqrt{\mu(u)} \Big\{ f^{m}(u-[(u-v)\cdot \omega]\omega) f^{m}(v+ [(u-v)\cdot \omega]\omega) -f^{m} (u)f^{m} (v) \Big\}
\mathrm{d}\omega\mathrm{d}u\\
=&\int_{\mathbb{R}^{3}} \int_{\mathbb{S}^{2}}
 |\mathcal{O}v-\mathcal{O}u|^{\kappa}q_{0}(\frac{\mathcal{O}v-\mathcal{O}u}{|\mathcal{O}v-\mathcal{O}u|}\cdot \mathcal{O}\omega) \sqrt{\mu(\mathcal{O}u)} \\
&  \ \ \ \times \Big\{ f^{m-1}
(\mathcal{O} u -[( \mathcal{O}u- \mathcal{O}v)\cdot \mathcal{O} \omega] \mathcal{O}{\omega}) f^{m-1}(\mathcal{O}v+ [(\mathcal{O}u-\mathcal{O}v)\cdot \mathcal{O}\omega]\mathcal{O}\omega) -f^{m-1} ( \mathcal{O}u)f^{m-1} (\mathcal{O}v) \Big\}
\mathrm{d}\omega\mathrm{d}u\\
=& \int_{\mathbb{R}^{3}} \int_{\mathbb{S}^{2}} | \mathcal{O}v- \tilde{u}|^{\kappa}q_{0}(\frac{ \mathcal{O}v-  \tilde{u}}{|\mathcal{O}v-  \tilde{u}|}\cdot  \tilde{\omega}) \sqrt{\mu( \tilde{u})}\\
& \ \ \ \times \Big\{
f^{m-1} (  \tilde{u} - [ (   \tilde{u} - \mathcal{O}  {v}  ) \cdot  \tilde{\omega}  ]  \tilde{\omega} )
f^{m-1} ( \mathcal{O}v + [(\tilde{u} - \mathcal{O}v)]\cdot \tilde{\omega}  ) \tilde{\omega}
-f^{m-1} (\tilde{u}) f^{m-1}(\mathcal{O}v)
\Big\} \mathrm{d}\tilde{\omega} \mathrm{d}\tilde{u}\\
= & \ \Gamma(f^{m-1}, f^{m-1})(  \mathcal{O} v ).
\end{split}
\end{equation}

This proves (\ref{invariant}). Especially we can apply (\ref{invariant}) for the specular reflection BC (\ref{specular_BC_m}) with $\mathcal{O}v= R_{x}v$ as well as the bounce-back reflection BC (\ref{bb_BC_m}) with $\mathcal{O}v=-v.$ 

Using Lemma \ref{lemma_operator} and (\ref{nabla_Gamma}),
\begin{equation}\notag
\begin{split}
\text{II}_{\mathbf{e}}& \lesssim \   P(|| e^{\theta|v|^{2}} f_{0} ||_{\infty})\int_{0}^{t}\sum_{\ell=0 }^{\ell_{*}(0)}  \mathbf{1}_{[t^{\ell+1},t^{\ell})}(s)    |\partial_{\mathbf{e}} X_{\mathbf{cl}}(s)| 
\int_{\mathbb{R}^{3}}  \frac{e^{-C_{\theta}  |V_{\mathbf{cl}}(s)-u|^{2}}}{|V_{\mathbf{cl}}(s)-u|^{2-\kappa}} | \nabla_{x} f^{m-\ell} (s,X_{\mathbf{cl}}(s),u  )| \mathrm{d}u \mathrm{d}s
\\
& \ \ + P(|| e^{\theta|v|^{2}} f_{0} ||_{\infty})\int_{0}^{t}\sum_{\ell=0 }^{\ell_{*}(0)}  \mathbf{1}_{[t^{\ell+1},t^{\ell})}(s)    |\partial_{\mathbf{e}} V_{\mathbf{cl}}(s)| 
\int_{\mathbb{R}^{3}}  \frac{e^{-C_{\theta}  |V_{\mathbf{cl}}(s)-u|^{2}}}{|V_{\mathbf{cl}}(s)-u|^{2-\kappa}} | \nabla_{v} f^{m-\ell} (s,X_{\mathbf{cl}}(s),u  )| \mathrm{d}u \mathrm{d}s
\\
& \ \ + tP(|| e^{\theta|v|^{2}} f_{0} ||_{\infty})  \langle v\rangle^{\kappa} e^{-\theta |v|^{2}} \sup_{0 \leq s\leq t} |\partial_{\mathbf{e}}V(s;t,x,v)|.
\end{split}
\end{equation}

We shall estimate the followings:
\[
e^{-\varpi \langle v\rangle t} \frac{ [\alpha(x,v)]^{\beta} }{\langle v\rangle^{b+1}} |\partial_{x} f(t,x,v)|, \ \ \ 
e^{-\varpi \langle v\rangle t} \frac{ |v|[\alpha(x,v)]^{\beta-\frac{1}{2}} }{\langle v\rangle^{b}} |\partial_{v} f(t,x,v)|.
\]
 
 From (\ref{lemma_Dxv}), the Velocity lemma (Lemma \ref{velocity_lemma}), Lemma \ref{local_existence}, and $F^{m}\geq 0$ for all $m,$ with $\varpi \gg 1$
\begin{equation}\notag
\begin{split}
&e^{-\varpi \langle v\rangle t}  \frac{1}{\langle v\rangle^{b+1}}
 [\alpha(x,v)]^{\beta} \ \text{I}_{\mathbf{x}}\\
& \lesssim_{\xi,t}   e^{-\varpi \langle v \rangle t } \frac{1}{\langle v\rangle^{b+1 }}
 [\alpha(X_{\mathbf{cl}}(0),V_{\mathbf{cl}}(0)) ]^{\beta}e^{ 2\mathcal{C}|v|t} \\
 &  \ \ \ \times \Big\{   \frac{|v|}{\sqrt{\alpha(x,v)}} |\partial_{x} f_{0}(X_{\mathbf{cl}}(0), V_{\mathbf{cl}}(0))| +  \frac{|v|^{3}}{\alpha(x,v)}  |\partial_{v} f_{0}(X_{\mathbf{cl}}(0), V_{\mathbf{cl}}(0))|  \Big\} \\
&\lesssim_{\xi,t}   
\big|\big| \  \frac{|v| }{\langle v\rangle^{b+1}}
 {\alpha}^{\beta-\frac{1}{2}} \partial_{x} f_{0} \  \big|\big|_{\infty} + \big|\big| \   \frac{   |v|^{3}}{\langle v\rangle^{b+1}}
 \alpha^{\beta-1} \partial_{v} f_{0}  \ \big|\big|_{\infty}\\
& \lesssim_{\xi,t} 
 \big|\big| \  \frac{ {\alpha}^{\beta-\frac{1}{2}}   }{\langle v\rangle^{b }}
\partial_{x} f_{0} \  \big|\big|_{\infty} + \big|\big| \   \frac{   |v|^{2}   \alpha^{\beta-1} }{\langle v\rangle^{b }}
 \partial_{v} f_{0}  \ \big|\big|_{\infty}
 ,
\end{split}
\end{equation}
and
\begin{equation}\notag
\begin{split}
& e^{-\varpi \langle v\rangle t}  \frac{|v|}{\langle v\rangle^{b}}
[\alpha(x,v)]^{\beta-\frac{1}{2}} \ \text{I}_{\mathbf{v}}\\
& \lesssim_{\xi,t}  \ e^{-\varpi \langle v \rangle t } \frac{ |v|}{\langle v\rangle^{b}}
 [\alpha(X_{\mathbf{cl}}(0),V_{\mathbf{cl}}(0))]^{\beta -\frac{1}{2} } e^{2 \mathcal{C}|v|t}  \\
 &  \ \ \ \times  \Big\{ \frac{1}{|v|} |\partial_{x} f_{0}(X_{\mathbf{cl}}(0), V_{\mathbf{cl}}(0))| +   \frac{|v| }{\sqrt{\alpha(x,v)}}  |\partial_{v} f_{0}(X_{\mathbf{cl}}(0), V_{\mathbf{cl}}(0))|  \Big\} \\
 & \lesssim_{\xi,t}\
\big|\big| \    \frac{\alpha^{\beta-\frac{1}{2} } }{\langle v\rangle^{b}}
 \partial_{x} f_{0} \  \big|\big|_{\infty} + \big|\big| \  \frac{|v|^{2} }{\langle v\rangle^{b}}
{\alpha}^{ \beta-1}  \partial_{v} f_{0}  \ \big|\big|_{\infty},
\end{split}
\end{equation}
where we have used $\alpha(x,v) \lesssim_{\xi} |v|^{2}$ and the choice of $\varpi \gg 1.$

From Lemma \ref{lemma_K}, Lemma \ref{lemma_operator}, and Lemma \ref{local_existence},  
\begin{equation}\notag
\begin{split}
 \text{II}_{\mathbf{e}} & \lesssim_{t}  P(|| e^{\theta|v|^{2}} f_{0} ||_{\infty} ) \int_{0}^{t} \mathrm{d}s \sum_{\ell=0}^{\ell_{*}(0)} \mathbf{1}_{[t^{\ell+1}, t^{\ell})}(s)    \int_{\mathbb{R}^{3}} \mathrm{d}u 
 \frac{e^{-C_{\theta} |u-V_{\mathbf{cl}}(s)|^{2} }}{|u-V_{\mathbf{cl}}(s)|^{2-\kappa}}
   \\
 &  \ \   \times
  \Big\{   |\partial_{\mathbf{e}} X_{\mathbf{cl}}(s)| |\partial_{x} f^{m-j}(s, X_{\mathbf{cl}}(s), u)| 
  + |\partial_{\mathbf{e}} V_{\mathbf{cl}}(s) |   \big(  1+  |\partial_{v} f^{m-j} (s, X_{\mathbf{cl}}(s), u)| \big)
  \Big\} .
\end{split}
\end{equation}
Now we use (\ref{lemma_Dxv}) to have 
\begin{equation}\notag
\begin{split}
& e^{-\varpi \langle v \rangle t}   \frac{ [\alpha(x,v)]^{\beta}}{\langle v\rangle^{b+1 }}
 \ \text{II}_{\mathbf{x}}  \lesssim_{t,\xi}   
P(||  e^{\theta|v|^{2}} f_{0} ||_{\infty} )  \\
& \ \ \ \times 
  \bigg\{ 
  \int_{0}^{t}  \int_{\mathbb{R}^{3}}  
  \frac{  e^{-C_{\theta}|V_{\mathbf{cl}}(s) -u|^{2}}}{ |u-V_{\mathbf{cl}}(s)|^{2-\kappa}}
 e^{-\varpi \langle v\rangle t }e^{\varpi \langle u\rangle s} e^{C|v||t-s|} \frac{|v| [\alpha(x,v)]^{\beta- \frac{1}{2}}}{ [\alpha(X_{\mathbf{cl}}(s),u)]^{\beta}} \frac{\langle u\rangle^{b+1} }{ \langle v\rangle^{b+1}}  \mathrm{d}u\mathrm{d}s
 \\
& \ \ \ \ \ \ \ \ \ \ \ \ \ \ \ \times  \sup_{m} \sup_{0 \leq s\leq t} \big|\big| e^{-\varpi \langle u \rangle s} \frac{[\alpha(X_{\mathbf{cl}} (s), u  )]^\beta}{ \langle u \rangle^{b+1}} \partial_{x} f^{m-j}(s,X_{\mathbf{cl}}(s),u)  \big|\big|_{\infty}  \\ 
& \ \ \ \ \ \ + \int_{0}^{t} \int_{\mathbb{R}^{3}}
  \frac{  e^{-C_{\theta}|V_{\mathbf{cl}}(s) -u|^{2}}}{ |u-V_{\mathbf{cl}}(s)|^{2-\kappa}} e^{- \varpi \langle v\rangle t} e^{\varpi \langle u\rangle s} e^{C|v||t-s|} 
\frac{ \langle u\rangle^{b}}{ \langle v\rangle^{b }}  \frac{|v|^{2} [\alpha(x,v)]^{\beta-1}}{ |u|[\alpha(X_{\mathbf{cl}}(s),u)]^{\beta- \frac{1}{2}}}\\
& \ \ \ \ \ \ \ \ \ \ \ \ \ \ \ \times  \sup_{m} \sup_{0 \leq s\leq t} \big|\big| e^{-\varpi \langle u \rangle s} \frac{|u|[\alpha(X_{\mathbf{cl}} (s), u  )]^{\beta-\frac{1}{2}}}{ \langle u \rangle^{b}} \partial_{v} f^{m-j}(s,X_{\mathbf{cl}}(s),u)  \big|\big|_{\infty}\bigg\}
.
\end{split}
\end{equation}
We first claim that 
\begin{equation}\label{exponent}
e^{-\varpi \langle v \rangle t}e^{\varpi \langle u \rangle s} e^{C|v|(t-s)} e^{-C^{\prime}|v-u|^{2} }\lesssim  e^{-\frac{\varpi  \langle v\rangle}{2} (t-s)}e^{C^{\prime\prime}  (s+s^{2})}   e^{- {C}^{\prime\prime}  |v-u|^{2}}.
\end{equation}
Using $\langle u \rangle   \leq  1+ |u| \leq 1+ |v|  + |u-v|  \leq 1+ \langle v\rangle  + |v-u|,$ we bound the first three exponents as
\begin{equation}\notag
\begin{split}
  -(\varpi-C) \langle v \rangle (t-s) - \varpi (\langle v\rangle - \langle u \rangle )s \leq -(\varpi-C)\langle v\rangle (t-s)+   \varpi |v-u|s + \varpi s.
\end{split}
\end{equation} 
Then we use a complete square trick, for $0 < \sigma \ll 1$
\begin{equation}\notag
\begin{split}
\varpi |v-u|s  
  = \frac{\sigma \varpi^{2}}{2} |v-u|^{2} + \frac{s^{2}}{2\sigma} - \frac{1}{2\sigma} \big[s- \sigma \varpi |v-u| \big]^{2} \leq \frac{\sigma \varpi^{2}}{2} |v-u|^{2} + \frac{s^{2}}{2\sigma} ,
\end{split}
\end{equation}
to bound the whole exponents of (\ref{exponent}) by
\begin{equation}\notag
\begin{split}
&-(\varpi -C) \langle v \rangle (t-s) + \varpi |v-u|s  -C^{\prime}|v-u|^{2} + \varpi s \\ 
&\leq -(\varpi-C) \langle v \rangle (t-s) -(C - \frac{\sigma \varpi^{2}}{2}) |v-u|^{2} + \frac{s^{2}}{2\sigma} +\varpi s  \\
& \leq - (\varpi-C)\langle  v\rangle (t-s) - C_{\sigma, \varpi}  |v-u|^{2} + C_{\sigma,\varpi}^{\prime} \big\{ s^{2} +s\big\}.
\end{split}
\end{equation}
Hence we prove the claim (\ref{exponent}) for $\varpi \gg 1.$

Now we use (\ref{exponent}) to bound 
\begin{equation}\label{AB}
\begin{split}
&e^{-\varpi \langle v \rangle t}  \frac{1}{\langle v\rangle^{ b+1}}
[\alpha(x,v)]^{\beta} \ \text{II}_{\mathbf{x}}\\
& \lesssim_{t,\xi} P( ||   e^{\theta|v|^{2}} f_{0} ||_{\infty} ) \times\\
& \ \times \bigg\{  \underbrace{  \int_{0}^{t} \int_{ \mathbb{R}^{3}}
 e^{-  \frac{\varpi \langle v \rangle}{2} (t-s) }  
\frac{ e^{-C_{\theta}^{\prime}|v-u|^{2}} }{|v-u|^{2-\kappa}}
 \frac{\langle u\rangle^{b+1}}{\langle v\rangle^{b+1}} \frac{ \langle v\rangle   [\alpha(x,v)]^{\beta-\frac{1}{2}}}{ [\alpha(X_{\mathbf{cl}}(s),u)]^{\beta}}
 \mathrm{d}u \mathrm{d}s }_{ \mathbf{(A)} }  \ 
\sup_{m}
 \sup_{0\leq s \leq t}  \big|\big| e^{-\varpi \langle v \rangle s}    \frac{\alpha^{\beta}}{\langle v\rangle^{b+1}}
 \partial_{x} f^{m}(s) \big|\big|_{\infty}\\
& \ \  + 
  \underbrace{  \int_{0}^{t} \int_{ \mathbb{R}^{3}}
 e^{-  \frac{\varpi \langle v \rangle}{2} (t-s) }   \frac{ e^{-C_{\theta}^{\prime}|v-u|^{2}} }{|v-u|^{2-\kappa}}\frac{\langle u\rangle^{b }}{\langle v\rangle^{b }} \frac{ |v|^{2}  [\alpha(x,v)]^{\beta-1}}{ |u|[\alpha(X_{\mathbf{cl}}(s),u)]^{\beta-\frac{1}{2}}}
 \mathrm{d}u \mathrm{d}s }_{ \mathbf{(B)} }    \ 
\sup_{m}
 \sup_{0\leq s \leq t} \big|\big| e^{-\varpi \langle v \rangle s}    \frac{|v|\alpha^{\beta-\frac{1}{2}}}{\langle v\rangle^{b }}
 \partial_{v} f^{m}(s) \big|\big|_{\infty} \bigg\}
 .\end{split}
\end{equation}
For $\mathbf{(A)}$ we use (\ref{specular_nonlocal}) with $Z=\langle v\rangle\big[ \alpha(x,v) \big]^{\beta-\frac{1}{2}}$ and $l= \frac{\varpi}{2}$ and $r=b+1$. For $\mathbf{(B)}$ we use (\ref{specular_nonlocal_u}) with $\beta \mapsto\beta-\frac{1}{2}$ and $Z=\langle v\rangle\big[ \alpha(x,v) \big]^{\beta- {1}}$ and $l= \frac{\varpi}{2}$ and $r=b$. Then 
\[
\mathbf{(A)} , \  \mathbf{(B)} \   \ll \ 1.
\] 

Similarly, but with different weight $e^{-\varpi \langle v\rangle t}  \frac{|v|}{\langle v\rangle^{b}}
 [\alpha(x,v)]^{\beta-\frac{1}{2}}$, we use (\ref{lemma_Dxv}) to have
\begin{equation}\notag
\begin{split}
 &e^{-\varpi \langle v \rangle t}   \frac{|v|}{\langle v\rangle^{b}}
[\alpha(x,v)]^{\beta-\frac{1}{2}} \ \text{II}_{\mathbf{v}} \\
&\lesssim_{t,\xi}   
P(|| e^{\theta|v|^{2}} f_{0} ||_{\infty} )  \times\\
& \ \  \times 
  \bigg\{ 
  \int_{0}^{t}  \int_{\mathbb{R}^{3}} 
  \frac{e^{-C|V_{\mathbf{cl}}(s) -u|^{2}} }{ |u-V_{\mathbf{cl}}(s)|^{2-\kappa}}e^{-\varpi \langle v\rangle t }e^{\varpi \langle u\rangle s} e^{C|v||t-s|} \frac{ \langle v\rangle  [\alpha(x,v)]^{\beta-\frac{1}{2}} }{   [\alpha(X_{\mathbf{cl}}(s),u)]^{\beta}} \frac{\langle u\rangle^{b+1} }{ \langle v\rangle^{b+1}}  \mathrm{d}u\mathrm{d}s
 \\
& \ \ \ \ \ \ \ \ \ \ \ \ \ \ \ \times  \sup_{m} \sup_{0 \leq s\leq t} \big|\big| e^{-\varpi \langle u \rangle s} \frac{[\alpha(X_{\mathbf{cl}} (s), u  )]^\beta}{ \langle u \rangle^{b+1}} \partial_{x} f^{m }(s,X_{\mathbf{cl}}(s),u)  \big|\big|_{\infty}  \\
& \ \ \ \ \ \ 
+ \int_{0}^{t} \int_{\mathbb{R}^{3}}
 \frac{e^{-C|V_{\mathbf{cl}} (s)-u|^{2}} }{|u-V_{\mathbf{cl}}(s)|^{2-\kappa} }  e^{- \varpi \langle v\rangle t} e^{\varpi \langle u\rangle s} e^{C|v||t-s|} 
\frac{ \langle u\rangle^{b}}{ \langle v\rangle^{b }}  \frac{|v|^{2} [\alpha(x,v)]^{\beta-1}}{ |u|[\alpha(X_{\mathbf{cl}}(s),u)]^{\beta- \frac{1}{2}}}\\
& \ \ \ \ \ \ \ \ \ \ \ \ \ \ \ \times  \sup_{m} \sup_{0 \leq s\leq t} \big|\big| e^{-\varpi \langle u \rangle s} \frac{|u|[\alpha(X_{\mathbf{cl}} (s), u  )]^{\beta-\frac{1}{2}}}{ \langle u \rangle^{b}} \partial_{v} f^{m }(s,X_{\mathbf{cl}}(s),u)  \big|\big|_{\infty}\bigg\}
.
\end{split}
\end{equation}
Again we use (\ref{exponent}) and (\ref{specular_nonlocal}) and (\ref{specular_nonlocal_u}) exactly as (\ref{AB}). Therefore for $0<\delta = \delta( || e^{\theta|v|^{2}} f_{0}||_{\infty}  ) \ll 1$
\begin{equation*} 
\begin{split}
&e^{-\varpi \langle v\rangle t} \frac{1}{\langle v\rangle^{b+1}} [\alpha(x,v)]^{\beta} \text{II}_{\mathbf{x}}
+e^{-\varpi \langle v\rangle t} \frac{|v|}{ \langle v\rangle^{b} } [\alpha(x,v)]^{\beta-\frac{1}{2}} \text{II}_{\mathbf{v}}\\
\lesssim & \ \delta \ \big\{ \sup_{m} \sup_{0 \leq s\leq t} \big|\big| e^{-\varpi \langle v \rangle s} \frac{ \alpha  ^\beta}{ \langle v \rangle^{b+1}} \partial_{x} f^{m }(s )  \big|\big|_{\infty}+   \sup_{m} \sup_{0 \leq s\leq t} \big|\big| e^{-\varpi \langle v \rangle s} \frac{|v| \alpha ^{\beta-\frac{1}{2}}}{ \langle v \rangle^{b}} \partial_{v} f^{m }(s)  \big|\big|_{\infty}\big\}.
\end{split}
\end{equation*}

Collecting all the terms, for $1< \beta < \frac{3}{2}$ and $b \in\mathbb{R}$ with $\varpi \gg1$ and $0 < \delta \ll1 $
\begin{equation}\notag
\begin{split}
&\sup_{m} \sup_{0 \leq s\leq t}||e^{-\varpi \langle v\rangle t}   \frac{\alpha ^{\beta} }{\langle v\rangle^{b+1 }}
\partial_{x}f^{m}(t )||_{\infty}
+ \sup_{m}  \sup_{0 \leq s\leq t}||e^{-\varpi \langle v\rangle t}  \frac{|v|  \alpha ^{\beta-\frac{1}{2}} }{\langle v\rangle^{b}}
\partial_{v}f^{m}(t )||_{\infty}
\\
 & \lesssim 
 ||   \frac{ \alpha^{\beta-\frac{1}{2}}}{\langle v\rangle^{b}}  \partial_{x}f_{0} ||_{\infty}
   + ||  \frac{ |v|^{2} \alpha^{\beta-1}}{\langle v\rangle^{b }}\partial_{v }f_{0 }||_{\infty} 
     +  {P}(||   e^{\theta |v|^{2}} f_{0} ||_{\infty}).
\end{split}
\end{equation} 
 
We remark that this sequence $f^{m}$ is Cauchy in $L^{\infty}([0,T]\times \bar{\Omega}\times\mathbb{R}^{3})$ for $0< T\ll 1$. Therefore the limit function $f$ is a solution of the Boltzmann equation satisfying the specular reflection BC. On the other hand, due to the weak lower semi-continuity of $L^{p}$, $p>1$, we pass a limit $\partial f^{m} \rightharpoonup \partial f$ weakly in the weighted $L^{\infty}-$norm.

Now we consider the continuity of $e^{-\varpi \langle v\rangle t} \langle v\rangle^{-1}\alpha^{\beta} \partial_{x} f$ and $e^{-\varpi \langle v\rangle t} |v| \alpha^{\beta-\frac{1}{2}} \partial_{v}f$. Remark that $e^{-\varpi \langle v\rangle t} \langle v\rangle^{-1}\alpha^{\beta} \partial_{x} f^{m}$ and $e^{-\varpi \langle v\rangle t} |v| \alpha^{\beta-\frac{1}{2}} \partial_{v}f^{m}$ satisfy all the conditions of Proposition \ref{inflowW1p}. Therefore we conclude 
$$e^{-\varpi \langle v\rangle t} \langle v\rangle^{-1}\alpha^{\beta} \partial_{x} f^{m} \in C^{0}([0,T^{*}] \times \bar{\Omega} \times\mathbb{R}^{3}), \ e^{-\varpi \langle v\rangle t} |v| \alpha^{\beta-\frac{1}{2}} \partial_{v}f^{m}\in C^{0}([0,T^{*}] \times \bar{\Omega} \times\mathbb{R}^{3}).$$
Now we follow $W^{1,\infty}$ estimate proof for $e^{-\varpi \langle v\rangle t} \langle v\rangle^{-1}\alpha^{\beta} [\partial_{x} f^{m+1}- \partial_{x}f^{m}]$ and $e^{-\varpi \langle v\rangle t} |v| \alpha^{\beta-\frac{1}{2}} [\partial_{v}f^{m+1}-\partial f^{m}]$ to show that $e^{-\varpi \langle v\rangle t} \langle v\rangle^{-1}\alpha^{\beta} \partial_{x} f^{m}$ and $e^{-\varpi \langle v\rangle t} |v| \alpha^{\beta-\frac{1}{2}} \partial_{v}f^{m}$ are Cauchy in $L^{\infty}$. Then we pass a limit $e^{-\varpi \langle v\rangle t} \langle v\rangle^{-1}\alpha^{\beta} \partial_{x} f^{m}\rightarrow e^{-\varpi \langle v\rangle t} \langle v\rangle^{-1}\alpha^{\beta} \partial_{x} f$ and $e^{-\varpi \langle v\rangle t} |v| \alpha^{\beta-\frac{1}{2}} \partial_{v}f^{m}\rightarrow e^{-\varpi \langle v\rangle t} |v| \alpha^{\beta-\frac{1}{2}} \partial_{v}f$ strongly in $L^{\infty}$ so that 
$e^{-\varpi \langle v\rangle t} \langle v\rangle^{-1}\alpha^{\beta} \partial_{x} f  \in C^{0}([0,T^{*}] \times \bar{\Omega} \times\mathbb{R}^{3})$ and $e^{-\varpi \langle v\rangle t} |v| \alpha^{\beta-\frac{1}{2}} \partial_{v}f \in C^{0}([0,T^{*}] \times \bar{\Omega} \times\mathbb{R}^{3}).$

\end{proof}
 
 \vspace{8pt}
 
\section{\large{Bounce-Back Reflection BC}}

\vspace{4pt}

We recall the bounce-back cycles from \textit{(iv)} of Definition \ref{cycles}: $(t^{0},x^{0},v^{0}) = (t,x,v)$ and
for $\ell\geq 1,$
\begin{eqnarray*}
t^{\ell}= t^{1} -(\ell-1)t_{\mathbf{b}}(x^{1},v^{1}) , \ \ x^{\ell} = \frac{%
1-(-1)^{\ell}}{2} x^{1} + \frac{1+(-1)^{\ell}}{2} x^{2}, \ \ v^{\ell+1} =
(-1)^{\ell+1} v,
\end{eqnarray*}
where $t_{\mathbf{b}}(x,v)$ is defined in (\ref{exit}).

 \begin{lemma}
\label{estimate_bb}  For all $0 \leq s \leq t,$
\begin{equation*}
\min \{ \alpha( x^{1},v^{1}),\alpha( x^{2},v^{2}) \}\lesssim_{\Omega}
\alpha( X_{\mathbf{cl}}(s;t,x,v), V_{\mathbf{cl}}(s;t,x,v)) \lesssim_{\Omega}
\max\{\alpha(x^{1},v^{1}),\alpha(x^{2},v^{2}) \}.
\end{equation*}
For $\ell_{*}(s;t,x,v)\in\mathbb{N}$ (therefore $t^{\ell_{*}+1}(t,x,v) \leq s \leq t^{\ell_{*}}(t,x,v)$)
\begin{equation*}
\ell_{*}(s;t,x,v) \ \leq \ \frac{|t-s|}{t_{\mathbf{b}}(x_{1},v_{1}) }  \ \lesssim_{\Omega} \ \frac{%
|t-s||v|^{2}}{\sqrt{\alpha( x,v) }}.
\end{equation*}
For all $0 \leq s \leq t$ uniformly
\begin{equation*}
\begin{split}
| \partial_{x_{i}} t^{\ell}(t,x,v)|&= \ \Big|-\ell \frac{\partial_{x_{i}}%
\xi(x^1) }{v\cdot \nabla \xi(x^{1})} - (\ell-1) \frac{\partial_{x_{i}}%
\xi(x^{2}) }{-v\cdot \nabla \xi(x^{2})}\Big| \ \lesssim_{\Omega} \frac{t|v|^{2}%
}{\alpha( x,v)} , \\
|\partial_{v_{i}} t^{\ell} (t,x,v)|&= \ \Big| \ell t_{\mathbf{b}}(x,v) \frac{%
\partial_{x_{i}}\xi(x^{1}) }{v\cdot \nabla \xi(x^{1})} + (\ell-1) t_{\mathbf{b}%
}(x,-v)\frac{ \partial_{x_{i}} \xi(x^{2})}{-v\cdot \nabla \xi(x^{2})} \Big| %
 \ \lesssim_{\Omega} \frac{t}{\sqrt{\alpha( x,v)}} , \\
|\partial_{x_{i}} x^{\ell}_{j}(x,v)| &= \ \Big| \frac{1-(-1)^{\ell}}{2} \Big\{%
\delta_{ij} - \frac{v_{j} \partial_{x_{i}} \xi(x_{1}) }{v\cdot \nabla
\xi(x_{1})}\Big\} + \frac{1+(-1)^{\ell}}{2}\Big\{ \delta_{ij} -\frac{v_{j}
\partial_{x_{i} }\xi(x_{2})}{v\cdot \nabla \xi(x_{2})} \Big\} \Big| %
 \ \lesssim_{\Omega} 1+ \frac{|v|}{\sqrt{\alpha( x,v)}} , \\
|\partial_{v_{i}} x^{\ell}_{j} (x,v)|&= \ \Big|\frac{1-(-1)^{\ell}}{2} (-t_{%
\mathbf{b}}(x,v)) \Big\{\delta_{ij} - \frac{v_{j} \partial_{x_{i}}
\xi(x_{1}) }{v\cdot \nabla \xi(x_{1})}\Big\} + \frac{1+(-1)^{\ell}}{2}(-t_{%
\mathbf{b}}(x,-v))\Big\{ \delta_{ij} -\frac{v_{j} \partial_{x_{i} }\xi(x_{2})%
}{v\cdot \nabla \xi(x_{2})} \Big\}\Big|, \\
&\lesssim_{\Omega} \frac{1}{|v|} , \\
\partial_{x_{i}} v^{\ell} &= \ 0, \ \ \ |\partial_{v_{i}} v^{\ell}_{j}|
=|(-1)^{\ell} \delta_{ij}| \ \lesssim_{\Omega} 1, \\
|\partial_{x_{i}}(t^{\ell}-t^{\ell+1})| &= \ \Big| \frac{\partial_{x_{i}}
\xi(x_{1}) }{v\cdot \nabla \xi(x_{1})} + \frac{\partial_{x_{i}} \xi(x_{2})}{%
-v\cdot \nabla \xi(x_{2})} \Big|  \ \lesssim_{\Omega} \frac{1}{\sqrt{%
\alpha( x,v)}}, \\
|\partial_{v_{i}}(t^{\ell}-t^{\ell+1})| &= \ \Big| t_{\mathbf{b}}(x,v) \frac{%
-\partial_{x_{i}} \xi(x_{1}) }{v\cdot \nabla \xi(x_{1})} + t_{\mathbf{b}%
}(x,-v)\frac{\partial_{x_{i}} \xi(x_{2})}{v\cdot \nabla \xi(x_{2})} \Big|  \ 
\lesssim_{\Omega} \frac{1}{|v|^{2}}.
\end{split}
\end{equation*}
\end{lemma}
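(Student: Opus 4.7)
The plan is to exploit the extreme simplicity of bounce-back geometry: the entire backward trajectory collapses onto the single chord between $x^1$ and $x^2$, and the velocity merely alternates in sign while preserving magnitude. On each interval $s \in (t^{\ell+1}, t^\ell)$, the straight-line piece $X_{\mathbf{cl}}(s) = x^\ell - (t^\ell - s) v^\ell$ is treated by the Velocity lemma (Lemma \ref{velocity_lemma}) to give $\alpha(s;t,x,v) \sim e^{\pm\mathcal{C}|v|(t^\ell - s)}\alpha(x^\ell, v^\ell)$. Since $|v|(t^\ell - s) \leq |v|\, t_{\mathbf{b}}(x^1, v^1) = |x^1 - x^2| \leq \mathrm{diam}(\Omega)$, the prefactor is $O_\Omega(1)$; combined with $\alpha(\cdot, v) = \alpha(\cdot, -v)$ and $x^\ell \in \{x^1, x^2\}$, this yields the two-sided $\alpha$-equivalence, and in particular $\alpha(x^1, v^1) \sim \alpha(x^2, v^2) \sim \alpha(x, v)$. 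The number-of-bounces bound then follows because every interior interval has the common length $t_{\mathbf{b}}(x^1, v^1) \gtrsim \sqrt{\alpha(x^1, v^1)}/|v|^2 \sim \sqrt{\alpha(x,v)}/|v|^2$ via (\ref{40}).

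For the derivative identities, the key algebraic observation is
\[
t^\ell = t - \ell\, t_{\mathbf{b}}(x, v) - (\ell - 1)\, t_{\mathbf{b}}(x, -v),
\]
which uses $t_{\mathbf{b}}(x^1, v^1) = t_{\mathbf{b}}(x, v) + t_{\mathbf{b}}(x, -v)$ (the two terms measuring the two halves of the chord through $x$), together with $x^1 = x - t_{\mathbf{b}}(x, v) v$ and $x^2 = x + t_{\mathbf{b}}(x, -v) v$. Direct chain-rule differentiation via the standard identities (\ref{xb}) for $\nabla t_{\mathbf{b}}$ and $\nabla x_{\mathbf{b}}$---treating the reflected-velocity factor through the identity $\partial_{v_i} t_{\mathbf{b}}(x, -v) = -(\partial_{u_i} t_{\mathbf{b}})(x, u)\big|_{u = -v}$---produces the displayed formulas, with $\partial v^\ell$ being trivial from $v^\ell = (-1)^{\ell - 1} v$.

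The numerical bounds then combine three inputs: (i) $|v \cdot \nabla \xi(x^i)| = \sqrt{\alpha(x^i, v^i)} \sim \sqrt{\alpha(x,v)}$ from the $\alpha$-equivalence above, (ii) $|\nabla \xi| = O_\Omega(1)$ by smoothness of the boundary, and (iii) the sharp upper bound $t_{\mathbf{b}} \lesssim \sqrt{\alpha}/|v|^2$ from (\ref{41}). The main technical point is input (iii): without it one would only reach $|\partial_v x^\ell| \lesssim 1/|v| + 1/\sqrt{\alpha}$ rather than the claimed $1/|v|$, and similarly $|\partial_v(t^\ell - t^{\ell+1})| \lesssim 1/|v|^2$ genuinely requires the strict-convexity improvement. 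For the $\partial_x t^\ell$ bound, the two denominators each contribute a factor $1/\sqrt{\alpha(x,v)}$, and combining with $\ell \lesssim t|v|^2/\sqrt{\alpha(x,v)}$ from the number-of-bounces estimate yields the stated $t|v|^2/\alpha(x,v)$; the remaining bounds reduce to direct bookkeeping.
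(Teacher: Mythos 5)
Your proposal is correct and follows the same route as the paper, whose proof is the one-line citation of (\ref{xb}) together with the Velocity lemma. Your identity $t^{\ell}=t-\ell\,t_{\mathbf{b}}(x,v)-(\ell-1)\,t_{\mathbf{b}}(x,-v)$, the $\alpha$-equivalence $\alpha(x^{1},v^{1})\sim\alpha(x^{2},v^{2})\sim\alpha(x,v)$ obtained from $|v|\,t_{\mathbf{b}}(x^{1},v^{1})=|x^{1}-x^{2}|\lesssim_{\Omega}1$, and the chain-rule computation via $\nabla t_{\mathbf{b}}$, $\nabla x_{\mathbf{b}}$ are exactly the ingredients the authors have in mind. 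Your explicit observation that the \emph{upper} bound $t_{\mathbf{b}}\lesssim\sqrt{\alpha}/|v|^{2}$ of (\ref{41}) (not merely the lower bound (\ref{40})) is what kills the potential $1/\sqrt{\alpha}$ contribution in $|\partial_{v}x^{\ell}|$ and $|\partial_{v}(t^{\ell}-t^{\ell+1})|$ is a useful point that the paper leaves implicit. The only cosmetic slip is in the claim that the chain rule ``produces the displayed formulas'' verbatim: the signs in front of $(\ell-1)$ in the $\partial_{v_{i}}t^{\ell}$ identity do not all come out as written, but since every such identity appears under an absolute value the stated bounds are unaffected.
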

\begin{proof}
These are direct consequence of (\ref{xb}) and the Velocity lemma (Lemma \ref{velocity_lemma}).
\end{proof}

Now we state the key ingredient in the case of the bounce-back BC which is the general version of Lemma \ref{change_time_bb}: In the sense of distribution, 
\begin{equation}\label{change_time_bb_ell}
\begin{split}
&\partial_{\mathbf{e}} \Big[ \sum_{\ell=0}^{\ell_{*}(s)} \int^{t^{j}}_{ \max\{  s, t^{j+1} \}} A^{m-j}(\tau, x^{j}-(t^{j}- {\tau})  v^{j}, v^{j}) \mathrm{d} {\tau}  \Big]\\
=&\sum_{j=0}^{\ell_{*} (s)} \int^{t^{j}}_{ \max\{s, t^{j+1}\}}  \big[ \partial_{\mathbf{e}} t^{j}, \partial_{\mathbf{e}} x^{j} + \tau \partial_{\mathbf{e}} v^{j}, \partial_{\mathbf{e}}v^{j}  \big] \cdot \nabla_{t,x,v} A^{m-j} (\tau, x^{j} -(t^{j} - \tau) v^{j}, v^{j}) \mathrm{d}\tau\\
& \ +  \sum_{j=0}^{\ell_{*} (s)-1 }  \partial_{\mathbf{e}}[t^{j}-t^{j+1}] 
\lim_{\tau \downarrow -(t^{j} -t^{j+1})} A^{m-j} (\tau+t^{j}, x^{j} + \tau v^{j}, v^{j})  \\
& \  + \partial_{\mathbf{e}} t^{\ell_{*}(s)} 
\lim_{\tau \downarrow -(t^{\ell_{*} (s)}-s)} A^{m-\ell_{*}(s)} ( \tau+ t^{\ell_{*}(s)}, x^{\ell_{*}(s)} + \tau v^{\ell_{*}(s)}, v^{\ell_{*}(s)}  ).
\end{split}
\end{equation}

Note that (\ref{change_time_bb_ell}) is more general than Lemma \ref{change_time_bb}.

\begin{proof}[Proof of (\ref{change_time_bb_ell}) and Lemma \ref{change_time_bb}] 
Once we prove (\ref{change_time_bb_ell}) then Lemma \ref{change_time_bb} holds clearly. Now we prove (\ref{change_time_bb_ell}): For each time intervals $[t^{j+1},t^{j}]$, we
apply the change of variables
\begin{equation}\label{COV_time}
\begin{split} 
x^{j} -(t^{j}-\tau)v^{j}, \ \tau\in[t^{j+1},t^{j}] \ \ &\mapsto \ \
x^{j}+\tau v^{j}, \ \tau  \in [-(t^{j}-t^{j+1}),0], \\
& \ \ \ \ \ \ \ \ \ \ \ \ \ \ \ \ \ \ \ \ \ \ \ \  \text{for} \ j=0,1, \cdots, \ell_{*}(s)-1,\\
x^{\ell_{*}(s)} - ( t^{\ell_{*}(s)} -\tau ) v^{\ell_{*}(s)}, \  \tau \in [s, t^{\ell_{*}(s)}]
 \ \ &\mapsto \ \ x^{\ell_{*}(s)} +\tau  v^{\ell_{*}(s)}, \ \tau  \in [-(t^{ \ell_{*}(s)}-s),0].
\end{split}
\end{equation}
  
From (\ref{piecewise}) the piecewise derivatives equal distributional derivatives almost everywhere. Therefore we prove Lemma \ref{change_time_bb}.  Moreover
\begin{equation}\notag
\begin{split}
&\partial_{\mathbf{e}} \Big[ \sum_{ j=0}^{\ell_{*}(s)} \int^{t^{j}}_{ \max\{  s, t^{j+1} \}} A^{m-j}(\tau, x^{j}-(t^{j}- {\tau})  v^{j}, v^{j}) \mathrm{d} {\tau}  \Big]\\
=& \ 
\partial_{\mathbf{e}} \Big[ \sum_{j=0}^{\ell_{*}(s)-1} \int^{t^{j}}_{t^{j+1}} \cdots \Big] + \partial_{\mathbf{e}} \Big[ \int^{t^{\ell_{*}(s) }  }_{s} \cdots \Big]
\\
=& \ \partial_{\mathbf{e}} \Big[  { \sum_{j=0}^{\ell_{*} (s) -1} \int^{0}_{   -(t^{j} -t^{j+1}) }    A^{m-j} (\tau + t^{j}, x^{j} + \tau v^{j} ,v^{j}) \mathrm{d}\tau  }\Big]\\
&+ \partial_{\mathbf{e}} \Big[ \int^{0}_{-( t^{\ell_{*} (s)}-s )} 
A^{m-\ell_{*}(s)} (\tau + t^{\ell_{*}(s)}, x^{\ell_{*}(s)} + \tau v^{\ell_{*}(s)}, v^{\ell_{*}(s)} )
\Big]
 \\ 
=&   \sum_{j=0}^{\ell_{*} (s) -1 }  \int^{0}_{    -(t^{j} -t^{j+1})  } \partial_{\mathbf{e}} \big[  A^{m-j} (\tau + t^{j}, x^{j} + \tau v^{j} ,v^{j})\big] \mathrm{d}\tau\\ 
&+  \sum_{j=0}^{\ell_{*} (s)-1 } 
 \partial_{\mathbf{e}}[t^{j}-t^{j+1}] 
 \lim_{\tau \downarrow -(t^{j} -t^{j+1})}A^{m-j} (\tau+t^{j}, x^{j} + \tau v^{j}, v^{j})  \\
&    +   \int^{0}_{-( t^{\ell_{*} (s)}-s )}  \partial_{\mathbf{e}} \big[
A^{m-\ell_{*}(s)} (\tau + t^{\ell_{*}(s)}, x^{\ell_{*}(s)} + \tau v^{\ell_{*}(s)}, v^{\ell_{*}(s)} ) \big]\\
& + \partial_{\mathbf{e}} t^{\ell_{*}(s)} \lim_{\tau \downarrow - (t^{\ell_{*}(s)}-s)}A^{m-\ell_{*}(s)}
(\tau  + t^{\ell_{*}(s)}, x^{\ell_{*}(s)} +\tau v^{\ell_{*}(s)}, v^{\ell_{*}(s)} )
.
\end{split}
\end{equation}
Directly we have
\[
\partial_{\mathbf{e}} \big[  A^{m-j} (\tau + t^{j}, x^{j} + \tau v^{j} ,v^{j})\big] 
= \big[ \partial_{\mathbf{e}} t^{j}, \partial_{\mathbf{e}} x^{j} + \tau \partial_{\mathbf{e}} v^{j}, \partial_{\mathbf{e}} v^{j} \big] \cdot
\nabla_{t,x,v} A^{m-j} (\tau + t^{j}, x^{j} + \tau v^{j} ,v^{j}).
\]

Then we apply the inverse of the change of variables in (\ref{COV_time}) to the time integration terms:
\begin{equation}\notag
\begin{split}
\sum_{j=0}^{\ell_{*} (s)} \int^{t^{j}}_{ \max\{s, t^{j+1}\}}  \big[ \partial_{\mathbf{e}} t^{j}, \partial_{\mathbf{e}} x^{j} + \tau \partial_{\mathbf{e}} v^{j}, \partial_{\mathbf{e}}v^{j}  \big] \cdot \nabla_{t,x,v} A^{m-j} (\tau, x^{j} -(t^{j} - \tau) v^{j}, v^{j}) \mathrm{d}\tau.
\end{split}
\end{equation}
We collect the terms and conclude (\ref{change_time_bb_ell}).
\end{proof}

 Now we are ready to proof the main theorem:

\begin{proof}[\textbf{Proof of Theorem \ref{main_bb}}]
  %
%
We use the approximation sequence (\ref{fm}) with (\ref{bb_BC_m}). Due to Lemma \ref{local_existence} we have (\ref{bounded}) and (\ref{bounded_t}).

Now we consider the spatial and velocity derivatives. From the iteration (\ref{positive_iteration}) and (\ref{bb_BC_m}), for $\ell_{*}(0;t,x,v)=\ell_{*}$ with $t^{\ell_{*} +1} \leq 0 < t^{\ell_{*}},$
\begin{equation}
\notag
\begin{split}
& \ f^{m+1}(t,x,v)\\
 &= \ e^{-\sum_{j=0}^{\ell_{*}(0) }\int_{ \max\{0,t^{j+1} \} }^{t^{j}} \nu(F^{m-j})(\tau) \mathrm{d}\tau  } \
f_{0}(x^{\ell_{*} (0)}-t^{\ell_{*} (0)}v^{\ell_{*} (0)},v^{\ell_{*} (0)}) \\
&+  \sum_{\ell=0}^{\ell_{*}(0)}\int_{ \max\{ 0, t^{\ell+1}\}}^{t^{\ell}} e^{-
\sum_{j=0}^{\ell_{*}(s)}\int_{\max \{0, t^{j+1}\}}^{t^{j}}\nu (F^{m-j})(\tau )%
\mathrm{d}\tau } \ \Gamma _{\text{gain } }(f^{m-\ell},f^{m-\ell})
(s,x^{\ell}-(t^{\ell}-s)v^{\ell},v^{\ell})\mathrm{d}s,
\end{split}
\end{equation}
where $\nu(F^{m-j})(\tau)= \mu(\sqrt{\mu} f^{m-j})(\tau)=\nu (\sqrt{\mu}f^{m-j})(\tau, x^{j}-(t^{j}-\tau)v^{j},v^{j})$.

From Lemma \ref{change_time_bb} and (\ref{change_time_bb_ell}) and (\ref{invariant}), in the sense of distribution, for $\partial_{\mathbf{e}} = [\partial_{x}, \partial_{v}]$ with $\mathbf{e} \in \{x,v\},$
\begin{equation}\label{D_duhamel_bb}
\begin{split}
&\partial_{\mathbf{e}} f^{m}(t,x,v)  \\
=& \ \mathbf{I}_{\mathbf{e}} + \mathbf{II}_{\mathbf{e}}  \\
=& \ e^{- \int^{t}_{0} \sum_{j}\mathbf{1}_{[t^{j+1},t^{j})}(\tau)\nu(F^{m-j})(\tau, X_{\mathbf{cl}}(\tau), V_{\mathbf{cl}}(\tau)) \mathrm{d}\tau } f_{0} (X_{\mathbf{cl}}(0), V_{\mathbf{cl}}(0))\\
&\times \Big\{
-\sum_{j=0}^{\ell_{*}(0)} \int^{t^{j}}_{\max\{  0,t^{j+1}\}} \underline{ \big[ \partial_{\mathbf{e}} t^{j}, \partial_{\mathbf{e}} x^{j} + \tau \partial_{\mathbf{e}} v^{j}, \partial_{\mathbf{e}}v^{j}\big]\cdot \nabla_{t,x,v} \nu(F^{m-j})(\tau, x^{j}-(t^{j}-\tau) v^{j}, v^{j}) \mathrm{d}\tau }_{\mathbf{II}_{\mathbf{e}}}\\
& \ \ \ \ \ \  -\sum_{j=0}^{\ell_{*}(0) -1}    \underline {\partial_{\mathbf{e}}[t^{j} -t^{j+1}] \nu(F^{m-j})(t^{j+1}, x^{j+1}, v^{j})}_{\mathbf{I}_{\mathbf{e}}} - \underline{ \partial_{\mathbf{e}} t^{\ell_{*}(0)} \nu(F^{m-\ell_{*}(0) }) (0, x^{j}-t^{j} v^{j} ,v^{j})}_{\mathbf{I}_{\mathbf{e}}}
\Big\} \\
+& \ \underline{  e^{- \int^{t}_{0} \sum_{j}\mathbf{1}_{[t^{j+1},t^{j})}(\tau)\nu(F^{m-j})(\tau, X_{\mathbf{cl}}(\tau), V_{\mathbf{cl}}(\tau)) \mathrm{d}\tau } \partial_{\mathbf{e}} \big[  x^{\ell_{*} (0)} - t^{\ell_{*} (0)} v^{\ell_{*}(0)}, v^{\ell_{*} (0)} \big] \cdot \nabla_{x,v} f_{0} (X_{\mathbf{cl}}(0), V_{\mathbf{cl}}(0))}_{\mathbf{I}_{\mathbf{e}}} \\
+&\sum_{\ell=0}^{\ell_{*} (0)-1}  { \partial_{\mathbf{e}}[t^{\ell} -t^{\ell+1}] e^{ - \sum_{j=0}^{\ell_{*}(t^{\ell} -t^{\ell+1})} \int^{0}_{\max\{ t^{\ell}-t^{\ell+1} -t^{j}, -(t^{j}-t^{j+1})  \}} \nu(F^{m-j}) (\tau+ t^{j}, x^{j}+ \tau v^{j}, v^{j}) \mathrm{d}\tau
} }\\
& \times \underline {  \Gamma_{\mathrm{gain}}(f^{m-\ell} , f^{m-\ell}) (t^{\ell+1}, x^{\ell+1}, v^{\ell})}_{\mathbf{I}_{\mathbf{e}}}\\
+& \  \underline{\partial_{\mathbf{e}} t^{\ell_{*} (0)}  e^{- \int^{t}_{0} \mathbf{1}_{[t^{j+1}, t^{j})}(s)  \nu(F^{m-j})(\tau) \mathrm{d}\tau  }
  \Gamma_{\mathrm{gain}}(f^{m-\ell_{*} (0)}, f^{m-\ell_{*} (0)}) (0, x^{\ell_{*}(0)  }  -t^{\ell_{*} (0)} v^{\ell_{*} (0)}, v^{\ell_{*} (0)} )}_{\mathbf{I}_{\mathbf{e}}}\\
+&\int_{0}^{t} \mathbf{1}_{  [t^{\ell +1}, t^{\ell})  } (s) e^{-\int^{t}_{s}  \sum_{j=0}^{\ell_{*} (s)} \mathbf{1}_{[ t^{j+1}, t^{j} )}(s)    \nu(F^{m-j})(\tau) \mathrm{d}\tau }\\
&\times \underline{[\partial_{\mathbf{e}} t^{\ell}, \partial_{\mathbf{e}} x^{\ell} +s \partial_{\mathbf{e}} v^{\ell} , \partial_{\mathbf{e}} v^{\ell}] \cdot
\nabla_{t,x,v} \Gamma_{\mathrm{gain}}(f^{m-\ell} ,f^{m-\ell})](s, x^{\ell} - (t^{\ell} -s) v^{\ell} , v^{\ell}) \mathrm{d}s}_{\mathbf{II}_{\mathbf{e}}}\\
+& \int^{t}_{0} \mathbf{1}_{[ t^{\ell+1}, t^{\ell})}(s)
  \Gamma_{\mathrm{gain}} (f^{m-\ell}, f^{m-\ell}) (s, X_{\mathbf{cl}}(s), V_{\mathbf{cl}}(s))
 \mathrm{d}s\\
 & \times \Big\{
 - \sum_{j=0}^{\ell_{*}(s)-1} \underline{ \partial_{\mathbf{e}} [t^{j} -t^{j+1}] \nu(F^{m-j}) (t^{j+1}, x^{j+1}, v^{j}) }_{\mathbf{I}_{\mathbf{e}}}
 -\underline{\partial_{\mathbf{e}} t^{\ell_{*} (s)} \nu(F^{m-\ell_{*} (s)})(s, X_{\mathbf{cl}}(s), V_{\mathbf{cl}}(s))}_{\mathbf{I}_{\mathbf{e}}}\\
  & \ \ \ \ \ -\sum_{j=0}^{\ell_{*}(s)} \underline{\int^{t^{j}}_{\max\{ s, t^{j+1} \}} [ \partial_{\mathbf{e}} t^{j} , \partial_{\mathbf{e}} x^{j } + \tau \partial_{\mathbf{e}} v^{j} , \partial_{\mathbf{e}} v^{j}]  \cdot \nabla_{t,x,v} \nu(F^{m-j}) (\tau, x^{j} -(t^{j} -\tau) v^{j} ,v^{j}) \mathrm{d}\tau}_{\mathbf{II}_{\mathbf{e}}}
  \Big\}.
\end{split}
\end{equation} 
We shall estimate the followings:
\[
e^{-\varpi \langle v\rangle t}   \frac{\alpha(x,v)}{\langle v\rangle^{2}} \partial_{x} f(t,x,v), \ \ \ \ e^{-\varpi \langle v\rangle t}   \frac{|v|\alpha(x,v)^{1/2}}{\langle v\rangle^{2}} \partial_{v} f(t,x,v).
\] 

Firstly, we estimate $\mathbf{I}_{\mathbf{e}}.$ Using Lemma \ref{estimate_bb} and Lemma \ref{lemma_operator} and $F^{m} \geq 0$ from (\ref{positive_iteration}) and Lemma \ref{local_existence}, for some polynomial $P$,
\begin{equation}\notag
\begin{split}
& e^{-\varpi \langle v\rangle t} \langle v\rangle^{-2} \alpha(x,v)  \mathbf{I}_{\mathbf{x}} \\
   \lesssim  & \ e^{-\varpi \langle v\rangle t}   \langle v\rangle^{-2} \alpha(x,v) 
P( ||  e^{\theta  |v|^{2}}f  ||_{\infty})
\\ &  \ \times    \Big\{    e^{-\theta |v|^{2}} \frac{t|v|^{2}  }{ \alpha(x,v)} \langle v\rangle^{\kappa}  + \big[(1+ \frac{|v| }{\alpha(x,v)}) + \frac{t|v|^{3}  }{ \alpha(x,v)}    \big] |\partial_{x} f_{0}| +\frac{t|v|^{2}  }{\alpha(x,v)}  e^{- \frac{\theta}{2}|v|^{2}}    + t   e^{- \frac{\theta}{2}|v|^{2}}    \langle v\rangle^{\kappa} \frac{t|v|^{2}  }{\alpha(x,v)}  \Big\}\\
  \lesssim &  \ ||    \langle v\rangle^{-2} \alpha (1+ \frac{|v|+ |v|^{3}}{\alpha(x,v)} ) \partial_{x}f_{0} ||_{\infty} +      \langle v\rangle^{-2}  e^{-C_{\theta} |v|^{2}} P(||  e^{\theta |v|^{2}} f_{0} ||_{\infty}  )\\
 \lesssim  & \ (1+ || \langle v\rangle \partial_{x} f_{0} ||_{\infty} ) \times P(||  e^{\theta |v|^{2}} f_{0} ||_{\infty}  ) 
.
\end{split}
\end{equation}
%

Similarly
\begin{equation}\notag
\begin{split}
& e^{-\varpi \langle v\rangle t} |v| \langle v\rangle^{-2} \alpha^{{1}/{2}} \mathbf{I}_{\mathbf{v}} \\
  \lesssim & \ e^{-\varpi \langle v\rangle t}  |v| \langle v\rangle^{-2}[\alpha(x,v)]^{1/2} P(|| e^{\theta |v|^{2}} f||_{\infty})
\\ &  \ \times    \Big\{
e^{-\frac{\theta}{2} |v|^{2}}
\frac{t  }{ \alpha(x,v)^{1/2}} \langle v\rangle^{\kappa}  + \big[(\frac{1}{|v|}+ \frac{  \alpha(x,v)^{1/2}}{|v|^{2}}) + \frac{t|v|  }{ \alpha(x,v)^{1/2}}  + t    \big] |\partial_{x} f_{0}| + |\nabla_{v} f_{0}|\\
& \ \ \ \ +\frac{t    }{\alpha(x,v)^{1/2}}   e^{-\frac{\theta}{2}|v|^{2}}   + t  e^{-\frac{\theta}{2}|v|^{2}}  \langle v\rangle^{\kappa}
 \frac{t   }{\alpha(x,v)^{1/2}}  \Big\}\\
 \lesssim & \   (1+ || \langle v\rangle \partial_{x} f_{0}||_{\infty} + || \partial_{v} f_{0} ||_{\infty}) P (|| e^{\theta |v|^{2}} f_{0} ||_{\infty})
 .
\end{split}
\end{equation}

Secondly, we estimate $\mathbf{II}_{\mathbf{e}}.$ Let $\phi_{\mathbf{e}} \in \{ \phi_{\mathbf{x}}, \phi_{\mathbf{v}}\}$ with $\phi_{\mathbf{x}} = e^{-\varpi\langle v\rangle t } \frac{\alpha(x,v)}{\langle v\rangle^{2}}$ and $\phi_{\mathbf{v}}= e^{-\varpi \langle v\rangle t} \frac{|v| \alpha(x,v)^{1/2}}{\langle v\rangle^{2}}$. We have
\begin{eqnarray} 
&& e^{-\varpi \langle v\rangle t}  \phi_{ {\mathbf{e}}}(v)  [\alpha(x,v)]^{\beta_{ {\mathbf{e}}}}  \mathbf{II}_{\mathbf{e}} \notag \\ 
&  \lesssim& e^{-\varpi \langle v\rangle t}  \phi_{ {\mathbf{e}}}(v)  [\alpha(x,v)]^{\beta_{ {\mathbf{e}}}}
\Big\{ 1+ (1+t) 
  e^{-\frac{\theta}{2} |v|^{2}} ||  e^{\theta |v|^{2}} f  ||_{\infty}
  \Big\}
   \notag \\
&&   \times \bigg\{
\int_{0}^{t} \sum_{j=0}^{\ell_{*}(0)} \mathbf{1}_{[t^{j+1}, t^{j})}(s)
 |\partial_{  {\mathbf{e}}} t^{j}|  \langle v\rangle^{\kappa} \mathrm{d}s
 \times  || e^{\theta|v|^{2}} \partial_{t}f  ||_{\infty} \label{II_1} \\
 && \   + \int_{0}^{t} \sum_{j=0}^{\ell_{*}(0)} \mathbf{1}_{[t^{j+1}, t^{j})}(s)  \{ |\partial_{  {\mathbf{e}}} x^{\ell}| + t|\partial_{  {\mathbf{e}}} v^{\ell}| \} \nu(\sqrt{\mu}\partial_{x}f^{m-\ell}) (s,X_{\mathbf{cl}}(s), V_{\mathbf{cl}}(s)) \mathrm{d}s \label{II_2}
\\
&& \  +  \int_{0}^{t} \sum_{j=0}^{\ell_{*}(0)} \mathbf{1}_{[t^{j+1}, t^{j})}(s)    |\partial_{  {\mathbf{e}}} v^{\ell}|  \int_{\mathbb{R}^{3}} |V_{\mathbf{cl}}(s)-u|^{\kappa-1} \sqrt{\mu(u)}    f^{m-\ell}  (s,X_{\mathbf{cl}}(s), u) \mathrm{d}u   \mathrm{d}s \label{II_3} \\
&& \ +    \int_{0}^{t} \sum_{\ell=0}^{\ell_{*}(0)} \mathbf{1}_{[t^{\ell+1}, t^{\ell})}(s)  |\partial_{  {\mathbf{e}}} t^{\ell}|
\big[   | \Gamma_{\mathrm{gain}}(\partial_{t }f^{m-\ell}, f^{m-\ell})| + | \Gamma_{\mathrm{gain}}(f^{m-\ell},\partial_{ t} f^{m-\ell}  )| \big] \mathrm{d}s
\label{II_4}
\\
&& \   + \int_{0}^{t} \sum_{\ell=0}^{\ell_{*}(0)} \mathbf{1}_{[t^{\ell+1}, t^{\ell})}(s) \big\{|\partial_{  {\mathbf{e}}} x^{\ell}| + t|\partial_{    {\mathbf{e}}} v^{\ell}|\big\}
\big[      | \Gamma_{\mathrm{gain}}(\partial_{  {x}}f^{m-\ell}, f^{m-\ell})| + | \Gamma_{\mathrm{gain}}(f^{m-\ell},\partial_{  {x}} f^{m-\ell}  )| \big] \mathrm{d}s\nonumber\\
\label{II_5}\\
&& \   + \int_{0}^{t} \sum_{\ell=0}^{\ell_{*}(0)} \mathbf{1}_{[t^{\ell+1}, t^{\ell})}(s) |\partial_{  {\mathbf{e}}} v^{\ell}|  \big[  |\Gamma_{\mathrm{gain},  v}(f^{m-\ell}, f^{m-\ell}) | \nonumber   \\
&&   \ \ \ \ \  \ \ \ \ \  \ \ \ \ \  \ \ \ \ \   \ \ \ \ \  \ \ \ \ \  \ \ \ \ \  \ \ \ \ \ + |\Gamma_{\mathrm{gain} }(f^{m-\ell},  \partial_{v}f^{m-\ell}) |+   |\Gamma_{\mathrm{gain} }( \partial_{v}f^{m-\ell}, f^{m-\ell}) | \big] \mathrm{d}s\bigg\}\label{II_6}
 . 
\end{eqnarray}

Firstly, we consider $\partial_{\mathbf{e}} t^{j}-$contribution. Then from Lemma \ref{estimate_bb} and \textit{(2)} of Lemma \ref{lemma_operator}  
\begin{equation}\notag
\begin{split}
 &e^{-\varpi \langle v\rangle t}  
 \frac{\alpha(x,v)}{\langle v\rangle^{2}} \{  (\ref{II_1})_{\mathbf{x}}  + (\ref{II_4})_{\mathbf{x}} \}\\
 &\lesssim e^{-\varpi \langle v\rangle t} \frac{\alpha(x,v)}{ \langle v\rangle^{2}}
  e^{- \frac{\theta}{2} |v|^{2}}
 t \frac{t|v|^{2}}{\alpha(x,v)}  \langle v\rangle 
  ||  e^{\theta|v|^{2}} f ||_{\infty}
  ||   e^{\theta|v|^{2}} \partial_{t} f  ||_{\infty} \\
 & \ + e^{-\varpi \langle v\rangle t}  \frac{\alpha(x,v)}{\langle v\rangle^{2}} (1+ t) ||  e^{\theta|v|^{2}} f ||_{\infty }   t \frac{t|v|^{2}}{\alpha(x,v)}    e^{- \frac{\theta}{2}|v|^{2}} ||  e^{\theta|v|^{2}} \partial_{t}f ||_{\infty}   \\
 & \lesssim_{t} 1+   P(||   e^{\theta |v|^{2}} \partial_{t}f_{0}||_{\infty}) + P(||   e^{\theta |v|^{2}} f_{0}||_{\infty}).
\end{split}
\end{equation}
Similarly, 
\begin{equation}\notag
\begin{split}
&e^{-\varpi \langle v\rangle t}  \frac{|v| \alpha(x,v)^{1/2}}{\langle v\rangle^{2}} \{ (\ref{II_1})_{\mathbf{v}}+ (\ref{II_4})_{\mathbf{x}} \}\\
&\lesssim_{t}  \frac{|v|}{ \langle v\rangle^{2}}  e^{-C_{\theta} |v|^{2}}  \big[ P(||  e^{\theta |v|^{2}} \partial_{t}f_{0}||_{\infty}) + P(||  e^{\theta |v|^{2}} f_{0}||_{\infty} )\big] \\
& \lesssim_{t} 1+ P( || e^{\theta |v|^{2}} \partial_{t}f_{0}||_{\infty} )+ P(||  e^{\theta |v|^{2}} f_{0}||_{\infty}) .
\end{split}
\end{equation}

Secondly, we consider the terms $(\ref{II_2}),  (\ref{II_4})$, which include $|\partial_{\mathbf{e}} x^{\ell} | + t |\partial_{\mathbf{e}} v^{\ell}|$. We use \textit{(2)} of Lemma \ref{lemma_operator} and Lemma \ref{estimate_bb}, $|\partial_{x} x^{\ell}| + t |\partial_{x} v^{\ell}|  \lesssim  \frac{|v|}{\sqrt{\alpha(x,v)}}$, and (\ref{exponent})
\begin{equation}\notag
\begin{split}
&e^{-\varpi \langle v\rangle t}
 \frac{\alpha(x,v)}{\langle v\rangle^{2}}
 \{ (\ref{II_2})_{\mathbf{x}} + (\ref{II_4})_{\mathbf{x}}  \}\\
&\lesssim_{t} \big[1+ P( ||  e^{\theta |v|^{2}} \partial_{t}f_{0}||_{\infty}) +P( ||  e^{\theta|v|^{2}} f_{0}||_{\infty} ) \big]\\
& \ \ \ \times  \sum_{\ell=0}^{\ell_{*}(0;t,x,v)} \int^{t^{\ell}}_{t^{\ell+1}} \int_{\mathbb{R}^{3}} e^{-\varpi \langle v\rangle t}   \frac{|v|}{\langle v\rangle^{2}}\alpha(x,v)^{  {1}/{2}  } 
\frac{e^{-C_{\theta} |u-v^{\ell}|^{2} }}{|u-v^{\ell}|^{2-\kappa}}
  |\partial_{x} f^{m-\ell}(s,X_{\mathbf{cl}}(s), u)|  \mathrm{d}u \mathrm{d}s,\\
&\lesssim_{t} 
\big[1+ P(||   e^{\theta |v|^{2}} \partial_{t}f_{0}||_{\infty} )+ P(||   e^{\theta |v|^{2}} f_{0}||_{\infty}) \big]
\max_{0 \leq \ell \leq m}\sup_{ 0 \leq s \leq t} || e^{-\varpi \langle v\rangle s}  \frac{\alpha}{\langle v\rangle^{2}} \partial_{x} f^{m-\ell}(s) ||_{\infty}\\
& \ \ \ \times   \int^{t}_{0} \sum_{\ell=0}^{\ell_{*}(0;t,x,v)} \mathbf{1}_{[t^{\ell+1}, t^{\ell})}(s)\int_{\mathbb{R}^{3}} e^{-\frac{\varpi}{2} \langle v\rangle ( t-s)} \frac{\langle u\rangle^{2}}{\langle v\rangle^{2}} \frac{ |v|\alpha(x,v)^{ \frac{1}{2}  } }{ |V_{\mathbf{cl}}(s)-u|^{2-\kappa} \alpha(X_{\mathbf{cl}}(s) , u )   } e^{-C_{\theta} |v-u|^{2}}  \mathrm{d}u \mathrm{d}s
.
\end{split}
\end{equation}
We use $ \frac{\langle u\rangle^{2}}{\langle v\rangle^{2}}\lesssim   \langle v-u\rangle^{2}$ and (\ref{specular_nonlocal_u}) to have
\begin{equation}\notag
\begin{split}
   &\lesssim_{t} \big[1+ P(||  e^{\theta |v|^{2}} \partial_{t}f_{0}||_{\infty}) + P(||   e^{\theta|v|^{2}} f_{0}||_{\infty})\big]   \max_{0 \leq \ell \leq m}\sup_{ 0 \leq s \leq t} || e^{-\varpi \langle v\rangle s} 
\frac{\alpha }{\langle v\rangle^{2} }
   \partial_{x} f^{m-\ell}(s) ||_{\infty}\\
   & \ \ \  \times \frac{ O(\delta)  }{ \langle v\rangle \alpha(x,v)^{ 1/2  }} |v| \alpha(x,v)^{1/2}\\
   & \lesssim   O(\delta) \big[1+ P(|| e^{\theta |v|^{2}} \partial_{t}f_{0}||_{\infty}) + P(||  e^{\theta |v|^{2}} f_{0}||_{\infty}) \big]   \max_{0 \leq \ell \leq m}\sup_{ 0 \leq s \leq t} || e^{-\varpi \langle v\rangle s}  \frac{ \alpha}{\langle v\rangle^{2} } \partial_{x} f^{m-\ell}(s) ||_{\infty}.
\end{split}
\end{equation}
Similarly we further use $|\partial_{v} x^{\ell}| + t |\partial_{v} v^{\ell}|   \lesssim  \frac{1}{|v|}$ from Lemma \ref{estimate_bb}
\begin{equation}\notag
\begin{split}
&e^{-\varpi \langle v\rangle t}   
\frac{|v| \alpha(x,v)^{1/2}}{\langle v\rangle^{2}}
\{  (\ref{II_2})_{\mathbf{v}}+(\ref{II_4})_{\mathbf{v}}  \}\\
&\lesssim_{t} \big[1+  P(||   e^{\theta |v|^{2}} \partial_{t}f_{0}||_{\infty}) + P(||   e^{\theta |v|^{2}} f_{0}||_{\infty})  \big]\\
& \ \ \ \times  \sum_{\ell=0}^{\ell_{*}(0;t,x,v)} \int^{t^{\ell}}_{t^{\ell+1}} \int_{\mathbb{R}^{3}} e^{-\varpi \langle v\rangle t} 
\frac{|v|}{\langle v\rangle^{2}}
   ( \frac{1}{|v|}+1)\alpha(x,v)^{1/2}
\frac{e^{-C_{\theta} |u-v^{\ell}|^{2}}}{|u-v^{\ell}|^{2-\kappa}}
  |\partial_{x} f^{m-\ell}(s,X_{\mathbf{cl}}(s), u)|  \mathrm{d}u \mathrm{d}s,\\
&\lesssim_{t} \big[1+ P(||   e^{\theta |v|^{2}} \partial_{t}f_{0}||_{\infty}) + P(||   e^{\theta |v|^{2}} f_{0}||_{\infty} ) \big]  \max_{0 \leq \ell \leq m} \sup_{0 \leq s\leq t}  || e^{-\varpi \langle v\rangle t} 
\frac{|u| \alpha^{1/2}}{\langle u\rangle^{2}}
  \partial_{x} f^{m-\ell}(s,X_{\mathbf{cl}}(s),u)||_{\infty}  \\
& \ \ \ \times  \sum_{\ell=0}^{\ell_{*}(0;t,x,v)} \int^{t^{\ell}}_{t^{\ell+1}} \int_{\mathbb{R}^{3}} e^{- \frac{\varpi}{2} \langle v\rangle (t-s)} \frac{|v|\langle u\rangle^{2}}{  \langle v\rangle^{2} } 
 \frac{( \frac{1}{|v|}+1)\alpha(x,v)^{1/2} }{|V_{\mathbf{cl}}(s)-u | ^{2-\kappa}\alpha(x,u) }
   \mathrm{d}u \mathrm{d}s.
\end{split}
\end{equation}
From $\frac{\langle u\rangle^{2}}{\langle v\rangle^{2}}\lesssim \langle v-u\rangle^{2}$, the last integration is bounded by
\[
 \sum_{\ell=0}^{\ell_{*}(0;t,x,v)} \int^{t^{\ell}}_{t^{\ell+1}} \int_{\mathbb{R}^{3}} e^{-\frac{\varpi }{2} \langle v\rangle(t-s)} \langle v-u \rangle^{2} \frac{\langle v\rangle \alpha(x,v)^{1/2} }{ \alpha(x,u)} \frac{e^{-C|V_{\mathbf{cl}}(s)-u|^{2}}}{|V_{\mathbf{cl}}(s)-u|^{2-\kappa}} \mathrm{d}u \mathrm{d}s.
\]
By the dynamical non-local to local estimate (\ref{specular_nonlocal}), this is bounded by
\begin{equation}\notag
\begin{split}
  O(\delta) \big[1+ P( ||   e^{\theta |v|^{2}} \partial_{t}f ||_{\infty}) + P(||  e^{\theta |v|^{2}} f ||_{\infty}) \big] \max_{0 \leq \ell \leq m}\sup_{ 0 \leq s \leq t} || e^{-\varpi \langle v\rangle s}\frac{\alpha(x,v)}{\langle v\rangle^{2}} \partial_{x} f^{m-\ell}(s) ||_{\infty}.
\end{split}
\end{equation}

Thirdly, we consider $\partial_{\mathbf{e}} v^{\ell}-$contribution, $(\ref{II_3})$ and $(\ref{II_6})$. Note that $(\ref{II_3})_{\mathbf{x}}=0= (\ref{II_6})_{\mathbf{x}}$ since $\partial_{x} v^{j}\equiv0$. From Lemma \ref{estimate_bb} and \textit{(3)} of Lemma \ref{lemma_operator}
\begin{equation}\notag
\begin{split}
 &e^{-\varpi \langle v\rangle t}  \frac{ |v| \alpha(x,v)^{1/2} }{\langle v\rangle^{2}} \{  (\ref{II_3})_{\mathbf{v}}+ (\ref{II_6})_{\mathbf{v}}  \}\\
\lesssim & \ \big[1+ P(|| e^{\theta |v|^{2}} \partial_{t} f_{0} ||_{\infty})  + P( || e^{\theta |v|^{2}} f_{0} ||_{\infty})\big]\\
 & \times \int_{0}^{t} \sum_{\ell=0}^{\ell_{*}(0)} \mathbf{1}_{[t^{\ell+1}, t^{\ell})} (s)  e^{-\varpi \langle v\rangle t} 
  \frac{ |v| \alpha(x,v)^{1/2} }{\langle v\rangle^{2}} e^{-C |v|^{2}}
    \int_{\mathbb{R}^{3}}  \frac{e^{-C_{\theta }|V_{\mathbf{cl}}(s) - u |^{2} }}{|V_{\mathbf{cl}}(s) - u |^{2-\kappa}} 
 |\partial_{v} f^{m-\ell}(s,X_{\mathbf{cl}} (s), u)|
 \mathrm{d}u  \mathrm{d}s \\
\lesssim  & \   \big[1+ P(|| e^{\theta |v|^{2}} \partial_{t} f_{0} ||_{\infty})  + P( || e^{\theta |v|^{2}} f_{0} ||_{\infty})\big]\\
 & \times \int_{0}^{t} \sum_{\ell=0}^{\ell_{*}(0)} \mathbf{1}_{[t^{\ell+1}, t^{\ell})} (s)   \int_{\mathbb{R}^{3}}  e^{-\varpi \langle v\rangle t} e^{-\varpi \langle u\rangle s}   e^{-C |v|^{2}}
 \frac{  |v| \langle u\rangle^{2} \alpha(x,v)^{1/2}   }{    |u| \langle v\rangle^{2} \alpha(X_{\mathbf{cl}}(s),u)^{1/2} } 
  \frac{e^{-C_{\theta}|V_{\mathbf{cl}}(s)-u|^{2} }}{|V_{\mathbf{cl}}(s)-u|^{2-\kappa}}   \mathrm{d}u  \mathrm{d}s\\
&\times
\sup_{0 \leq s\leq t} \max_{0 \leq \ell \leq m} || e^{-\varpi \langle u\rangle s}   \frac{|u| \alpha(x,u)^{1/2}}{\langle u\rangle^{2}}    \partial_{v} f^{m-\ell}(s,x, u)
||_{\infty}
 .
\end{split}
\end{equation}\notag
Now we choose $ \beta^{\prime} \in( \frac{1}{2},1)$ and use $\alpha(x,u) \lesssim |u|^{2}$ to have
\begin{equation}\notag
\frac{1}{[\alpha(X_{\mathbf{cl}}(s), u)]^{1/2}  } \lesssim \frac{|u|^{2(\beta^{\prime} -\frac{1}{2}   )}}{[\alpha(X_{\mathbf{cl}}(s), u)]^{\beta^{\prime} }}.
\end{equation}
Now we use (\ref{exponent}) to bound the integration by
\[
  \int_{0}^{t} \sum_{\ell=0}^{\ell_{*}(0)} \mathbf{1}_{[t^{\ell+1}, t^{\ell})} (s)   \int_{\mathbb{R}^{3}}  e^{-\varpi \langle v\rangle (t-s)}
 \frac{|v|}{|u|}
 \frac{|u|^{2 \beta^{\prime} -1} \alpha(x,v)^{ 1/2}  }{  |V_{\mathbf{cl}}(s) -u|^{2-\kappa}\alpha(X_{\mathbf{cl}}(s),u)^{\beta^{\prime}} }
 e^{ -C |v|^{2} }
 e^{-C_{\theta} |V_{\mathbf{cl}}(s)-u|^{2}}   \mathrm{d}u  \mathrm{d}s
\]
Now we use $|u|^{2\beta^{\prime}-1} \leq \langle v\rangle ^{2\beta^{\prime}-1} \langle u-v\rangle ^{2\beta^{\prime}-1} $ and we apply (\ref{specular_nonlocal_u}) to bound this integration by
\[
 O( {\delta})   \langle v\rangle^{-2 + 2\beta^{\prime}} \alpha(x,v)^{1-\beta^{\prime}} \ \lesssim \ O(\delta),
\]
Hence
\begin{equation}\notag
\begin{split}
&e^{-\varpi \langle v\rangle t} \frac{|v| \alpha(x,v)^{1/2}}{\langle v\rangle^{2}} \{ (\ref{II_3})_{\mathbf{v}} + (\ref{II_6})_{\mathbf{v}}  \}\\
  \lesssim  &\big[1+ P( || e^{\theta|v|^{2}} f_{0} ||_{\infty})   + P( || e^{\theta |v|^{2}}\partial_{t} f_{0} ||_{\infty}) \big]
 \sup_{0 \leq s\leq t} \max_{0 \leq \ell \leq m} || e^{-\varpi \langle v\rangle s}
\frac{|u| \alpha(x,u)^{1/2}}{\langle u\rangle^{2}}
  \partial_{v} f^{m-\ell}(s,x, u)
||_{\infty}
 \\
 & \times \Big\{
 \frac{ O({\delta})    }{\langle v\rangle [\alpha(x,v)]^{\beta^{\prime}-1/2}} \alpha(x,v)^{1/2} e^{-C_{\theta}|v|^{2}}+O(\delta) \Big\}\\
  \lesssim  & \big[1+ P( || e^{\theta |v|^{2}} f_{0} ||_{\infty})   + P( || e^{\theta |v|^{2}}\partial_{t} f_{0} ||_{\infty}) \big]
\\
  & +   O( {\delta})  \big[ P(  || e^{\theta |v|^{2}} f_{0} ||_{\infty}) + P( || e^{\theta |v|^{2}} \partial_{t}f_{0} ||_{\infty}) \big]
 \sup_{0 \leq s\leq t} \max_{0 \leq \ell \leq m} || e^{-\varpi \langle v\rangle s} 
\frac{|u| \alpha(x,u)^{1/2}}{\langle u\rangle^{2}}
 \partial_{v} f^{m-\ell}(s,x, u)
||_{\infty}
.
\end{split}
\end{equation}
Now we gather all the estimates with small $0<\delta\ll 1$ to close the estimate. Then we follow the exactly same argument as the specular case and this complete the proof of Theorem \ref{main_bb}.

\end{proof}

\section*{{{Appendix.} Non-Existence of Second Derivatives}}

In the previous theorem, we consider the \textit{first-order derivative}
of the Boltzmann solution with {several boundary conditions}. Now we show that some second order spatial derivative does not exist up to
the boundary in general so that our result is quite optimal.

Assume that all the second order spatial derivatives exist away from the grazing set $\gamma_0=\{(x,v) \in \partial\Omega\times\mathbb{R}^3: n(x)\cdot v=0\}$ but up to some boundary $\partial\Omega\times\mathbb{R}^3$.
Taking the normal derivative $\partial _{n}= n(x)\cdot \nabla_{x}=\frac{\nabla_{x} \xi(x)}{|\nabla_{x} \xi(x)|}\cdot \nabla_{x}$ to the Boltzmann equation
directly yields
\[
v\cdot \partial_{n} \nabla_{x} f = - \partial_{n} \partial_{t} f - \nu(\sqrt{\mu}f) \partial_{n}f 
+  \underbrace{\partial_{n} \Gamma_{\mathrm{gain}}(f,f) - \partial_{n} \nu(\sqrt{\mu}f) f}. 
\]

From previous Theorem we know that $\partial_{n} \partial_{t} f, \ \nu(\sqrt{\mu}f) \partial_{n}f \sim \frac{1}{\alpha^{a}}$ with some $a>0$. In this section we show that the underbraced term blows up at the boundary with any velocity for symmetric domains. 

Assume $f_{0} \sim (\sqrt{\mu})^{1-\delta}$ for some $0< \delta \ll1.$ Then there exists $\mathbf{k}_{f_{0}}(v,u)$ such that
\[
\Gamma_{\mathrm{gain}}(f,f_{0}) + \Gamma_{\mathrm{gain}}(f_{0}, f) - \nu(\sqrt{\mu} f) f_{0} := \int_{\mathbb{R}^{3}} \mathbf{k}_{f_{0}}(v,u) f(u) \mathrm{d}u.
\]
 
%
%
%

First consider the diffuse reflection boundary condition. Theorem 2
plays an important role in our proof.

\begin{proposition}[Diffuse BC] Assume $\Omega= \{ x\in \mathbb{R}^{3} : |x| <1 \}$ and $\xi(x) = |x|^{2}-1$. Assume the initial datum $f_{0}$ satisfies, for some $x_{0} \in\partial\Omega,$
\begin{equation}\label{condition_diffuse}
 \Big[\int_{ n(x_{0})\cdot u_{\tau}=0}\mathbf{k}_{f_{0}}(v,u)   u\cdot n(x_{0}) \partial_{n} f_{0} (x_{0}, u) \mathrm{d}u_{\tau}\Big]_{u\cdot n(x_{0})=0} >C >0.
\end{equation} 
Then there exist $t>0 $ such that for all $v\in\mathbb{R}^{3},$
\begin{equation}\label{blow_up}
\partial_{n}  \Gamma_{\mathrm{gain}} (f,f)(t,x_{0},v) - \partial_{n}\nu(\sqrt{\mu}f) f(t,x_{0},v)=\infty.
\end{equation}
\end{proposition}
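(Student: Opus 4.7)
The strategy is to reduce the left--hand side of \eqref{blow_up} to a single logarithmically divergent integral of $\partial_n f$ against the generalized Grad kernel $\mathbf{k}_{f_0}$, and then exploit the explicit $1/(u\cdot n(x_0))$ singularity of $\partial_n f$ at the grazing set that is forced by the Boltzmann equation (formula \eqref{fn}).

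First, I would expand the expression in \eqref{blow_up} by bilinearity,
\begin{equation*}
\partial_n\Gamma_{\mathrm{gain}}(f,f) - \partial_n \nu(\sqrt{\mu}f)\cdot f = \Gamma_{\mathrm{gain}}(\partial_n f, f) + \Gamma_{\mathrm{gain}}(f, \partial_n f) - \nu(\sqrt{\mu}\,\partial_n f)\cdot f,
\end{equation*}
all evaluated at $(t, x_0, v)$. By the local existence lemma (Lemma \ref{local_existence}), $f(t,\cdot) \to f_0$ in $L^\infty$ as $t\to 0$, and $f_0$ replacing $f$ in the second slot of each term, together with the definition of $\mathbf{k}_{f_0}$, gives
\begin{equation*}
\int_{\mathbb{R}^3} \mathbf{k}_{f_0}(v, u)\,\partial_n f(t, x_0, u)\,du \;+\; R(t, v),
\end{equation*}
where the remainder $R(t,v)$ comes from the difference $f - f_0$ paired with $\partial_n f$ and is finite by the $W^{1,p}$ estimate of Theorem \ref{Global_p} together with Lemma \ref{lemma_operator}. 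So it suffices to show that the leading integral blows up at $(t, x_0, v)$ for some positive time and every $v$.

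Second, I would extract the grazing singularity of $\partial_n f(t, x_0, u)$. Applying \eqref{fn} on the boundary gives, for $u\cdot n(x_0)\neq 0$,
\begin{equation*}
\partial_n f(t, x_0, u) = -\frac{N(t, x_0, u)}{u\cdot n(x_0)},\qquad N := \partial_t f + \sum_i (u\cdot\tau_i)\partial_{\tau_i}f - \Gamma_{\mathrm{gain}}(f,f) + \nu(\sqrt{\mu}f)f,
\end{equation*}
where $N$ extends continuously across $u\cdot n(x_0)=0$ since every term on the right is defined at the grazing direction. Writing $u = u_\tau + u_n n(x_0)$ and continuity of $N$ in $t$ give, as $t\downarrow 0$ and $u_n\to 0^+$,
\begin{equation*}
u_n\, \partial_n f(t, x_0, u_\tau + u_n n(x_0)) \longrightarrow [u\cdot n(x_0)\,\partial_n f_0(x_0, u)]_{u\cdot n(x_0)=0},
\end{equation*}
which is exactly the quantity appearing in hypothesis \eqref{condition_diffuse}. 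Splitting the $u$-integral at the shell $|u_n|<\varepsilon$, the outer region is uniformly bounded (Theorem \ref{weigh_W1p}), and the outgoing inner part is
\begin{equation*}
\int_{n(x_0)\cdot u_\tau = 0}\!\!\int_0^{\varepsilon} \mathbf{k}_{f_0}(v, u_\tau + u_n n(x_0))\cdot \frac{[-N(t, x_0, u_\tau + u_n n(x_0))]}{u_n}\, du_n\,du_\tau,
\end{equation*}
whose $u_n$-integral diverges like $\log(\varepsilon/\delta)$ as $\delta\downarrow 0$ because, by \eqref{condition_diffuse}, the tangential integral of the numerator has a uniform positive lower bound $C$ at $u_n = 0$, $t = 0$.

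Third (and main obstacle), I would rule out a cancellation from the incoming side $-\varepsilon < u_n < 0$. On $\gamma_-$ the diffuse condition \eqref{diffuseBC} forces $f(t, x_0, u) = c_\mu \sqrt{\mu(u)} I(t, x_0)$ with $I(t, x_0)$ independent of the tangential coordinate $u_\tau$; consequently, using the tangential and time derivative identities \eqref{boundary_t}--\eqref{boundary_tau} inside \eqref{boundary_n}, the leading $1/u_n$ coefficient of $\partial_n f$ on the $\gamma_-$-side is a specific multiple of $\sqrt{\mu(u_\tau)}$ depending on a single $v$-independent scalar $I(0, x_0)$ and tangential Maxwellian moments, whereas the outgoing coefficient $A(x_0, u_\tau) = -N(0, x_0, u_\tau)$ carries the full $u_\tau$-profile of $f_0$. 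Their integrals against $\mathbf{k}_{f_0}(v, u_\tau)$ give distinct functions of $v$, so the one-sided positivity $\eqref{condition_diffuse}$ cannot be annihilated by the incoming contribution except on a set of $v$ of measure zero; continuity of both coefficients in $v$ then upgrades this to every $v$. Combined with the bounded remainder $R(t, v)$ and the bounded outer part, this establishes \eqref{blow_up}. The delicate quantitative step will be controlling the remainder $R$ and the incoming principal-value contribution uniformly in $v$ so that the logarithmic divergence from the outgoing side survives; this is where the simplicity of the ball $\Omega = B(0,1)$, which makes the tangential Jacobian trivial and allows explicit computation of $N$ at $t=0$, will be essential.
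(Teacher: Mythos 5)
Your overall scaffolding is right (split the integral in $u$, isolate a logarithmic divergence from the grazing set, anchor the sign with hypothesis \eqref{condition_diffuse}), but there is a genuine gap in the central step. You apply \eqref{fn} at the boundary to write $\partial_n f(t,x_0,u)=-N(t,x_0,u)/(u\cdot n(x_0))$ and assert that "$N$ extends continuously across $u\cdot n(x_0)=0$ since every term on the right is defined at the grazing direction." That is not justified: the available a priori bound from Theorem~\ref{weigh_W1p} is a weighted estimate $\|e^{-\varpi\langle v\rangle t}\sqrt{\alpha}\,\nabla_{x,v}f\|_\infty<\infty$, and on the boundary $\alpha(x_0,u)\sim|u\cdot n(x_0)|^2$, so $\partial_{\tau_i}f(t,x_0,u)$ (and $\partial_t f$) can blow up like $1/|u\cdot n(x_0)|$ as $u$ approaches grazing. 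On the \emph{outgoing} side $\gamma_+$ the diffuse condition gives no smoothing, so $N$ need not be bounded there, and the decomposition $\partial_n f=-N/u_n$ does not cleanly isolate a single $1/u_n$ singularity. Your third paragraph, meant to handle the incoming/outgoing mismatch, then resorts to a "measure-zero plus continuity in $v$" argument, which is a heuristic rather than an estimate and in particular does not produce the quantitative lower bound needed to dominate the remainder terms.

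The paper's route avoids this problem by never evaluating $\partial_n f$ on $\partial\Omega$ directly. It writes the normal derivative as a difference quotient, $\bigtriangleup_\varepsilon[\Gamma_{\mathrm{gain}}(f,f)]-\nu(\sqrt{\mu}\bigtriangleup_\varepsilon f)f\sim\int\mathbf{k}_{f_0}(v,u)\bigtriangleup_\varepsilon f(t,x_0,u)\,du$, splits the $u$-integral into $|u_n|\leq\varepsilon$, $\varepsilon\leq|u_n|\leq\sigma$, $|u_n|\geq\sigma$, and in the middle annulus uses the fundamental theorem of calculus to write $\bigtriangleup_\varepsilon f(t,x_0,u)=\int_0^1\partial_n f(t,x_0-\varepsilon r n(x_0),u)\,dr$ at interior points where $\alpha>0$. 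Then $\partial_n f$ at the interior point $x(r)$ is represented along the backward characteristic; its dominant term $\frac{n(x(r))\cdot n(x_{\mathbf b})}{n(x_{\mathbf b})\cdot u}\,u\cdot n(x_{\mathbf b})\,\partial_n f(t-t_{\mathbf b},x_{\mathbf b},u)$ places the boundary evaluation at $(x_{\mathbf b},u)\in\gamma_-$, where the diffuse condition \eqref{boundary_n} makes $u\cdot n(x_{\mathbf b})\partial_n f$ a bounded quantity; a short-time expansion reduces it to $u\cdot n(x_{\mathbf b})\partial_n f_0(x_{\mathbf b},u)$, whose tangential $\mathbf{k}_{f_0}$-average has sign by \eqref{condition_diffuse}; the remaining $1/(n(x_{\mathbf b})\cdot u)\sim 1/\sqrt{\alpha(x(r),u)}=1/\sqrt{u_n^2+C\varepsilon r|u_\tau|^2}$ factor, integrated in $r$ and $u_n$, yields $N^2\log(1/\varepsilon)$ after the bounded remainder and the $\mathbf{II}_b$ tail are subtracted. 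In short: you should abandon the direct boundary application of \eqref{fn}, keep the difference quotient, move the singular evaluation to the interior slab $x(r)$, and use the backward characteristic to land the singular factor on $\gamma_-$, where \eqref{boundary_n} and the hypothesis give the needed sign.
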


We remark that for $0<\theta <\frac{1}{4}$ we have $%
\sup_{t}||e^{\theta |v|^{2}}f(t)||_{\infty }\lesssim ||e^{\theta
|v|^{2}}f_{0}||_{\infty }$ due to Lemma \ref{local_existence} or \cite{Guo10,EGKM} and $|| \alpha^{1/2}%
\partial f(t)||_{\infty }\lesssim 1$ due to Theorem 2. We also remark that the condition (\ref{condition_diffuse}) is very natural for the diffuse BC.
\begin{proof}
We denote the different quotient
\begin{equation}\notag
\begin{split}
\bigtriangleup_{\varepsilon} f(t,x , v) := \frac{f(t, x  + \varepsilon[- n(x )],v   )   - f(t,x , v)}{\varepsilon}.
\end{split}
\end{equation}
Then 
\begin{equation}\notag
\begin{split}
\bigtriangleup_{\varepsilon} \{  \Gamma_{\mathrm{gain}} (f,f)    \}  - \nu(\sqrt{\mu}  \bigtriangleup_{\varepsilon}f ) f& = \Gamma_{\mathrm{gain}}  (\bigtriangleup_{\varepsilon}  f,f ) + 
\Gamma_{\mathrm{gain}}  ( f, \bigtriangleup_{\varepsilon} f ) - \nu(\sqrt{\mu}  \bigtriangleup_{\varepsilon}f ) f.
\end{split}
\end{equation}
Assuming $f\sim  f_{0 } \sim (\sqrt{\mu})^{1- \delta}$ for $0<\delta\ll 1,$ we have
\begin{equation}\label{diff_q_gamma}
\begin{split}
& \Gamma_{\mathrm{gain}}  (\bigtriangleup_{\varepsilon}  f,f ) + 
\Gamma_{\mathrm{gain}}  ( f, \bigtriangleup_{\varepsilon} f ) - \nu(\sqrt{\mu}  \bigtriangleup_{\varepsilon}f ) f\\ 
&\sim   \int_{\mathbb{R}^{3}} {\mathbf{k}_{f_{0}}}(v,u)  \bigtriangleup_{\varepsilon}f   ( x,u) \mathrm{d}u \sim \int_{ \mathbb{R}^{3}} \mathbf{k}_{f_{0}} (v,u)  
\frac{ f( x-\varepsilon n(x), u) - f( x,u) }{\varepsilon} \mathrm{d}u,
\end{split}
\end{equation}
where $\mathbf{k}_{f_{0}}(v,u) \sim \mathbf{k}(v,u)$ in (\ref{k_estimate}) with slightly different exponents. For simplicity let us assume $\mathbf{k}_{f_{0}}(v,u)$ is bounded. We split as
\begin{eqnarray}  
&& \int_{ \mathbb{R}^{3}} \mathbf{k}_{f_{0}} (v,u)  
\frac{ f(t,x-\varepsilon n(x), u) - f(t,x,u) }{\varepsilon} \mathrm{d}u \nonumber\\
&=&  \underbrace{\int_{|n(x)\cdot u| \leq \varepsilon}}_{{\mathbf{I}}} +  \underbrace{\int_{\varepsilon \leq |n(x)\cdot u| \leq\sigma } }_{\mathbf{II}}+   \underbrace{\int_{\sigma \leq |n(x)\cdot u|   } }_{\mathbf{III}}.\label{split_nonexistence}
\end{eqnarray}
The first term is bounded as $\mathbf{I} \lesssim O(1) || e^{\theta |v|^{2}} f ||_{\infty}.$ The last term is bounded due to Theorem \ref{weigh_W1p}. Since $\xi(x)=|x|^{2}-1$, for all $0< r< \varepsilon \ll 1,$
\begin{equation}\label{ball_n}
\begin{split}
\nabla \xi(x- r n(x))\cdot u &= \nabla \xi(x) \cdot u - \int^{r}_{0} \big\{ \nabla \xi(x) \cdot \nabla^{2} \xi(x- r^{\prime} n(x)) \cdot u \big\}\mathrm{d} r^{\prime}\\
&= \nabla \xi(x)\cdot u - 2\int^{r}_{0} \nabla\xi(x) \cdot u \mathrm{d}r^{\prime}\\
&= \nabla \xi(x) \cdot u + O(\varepsilon) |\nabla \xi(x) \cdot u| \\
& \sim  \nabla \xi(x) \cdot u.
\end{split}
\end{equation}
Therefore $\sigma \leq |n(x)\cdot u|$ implies $\sigma \lesssim \sqrt{\alpha(x,u)}$ and 
\begin{equation}\notag
\begin{split}
\mathbf{III} & \ \lesssim \  || e^{- \varpi \langle v\rangle t} \sqrt{\alpha} \nabla_{x} f (t) ||_{\infty} \int_{\sigma \lesssim \sqrt{\alpha}} \frac{e^{\varpi \langle u \rangle t}}{\sqrt{\alpha}}  \mathbf{k}_{f_{0}}(v,u)\mathrm{d}u\\
& \ = \ \int_{\sigma \leq \sqrt{\alpha}, |u| \leq N}   + \int_{\sigma \leq \sqrt{\alpha}, |u| \geq N}   \  \lesssim \ \frac{ O(1) + e^{C Nt}}{\sigma}. 
\end{split}
\end{equation}
For the second term of (\ref{split_nonexistence}) we use (\ref{ball_n}) to conclude, for $0 \leq r \leq \varepsilon$, 
\[
\varepsilon \ \lesssim \ |n(x-rn(x)) \cdot u| \  \lesssim \sigma. 
\]
Therefore $f(t,x-\varepsilon n(x), u)$ is differentiable so that 
\begin{equation}\label{n_epsilon}
\frac{f(t, x-\varepsilon n (x), u) - f(t,x,u)}{\varepsilon} = \int_{0}^{1} 
\partial_{n} f(t,x-\varepsilon r n(x), u)\mathrm{d}r.
\end{equation}
We further split $\mathbf{II}$ as
\[
\mathbf{II} \ =  \ \underbrace{\int_{\substack{ \varepsilon \leq |n(x)\cdot u| \leq \sigma \\  \frac{1}{N}\leq |  u  |  \leq N}}}_{\mathbf{II}_{a}} \ + \   \underbrace{\int_{\substack{ \varepsilon \leq |n(x)\cdot u| \leq \sigma \\  |u| \leq \frac{1}{N},| u  |  \geq N}} }_{\mathbf{II}_{b}}. 
\]
For the second term we use Theorem \ref{weigh_W1p} to have
\begin{equation}\label{II_b}
\begin{split}
\mathbf{II}_{b} & \lesssim e^{-N} \int_{0}^{1} \mathrm{d}r \int_{\varepsilon \lesssim |u_{n} | \lesssim \sigma} \mathrm{d} u_{n}\int_{|u_{\tau}| \gtrsim N } \mathrm{d}u_{\tau} \ 
\mathbf{k}_{f_{0}} (v,u) \partial_{n} f(t, x- \varepsilon r n(x), u)\\
& \lesssim e^{-N} \int_{0}^{1} \mathrm{d}r \int_{\varepsilon \lesssim |u_{n} | \lesssim \sigma} \mathrm{d} u_{n}\int_{|u_{\tau}| \gtrsim N } \mathrm{d}u_{\tau} \ 
\frac{\mathbf{k}_{f_{0}} (v,u) }{ \sqrt{ |u_{n}|^{2} + C r \varepsilon N^{2}  }  },
\end{split}
\end{equation}
where we used
\[
\xi( x- \varepsilon r n(x)) = {\xi(x)}  + C \varepsilon r = C \varepsilon r .
\]

The main term is $\mathbf{II}_{a}:$
\begin{equation}\notag
\begin{split}
 \mathbf{II}_{a} & = \int_{0}^{1} \mathrm{d}r \iint_{ \substack{\varepsilon \lesssim |u_{n}| \lesssim \sigma  \\ \frac{1}{N} \leq |u| \leq N  }}  \mathrm{d}u_{\tau} \mathrm{d}u_{n}  \ \mathbf{k}_{f_{0}} (v,u) \partial_{n} f(t,x-\varepsilon r n(x),u).
\end{split}
\end{equation}
From (\ref{41}), for $\varepsilon \lesssim |u_{n}| \lesssim \sigma$ and $\frac{1}{N} \leq |u| \leq N,$ 
\[
t_{\mathbf{b}}(x-\varepsilon r n(x), u)\lesssim \frac{\sqrt{\alpha (x-\varepsilon r n(x), u)}}{|u|^{2}} \lesssim \frac{\sqrt{\sigma^{2}+ \varepsilon r N^{2}}}{\frac{1}{N^{2}}} \lesssim N^{2} \sqrt{\sigma^{2} + \varepsilon  N^{2}}.
\]
Let $x(r)= x-\varepsilon r n(x).$ For $\varepsilon \lesssim |u_{n}| \lesssim \sigma$ and $\frac{1}{N} \leq |u| \leq N$ and $t \gtrsim N^{2} \sqrt{\sigma^{2} + \varepsilon N ^{2}},$
\begin{equation}\notag
\begin{split}
&\partial_{n}f(t, x(r) ,u)\\
= & \ n(x  (r))\cdot  \nabla_{x } \Big\{ f(t-t_{\mathbf{b}}  , x_{\mathbf{b}},u) + \int^{t_{\mathbf{b}}}_{0} [ \Gamma_{\mathrm{gain}}(f,f) - \nu(F)f ](t-s, x(r)-su,u) \mathrm{d}s  \Big\}\\
=& \ \sum_{i=1}^{2} n(x (r))\cdot \tau_{i} (x_{\mathbf{b}}) \partial_{\tau_{i}} f(t-t_{\mathbf{b}}, x_{\mathbf{b}}, u) 
+ \frac{n(x (r))\cdot n(x_{\mathbf{b}})}{ n(x_{\mathbf{b}}) \cdot u }
\underline{u\cdot n(x_{\mathbf{b}}) \partial_{n} f(t-t_{\mathbf{b}}, x_{\mathbf{b}},u)}
\\
& + \int^{t_{\mathbf{b}}}_{0} n(x(r) )\cdot  \{ \Gamma_{\mathrm{gain}}(\nabla_{x} f,f) + \Gamma_{\mathrm{gain}}(f,\nabla_{x}f) - \nu(\sqrt{\mu}\nabla_{x} f) f - \nu(\sqrt{\mu}f) \nabla_{x}f  \}(t-s, x(r)-su,u) \mathrm{d}s.
\end{split}
\end{equation}
Now we expand in time for the underlined term and  choose $0< t \ll 1$ ($N^{2} \sqrt{\sigma^{2} + \varepsilon N^{2}}\ll 1$) so that
\begin{equation}\notag
\begin{split}
&u\cdot n(x_{\mathbf{b}}) \partial_{n} f(t-t_{\mathbf{b}}, x_{\mathbf{b}}, u)\\
&=  u\cdot n(x_{\mathbf{b}}) \partial_{n} f_{0}( x_{\mathbf{b}}, u)
+\int^{t-t_{\mathbf{b}}}_{0} \{u\cdot n(x_{\mathbf{b}})\} \partial_{t} \partial_{n} f(s, x_{\mathbf{b}}, u)
 \mathrm{d}s\\
 & = u\cdot n(x_{\mathbf{b}}) \partial_{n} f_{0}( x_{\mathbf{b}}, u)
 + O(1)t e^{\varpi N t} || e^{-\varpi \langle v\rangle t} \sqrt{\alpha} \partial_{t} \partial_{n} f(t)||_{\infty} .
 \end{split}
\end{equation}
The tangential derivative term is bounded by
\begin{equation}\notag
\begin{split}
&|n(x  (r)) \cdot \tau_{i} (x_{\mathbf{b}} (x (r),u)) ||\partial_{\tau_{i}} f(t-t_{\mathbf{b}}, x_{\mathbf{b}}, u)|\\
 \lesssim   & \ |n(x_{\mathbf{b}})\cdot \tau_{i} (x_{\mathbf{b}})  + 
 O(t_{\mathbf{b}} (x(r), u) ) u\cdot \nabla_{x} n ( x(r))
||\partial_{\tau_{i}} f(t-t_{\mathbf{b}}, x_{\mathbf{b}}, u)|\\
 \lesssim & \ \frac{\sqrt{\alpha(x_{\mathbf{b}},u)}}{|u|} | \nabla_{x} f(t-t_{\mathbf{b}},x_{\mathbf{b}},u)|\\
  \lesssim& \  Ne^{\varpi Nt} || e^{-\varpi \langle  v\rangle t} \sqrt{\alpha} \nabla_{x} f(t,x,v) ||_{\infty},
\end{split}
\end{equation}
and the time integration terms are bounded by
\begin{equation}\notag
\begin{split}
&|| e^{\theta|v|^{2}} f||_{\infty}   \int^{t_{\mathbf{b}}}_{0} \int_{\mathbb{R}^{3}}\frac{e^{-C|u-u^{\prime}|^{2}}}{|u-u^{\prime}|^{2-\kappa}} 
|\partial_{x} f (t-s,  x(r)-su, u^{\prime})|
\mathrm{d}u^{\prime}\mathrm{d}s\\
& +  N e^{\varpi  N t} || e^{\theta|v|^{2}} f||_{\infty}  || e^{-\varpi \langle v\rangle t} \sqrt{\alpha} \nabla_{x} f(t) ||_{\infty}\\
\lesssim & \ || e^{\theta|v|^{2}} f||_{\infty}  || e^{-\varpi \langle v\rangle t} \sqrt{\alpha} \nabla_{x} f(t) ||_{\infty}  \times e^{\varpi Nt}
\Big\{  \int^{t_{\mathbf{b}}}_{0} \int_{\mathbb{R}^{3}}  e^{-\varpi \langle u\rangle (t-s)}
\frac{e^{-C|u-u^{\prime}|^{2}}}{|u-u^{\prime}|^{2-\kappa}} 
\frac{|u^{\prime}|^{\delta}}{ \alpha( x(r) -su, u^{\prime})^{\frac{1+\delta}{2}}}
\mathrm{d}u^{\prime} \mathrm{d}s
 \Big\}\\
 \lesssim & \ || e^{\theta|v|^{2}} f||_{\infty}  || e^{- \varpi \langle v\rangle t} \sqrt{\alpha} \nabla_{x} f(t) ||_{\infty}  \frac{ e^{\varpi Nt} C_{N}}{[\alpha( x(r), u)]^{ {\delta}/{2}}}.
\end{split}
\end{equation}

Now we plug these estimates into $\mathbf{II}_{a}$ to have
\begin{equation}\notag
\begin{split}
\mathbf{II}_{a} + \mathbf{II}_{b}& \gtrsim \int^{1}_{0}  
 \int_{ \substack{ \varepsilon \lesssim |u_{n}| \lesssim \sigma \\  \frac{1}{N} \lesssim |u| \lesssim N }} \frac{ 1 }{\sqrt{\alpha(x_{0}- \varepsilon r n(x_{0}),u)}}   \Big[\int_{\cdot n(x_{0})\cdot u_{\tau}=0}\mathbf{k}_{f_{0}}(v,u)   u\cdot n(x_{0}) \partial_{n} f_{0} (x_{0}, u) \mathrm{d}u_{\tau}\Big]_{u\cdot n(x_{0})=0} \\
& - \Big\{O(t) e^{-\varpi N t} || e^{-\varpi \langle v\rangle t} \sqrt{\alpha} \partial_{t} \partial_{n} f(t) ||_{\infty} + e^{-N}\Big\} \int^{1}_{0}  
 \iint_{ \substack{ \varepsilon \lesssim |u_{n}| \lesssim \sigma \\  \frac{1}{N} \lesssim |u| \lesssim N }}  \frac{ 1 }{\sqrt{\alpha( x_{0}- \varepsilon r n(x_{0}),u)}}   \\
 & - O(1) N e^{ \varpi Nt } || e^{-\varpi \langle v\rangle t} \sqrt{\alpha} \nabla_{x} f(t) ||_{\infty}\\
 &- O_{N}(1) e^{\varpi N t}  || e^{\theta |v|^{2}} f_{0}||_{\infty} || e^{-\varpi \langle v\rangle t} \sqrt{\alpha} \nabla_{x} f(t) ||_{\infty}   \int^{1}_{0}  
 \iint_{ \substack{ \varepsilon \lesssim |u_{n}| \lesssim \sigma \\  \frac{1}{N} \lesssim |u| \lesssim N }}  \frac{ 1 }{[{\alpha( x_{0}- \varepsilon r n(x_{0}),u)}]^{\delta/2}}. 
 \end{split}
\end{equation}
Due to (\ref{condition_diffuse}), for $N\gg1$ and $t\ll 1$ with $N^{2} \sqrt{\sigma^{2} + \varepsilon N^{2}}\ll1$
\begin{equation}\notag
\begin{split}
\mathbf{II}& \  \gtrsim  \  \int^{1}_{0}  
 \int_{ \substack{ \varepsilon \lesssim |u_{n}|  \lesssim \sigma \\  \frac{1}{N} \lesssim |u| \lesssim N }} \frac{ 1 }{ \sqrt{ |u_{n}|^{2} + C \varepsilon r |u_{\tau}|^{2}  }   }  \mathrm{d}u_{n} \mathrm{d}u_{\tau} \mathrm{d}r\\
& \ \gtrsim \ \int_{\varepsilon \leq |u_{n}| \leq \sigma}   \frac{N^{2}}{|u_{n}| + \sqrt{\varepsilon}N}  \mathrm{d}u_{\tau} \mathrm{d}u_{n}\\
& \ \gtrsim \  N^{2}\ln \frac{1}{\varepsilon + \sqrt{\varepsilon} N}   - O_{N, \sigma}(1)\\
& \  \gtrsim \ \frac{N^{2}}{2} \ln \frac{1}{\varepsilon} - o(1) \ln\frac{1}{\varepsilon} - O_{N, \sigma}(1) \rightarrow \infty.
\end{split}
\end{equation}
\end{proof}

For the bounce-back case, we identify the condition for non-existence of $\nabla^{2}f$ up to the boundary:
\begin{proposition}[Bounce-Back BC]\label{bb_nonexistence}
Assume $\Omega= \{ x\in \mathbb{R}^{3} : |x| <1 \}$ and $\xi(x) = |x|^{2}-1$. Assume the initial datum $f_{0}$ satisfies, for some $x_{0} \in\partial\Omega$ and some $v_{0} \in\mathbb{R}^{3}$ with $|v_{0}|\sim 1$ with $n(x_{0})\cdot v_{0} =0,$ 
\begin{equation}\label{bb_condition}
 \int_{n(x_{0})\cdot u_{\tau} =0 } \mathbf{k}_{f_{0}} (v,u) v_{0} \cdot \nabla_{x} f_{0} (x_{0}, v_{0}) \mathrm{d}u_{\tau}> C>0,
\end{equation} 
where $u_{\tau} = u- [u\cdot n(x_{0})] n(x_{0}).$ Then there exists $t>0$ we have (\ref{blow_up}).
\end{proposition}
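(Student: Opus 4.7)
The plan is to mimic the diffuse-BC argument almost line-by-line, with the key change being the boundary-reduction formula along characteristics. I would introduce the same difference quotient $\bigtriangleup_\varepsilon f(t,x,v) = \varepsilon^{-1}[f(t,x-\varepsilon n(x),v)-f(t,x,v)]$ and, as in (\ref{diff_q_gamma}), reduce the question to showing that
\begin{equation*}
\int_{\mathbb{R}^3} \mathbf{k}_{f_0}(v,u)\, \bigtriangleup_\varepsilon f(t,x_0,u)\,\mathrm{d}u \;\longrightarrow\; +\infty \quad\text{as}\quad \varepsilon\downarrow 0.
\end{equation*}
Split as in (\ref{split_nonexistence}) into $\mathbf{I}+\mathbf{II}+\mathbf{III}$ with the cut-offs $|n(x_0)\cdot u|\le\varepsilon$, $\varepsilon\le|n(x_0)\cdot u|\le\sigma$, and $|n(x_0)\cdot u|\ge\sigma$. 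The contributions $\mathbf{I}$ and $\mathbf{III}$ are controlled exactly as in the diffuse case: $\mathbf{I}\lesssim \|e^{\theta|v|^2}f\|_\infty$ using the boundedness of the kernel, and $\mathbf{III}$ is handled by Theorem \ref{main_bb} through the weighted bound on $e^{-\varpi\langle v\rangle t}\alpha\partial_x f$, noting that $|n(x_0)\cdot u|\ge\sigma$ forces $\alpha\gtrsim\sigma^2$ on the ball.

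The only new work is in the medium band $\mathbf{II}$. There $f(t,x-\varepsilon r n(x),u)$ is differentiable, so $\bigtriangleup_\varepsilon f = \int_0^1 \partial_n f(t,x(r),u)\,\mathrm{d}r$ with $x(r)=x_0-\varepsilon r n(x_0)$. For short enough $t>0$ (small compared to $N^{-2}(\sigma^2+\varepsilon N^2)^{-1/2}$ so that the backward bounce-back trajectory reaches $t=0$ after exactly one reflection), the Duhamel representation with bounce-back BC reads
\begin{equation*}
f(t,x(r),u) = e^{-\int \nu}\, f_0\bigl(X(r,u),-u\bigr) + \int_0^t [\Gamma_{\mathrm{gain}}-\nu f]\,\mathrm{d}s,\qquad X(r,u)=x_{\mathbf{b}}+(t-t_{\mathbf{b}})u.
\end{equation*}
Applying $\partial_n=n(x)\cdot\nabla_x$ and using (\ref{xb}), the crucial identity is
\begin{equation*}
\partial_n X(r,u) \;=\; \partial_n x_{\mathbf{b}} - u\,\partial_n t_{\mathbf{b}} \;=\; n(x) - 2\,\frac{n(x)\cdot n(x_{\mathbf{b}})}{u\cdot n(x_{\mathbf{b}})}\,u,
\end{equation*}
so the leading singular part of $\partial_n f(t,x(r),u)$ is
\begin{equation*}
-\,\frac{2\bigl(n(x)\cdot n(x_{\mathbf{b}})\bigr)}{u\cdot n(x_{\mathbf{b}})}\,\bigl[u\cdot \nabla_x f_0\bigr]\bigl(X(r,u),-u\bigr) \;+\; \text{bounded remainder},
\end{equation*}
the remainder being controlled by $\|e^{\theta|v|^2}\partial_t f\|_\infty$ and the weighted $C^1$ norm from Theorem \ref{main_bb}, exactly in parallel with the tangential and collisional remainders of the diffuse proof.

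Integrating this singular part over the medium band $\varepsilon\le|u_n|\le\sigma$ with $\tfrac1N\le|u|\le N$ yields the main contribution
\begin{equation*}
\mathbf{II} \;\gtrsim\; \int_0^1\!\int_{\varepsilon\le|u_n|\le\sigma}\!\int_{|u_\tau|\sim 1}\frac{\mathbf{k}_{f_0}(v,u)\,[u\cdot\nabla_x f_0(x_0,u)]}{|u_n|+C\sqrt{\varepsilon r}\,|u_\tau|}\,\mathrm{d}u_\tau\,\mathrm{d}u_n\,\mathrm{d}r,
\end{equation*}
where I used (\ref{ball_n})-(\ref{II_b}) to relate $|u\cdot n(x_{\mathbf{b}})|$ to the denominator after integrating in $r$. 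Performing the $u_n$-integral produces a logarithm $\log(1/\varepsilon)$, and the remaining $u_\tau$-integration is bounded below, after localizing $u_\tau$ near $v_0$, precisely by the quantity in hypothesis (\ref{bb_condition}). Thus $\mathbf{II}\to+\infty$ as $\varepsilon\downarrow 0$, while $\mathbf{I}+\mathbf{III}$ remain bounded, giving the claimed blow-up. The main obstacle, just as in the diffuse case, is the bookkeeping of the remainder terms in $\partial_n f$: I would need to check that the $\partial_t$, tangential $\nabla_x$, and collision-integral contributions along the bounce-back trajectory are each $O(\log^{1-\delta}(1/\varepsilon))$ or better, so that they cannot cancel the leading $\log(1/\varepsilon)$ divergence extracted from the $\partial_n t_{\mathbf{b}}\sim 1/u_n$ term.
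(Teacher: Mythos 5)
Your proposal has a genuine gap, and it is precisely at the step you flagged as the ``main obstacle,'' but the trouble is worse than a bookkeeping issue. Your Duhamel representation assumes that the backward bounce-back trajectory from $(t,x(r),u)$ hits $t=0$ after exactly one reflection, and you note this requires $t$ small relative to $t_{\mathbf b}$. But for $u$ in the band $\varepsilon\le|u_n|\le\sigma$, the bounce-back period is $t_{\mathbf b}(x^1,u)\sim \sqrt{\alpha}/|u|^2\sim\sqrt{|u_n|^2+\varepsilon r\,|u_\tau|^2}$, so the single-bounce condition $t\lesssim t_{\mathbf b}$ forces $|u_n|\gtrsim t$ uniformly in $\varepsilon$. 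The medium-band integral over $u_n$ then runs over $[c\,t,\sigma]$, a range independent of $\varepsilon$, and $\int_{ct}^{\sigma}\mathrm d u_n/|u_n|$ is simply a constant. There is no divergence as $\varepsilon\downarrow 0$; the regime $u_n\to 0$, which is where the singularity must come from, is exactly where your single-bounce Duhamel formula is invalid.

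This is not a defect you can patch by shrinking $t$, because the singularity in the bounce-back case is a genuinely multi-bounce phenomenon. The paper's proof uses the full $\ell$-cycle Duhamel formula (\ref{D_duhamel_bb}) together with Lemma \ref{change_time_bb} (more precisely its generalization (\ref{change_time_bb_ell})) and the explicit cycle derivatives of Lemma \ref{estimate_bb}. The leading singular object there is $\partial_n t^{\ell_*}$, which carries a factor $\ell_*\sim t|u|^2/\sqrt{\alpha}$ from the number of bounces, so $\partial_n t^{\ell_*}\cdot v^{\ell_*}\sim t|u|^3/\alpha$. The dominant term is then $e^{-\int\nu}\big[\partial_n x^{\ell_*}-\partial_n t^{\ell_*}v^{\ell_*}\big]\cdot\nabla_x f_0$ paired with the cancellation identity (\ref{diff_nv}), and after the $u_n$ and $r$ integrations one gets a lower bound of order $1/\varepsilon$ (see (\ref{bb_dominant})), which is strong enough to beat the $O(1/\varepsilon)$-sized remainders (controllable by Theorem \ref{main_bb} because they come with small $\|e^{\theta|v|^2}f_0\|_\infty$-factors). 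Your single-bounce picture only yields $1/\sqrt{\alpha}\sim 1/u_n$, which even if allowed down to $u_n\sim\varepsilon$ would give merely $\log(1/\varepsilon)$ and could be swamped by the $1/\varepsilon$-sized remainders. To fix the argument you need to replace the one-bounce Duhamel formula by the full bounce-back cycle expansion and extract the $\ell_*$-cumulative singularity in $\partial_n t^{\ell_*}$.
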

We remark that $v_{0}\cdot \nabla_{x} f_{0}(x_{0},v_{0})$ is rather arbitrary for $v_{0}\cdot n(x_{0})=0.$
\begin{proof} 
We choose $(x,v) \in \bar{\Omega} \times\mathbb{R}^{3}$ so that $x^{\ell}\sim x_{0}$ and $v^{\ell}\sim \pm v_{0}$ for all $\ell\in\mathbb{N}$. Then 
\begin{eqnarray*}  
&& \int_{ \mathbb{R}^{3}} \mathbf{k}_{f_{0}} (v,u)  
\frac{ f(t,x-\varepsilon n(x), u) - f(t,x,u) }{\varepsilon} \mathrm{d}u \nonumber\\
&=&  \int_{|n(x)\cdot u| \leq \varepsilon} +  \int_{\varepsilon \leq |n(x)\cdot u|  }.
\end{eqnarray*}
The first terms is bounded. Due to (\ref{ball_n}) we have $|n(x)\cdot u| \geq \varepsilon$ implies $|n(x-r n(x))\cdot u| \gtrsim \varepsilon$ for all $0\leq r \leq \varepsilon.$ Then by Theorem \ref{main_bb}, the function $f$ is differentiable and the second term equals
\begin{eqnarray}
&&\int_{ |n(x)\cdot u| \geq \varepsilon}  \mathbf{k}_{f_{0}} (v,u)   \frac{ f(t,x-\varepsilon n(x), u) - f(t,x,u) }{\varepsilon} \mathrm{d}u  \nonumber
\\
&=  & \int_{0}^{1} \int_{ |n(x)\cdot u| \geq \varepsilon}  \mathbf{k}_{f_{0}} (v,u)  
\partial_{n} f(t, x- \varepsilon r n(x),u) 
 \mathrm{d}u
\mathrm{d}r\nonumber  \\
&=&  \int_{0}^{1} \int_{ \varepsilon \leq  |n(x)\cdot u| \leq 1,  |\tau(x)\cdot u| \leq N} + \int_{0}^{1} \int_{\varepsilon \leq  |n(x)\cdot u| \leq 1,  |\tau(x)\cdot u| \geq N}  +  \int_{0}^{1} \int_{ |n(x)\cdot u| \geq 1} .  \label{k_bb}
\end{eqnarray}

For the third term of (\ref{k_bb}) we use Theorem \ref{main_bb} to have 
\[
|\partial_{n} f(t,x-\varepsilon r n(x), u)| \lesssim \frac{\langle u\rangle^{2} e^{\varpi \langle u\rangle t}}{\alpha(x-\varepsilon r n(x), u)} \lesssim 
 {\langle u\rangle^{2} e^{\varpi \langle u\rangle t}},
\]
and therefore the third term of (\ref{k_bb}) is bounded. For the second term of (\ref{k_bb}) we use Theorem \ref{main_bb} to bound
\begin{equation}\label{second_bb}
\begin{split}
&|| e^{-\varpi \langle v\rangle t} \frac{\alpha}{\langle v\rangle^{2} } \nabla_{x} f(t)||_{\infty} 
\times  \int_{0}^{1} \mathrm{d}r \int_{\varepsilon}^{1} \mathrm{d} u_{n} \frac{e^{\varpi \delta} N^{2}}{ |u_{n}|^{2} + C r\varepsilon N^{2}} \int_{\mathbb{R}^{2}} \mathrm{d} u_{\tau} \mathbf{k}_{f_{0}}(v,u)\\
&\lesssim e^{-N}
\times  \int_{0}^{1} \mathrm{d}r \int_{\varepsilon}^{1} \mathrm{d} u_{n} \frac{1}{ |u_{n}|^{2} + C r\varepsilon N^{2}}.
\end{split}
\end{equation}

Now we focus on the first and the second terms of (\ref{k_bb}). Set $y= x-\varepsilon r n(x)$ for $|\partial_{n} f(t,x-\varepsilon^{\prime} n(x), u)|.$ We use (\ref{D_duhamel_bb}), Theorem \ref{main_bb}, and Lemma \ref{estimate_bb}, we have
\begin{equation}\notag
\begin{split}
&\partial_{n} f(t,y,u)\\
& = e^{-\int^{t}_{0} \nu(\sqrt{\mu}f)(\tau) \mathrm{d}\tau} \Big\{[\partial_{n} x^{\ell_{*}(0)} -\partial_{n}t^{\ell_{*} (0) } v^{\ell_{*}(0)}] \cdot \nabla_{x} f_{0}(X_{\mathbf{cl}}(0), V_{\mathbf{cl}}(0)) + v^{\ell_{*}(0)}\cdot \nabla_{v} f_{0}(X_{\mathbf{cl}}(0), V_{\mathbf{cl}}(0)) \Big\}\\
& +    O( ||   e^{\theta |v|^{2}} f_{0} ||_{\infty} )
\frac{t^{2}|u|^{2}}{\alpha(y,u)} e^{-\frac{\theta}{4}|u|^{2}} || e^{\theta|v|^{2}} \partial_{t }f_{0 }||_{\infty}
 + O(||  e^{\theta |v|^{2}} f_{0} ||_{\infty}  )  \frac{t(1+t) |u|^{2}}{\alpha(y,u)} e^{-\frac{\theta}{4}|u|^{2}}  
\\
& + O( ||   e^{\theta|v|^{2}} f_{0} ||_{\infty}  )\sup_{0\leq s \leq t} ||e^{-\varpi \langle v\rangle t} \alpha  \partial_{x}f(t) ||_{\infty}\\
&  \ \ \ \times      {\int_{0}^{t} \int_{\mathbb{R}^{3}} \sum_{\ell} \mathbf{1}_{[t^{\ell+1}, t^{\ell})}(s) \frac{e^{-\frac{\theta}{8} |v^{\ell}-u^{\prime}|^{2}}  }{|v^{\ell}-u^{\prime}|^{2-\kappa}}   \frac{|\partial_{n} x^{\ell}|}{  e^{-\varpi \langle u^{\prime} \rangle s} \alpha(X_{\mathbf{cl}}(s), u^{\prime}) } \mathrm{d}u^{\prime} \mathrm{d}s } .
\end{split}
\end{equation} 
Using Lemma \ref{specular_nonlocal}, Lemma \ref{estimate_bb}, and (\ref{exponent}), for $0<\varepsilon \ll 1$ we bound the last integration by
\begin{equation}\notag
\begin{split}
 & e^{\varpi \langle u\rangle t} \frac{|u|}{\sqrt{\alpha(y,u)}}\int_{0}^{t} \int_{\mathbb{R}^{3}} e^{- \varpi \langle u\rangle (t-s)}    \frac{e^{-\frac{\theta}{8} |V_{\mathbf{cl}}(s)-u^{\prime}|^{2}}  }{| V_{\mathbf{cl}}(s)-u^{\prime}|^{2-\kappa}}\frac{1}{\alpha(X_{\mathbf{cl}}(s),u^{\prime})}
 \mathrm{d}u^{\prime} \mathrm{d}s\\
 &  \lesssim O(\varepsilon) e^{\varpi \langle u\rangle t} \frac{|u|}{\langle u\rangle  } \frac{1}{\alpha(y,u)}.
 \end{split}
\end{equation}
Now by the explicit computations in Lemma \ref{estimate_bb}
\begin{equation}\notag
\begin{split}
&e^{-\int^{t}_{0} \nu(\sqrt{\mu}f)(\tau) \mathrm{d}\tau} \Big\{[\partial_{n} x^{\ell_{*}(0)} -\partial_{n}t^{\ell_{*} (0) } v^{\ell_{*}(0)}] \cdot \nabla_{x} f_{0}(X_{\mathbf{cl}}(0), V_{\mathbf{cl}}(0)) + v^{\ell_{*}(0)}\cdot \nabla_{v} f_{0}(X_{\mathbf{cl}}(0), V_{\mathbf{cl}}(0)) \Big\}\\
\geq & \ e^{-t\langle u\rangle ||  e^{\theta|v|^{2}} f_{0 }||_{\infty} } 
 \underbrace{\Big\{ \ell_{*}(0)  \frac{n(y)\cdot \nabla \xi(x^{1})}{ v\cdot \nabla \xi(x^{1})}   + (\ell_{*}(0)-1) \frac{n(y)\cdot \nabla \xi(x^{2})}{-v\cdot \nabla \xi(x^{2})} \Big\} }
 V_{\mathbf{cl}}(0) \cdot \nabla_{x} f_{0}(X_{\mathbf{cl}}(0), V_{\mathbf{cl}}(0))\\
& - C_{\xi}\frac{|u|}{\sqrt{\alpha(y,u)}} | \nabla_{x} f_{0} (X_{\mathbf{cl}}(0), V_{\mathbf{cl}}(0))|   -C_{\xi} |u| |\nabla_{v} f_{0} (X_{\mathbf{cl}}(0), V_{\mathbf{cl}}(0)) |\\
\geq &  \ e^{-t\langle u\rangle ||  e^{\theta|v|^{2}} f_{0 }||_{\infty} }  O_{\xi}(1) \frac{t|u|^{2}}{\alpha(y,u)}  V_{\mathbf{cl}}(0) \cdot \nabla_{x} f_{0}(X_{\mathbf{cl}}(0), V_{\mathbf{cl}}(0))\\
&- \frac{O_{\xi}(1+ t|u|)}{\sqrt{\alpha(y,u)}} |u||\nabla_{x}f_{0}(X_{\mathbf{cl}} (0) , V_{\mathbf{cl}}(0))|  -C_{\xi} |u| |\nabla_{v} f_{0} (X_{\mathbf{cl}}(0), V_{\mathbf{cl}}(0)) |,
\end{split}
\end{equation}
where for the underbraced term we used 
\begin{equation}\label{diff_nv}
\begin{split}
& n(y)\cdot \left\{ \frac{n(x_{\mathbf{b}})}{n(x_{\mathbf{b}})\cdot v}-\frac{%
n(y)}{n(y)\cdot v}\right\}  \\
 =& \ \frac{n(y)\cdot \nabla \xi (y-t_{\mathbf{b}}v)\big(\nabla \xi (y)\cdot v%
\big)-n(y)\cdot \nabla \xi (y)\big(\nabla \xi (y-t_{\mathbf{b}}v)\cdot v\big)%
}{\big(\nabla \xi (y)\cdot v\big)\big(\nabla \xi (y-t_{\mathbf{b}}v)\cdot v%
\big)} \\
 =& \ \frac{t_{\mathbf{b}}\big\{\big(n(y)\cdot \lbrack v\cdot \nabla ]\nabla
\xi (y-\tilde{\tau}v)\big)(\nabla \xi (y)\cdot v)-n(y)\cdot \nabla \xi (y)%
\big(v\cdot \nabla ^{2}\xi (y-\tilde{\tau}v)\cdot v\big)\big\}}{\big(\nabla
\xi (y)\cdot v\big)\big(-\nabla \xi (y-t_{\mathbf{b}}v)\cdot v\big)} \\
 =& \  
- \frac{t_{\mathbf{b}}}{(-\nabla \xi (x_{\mathbf{b}})\cdot v)}  
\frac{\big(v\cdot \nabla ^{2}\xi (y-\tilde{\tau}v)\cdot v\big)}{n(y)\cdot v}+
 \frac{t_{\mathbf{b}}}{(-\nabla \xi (x_{\mathbf{b}})\cdot v)}
\big(n(y)\cdot \lbrack v\cdot \nabla ]\nabla \xi (y-\tilde{\tau}v)\big)%
   \\
 := & \ -\frac{A(y,v)}{n(y)\cdot v}+B(y,v),
 \end{split}
\end{equation}%
where for some $\tilde{\tau}\in \lbrack 0,t_{\mathbf{b}}],$ and from (\ref{40}), (\ref{41}), and the Velocity
lemma (Lemma \ref{velocity_lemma}) we have $A\geq 0$ and 
\begin{equation*}
A(y,v)\geq C_{\xi }\frac{v}{|v|}\cdot \nabla ^{2}\xi (y-\tilde{\tau}v)\cdot
\frac{v}{|v|}\gtrsim _{\Omega }1,\ \ \ B(y,v)\sim _{\Omega }\frac{1}{|v|},
\end{equation*}%
and therefore finally the underbraced term has the following explicit lower bound:
\begin{equation}\notag
\begin{split}
&\ell_{*}(0) \Big[  \frac{n(y)\cdot \nabla \xi(x^{1})}{v\cdot \nabla \xi(x^{1})} - \frac{n(y)\cdot \nabla \xi(x^{2})}{v\cdot \nabla \xi(x^{2})} \Big] + \frac{n(y)\cdot \nabla \xi(x^{2})}{v\cdot \nabla \xi(x^{2})}\\
&= \ell_{*}(0) \frac{A(y,v)}{n(x^{1})\cdot v} + \ell_{*}(0) B(y,v)  + O\Big(\frac{1}{n(x^{1})\cdot v}\Big)\\
&= O_{\xi}(1)  \frac{t |v|^{2}}{\alpha(y,v)}+ \frac{O_{\xi}(1+t|v|) }{ \sqrt{\alpha(y,v)}}
.
\end{split}
\end{equation} 
Therefore
\begin{equation}\label{fn_bb}
\begin{split}
\partial_{n} f(t,y,u) &\geq e^{- t\langle u\rangle ||  e^{\theta |v|^{2}}  f_{0}||_{\infty}  } O_{\xi}(1) \frac{t|u|^{2}}{\alpha(y,u)} V_{\mathbf{cl}}(0)\cdot \nabla_{x} f_{0}(X_{\mathbf{cl}}(0), V_{\mathbf{cl}}(0)) \\
& - \frac{O_{\xi}(1+ t|u|)}{\sqrt{\alpha(y,u)}} |u||\nabla_{x}f_{0}(X_{\mathbf{cl}} (0) , V_{\mathbf{cl}}(0))|  -C_{\xi} |u| |\nabla_{v} f_{0} (X_{\mathbf{cl}}(0), V_{\mathbf{cl}}(0)) |\\
&   -  {O}( ||   e^{\theta |v|^{2}} f_{0} ||_{\infty})
\frac{t^{2}|u|^{2}}{\alpha(y,u)} e^{-\frac{\theta}{4}|u|^{2}} || e^{\theta|v|^{2}} \partial_{t }f_{0 }||_{\infty} \\
& -    {O}( ||  e^{\theta |v|^{2}} f_{0} ||_{\infty})
\frac{t(1+t) |u|^{2}}{\alpha(y,u)} e^{-\frac{\theta}{4}|v|^{2}}   \\
& -  {O}(||   e^{\theta|v|^{2}}  f_{0} ||_{\infty}   ) \sup_{0 \leq s\leq t} || e^{-\varpi \langle v\rangle t} \alpha \partial_{x}  f (t)||_{\infty} \times O( \varepsilon) e^{\varpi \langle u\rangle t} \frac{|u|}{\langle u\rangle } \frac{1}{\alpha(y,u)} .
\end{split}
\end{equation}
Choose $y=x-\varepsilon r n(x).$
First consider the first contribution of (\ref{fn_bb}). It has following lower bound as
\begin{eqnarray}  
&&\int^{1}_{0} \int_{\varepsilon \leq |n(x)\cdot u| \leq 1} \int_{|\tau(y)\cdot u| \leq N}   \nonumber\\
&\gtrsim &  \int_{0}^{1} \mathrm{d}r \int_{\varepsilon \leq |u_{n}| \leq 1} \mathrm{d}u_{n} \int_{|u_{\tau}| \leq N}   \mathrm{d}u_\tau
\frac{1}{|u_{n}|^{2} + C r \varepsilon N^{2}} \mathbf{k}_{f_{0} }(v,u) V_{\mathbf{cl}}(0) \cdot \nabla_{x} f_{0} (X_{\mathbf{cl}}(0), V_{\mathbf{cl}}(0))\nonumber  \\
&&-
 \notag \\
& \sim & \int_{0}^{1} \mathrm{d}r \int_{\varepsilon \leq |u_{n}| \leq 1}  \frac{ \mathrm{d}u_{n}}{|u_{n}|^{2} + C r \varepsilon N^{2}} \int_{|u_{\tau}| \leq N}   \mathrm{d}u_\tau
 \mathbf{k}_{f_{0} }(v,u)  v_{0} \cdot \nabla_{x } f_{0} (x_{0}, v_{0}) 
 .\label{bb_lower}
\end{eqnarray}
Now we use the condition (\ref{bb_condition}) for $\varepsilon \sim 0$ and $u_{n} \sim 0$ 
\begin{equation}\notag
\begin{split}
&\int_{|u_{\tau} |\geq N}   \mathbf{k}_{f_{0}} (v,u)V_{\mathbf{cl}}(0)\cdot \nabla_{x} f_{0} (X_{\mathbf{cl}}(0), V_{\mathbf{cl}}(0)) \mathrm{d}u_{\tau}\\
\sim &\int_{n(x_{0})\cdot u_{\tau} =0 } \mathbf{k}_{f_{0}} (v_{0},u) v_{0} \cdot \nabla_{x} f_{0} (x_{0}, v_{0}) \mathrm{d}u_{\tau} = C \neq 0.
\end{split}
\end{equation}
We combine this term with the second term of (\ref{k_bb}) to conclude
\begin{equation} \label{bb_dominant}
\begin{split}
(\ref{bb_lower}) - (\ref{second_bb})&\gtrsim \{C- e^{-N} \}
\int_{\varepsilon}^{1}   \mathrm{d}  u_{n}  \int_{0}^{1}   \frac{ \mathrm{d}r}{ |u_{n}|^{2} + C r\varepsilon N^{2}}\\
& \gtrsim 
\int_{\varepsilon}^{1} \frac{1}{C \varepsilon N^{2}} \ln \Big(1 + \frac{C\varepsilon N^{2}}{ |u_{n}|^{2}}  \Big)\mathrm{d}u_{n} \ \gtrsim \ \frac{1}{N^{2}} \frac{1}{\varepsilon} .
 \end{split}
\end{equation}

Now the all the other terms of (\ref{fn_bb}) except the first term are bounded by
\begin{equation}\notag
\begin{split}
&\int_{\varepsilon}^{1} \mathrm{d}u_{n} \frac{1}{|u_{n}|} + O( || e^{\theta |v|^{2}} f_{0} ||_{\infty}) \int_{\varepsilon}^{1} \mathrm{d}u_{n} \frac{1}{|u_{n}|^{2}}\\
& \lesssim |\ln \varepsilon|+  O( || e^{\theta |v|^{2}} f_{0} ||_{\infty}) \frac{1}{\varepsilon}.
\end{split}
\end{equation}
 Finally we choose large $N>0$ and small $\delta>0$ and small $|| e^{\theta |v|^{2}} f_{0} ||_{\infty}$ to conclude for small $\varepsilon>0$
\[
(\ref{k_bb}) \gtrsim  \frac{1}{\varepsilon}.
\] 
 Therefore we conclude (\ref{blow_up}).
\end{proof}

In order to show the non-existence of $\nabla^{2}f$ up to the boundary for the specular reflection BC (Proposition \ref{sing_specular}) we first obtain the explicit lower bound of (\ref{lemma_Dxv}) with a a lower dimensional symmetric domain, 2D disk. 

\begin{example}
Let $\Omega= \{\bar{x}=(x_{1},x_{2}) \in\mathbb{R}^{2} : |x_{1}|^{2} + |x_{2}|^{2} <1\}$.  Define
\begin{equation}\notag
\begin{split}
r&:=\sqrt{x_{1}^{2} + x_{2}^{2}}\in [0,1], \ \
\theta  \in [0,2\pi) \ \ \text{such that}  \ \ (\cos\theta,\sin\theta) = \frac{1}{\sqrt{x_{1}^{2}  + x_{2}^{2}}} (x_{1},x_{2}) ,\\
\bar{v}_{n}  &:=  v_{1} \cos\theta + v_{2} \sin\theta, \ \
\bar{v}_{\theta}  :=  -v_{1} \sin\theta + v_{2} \cos\theta.
\end{split}
\end{equation}
We claim that as $\alpha \rightarrow 0$ $($therefore $r\sim 1, \bar{v}_{n}\sim 0$$)$ asymptotically 
\begin{equation}\label{lower_Dxv}
\begin{split}
|\partial_{n} \bar{X}_{\mathbf{cl}}(s;t,x,v)\cdot \bar{n}^{\perp} (\bar{X}_{\mathbf{cl}}(s;t,x,v)) | &\sim \frac{|t-s||\bar{v}_{\theta}|^{2}}{\sqrt{\bar{v}_{n}^{2} + (1-r^{2}) \bar{v}_{\theta}^{2}}}
\sim  \frac{|t-s||\bar{v}|^{2}}{ \sqrt{\alpha(x,v)}} ,\\
|\partial_{n} \bar{V}_{\mathbf{cl}}(s;t,x,v)\cdot \bar{n}(\bar{X}_{\mathbf{cl}}(s;t,x,v)) |& \sim \frac{  |t-s||\bar{v}|^{4}}{ \bar{v}_{n}^{2} + (1-r^{2}) \bar{v}_{\theta}^{2}}
\sim \frac{|t-s||\bar{v}|^{4}}{\alpha(x,v)}
,
\end{split}
\end{equation}
where $\bar{n}^{\perp} = \left[\begin{array}{cc} 0 & - 1 \\ 1 & 0 \end{array}\right]\bar{n}.$ \end{example}
\begin{proof}
 
Explicitly $x = (r\cos\theta, r\sin\theta,x_{3}),$ and $v= (\bar{v}_{n} \cos\theta- \bar{v}_{\theta} \sin\theta, \bar{v}_{n} \sin\theta + \bar{v}_{\theta} \cos\theta, v_{3}),$ and
\begin{equation}\notag
\begin{split}
x^{\ell}& = (\cos\theta^{\ell}, \sin\theta^{\ell}, x_{3}-(t-t^{\ell}) v_{3}),\\
v^{\ell} & = ( \sqrt{\bar{v}_{n}^{2} + \bar{v}_{\theta}^{2}} \cos\psi^{\ell} , \sqrt{\bar{v}_{n}^{2} + \bar{v}_{\theta}^{2}} \sin\psi^{\ell}
,v_{3}),\\
t^{1} &= t- \frac{r | \bar{v}_{n}| + \sqrt{(1-r^{2}) \bar{v}_{\theta}^{2}  + \bar{v}_{n}^{2} }}{ \bar{v}_{n}^{2} + \bar{v}_{\theta}^{2}},\\
t^{\ell} & =  t- \frac{r | \bar{v}_{n}| +(2\ell -1) \sqrt{(1-r^{2}) \bar{v}_{\theta}^{2}  + \bar{v}_{n}^{2} }}{ \bar{v}_{n}^{2} + \bar{v}_{\theta}^{2}}, 
\end{split}
\end{equation}
and
\[
\ell_{*}(s;t,x,v) \leq \frac{(t-s)|\bar{v}|^{2}}{2\sqrt{(1-r^{2}) \bar{v}_{\theta}^{2} + \bar{v}_{n}^{2}}} - \frac{r| \bar{v}_{n}|}{2\sqrt{(1-r^{2}) \bar{v}_{\theta}^{2} + \bar{v}_{n}^{2}}} + \frac{1}{2} < \ell_{*}(s;t,x,v) + 1,
\]
where, for $\ell\geq 1$
\begin{equation}\notag
\begin{split}
\theta^{0}&=\theta, \ \ \ \theta^{\ell}=  \theta- \cos^{-1}  \Big(   \frac{\bar{v}_{\theta}}{\sqrt{\bar{v}_{\theta}^{2} + \bar{v}_{n}^{2} }}\Big) -(2\ell-1)\cos^{-1}  \Big( \frac{r \bar{v}_{\theta}}{\sqrt{ \bar{v}_{n}^{2} + \bar{v}_{\theta}^{2}}}\Big),\\
\psi^{0} &=\cos^{-1} \Big( \frac{ \bar{v}_{n} \cos\theta - \bar{v}_{\theta} \sin\theta }{\sqrt{ \bar{v}_{n}^{2} + \bar{v}_{\theta}^{2}}}\Big), \ \ \  \psi^{\ell}  = \psi^{0} -2\ell  \cos^{-1} \Big( \frac{r \bar{v}_{\theta}}{\sqrt{ \bar{v}_{n}^{2} + \bar{v}_{\theta}^{2} }}\Big).
\end{split}
\end{equation}
Therefore, if $t^{\ell+1}<s<t^{\ell},$
\begin{equation}\notag
\begin{split}
X_{\mathbf{cl}}(s)= x^{\ell}-(t^{\ell}-s) v^{\ell}, \ \ \ V_{\mathbf{cl}}(s) = v^{\ell},
\end{split}
\end{equation}
and
\begin{equation}\notag
\begin{split}
r(s)&=|\bar{X}_{\mathbf{cl}}(s)| = |\bar{x}^{\ell}-(t^{\ell}-s)\bar{v}^{\ell}|,\\
\bar{v}_{n}(s)&= \bar{V}_{\mathbf{cl}}(s) \cdot \frac{\bar{X}_{\mathbf{cl}}(s)}{|\bar{X}_{\mathbf{cl}}(s)|}, \ \
\bar{v}_{\theta}(s)   = \bar{V}_{\mathbf{cl}}(s)\cdot \left(\begin{array}{cc} 0 & -1 \\ 1 & 0\end{array} \right)  \frac{\bar{X}_{\mathbf{cl}}(s)}{|\bar{X}_{\mathbf{cl}}(s)|},\\
 v_{3}(s)&=v_{3}.
\end{split}
\end{equation}
Directly
\begin{equation}\notag
\begin{split}
& \partial_{\theta} \bar{v}_{n} = v_{\theta}, \ \ \ \partial_{\theta} \bar{v}_{\theta} = -\bar{v}_{n},
\\
&\partial_{n} \cos^{-1} \Big( \frac{r \bar{v}_{\theta}}{\sqrt{ \bar{v}_{n}^{2} + \bar{v}_{\theta}^{2}}}\Big) = \frac{-\bar{v}_{\theta}}{\sqrt{ \bar{v}_{n}^{2} + (1-r^{2}) \bar{v}_{\theta}^{2}}},
\\
& \partial_{\theta} \cos^{-1} \Big( \frac{ \bar{v}_{\theta}}{\sqrt{ \bar{v}_{n}^{2} + \bar{v}_{\theta}^{2}}}\Big) = 1  , \ \ \
\partial_{\theta} \cos^{-1} \Big( \frac{r \bar{v}_{\theta}}{\sqrt{ \bar{v}_{n}^{2} + \bar{v}_{\theta}^{2}}}  \Big) = \frac{r \bar{v}_{n}}{\sqrt{ \bar{v}_{n}^{2} + (1-r^{2}) \bar{v}_{\theta}^{2}}}  ,
\\
&\partial_{ \bar{v}_{n}} \cos^{-1} \Big( \frac{ \bar{v}_{\theta}}{\sqrt{ \bar{v}_{n}^{2}+ \bar{v}_{\theta} ^{2} }}\Big)= \frac{ \bar{v}_{\theta}}{ \bar{v}_{n}^{2} + \bar{v}_{\theta}^{2}} , \ \ \
\partial_{ \bar{v}_{n}} \cos^{-1} \Big( \frac{r \bar{v}_{\theta}}{\sqrt{ \bar{v}_{n}^{2}+ \bar{v}_{\theta} ^{2} }}\Big)= \frac{r \bar{v}_{\theta}}{ \bar{v}_{n}^{2} + \bar{v}_{\theta}^{2}},
\\
&\partial_{ \bar{v}_{\theta}} \cos^{-1} \Big( \frac{\bar{v}_{\theta}}{\sqrt{\bar{v}_{n}^{2}+ \bar{v}_{\theta} ^{2} }}\Big)= \frac{- \bar{v}_{n}}{ \bar{v}_{n}^{2} + \bar{v}_{\theta}^{2}}, \ \ \
\partial_{ \bar{v}_{\theta}} \cos^{-1} \Big( \frac{ r \bar{v}_{\theta}}{\sqrt{ \bar{v}_{n}^{2}+ \bar{v}_{\theta} ^{2} }}\Big)= \frac{-r \bar{v}_{n}}{ \bar{v}_{n}^{2} + \bar{v}_{\theta}^{2}},\\
& \partial_{ \bar{v}_{n}} \cos^{-1} \Big( \frac{ \bar{v}_{n} \cos\theta - \bar{v}_{\theta} \sin\theta}{\sqrt{ \bar{v}_{n}^{2} + \bar{v}_{\theta}^{2}}}\Big)=\frac{ \bar{v}_{\theta}}{ { \bar{v}_{n}^{2} + \bar{v}_{\theta}^{2}}}, \ \ \ \partial_{ \bar{v}_{\theta}} \cos^{-1} \Big( \frac{ \bar{v}_{n} \cos\theta - \bar{v}_{\theta} \sin\theta}{\sqrt{ \bar{v}_{n}^{2} + \bar{v}_{\theta}^{2}}}\Big)=\frac{ \bar{v}_{n}}{ { \bar{v}_{n}^{2} +\bar{v}_{\theta}^{2}}},\\
& \partial_{\theta} \cos^{-1} \Big( \frac{ \bar{v}_{n} \cos\theta - \bar{v}_{\theta} \sin\theta}{\sqrt{\bar{v}_{n}^{2} + \bar{v}_{\theta}^{2}}}\Big)=0= \partial_{n} \cos^{-1} \Big( \frac{\bar{v}_{n} \cos\theta - \bar{v}_{\theta} \sin\theta}{\sqrt{ \bar{v}_{n}^{2} + \bar{v}_{\theta}^{2}}}\Big),
\end{split}
\end{equation}
and
\begin{equation}\notag
\begin{split}
&\partial_{\bar{v}_{\theta}} \theta^{\ell}= \frac{|\bar{v}_{n}|}{|\bar{v}|^{2}} + |t-s|, \ \ \ \partial_{\bar{v}_{r}} \theta^{\ell} = -\frac{\bar{v}_{\theta}}{|\bar{v}|^{2}} -(2\ell-1) \frac{r \bar{v}_{\theta}}{|\bar{v}|^{2}}, \\
& \partial_{\theta} \theta^{\ell} \lesssim \frac{|t-s||\bar{v}|^{2} |\bar{v}_{n}|}{\bar{v}_{n}^{2} + (1-r^{2}) \bar{v}_{\theta}^{2}}
, \  \partial_{n} \theta^{\ell} = \frac{(2\ell-1) \bar{v}_{\theta}}{\sqrt{\bar{v}_{n}^{2} + (1-r^{2}) \bar{v}_{\theta}^{2}}},
\end{split}
\end{equation}
and
\begin{equation}\notag
\begin{split}
&\partial_{\theta} \psi^{\ell} \lesssim
 \frac{|t-s||\bar{v}|^{2} |\bar{v}_{n}|}{\bar{v}_{n}^{2} + (1-r^{2}) \bar{v}_{\theta}^{2}}
, \ \ \partial_{n} \psi^{\ell} = \frac{2\ell \bar{v}_{\theta}}{ \sqrt{\bar{v}_{n}^{2} + (1-r^{2}) \bar{v}_{\theta}^{2}}},\\
& \partial_{\bar{v}_{\theta}} \psi^{\ell} \lesssim \frac{|t-s| |\bar{v}_{n}|}{\sqrt{\bar{v}_{n}^{2}+ (1-r^{2}) \bar{v}_{\theta}^{2}}}\lesssim |t-s| , \ \  \partial_{\bar{v}_{n}} \psi^{\ell} = -2\ell \frac{r \bar{v}_{\theta}}{ \bar{v}_{n}^{2} + \bar{v}_{\theta}^{2}}+O_{\xi}(1)\frac{1}{|\bar{v}|},
\end{split}
\end{equation}
and
\begin{equation}\notag
\begin{split}
t^{\ell}-t^{\ell+1} & \leq \frac{2 \sqrt{(1-r^{2}) \bar{v}_{\theta}^{2} + \bar{v}_{n}^{2}}}{ \bar{v}_{n}^{2} + \bar{v}_{\theta}^{2}},\\
\ell_{*}(s) & \leq \frac{|t-s| |\bar{v}|^{2}}{2 \sqrt{\bar{v}_{n}^{2} + (1-r^{2}) \bar{v}_{\theta}^{2}}} - \frac{r|\bar{v}_{n}|}{2\sqrt{ \bar{v}_{n}^{2} + (1-r^{2}) \bar{v}_{\theta}^{2}}} + \frac{1}{2} \leq \ell_{*}(s) +1,
\end{split}
\end{equation}
and
\begin{equation}\notag
\begin{split}
\partial_{r} t^{\ell} & = \frac{-| \bar{v}_{n}|}{ \bar{v}_{n}^{2} + \bar{v}_{\theta}^{2}} + (2\ell-1) \frac{r \bar{v}_{\theta}^{2}}{|\bar{v}|^{2} \sqrt{\bar{v}_{n}^{2} + (1-r^{2}) \bar{v}_{\theta}^{2}}}
,\\
\partial_{\theta} t^{\ell} & =\frac{-(2\ell -1) \bar{v}_{n} \bar{v}_{\theta} r^{2}}{|\bar{v}|^{2} \sqrt{\bar{v}_{n}^{2} + (1-r^{2}) \bar{v}_{\theta}^{2}}} \lesssim \frac{|t-s||\bar{v}_{\theta} | r^{2}}{\sqrt{\bar{v}_{n}^{2}  + (1-r^{2}) \bar{v}_{\theta}^{2}}}, 
\end{split}
\end{equation}
\begin{equation}\notag
\begin{split}
\partial_{\bar{v}_{n}} t^{\ell} & =- (2\ell-1) \frac{\bar{v}_{n}}{|\bar{v}|^{2} \sqrt{\bar{v}_{n}^{2} + (1-r^{2})\bar{v}_{\theta}^{2}}} + O_{\xi}(1) \frac{1+ |\bar{v}||t-s|}{|\bar{v}|^{2}}
,\\
\partial_{\bar{v}_{\theta}} t^{\ell} & \leq (2\ell-1) \frac{(1-r^{2})|\bar{v}_{\theta}|}{|\bar{v}|^{2} \sqrt{\bar{v}_{n}^{2} + (1-r^{2}) \bar{v}_{\theta}^{2}}} + 2(2\ell-1) \frac{| \bar{v}_{\theta}| \sqrt{ \bar{v}_{n}^{2} +(1-r^{2}) \bar{v}_{\theta}^{2}}}{|\bar{v}|^{4}}  \\
& \lesssim |t-s| \frac{(1-r^{2}) | \bar{v}_{\theta}|}{ \bar{v}_{n}^{2} + (1-r^{2}) \bar{v}_{\theta}^{2}} + |t-s| \frac{| \bar{v}_{\theta}|}{|\bar{v}|^{2}}.
\end{split}
\end{equation}
If $r<\frac{1}{2}$ then $(1-r^{2}) \bar{v}_{\theta}^{2} + \bar{v}_{n}^{2} \geq \frac{3}{4} |\bar{v}|^{2}$ and $\partial_{\bar{v}_{\theta}} t^{\ell} \lesssim |t-s| \frac{4|\bar{v}_{\theta}|}{3|\bar{v}|^{2}} + |t-s|\frac{| \bar{v}_{\theta}|}{|\bar{v}|^{2}} \lesssim |t-s| \frac{| \bar{v}_{\theta}|}{|\bar{v}|^{2}}.$ If $r\geq \frac{1}{2}$ and $| \bar{v}_{\theta}| \leq | \bar{v}_{n}|$ then $\partial_{ \bar{v}_{\theta}} t^{\ell} \lesssim \frac{|t-s| | \bar{v}_{\theta}|}{\frac{ \bar{v}_{n}^{2}}{2} + \frac{ \bar{v}_{n}^{2}}{2}} + |t-s| \frac{| \bar{v}_{\theta}|}{|\bar{v}|^{2}}\lesssim |t-s| \frac{| \bar{v}_{\theta}|}{|\bar{v}|^{2}}.$ If $r\geq \frac{1}{2}$ and $|\bar{v}_{\theta}|\geq | \bar{v}_{n}|$ then
$\partial_{ \bar{v}_{\theta}} t^{\ell} \lesssim |t-s| \frac{(1-r^{2})| \bar{v}_{\theta}|}{(1-r^{2})| \bar{v}_{\theta}| (\frac{| \bar{v}_{\theta}|}{2} + \frac{| \bar{v}_{\theta}|}{2})} + |t-s| \frac{| \bar{v}_{\theta}|}{|\bar{v}|^{2}} \lesssim \frac{|t-s|}{|\bar{v}|}.$

Therefore $${\partial_{ \bar{v}_{\theta}}} t^{\ell}\lesssim \frac{|t-s|}{|\bar{v}|}.$$

Directly
\begin{equation}\notag
\begin{split}
\partial_{n} \bar{X}_{\mathbf{cl}}(s) & =  \partial_{n} \theta^{\ell} \left(\begin{array}{c} -\sin\theta^{\ell} \\ \cos\theta^{\ell} \end{array}\right)- \frac{\partial t^{\ell}}{\partial n} |\bar{v}| \left(\begin{array}{c}\cos\psi^{\ell} \\ \sin\psi^{\ell} \end{array}\right) - (t^{\ell}-s) |\bar{v}| \frac{\partial \psi^{\ell}}{\partial n} \left( \begin{array}{c} -\sin\psi^{\ell} \\ \cos\psi^{\ell} \end{array}
\right)\\
& = \frac{(2\ell-1) \bar{v}_{\theta}^{2}}{|\bar{v}|\sqrt{\bar{v}_{n}^{2} + (1-r^{2}) \bar{v}_{\theta}^{2}}} \left(\begin{array}{c}  -\sin \theta^{\ell} -\cos\psi^{\ell} \\ \cos\theta^{\ell} - \sin\psi^{\ell} \end{array}\right)\\
& +O_{\xi}(1)\Big\{ \frac{(2\ell-1) |\bar{v}_{\theta}||\bar{v}_{n}|}{ |\bar{v}|\sqrt{ \bar{v}_{n}^{2} + (1-r^{2}) \bar{v}_{\theta}^{2}}} +  \frac{(2\ell-1) (1-r)| \bar{v}_{\theta}|^{2} }{ |\bar{v}|\sqrt{ \bar{v}_{n}^{2} + (1-r^{2}) \bar{v}_{\theta}^{2}}} + \frac{| \bar{v}_{n}| }{|\bar{v}|} +  {\ell |t^{\ell}-t^{\ell+1}|}
 \Big\},
\end{split}
\end{equation}
where $O_{\xi}(1)-$remainder is bounded by
\begin{equation}\notag
\begin{split}
\lesssim& \frac{|t-s||\bar{v}|^{2}}{ {\bar{v}_{n}^{2} + (1-r^{2})\bar{v}_{\theta}^{2}}} \Big\{ \frac{| \bar{v}_{\theta}|| \bar{v}_{n}|}{|\bar{v}|} + \frac{|1-r| | \bar{v}_{\theta}^{2}|}{|\bar{v}|} + \frac{\sqrt{ \bar{v}_{n}^{2} + (1-r^{2}) \bar{v}_{\theta}^{2}}}{|\bar{v}|}\Big\} + \frac{|\bar{v}_{n}|}{|\bar{v}|}\\
\lesssim& \frac{|t-s||\bar{v}| (1+ |\bar{v}|)}{\sqrt{\bar{v}_{n}^{2} + (1-r^{2})\bar{v}_{\theta}^{2}}} + |t-s| |\bar{v}| + \frac{|\bar{v}_{n}|}{|\bar{v}|}.
\end{split}
\end{equation}
Now we use some trigonometric identities to have
\begin{equation}\notag
\begin{split}
&\sin \theta^{\ell} + \cos\psi^{\ell} \\
=& \sin\theta \cos \big( \cos^{-1} (\frac{\bar{v}_{\theta}}{|\bar{v}|}) + (2\ell-1) \cos^{-1} (\frac{r \bar{v}_{\theta}}{|\bar{v}|})  \big)
-\cos\theta \sin \big( \cos^{-1} (\frac{ \bar{v}_{\theta}}{|\bar{v}|})  + (2\ell-1) \cos^{-1} (\frac{r \bar{v}_{\theta}}{|\bar{v}|})  \big)\\
&+ \frac{\bar{v}_{n} \cos\theta - \bar{v}_{\theta} \sin\theta}{|\bar{v}|} \cos(2\ell \cos^{-1}(\frac{r \bar{v}_{\theta}}{|\bar{v}|})) + \sin \big(\cos^{-1} (\frac{ \bar{v}_{n}\cos\theta - \bar{v}_{\theta} \sin\theta}{|\bar{v}|})\big) \sin \big(-2\ell \cos^{-1} (\frac{r \bar{v}_{\theta}}{|\bar{v}|})\big).
\end{split}
\end{equation}
Here
\begin{equation}\notag
\begin{split}
&\cos\Big( \cos^{-1} (\frac{\bar{v}_{\theta}}{|\bar{v}|}) - \cos^{-1} (\frac{r \bar{v}_{\theta}}{|\bar{v}|}) + 2\ell \cos^{-1} (\frac{r \bar{v}_{\theta}}{|\bar{v}|})\Big)\\
=& \cos \Big( \cos^{-1} (\frac{ \bar{v}_{\theta}}{|\bar{v}|}) - \cos^{-1} (\frac{r \bar{v}_{\theta}}{|\bar{v}|}) \Big) \cos \Big(   2\ell \cos^{-1} (\frac{r\bar{v}_{\theta}}{|\bar{v}|})\Big) - \sin \Big( \cos^{-1} (\frac{\bar{v}_{\theta}}{|\bar{v}|}) - \cos^{-1} (\frac{r \bar{v}_{\theta}}{|\bar{v}|}) \Big) \sin  \Big(   2\ell \cos^{-1} (\frac{r \bar{v}_{\theta}}{|\bar{v}|})\Big)\\
=& \cos \Big(   2\ell \cos^{-1} (\frac{r \bar{v}_{\theta}}{|\bar{v}|})\Big) + O_{\xi}(1)\Big|1-  \cos \Big( \cos^{-1} (\frac{\bar{v}_{\theta}}{|\bar{v}|}) - \cos^{-1} (\frac{r \bar{v}_{\theta}}{|\bar{v}|}) \Big) \Big| \\
&+O_{\xi}(1) \Big|   \sin \Big( \cos^{-1} (\frac{ \bar{v}_{\theta}}{|\bar{v}|}) - \cos^{-1} (\frac{r \bar{v}_{\theta}}{|\bar{v}|}) \Big) \sin  \Big(   2\ell \cos^{-1} (\frac{r \bar{v}_{\theta}}{|\bar{v}|})\Big)\Big|\\
=&  \cos \Big(   2\ell \cos^{-1} (\frac{r \bar{v}_{\theta}}{|\bar{v}|})\Big) + O_{\xi}(1) \Big\{\Big| 1- \frac{r \bar{v}_{\theta}^{2} }{|\bar{v}|^{2}}  - \frac{ \bar{v}_{n} \sqrt{ \bar{v}_{n}^{2} + (1-r^{2}) \bar{v}_{\theta}^{2}}}{|\bar{v}|^{2}} \Big| + \Big| \frac{r \bar{v}_{n}  \bar{v}_{\theta}}{|\bar{v}|^{2}} - \frac{ \bar{v}_{\theta} \sqrt{ \bar{v}_{n}^{2} + (1-r^{2}) \bar{v}_{\theta}^{2} }}{|\bar{v}|^{2}}  \Big|\Big\}\\
=&  \cos \Big(   2\ell \cos^{-1} (\frac{r \bar{v}_{\theta}}{|\bar{v}|})\Big) + O_{\xi}(1)\big\{ \frac{(1-r) \bar{v}_{\theta}^{2}}{|\bar{v}|^{2}} + \frac{ \sqrt{ \bar{v}_{n}^{2} + (1-r^{2}) \bar{v}_{\theta}^{2} }}{|\bar{v}| }  \big\},
\end{split}
\end{equation}
 and
\begin{equation}\notag
\begin{split}
& \sin \Big( \cos^{-1} (\frac{ \bar{v}_{\theta}}{|\bar{v}|}) -\cos^{-1} (\frac{r \bar{v}_{\theta}}{|\bar{v}|}) + 2\ell \cos^{-1} (\frac{r \bar{v}_{\theta}}{|\bar{v}|})\Big)\\
=& \sin \Big(  \cos^{-1} (\frac{\bar{v}_{\theta}}{|\bar{v}|}) -\cos^{-1} (\frac{r \bar{v}_{\theta}}{|\bar{v}|}) \Big) \cos \Big(  2\ell \cos^{-1} (\frac{r \bar{v}_{\theta}}{|\bar{v}|})\Big)  \Big) + \cos \Big(  \cos^{-1} (\frac{ \bar{v}_{\theta}}{|\bar{v}|}) -\cos^{-1} (\frac{r \bar{v}_{\theta}}{|\bar{v}|}) \Big) \sin \Big(  2\ell \cos^{-1} (\frac{r \bar{v}_{\theta}}{|\bar{v}|})\Big)  \Big) \\
=& \sin \Big(  2\ell \cos^{-1} (\frac{r \bar{v}_{\theta}}{|\bar{v}|}) \Big) + O_{\xi}(1) \big\{ \frac{(1-r)\bar{v}_{\theta}^{2}}{|\bar{v}|^{2}} + \frac{ \sqrt{\bar{v}_{n}^{2} + (1-r^{2}) \bar{v}_{\theta}^{2} }}{|\bar{v}| }  \big\}.
\end{split}
\end{equation}
Therefore
\begin{equation}\notag
\begin{split}
\sin \theta^{\ell} + \cos\psi^{\ell}& = (1- \frac{|\bar{v}_{\theta}|}{|\bar{v}|} )\sin\theta \cos\Big( (2\ell-1) \cos^{-1} (\frac{r \bar{v}_{\theta}}{|\bar{v}|})  \Big)
-  (1- \frac{| \bar{v}_{\theta}|}{|\bar{v}|})\cos\theta   \sin \Big( 2\ell \cos^{-1} (\frac{r \bar{v}_{\theta}}{|\bar{v}|})\Big)\\
&+ O_{\xi}(1)  \big\{ \frac{(1-r) \bar{v}_{\theta}^{2}}{|\bar{v}|^{2}} + \frac{ \sqrt{ \bar{v}_{n}^{2} + (1-r^{2}) \bar{v}_{\theta}^{2} }}{|\bar{v}| }  \big\}\\
&\sim   \frac{ \sqrt{ \bar{v}_{n}^{2} + (1-r^{2}) \bar{v}_{\theta}^{2} }}{|\bar{v}| } .
  \end{split}
\end{equation}
Since $\cos\theta^{\ell} - \sin\psi^{\ell} = \sin(\theta^{\ell} + \frac{\pi}{2}) + \cos(\psi^{\ell} + \frac{\pi}{2})$,
\begin{equation}\notag
\begin{split}
 \cos\theta^{\ell} - \sin\psi^{\ell} \sim  \frac{ \sqrt{ \bar{v}_{n}^{2} + (1-r^{2}) \bar{v}_{\theta}^{2} }}{|\bar{v}| } .
  \end{split}
\end{equation}
Therefore we conclude our claim for $\partial_{n} \bar{X}_{\mathbf{cl}}$.

Using the same estimates
\begin{equation}\notag
\begin{split}
\partial_{\bar{v}_{n}} \bar{X}_{\mathbf{cl}}(s) & =  (2\ell-1)\Big\{ \frac{-r \bar{v}_{\theta}}{|\bar{v}|^{2}} \left(\begin{array}{c} -\sin\theta^{\ell} \\ \cos\theta^{\ell} \end{array}\right)
+   \frac{ \bar{v}_{n}}{|\bar{v}| \sqrt{ \bar{v}_{n}^{2} + (1-r^{2}) \bar{v}_{\theta}^{2}}} \left(\begin{array}{c}  \cos\psi^{\ell} \\ \sin\psi^{\ell}\end{array}\right)  \Big\}+ O_{\xi}(1)   \frac{1 + |\bar{v}||t-s|}{|\bar{v}|}  \\
&= \frac{2\ell-1}{|\bar{v}|} \left(\begin{array}{c}  \sin\theta^{\ell} + \cos\psi^{\ell} \\ -\cos\theta^{\ell} + \sin\psi^{\ell}  \end{array} \right) +  O_{\xi}(1)   \frac{1 + |\bar{v}||t-s|}{|\bar{v}|}  \lesssim \frac{1}{|\bar{v}|}.
\end{split}
\end{equation}

Since $t^{\ell_{*}+1}<0<t^{\ell_{*}}$,
\begin{equation}\notag\label{p_nV}
\begin{split}
\partial_{n} v^{\ell}  =  \partial_{n} \psi^{\ell}( -|\bar{v}|  \sin \psi^{\ell}, |\bar{v}| \cos\psi^{\ell},0  )
  = \frac{|t-s||\bar{v}|^{2}|\bar{v}_{\theta}|  }{ \bar{v}_{n}^{2} + (1-r^{2}) \bar{v}_{\theta}^{2} } ( -|\bar{v}|  \sin \psi^{\ell}, |\bar{v}| \cos\psi^{\ell},0  ).
\end{split}
\end{equation}
Therefore we conclude our claim for $\partial_{n} \bar{V}_{\mathbf{cl}}(0)$. Moreover
 
\begin{equation}\label{2D_specular}
\begin{split}
|\partial_{\theta} \bar{X}_{\mathbf{cl}}(s)| & \lesssim \frac{ |\bar{v}| }{\sqrt{\bar{v}_{n}^{2} + (1-r^{2}) \bar{v}_{\theta}^{2}}}  |\bar{v}||t-s| , \ \ \ |\partial_{\theta} \bar{V}_{\mathbf{cl}}(s)|  \lesssim \frac{ |\bar{v}|^{2}}{\sqrt{\bar{v}_{n}^{2} + (1-r^{2}) \bar{v}_{\theta}^{2}}}|\bar{v}||t-s|,\\
|\partial_{\bar{v}_{n}} \bar{V}_{\mathbf{cl}}(s)| & \lesssim 1 + \frac{ |\bar{v}|^{2}|t-s|}{\sqrt{ \bar{v}_{n}^{2} + (1-r^{2}) \bar{v}_{\theta}^{2}}} , \  \ 
|\partial_{\bar{v}_{\theta}} \bar{X}_{\mathbf{cl}}(s)|   \lesssim \frac{1 }{|\bar{v}|}  , \  \ | {\partial_{{  \bar{v}_{\theta}}} \bar{V}_{\mathbf{cl}}(s)} | \lesssim 1+ |\bar{v}||t-s|.
\end{split}
\end{equation}
 \end{proof}
   
Based on Example 1, we naturally consider the 2D specular problem. We consider the 2D specular problem for $f(t,x_{1},x_{2},v_{1},v_{2},v_{3})$ solving
\begin{equation}
  \partial _{t} f+v_{1}\partial _{x_{1}}f+ v_{2}\partial
_{x_{2}}f=  \Gamma _{\text{gain}}(f,f) - \nu (\sqrt{\mu}f)f,  \label{boltzmann2d}
\end{equation}%
where $v_{3}$ is a paramter. Here $(x_{1},x_{2})\in \Omega =\{x\in \mathbb{R}%
^{2}:$ $\xi (x)>0\}$ and the convexity (\ref{convex}) is valid for all $\zeta \in \mathbb{R%
}^{2}.$ We study (\ref{boltzmann2d}) with specular boundary condition (\ref
{specularBC}). Denote $v:=(\bar{v},v_{3}) = (v_{1},v_{2};v_{3})\in \mathbb{R}^{3}$. We define $$%
\alpha (x,\bar{v})=|\bar{v}\cdot \nabla \xi (x)|^{2}-2\{\bar{v}\cdot \nabla
^{2}\xi (x)\cdot \bar{v}\}\xi (x).$$ Note that $\nabla \xi(x)= (\partial_{x_1}\xi(x), \partial_{x_{2}} \xi(x),0).$

\vspace{8pt}

The following estimate is crucial to establish the weighted $C^{1}$ estimate (Theorem \ref{theo2D}) and non-existence of $\nabla^{2}f$ up to the boundary (Proposition \ref{sing_specular}). 

\begin{lemma}\label{2D_Gamma}  
For $\theta>0$ and for $i=1,2,$
\begin{equation}
\begin{split}
 &e^{-\varpi \langle v\rangle s}|\partial _{v_{i}}\Gamma _{\mathrm{gain}}(f,f)|\\ &\lesssim
 ||  e^{\theta |v|^{2}}f ||_{\infty } \Big\{  ||  e^{\theta |v|^{2}}f ||_{\infty }+  ||  \partial _{v_3}f ||_{\infty }+ \Big|\Big|
 e^{-\varpi\langle v \rangle s}
  \frac{|\bar{v}|}{\langle \bar{v} \rangle} \alpha^{1/2} \nabla_{\bar{v}} f  \Big|\Big|_{\infty} \Big\},
  \end{split}
\label{2dv}
\end{equation}
where $v=(\bar{v}, v_{3})= (v_{1}, v_{2}, v_{3}).$
\end{lemma}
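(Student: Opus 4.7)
The plan is to apply the chain rule to $\partial_{v_i}\Gamma_{\mathrm{gain}}(f,f)(v)$ while keeping the integration variable $u\in\mathbb{R}^3$ fixed (rather than changing to $\eta=v-u$), which exposes the special structure for $i=1,2$. With $u'=u+[(v-u)\cdot\omega]\omega$ and $v'=v-[(v-u)\cdot\omega]\omega$, one computes $\partial_{v_i}u'=\omega_i\omega$ and $\partial_{v_i}v'=e_i-\omega_i\omega$, so
\begin{equation*}
\partial_{v_i}\Gamma_{\mathrm{gain}}(f,f)(v)=\iint(\partial_{v_i}B)\sqrt{\mu(u)}f(u')f(v')\,d\omega du+\iint B\sqrt{\mu(u)}\bigl[\omega_i(\omega\cdot\nabla f)(u')f(v')+((e_i-\omega_i\omega)\cdot\nabla f)(v')f(u')\bigr]d\omega du.
\end{equation*}
The $\partial_{v_i}B$ contribution is controlled via the Grad estimate (\ref{Gamma_k}) and yields $\|e^{\theta|v|^2}f\|_\infty^2$.

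Next I split each $\nabla f$ in the remaining terms as $\nabla f=(\nabla_{\bar v}f,\partial_{v_3}f)$. The $\partial_{v_3}f$ pieces are dominated by $|\omega_3|\|\partial_{v_3}f\|_\infty$ (and similarly for $T_3$ by $|\omega_i\omega_3|\|\partial_{v_3}f\|_\infty$), and their contribution to the collision integral is bounded by $\|e^{\theta|v|^2}f\|_\infty\|\partial_{v_3}f\|_\infty$ via Lemma \ref{lemma_operator}(i). This is exactly where the two-dimensional structure enters: because the kinetic distance $\alpha$ depends only on $\bar v$, the derivative $\partial_{v_3}f$ escapes the grazing singularity and admits an unweighted pointwise bound.

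For the $\nabla_{\bar v}f$ pieces I apply the weighted pointwise estimate
\begin{equation*}
|\nabla_{\bar v}f(x,w)|\ \lesssim\ \frac{\langle\bar w\rangle}{|\bar w|\alpha(x,\bar w)^{1/2}}e^{\varpi\langle w\rangle s}\Bigl\|e^{-\varpi\langle v\rangle s}\tfrac{|\bar v|}{\langle\bar v\rangle}\alpha^{1/2}\nabla_{\bar v}f\Bigr\|_\infty
\end{equation*}
at $w=u'$ and $w=v'$. Energy conservation $|u'|^2+|v'|^2=|u|^2+|v|^2$ together with a complete-square argument parallel to (\ref{exponent}) absorbs the exponential $e^{\varpi(\langle u'\rangle-\langle v\rangle)s}$ into a Gaussian $e^{-c|v-u|^2}e^{Cs^2}$, while $|f(v')|\lesssim e^{-\theta|v'|^2}\|e^{\theta|v|^2}f\|_\infty$ supplies further decay. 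The problem then reduces to a uniform bound on
\begin{equation*}
\iint\frac{|v-u|^\kappa e^{-c|v-u|^2}}{|\bar u'|\,\alpha(x,\bar u')^{1/2}}\,d\omega du,
\end{equation*}
together with the analogous expression at $v'$.

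The main obstacle is the kernel estimate displayed above. I expect to handle it via a Carleman-type change of variables, converting $(\omega,u)$-integration into integration over $(u',v')\in\mathbb{R}^3\times\mathbb{R}^3$, thereby factoring the estimate into a regular $v'$-integral times a singular $\bar u'$-integral in $\mathbb{R}^2$. The $\bar u'$-integral, carried out along the lines of Step 1 in the proof of Lemma \ref{lemma_nonlocal}(1) but in two dimensions, is finite exactly at the borderline $\beta=\tfrac12$: a decomposition $\bar u'=u'_n n+u'_\tau\tau$ gives a one-dimensional integral $\int|u'_n|^{-1}\,d|u'_n|$ that is logarithmically borderline, and the extra $|v-u|^\kappa e^{-c|v-u|^2}$ weight renders it finite. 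This is a purely two-dimensional phenomenon—the same calculation in three dimensions would require $\beta>\tfrac12$, which is precisely the obstruction to the critical case of Lemma \ref{lemma_nonlocal} noted after Theorem \ref{main_specular}. Carrying out this change of variable and extracting the correct power of $\alpha$ is the chief technical hurdle of the proof.
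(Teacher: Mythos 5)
Your reduction to the kernel estimate
\begin{equation*}
\iint\frac{|v-u|^\kappa e^{-c|v-u|^2}}{|\bar u'|\,\alpha(x,\bar u')^{1/2}}\,d\omega\,du
\end{equation*}
contains a genuine gap: this quantity is \emph{not} uniformly bounded in $v$. Near the boundary $\alpha(x,\bar u')^{1/2}\sim|\bar u'_n|$, so after the Carleman change of variables the $\bar u'_n$-integration has the form $\int_{\mathbb{R}}\frac{e^{-c|\bar u'_n-\bar v_n|^2}}{|\bar u'_n|}\,d\bar u'_n$, which diverges as $\bar v_n\to 0$. The extra factor $|v-u|^\kappa e^{-c|v-u|^2}$ cannot rescue this, because $|\bar u'_n|$ can be small with $u$ arbitrarily close to $v$ once $v$ itself is near grazing; the two singular sources are independent, and the Gaussian supplies no decay in the offending direction. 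Your statement that the $\bar u'$-integral ``is finite exactly at the borderline $\beta=\tfrac12$'' contradicts the paper's own assertion (see the discussion after Theorem~\ref{main_specular} and Step~1 of the proof of Lemma~\ref{lemma_nonlocal}) that the nonlocal-to-local estimate \emph{fails} at $\beta=\tfrac12$ -- in 2D as in 3D, the relevant integral $\int_0^{\pi/2}(\cos\theta)^{2\beta-2}d\theta$ diverges at that exponent. The $|v-u|^\kappa e^{-c|v-u|^2}$ factor is already present in the 3D non-local estimate and does not alter the critical exponent.

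The source of the 2D gain is not dimensional; it is the availability of a sup bound on $\partial_{v_3}f$, exploited by an \emph{integration by parts} that your proposal does not perform. The paper works in the Carleman representation (\ref{carleman}), where $\partial_{v_i}f$ lands at the argument $v+u_\perp$ on the hyperplane $\{u_\parallel\cdot u_\perp=0\}$; splitting this hyperplane integral with the cutoff $\chi\big(\frac{|\bar v+\bar u_\perp|^{1-\varepsilon}\alpha^{1/2-\varepsilon}}{u_{\parallel,3}}\big)$, the paper parametrizes $u_\perp$ by $\bar u_\perp$ with $u_{\perp,3}=-\frac{\bar u_\parallel\cdot\bar u_\perp}{u_{\parallel,3}}$. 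Integration by parts in $u_{\perp,i}$ then produces $\partial_{v_3}f$ through the chain rule $\frac{\partial u_{\perp,3}}{\partial u_{\perp,i}}=-\frac{u_{\parallel,i}}{u_{\parallel,3}}$, converting the grazing-singular $\partial_{v_1},\partial_{v_2}$ derivatives into the benign $\partial_{v_3}$, which is $L^\infty$. On the complementary $(1-\chi)$-region the weighted bound on $\nabla_{\bar v}f$ is used, but only over the restricted set $|u_{\parallel,3}|\leq|\bar v+\bar u_\perp|^{1-\varepsilon}\alpha^{1/2-\varepsilon}$, which compensates the $\alpha^{-1/2}$ precisely. Your splitting $\nabla f=(\nabla_{\bar v}f,\partial_{v_3}f)$ uses $\partial_{v_3}f$ only through the trivial coefficient $\omega_i\omega_3$ and so does not convert the problematic derivatives at all; the $\nabla_{\bar v}f$ piece then requires an unrestricted kernel bound that does not hold. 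To complete the argument you would need the Carleman representation, the $\chi$-cutoff, and the integration by parts.
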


\begin{proof}
The key is to use the splitting $u_{||,3}$ with respect to $|\bar{%
v}+\bar{u}_{\perp }|\sqrt{\alpha (\bar{v}+\bar{u}_{\perp })}.$ 

Recall from \cite{Guo03} that the gain term of the nonlinear Boltzmann operator in (\ref{gain_Gamma}) equals
\begin{equation}\label{carleman}
\begin{split}
 &\Gamma_{\mathrm{gain}}(g_{1},g_{2})(v)\\
=&C
\int_{\mathbb{R}^{3}} \mathrm{d}u  \int_{u \cdot w =0} \mathrm{d}w \ g_{1}(v+ w) g_{2} (v+ u )
q_{0}^{*} \big( \frac{|u |}{|u  + w|} \big) \frac{|u  + w|^{\kappa-1}}{|u | } e^{- \frac{|u  + v + w|^{2}}{4}}
,\\
=&
C\int_{\mathbb{R}^{3}} \mathrm{d}u  \int_{u \cdot w =0} \mathrm{d}w \ g_{2}(v+w) g_{1} (v+ u )
q_{0}^{*} \big( \frac{|u |}{|u  + w|} \big) \frac{|u  + w|^{\kappa-1}}{|u | } e^{- \frac{|u  + v + w|^{2}}{4}}
,\\
=& C\int_{\mathbb{R}^{3}} \mathrm{d}u  \int_{ (u  - v)\cdot w=0}  \mathrm{d}w \ g_{1}(v+w) g_{2}(u )
q_{0}^{*}\big(  \frac{|u -v|}{ { |u -v+w|    }}\big)  \frac{\big| u -v + w \big|^{{\kappa-1}}}{|u -v|}
e^{-\frac{|u +w |^{2}}{4}} ,\\
=&C \int_{\mathbb{R}^{3}} \mathrm{d}u  \int_{ (u - v)\cdot w=0}  \mathrm{d}w \  g_{2}(v+w) g_{1}( u )
q_{0}^{*}\big(  \frac{|u -v|}{ { |u -v+w|    }}\big)  \frac{\big| u -v + w \big|^{ {\kappa-1} }}{|u -v|}
e^{-\frac{|u +w |^{2}}{4}}
,
\end{split}
\end{equation}
where $q_{0}^{*}(\cos \theta )=  \frac{q_{0}(\cos\theta)}{|\cos\theta|}.$  This is due to two change of variables (37),(38) and page 316 of \cite{Guo03}.
Then 
\begin{eqnarray*}
&&\partial _{v_{i}}\Gamma _{\text{gain}}(f,f) \\
&=&2\Gamma _{\text{gain}}(\partial _{v_{i}}f,f) \\
&&+C\int_{\mathbb{R}^{3}}\mathrm{d}u_{\parallel}\int_{u_{\parallel}\cdot u_{\perp}=0}\mathrm{d}wf(v+ u_{\perp})f(v+u_{\parallel})\\
&& \ \ \ \ \ \ \ \ \ \ \ \ \ \ \ \ \ \ \  \times q_{0}^{\ast }(%
\frac{|u_{\parallel}|}{|u_{\parallel}+ u_{\perp}|})\frac{|u_{\parallel}+ u_{\perp}|^{\kappa -1}}{|u_{\parallel}|}e^{-\frac{|u_{\parallel}+v+ u_{\perp}|^{2}}{4}}(-%
\frac{\mathbf{e}_{i}}{2})\cdot  {(u_{\parallel}+v+ u_{\perp})}  \\
&=&2\Gamma _{\text{gain}}(\partial _{v_{i}}f,f)+O_{\xi }(1) e^{-C|v|^{2}}|| e^{\theta |v|^{2}}f||_{\infty }^{2}.
\end{eqnarray*}

Denote the standard cutoff function $\chi\geq 0 $: $\chi\equiv 1$ on $[0,1]$ and $\chi\equiv 0$ for $[2,\infty)$. We have 
\begin{equation*}
\Gamma _{\text{gain}%
}(\partial _{v_{i}}f,f)=\int_{\mathbb{R}^{3}}\mathrm{d}u_{||}f(v+u_{||})\int_{\mathbb{R}^{2}}\mathrm{d}u_{\perp
}\partial _{v_{i}}f(v+u_{\perp })e^{-\frac{|u_{\parallel}+u_{\perp} +v|^{2}}{4}} q_{0}^{\ast
}(\frac{|u_{\parallel}|}{|u_{\parallel}+u_{\perp}|}) \frac{ |u_{\parallel} +u_{\perp}|^{\kappa -1}}{|u_{||}|}.
\end{equation*}%
We further split it into, for $0< \varepsilon \ll1 ,$
\begin{equation}\notag
\begin{split}
& \int_{\mathbb{R}^{3}}\mathrm{d}u_{||}f(v+u_{||})\int_{\mathbb{R}^{2}}\chi \left( 
\frac{|\bar{v}+\bar{u}_{\perp }|^{1-  \varepsilon}\alpha ^{1/2-  \varepsilon}}{u_{||,3}}\right)\\
& \ \ \ \ \ \times \partial _{v_{i}}f(v+u_{\perp })e^{-\frac{|u_{\parallel}+v+u_{\perp}|^{2}}{4}} q_{0}^{\ast
}(\frac{|u_{\parallel}|}{|u_{\parallel}+u_{\perp}|}) \frac{ |u_{\parallel} +u_{\perp}|^{\kappa -1}}{|u_{||}|}\mathrm{d}u_{\perp } \\
& +\int_{\mathbb{R}^{3}}du_{||}f(v+u_{||})\int_{\mathbb{R}^{2}}\left\{
1-\chi \left( \frac{|\bar{v}+\bar{u}_{\perp }|^{1- \varepsilon}\alpha ^{1/2- \varepsilon}}{u_{||,3}}%
\right ) \right\}  \\
&  \ \ \ \ \ \  \times \partial _{v_{i}}f(v+u_{\perp })e^{-\frac{%
|u_{\parallel}+v+u_{\perp}|^{2}}{4}} q_{0}^{\ast
}(\frac{|u_{\parallel}|}{|u_{\parallel}+u_{\perp}|}) \frac{ |u_{\parallel} +u_{\perp}|^{\kappa -1}}{|u_{||}|}\mathrm{d}u_{\perp }.
\end{split}
\end{equation}%
For the first part, $|u_{||,3}|\geq |\bar{v}+\bar{u}_{\perp }|^{1- \varepsilon}\alpha
^{1/2- \varepsilon},$ and we parametrize $u_{\perp }$ as $u_{\perp ,3}=-\frac{\bar{%
u}_{||}\cdot \bar{u}_{\perp }}{u_{||,3}}$ so that 
\begin{equation}\label{3_perp}
\mathrm{d}u_{\perp }=\frac{| {u}_{||}|}{|u_{||,3}|}\mathrm{d}\bar{u}_{\perp } : = \frac{| {u}_{||}|}{|u_{||,3}|}\mathrm{d} {u}_{\perp,1 } \mathrm{d} {u}_{\perp,2} ,
\end{equation}%
and the first part equals
\begin{eqnarray*}
& \int_{\mathbb{R}^{3}} \mathrm{d}u_{||}f(v+u_{||})\int_{\mathbb{R}^{2}} \mathrm{d}\bar{u}%
_{\perp }\partial _{v_{i}}f(v_{1}+u_{\perp ,1},v_{2}+u_{\perp ,2},v_{3}-%
\frac{\bar{u}_{||}\cdot \bar{u}_{\perp }}{u_{||,3}}) \\
&\times  \chi \left( \frac{|\bar{v}+\bar{u}_{\perp }|^{1-\varepsilon}\alpha ^{1/2-\varepsilon}}{%
u_{||,3}}\right) e^{-\frac{|u_{\parallel}+v+u_{\perp}|^{2}}{4}}\frac{q_{0}^{\ast }(\frac{|u_{\parallel}|}{%
|u_{\parallel}+u_{\perp}|})}{|u_{||,3}||u_{\parallel}+u_{\perp}|^{ 1-\kappa}}.
\end{eqnarray*}%
We now integrate by part in $u_{\perp ,i}$ for $i=1,2$ to get 
\begin{equation}\notag
\begin{split}
&- \int_{\mathbb{R}^{3}}\mathrm{d}u_{||}f(v+u_{||})\int_{\mathbb{R}^{2}}\partial
_{v_{3}}f(\bar{v}+ \bar{u}_{\perp},v_{3}-\frac{\bar{u}%
_{||}\cdot \bar{u}_{\perp }}{u_{||,3}})\\
 & \ \ \ \ \ \ \ \ \times  \chi \left( \frac{|\bar{v}+\bar{u}_{\perp }|^{1-}\alpha ^{1/2-}}{%
u_{||,3}}\right) e^{-\frac{|u_{\parallel}+v+u_{\perp}|^{2}}{4}}\frac{q_{0}^{\ast }(\frac{|u_{\parallel}|}{%
|u_{\parallel}+u_{\perp}|})u_{||,i}\mathrm{d}\bar{u}_{\perp }}{|u_{||,3}|^{2}|u_{||}+u_{\perp }|^{1-\kappa
 }} \\
&- \int_{\mathbb{R}^{3}}\mathrm{d}u_{||}f(v+u_{||})\int_{\mathbb{R}%
^{2}}f(\bar{v} +\bar{u}_{\perp } ,v_{3}-\frac{\bar{u}_{||}\cdot 
\bar{u}_{\perp }}{u_{||,3}})\\
& \ \  \ \ \ \ \ \ \times  \partial _{u_{\perp },i}\left\{  \chi \left( \frac{|\bar{v}+
\bar{u}_{\perp }|^{1- \varepsilon}\alpha ^{1/2- \varepsilon}}{u_{||,3}}\right) e^{-\frac{|u_{\parallel}+v+u_{\perp}|^{2}%
}{4}}\frac{    q_{0}^{\ast }(\frac{|u_{\parallel}|}{|u_{\parallel}+u_{\perp}|})\bar{u}_{||,i}}{%
|u_{||,3}||u_{||}+u_{\perp }|^{1-\kappa }}\right\} \mathrm{d}\bar{u}_{\perp }.
\end{split}
\end{equation}
Directly we have $|\partial_{u_{\perp,i}} \alpha( \bar{v} + \bar{u}_{\perp})| \lesssim \alpha( \bar{v} + \bar{u}_{\perp})^{1/2}$ and $|\frac{d u_{\perp,3}}{d u_{\perp,i}}| \leq \frac{|\bar{u}_{\parallel}|}{|u_{\parallel,3}|} $ to conclude 
\begin{equation}\notag
\begin{split}
 |\partial _{u_{\perp },i}\Big\{ \ \ \Big\}|\sim \  & \ \chi^{\prime} \frac{|\bar{v} + \bar{u}_{\perp}|^{-\varepsilon} \alpha^{1/2-\varepsilon} + |\bar{v} + \bar{u}_{\perp}|^{1-\varepsilon} \alpha^{ -\varepsilon} }{u_{\parallel,3}} e^{-\frac{|u_{\parallel}+v+u_{\perp}|^{2}%
}{4}}\frac{    ||q_{0}^{\ast }||_{\infty}|\bar{u}_{||}|}{%
|u_{||,3}||u_{||}+u_{\perp }|^{1-\kappa }} \\
&  + \chi  e^{-C|u_{\parallel} + u_{\perp} + v|^{2}} \Big\{ \frac{ || q_{0}^{*}||_{\infty}|\bar{u}_{\parallel}|^{2}}{|u_{\parallel,3}|^{2} |u_{\parallel} + u_{\perp}|^{1-\kappa}}
+ \frac{|| q_{0}^{*}||_{C^{1}}|\bar{u}_{\parallel}|^{2}}{|u_{\parallel,3}|^{2}|u_{\parallel} + u_{\perp}|^{3-\kappa}}
+ \frac{   || q_{0}^{*}||_{\infty}|\bar{u}_{\parallel}| (1+ \frac{|\bar{u}_{\parallel}|}{|u_{\parallel,3}|})}{|u_{\parallel,3}||u_{\parallel} + u_{\perp}|^{2-\kappa}}
\Big\}\\
\lesssim_{q_{0}^{*}} & \ \mathbf{1}_{\{u_{\parallel,3} \sim |\bar{v}+ \bar{u}_{\perp}|^{1-\varepsilon} \alpha^{1/2-\varepsilon}\}}
\Big\{ 
\frac{1}{|\bar{v} + \bar{u}_{\perp}|} 
+ \frac{ |\bar{v} + \bar{u}_{\perp}|^{1-\varepsilon}  \alpha^{-\varepsilon}}{|u_{\parallel,3}| }
\Big\} 
\frac{ e^{-\frac{|u_{\parallel}+v+u_{\perp}|^{2}%
}{4}}   |\bar{u}_{||}|}{%
|u_{||,3}||u_{||}+u_{\perp }|^{1-\kappa }}\\
&+\mathbf{1}_{\{ |\bar{v} + \bar{u}_{\perp}|^{1-\varepsilon} \alpha^{1/2-\varepsilon} \leq u_{\parallel,3} \}} \frac{|\bar{u}_{\parallel}|(1+ |\bar{u}_{\parallel}|)  }{|u_{\parallel,3}|^{2}|u_{\parallel} + u_{\perp}|^{1-\kappa}} (1+ \frac{1}{|u_{\parallel} + u_{\perp}|^{2}})e^{-C|u_{\parallel}+v+u_{\perp}|^{2}%
}     .
\end{split}
\end{equation}
Note that $|f(v+u_{\parallel})| \lesssim e^{-C|v+u_{\parallel}|^{2}} ||  e^{\theta|v|^{2}} f||_{\infty}$ and 
\begin{equation}\label{parallel3}
 |f(\bar{v} + \bar{u}_{\perp}, v_{3} - \frac{\bar{u}_{\parallel} \cdot \bar{u}_{\perp}}{u_{\parallel,3}})| \lesssim e^{-C|\bar{v}+ \bar{u}_{\perp}|^{2} -C |v_{3} + u_{\perp,3}(u_{\parallel}, \bar{u}_{\perp})|^{2}}  ||  e^{\theta|v|^{2}} f||_{\infty}, 
\end{equation}
and 
\[
e^{-|u_{\parallel} + u_{\perp} + v|^{2}} e^{-C|v+u_{\parallel}|^{2}}e^{-C|\bar{v}+ \bar{u}_{\perp}|^{2} -C |v_{3} + u_{\perp,3}(u_{\parallel}, \bar{u}_{\perp})|^{2}} \lesssim e^{-C^{\prime}|v|^{2}} e^{-C^{\prime} |u_{\perp}|^{2}} e^{-C|v+u_{\parallel}|^{2}},
\]
where $v:=v_{\parallel} + v_{\perp}$ with $v_{\parallel} :=  v\cdot \frac{u_{\parallel}}{|u_{\parallel}|}$ and $$|v+u_{\parallel}|^{2} + |v+u_{\perp}|^{2} = |v_{\parallel} + u_{\parallel}|^{2} + |v_{\perp}|^{2} + |v_{\perp} + u_{\perp}|^{2} + |v_{\parallel}|^{2} \geq |v|^{2}.$$ 
The $\partial_{u_{\perp,i}}\big\{ \ \big\}-$contribution are bounded by following three estimates: For the first term
\begin{equation}\notag
\begin{split}
e^{-|v|^{2}}  || e^{\theta|v|^{2}} f||_{\infty}^{2}\int_{\mathbb{R}^{2}} \frac{  e^{-|u_{\perp}|^{2}}\mathrm{d}\bar{u}_{\perp}}{|\bar{v}+ \bar{u}_{\perp}|}
 \int_{\mathbb{R}^{2}}  |\bar{u}_{\parallel}|^{\kappa} e^{-|\bar{v} + \bar{u}_{\parallel}|^{2}}  \mathrm{d}\bar{u}_{\parallel} 
\int_{u_{\parallel,3} \sim |\bar{v} + \bar{u}_{\perp}|^{1-\varepsilon} \alpha^{\frac{1}{2}-\varepsilon}} \frac{\mathrm{d}u_{\parallel,3}}{|u_{\parallel,3}|} \lesssim e^{-C^\prime |v|^{2}}.
\end{split}
\end{equation}
For the second term we use $f(\bar{v}+ \bar{u}_{\perp}, v_{3} -\frac{\bar{u}_{\parallel} \cdot \bar{u}_{\perp}}{u_{\parallel,3}})\lesssim e^{-C|v+u_{\perp}|^{2}} || e^{\theta|v|^{2}}f||_{\infty} \frac{1}{1+ (v_{3} - \frac{\bar{u}_{\parallel} \cdot \bar{u}_{\perp}}{|u_{\parallel,3}|}  )^{\varepsilon}}$ such that 
\begin{equation}\notag
\begin{split}
&f(v+u_{\parallel})  \frac{|\bar{v} + \bar{u}_{\perp}|^{1-\varepsilon} \alpha^{ -\varepsilon}}{|u_{\parallel,3}|^{2}} f(v_{3} - \frac{\bar{u}_{\parallel} \cdot \bar{u}_{\perp}}{u_{\parallel,3}})
\frac{e^{-\frac{|u_{\parallel} + u_{\perp} + v|^{2}}{4}} |\bar{u}_{\parallel}|}{ |u_{\parallel} + u_{\perp}|^{1-\kappa}}
\\
\lesssim&e^{-|v|^{2}} e^{-|v+u_{\parallel}|^{2}} e^{-|u_{\perp}|^{2}} || e^{\theta|v|^{2}} f||_{\infty}^{2}   \frac{|\bar{v} + \bar{u}_{\perp}|^{1-\varepsilon} \alpha^{ -\varepsilon}|\bar{u}_{\parallel}|  }{ |u_{\parallel} + u_{\perp}|^{1-\kappa} }  \frac{1}{|u_{\parallel,3}|^{2-\varepsilon}} \frac{1}{|u_{\parallel,3}|^{\varepsilon}  + [  v_{3} u_{\parallel,3} - \bar{u}_{\parallel} \cdot \bar{u}_{\perp} ]^{\varepsilon}}\\
\lesssim&e^{-|v|^{2}} e^{-|v+u_{\parallel}|^{2}} e^{-|u_{\perp}|^{2}} || e^{\theta|v|^{2}} f||_{\infty}^{2}   \frac{|\bar{v} + \bar{u}_{\perp}|^{1-\varepsilon} \alpha^{ -\varepsilon}
\langle v\rangle }{ |u_{\parallel} + u_{\perp}|^{1-\kappa} }  \frac{1}{|u_{\parallel,3}|^{2-\varepsilon} |\bar{u}_{\perp}|^{\varepsilon} } \frac{1}{  \big[  \frac{v_{3} u_{\parallel,3}}{|\bar{u}_{\perp}|} - \bar{u}_{\parallel} \cdot \frac{\bar{u}_{\perp}}{|\bar{u}_{\perp}|} \big]^{\varepsilon}},
\end{split}
\end{equation}
where we have used $e^{-|v+u_{\parallel}|^{2}}|u_{\parallel}|\lesssim\{ |u_{\parallel}+ v| + |v|\}e^{-|v+u_{\parallel}|^{2}} \lesssim(1+ |v|)e^{-C|v+u_{\parallel}|^{2}}.$ 

Now we decompose $\bar{u}_{\parallel} = \bar{u}_{\parallel,a} + \bar{u}_{\parallel,b} :=  \bar{u}_{\parallel} \cdot \frac{\bar{u}_{\perp}}{|\bar{u}_{\perp}|} + (\bar{u}_{\parallel} -  \bar{u}_{\parallel} \cdot \frac{\bar{u}_{\perp}}{|\bar{u}_{\perp}|})$ and bound $e^{-|v|^{2}} || e^{\theta|v|^{2}} f||_{\infty}^{2}\times $
\begin{equation}\notag
\begin{split}
& \int_{ u_{\parallel,3} \sim |\bar{v}+ \bar{u}_{\perp}|^{1-\varepsilon} \alpha^{\frac{1}{2} -\varepsilon}} \mathrm{d}u_{\parallel,3}  \int_{\mathbb{R}^{2}} \mathrm{d}\bar{u}_{\perp} e^{-|\bar{u}_{\perp}|^{2}} \frac{|\bar{v}+ \bar{u}_{\perp}|^{1-\varepsilon} \alpha^{-\varepsilon}}{|\bar{u}_{\perp}|^{\varepsilon}|u_{\parallel,3}|^{2-\varepsilon}}\\
& \ \ \times \int_{\mathbb{R}} \frac{ e^{-| \bar{u}_{\parallel,b}  + (v-v\cdot \frac{\bar{u}_{\perp}}{|\bar{u}_{\perp}|})|^{2}}\mathrm{d}\bar{u}_{\parallel,b}}{ |\bar{u}_{\parallel,b}|^{1-\kappa} }
  \int_{\mathbb{R}}  \frac{ \mathrm{d} \bar{u}_{\parallel,a}e^{-|\bar{u}_{\parallel,a}|^{2}}}{[\bar{u}_{\parallel,a} -\frac{v_{3} u_{\parallel,3}}{|\bar{u}_{\perp}|}    ]^{\varepsilon}}\\
  \lesssim &  \int_{\mathbb{R}^{2}} \mathrm{d}\bar{u}_{\perp} e^{-|\bar{u}_{\perp}|^{2}} \int_{ u_{\parallel,3} \sim |\bar{v}+ \bar{u}_{\perp}|^{1-\varepsilon} \alpha^{\frac{1}{2} -\varepsilon}} \mathrm{d}u_{\parallel,3}   \frac{|\bar{v}+ \bar{u}_{\perp}|^{1- \varepsilon} \alpha(\bar{v}+ \bar{u}_{\perp})^{ -\varepsilon}}{|\bar{u}_{\perp}|^{\varepsilon}|u_{\parallel,3}|^{2-\varepsilon}},
\end{split}
\end{equation}
where we have used $\bar{u}_{\parallel,a} \mapsto \bar{u}_{\parallel,a} - v\cdot \frac{\bar{u}_{\perp}}{|\bar{u}_{\perp}|}.$ The $u_{\parallel,3}-$integration yields
\begin{equation}\notag
\begin{split}
\lesssim&  \int_{\mathbb{R}^{2}} \mathrm{d}\bar{u}_{\perp} e^{-|\bar{u}_{\perp}|^{2}} 
\frac{|\bar{v}+ \bar{u}_{\perp}|^{1-\varepsilon} \alpha(\bar{v}+ \bar{u}_{\perp})^{ -\varepsilon}}{|\bar{u}_{\perp}|^{\varepsilon} }
\int_{ u_{\parallel,3} \sim |\bar{v}+ \bar{u}_{\perp}|^{1-\varepsilon} \alpha^{\frac{1}{2} -\varepsilon}} \frac{\mathrm{d}u_{\parallel,3} }{|u_{\parallel,3}|^{2-\varepsilon}} \\
\lesssim&
  \int_{\mathbb{R}^{2}} \mathrm{d}\bar{u}_{\perp} e^{-|\bar{u}_{\perp}|^{2}} 
\frac{|\bar{v}+ \bar{u}_{\perp}|^{1- \varepsilon} \alpha(\bar{v}+ \bar{u}_{\perp})^{ - \varepsilon}}{|\bar{u}_{\perp}|^{\varepsilon} }
\frac{1}{| \bar{v} + \bar{u}_{\perp}|^{(1-\varepsilon)(1- \varepsilon)}  } \frac{1}{\alpha(\bar{v} + \bar{u}_{\perp})^{(\frac{1}{2}- \varepsilon)(1-\varepsilon)}}\\
\lesssim & \int_{\mathbb{R}^{2}} \mathrm{d}\bar{u}_{\perp} e^{-|\bar{u}_{\perp}|^{2}}
 \frac{ |\bar{v}+ \bar{u}_{\perp}|^{\varepsilon (1- \varepsilon)}  }{|\bar{u}_{\perp}|^{\varepsilon}} \frac{1}{\alpha(\bar{v} + \bar{u}_{\perp})^{\frac{1}{2}-(1- \varepsilon)\varepsilon  }}.
  \end{split}
\end{equation}
Note that $\alpha(\bar{v}+ \bar{u}_{\perp})^{\frac{1}{2}-\varepsilon(1-\varepsilon)} \gtrsim [n(x) \cdot ( \bar{v} + \bar{u}_{\perp})]^{1-\varepsilon(1-\varepsilon)}$ and $|\bar{u}_{\perp}|^{\varepsilon} \gtrsim [n^{\perp} \cdot \bar{u}_{\perp}]^{\varepsilon}$ to bound
\begin{equation}\notag
\lesssim \langle v\rangle \int_{\mathbb{R}} \frac{e^{-|n\cdot \bar{u}_{\perp}|^{2}} }{ [n\cdot \bar{u}_{\perp} + n\cdot \bar{v}]^{1-2\varepsilon(1-\varepsilon)} }\mathrm{d}[n\cdot \bar{u}_{\perp}] \int_{\mathbb{R}} \frac{e^{-|n^{\perp} \cdot \bar{u}_{\perp}|^{2}}}{|n^{\perp} \cdot \bar{u}_{\perp}|^{\varepsilon}} \mathrm{d} [n^{\perp} \cdot \bar{u}_{\perp}]\lesssim \langle v\rangle .
\end{equation}
For the third term is bounded by $|| e^{\theta |v|^{2}}f||_{\infty }^{2}\times $  
\begin{eqnarray*}
&&\int_{%
\mathbb{R}^{3}}d\bar{u}_{||}\int_{\mathbb{R}^{2}}d\bar{u}_{\perp }\left\{
\int_{|u_{||,3}|\geq |\bar{v}+\bar{u}_{\perp }|^{1- \varepsilon}\alpha ^{1/2 -\varepsilon}}\frac{%
e^{-| u_{||,3}-v_{3}|^{2}}}{|u_{||,3}|^{2}}du_{||,3}\right\} \\
&& \ \ \ \  \ \ \ \  \ \ \   \ \ \ \  \ \ \   \ \ \ \times \frac{%
\langle u_{||}\rangle^{2}e^{-C\{|u_{||}+v|^{2}-|u_{\perp }|^{2}\}}}{|u+w|^{1-\kappa }}(1+\frac{%
1}{|u_{||}+u_{\perp }|^{2}}) \\
&\lesssim & \int_{\mathbb{R}^{3}}d\bar{u}_{||}\left\{ \int_{\mathbb{R}^{2}}\frac{%
e^{-C|u_{\perp }|^{2}}d\bar{u}_{\perp }}{|\bar{v}+\bar{u}_{\perp
}|^{1-\varepsilon}\alpha ^{1/2-\varepsilon}}\right\} \frac{\langle \bar{u}_{||}\rangle^{2}e^{-C\{|u_{||}+v|^{2}\}}}{%
|u_{\parallel}+u_{\perp}|^{1-\kappa }}(1+\frac{1}{|u_{||}+u_{\perp }|^{2}}),
\end{eqnarray*}%
where we have used
\begin{equation}\notag
\begin{split}
&\int_{|u_{\parallel,3}| \geq |\bar{v} + \bar{u}_{\perp}|^{1-\varepsilon} \alpha^{\frac{1}{2} - \varepsilon}  } 
\frac{e^{-|u_{\parallel,3} -v_{3}|^{2}}}{|u_{\parallel,3}|^{1-\varepsilon}} \frac{1}{|u_{\parallel,3}|^{1+\varepsilon}}
\mathrm{d}u_{\parallel,3}\\
&\lesssim
\frac{1}{|\bar{v}+ \bar{u}_{\perp}|^{(1-\varepsilon)(1+ \varepsilon^{2})}  \alpha^{(\frac{1}{2} -\varepsilon)(1+  \varepsilon^{2})} }
 \int_{\mathbb{R}}  \frac{e^{-|u_{\parallel,3} -v_{3}|^{2}}}{|u_{\parallel,3}|^{1-\varepsilon^{2}}}\mathrm{d}u_{\parallel,3}\\
& \lesssim  \frac{1}{|\bar{v}+ \bar{u}_{\perp}|^{1-(1-\varepsilon) (1+ \varepsilon^{2})}  \alpha^{ \frac{1}{2}-(1-\varepsilon) (1+ \varepsilon^{2})}}.
 \end{split}
\end{equation}

We note that, by separating $|\xi| \geq \delta $ or $|\xi| \leq \delta ,$ we can
write $\alpha ^{1/2- (1-\varepsilon) (1+ \varepsilon^{2})}\geq \{n\cdot \lbrack \bar{v}+\bar{u}_{\perp }]\}^{1- (1-\varepsilon) (1+ \varepsilon^{2})}$
and $|\bar{v}+\bar{u}_{\perp }|^{1- (1-\varepsilon) (1+ \varepsilon^{2})}\geq \{n^{\perp }\cdot \lbrack \bar{v}+%
\bar{u}_{\perp }]\}^{1-(1-\varepsilon) (1+ \varepsilon^{2})},$ where $n^{\perp} = \left[\begin{array}{cc} 0 & -1 \\ 1 & 0\end{array} \right] n,x$ so that the inner 2D integral are two convergent
1D one 
\begin{eqnarray*}
&&\int_{|(\bar{v}+\bar{u}_{\perp }) \cdot n^{\perp}|\geq 1}\frac{e^{-C|u_{\perp }|^{2}}d\bar{u}%
_{\perp }}{\alpha ^{1/2- \varepsilon }}+\int_{|\bar{v}+\bar{u}_{\perp }|\leq 1,|n^{\perp
}\{\bar{v}+\bar{u}_{\perp }\}|\leq 1}\frac{d\bar{u}_{\perp }}{|\bar{v}+\bar{u%
}_{\perp }|^{1- \varepsilon }\alpha ^{1/2- \varepsilon}} 
 \\
&\leq &1
+\int_{|n^{\perp }\{\bar{v}+\bar{u}_{\perp }\}|\leq 1}\frac{%
e^{-C|u_{\perp }|^{2}}d\bar{u}_{\perp }}{\alpha ^{1/2-  \varepsilon}}
+\int_{ |n^{\perp }\{\bar{v}+\bar{u}_{\perp }\}|\leq 1}\frac{d%
\bar{u}_{\perp }}{|\bar{v}+\bar{u}_{\perp }|^{1- \varepsilon}\alpha ^{1/2- \varepsilon}}\\
&&+\int_{|n^{\perp }\{\bar{v}+\bar{u}_{\perp }\}|\leq 1}\frac{e^{-C|u_{\perp
}|^{2}}d\bar{u}_{\perp }}{|\bar{v}+\bar{u}_{\perp }|^{1- \varepsilon}} \\ 
&<&+\infty .
\end{eqnarray*}

Similarly, the first term is bounded by 
\begin{equation*}
||\langle v\rangle^{\zeta}e^{\theta|v|^{2}}f||_{\infty }||\partial _{v_{3}}f||\int_{\mathbb{R}%
^{3}}du_{||}\left\{ \int_{|u_{||,3}|\geq |\bar{v}+\bar{u}_{\perp
}|^{1- \varepsilon}\alpha ^{1/2- \varepsilon}}\frac{e^{-|v_{3}-u_{||,3}|^{2}}}{|u_{||,3}|^{2}}%
\right\} \frac{q_{0}^{\ast }(\frac{|u|}{|v+w|})u_{||,i} \mathrm{d}\bar{u}_{\perp }}{%
|u_{||}+u_{\perp }|^{\kappa -1}}e^{-\frac{|u+v+w|^{2}}{4}},
\end{equation*}%
and the same argument yields the same bound.

We now turn to 
\begin{equation*}
e^{-\varpi \langle v\rangle s}\int_{\mathbb{R}^{3}}\mathrm{d}u_{||}f(v+u_{||})\int_{\mathbb{R}^{2}}\left\{ 1-\chi
\left( \frac{|\bar{v}+\bar{u}_{\perp }|^{1-  \varepsilon}\alpha ^{1/2- \varepsilon}}{u_{||,3}}\right)
\right\} \mathrm{d}u_{\perp }\partial _{v_{i}}f(v+u_{\perp })e^{-\frac{|u+v+w|^{2}}{4}%
}\frac{q_{0}^{\ast }(\frac{|u|}{|v+w|})}{|u_{||}||u+w|^{1-\kappa  }}.
\end{equation*}%
In this case, 
\begin{equation*}
|\bar{v}+\bar{u}_{\perp }|^{1- \varepsilon}\alpha ^{1/2- \varepsilon}\geq |u_{||,3}|.
\end{equation*}%
We now parametrize $\mathrm{d}u_{\perp }$ in two different ways. 

We decompose
\begin{equation}\label{decomp_app}
\bar{u}_{\parallel} = \bar{u}_{\parallel,n} + \bar{u}_{\parallel,n^{\perp}} := \bar{u}_{\parallel} \cdot n + \bar{u}_{\parallel} \cdot n^{\perp}.
\end{equation}

If $|u_{||,3}|\geq|\bar{u}_{\parallel,n^{\perp}}|,$ then we use
the same parametrization to get%
\begin{eqnarray*}
&&e^{-\varpi \langle v\rangle s} e^{\varpi\langle v+ u_{\perp} \rangle s}\int_{\mathbb{R}^{3}} \mathrm{d}u_{||}f(v+u_{||})\int_{\mathbb{R}^{2}} \mathrm{d}\bar{u}%
_{\perp } e^{-\varpi\langle v+ u_{\perp} \rangle s}\partial _{v_{i}}f(\bar{v}+\bar{u}_{\perp} ,v_{3}-%
\frac{\bar{u}_{||}\cdot \bar{u}_{\perp }}{u_{||,3}}) \\
&&\times \lbrack 1-\chi \left( \frac{|\bar{v}+\bar{u}_{\perp }|^{1- \varepsilon}\alpha
^{1/2- \varepsilon}}{u_{||,3}}\right) ]e^{-\frac{|u_{\parallel}+v+u_{\perp}|^{2}}{4}}\frac{q_{0}^{\ast }(%
\frac{|u_{\parallel}|}{|u_{\parallel}+u_{\perp}|})}{|u_{||,3}||u_{\parallel}+u_{\perp}|^{1-\kappa  }} \\
&\lesssim_{s} &||e^{\theta|v|^{2}}f||_{\infty } ||e^{-\varpi \langle v\rangle s}\frac{\bar{v}\alpha ^{1/2}}{%
\langle \bar{v}\rangle }  \partial _{v_{i}}f (s)||_{\infty }\\
&& \ \ \ \ \ \times\int_{\mathbb{R}^{3}}\mathrm{d}u_{||}\int_{|\bar{v%
}+\bar{u}_{\perp }|^{1- \varepsilon}\alpha ^{1/2- \varepsilon}\geq u_{||,3}}\frac{\mathrm{d}\bar{u}_{\perp }}{%
|\bar{v}+\bar{u}_{\perp }|\alpha ^{1/2}}\frac{e^{-C|u_{\perp}|^{2}} e^{-C|v+u_{\parallel}|^{2}} }{|u_{||,3}| |u_{\parallel} + u_{\perp}|^{1-\kappa}}.
\end{eqnarray*}%
First we integrate $u_{\parallel,n}$ to drop $\frac{1}{| u_{\parallel} + u_{\perp} |^{1-\kappa}}$ singular term for $0 < \kappa\leq 1$
\[
 \int  \frac{{e^{-|v_{n} + u_{\parallel,n}|^{2}}}    }{ | u_{\parallel } - u_{\perp } |^{1-\kappa}}   \mathrm{d}u_{\parallel,n} \leq \int  \frac{e^{-|v_{n} + u_{\parallel,n}|^{2}}}{ | u_{\parallel,n} - u_{\perp,n} |^{1-\kappa}}\mathrm{d}u_{\parallel,n}<\infty,
\]
so that we only need to bound
\begin{eqnarray*}
&&\int \mathrm{d}\bar{u}_{||,n^{\perp}}\int \frac{e^{-|\bar{u}_{\perp }|^{2}-|v+u_{||}|^{2}}\mathrm{d}\bar{u}%
_{\perp }}{|\bar{v}+\bar{u}_{\perp }|^{2\varepsilon }\alpha ^{2\varepsilon }|%
\bar{v}+\bar{u}_{\perp }|^{1-2\varepsilon }\alpha ^{1/2-2\varepsilon }}\frac{1%
}{|u_{||,3}| } \\
&\leq &\int \mathrm{d}\bar{u}_{||,n^{\perp}}\left\{ \int \frac{e^{-|\bar{u}_{\perp
}|^{2}}\mathrm{d}\bar{u}_{\perp }}{|\bar{v}+\bar{u}_{\perp
}|^{\varepsilon }\alpha ^{\varepsilon }}\right\} \frac{e^{|v+u_{\parallel} + u_{\perp}|^{2}}e^{-|v+u_{||}|^{2}}}{|u_{||,3}|^{2-2\varepsilon} }.
\end{eqnarray*}%

The inner integral is finite, since $\alpha \geq |n\cdot \{\bar{v}+\bar{u}%
_{\perp }\}|=|\bar{v}\cdot n + \bar{u}_{\perp,n}|,$ and the integral is a 1D integral:%
\begin{equation*}
 \int_{\mathbb{R}}\frac{e^{-|\bar{u}_{\perp,n }|^{2}} \mathrm{d}\bar{u}_{\perp,n }%
}{|n\cdot \bar{v}+ \bar{u}_{\perp,n } |^{3 \varepsilon }}<+\infty .
\end{equation*}%
Moreover, from $|u_{||,3}| \geq |\bar{u}_{||,n^{\perp}}|,$ the outer integral takes
the form 
\begin{equation*}
\int_{ \mathbb{R}}\frac{ e^{-|u_{\parallel,3}+ v_{3}|^{2}}e^{-|v_{\parallel,n^{\perp}} + u_{\parallel,n^{\perp}} |^{2}}  \mathrm{d}u_{||,3} \mathrm{d}u_{\parallel,n^{\perp}}}{|u_{||,3}|^{2-\varepsilon}}%
\leq \int \frac{ e^{-|u_{\parallel,3}+ v_{3}|^{2}}e^{-|v_{\parallel,n^{\perp}} + u_{\parallel,n^{\perp}} |^{2}}  \mathrm{d}u_{||},_{n^{\perp}} \mathrm{d}u_{||,3}}{%
\{|u_{||,n^{\perp}}|+|u_{||,3}|\}^{2-\varepsilon}} <\infty .
\end{equation*}%
We are done in this case.

We now consider the case $|u_{||,3}|\leq |u_{\parallel,n^{\perp}}|.$ We now choose a different parametrization. We define
\begin{equation}\notag
u_{\perp,n} := u_{\perp} \cdot n, \ \ \  u_{\perp,n^{\perp}}:= u_{\perp} \cdot n^{\perp}.
\end{equation}
Now we
choose $_{{}} {u}_{\perp ,n }$ and $ {u}_{\perp ,3}$ as parameters so
that $ {u}_{\perp ,n^{\perp}}=-\frac{ {u}_{\perp ,n}u_{||,n}+u_{\perp ,3}u_{||,3}}{%
u_{||,n^{\perp}}}$ and 
\begin{equation*}
\mathrm{d}u_{\perp }=\frac{|u_{||}|}{|u_{||,n^{\perp}}|}\mathrm{d}u_{\perp ,n}\mathrm{d}u_{\perp ,3},
\end{equation*}%
so that we need to bound
\begin{eqnarray*}
&&e^{-\varpi \langle v\rangle s}\int_{\mathbb{R}^{3}}\mathrm{d}u_{||}\\
&&\times f(v+u_{||})\int_{\mathbb{R}^{2}}\mathrm{d} {u}%
_{\perp ,n} \mathrm{d}u_{\perp,3}\partial _{v_{i}}
f(v_{n}+u_{\perp ,n},v_{n^{\perp}}-\frac{u_{\perp,n}u_{||,n}+u_{\perp ,3}u_{||,3}}{u_{||,n^{\perp}}},v_{3}+u_{\perp ,3})\frac{|u_{||}|
}{|u_{||,n^{\perp}}|}  \\
&&\times \Big\{ 1-\chi \left( \frac{|\bar{v}+\bar{u}_{\perp }|^{1-}\alpha
^{1/2-}}{u_{||,3}}\right) \Big\}e^{-\frac{|u_{\parallel}+v+u_{\perp}|^{2}}{4}}\frac{q_{0}^{\ast }(%
\frac{|u_{\parallel}|}{|u_{\parallel}+u_{\perp}|})}{|u_{|| }||u_{\parallel}+u_{\perp}|^{1-\kappa  }}.
\end{eqnarray*}
Directly this is bounded by $|| e^{\theta|v|^{2}}f||_{\infty } || e^{-\varpi \langle v\rangle s}\frac{ |\bar{v}|\alpha ^{1/2}\partial _{v_{i}}f}{%
\langle \bar{v}\rangle }||_{\infty }\times $
\begin{eqnarray*}
&&\int_{\mathbb{R}^{3}}\mathrm{d}u_{||}\int_{|\bar{v%
}+\bar{u}_{\perp }|^{1- \varepsilon}\alpha ^{1/2-  \varepsilon}\geq |u_{||,3}|}
\frac{ \langle \bar{v} + \bar{u}_{\perp}\rangle
e^{-|\bar{u}%
_{\perp }|^{2}-|v+u_{||}|^{2}}\mathrm{d}u_{\perp ,n}\mathrm{d}u_{\perp ,3}}{|\bar{v}+\bar{u}%
_{\perp }|\alpha ^{1/2}}\frac{\mathrm{d}u_{\parallel}}{|u_{||,n^{\perp}}||u_{\parallel}+u_{\perp}|^{1-\kappa  }} \\
&\lesssim_{s} & \int_{\mathbb{R}^{3}}\mathrm{d}u_{||}\int_{|\bar{v%
}+\bar{u}_{\perp }|^{1- \varepsilon}\alpha ^{1/2- \varepsilon}\geq | u_{||,3}|}\frac{   \langle \bar{v} + \bar{u}_{\perp} \rangle e^{-|\bar{u}%
_{\perp,n }|^{2}-|v+u_{||}|^{2}}     } {|\bar{v}+\bar{u}%
_{\perp }|^{ 2\varepsilon }\alpha ^{2 \varepsilon }|\bar{v}+\bar{u}_{\perp
}|^{1-2\varepsilon }\alpha ^{1/2-2\varepsilon }}\frac{1}{|u_{||,n^{\perp}}|}  \mathrm{d}\bar{u}_{\perp ,n} .
\end{eqnarray*}
where we integrate $ u_{\perp,3}$ first to drop $\int_{\mathbb{R}}\frac{ e^{-C|u_{\perp,3}|^{2}}  \mathrm{d}u_{\perp,3}}{|u_{\parallel} + u_{\perp}|^{1-\kappa}} \lesssim \int_{\mathbb{R}}\frac{ e^{-C|u_{\perp,3}|^{2}}  \mathrm{d}u_{\perp,3}}{|u_{\parallel,3} + u_{\perp,3}|^{1-\kappa}}<+\infty.$

In the case of $|\bar{v} + \bar{u}_{\perp}|\leq 1,$ this is bounded by
\begin{eqnarray*}
&\lesssim_{s} & \int_{ \mathbb{R}^{3} }\mathrm{d}u_{||}\int \frac{e^{-| {u}_{\perp }|^{2}-|v+u_{||}|^{2}}\mathrm{d}u_{\perp
,n} }{|\bar{v}+\bar{u}_{\perp }|^{2\varepsilon }\alpha
^{2\varepsilon }}\frac{1}{|u_{||,n^{\perp}}||u_{||,3}|^{1- \varepsilon}  } \\
&\lesssim_{s} & \int_{  \mathbb{R}^{3} }du_{||}\int \frac{e^{-| {u}_{\perp }|^{2}-|v+u_{||}|^{2}}\mathrm{d}u_{\perp
,n} }{ |\bar{u}_{\perp,n} + \bar{v}_{n}|^{4\varepsilon}    }\frac{1}{%
|u_{||,n^{\perp}}||u_{||,3}|^{1- \varepsilon}    }
 \\
&\lesssim_{s} & \int_{  \mathbb{R}^{3} }\mathrm{d}u_{||} \Big\{\int \frac{e^{-| {\bar{u}}_{\perp,n }|^{2}}\mathrm{d}u_{\perp ,n}}
{ 
 |\bar{u}_{\perp,n} + \bar{v}_{n}|^{4\varepsilon} 
}\Big\}
\frac{e^{-|v+u_{||}|^{2}}  }{%
|u_{||,n^{\perp}}||u_{||,3}|^{1- \varepsilon} },
\end{eqnarray*}%
where the inner integral is 1D which is finite and bounded. On the other hand, from the assumption 
$|u_{||,3}|\leq |\bar{u}_{||,n^{\perp}}|,$ the outer integral is%
\begin{eqnarray*}
&&\int \frac{e^{-|v+u_{||}|^{2}} \mathrm{d}\bar{u}_{||,n^{\perp}} \mathrm{d}\bar{u}_{\parallel,n} \mathrm{d}u_{||,3}  }{%
|\bar{u}_{||,n^{\perp}}||u_{||,3}|^{1- \varepsilon}   } \\
&\leq &\int \left\{ \int_{0}^{|\bar{u}_{||,n^{\perp}}|}\frac{\mathrm{d}u_{||,3}}{|u_{||,3}|^{1- \varepsilon}}%
\right\}   \frac{e^{-|\bar{v}_{n}+\bar{u}_{||,n}|^{2}} e^{-|\bar{v}_{n^{\perp}}+\bar{u}_{||,n^{\perp}}|^{2}}  }{|\bar{u}_{||,n^{\perp}}|} \mathrm{d}\bar{u}_{||,n^{\perp}}\mathrm{d}\bar{u}_{||,n }\\
&\leq &\iint_{\mathbb{R}^{2}}  \frac{e^{-|\bar{v}_{n}+\bar{u}_{||,n}|^{2}} e^{-|\bar{v}_{n^{\perp}}+\bar{u}_{||,n^{\perp}}|^{2}}  }{|\bar{u}_{||,n^{\perp}}|} \mathrm{d}\bar{u}_{||,n^{\perp}}\mathrm{d}\bar{u}_{||,n }  <\infty .
\end{eqnarray*}

In the case of $|\bar{v} + \bar{u}_{\perp}|\geq 1$ we bound the integration as
\begin{equation}\notag
\begin{split}
&\int_{\mathbb{R}^{3}} \int_{\mathbb{R}}
\frac{   \langle \bar{v} + \bar{u}_{\perp} \rangle^{\varepsilon} \langle \bar{v} + \bar{u}_{\perp} \rangle^{1-\varepsilon}
}{   |u_{\parallel,3}|^{\frac{2 \varepsilon}{1-2  \varepsilon}} |\bar{v}+ \bar{u}_{\perp}|^{1-\varepsilon} \alpha^{\frac{1}{2}-\varepsilon} } \frac{e^{-|\bar{u}_{\perp,n}|^{2}}e^{-|v+ u_{\parallel}|^{2}} }{|u_{\parallel,n^{\perp}}|  |u_{\parallel} + u_{\perp}|^{1-\kappa}}  \mathrm{d}\bar{u}_{\perp,n}  \mathrm{d}u_{\parallel}\\
\lesssim&   
\int_{\mathbb{R}^{3}} \mathrm{d}u_{\parallel} \int 
\frac{
 \langle \bar{v}_{n} + \bar{u}_{\perp,n} \rangle^{\varepsilon}  
 \langle \bar{v}_{n^{\perp}}  \rangle^{\varepsilon} 
 e^{-|\bar{u}_{\perp,n}|^{2} }  e^{-|v+u_{\parallel}|^{2}}
 }{ |u_{\parallel,3}|^{ {\frac{2 \varepsilon}{1-2  \varepsilon}}}  
 [
 \bar{u}_{\perp,n} + \bar{v}_{n} 
 ]^{2(\frac{1}{2}-\varepsilon)} |\bar{u}_{\parallel,n^{\perp}} | } \mathrm{d}\bar{u}_{\perp,n}.
\end{split}
\end{equation}

Again $\int_{0}^{|\bar{u}_{\parallel, n^{\perp}}|} \frac{\mathrm{d}u_{\parallel,3}}{|u_{\parallel,3}|^{
\frac{2\varepsilon}{1-2 \varepsilon}
}
}\lesssim |\bar{u}_{\parallel, n^{\perp}}|^{1-  \frac{2\varepsilon}{1-2 \varepsilon}}$ and hence the integration is bounded by
\begin{equation}\notag
\begin{split}
\langle \bar{v}_{n} \rangle^{\varepsilon} \langle \bar{v}_{n^{\perp}} \rangle^{\varepsilon} \iint \frac{e^{-|\bar{u}_{\perp,n}|^{2}}  e^{-|\bar{v} + \bar{u}_{\parallel}|^{2}}  }{ |\bar{u}_{\parallel,n^{\perp}}|^{ \frac{2\varepsilon}{1-2 \varepsilon}}   | \bar{u}_{\perp,n} + \bar{v}_{n}  |^{1-2\varepsilon}  } \leq \langle \bar{v}_{n} \rangle^{\varepsilon} \langle \bar{v}_{n^{\perp}} \rangle^{\varepsilon} \langle \bar{v}_{  n^{\perp}} \rangle^{- \frac{2\varepsilon}{1-2 \varepsilon}} \langle \bar{v}_{n}\rangle^{-(1-2\varepsilon)} \lesssim1.
\end{split}
\end{equation} 
\end{proof}

 Our main result for 2D specular case is the following.

\begin{theorem}\label{theo2D} 
Assume a stronger cut-off assumption on $q_{0}(\theta)$ of (\ref{collision_Q})
\begin{equation}\label{strong_cutoff}
\Big|\nabla_{v} q_{0} (\frac{v-u}{|v-u|}\cdot \omega)\Big| \Big{/} \Big|  \frac{v-u}{|v-u|}\cdot \omega \Big| \lesssim1.
\end{equation}
Assume $f_{0}\in W^{1,\infty }$ with (\ref{specularBC}).
Assume that
\begin{equation*}
\sup_{0<t\leq T}\{||  e^{\theta |v|^{2}}f(t)||_{\infty }+||  \partial _{v_{3}}f(t)||_{\infty } \}\leq c_{T,\zeta
,f_{0}}<+\infty ,
\end{equation*}%
then
\begin{equation}\notag
\begin{split}
& \sup_{0\leq t\leq T}\{||e^{-\varpi\langle \bar{v} \rangle t}\frac{\alpha }{1+|\bar{v}|^{2}}%
\nabla _{x}f(t)||_{\infty }+||e^{-\varpi\langle \bar{v}\rangle t} \frac{ |\bar{v}| }{\langle \bar{v} \rangle}\sqrt{\alpha }\nabla
_{v}f(t)||_{\infty }\} \\ 
&\lesssim  _{T,\Omega ,L}|| \frac{\alpha^{1/2}}{\langle \bar{v} \rangle}  \nabla _{\bar{x} }f_{0}||_{\infty } + || \frac{|\bar{v}|^{2}}{\langle v\rangle} \nabla_{\bar{v}} f_{0} ||_{\infty} + || \partial_{v_{3}}f||_{\infty}+P(||  e^{\theta
|v|^{2}}f_{0}||_{\infty }),
\end{split}
\end{equation}%
where $P\,$\ is some polynomial. If $f_{0}\in C^{1}$ and the compatibility conditions (\ref{compatibility_specular}) and (\ref{compatibility_specular_t}) are satisfied,
then $f\in C^{1}$ away from the grazing set $\gamma _{0}.$ Furthermore, if $%
|| e^{\theta |v|^{2}}f_{0}||_{\infty }\ll1,$ and $\partial \Omega $ (therefore $\xi$) is
real analytic, then $T$ can be arbitrarily large.
\end{theorem}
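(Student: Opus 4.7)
The plan is to mimic the proof of Theorem \ref{main_specular}, but with three crucial substitutions dictated by the lower dimension: (i) the $v_3$-variable is a mere parameter, so $\partial_{v_3} f$ is an extra ``free'' unknown bounded by hypothesis; (ii) the explicit 2D estimates on specular cycles in \eqref{2D_specular} replace Theorem \ref{theorem_Dxv}, giving in particular $|\partial_{\bar v}\bar X_{\mathbf{cl}}|\lesssim 1/|\bar v|$ and $|\partial_n \bar X_{\mathbf{cl}}|\lesssim |\bar v|(t-s)/\sqrt{\alpha}$; and (iii) Lemma \ref{2D_Gamma} provides the critical-exponent estimate for $\partial_{v_i}\Gamma_{\mathrm{gain}}$ with weight $|\bar v|\alpha^{1/2}/\langle\bar v\rangle$, which is exactly the borderline weight $\beta=\tfrac12$ that was inaccessible in 3D.

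The first step is to set up the positive-preserving iteration (\ref{positive_iteration}) with the specular boundary condition (\ref{specular_BC_m}), and derive the Duhamel representation for $\partial_{\bar x} f^{m+1}$ and $\partial_{\bar v} f^{m+1}$ along the 2D specular cycles exactly as in the derivation of \eqref{deriv_spec}. Since $v_3$ is an advection parameter and the boundary condition does not mix $v_3$, the formulas for $\mathbf I_{\mathbf e}$ and $\mathbf{II}_{\mathbf e}$ carry over verbatim, with all $X_{\mathbf{cl}}, V_{\mathbf{cl}}$ and their derivatives replaced by the 2D versions. I then weight by $e^{-\varpi\langle \bar v\rangle t}\alpha/(1+|\bar v|^2)$ for the $\bar x$-derivative and by $e^{-\varpi\langle \bar v\rangle t}|\bar v|\sqrt{\alpha}/\langle \bar v\rangle$ for the $\bar v$-derivative, matching the powers of $\alpha$ that appear in (\ref{2D_specular}).

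The second step is to control the source terms $\mathbf{II}_{\mathbf e}$. The inhomogeneous contribution splits, as in (\ref{deriv_spec}), into $\Gamma_{\mathrm{gain}}(\partial f^{m-\ell}, f^{m-\ell})+\Gamma_{\mathrm{gain}}( f^{m-\ell},\partial f^{m-\ell})$ and a boundary contribution involving $\nu$. For the $\bar x$-derivative term one inserts the trajectory bound $|\partial_{\bar x} \bar X_{\mathbf{cl}}|\lesssim e^{C|\bar v|(t-s)}|\bar v|/\sqrt{\alpha}$ and applies the non-local to local estimate (\ref{specular_nonlocal}) at $\beta=1$; the balance of weights is $\alpha \cdot 1/(|\bar v|\sqrt{\alpha})\cdot 1/\alpha= \alpha^{-1/2}/|\bar v|$, which is exactly compensated by the $\sqrt{\alpha}$ gain from Lemma \ref{lemma_nonlocal}(2), giving a small constant $O(\varepsilon)$ plus a piece controlled by $\sup_{s}\|\,e^{-\varpi\langle \bar v\rangle s}\alpha\,\partial_{\bar x}f^{m}(s)\|_\infty$. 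The genuinely new step is the $\partial_{\bar v}\Gamma_{\mathrm{gain}}$ term with weight $|\bar v|\sqrt{\alpha}/\langle \bar v\rangle$: here (\ref{Gamma_k}) only gives control in terms of $\nabla_v f$, which is precisely what we are trying to estimate with a critical weight. This is where Lemma \ref{2D_Gamma} is used, absorbing $\partial_{\bar v}\Gamma_{\mathrm{gain}}$ into $\|\partial_{v_3} f\|_\infty$ (bounded by hypothesis), $\|e^{\theta|v|^2}f\|_\infty^2$, and the quantity $\|e^{-\varpi\langle \bar v\rangle s}(|\bar v|/\langle \bar v\rangle)\sqrt{\alpha}\,\nabla_{\bar v} f\|_\infty$ that we are iterating on. The stronger cutoff hypothesis \eqref{strong_cutoff} is what makes the integration-by-parts in $u_{\perp,i}$ inside Lemma \ref{2D_Gamma} permissible.

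The third step is to close a Gronwall loop in $m$. Collecting all contributions, one obtains for $T_*\ll 1$
\[
A_{m+1}(T_*)\;\le\; \big(\varepsilon+\delta+CT_*e^{C_{\varpi}T_*^2}\big)P(\|e^{\theta|v|^2}f_0\|_\infty)\,\max_{i\in\{m-1,m\}}A_i(T_*)+D,
\]
where $A_m$ is the supremum in $t\in[0,T_*]$ of the two weighted norms for $f^m$ and $D$ collects the initial-data and polynomial-in-$\|e^{\theta|v|^2}f_0\|_\infty$ terms together with $\|\partial_{v_3} f\|_\infty$. Choosing $\varpi$ large and then $T_*,\varepsilon,\delta$ small absorbs the first factor below $1/8$, and the abstract iteration lemma (\ref{aAD}) then closes the estimate on $[0,T_*]$. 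Continuing in time intervals of length $T_*$ yields the local result; for the large-time claim under analyticity of $\xi$ and smallness of $\|e^{\theta|v|^2}g_0\|_\infty$, the uniform-in-time control of $\|e^{\theta|v|^2}g(t)\|_\infty$ and $\|\partial_{v_3} g(t)\|_\infty$ from \cite{Guo10} allows iterating the short-time estimate on $[kT_*,(k+1)T_*]$ for all $k$, picking up only an $e^{C t}$ constant absorbed by the $e^{-\varpi\langle \bar v\rangle t}$ weight. The $C^1$ regularity away from $\gamma_0$ is obtained, as in the proof of Theorem \ref{main_specular}, by verifying that the iterates satisfy Proposition \ref{inflowW1p} (hence are $C^1$ away from $\gamma_0$) and showing the weighted derivatives are Cauchy in $L^\infty$.

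The main obstacle is the critical exponent $\beta=\tfrac12$: the dynamical non-local to local estimate of Lemma \ref{lemma_nonlocal} fails at $\beta=\tfrac12$, so without further input the $\partial_{\bar v}\Gamma_{\mathrm{gain}}$ contribution with weight $|\bar v|\sqrt{\alpha}/\langle \bar v\rangle$ cannot be closed. Lemma \ref{2D_Gamma}, whose proof exploits the Carleman representation \eqref{carleman} together with a delicate $u_{\parallel,3}$-splitting at scale $|\bar v+\bar u_\perp|^{1-\varepsilon}\alpha^{1/2-\varepsilon}$, bypasses this failure by trading a $v$-derivative for a $v_3$-derivative, which is bounded by the $x_3$-invariance of the 2D problem. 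All other estimates follow the 3D specular template with the 2D trajectory bounds \eqref{2D_specular}.
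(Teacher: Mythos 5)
Your proposal follows essentially the same route as the paper: set up the specular iteration, substitute the 2D trajectory bounds (\ref{2D_specular}) for Theorem \ref{theorem_Dxv}, apply the dynamical non-local-to-local estimate for the $\partial_{\bar x}$-derivative term, handle the critical $\beta=\tfrac12$ weight for $\partial_{v_i}\Gamma_{\mathrm{gain}}$ via Lemma \ref{2D_Gamma} and the a-priori bound on $\|\partial_{v_3}f\|_\infty$, then close with the iteration lemma (\ref{aAD}), bootstrap in time, and use the analyticity hypothesis for the global extension. One misattribution is worth flagging: the strong cutoff (\ref{strong_cutoff}) is not what licenses the integration by parts in Lemma \ref{2D_Gamma}. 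That lemma is established under the standard Grad cutoff, with boundedness and $C^1$-control of $q_0^\ast$ entering through the Carleman formula (\ref{carleman}). In the paper, (\ref{strong_cutoff}) is invoked for a separate piece of $\mathbf{II}_{\mathbf{e}}$, the one containing $\partial_{x}\bar{V}_{\mathbf{cl}}(s)\cdot\nabla_{\bar{v}}B(v-u,\omega)$, which arises from differentiating the collision-frequency exponential $\exp(-\int\nu(F^{m-j}))$ along the trajectory in $\bar v$ and needs control on the angular part of $\nabla_v B$. Also, your intermediate weight bookkeeping in Step 2 inverts the trajectory factor: with $|\partial_{\bar x}\bar X_{\mathbf{cl}}|\lesssim e^{C|\bar v|(t-s)}|\bar v|/\sqrt\alpha$ and the $\beta=1$ gain from (\ref{specular_nonlocal}), the residual is
\[
\frac{\alpha}{1+|\bar v|^2}\cdot\frac{|\bar v|}{\sqrt\alpha}\cdot\frac{1}{\langle v\rangle\sqrt\alpha}\;\lesssim\;1,
\]
not $\alpha^{-1/2}/|\bar v|$; the cancellation you assert is exact, but your written intermediate expression is off by $|\bar v|^2$. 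Neither issue undermines the soundness of the overall plan.
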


We remark that powers\ of singularity\ $\alpha $ and $\sqrt{\alpha }$ are
barely missed in 3D case (borderline case).

\begin{proof}
We repeat our program in 3D for the simpler 2D case, and we only point out
the differences. Lemma \ref{local_existence} is valid with easy adaptations.
The new $  \partial _{v_{3}}f(t)$
estimate follows from taking the $v_{3}$ derivative
\begin{equation*}
\begin{split}
&\{\partial _{t}+v_{1}\partial _{x_{1}}+v_{2}\partial _{x_{2}}\} \partial _{v_{3}}f   +    \nu(F) \partial_{v_{3}} f \\
&=   \Gamma_{\text{gain},v_{3}} (f,f) + \Gamma_{\text{gain}}(\partial_{v_{3}}f,f) + \Gamma_{\text{gain}}(f,\partial_{v_{3}}f)  - 
\nu_{v_{3}}(\sqrt{\mu}f) f  - \nu(\sqrt{\mu} \partial_{v_{3}} f)f .
\end{split}
\end{equation*}%
Since
\begin{eqnarray*}
 | \nu_{v_{3}} (\sqrt{\mu} f)f |+ | \Gamma _{\text{gain},v_{3}} (f,f)|&\lesssim &   P(||  e^{\theta
|v|^{2}}f||_{\infty }), \\
| \nu(\sqrt{\mu} \partial_{v_{3}} f)f|+|\Gamma_{\mathrm{gain}}(\partial_{v_{3}} f,f)| &\lesssim &  P(||  e^{\theta
|v|^{2}}f||_{\infty }) \int \frac{e^{-C|v-u|^{2}}}{|v-u|^{2-\kappa}} |\partial_{v_{3}}f (u)| \mathrm{d}u ,
\end{eqnarray*}%
and for $(x,v)\in\gamma_{-}$
\[
\partial_{v_{3}}f(t,x,v) = \partial_{v_{3}}f(t,x,R_{x}v),
\]
then we follow the proof of Lemma \ref{local_existence} (similar to $\partial_{t}f$ proof) to conclude
\begin{equation*}
|| \partial _{v_{3}}f(t)||_{\infty }\lesssim
|| \partial _{v_{3}}f_{0}||_{\infty }+ P(|| e^{\theta
|v|^{2}}f||_{\infty }).
\end{equation*}%

The Velocity lemma (Lemma \ref{velocity_lemma}) is valid with changing $v$ to $\bar{v}.$ The non-local to local estimates (\ref{nonlocal}) and (\ref%
{specular_nonlocal}) are valid for $0<\kappa \leq 1$ for $\bar{v}%
=(v_{1},v_{2})$: In the proof of (\ref{nonlocal}) in Lemma \ref{lemma_nonlocal}, \text{Step 1}, the claim (\ref{int_xi}) is valid. \textit{Step 2}, (\ref{sigma}) and (\ref{COV_xi_s})is valid with $\alpha(x,\bar{v})$. In \text{Step 3} we define $\tilde{\sigma}_{1}$ and $\tilde{\sigma}_{2}$ with changing $v$ to $\bar{v}$. Then (\ref{max_xi}), and (\ref{tz}) hold with changing $v$ to $\bar{v}$. We follow the same proof of \textit{Step 4} to bound $\int_{0}^{t_{\mathbf{b}}(x, \bar{v})  } \frac{ e^{-l \langle \bar{v} \rangle (t-s)}  }{|v|^{2\beta-1} |\xi|^{\beta-\frac{1}{2}}} Z(s,v) \mathrm{d}s.$ We use $\frac{1}{|v|} \leq \frac{1}{|\bar{v}|}$ to conclude (\ref{nonlocal}). For the proof of (\ref{specular_nonlocal}) in Lemma \ref{lemma_nonlocal}, we use the same time splitting of (\ref{time_splitting}) with changing $|v|$ to $|\bar{v}|.$ Then all the proofs are followed and we conclude the proof using $\frac{1}{|v|} \leq \frac{1}{|\bar{v}|}$.

The fundamental Theorem \ref{theorem_Dxv} is valid with simpler proof with changing all $v$ to $\bar{v}$. In
fact, due to topological advantage, we can use a global chart $x_{||}=\theta
$ in $\mathbb{R}^{1}$ (such as the polar co-ordinates) for the boundary as
\begin{equation*}
\eta (x_{||})=[R(x_{||})\cos x_{||},R(x_{||})\sin x_{||}],
\end{equation*}%
(vector-valued function) with a global ODE for in the polar co-ordinate
system near the boundary$!$ The proof of Theorem \ref{theorem_Dxv} follows
step by step of the 3D case but with simpler argument without changes of
charts. The estimate of $e^{-\varpi\langle v\rangle t}\frac{\alpha }{1+|v|^{2}}%
\nabla _{x}f(t)$ exactly as in 3D case, valid for $\alpha .$ The most
delicate part is to estimate $\partial _{v_{3}}\Gamma _{\text{gain}}(f,f),$
where a weight stronger than $\sqrt{\alpha }$, due to $\beta >1/2$ in (\ref%
{specular_nonlocal}). It is important to know, that we are unable to
establish (\ref{specular_nonlocal}) in the 2D case with $\beta=1/2$. However, we are able to
close the estimate by using additional bounds on $\partial _{v_{3}}f$. 

Basically we follow the {\textit{Proof of Theorem \ref{main_specular}}} but we use Lemma \ref{2D_Gamma} when derivatives act on $\bar{V}_{\mathbf{cl}}(s)$ argument of $\Gamma_{\mathrm{gain}}(f^{m-\ell}, f^{m-\ell})(s, X_{\mathbf{cl}} (s), \bar{V}_{\mathbf{cl}}(s),v_{3}).$

More precisely we use Lemma \ref{lemma_operator} for $\mathbf{e}\in \{x_{1}, x_{2}, v_{1}, v_{2} \}$
\begin{equation}\notag
\begin{split}
&\text{II}_{\mathbf{e}} \text{ of } (\ref{deriv_spec})\\
&= \int_{0}^{t} \mathrm{d}%
s \sum_{\ell=0 }^{\ell_{*}(0)}  \mathbf{1}_{[t^{\ell+1},t^{\ell})}(s) e^{-\int^{t}_{s}
\sum_{j=0}^{\ell_{*}(s)} \mathbf{1}_{[{t^{j+1}},{t^{j}})}(\tau) \nu (F^{m-j})(\tau )%
\mathrm{d}\tau }   \\
&  \ \ \ \ \  \times \bigg\{  \partial_{\mathbf{e}} \bar{X}_{\mathbf{cl}}(s)\cdot\big[    \Gamma_{\mathrm{gain}}  (\nabla_{\bar{x}}f^{m-\ell}, f^{m-\ell})
+
  \Gamma_{\mathrm{gain}}  (f^{m-\ell}, \nabla_{\bar{x}}f^{m-\ell})
  \big]  (s,\bar{X}_{\mathbf{cl}}(s),V_{\mathbf{cl}}(s))\\
& \ \ \ \ \ \ \ \ \ \  + \partial_{\mathbf{e}} \bar{V}_{\mathbf{cl}}(s)\cdot\nabla_{\bar{v}}\big[    \Gamma_{\mathrm{gain}}  ( f^{m-\ell}, f^{m-\ell})
  \big]  (s,\bar{X}_{\mathbf{cl}}(s),V_{\mathbf{cl}}(s)) \bigg\}
\end{split}
\end{equation}
\begin{equation}\notag
\begin{split}
& \ -\int_{0}^{t} \mathrm{d}%
s   \sum_{\ell =0}^{\ell_{*}(0)}  \mathbf{1}_{[t^{\ell+1},t^{\ell})}(s) e^{-\int^{t}_{s}
\sum_{j} \mathbf{1}_{[{t^{j+1}},{t^{j}})}(\tau) \nu (F^{m-j})(\tau )%
\mathrm{d}\tau }    \Gamma_{\text{gain}}(f^{m-\ell}, f^{m-\ell}) (s,\bar{X}_{\mathbf{cl}}(s),V_{\mathbf{cl}}(s))  \\  
& \ \ \ \ \  \times
 \int^{t}_{s}
\mathrm{d}\tau  
\sum_{j=0}^{\ell_{*}(s)} \mathbf{1}_{[{t^{j+1}},{t^{j}})}(\tau)  \bigg\{\partial_{\mathbf{e}} \bar{X}_{\mathbf{cl}}(s) \cdot 
 \nu (\sqrt{\mu} \nabla_{\bar{x}}f^{m-j})(\tau, \bar{X}_{\mathbf{cl}}(\tau), V_{\mathbf{cl}}(\tau) ) \\
 & \ \ \ \ \ \ \ \ \ \ \ \ \ \ \ \  \ \ \ \ \ \ \ \ \ \ \ \ \ \ \ \ \ \ \ \ \  + \int_{\mathbb{R}^{3}} \partial_{\mathbf{e}} \bar{V}_{\mathbf{cl}}(s)\cdot \nabla_{\bar{v}} B(v-u,\omega) \sqrt{\mu(u)} f^{m-j} (\tau, \bar{X}_{\mathbf{cl}}(\tau), u)  \mathrm{d}u \bigg\}
 \\
&  \ - e^{- \int^{t}_{0}   \sum_{\ell=0 }^{\ell_{*}(0)} \mathbf{1}_{[{t^{\ell+1}} ,{t^{\ell}})}(s) \nu(F^{m-\ell} )(s) \mathrm{d}s} \
   f_{0}(X_{\mathbf{cl}}(0),V_{\mathbf{cl}}(0))  \int^{t}_{0}   \sum_{\ell=0 }^{\ell_{*}(0)} \mathbf{1}_{[{t^{\ell+1}} ,{t^{\ell}})}(s) \\
&  \ \ \ \ \ \  
     \times  \bigg\{\partial_{\mathbf{e}} \bar{X}_{\mathbf{cl}}(s) \cdot 
 \nu (\sqrt{\mu} \nabla_{\bar{x}}f^{m-j})(\tau, \bar{X}_{\mathbf{cl}}(\tau), V_{\mathbf{cl}}(\tau) ) \\
 & \ \ \ \ \ \ \ \    + \int_{\mathbb{R}^{3}} \partial_{\mathbf{e}} \bar{V}_{\mathbf{cl}}(s)\cdot \nabla_{\bar{v}} B(v-u,\omega) \sqrt{\mu(u)} f^{m-j} (\tau, \bar{X}_{\mathbf{cl}}(\tau), u)  \mathrm{d}u \bigg\}.
\end{split}
\end{equation}

We use the crucial lemma (\ref{lemma_Dxv}) for the terms containing $\partial_{\bar{v}}\Gamma_{\mathrm{gain}}$ as
\begin{equation}\notag
\begin{split} 
& \int_{0}^{t}  e^{-\varpi \langle v\rangle t} \frac{\alpha(x,\bar{v})}{\langle \bar{v} \rangle^{2}} |\partial_{\bar{x}} \bar{V}_{\mathbf{cl}}(s)| |
\partial_{\bar{v}} \Gamma_{\mathrm{gain}}(f^{m-\ell}, f^{m-\ell})|\mathrm{d}s\\
&\lesssim \int_{0}^{t}  e^{-\varpi \langle v\rangle(t-s)} \frac{|\bar{v}|^{3} e^{C|\bar{v}||t-s|}}{\langle \bar{v} \rangle^{2}}  | e^{\varpi \langle v \rangle s} \partial_{\bar{v}}\Gamma_{\mathrm{gain}}(f^{m-\ell}, f^{m-\ell})  | \mathrm{d}s\\
&\lesssim \int_{0}^{t}  e^{-\varpi \langle \bar{v} \rangle (t-s)} |\bar{v}| \mathrm{d}s \times \{ \text{ RHS of } (\ref{2dv})\}\\ 
& \lesssim \frac{1}{\varpi} P (|| e^{\theta|v|^{2}} f_{0}||_{\infty} ) \{1+ || \partial_{v_{3}} f_{0}||_{\infty}
+ \Big|\Big| e^{-\varpi \langle v\rangle s} \frac{|\bar{v}|}{\langle \bar{v} \rangle } \alpha^{1/2} \partial_{\bar{v}}f \Big|\Big|_{\infty}
\}.
\end{split}
\end{equation} 

Similarly
\begin{equation}\notag
\begin{split} 
&  \int_{0}^{t}  e^{-\varpi \langle v\rangle t} \frac{\alpha(x,\bar{v})}{\langle \bar{v} \rangle^{2}} |\partial_{\bar{v}} \bar{V}_{\mathbf{cl}}(s)| |
\partial_{\bar{v}} \Gamma_{\mathrm{gain}}(f^{m-\ell}, f^{m-\ell})|\mathrm{d}s\\
&\lesssim \int_{0}^{t}  e^{-\varpi \langle v\rangle(t-s)} \frac{|\bar{v}|^{2} e^{C|\bar{v}||t-s|}}{\langle \bar{v} \rangle }  | e^{\varpi \langle v \rangle s} \partial_{\bar{v}}\Gamma_{\mathrm{gain}}(f^{m-\ell}, f^{m-\ell})  | \mathrm{d}s\\
&\lesssim \int_{0}^{t}  e^{-\varpi \langle \bar{v} \rangle (t-s)} |\bar{v}| \mathrm{d}s \times \{ \text{ RHS of } (\ref{2dv})\}\\ 
& \lesssim \frac{1}{\varpi} P (||   e^{\theta|v|^{2}} f_{0}||_{\infty} ) \{1+ || \partial_{v_{3}} f_{0}||_{\infty}
+ \Big|\Big| e^{-\varpi \langle v\rangle s} \frac{|\bar{v}|}{\langle \bar{v} \rangle } \alpha^{1/2} \partial_{\bar{v}}f \Big|\Big|_{\infty}
\}.
\end{split}
\end{equation}
 
For the term containing $\partial_{x} \bar{V}_{\mathbf{cl}}(s) \cdot \nabla_{\bar{v}}B(v-u,\omega)$ we use (\ref{strong_cutoff}). The estimate for the other terms are same as the proof of Theorem \ref{main_specular}.
\end{proof}

\begin{proposition}[Specular BC]\label{sing_specular}
Assume $\Omega: = \{ \bar{x} \in\mathbb{R}^{2} : |\bar{x}|<1\}$ be 2D disk and $\xi(\bar{x})= |\bar{x}|^{2}-1$. For any $1 \leq k$ assume the compatibility conditions for $0 \leq i \leq k-1$ 
\[
\partial^{i}_{t} f_{0}(x,v) = \partial^{i}_{t} f_{0}(x,R_{x}v)  \ \ \text{on} \ \gamma_{-},
\]
and for some $x_{0} \in \partial\Omega$ and some $u_{0} \in \mathbb{R}^{3}$ with $|u_{0}| \sim 1$ and $n(x_{0})\cdot u_{0} =0,$
\begin{equation}\label{specular_condition}
 \int_{  \substack{n(x_{0})\cdot u_{\tau}=0\\ u_{\tau } \sim u_{0}}} \mathbf{k}_{f_{0}}(v,u) \partial_{v_{n}} \partial_{t}^{k } f_{0} (x_{0}, u) \mathrm{d}u_{\tau}  > C >0,
\end{equation}
where $\mathbf{k}_{f_{0}}$ is defined in (\ref{diff_q_gamma}). Then there exists $  t >0$ such that if $X_{\mathbf{cl}}(0;t,x,v)\sim x_{0}$ then for all $v\in\mathbb{R}^{3}$ we have a blow-up (\ref{blow_up}). 
\end{proposition}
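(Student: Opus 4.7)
Following the strategy of the diffuse and bounce-back cases, introduce $\Delta_\varepsilon f(t,x,v) := \varepsilon^{-1}[f(t,x-\varepsilon n(x),v)-f(t,x,v)]$. Using the Grad-type representation in (\ref{diff_q_gamma}), failure of (\ref{blow_up}) would force $\int_{\mathbb{R}^3}\mathbf{k}_{f_0}(v,u)\Delta_\varepsilon f(t,x,u)\,\mathrm{d}u$ to remain bounded as $\varepsilon\downarrow 0$. Decompose the $u$-integral into $\{|n(x)\cdot u|\leq\varepsilon\}$, $\{\varepsilon\leq |n(x)\cdot u|\leq\sigma\}$ and $\{|n(x)\cdot u|\geq\sigma\}$: the first is $O(1)$ by the $L^\infty$ bound, the third is $O(\sigma^{-1})$ by the weighted $W^{1,\infty}$ estimate of Theorem \ref{main_specular}, so the burden falls on the middle non-grazing band. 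On the disk $\xi=|x|^2-1$, identity (\ref{ball_n}) gives $|n(y)\cdot u|\gtrsim\varepsilon$ for every $y$ on the segment from $x$ to $x-\varepsilon n(x)$, so $\Delta_\varepsilon f(t,x,u)=\int_0^1\partial_n f(t,x-\varepsilon r n(x),u)\,\mathrm{d}r$ on this band.

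\textbf{Representation along specular cycles and the dominant term.} Differentiate the Duhamel expansion (\ref{deriv_spec}) along the specular cycles to obtain
\begin{equation*}
\partial_n f(t,y,u)=e^{-\int_0^t\nu}\bigl[\partial_n X_{\mathbf{cl}}(0)\cdot\nabla_x f_0+\partial_n V_{\mathbf{cl}}(0)\cdot\nabla_v f_0\bigr](X_{\mathbf{cl}}(0),V_{\mathbf{cl}}(0))+R,
\end{equation*}
where $R$ lumps the $\partial_n\Gamma_{\mathrm{gain}}$ time integrals and $\partial_n\nu$ terms. Choose $(t,x,v)$ so that after exactly $k$ specular bounces the backward trajectory passes close to $(x_0,u_0)$; then iterate $\partial_t f_0=-v\cdot\nabla_x f_0-\nu f_0+\Gamma_{\mathrm{gain}}(f_0,f_0)$ to convert the repeated normal derivatives along the cycle into the quantity $\partial_{v_n}\partial_t^k f_0(x_0,u)$ controlled by (\ref{specular_condition}). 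The 2D estimates (\ref{lower_Dxv}) of Example 1 supply the essential asymptotics $|\partial_n V_{\mathbf{cl}}(0)\cdot n(X_{\mathbf{cl}}(0))|\sim t|u|^4/\alpha(y,u)$, which dominates the $|\partial_n X_{\mathbf{cl}}(0)|\sim 1/\sqrt{\alpha}$ contribution by one full power of $\alpha$.

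\textbf{Extracting the $1/\varepsilon$ blow-up.} On the disk, $\alpha(x-\varepsilon r n(x),u)\sim u_n^2+C\varepsilon r|u_\tau|^2$, so integrating the dominant piece against $\mathbf{k}_{f_0}$ reduces, after an $r$-integration, to a lower bound of the form
\begin{equation*}
\int_0^1\mathrm{d}r\int_\varepsilon^1\frac{\mathrm{d}u_n}{u_n^2+C\varepsilon r|u_0|^2}\int_{n(x_0)\cdot u_\tau=0,\,u_\tau\sim u_0}\mathbf{k}_{f_0}(v,u)\,\partial_{v_n}\partial_t^k f_0(x_0,u)\,\mathrm{d}u_\tau\;\gtrsim\;\frac{1}{\varepsilon},
\end{equation*}
where (\ref{specular_condition}) delivers the strictly positive tangential integral and the $(r,u_n)$ integration yields the $1/\varepsilon$ divergence, exactly as in the estimate (\ref{bb_dominant}) of Proposition \ref{bb_nonexistence}. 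The competing $\partial_n X_{\mathbf{cl}}(0)\cdot\nabla_x f_0$ contribution is at most $O(|\ln\varepsilon|)$, and the remainder $R$ is $O(1)$ by combining the weighted gradient bound of Theorem \ref{main_specular} with the non-local-to-local estimate (\ref{specular_nonlocal}) for $\beta>1/2$, so neither can cancel the leading singularity. The main obstacle, absent from the bounce-back analysis, is isolating the genuinely normal component of $\partial V_{\mathbf{cl}}$ from the matrix products of Theorem \ref{theorem_Dxv} with the correct sign and the correct power of $\alpha$; it is precisely why the proposition is stated on the disk, where the explicit computations of Example 1 supply two-sided control that does not seem available on a general convex domain.
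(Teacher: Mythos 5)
Your overall skeleton — difference quotient $\Delta_\varepsilon f$, the three-way split of the $u$-integral in $|n(x)\cdot u|$, the explicit 2D cycle derivatives from Example 1, and the final $\int_0^1\mathrm{d}r\int_\varepsilon^1 (u_n^2+C\varepsilon r N^2)^{-1}\mathrm{d}u_n\sim 1/\varepsilon$ — matches the paper. But there are two genuine gaps.

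First, you never make the reduction to the time-differentiated equation, and the mechanism you propose in its place does not work. The paper first reduces (\ref{blow_up}) to $\partial_n\partial_t^k\Gamma_{\mathrm{gain}}(f,f)(t,x_0,v)-\partial_n\partial_t^k\nu(\sqrt{\mu}f)f(t,x_0,v)=\infty$ via the fundamental theorem of calculus in $t$ (applied $k$ times and using the freedom to choose the datum), and only then runs the difference-quotient argument on $\partial_t^k f$. This is how $\partial_{v_n}\partial_t^k f_0$ in (\ref{specular_condition}) ever enters: the dominant term in the cycle representation of $\partial_n\partial_t^k f$ is $\partial_n\bar V_{\mathbf{cl}}(0)\cdot\nabla_{\bar v}\partial_t^k f_0$. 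Your alternative — choosing $(t,x,v)$ so the trajectory has exactly $k$ bounces and then ``iterating $\partial_t f_0=-v\cdot\nabla_x f_0+\cdots$ to convert the repeated normal derivatives along the cycle into $\partial_{v_n}\partial_t^k f_0$'' — is not a valid mechanism: the number of bounces near grazing is of order $\alpha^{-1/2}\gg k$, not $k$, and in any case the cycle representation of $\partial_n f$ produces $\nabla_{x,v}f_0$, not iterated time derivatives. Without the reduction to $\partial_t^k f$ the condition (\ref{specular_condition}) is never reachable.

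Second, you invoke Theorem \ref{main_specular} to absorb the remainder $R$ and to control the $|n(x)\cdot u|\geq\sigma$ and $|u_\tau|\geq N$ regions, but that theorem is the 3D result with weight $\alpha^\beta$, $\beta>1$ strictly, which is too strong a power of $\alpha$ for the balance in (\ref{lower_Dxv}) to close. The proposition is stated on the 2D disk precisely because one needs the borderline weighted estimate of Theorem \ref{theo2D}, namely $e^{-\varpi\langle\bar v\rangle t}\tfrac{\alpha}{1+|\bar v|^2}\nabla_{\bar x}f$ and $e^{-\varpi\langle\bar v\rangle t}\tfrac{|\bar v|}{\langle\bar v\rangle}\sqrt{\alpha}\,\nabla_{\bar v}f$ bounded in $L^\infty$, which in turn relies on the 2D-specific Lemma \ref{2D_Gamma} to handle $\partial_{\bar v}\Gamma_{\mathrm{gain}}$ at the critical exponent $\beta=1/2$. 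Your last paragraph correctly flags that Example 1 is the reason for the disk, but you do not flag that the regularity input must also change: using the 3D theorem leaves the remainder estimate a full power of $\alpha$ short.
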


\begin{proof}The crucial ingredients of the proof is a 2D borderline estimate of Theorem \ref{theo2D} (due to Lemma \ref{2D_Gamma}) and the explicit lower bound of (\ref{lower_Dxv}) in Example 1.   

For simplicity we only consider the case of $k=1$. In order to show (\ref{blow_up}) it suffices to show
\begin{equation}\label{blowup_specular}
\partial_{n} \partial_{t} \Gamma_{\mathrm{gain}}(f,f) (t,x_{0},v) - \partial_{n} \partial_{t} \nu(\sqrt{\mu} f) f(t,x_{0},v) = +\infty.
\end{equation}
This is due to the fundamental theory of calculus
\begin{equation}\notag
\begin{split}
&\partial_{n}  \Gamma_{\mathrm{gain}}(f,f) (t,x_{0},v) - \partial_{n}   \nu(\sqrt{\mu} f) f(t,x_{0},v)\\
= &\partial_{n}  \Gamma_{\mathrm{gain}}(f_{0},f_{0}) (x_{0},v) - \partial_{n}   \nu(\sqrt{\mu} f_{0}) f_{0}(x_{0},v) + \int^{t}_0 \partial_{n} \partial_{s} \Gamma_{\mathrm{gain}}(f,f) (s,x_{0},v) - \partial_{n} \partial_{s} \nu(\sqrt{\mu} f) f(s,x_{0},v) \mathrm{d}s,
\end{split}
\end{equation}
where we can choose the initial datum as good as possible.

  We decompose
\begin{equation}\notag
\begin{split}
 \int_{\mathbb{R}^{3}} \mathbf{k}_{f_{0}} (v,u) \frac{\partial_{t}f(t, x-\varepsilon n(x) ,u) - \partial_{t}f(t,x,u)}{\varepsilon} 
=  \int_{|n(x)\cdot u|\leq \varepsilon} +  \underbrace{\int_{\varepsilon \leq |n(x)\cdot u| \leq 1} }_{\mathbf{II}}+ \int_{1\leq |n(x)\cdot u|}.
\end{split}
\end{equation} 
By Lemma \ref{local_existence}, the first term is bounded by 
\[
\int_{|n(x)\cdot u|\leq \varepsilon}  \lesssim O(1) || \partial_{t} f||_{\infty}.
\]
Due to (\ref{ball_n}), $1 \leq |n(x)\cdot u|$ implies $1 \lesssim | n(x-\varepsilon n(x))\cdot u|$ for $0\leq \varepsilon \ll1.$ Therefore we use Theorem \ref{theo2D} to bound the third term as
\begin{equation}\notag
\begin{split}
\int_{1 \leq |n(x)\cdot u|} &\lesssim \int e^{\varpi \langle \bar{u} \rangle t}\frac{1+ |\bar{u}|^{2}}{1+ \varepsilon|\bar{u}|^{2}} \mathbf{k}_{f_{0}}(v,u) \mathrm{d} u\times ||  e^{-\varpi \langle \bar{v} \rangle t}\frac{\alpha}{1 + |\bar{v}|^{2}} \partial_{t} \nabla_{\bar{x}} f(t) ||_{\infty} \\
& \lesssim O_{N,t}(1) ||  e^{-\varpi \langle \bar{v} \rangle t}\frac{\alpha}{1 + |\bar{v}|^{2}} \partial_{t} \nabla_{\bar{x}} f(t) ||_{\infty}.
\end{split}
\end{equation}
 
 Now we focus on the second term $\mathbf{II}.$ Due to (\ref{ball_n}), $\partial_{t} f(t, x-\varepsilon r n(x),u)$ is differentiable for all $0 \leq r \leq 1$ and we have (\ref{n_epsilon}). We further decompose
 \[
\mathbf{II}=  \int_{\varepsilon \leq |n(x)\cdot u| \leq 1}  \int_{0}^{1}  \mathbf{k}_{f_{0}} (v,u) 
 {\partial_{n}\partial_{t}f(t, x-\varepsilon r n(x) ,u)  } \mathrm{d}r \mathrm{d}u
=   \int_{ \substack{\varepsilon \leq |u_{n}| \leq 1 \\  |u_{\tau}| \leq N}} +  \int_{ \substack{\varepsilon \leq |u_{n}| \leq 1 \\  |u_{\tau}| \geq N}}  .
 \]
Set $x(r) : = x- \varepsilon r n(x).$ Now we use (\ref{deriv_spec}) for the first term and apply Theorem \ref{theo2D} to the second term ($|u_{\tau}| \geq N$) to have
\begin{equation}\notag
\begin{split}
 \mathbf{II}  
&\gtrsim  \int_{ \substack{\varepsilon \leq |u_{n}| \leq 1 \\  |u_{\tau}| \leq N}} \mathrm{d}u \int^{1}_{0} \mathrm{d}r \ \mathbf{k}_{f_{0}} (v,u)\big\{ \partial_{n} \bar{X}_{\mathbf{cl}}(0) \cdot \nabla_{\bar{x}}\partial_{t} f_{0} (X_{\mathbf{cl}}(0), V_{\mathbf{cl}}(0))
+ \underbrace{\partial_{n} \bar{V}_{\mathbf{cl}}(0) \cdot \nabla_{\bar{v}}\partial_{t} f_{0} (X_{\mathbf{cl}}(0), V_{\mathbf{cl}}(0))}
\big\}\\
&- P(|| e^{\theta |v|^{2}} f_{0} ||_{\infty}) \int_{ \substack{\varepsilon \leq |u_{n}| \leq 1 \\  |u_{\tau}| \leq N}} \mathrm{d}u \int^{1}_{0} \mathrm{d}r \ \mathbf{k}_{f_{0}} (v,u) \\
&  \ \   \times\bigg\{ \int_{0}^{t} |\partial_{n} \bar{X}_{\mathbf{cl}}(s)| \int_{\mathbb{R}^{3}} \frac{e^{-C_{\theta} |V_{\mathbf{cl} }(s) - u|^{2}}}{|V_{\mathbf{cl} }(s) -u|^{2-\kappa}} |\nabla_{\bar{x}} f(s)|\mathrm{d}u \mathrm{d}s  
 +\int_{0}^{t} |\partial_{n} \bar{V}_{\mathbf{cl}}(s)| \int_{\mathbb{R}^{3}} \frac{e^{-C_{\theta} |V_{\mathbf{cl}}(s) - u |^{2}}}{|V_{\mathbf{cl} }(s) -u|^{2-\kappa}} |\nabla_{\bar{v}} f(s)|\mathrm{d}u \mathrm{d}s \\ 
 &  \ \ \ \ \ \ \  
 + \langle u \rangle^{\kappa} e^{-\theta |u|^{2}} \sup _{ 0 \leq s\leq t} |\partial_{n} \bar{V}_{\mathbf{cl}}(s;t,x,v)| 
\bigg\}\\
&- O(1)  \int_{0}^{1}\int_{\varepsilon \leq |u_{n}| \leq 1} \frac{ e^{-N}}{|u_{n}|^{2} + C \varepsilon r N^{2} }  \mathrm{d}u_{n} \mathrm{d}r. 
\end{split}
\end{equation}
We use (\ref{lower_Dxv}) and (\ref{specular_condition}) and (\ref{bb_dominant}) to bound (lower) the underbraced term
\begin{equation}\notag
\begin{split}
&\gtrsim\int_{0}^{1} \mathrm{d}r\int_{\varepsilon \leq |u_{n}| \leq 1} \mathrm{d}u_{n}
 \int_{|u_{\tau}| \leq N} \mathrm{d}u_{\tau}  
\frac{t|\bar{u}_{\tau}|^{4}}{ |u_{n}|^{2} + C \varepsilon r N^{2}} \mathbf{k}_{f_{0}}(v,u)  \partial_{v_{n}}\partial_{t} f_{0} (X_{\mathbf{cl}}(0) , {V}_{\mathbf{cl}}(0))\\
&\sim \int_{0}^{1} \mathrm{d}r\int_{\varepsilon \leq |u_{n}| \leq 1} \mathrm{d}u_{n}  \frac{1}{|u_{n}|^{2} + C \varepsilon r N^{2}},
\end{split}
\end{equation}
where $X_{\mathbf{cl}}(0) \sim x_{0}$ and $V_{\mathbf{cl}}(0) \sim u_{0}.$

Except the underbraced term, all the other terms are bounded, by Theorem \ref{theo2D} and (\ref{lower_Dxv}) and (\ref{specular_nonlocal}),
\begin{equation}\notag
\begin{split}
\gtrsim&- \int_{\varepsilon \leq |u_{n}| \leq 1} \mathrm{d}u_{n} \int_{|u_{\tau}| \leq N} \mathrm{d}u_{\tau} \mathbf{k}_{f_{0}} (v,u)\frac{t |\bar{u}|^{2}}{ \sqrt{ |u_{n}|^{2 } + C\varepsilon r N^{2}}}  || \nabla_{\bar{x}}\partial_{t} f_{0} ||_{\infty} \\
&- P( || e^{\theta |v|^{2}} f_{0} ||_{\infty} ) \int_{ \substack{\varepsilon \leq |u_{n}| \leq 1 \\  |u_{\tau}| \leq N}} \mathrm{d}u \int_{0}^{1} \mathrm{d}r \ \mathbf{k}_{f_{0}}(v,u)  \int_{0}^{t} \int_{\mathbb{R}^{3}} \frac{|t-s| |\bar{u}^{\prime}|^{2}(1+ |\bar{u}^{\prime}|^{2}) e^{\varpi \langle \bar{u}^{\prime}  \rangle s}  }{\alpha(X_{\mathbf{cl}}(s) ,u^{\prime})^{3/2}} \mathrm{d}u^{\prime} \mathrm{d}s\\
&- P( || e^{\theta |v|^{2}} f_{0} ||_{\infty} ) \int_{ \substack{\varepsilon \leq |u_{n}| \leq 1 \\  |u_{\tau}| \leq N}} \mathrm{d}u \int_{0}^{1} \mathrm{d}r \ \mathbf{k}_{f_{0}}(v,u)  \langle u\rangle^{\kappa} e^{-\theta |u|^{2}} \frac{|t | |\bar{u}|^{4}}{  |u_{n}|^{2} + C\varepsilon r N^{2}}\\
&-O(1) \int_{0}^{1}\int_{\varepsilon \leq |u_{n}| \leq 1} \frac{ e^{-N}}{|u_{n}|^{2} + C \varepsilon r N^{2} }  \mathrm{d}u_{n} \mathrm{d}r\\
\gtrsim & - O_{N}(1)|| \nabla_{x} \partial_{t} f_{0} ||_{\infty} \ln (\frac{1} {\varepsilon})  - o(1) \int_{0}^{1}\int_{\varepsilon \leq |u_{n}| \leq 1} \frac{ e^{-N}}{|u_{n}|^{2} + C \varepsilon r N^{2} }  \mathrm{d}u_{n} \mathrm{d}r.
\end{split}
\end{equation}

Collecting the terms and using (\ref{bb_dominant})
\begin{equation}\notag
\begin{split}
\mathbf{II}   \gtrsim \int_{0}^{1} \mathrm{d}r \int_{\varepsilon \leq |u_{n}| \leq 1} \mathrm{d}u_{n} \frac{1}{|u_{n}|^{2} + C\varepsilon r N^{2}} - \ln (\frac{1}{\varepsilon})  \gtrsim_{N} \frac{1}{\varepsilon} \rightarrow +\infty,
\end{split}
\end{equation}
 and $\varepsilon \rightarrow 0$ and this proves (\ref{blowup_specular}).

\end{proof}

\noindent \textbf{Acknowledgements}: This project was initiated on the study of diffusive reflection boundary condition during the Kinetic Program at ICERM, 2011. We thank a referee for the constructive remarks on this earlier work, leading to our study of specular and bounce-back cases 
included in the paper now (carried out by Y. Guo and C. Kim). Y.Guo's research is
supported in part by a NSF grant, FRG grant, a Chinese NSF grant as well as Beijing International Center for
Mathematical Research. He thanks Nader Masmoudi for earlier discussions on the same subject. C. Kim's research
is supported in part by the Herchel Smith fund. He thanks Division of Applied Mathematics, Brown University and
KAIST for the kind hospitality and support and also he thanks Cl\'ement Mouhot for
helpful discussions. A. Trescases thanks the Division of Applied Mathematics, Brown University for its hospitality
during her visit during 2011-2012. The authors would like to dedicate this work to the memory of Seiji Ukai.

\bigskip

\vspace{20pt}
 
\noindent{Division of Applied Mathematics, Brown University,
Providence, RI 02812, U.S.A., \ \ Yan$\underline{~}$Guo@brown.edu }

\noindent{Department of Pure Mathematics and Mathematical Statistics ,
University of Cambridge, Cambridge, CB3 0WB,UK, \ \ C.W.Kim@dpmms.cam.ac.uk}\newline

\noindent {Institut de math\'{e}matiques de Jussieu, case 472, 4 Place de
Jussieu, 75252, Paris, France, \ \ tonond@math.jussieu.fr} \newline

\noindent{CMLA, ENS Cachan, 61, avenue du Pr\'{e}sident Wilson, 94235
Cachan, France, \ \ ariane.trescases@ens-cachan.fr}

\end{document}